\newcommand{\leqnomode}{\tagsleft@true}
\newcommand{\reqnomode}{\tagsleft@false}
\NewDocumentCommand\FbL{O{b} O{L}}{\cF^{#1}_{#2}}
\newcommand{\cFplus}{\cF^+}
\newcommand{\Gnorm}{\N_G}
\newcommand{\rou}{\boldsymbol\mu}
\DeclareSymbolFont{cyrletters}{OT2}{wncyr}{m}{n}
\DeclareMathSymbol{\Sha}{\mathalpha}{cyrletters}{"58}
\DeclareMathOperator{\rA}{A}
\newcommand{\Pmax}[1]{P^+\!(#1)}
\newcommand{\ImageModSquares}{\bS}
\newcommand{\bottomfield}{\sA}
\newcommand{\topfield}{\sB}
\newcommand{\idelesmodsquaresset}[2]{\prod_{v\in #2}#1_v^\times/#1_v^{\times 2}}
\begin{document}
\title[Galois module structures, Hasse principle, Selmer groups]{Galois module structures and the Hasse principle in twist families via the distribution of Selmer groups}
 
\author{Alex Bartel and Adam Morgan}
\begin{abstract} 
We address several seemingly disparate problems in arithmetic geometry:
the statistical behaviour of the Galois module structure
of Mordell--Weil groups of a fixed elliptic curve over varying quadratic
extensions; the frequency of failure of the Hasse
principle in quadratic twist families of genus $1$ hyperelliptic curves; and the Hasse principle for Kummer varieties.
The common technical ingredient for all of these  is a result on the distribution
of $2$-Selmer ranks in certain sparse families of quadratic twists of a given abelian variety.
\end{abstract}
\maketitle
\setcounter{tocdepth}{1}
\tableofcontents

\section{Introduction}\label{sec:intro}
In this article, we study the distribution of $2$-Selmer groups of abelian varieties over $\Q$ in certain zero-density
subfamilies of quadratic twists defined by Frobenian conditions. These results
have several applications, which we address in the second part of the paper:
distribution of Galois module structures of Mordell--Weil groups, Hasse principle for
twists of a hyperelliptic curve of genus $1$, and the Hasse principle for Kummer varieties.
A more general point that this paper illustrates, and that, we hope, will inform future work,
is that incorporating Frobenian conditions into the breakthrough work of Smith
\cite{Smith1,Smith2} more generally has many potential applications.

\subsection{Galois module structure of Mordell--Weil groups}\label{sec:introGalMod}
There is a lot of beautiful work investigating the average behaviour of the
group structure of $E(F)$ where $F$ is a number field and $E$ is an elliptic curve or a more general abelian variety.
Typically one of these will be fixed and the other one will be varying in some
natural family. In this paper we put ourselves in a situation where the group $E(F)$
has additional structure, namely that of a Galois module, and we ask about the
statistical behaviour of that additional structure.

Suppose that $E/\Q$ is a principally polarised abelian variety such that $E(\Q)$
has rank $\rk E(\Q) = 1$, and let $F/\Q$ be a quadratic
extension such that $E(F)$ has rank $2$. Let $G$ be the Galois group of $F/\Q$.
Then it follows from \cite{Reiner} that the $\Z$-free
quotient $E(F)/E(F)_{\tors}$ of $E(F)$ is isomorphic, as a $\Z[G]$-module,
either to a free $\Z[G]$-module of rank $1$, which we also denote by
$\Z[G]$, or a direct sum $\Z\oplus \Z(-1)$, where the two summands are
both free over $\Z$ of rank $1$, and with the non-trivial element of $G$
acting trivially, respectively by multiplication by $-1$.

\begin{question}\label{qn:GalMod_special}
  As $F/\Q$ runs over quadratic extensions for which $E(F)$ has rank $2$,
  ordered by the radicals of their discriminants,
  what is the proportion of those for which there is an isomorphism
  $E(F)/E(F)_{\tors}\cong \Z[G]$ of Galois modules?
\end{question}

Question \ref{qn:GalMod_special} has close parallels to, and indeed was
inspired by Stevenhagen's heuristic on the solubility of negative Pell
equations \cite{stevenhagen1993}, now a theorem of Koymans--Pagano \cite{KP22}.
In the appendix we reformulate Stevenhagen's conjecture to highlight this parallel.
In particular, as in the case of Pell equations, it will turn out that this
question is too naive to admit an interesting answer, and that it should be
modified.

If $d$ is a square-free integer, we abbreviate $\Q(\sqrt{d})$ to $F_d$. 
Since we assume that $E(\Q)$ has rank $1$, the condition $\rk E(F_d)=2$
is equivalent to the condition that $\rk E_d(\Q)=1$, where
$E_d$ is the quadratic twist of $E$ by $d$.
Let
$$
\cF^{\temp}=\{d\in \Z : d \text{ square-free}, \rk_2 E_d(\Q)=1\},
$$
where $\rk_2$ denotes the $2^\infty$-Selmer rank, which is widely expected to be equal to the rank.
Our first result is the following.
\begin{proposition}\label{prop:intro_globalprobregular}
  Suppose that $E$ has full rational $2$-torsion, and assume that the family $\cF^{\temp}$
  is non-empty. Then one has
  $$
  \lim_{X\to \infty}\frac{\#\{d\in \cF^{\temp}:|d|<X, E(F_d)/E(F_d)_{\tors} \cong \Z[G]\}}{\#\{d\in \cF^{\temp}:|d|<X\}} = 0.
  $$
\end{proposition}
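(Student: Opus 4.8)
The plan is to convert the condition $E(F_d)/E(F_d)_{\tors}\cong\Z[G]$ into a condition on $2^{\infty}$-Selmer groups, and then to read off the answer from the distribution result for $2$-Selmer ranks in Frobenian twist families that underlies the paper. First I would record a cohomological reformulation. By the dichotomy recalled above, $M:=E(F_d)/E(F_d)_{\tors}$ is either $\Z[G]$ or $\Z\oplus\Z(-1)$; computing Tate cohomology directly one finds $\widehat H^0(G,M\otimes_{\Z}\Q_2/\Z_2)=0$ in the first case (as $\Z[G]\otimes\Q_2/\Z_2$ is an induced, hence cohomologically trivial, $\Z_2[G]$-module) and $\widehat H^0(G,M\otimes_{\Z}\Q_2/\Z_2)\cong\Z/2$ in the second (the $\Z(-1)$-summand contributing the $\Z/2$). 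Since the torsion of $E(F_d)$ dies on tensoring with $\Q_2/\Z_2$ one has $E(F_d)\otimes\Q_2/\Z_2=M\otimes\Q_2/\Z_2$, so the event in the Proposition is exactly
\[
\widehat H^0\!\bigl(\,\mathrm{Gal}(F_d/\Q),\;E(F_d)\otimes\Q_2/\Z_2\,\bigr)=0 .
\]

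Next I would rewrite $E(F_d)\otimes\Q_2/\Z_2$ in terms of Selmer groups over $\Q$. The Kummer sequence gives a $G$-equivariant inclusion $E(F_d)\otimes\Q_2/\Z_2\hookrightarrow \mathrm{Sel}_{2^{\infty}}(E/F_d)$ with cokernel $\Sha(E/F_d)[2^{\infty}]$; and the twisting exact sequence of $G_{\Q}$-modules $0\to E_d[2^{\infty}]\to \mathrm{Ind}_{G_{F_d}}^{G_{\Q}}E[2^{\infty}]\to E[2^{\infty}]\to 0$, combined with Shapiro's lemma and a comparison of the defining local conditions, presents $\mathrm{Sel}_{2^{\infty}}(E/F_d)$, up to finite error terms controlled by $E[2]$ and by the local conditions at $2$, $\infty$ and the bad primes, as an extension of $\mathrm{Sel}_{2^{\infty}}(E/\Q)$ (with trivial $G$-action) by $\mathrm{Sel}_{2^{\infty}}(E_d/\Q)$ twisted by the sign character of $G$. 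Granting — as the Selmer-distribution theorem of this paper provides — that $\Sha(E_d/\Q)[2^{\infty}]=0$ for all but a density-$0$ subset of $d\in\cF^{\temp}$ (and, say, assuming in addition $\rk_2 E(\Q)=1$, so that $\mathrm{Sel}_{2^{\infty}}(E/\Q)$ is a fixed group of corank $1$; the general case is similar), the Selmer groups occurring are divisible of corank $1$, and the criterion collapses to a condition on the class of the extension above — equivalently, to a condition on the $\F_2[\mathrm{Gal}(F_d/\Q)]$-module structure of $\mathrm{Sel}_2(E/F_d)$, namely whether it carries a free summand "coming from the Mordell--Weil group" rather than from the rational $2$-torsion. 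This is a Selmer-structural condition on $d$ of exactly the type the distribution theorem controls.

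Finally I would feed this into the distribution theorem. Inside $\cF^{\temp}$, which by definition fixes $\rk_2 E_d=1$ and hence $\rk_2 E(F_d)=2$, the theorem shows that the relevant Selmer module is, for a density-$1$ set of $d$, the minimal one with no such free summand; for those $d$ the criterion fails, i.e. $E(F_d)/E(F_d)_{\tors}\cong\Z\oplus\Z(-1)$. Hence $E(F_d)/E(F_d)_{\tors}\cong\Z[G]$ holds only on a density-$0$ subset of $\cF^{\temp}$ (into which the density-$0$ set of $d$ with $\Sha(E_d/\Q)[2^{\infty}]\neq 0$ is absorbed), which is the claim.

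The step I expect to be the main obstacle is the middle one — turning the module condition over $F_d$ into a clean Selmer condition over $\Q$. The cohomological criterion is very delicate, since its output is $\Z/2$ versus $0$; one must therefore track every finite correction term (the image of $E(\Q)[2^{\infty}]$ in the sequences, the intrinsic mismatch in the induced-module sequence, the local conditions at $2$, at $\infty$ and at the bad primes, and the precise relation between $\Sha$ over $F_d$ and the $\Sha$'s over $\Q$) and check that none of them flips the answer, and one must then package the outcome precisely in the form the distribution theorem accepts, namely as a Frobenian condition on the $\F_2[\mathrm{Gal}(F_d/\Q)]$-module structure of $\mathrm{Sel}_2(E/F_d)$. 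It is exactly here that the two new features of the paper's Selmer result — the incorporation of Frobenian conditions, and the control of Galois-module structure rather than merely Selmer rank — are indispensable.
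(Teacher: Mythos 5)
Your opening reduction is sound: $\widehat{H}^0\big(G,E(F_d)\otimes\Q_2/\Z_2\big)$ does distinguish $\Z[G]$ from $\Z\oplus\Z(-1)$, and this is essentially equivalent to the criterion the paper actually uses (\Cref{lem:regular_iff_norm}), namely that $\Z[G]$ occurs if and only if a Mordell--Weil generator of $E(\Q)$ is a norm from $E(F_d)$. The gap is in what you do with this. The true source of the density-$0$ conclusion is a \emph{local} obstruction that your proposal never isolates: if $P$ is a norm from $E(F_d)$, then $\delta_p(P)\in\im\delta_p\cap\im\delta_{d,p}$ for every prime $p$, and for $p\mid d$ with $p\notin\Sigma$ this intersection is trivial (\Cref{prop:KummerIntersection}), forcing every such $p$ to split completely in the nontrivial extension $\Q(\tfrac12 P)/\Q$ (\Cref{lem:Sel2E_d}, \Cref{prop:cF(P)}). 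Hence the set of $d$ giving $\Z[G]$ is contained in finitely many Frobenian families $\cF^b_L$ with $L=\Q(\tfrac12(P+Q))\neq\Q$, each of size $O\big(X(\log X)^{1/n_L-1}\big)=o(X)$ by \Cref{prop:asymptotic_cF}, while $\#\{d\in\cF^{\temp}:|d|<X\}\asymp X$ by Smith's theorem \cite{Smith2} --- a denominator estimate your proposal omits entirely. That is the whole proof.

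By contrast, your final step asserts that ``the distribution theorem'' shows the relevant Selmer module has no free summand for density $1$ of $d\in\cF^{\temp}$. No theorem in the paper says this, and the assertion is not repairable as stated. \Cref{thm:intro_distr} applies to Frobenian families $\cF^b_L$ (whereas $\cF^{\temp}$ is cut out by a rank condition, so only the $L=\Q$ case, i.e.\ Smith, is available there), and it controls only the \emph{dimension} of $\Sel_2(E_d/\Q)$, assigning positive probability to every admissible dimension; it says nothing about where the specific class $\delta(P)$, or the image of the Mordell--Weil group, sits inside the Selmer group. Whether the Mordell--Weil image hits $\delta(P)$ is precisely the unproven equidistribution heuristic of \Cref{conj:intro_prob}, and even that heuristic predicts a \emph{positive} proportion within the subfamily where $\delta(P)\in\Sel_2(E_d/\Q)$; the overall density is $0$ only because that subfamily is itself sparse in $\cF^{\temp}$, for the Frobenian reason above. (The same over-reading occurs when you grant that $\Sha(E_d/\Q)[2^\infty]=0$ off a density-$0$ set: the distribution theorem gives this only with probability $\approx 0.42$, not $1$.) So the step you flag as ``the main obstacle'' is not merely delicate: the statement it is aiming at is false as a distributional assertion about Selmer structure, and true only as a statement about the sparseness of a splitting condition on the prime divisors of $d$, which your argument never produces.
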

The assumption on full rational $2$-torsion will be made throughout the paper.
\Cref{prop:intro_globalprobregular}, which will be proven as Corollary \ref{cor:globalprobregular}, may, at first,
seem surprising. For example if one expects the frequencies of $\Z[G]$ and $\Z\oplus\Z(-1)$
in this family to be dictated by the sizes of their automorphism groups, analogously to the
Cohen--Lenstra heuristic for class groups and many of its generalisations, then one should
expect both modules to appear with positive probability. However, as we will explain in
Section \ref{sec:galmodMordellWeil},
there is a local obstruction to that happening. More precisely, we will show
that, disregarding finitely many exceptional fields, one has
$E(F_d)/E(F_d)_{\tors}\cong \Z[G]$ if and only if a non-divisible point on
$E(\Q)$ is a norm from $E(F_d)$. For that, a necessary condition is that such a
point be a norm \emph{everywhere locally}. Making this condition explicit, we
deduce that there is a set $\cP$ of prime numbers of positive Dirichlet density
such that if $d$ has any prime divisors not in $\cP$, then one has $E(F_d)/E(F_d)_{\tors}\cong \Z\oplus \Z(-1)$.
By a well known argument, 
Proposition \ref{prop:intro_globalprobregular} follows from this observation.

We define $\cFplus$ to be the
set of $d\in \cF^{\temp}$ that satisfy the necessary local conditions just mentioned. In light of
the above observations, it is natural to modify \Cref{qn:GalMod_special} as follows.
\begin{question}\label{qn:GalMod_modified}
  Does the limit
  $$
    \lim_{X\to \infty}\frac{\#\{d\in \cFplus:|d|<X, E(F_d)/E(F_d)_{\tors} \cong \Z[G]\}}{\#\{d\in \cFplus:|d|<X\}}
  $$
  exist, and if so, then what is its value?
\end{question}

Before we explain our approach to this question, we introduce a seemingly entirely
unrelated one, which is very natural in its own right.

\subsection{Hasse principle for genus $1$ curves}\label{sec:intro_hyperell}
In \cite{BhargavaGenus1} Bhargava investigated how many genus $1$
hyperelliptic curves with affine model $y^2=f(x)$ have a rational point,
as $f$ runs over the set of separable quartic polynomials with integer coefficients
bounded by a given constant.
Here we investigate the analogous question for quadratic twist families.
Of course that question is only interesting if $f$ has no rational roots.
We assume, from now on, that $f$ has no rational roots and that, moreover,
the Galois group of $f$ is contained in the Klein $4$-group, equivalently
that the Jacobian $E$ of the hyperelliptic curve $y^2=f(x)$ has full rational $2$-torsion.

Note that unlike in \cite{BhargavaGenus1}, 100\% of quadratic twists of such a genus
$1$ curve fail to be everywhere locally soluble, see e.g. \cite{Novak}.
Thus, the interesting question in this context is the following.

\begin{question}\label{qn:hyperell}
  In the limit as
  $X\to \infty$, among all square-free integers $d$ with $|d|<X$ for which the
  hyperelliptic curve $dy^2=f(x)$ has points over all
  completions of $\Q$, what is the proportion of those for which
  the curve has $\Q$-points?
\end{question}

We now describe our work on the above questions and explain the connection between them.

\subsection{Selmer groups}\label{sec:introSelmer}
The
Jacobian of 
 $C: y^2=f(x)$ is an elliptic curve $E$, and for each
$d$, the quadratic twist $C_d: dy^2=f(x)$ of $C$ defines a $2$-covering of the quadratic
twist $E_d$ of $E$,  hence a class in $H^1(\Q,E_d[2])$.
There is a natural isomorphism between the Galois modules $E[2]$ and $E_d[2]$,
which identifies the class of $C_d$ in $H^1(\Q,E_d[2])$
with the class of $C$ in $H^1(\Q,E[2])$.
The curve $C_d$ has a rational point if and only if
the corresponding class $[C]\in H^1(\Q,E[2])$ lies in the image of the
coboundary map $\delta_d\colon E_d(\Q)/2E_d(\Q)\to H^1(\Q,E[2])$,
and $C_d$ has a point everywhere locally if and only if $[C]$ lies in the
image of $\delta_d$ everywhere locally, i.e. if and only if one has $[C]\in \Sel_2(E_d/\Q)\subset H^1(\Q,E[2])$.

Note, in particular, that $C$ does have a rational point (a point ``at infinity'' with
respect to the given affine model), so that the class $[C]$
lies in the image of $\delta=\delta_1$. Let $\Sigma$
be a finite set of places of $\Q$ containing $2$, $\infty$, and all places
at which $E$ has bad reduction. One can show, as in \cite{Sadek}*{\S 3--4}, that a necessary condition for $[C]\in \Sel_2(E_d/\Q)$ is that
all prime divisors of $d$ that are not in $\Sigma$ be totally split
in the splitting field of $f$.
Moreover, once that condition is satisfied, the question of whether one has
$[C]\in \Sel_2(E_d/\Q)$ depends only on the class $b$ of $d$ in $\idelesmodsquaresset{\Q}{\Sigma}$.
For a Galois number field $L$ and $b\in \idelesmodsquaresset{\Q}{\Sigma}$, define
\[
  \FbL = \{d\in \Z: d \text{ square-free},d\equiv b \in \idelesmodsquaresset{\Q}{\Sigma}, (\text{prime }p\not\in \Sigma \text{ and }p|d)\Rightarrow p\text{ totally split in }L\}.
\]
Then the above discussion can be summarised with the observation that
the family of square-free integers $d$ for which one has $[C]\in \Sel_2(E_d/\Q)$
is a union of families of the form $\FbL$
for suitable classes $b$ as above, and with $L$ being the splitting field of $f$.
\Cref{qn:hyperell} therefore reduces to the following question.

\begin{question}\label{qn:the_real_question}
  Let $E/\Q$ be a principally polarised abelian variety, let $\zeta\in H^1(\Q,E[2])$,
  let $L/\Q$ be a finite Galois extension that trivialises $\zeta$, assume that
  $\Sigma$ contains all primes that ramify in $L$, and let
  $b \in \idelesmodsquaresset{\Q}{\Sigma}$ be such that for all $d\in \FbL$
  one has $\zeta\in \Sel_2(E_d/\Q)$. Then as $d$ runs over $\FbL$,
  how often is $\zeta$ in the image of $\delta_d$?
\end{question}

We now make a further assumption on the Galois image
$G_{\Q}\to \Aut E[4]$: in addition to assuming $E[2]\subset E(\Q)$, assume that the
Galois image in $\Aut E[4]$ is suitably large subject to the constraint on the
$2$-torsion. See \Cref{assumption:simple_gamma_mod} for the precise formulation.

It turns out that \Cref{qn:GalMod_modified} also reduces to \Cref{qn:the_real_question}; see Proposition \ref{prop:cF(P)} for details.
For the rest of the introduction we concentrate on \Cref{qn:the_real_question}.
Our answer comes in two parts:
a theorem on the distribution of $\dim_{\F_2}\Sel_2(E_d/\Q)$ as $d$ runs through $\FbL$,
and, in the case of $E$ being an elliptic curve, a conjecture on the probability of
$\zeta\in\Sel_2(E_d/\Q)$ being in the image of
$\delta_d$, conditional on the dimension of the Selmer group. Taken together,
these result in a precise conjectural answer to \Cref{qn:the_real_question} 
when $E$ is an elliptic curve, and hence to the other questions discussed above.
Before we explain this in detail, we make some preparatory remarks.

The distribution of $\dim_{\F_2}\Sel_2(E_d/\Q)$ as $d$ runs over \emph{all} square-free
integers was determined by Kane \cite{MR3101079} when $E$ is an elliptic curve,
following foundational work by
Heath-Brown \cite{MR1292115} and Swinnerton-Dyer \cite{MR2464773}. 
Smith \cite{Smith1,Smith2} determined this distribution when $E$ is a principally polarised
abelian variety. However, within the
set of all square-free integers, $\FbL$ is a zero-density subfamily.
Moreover, $\FbL$ is defined in such a way as to skew the Selmer distribution,
since $\zeta$ is a non-trivial element of $\Sel_2(E_d/\Q)/\delta_d(E_d[2])$ for
all but finitely many $d\in \FbL$.
Actually, the local conditions on $d$ may inadvertently force
more elements into the Selmer group.
In Section \ref{sec:syst_subspace} we define 
the \emph{systematic subspace} $\cS_b\subset H^1(\Q,E[2])$, of dimension $n_b$,
such that we have $\cS_b\subset \Sel_2(E_d/\Q)$ for all
$d\in \FbL$. If $E$ is an elliptic curve and $L=\Q(\tfrac12 P)$ for $P\in E(\Q)$,
then we have $n_b\in \{1,2,3,4\}$.
We can now state our theorem on the $2$-Selmer distribution, the proof of which will occupy Sections \ref{sec:fouvryklueners}
and \ref{sec:combinatorics}.

\begin{theorem}\label{thm:intro_distr}
  Let $E/\Q$ be a principally polarised abelian variety with full rational $2$-torsion and
  satisfying \Cref{assumption:simple_gamma_mod}, let $g$ be its dimension,
  and let $L$ be a Galois number field. Let $\Sigma$ be a finite set of places of $\Q$ containing
  $2$, $\infty$, all places at which $E$ has bad reduction, and all places that are ramified
  in $L$. Define the discrete probability distribution $\alpha$ on $\Z_{\geq 0}$ by
  \[
    \alpha(r) = \prod_{j\geq 1}(1+2^{-j})^{-1}\prod_{j=1}^{r}\frac{2}{2^{j}-1}
  \]
  for all $r\in \Z_{\geq 0}$.
  Let $b\in \idelesmodsquaresset{\Q}{\Sigma}$,
  and let $\FbL$ be as defined before \Cref{qn:the_real_question}. Assume that $\FbL\neq \emptyset$.
 %
  Let $n_b\in \Z_{\geq 0}$ be the dimension of the systematic subspace $\cS_b$. Then there exists $m_b\in \{0,1\}$ such that
  for all $d\in \FbL$ we have $\dim\Sel_2(E_{d}/\Q)\equiv n_b+m_b\pmod 2$, and
  for all $r\in\Z_{\geq 0}$ we have
  \[
    \lim_{X\to \infty} \frac{\#\{d\in \FbL: |d| < X, \dim\Sel_2(E_d/\Q)=2g+n_b+m_b+2r\}}{\#\{d\in \FbL: |d| < X\}}=\alpha(2r+m_b).
  \]
\end{theorem}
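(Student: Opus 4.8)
The plan is to reduce the statement to Smith's distribution theorem for the full family of quadratic twists, applied not to $E$ itself but to a carefully chosen auxiliary abelian variety, and then to track how the Frobenian conditions defining $\FbL$ perturb the Selmer distribution by exactly the predicted shift. First I would set up the algebraic framework: choose a finite set of places $\Sigma$ as in the statement, and recall from Section \ref{sec:syst_subspace} the definition of the systematic subspace $\cS_b\subset H^1(\Q,E[2])$ together with the fact that $\cS_b\subseteq \Sel_2(E_d/\Q)$ for every $d\in\FbL$. The key structural observation is that, away from $\Sigma$, membership in $\FbL$ forces every prime divisor $p\mid d$ to split completely in $L$; this is precisely the kind of splitting-behaviour constraint that the Fouvry--Klüners machinery of Section \ref{sec:fouvryklueners} is designed to handle. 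So the first substantive step is to express the indicator of ``$p$ totally split in $L$'' as a Frobenian condition and to feed this into the analytic input that governs how many admissible $d<X$ there are and how their prime factorisations are distributed.

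Next I would carry out the core reduction. The idea is that for $d\in\FbL$, the local conditions at primes $v\notin\Sigma$ are no longer the ``generic'' ones (where both the split and inert behaviour of $v$ in $\Q(\sqrt d)$ contribute), but are constrained, and this constraint changes the local Selmer conditions $H^1_f(\Q_v, E[2])$ in a controlled way at each such $v$. I would show that, after fixing the behaviour at the finitely many places in $\Sigma$ (which pins down the class $b$ and hence $\cS_b$ and the parity constant $m_b$), the remaining randomness in $\Sel_2(E_d/\Q)$ is governed by a random alternating (or, via the principal polarisation, quadratic) pairing on a space whose dimension grows with the number of prime factors of $d$, exactly as in Smith's framework, but with a fixed isotropic subspace of dimension $n_b+\text{(contribution from $E[2]$)}$ already present. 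The combinatorial heart, to be done in Section \ref{sec:combinatorics}, is then to compute the distribution of the corank of such a random pairing conditioned on containing a fixed isotropic subspace of the given dimension: this yields the shifted Cohen--Lenstra-type distribution $\alpha(2r+m_b)$, with the shift by $2g+n_b+m_b$ accounting for the $2g$ dimensions coming from $E[2]\subset E(\Q)$ under the full rational $2$-torsion hypothesis, the $n_b$ dimensions of the systematic subspace, and the parity obstruction $m_b$.

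For the parity statement — that there is a single $m_b\in\{0,1\}$ with $\dim\Sel_2(E_d/\Q)\equiv n_b+m_b\pmod 2$ for all $d\in\FbL$ — I would invoke the standard fact that $\dim\Sel_2(E_d/\Q)-\dim E_d[2](\Q)$ has a fixed parity on a set of $d$ cut out by congruence and splitting conditions, since the relevant Cassels--Tate/Poitou--Tate parity is itself a Frobenian function of $d$ that is constant on $\FbL$; here \Cref{assumption:simple_gamma_mod} guarantees there is no further local obstruction splitting $\FbL$ into pieces of differing parity. The main obstacle, I expect, is not the algebra but making Smith's arithmetic-statistics input work over the \emph{sparse} family $\FbL$: Smith's theorems are stated for the density-one family of all squarefree $d$, and one must verify that imposing the Frobenian splitting condition ``$p\mid d\Rightarrow p$ split in $L$'' does not destroy the delicate equidistribution of Legendre-symbol data (the ``$d$-th power residue symbols'' among prime divisors of $d$) that drives his bilinear and higher-moment estimates. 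Concretely, one needs a version of the relevant large-sieve / Chebotarev-type equidistribution that is uniform over the subfamily and still exhibits the independence of the off-diagonal symbol values; this is exactly what incorporating Frobenian conditions into Smith's method, advertised in the introduction, is meant to supply, and it is where the bulk of the technical work in Sections \ref{sec:fouvryklueners} and \ref{sec:combinatorics} will go.
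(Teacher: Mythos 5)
There is a genuine gap, and it sits exactly where your proposal leans hardest: the claimed reduction ``to Smith's distribution theorem for the full family of quadratic twists, applied \ldots to a carefully chosen auxiliary abelian variety'' is never specified and cannot work as stated. Smith's theorem is a statement about the density-one family of all squarefree twists, whereas $\FbL$ has density zero and is deliberately skewed: the defining local conditions force the systematic subspace $\cS_b$ into every $\Sel_2(E_d/\Q)$, so no choice of auxiliary abelian variety turns the conditioned problem into an unconditioned one. The paper does not argue this way at all: it runs a direct moment computation in the style of Heath--Brown, Kane and Fouvry--Kl\"uners. Concretely, for $A=E^r$ one writes $\#\Sel_2(E_d/\Q)^r$ as a sum of Jacobi symbols over factorisations of $d/d_0$ indexed by $A[2]^2$ (Proposition \ref{prop:explicit_selmer_formula_non-varying}), shows via the linked-index analysis and the Chebotarev/Siegel--Walfisz input of Section \ref{sec:analytic} that only maximal unlinked supports survive (Proposition \ref{prop:error_term_F_K}), and then evaluates the main term by classifying maximal isotropic subspaces as pairs $(U,\phi)$, proving that only $\Gamma$-invariant data contribute, and using Condition \ref{assumption:simple_gamma_mod} plus the Tate quadratic forms of Section \ref{tate_quad_form_subsec} to show the $r$-th \emph{injection} moment of $\Sel_2(E_d/\Q)/(\delta_d(E[2])\oplus\cS_b)$ equals $2^{r(r+1)/2}$ (Theorems \ref{thm:inj_moments} and \ref{thm:main_term_inj_contribution}). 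Your ``random pairing conditioned on containing a fixed isotropic subspace'' is a heuristic model of the answer, not an argument that the Selmer groups obey it; the missing step is precisely the moment computation over the sparse family, and the recovery of the distribution from the moments via parity constancy and \cite{MR1292115}*{Lemma 19} (Lemma \ref{lem:hom_moments}, Theorem \ref{thm:main_distr}). You acknowledge that ``the bulk of the technical work'' lies in making the equidistribution work over the subfamily, but that is the theorem, not a detail to be deferred.

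A secondary error: you attribute the constancy of the parity $m_b$ on $\FbL$ partly to Condition \ref{assumption:simple_gamma_mod}, saying it ``guarantees there is no further local obstruction splitting $\FbL$ into pieces of differing parity''. In the paper the parity statement has nothing to do with that condition: it follows from Theorem \ref{thm:2infty-Selmer rank} (the parity of $\rk_2(E_d/\Q)$ is $\rk_2(E/\Q)+\sum_{v\in\Sigma}\kappa_v(d)$, which depends only on the class $b$) together with the evenness of $\dim_{\F_2}\Sha_{\nd}(E_d/\Q)[2]$ under full rational $2$-torsion, i.e.\ Corollary \ref{cor:2-Selmer rank}. Condition \ref{assumption:simple_gamma_mod} is used only in the combinatorics of the main term, to restrict to $\Gamma$-invariant subspaces $U=A[2]$ and to identify the $\Gamma$-invariant alternating maps with $\Sym_r(\F_2)$.
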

\Cref{assumption:simple_gamma_mod} is equivalent, in the case of full rational $2$-torsion,
to the hypotheses in \cite{Smith2}*{Theorem 2.14}; see \Cref{cor:globalprobregular}. In particular, in the case $L=\Q$, i.e.
when there are no Frobenian conditions on the prime divisors of $d\in \FbL$, our theorem
reduces to a special case of \cite{Smith2}*{Theorem 2.14}.
To our knowledge, the theorem has not been known for any other field~$L$.

Note that if $C/\Q$ is a hyperelliptic curve of genus $1$,
then
one expects \cite{MR2410120} that
inside the family of square-free $d\in \Z$ for which $C_d$ has a rational point
there is a density $1$ subfamily in which the Jacobian of $C_d$ has rank $1$.
This observation justifies the condition on $m_b$ in the following corollary of \Cref{thm:intro_distr},
which we will deduce in \Cref{thm:globalPointsLowerBound}.
\begin{corollary}\label{cor:intro_3dim}
  Let $C/\Q: y^2=f(x)$ be a hyperelliptic curve as in Section \ref{sec:intro_hyperell},
  and let $E$ be its Jacobian.
  Assume that $E$ satisfies the hypotheses of \Cref{thm:intro_distr}.
  Let $L$ be the splitting field of $f$.
  With $\Sigma$ as in the theorem, let
  $b\in\idelesmodsquaresset{\Q}{\Sigma}$ be such that $n_b=1$ and such that
  for all $d\in \FbL$, the curve $C_d$ has points everywhere locally
  and the $2^\infty$-Selmer rank of $E_d/\Q$ is odd, i.e.,
  in the notation of \Cref{thm:intro_distr}, we have $m_b=0$. Assume that the
  $2$-parts of Shafarevich--Tate groups of $E_d/\Q$ for all $d\in \FbL$ are finite.
  Then we have
  $$
    \lim_{X\to \infty}\frac{\#\{d\in \FbL: |d| < X, C_d(\Q)\neq \emptyset\}}{\#\{d\in \FbL: |d| < X\}}\geq 
      \prod_{j\geq 1}(1+2^{-j})^{-1}>0.41.
  $$
\end{corollary}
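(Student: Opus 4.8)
The plan is to read off from \Cref{thm:intro_distr} the positive-density subfamily of $d\in\FbL$ on which $\Sel_2(E_d/\Q)$ is as small as the systematic constraints permit, and then to argue that on this subfamily the rational point on $C_d$ is forced to exist.

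First I would recall the dictionary from \Cref{sec:introSelmer}. Writing $\zeta=[C]\in H^1(\Q,E[2])$, one has $C_d(\Q)\neq\emptyset$ if and only if $\zeta\in\im(\delta_d)$, while $C_d$ being everywhere locally soluble --- which holds for every $d\in\FbL$ by hypothesis --- is equivalent to $\zeta\in\Sel_2(E_d/\Q)$. Thus for each $d\in\FbL$ we already know that $\zeta\in\Sel_2(E_d/\Q)$, and the only remaining question is how often $\zeta$ lands in the subgroup $\im(\delta_d)$.

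Next, apply \Cref{thm:intro_distr} with $g=1$ (the Jacobian of a genus $1$ curve being an elliptic curve), $n_b=1$, and $m_b=0$: the density of $d\in\FbL$ with $|d|<X$ and $\dim_{\F_2}\Sel_2(E_d/\Q)=2g+n_b+m_b=3$ equals $\alpha(0)=\prod_{j\geq1}(1+2^{-j})^{-1}$. I claim every such $d$ satisfies $C_d(\Q)\neq\emptyset$. Indeed, by full rational $2$-torsion $\dim_{\F_2}E_d(\Q)[2]=2$, so $\dim_{\F_2}E_d(\Q)/2E_d(\Q)=\rk E_d(\Q)+2$; and since the $2^\infty$-Selmer rank of $E_d/\Q$ is odd and $\Sha(E_d/\Q)[2^\infty]$ is finite, $\rk E_d(\Q)$ is odd, in particular $\geq 1$, so $\dim_{\F_2}E_d(\Q)/2E_d(\Q)\geq 3=\dim_{\F_2}\Sel_2(E_d/\Q)$. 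Comparing this with the tautological exact sequence
\[
  0\longrightarrow E_d(\Q)/2E_d(\Q)\xrightarrow{\delta_d}\Sel_2(E_d/\Q)\longrightarrow\Sha(E_d/\Q)[2]\longrightarrow 0
\]
forces $\dim_{\F_2}E_d(\Q)/2E_d(\Q)=3$, $\Sha(E_d/\Q)[2]=0$, and $\delta_d$ to be an isomorphism onto $\Sel_2(E_d/\Q)$; since $\zeta\in\Sel_2(E_d/\Q)$ it then lies in $\im(\delta_d)$, and hence $C_d(\Q)\neq\emptyset$. Combining the two points, the density of $d\in\FbL$ with $C_d(\Q)\neq\emptyset$ is at least $\alpha(0)$; and the concluding numerical inequality is elementary, since multiplying out the first few factors of $\prod_{j\geq1}(1+2^{-j})$ and bounding the tail via $\prod_{j\geq6}(1+2^{-j})\leq\exp(2^{-5})$ gives $\prod_{j\geq1}(1+2^{-j})<2.39$, whose reciprocal exceeds $0.41$.

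The argument is short once \Cref{thm:intro_distr} is available; the only step requiring genuine care is the forced collapse of the tautological exact sequence, which is exactly where the hypotheses $n_b=1$, $m_b=0$, and finiteness of $\Sha(E_d/\Q)$ all enter. Were $n_b$ larger, one would need to exhibit more of $\Sel_2(E_d/\Q)$ as coming from the Mordell--Weil group than the $2$-torsion and a rank $1$ point can supply, and the clean lower bound $\alpha(0)$ would be lost.
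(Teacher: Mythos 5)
Your proposal is correct and follows essentially the same route as the paper: apply \Cref{thm:intro_distr} to get density $\alpha(0)=\prod_{j\geq 1}(1+2^{-j})^{-1}$ for the subfamily with $\dim_{\F_2}\Sel_2(E_d/\Q)=2g+n_b+m_b=3$, and observe that on this subfamily $\delta_d$ is forced to surject onto $\Sel_2(E_d/\Q)$, so that $[C]\in\im(\delta_d)$ and $C_d(\Q)\neq\emptyset$. The only (harmless) variation is in how surjectivity is deduced: the paper (via \Cref{prop:1dim_systematic_regular}) uses that finiteness of $\Sha(E_d/\Q)[2^\infty]$ forces $\dim_{\F_2}\Sha(E_d/\Q)[2]$ to be even, so $3=\rk+2+\dim_{\F_2}\Sha(E_d/\Q)[2]$ yields $\Sha(E_d/\Q)[2]=0$, whereas you use oddness of the $2^\infty$-Selmer rank together with finiteness of $\Sha(E_d/\Q)[2^\infty]$ to get $\rk E_d(\Q)\geq 1$ and then conclude by a dimension count --- both arguments are valid.
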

The analogous result on \Cref{qn:GalMod_modified}, giving, under similar hypotheses,
a lower bound on the frequency
of $E(F_d)/E(F_d)_{\tors}\cong \Z[G]$ among $d\in \FbL$, will be proven in \Cref{thm:regularRepLowerBound}.

Lastly, we formulate
a conjecture on the value of this last limit for all possible values of $n_b$.
In the next subsection we will explain our heuristics that lead to this conjecture.

\begin{conjecture}\label{conj:intro_prob_explicit}
  Let $C$, $E$, and $\Sigma$ be as in \Cref{cor:intro_3dim}.
  Let $b\in \idelesmodsquaresset{\Q}{\Sigma}$ be such that
  for all $d\in \FbL$, the curve $C_d$ has points everywhere locally
  and the $2^\infty$-Selmer rank of $E_d/\Q$ is odd.
  Let $\alpha(r)$ be as in Theorem \ref{thm:intro_distr},
  and $n_b=\dim \cS_b$ as introduced before Theorem \ref{thm:intro_distr}.
  Then we have
  \[
    \lim_{X\to \infty}\frac{\#\{d\in \FbL: |d| < X, C_d(\Q)\neq \emptyset\}}{\#\{d\in \FbL: |d| < X\}}=
    \begin{cases}
   1/2, & n_b=1;\\
   1/8, & n_b=2;\\
   5/64, & n_b=3;\\
   29/1024, & n_b=4.
 \end{cases}
  \]
\end{conjecture}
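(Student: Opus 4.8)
The plan is to combine the Selmer distribution of Theorem \ref{thm:intro_distr} with a conditional heuristic on how often the distinguished class $\zeta = [C]$ lies in the image of the coboundary map $\delta_d$, given the dimension of $\Sel_2(E_d/\Q)$. First I would set up the bookkeeping: by Theorem \ref{thm:intro_distr}, for $d\in\FbL$ we have $\dim\Sel_2(E_d/\Q) = 2g + n_b + 2r$ (here $g=1$ since $E$ is an elliptic curve and $m_b=0$), occurring with probability $\alpha(2r)$. Writing $V_d = \Sel_2(E_d/\Q)$, the systematic subspace $\cS_b$ sits inside $V_d$, and $\zeta$ is a fixed nonzero vector of $\cS_b$ that is \emph{not} in $\delta_d(E_d[2])$ for all but finitely many $d$ (this is exactly how $\FbL$ is constructed). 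Assuming finiteness of the $2$-part of $\Sha$, the Cassels--Tate pairing on $V_d$ is a nondegenerate alternating form whose kernel modulo $\delta_d(E_d[2])$ is the rank part; since the rank is odd (as $m_b=0$), $C_d(\Q)\neq\emptyset$ if and only if $\zeta$ pairs trivially against everything in $V_d$, i.e. $\zeta$ lies in the (one-dimensional, by the rank-$1$ expectation) radical of the Cassels--Tate pairing on $V_d$.

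The heuristic input, explained in the subsection following the conjecture, is that conditional on $\dim V_d = 2 + n_b + 2r$, the Cassels--Tate pairing on $V_d$ (with its fixed subspace $\cS_b$ of isotropic vectors containing $\zeta$) behaves like a uniformly random alternating form subject to the known constraints, and one asks for the probability that $\zeta$ lands in the radical. For a random nondegenerate alternating form on an even-dimensional space, a fixed nonzero vector lies in the radical with probability $0$, so the relevant event is governed by the $2r$-dimensional "fluctuation" directions together with the structure forced on $\cS_b$. Concretely, I would compute, for each $r\geq 0$, the probability $p(n_b, r)$ that $\zeta$ is orthogonal to all of $V_d$ under a random alternating form extending the fixed data on $\cS_b$; this is a finite linear-algebra computation over $\F_2$ counting alternating forms with prescribed Gram data. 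Then the answer to Question \ref{qn:the_real_question} is
\[
  \sum_{r\geq 0} \alpha(2r)\, p(n_b, r),
\]
and the claim is that this sum evaluates to $1/2$, $1/8$, $5/64$, $29/1024$ for $n_b = 1,2,3,4$ respectively.

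The main obstacle is pinning down the correct conditional model for $p(n_b,r)$ — that is, making precise which ensemble of alternating forms the Cassels--Tate pairing should be compared against once the systematic subspace and the constraint $\zeta\notin\delta_d(E_d[2])$ are imposed. The subtlety is that $\cS_b$ is not merely an abstract subspace: its interaction with $\delta_d(E_d[2])$ and with the pairing is partly rigid and partly random, and getting the dimension count of "free" parameters right is what distinguishes $5/64$ from a naive guess. Once the ensemble is fixed, verifying that $\sum_r \alpha(2r) p(n_b,r)$ telescopes to the stated rationals is a routine manipulation using the identity $\sum_{r\geq 0}\alpha(2r)x^r$-type generating functions for $\alpha$; I would check the $n_b=1$ case first, where $p(1,r)$ should be constant in a way that forces the clean value $1/2$, and then propagate to $n_b=2,3,4$ by tracking one extra isotropic constraint at each step. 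A secondary point to be careful about is the unconditional versus conditional nature of the statement: Theorem \ref{thm:intro_distr} is a theorem, but the passage from the Selmer \emph{size} distribution to the distribution of whether $\zeta\in\operatorname{im}\delta_d$ is genuinely conjectural, so the final statement is labelled a conjecture and the "proof" is really a derivation of the numerical values from the heuristic ensemble, not a rigorous argument.
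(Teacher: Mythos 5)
Your outline reproduces the overall two-step architecture the paper uses (the proven Selmer-dimension distribution of Theorem \ref{thm:intro_distr} plus a conjectural conditional probability $p(n_b,r)$ that $\zeta$ lies in $\operatorname{im}\delta_d$ given the dimension), but both quantitative ingredients that actually produce the numbers $1/2$, $1/8$, $5/64$, $29/1024$ are missing. First, you never specify $p(n_b,r)$; you explicitly flag "pinning down the correct conditional model" as the main obstacle and leave it open. The paper's answer is the equidistribution hypothesis: the unique non-trivial element of $\delta_d(E_d(\Q))/\delta_d(E_d[2])$ is modelled as uniform among the non-zero elements of $\Sel_2(E_d/\Q)/\delta_d(E_d[2])$, giving $p = 1/(2^{\,n_b+m_b+2r}-1)$ and hence the sum in Conjecture \ref{conj:intro_prob}. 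Second, evaluating that sum as a rational number is not the "routine telescoping" you assert: the paper devotes Section \ref{sec:randMatrix} to it, modelling the situation by Haar-random alternating matrices over $\Z_2$ whose mod-$2$ kernel contains a fixed $n$-dimensional subspace, and deriving via Bayes's formula the recurrence $\beta_1=1/2$, $\beta_n = 2^{-n}\bigl(1-(2^{n-1}-1)\beta_{n-1}\bigr)$ (Theorems \ref{thm:1diml_systematic} and \ref{thm:arbitrarydim_systematic}), which yields the four stated values. Without either the explicit $p(n_b,r)$ or the recurrence, no derivation of the numbers has taken place.

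Two further inaccuracies in your setup. (a) You set $m_b=0$ throughout, but the hypothesis is only that the $2^\infty$-Selmer rank is odd, which forces $m_b\equiv n_b+1\pmod 2$; for $n_b=2,4$ one has $m_b=1$, the dimensions are $2g+n_b+1+2r$, and the weights are $\alpha(2r+1)$, so your sum $\sum_r\alpha(2r)p(n_b,r)$ is indexed incorrectly in half the cases. (b) The reformulation "$C_d(\Q)\neq\emptyset$ iff $\zeta$ lies in the radical of the Cassels--Tate pairing on $V_d$" is not correct as stated: the radical of the induced pairing on $\Sel_2/\delta_d(E_d(\Q))\cong\Sha(E_d/\Q)[2]$ is $2\Sha[4]$, so the radical on $V_d$ equals $\delta_d(E_d(\Q))$ plus the preimage of $2\Sha[4]$; membership of $\zeta$ in the radical is necessary but not sufficient for $\zeta\in\operatorname{im}\delta_d$ unless one additionally assumes $\Sha[4]=\Sha[2]$. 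The paper avoids this by working directly with the line $\langle y(M)\bmod 2\rangle$ spanned by a generator of $\ker M$ in the integral matrix model, rather than with the radical of the induced form.
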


\subsection{Equidistribution and random matrix heuristics}
Keep the notation of Section \ref{sec:introSelmer}.
Recall that we have $C_d(\Q)\neq \emptyset$ if and only if the element $[C]\in H^1(\Q,E[2])$
is in the image of $\delta_d(E_d(\Q))$. Let $L$ be the splitting field of $f$, and fix
$b\in \idelesmodsquaresset{\Q}{\Sigma}$ such that for all $d\in \FbL$ one has
$[C]\in \Sel_2(E_d/\Q)$ and $\rk_2 E_d/\Q$ is odd. One can show that for all but finitely many $d\in \FbL$
one has $[C]\not\in\delta_d(E_d[2])$.
One expects that outside of a density $0$ exceptional subset
of $\FbL$ we have $E_d(\Q)\cong (\Z/2\Z)^{2}\oplus \Z$, so that
outside of that exceptional subset, the projection of the image
$\delta_d(E_d(\Q))$ on the quotient $\Sel_2(E_d/\Q)/\delta_d(E_d[2])$ is $1$-dimensional,
and we have $C_d(\Q)\neq \emptyset$ if and only if the unique non-trivial
element of that image is equal to $[C]$ in $\Sel_2(E_d/\Q)/\delta_d(E_d[2])$.
As $d$ varies over $\FbL$, it seems natural
to model the non-trivial element in $\delta_d(E_d(\Q)/E[2])$ as a uniformly random non-trivial element of $\Sel_2(E_d/\Q)/\delta_d(E_d[2])$.
This heuristic together with Theorem \ref{thm:intro_distr} leads to the following concrete conjecture.

\begin{conjecture}\label{conj:intro_prob}
  Let $b\in \idelesmodsquaresset{\Q}{\Sigma}$ be such that for all $d\in\FbL$
  one has $[C]\in \Sel_2(E_d/\Q)$ and $\rk_2(E_d/\Q)$ odd,
  and let $n_b\in \{1,\ldots, 4\}$, $m_b\in \{0,1\}$, and $\alpha(r)$
  be as in Theorem \ref{thm:intro_distr}. Then we have
  \[
    \lim_{X\to \infty}\frac{\#\{d\in \cF_b: |d| < X, C_d(\Q)\neq \emptyset\}}{\#\{d\in \cF_b: |d| < X\}}=
    \sum_{r\geq 0} \frac{\alpha(2r+m_b)}{2^{n_b+2r+m_b}-1}.
  \]
\end{conjecture}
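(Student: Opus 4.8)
The plan is to combine Theorem~\ref{thm:intro_distr} with the random-model heuristic articulated just above the statement. First I would fix $b$ as in the hypotheses, so that $n_b\in\{1,\dots,4\}$ is the dimension of the systematic subspace, $m_b\in\{0,1\}$ is the parity offset, and for all but finitely many $d\in\cF_b$ the class $[C]$ is a nontrivial element of $V_d\coloneqq\Sel_2(E_d/\Q)/\delta_d(E_d[2])$. By Theorem~\ref{thm:intro_distr}, outside a set of density $0$ we may restrict to $d$ with $\dim\Sel_2(E_d/\Q)=2g+n_b+m_b+2r$ (here $g=1$ since $E$ is an elliptic curve, but this does not affect the combinatorics), and such $d$ occur with limiting frequency $\alpha(2r+m_b)$; on this subfamily $\dim_{\F_2}V_d=\dim\Sel_2(E_d/\Q)-2g=n_b+m_b+2r$, so $\#V_d=2^{n_b+m_b+2r}$ and $\#(V_d\setminus\{0\})=2^{n_b+m_b+2r}-1$. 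Next I would discard the further density-$0$ exceptional set on which $E_d(\Q)\not\cong(\Z/2\Z)^2\oplus\Z$ (using the cited expectation that this set has density $0$, or noting it is subsumed into the error terms we are allowed to incur), so that on the remaining full-density subfamily the image $\delta_d(E_d(\Q))$ projects to a $1$-dimensional subspace of $V_d$, i.e.\ to a single nonzero vector $w_d\in V_d\setminus\{0\}$, and $C_d(\Q)\ne\emptyset$ if and only if $w_d=[C]$ in $V_d$.

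The heuristic input is then precisely that, conditional on $\dim V_d=n_b+m_b+2r$, the vector $w_d$ is equidistributed among the $2^{n_b+m_b+2r}-1$ nonzero elements of $V_d$, so that the conditional probability of $C_d(\Q)\ne\emptyset$ is $1/(2^{n_b+m_b+2r}-1)$. Summing over $r\ge 0$ against the Selmer-rank distribution from Theorem~\ref{thm:intro_distr} gives
\[
  \lim_{X\to\infty}\frac{\#\{d\in\cF_b:|d|<X,\ C_d(\Q)\ne\emptyset\}}{\#\{d\in\cF_b:|d|<X\}}
  =\sum_{r\ge 0}\frac{\alpha(2r+m_b)}{2^{n_b+2r+m_b}-1},
\]
which is exactly the claimed formula. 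The only genuinely rigorous ingredients here are Theorem~\ref{thm:intro_distr} and the density-$0$ control of the exceptional sets; the passage from those to the displayed equality is the heuristic step, and indeed the statement is presented as a conjecture rather than a theorem precisely because of it.

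The main obstacle — and the reason this is Conjecture~\ref{conj:intro_prob} rather than a corollary — is justifying the equidistribution of $w_d$ in $V_d$ as $d$ varies. This is a statement about the joint distribution of the Mordell--Weil lattice (or at least the image of its mod-$2$ reduction in the Selmer quotient) together with the Selmer group, and it lies well beyond what the methods of Smith, and of the earlier sections of this paper, can currently deliver: those techniques pin down the \emph{distribution of the Selmer group} but say nothing about which Selmer classes come from genuine rational points. One should check that the proposed model is at least \emph{consistent}: that the finitely many exceptional $d$, the parity constraint forcing $\dim V_d\equiv n_b+m_b\pmod 2$, and the nontriviality of $[C]$ in $V_d$ are all compatible with a uniform law on $V_d\setminus\{0\}$, and I would verify this by unwinding the definitions of $\cS_b$ and $m_b$ from Sections~\ref{sec:syst_subspace} and~\ref{sec:introSelmer}. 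A secondary point to be careful about is whether the exceptional set where $E_d(\Q)$ has torsion larger than $(\Z/2\Z)^2$ or rank $>1$ really has density $0$ within the sparse family $\cF_b$; the cited reference~\cite{MR2410120} addresses the full twist family, and one should at least remark that the same heuristic applies verbatim to the Frobenian subfamily, or absorb this into the conjectural nature of the statement.
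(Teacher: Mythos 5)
Your derivation matches the paper's own heuristic justification of this statement essentially verbatim: since it is a conjecture, the paper does not prove it but motivates it exactly as you do, by combining the Selmer-rank distribution of Theorem \ref{thm:intro_distr} with the hypothesis that, outside a finite/density-zero exceptional set where $[C]\in\delta_d(E_d[2])$ or $E_d(\Q)\not\cong(\Z/2\Z)^2\oplus\Z$, the unique nonzero element of $\delta_d(E_d(\Q))$ in $\Sel_2(E_d/\Q)/\delta_d(E_d[2])$ is uniformly distributed among the $2^{n_b+m_b+2r}-1$ nonzero classes. You also correctly identify the equidistribution step as the genuinely conjectural ingredient, which is precisely why the paper states this as a conjecture (and later reinterprets the same heuristic via the random alternating matrix model of Section \ref{sec:randMatrix}).
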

A similar sum also occurs in Stevenhagen's conjecture -- see the Appendix --
only with a slightly different Selmer group distribution. 
Rather astonishingly, unlike the sum in Stevenhagen's conjecture,
the infinite sum in \Cref{conj:intro_prob} turns out to
be a rational number. While one could evaluate
it by explicit manipulation of the sum, we opt for a different route, which also offers an alternative
heuristic explanation for \Cref{conj:intro_prob}, as we now describe.

It is known \cite{MR3393023} that one can
model $p^\infty$-torsion subgroups of Tate--Shafarevich groups of quadratic
twists of an elliptic curve as cokernels of random large alternating matrices
over $\Z_p$. In Section \ref{sec:randMatrix}
we modify this heuristic so as to incorporate the
local conditions on $d\in \FbL$. It turns out that our modification ``knows'' not only
the distribution of Selmer ranks, but also the image of $[C]$ in
the Selmer groups of the twists. Both the probabilities in Theorem \ref{thm:intro_distr}
and the equidistribution hypothesis then arise via pushforward of Haar measure on
a suitable space of large alternating matrices. What is more, this description naturally leads
to an expression for the
infinite sum in \Cref{conj:intro_prob} as a rational number.
Explicitly, we will prove the following result as \Cref{cor:explicit_prob}.

\begin{proposition}\label{prop:intro_explicitsums}
  Let $n\in \Z_{\geq 1}$, and let $m\in \{0,1\}$ be such that one has $m\equiv n+1\pmod{2}$.
  Let
  \[
    \beta_n = \sum_{r\geq 0} \frac{\alpha(2r+m)}{2^{n+2r+m}-1} 
  \]
  be the sum in \Cref{conj:intro_prob}. Then we have $\beta_1=1/2$, and for all $n\in \Z_{\geq 2}$
  we have $\beta_n = 2^{-n}\cdot\left(1-(2^{n-1}-1)\beta_{n-1}\right)$.
\end{proposition}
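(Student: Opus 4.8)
The plan is to prove the recursion directly by manipulating the defining series for $\beta_n$, using the explicit product formula for $\alpha$. Recall that $\alpha(r) = c\cdot\prod_{j=1}^r \tfrac{2}{2^j-1}$ with $c = \prod_{j\ge1}(1+2^{-j})^{-1}$, so that $\alpha$ satisfies the simple recursion $\alpha(r) = \tfrac{2}{2^r-1}\alpha(r-1)$ for $r\ge1$. The key observation is that when we decrease $n$ by one, the parity constraint $m\equiv n+1\pmod 2$ forces $m$ to flip as well, so that the two sums $\beta_n$ and $\beta_{n-1}$ run over complementary residue classes of the argument of $\alpha$; this is what makes a clean two-term recursion possible rather than something messier. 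Concretely, I would write $\beta_n = \sum_{k\ge 0,\ k\equiv m\,(2)} \tfrac{\alpha(k)}{2^{n+k}-1}$ and $\beta_{n-1} = \sum_{k\ge 1,\ k\equiv m+1\,(2)} \tfrac{\alpha(k)}{2^{n-1+k}-1}$ (noting $k=0$ does not occur in $\beta_{n-1}$ since $m+1\equiv 0$ would require $m=1$, but then the constraint excludes $k=0$ — one should double-check this edge case, which corresponds exactly to $n=2$, $m=1$).

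For the base case $\beta_1$: here $m=0$, so $\beta_1 = \sum_{r\ge0} \tfrac{\alpha(2r)}{2^{1+2r}-1}$. I expect this to telescope: the natural move is to use $\tfrac{1}{2^{1+2r}-1} = \tfrac{1}{2^{2r}}\cdot\tfrac{2^{2r}}{2^{2r+1}-1}$ and relate $\tfrac{\alpha(2r)}{2^{2r}}$-type quantities to differences, or alternatively to recognize the sum via the $q$-series identity underlying $\alpha$ being a probability distribution ($\sum_r \alpha(r) = 1$). A cleaner approach may be to first establish the general recursion for $n\ge 2$ and then verify $\beta_1 = 1/2$ as a consequence of a boundary computation, or independently by a direct telescoping argument; I would try both and keep whichever is shorter.

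For the recursion ($n\ge 2$): starting from $2^n\beta_n = \sum_{k\equiv m\,(2)} \tfrac{2^n\alpha(k)}{2^{n+k}-1}$, the idea is to split $\tfrac{2^n}{2^{n+k}-1}$ cleverly so that one piece recombines into $\beta_{n-1}$ (after applying $\alpha(k) = \tfrac{2}{2^k-1}\alpha(k-1)$ to shift the index parity) and the leftover collapses to a constant via telescoping. The identity to exploit is something like $\tfrac{2^n}{2^{n+k}-1} = \tfrac{1}{2^k-1}\left(\tfrac{2^{n-1}}{2^{n-1+k}-1}\cdot 2^{k}\cdot\text{(something)} - \cdots\right)$; the precise partial-fraction decomposition has to be reverse-engineered from the target $2^n\beta_n = 1 - (2^{n-1}-1)\beta_{n-1}$. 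The main obstacle — and the only real content of the proof — is finding this decomposition: one needs a telescoping identity of the form $\tfrac{\alpha(k)}{2^{n+k}-1} - (\text{shifted term involving }n-1) = t_k - t_{k+1}$ for an explicit sequence $t_k$ whose total sum over the relevant residue class is $2^{-n}$, the "$1$" on the right-hand side being $\sum 2^{-n}$-normalized. I would hunt for $t_k$ by computing the first few partial sums numerically, guessing a closed form (likely $t_k = \tfrac{c'\cdot\alpha(k)}{2^{n-1+k}}$ or similar up to constants), and then verifying the telescoping identity by a routine algebraic check using only $\alpha(k)/\alpha(k-1) = 2/(2^k-1)$. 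Once the telescoping sequence is pinned down, both the base case and the recursion fall out, and the assembly into the final statement is immediate.
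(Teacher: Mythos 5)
Your proposal identifies a viable alternative route but does not actually carry it out: the one ingredient you yourself call ``the only real content of the proof'' --- the telescoping/partial-fraction identity --- is never exhibited. You propose to ``hunt for $t_k$ by computing the first few partial sums numerically, guessing a closed form, and then verifying''; until that identity is written down and checked, neither the base case nor the recursion is proved. There is also a concrete error in your setup: $\alpha$ does \emph{not} satisfy $\sum_{r\geq 0}\alpha(r)=1$ (the total mass is $2$); the correct normalisations, which are the genuine $q$-series input to any direct manipulation, are the parity-restricted identities $\sum_{r\geq 0}\alpha(2r)=\sum_{r\geq 0}\alpha(2r+1)=1$, and the ``$1$'' on the right of the recursion must be read as $\sum_{r}\alpha(2r+m)$ for the fixed parity $m$. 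Granting those identities and the relation $\alpha(k)(2^k-1)=2\alpha(k-1)$ (which shifts the parity of the argument and is what couples $\beta_n$ to $\beta_{n\pm 1}$), a careful version of your computation does \emph{not} close up in one step: writing $R_n=2^n\beta_n+(2^{n-1}-1)\beta_{n-1}-1$, the reindexing only yields $R_{n+1}=-R_n$, so one still needs a limiting argument ($R_n\to 0$ as $n\to\infty$, using $\sum_{k}2^{-k}\alpha(k)=1$, another Euler-type identity) to conclude $R_n\equiv 0$. So the route can be completed, but it needs strictly more than the advertised ``routine algebraic check'', and none of it is present in the proposal.

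For comparison, the paper deliberately avoids all series manipulation (see the remark preceding \Cref{prop:intro_explicitsums}: ``While one could evaluate it by explicit manipulation of the sum, we opt for a different route''). In Section \ref{sec:randMatrix} the quantity $\beta_n$ is realised as $\lim_{s\to\infty}\gamma_n(s)$, where $\gamma_n(s)$ is the probability that a Haar-random alternating $s\times s$ matrix over $\Z_p$ of corank $1$, whose reduction kills a fixed $n$-dimensional space $S$, has kernel line equal to a fixed line $\cL\subset S$. Conditioning on $\dim\ker\bar{M}$ and using equidistribution of the kernel line gives the infinite sum (\Cref{prop:prob_hitting_elt_of_kernel}); computing the same probability by Bayes's formula relative to a hyperplane $S'\subset S$ gives an exact finite-$s$ recursion (\Cref{thm:1diml_systematic} and \Cref{thm:arbitrarydim_systematic}) whose $s\to\infty$ limit is the stated recursion. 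Equating the two evaluations proves the proposition, with the $q$-series identities absorbed into the matrix counts rather than manipulated by hand.
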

This is the source of the otherwise perhaps rather mysterious looking numbers in \Cref{conj:intro_prob_explicit}.

\subsection{Hasse principle for Kummer varieties}

Our final application of Theorem \ref{thm:intro_distr} concerns the Hasse principle for Kummer varieties. Let $E/\mathbb{Q}$ be a principally polarised abelian variety and let $0\neq \zeta \in H^1(\mathbb{Q},E[2])$. We can associate to $\zeta$ a generalized Kummer variety $\sX_\zeta$. The existence of rational points on classes of such Kummer varieties is the subject of several works \cite{MR2183385,MR3519097,MR3932783,MR4934780}. These all use versions of Swinnerton-Dyer's descent-fibration method to establish, conditional on finiteness of relevant Shafarevich-Tate groups,  either the Hasse principle or the sufficiency of the Brauer--Manin obstruction.

For each squarefree integer $d$, denote by $Y_{\zeta,d}$ the $2$-covering of $E_d$ associated to $\zeta$. This is a torsor under $E_d$ equipped with an involution $\iota_d$ compatible with multiplication by $-1$ on $E_d$. Denoting by $\widetilde{Y}_{\zeta,d}$ the blow up of $Y_{\zeta,d}$ at the fixed points of $\iota_d$, there is a natural degree $2$ morphism $f_d: \widetilde{Y}_{\zeta,d}\to \mathscr{X}_{\zeta}$ (the quotient by the unique extension of $\iota_d$).  In each of the cited works,  one establishes the existence of a rational point on   $\mathscr{X}_{\zeta}$  by first proving the existence of a squarefree integer $d$ for which  $\widetilde{Y}_{\zeta,d}(\mathbb{Q})\neq \emptyset$.   As a result of Theorem \ref{thm:intro_distr}, in certain cases we are able  not only to establish the existence of such a $d$, but to quantify how many such $d$ there are. 

Let $L/\mathbb{Q}$ be the smallest Galois extension trivialising $\zeta$, and let $\Sigma$ be a finite set of places containing $2,\infty$, all places ramified in $L/\mathbb{Q}$, and all places of bad reduction for $E$. For the statement of Theorem \ref{thm:kummer_intro} below, recall that Condition \ref{assumption:simple_gamma_mod} asserts that the Galois action on $E[4]$ is suitably large, subject to being trivial on $E[2]$.  In the terminology of the statement of Theorem \ref{thm:intro_distr}, Condition \ref{cond_E_new} is the assertion that there exists $b\in \prod_{v\in \Sigma}\mathbb{Q}_v^{\times}/\mathbb{Q}_v^{\times 2}$ such that both the systematic subspace $\mathcal{S}_b$ is generated by $\zeta$, and the parity $m_b+n_b \pmod 2$ is  equal to $1$. This condition is closely related to \cite[Condition E]{MR2183385}; see Remark \ref{rmk:condition_E} for more details.

\begin{theorem} \label{thm:kummer_intro}
Suppose that $E$ has full rational $2$-torsion. Let $0\neq \zeta \in H^1(\mathbb{Q},E[2])$, let $L/\mathbb{Q}$ be as above, and suppose  that Conditions \ref{assumption:simple_gamma_mod} and \ref{cond_E_new} are satisfied. 
Assume further that the $2$-primary part of the Shafarevich--Tate group of each quadratic twist of $E$ is finite.  Then we have
\[\#\{d\in \Z  ~~:~~d\textup{ square-free},  ~|d|<X,~  f_d(\widetilde{Y}_{\zeta,d}\big(\Q)\big)\neq \emptyset\} \asymp  X(\log X)^{\frac{1}{[L:\Q]}-1}.\]
\end{theorem}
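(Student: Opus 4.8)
The plan is to combine the Selmer distribution of \Cref{thm:intro_distr} with the point-counting input of \Cref{cor:intro_3dim} and a Frobenian sieve for the denominator. First I would fix, using Condition \ref{cond_E_new}, a class $b\in\prod_{v\in\Sigma}\Q_v^\times/\Q_v^{\times2}$ with $\cS_b=\langle\zeta\rangle$, $n_b=1$, and $m_b=0$; for such $b$ every $d\in\FbL$ has $\zeta\in\Sel_2(E_d/\Q)$, the $2^\infty$-Selmer rank of $E_d$ is odd, and (after discarding finitely many $d$) $\zeta\notin\delta_d(E_d[2])$. Since $L$ trivialises $\zeta$ and $\Sigma$ contains the ramified primes, $\FbL$ is exactly the set of square-free $d\equiv b$ in $\idelesmodsquaresset{\Q}{\Sigma}$ all of whose prime factors outside $\Sigma$ split completely in $L$. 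A standard Landau/Selberg--Delange or Wirsing-type estimate for such a Frobenian set gives
\[
  \#\{d\in\FbL:|d|<X\}\asymp X(\log X)^{\frac{1}{[L:\Q]}-1},
\]
the exponent being $\delta-1$ where $\delta=1/[L:\Q]$ is the density of rationally split primes. This already yields the claimed upper bound for the quantity in the theorem, since $f_d(\widetilde Y_{\zeta,d}(\Q))\neq\emptyset$ can only occur when $C_d$ (equivalently $Y_{\zeta,d}$) has points everywhere locally, and the everywhere-locally-soluble twists form a union of finitely many families $\FbL$, each of the same order of magnitude.

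For the lower bound I would argue that $f_d(\widetilde Y_{\zeta,d}(\Q))\neq\emptyset$ whenever $Y_{\zeta,d}(\Q)\neq\emptyset$, i.e. whenever $\zeta\in\delta_d(E_d(\Q))$: a rational point on $Y_{\zeta,d}$ away from the fixed locus of $\iota_d$ lifts to $\widetilde Y_{\zeta,d}$ and maps into $\sX_\zeta$, and the finitely many $d$ for which all rational points are fixed points can be discarded. So it suffices to bound below the number of $d\in\FbL$ with $|d|<X$ for which $\zeta\in\delta_d(E_d(\Q))$. Here I would invoke exactly the mechanism behind \Cref{cor:intro_3dim}: under finiteness of the $2$-primary part of $\Sha(E_d/\Q)$ and parity $m_b=0$, for a proportion at least $\prod_{j\ge1}(1+2^{-j})^{-1}$ of $d\in\FbL$ one has $\dim\Sel_2(E_d/\Q)=2g+1$, forcing $\Sel_2(E_d/\Q)=\delta_d(E_d(\Q))$ (as the $2$-part of $\Sha$ then contributes nothing and $\delta_d(E_d[2])$ already has dimension $2g$), and in particular $\zeta\in\delta_d(E_d(\Q))$. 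Multiplying this positive proportion by the asymptotic count of $\FbL$ gives a lower bound of the same order $X(\log X)^{1/[L:\Q]-1}$, which together with the upper bound yields the stated $\asymp$.

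Two points need care. First, the reduction to a \emph{finite} union of families $\FbL$: one must check, as in the discussion preceding \Cref{qn:the_real_question} and as in \cite{Sadek}, that the condition ``$Y_{\zeta,d}$ everywhere locally soluble'' forces all primes $p\nmid\Sigma$ dividing $d$ to split completely in $L$ and otherwise depends only on the class of $d$ in $\idelesmodsquaresset{\Q}{\Sigma}$; this is where the hypothesis $E[2]\subset E(\Q)$ and the choice of $L$ as the \emph{smallest} trivialising extension are used, and one must also verify the union is nonempty, which is guaranteed by Condition \ref{cond_E_new}. Second, one must make sure the $b$ furnished by Condition \ref{cond_E_new} does lie among the classes arising from everywhere-locally-soluble twists — but $n_b=1$ with $\cS_b=\langle\zeta\rangle$ says precisely that $\zeta$, and nothing more, is systematically in the Selmer group, and combined with the local solubility bookkeeping this places $\FbL$ inside the locally soluble locus.

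The main obstacle I expect is not the arithmetic geometry but the analytic number theory: proving the two-sided asymptotic $\#\{d\in\FbL:|d|<X\}\asymp X(\log X)^{1/[L:\Q]-1}$ with the correct power of $\log X$, uniformly enough that the finitely many congruence classes $b$ and the bad set $\Sigma$ do not disturb the exponent. The upper bound here is routine (Landau's theorem on sums of two squares generalises directly to any Frobenian set of primes of density $\delta$), but the matching lower bound of the same order — ensuring the square-free condition and the finitely many local conditions at $\Sigma$ cut down the count only by a constant factor, not by a power of $\log X$ — requires either a Selberg--Delange argument or a careful sieve, and is the technical heart of the estimate. Everything else (the point-lifting to $\widetilde Y_{\zeta,d}$, the Selmer-rank-to-rational-points implication under finite $\Sha$, and the reduction to a finite union of $\FbL$'s) is either immediate or already packaged in the results quoted above.
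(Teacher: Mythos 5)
Your overall route is the one the paper takes (Theorem \ref{thm:kummer_starting} together with Corollaries \ref{cor:loc_soluble_fibres} and \ref{cor:Hasse_Kummer_first}): for the lower bound, apply the $r=0$ case of \Cref{thm:intro_distr} to the family $\FbL$ attached to the class $b$ of Condition \ref{cond_E_new}, note that $\dim\Sel_2(E_d/\Q)=2g+1$ together with finiteness of $\Sha(E_d/\Q)[2^\infty]$ and evenness of $\dim\Sha(E_d/\Q)[2]$ forces $\Sel_2(E_d/\Q)=\im\delta_d$, hence $\zeta\in\im\delta_d$ and $Y_{\zeta,d}(\Q)\neq\emptyset$, and pass to $\widetilde{Y}_{\zeta,d}$; for the upper bound, local solubility of $Y_{\zeta,d}$ forces every prime $p\notin\Sigma$ dividing $d$ to split completely in $L$, so the relevant $d$ lie in finitely many families $\cF^{b'}_{L}$, each of size $O\big(X(\log X)^{\frac{1}{[L:\Q]}-1}\big)$ by \Cref{prop:asymptotic_cF}. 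Up to minor imprecision this matches the paper's argument step for step.

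There is, however, one genuine gap: the nonemptiness of $\FbL$, which you assert is ``guaranteed by Condition \ref{cond_E_new}''. Condition \ref{cond_E_new} is purely local — it produces a class $b=(b_v)_{v\in\Sigma}$ — and nothing formal guarantees that this class is realized by a square-free integer all of whose prime factors outside $\Sigma$ split completely in $L$. The classes realizable by such integers form, in general, a proper subgroup of $\prod_{v\in\Sigma}\Q_v^{\times}/\Q_v^{\times 2}$ (the split primes only reach the subgroup corresponding to $K_\Sigma\cap L$ under the Artin map, together with the divisors of $N$), so realizability is a genuine compatibility condition; if it failed, $\FbL$ would be empty, \Cref{thm:intro_distr} would not apply, and your lower bound would be vacuous. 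The paper's proof of \Cref{thm:kummer_starting} devotes its opening claim to exactly this: the obstruction is $\sum_{v\in\Sigma}\inv_v(\psi_{b_v}\cup\chi)$ for quadratic characters $\chi$ of $\Gal(L/\Q)$, and it vanishes because every such $\chi$ can be written as $x\cup\zeta$ with $x\in E[2]$ (non-degeneracy of the Weil pairing plus the injection $\Gal(L/\Q)\hookrightarrow E[2]$ given by $\zeta$), one then applies \Cref{lem:twisted_boundary_map}, uses that $\res_v\zeta$ and $\delta_{b_v,v}(x)$ both lie in the isotropic subspace $\sS_{b_v,v}$ — this is precisely where part (1) of Condition \ref{cond_E_new} enters — and finishes with reciprocity for the Brauer group and unramifiedness outside $\Sigma$. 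This is the one step of the Kummer application that is not a direct assembly of the quoted results, and it is missing from your proposal. Relatedly, what you call the technical heart — the two-sided estimate $\#\FbL(X)\asymp X(\log X)^{\frac{1}{[L:\Q]}-1}$ — is already \Cref{prop:asymptotic_cF}; but its lower bound is of the right order only once nonemptiness (that is, $c\in S$ in the notation of Section \ref{sec:analytic}) has been established, which is precisely the missing argument.
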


An immediate consequence is that, under the hypotheses of Theorem \ref{thm:kummer_intro}, one has $\sX_\zeta(\Q)\neq \emptyset$. To the best of our knowledge, this statement is already new. In Proposition \ref{cond_Z_prop}, we show that Condition  \ref{assumption:simple_gamma_mod} is implied by a version of \cite[Conditions $\textup{Z}_1$ and $\textup{Z}_2$]{MR2183385}. In particular, one can deduce from Theorem \ref{thm:kummer_intro} a similar  result to \cite[Theorem 1.1]{MR2183385} which, though working over $\mathbb{Q}$, allows abelian varieties of arbitrary dimension; see Corollary \ref{sec:assumptions} and the following remark. As alluded to in  \cite[Remark 2.10]{MR3932783}, such a result (i.e. Corollary \ref{sec:assumptions} minus the quantitive aspect) can likely be established via a more direct application of Swinnerton-Dyer's method, and should, roughly speaking, be viewed as being weaker than  the main result of  \cite{MR3932783}. Nevertheless, we believe there is much potential for proving new results about rational points on Kummer varieties via analytic methods. Indeed, fully adapting Smith's  work \cite{Smith1,Smith2} to twist families defined by suitable Frobenian conditions should  yield  a version of Theorem \ref{thm:kummer_intro} that applies over general number fields, allows more general Galois structures on $E[2]$, and incorporates information from higher descents.

\subsection{Strategy of proof of the main theorem}\label{sec:introProof}
We broadly follow the strategy of Heath--Brown \cite{MR1292115}, but with some modifications, and in several  places take inspiration from the more recent works of Kane \cite{MR3101079}
and Smith \cites{Smith1}.
To determine the distribution of $\dim_{\F_2}\Sel_2(E_d/\Q)$ over $d\in \FbL$ we
determine, for all $r$, the $r$-th moments. For $X\in \R_{\geq 0}$ write $\FbL(X)=\{d\in \FbL: |d|<X\}$.
By the $r$-th moment one usually means
$\lim_{X\to\infty}\frac{1}{\#\FbL(X)}\sum_{d\in \FbL(X)}\#\Sel_2(E_d/\Q)^r$,
equivalently the average number of homomorphisms from $\F_2^r$ to $\Sel_2(E_d/\Q)$,
although we will in fact compute something slightly different, see below.

To determine the $r$-th moment, it is convenient to work with
the abelian variety $A=E^r$. One clearly has $\Sel_2(E_d/\Q)^r=\Sel_2(A_d/\Q)$. 

In common with the works of Kane and Smith, we begin by using the local Tate pairing to express, in \Cref{lem:kane_formula},
the characteristic function of $\Sel_2(A_d/\Q)\subset H^1(\Q,A[2])$
in terms of quadratic characters on local cohomology groups.

Write $D$ for the ``part of $d$ coprime to $\Sigma$''.
We parametrise both local and global cohomology
groups in terms of factorisations of $D$ into square-free integers indexed by $A[2]$.
The expression for $\#\Sel_2(A_d/\Q)=\#\Hom(\F_2,\Sel_2(A_d/\Q))$
becomes a sum of Jacobi symbols indexed by factorisations of $D=\prod_{\bfu\in A[2]^2}D_{\bfu}$ -- see
\Cref{prop:explicit_selmer_formula_non-varying}. 
One wants to evaluate the sum of such expressions over $d<X$, in the limit as $X\to \infty$.
It turns out to be fruitful to partition that latter sum into subsums $\cS^T(X)$
according to the subsets $T = \{\bfu\in A[2]^2:D_{\bfu}\neq 1\}$. Thus, for example,
there are $\#A[2]^2$ subsums corresponding to singletons $T$, i.e. to trivial
factorisations of $D$ into just a single factor. This is explained in detail in
\Cref{def:support}.

At this point, a central role is played by the notion of linked indices. We define
in \Cref{quad_form_q_defi} what it means for two elements of $A[2]^2$ to be linked.
The role of \Cref{prop:error_term_F_K}
is to show that in the contribution to the $r$-th moment, subsums $\cS^T(X)$ for
subsets $T$ that are not maximal unlinked will go into the error term.

It remains to estimate the sum $\cS^T(X)$ over maximal unlinked subsets $T$.
In fact, we determine the average number of \emph{injective} homomorphisms
from $\F_2^r$ to $\Sel_2(E_d/\Q)$, or to be even more precise, from $\F_2^r$ to
the quotient $\Sel_2(E_d/\Q)/(\delta_d(E_d[2])\oplus\cS_b)$ by the image of $2$-torsion and the systematic subspace,
which natually produces the shift in the distribution by $2g+n_b$ appearing in the statement of Theorem \ref{thm:intro_distr}.

The systematic subspace is defined in Section \ref{sec:syst_subspace}.
\Cref{thm:inj_moments} explains that the passage from all homomorphisms
$\F_2^r\to \Sel_2(A_d/\Q)$ to those that are injections into $\Sel_2(A_d/\Q)/\cS_b$
corresponds to summing only over particular maximal unlinked subsets $T$.
That restriction turns out to be very convenient for the final count.

Section \ref{sec:combinatorics} is devoted to computing the main term
of the injection-count and deducing the distribution.
Maximal unlinked subsets turn out to be cosets of maximal isotropic subspaces
for a certain quadratic form, as in \cite{MR1292115}. One novelty
of our approach is that by shifting Selmer elements by the images
of $2$-torsion, we manage to reduce the analysis to only maximal
isotropic subspaces.

It is shown in \Cref{desc_of_max_iso}
that those maximal isotropic subspaces admit a very explicit linear-algebraic description:
they are parametrised by pairs $(U,P)$ consisting of a subspace $U$ of $A[2]$
and an alternating pairing $P$ on $U$. Moreover, it is shown in \Cref{main_invariant_gamma_computation_prop} that
the only such pairs $(U,P)$ contributing to the final count are those for which $U$ is fixed by a subgroup  $\Gamma$ of $\textup{End}(A[2])$ arising from the natural Galois
action on   $A[4]$, as described in Section \ref{sec:GalAction4Torsion}.
It is here that our assumption on the Galois image on $A[4]$ comes in. Under that assumption,
one can explicitly describe the $\Gamma$-invariant subspaces $U\subset A[2]$ -- see \Cref{simple_subspace_lemma}.

This is where the strategy of counting injections pays off. It turns out that for injections, the only relevant maximal isotropic
subspaces are those for which $U=A[2]$, so that the sum becomes only a sum over the alternating
pairings $P$. As above, only the $\Gamma$-invariant pairings $P$ are shown to
contribute to the sum. Writing $A[2]$ as $E[2]\otimes \F_2^r$, the $\Gamma$-invariant pairings $P$ are exactly the ones of the form $e_2\otimes \langle,\rangle$,
where $e_2$ is the Weil pairing on $E[2]$ and $\langle,\rangle$ is an arbitrary symmetric
pairing on $\F_2^r$. In particular, the number of such pairings is $2^{r(r+1)/2}$, which is
exactly the $r$-th injection-moment -- see \Cref{thm:inj_moments_main}.

\subsection{Future directions}
Question \ref{qn:GalMod_special} is a special case of a question that,
we believe, deserves more attention than it has received.
Fix an algebraic closure $\bar{\Q}$ of $\Q$.
Let $E/\Q$ be an elliptic curve, let $G$ be a finite group, and let $V$ be
a finitely generated module over the group ring $\Q[G]$. By a $G$-extension
of $\Q$ we mean a pair $(F,\iota)$ consisting of a Galois number field
$F$ inside $\bar{\Q}$
and an isomorphism $\iota$ between the Galois group of $F/\Q$ and $G$.
If $(F,\iota)$ is a $G$-extension of $\Q$, then the Galois module $E(F)$ may be
viewed as a finitely generated $\Z[G]$-module via $\iota$. If, moreover, the
$\Q[G]$-module $\Q\otimes_{\Z} E(F)$ is isomorphic to $V$, then by the
Jordan--Zassenhaus theorem
\cite{MR3069691}, there are only finitely many possible isomorphism classes for the
$\Z[G]$-lattice $E(F)/E(F)_{\tors}$, where $E(F)_{\tors}$ denotes the torsion
subgroup of $E(F)$, and where by a $\Z[G]$-lattice we mean a finitely generated
$\Z[G]$-module that is free over $\Z$.

\begin{question}\label{qn:GalMod_general}
  For each of the finitely many, up to isomorphism, lattices $\Lambda\subset V$,
  as $(F,\iota)$ varies in natural families of $G$-extensions of $\Q$
  satisfying $\Q\otimes_{\Z}E(F)\cong V$,
  how often do we have $E(F)/E(F)_{\tors}\cong_{\Z[G]} \Lambda$?
\end{question}

It can be deduced from the proof of \cite{MR1644252}*{Theorem 5.6.1} that
Artin's induction theorem implies that prescribing the isomorphism class
of $\Q\otimes_{\Z} E(F)$
as a $\Q[G]$-module is equivalent to prescribing the ranks of $E$ over all
subfields of $F$. Thus, Question \ref{qn:GalMod_general} can be interpreted
as asking about the Galois module structure of $E(F)/E(F)_{\tors}$,
provided one knows everything about ranks.

Our approach readily generalises to the following case: $G=\langle g\rangle$ has order $2$,
$E$ has full rational $2$-torsion, but $V$ is arbitrary. Indeed, it follows
from the computations in Section \ref{sec:modules} that when $G$ has order $2$, a $\Z[G]$-lattice
$M$ is determined up to isomorphism by its $\Z$-rank, the $\Z$-rank of the $G$-fixed
submodule $M^G$, and the index of $(1+g)M$ in $M^G$. Thus, Question
\ref{qn:GalMod_general} for $G$ cyclic of order $2$ and $E$ with full rational $2$-torsion
may always be reduced to Question \ref{qn:hyperell} in the same manner as we proceeded
for Question \ref{qn:GalMod_modified}.

Of course the scope of \Cref{qn:GalMod_general} can be expanded further by
replacing the base field $\Q$ by any other field that is finitely generated over
its prime field. Another natural variant is to investigate the statistical
behaviour of the $\Z[G]$-modules $\cO_F^\times$ as $F$ runs over $G$-extensions
of a fixed base field.

We believe that \Cref{conj:intro_prob_explicit}, while presenting a
formidable technical challenge, is within reach. A proof
can be expected to involve a generalisation of \Cref{thm:intro_distr}
from $2$-Selmer groups to $2^n$-Selmer groups. Smith \cites{Smith1,Smith2}
has developed powerful techniques for computing the distributions of $2^n$-Selmer
groups in very general twist families. Zero-density families such as ours, where the systematic subspace is typically non-trivial, are explicitly excluded from Smith's work. However, the difficulties involved in generalising Smith's techniques to such
a family have already been overcome in one specific instance, namely
in the proof by Koymans--Pagano
\cite{KP22} of Stevenhagen's conjecture, which, as we mentioned above,
we regard as a number field analogue of our conjecture and which was one of the main
inspirations for the present work.

\subsection{Notation}
By $\mu$ we will denote the Moebius function $\Z\to \{-1,0,1\}$.
  If $k$ is a field, then we will denote by $\bar{k}$ a fixed algebraic closure
  of $k$, by $k^s$ the separable closure inside $\bar{k}$, and by $G_k$ the absolute
  Galois group of $k$, i.e. the Galois group of $k^s/k$. We denote by $k^{\times 2}$
  the subgroup of squares in the unit group $k^\times$ of $k$. If $k$ is a global field,
  then by $\cO_k$ we denote its ring of integers, and, for a place $v$ of $k$, 
  by $k_v$ we denote the completion of $k$ at $v$.
  If $E/k$ is an elliptic curve over a number field with rank $1$,
  then we will refer to non-divisible elements of $E(k)$ as ``Mordell--Weil generators''.
If $U$ is a set, then $\triv_U$ will denote the characteristic function
  of $U$ (on some superset of $U$). For a finite group, we will denote by $\triv$  the trivial character.

\part{Distribution of \texorpdfstring{$2$}{2}-Selmer groups}
\section{Preliminaries on \texorpdfstring{$2$}{2}-Selmer groups}\label{sec:ellcurves}

In this section, let $k$ be a number field, and let $E/k$ be a
principally polarised abelian variety with full rational $2$-torsion. We denote
by $\Sigma$ a finite set of places of $k$ containing all places dividing $2$,
all archimedean places, and all places at which $E$ has bad reduction. Let $g=\dim E$. 
\subsection{Weil pairings}

For $n\in \Z_{\geq 1}$, let  
$
  e_n\colon E[n]\otimes E[n]\to \rou_n
$
denote the Weil pairing on $E[n]$ associated to the given principal polarisation,
where $\rou_n$ denotes the group of $n$-th roots of unity in $\bar{k}$.
We also write $e$ for $e_2$ but taking values in $\F_2$ instead of $\{\pm 1\}$,
so that we have $e_2(x,y) = (-1)^{e(x,y)}$ for all $x,y\in E[2]$.

\subsection{Quadratic twists}\label{sec:SelmerBackg}
Fix $d\in k^\times$, 
let $K=k(\sqrt{d})$  and let
$\psi_d\colon G_k\rightarrow \F_2$ be the associated quadratic character with kernel $G_K$. We allow for the possibility that $\psi_d$ is trivial, i.e. that $d\in k^{\times 2}$.  

Let $E_d/k$ denote the quadratic twist of $E$ by $K/k$, and
fix a $K$-isomorphism $\xi\colon E_d\to E$  
such that, for all $\sigma\in G_k$, one has $\xi^{-1}\xi^{\sigma}=(-1)^{\psi_d(\sigma)}$. Via the isomorphism $\xi$, the principal polarisation on $E$ descends to a principal polarisation on $E_d$ (\cite[Lemma 4.16]{MR3951582}). In this way, we view $E_d$ as a principally polarised abelian variety also. 
Note that $\xi$ induces a $G_k$-isomorphism $E_d[2]\cong E[2]$. This identifies the Weil pairing on $E[2]$ with the corresponding pairing on $E_d[2]$ (to see this, e.g.
combine \cite[Lemmas 4.5(ii) and 4.17]{MR3951582}).

Denote by $\delta\colon E(k)/2E(k)\hookrightarrow H^1(k,E[2])$ the coboundary 
homomorphism associated with the multiplication-by-$2$ Kummer sequence
\[
  0\longrightarrow E[2]\longrightarrow E(k^s)\stackrel{[2]}{\longrightarrow}E(k^s)\longrightarrow 0.
\]
 In what follows, use the map $\xi$ to identify $H^1(k,E_d[2])$ with $H^1(k,E[2])$. 
In this way, we view the corresponding coboundary homomorphism associated to  $E_d$ as taking values in $H^1(k,E[2])$. We denote the resulting homomorphism by $\delta_d\colon E_d(k)/2E_d(k)\hookrightarrow H^1(k,E[2])$.

\subsection{Selmer conditions}

For each place $v$ of $k$, denote by
\begin{equation*}
  \delta_v \colon E(k_v)/2E(k_v) \hookrightarrow  H^1(k_v,E[2])\quad\text{ and}\quad
  \delta_{d,v} \colon E_d(k_v)/2E_d(k_v)   \hookrightarrow   H^1(k_v,E[2])
\end{equation*}
the local versions of the maps $\delta$ and $\delta_d$, respectively.
We denote by $\sS_{d,v}$ the image of $\delta_{d,v}$. When $d\in k^{\times 2}$, we drop it from the notation and write $\sS_v=\im(\delta_v)$.  
For a nonarchimedean place $v$ of
$k$ we denote by $H^1_{\ur}(k_v,E[2])$ the set of classes in $H^1(k_v,E[2])$
that restrict trivially to the inertia group $I_v$ at $v$.

\begin{lemma} \label{lem:kummer_image_basic} 
Let $v\notin \Sigma$ be a place of $k$.  Then one has
\[\sS_{d,v}
= \begin{cases} \delta_{d,v}(E[2]),&v \textup{ ramifies in } k(\sqrt{d});\\
H^1_{\ur}(k_v,E[2]),&\textup{otherwise.} \end{cases}\]
\end{lemma}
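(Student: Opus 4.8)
The plan is to analyze the image $\sS_{d,v}$ of the local Kummer map for $E_d$ at a place $v\notin\Sigma$, splitting into the two cases according to the ramification of $v$ in $k(\sqrt d)$. First I would recall that, since $v\notin\Sigma$, the abelian variety $E$ has good reduction at $v$ and $v\nmid 2$; via the fixed isomorphism $\xi\colon E_d[2]\cong E[2]$ we work throughout inside $H^1(k_v,E[2])$.

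\textbf{The unramified case.} Suppose $v$ is unramified in $k(\sqrt d)$. Then $E_d$ also has good reduction at $v$ (the quadratic twist by an unramified extension of a variety with good reduction at $v$, for $v\nmid 2$, still has good reduction), so I can apply the standard fact that for an abelian variety with good reduction at a place $v\nmid 2$ the image of the local Kummer map $\delta_{d,v}$ equals the unramified subgroup $H^1_{\ur}(k_v,E_d[2])$; this is the usual local computation (e.g. via the Néron model and the inflation–restriction/filtration argument, as in Mazur or Milne). Under $\xi$ this matches $H^1_{\ur}(k_v,E[2])$, since $\xi$ is an isomorphism of $G_{k_v}$-modules and in fact of $I_v$-modules (the twisting character $\psi_d$ is unramified at $v$), so it carries $H^1_{\ur}$ to $H^1_{\ur}$. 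A small point to check is that the identification of Weil pairings, already noted in the text, guarantees the polarisation on $E_d$ is again good at $v$ so the cited good-reduction lemma applies; but this is routine.

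\textbf{The ramified case.} Suppose $v$ ramifies in $K=k(\sqrt d)$. Here $E_d$ typically has bad (additive) reduction at $v$. The cleanest route is: the image $\sS_{d,v}=\delta_{d,v}(E_d(k_v)/2E_d(k_v))$ always contains $\delta_{d,v}(E_d[2])$, and by local Tate duality $\sS_{d,v}$ is its own annihilator under the local pairing $H^1(k_v,E[2])\times H^1(k_v,E[2])\to\F_2$ induced by $e_2$ (using the principal polarisation); in particular $\#\sS_{d,v} = \#H^0(k_v,E[2]) = \#E[2] = 2^{2g}$ since $E[2]\subset E(k)$. On the other hand $\delta_{d,v}(E_d[2])$ has order $\#E_d[2]/\#(2E_d(k_v)\cap E_d[2])$; because $v\nmid 2$, the group $2E_d(k_v)$ is $2$-divisible-friendly and one computes (again from the Néron model: the component group has order prime to $2$ after the quadratic ramified twist is taken into account, or directly that $E_d(k_v)$ has no $2$-divisible $2$-torsion obstruction) that $\delta_{d,v}$ is injective on $E_d[2]$, so $\#\delta_{d,v}(E_d[2]) = 2^{2g}$. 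Comparing orders forces $\sS_{d,v}=\delta_{d,v}(E_d[2])$. I would present the order count via the standard formula $\#(E_d(k_v)/2E_d(k_v)) = \#E_d(k_v)[2]\cdot\#(k_v^\times/k_v^{\times 2})^g\cdot|2|_v^{-g}$-type local Euler-characteristic identity, which for $v\nmid 2$ reduces to $\#E_d(k_v)[2]$ — and then reconcile with the injectivity of $\delta_{d,v}$ on $E_d[2]$.

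The main obstacle I anticipate is the ramified case: pinning down $\#\delta_{d,v}(E_d[2])$ and $\#\sS_{d,v}$ precisely requires the local Euler characteristic / local Tate duality bookkeeping for the bad-reduction abelian variety $E_d$ together with the fact that $v\nmid 2$ kills the annoying $|2|_v$ factors, and one must make sure the principal polarisation (hence self-duality under the Weil pairing) survives the twist — which the excerpt has already arranged via $\xi$. Everything else is a routine application of good-reduction local computations and the compatibility of $\xi$ with inertia.
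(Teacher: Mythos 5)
Your unramified case coincides with the paper's (good reduction of $E_d$ at $v$ plus the standard description of the image of the Kummer map), and your overall strategy in the ramified case --- show $\dim_{\F_2}\sS_{d,v}=2g=\dim_{\F_2}\delta_{d,v}(E[2])$ and conclude from the containment $\delta_{d,v}(E[2])\subseteq \sS_{d,v}$ --- is also exactly the paper's. The first half of that count is fine: for $v\nmid 2$ one has $\dim_{\F_2} E_d(k_v)/2E_d(k_v)=\dim_{\F_2} E_d(k_v)[2]=2g$, whether you phrase it via maximal isotropy of $\sS_{d,v}$ for the local Tate pairing or via the local Euler characteristic formula you cite.

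The gap is the second half: the injectivity of $\delta_{d,v}$ on $E_d[2]$, i.e.\ the statement $E_d[2]\cap 2E_d(k_v)=0$. This is precisely where the hypothesis that $v$ ramifies in $k(\sqrt{d})$ has to enter, and neither justification you offer works. The claim that ``the component group has order prime to $2$ after the ramified quadratic twist'' is false: a ramified quadratic twist of an abelian variety with good reduction at $v\nmid 2$ acquires additive reduction whose component group is a nontrivial $2$-group (for an elliptic curve, Kodaira type $I_0^*$ with component group of order $4$); indeed, if the component group had odd order then, since the formal group and the unipotent identity component have no $2$-torsion when $v\nmid 2$, the group $E_d(k_v)$ would have no $2$-torsion at all, contradicting $E[2]\subset E(k)$. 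Your alternative phrase ``no $2$-divisible $2$-torsion obstruction'' merely restates what must be proved. The missing argument, which is the paper's, is this: since $E$ has good reduction at $v\nmid 2$, inertia acts trivially on $E[4]$, and the Galois action on $E_d[4]$ differs from that on $E[4]$ by the sign $(-1)^{\psi_d(\sigma)}$; because $\psi_d$ is ramified at $v$, some inertia element acts on $E_d[4]$ as multiplication by $-1$, so the inertia-fixed points of $E_d[4]$ are exactly $E_d[2]$. Hence $E_d$ has no point of order $4$ over the maximal unramified extension of $k_v$, a fortiori over $k_v$, which gives $E_d[2]\cap 2E_d(k_v)=0$ and completes the dimension count. (If you insist on a N\'eron-model formulation, the correct input is that the component group is killed by $2$ and that prime-to-$p$ torsion over the maximal unramified extension injects into it, again excluding points of order $4$ --- not that the component group has odd order.)
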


\begin{proof}
When $v$ is unramified in $k(\sqrt{d})$, the abelian variety $E_d$ has good reduction at $v$ and the result is standard (see e.g.
  \cite[Remark 4.11 and Proposition 4.12]{MR2833483}). 
  Suppose that $v$ ramifies in $k(\sqrt{d})$. Arguing as in
  \cite[Lemma 5.2]{MR4400944}, we see that $E_d$ has no points of order $4$ defined over the maximal unramified extension of $k_v$. In particular, $E(k_v)$ contains no points of order $4$, hence
  $\delta_{d,v}$ is injective on $E[2]$. Thus we have $\dim_{\F_2}\delta_{d,v}(E[2])=\dim E[2]$.
  Since $v\nmid 2$, we also have 
  \[\dim E[2]=\dim E_d(k_v)/2E_d(k_v)=\dim \sS_{d,v},\] 
where the first equality follows e.g. from \cite[Proposition 3.9]{MR1370197}.
\end{proof}

\begin{lemma}\label{prop:KummerIntersection} 
  Let $v\not\in\Sigma$ be a place of $k$. Then:
  \begin{enumerate}[leftmargin=*, label=\upshape{(\arabic*)}]
    \item\label{item:normImUnram} if $v$ is unramified in $k(\sqrt{d})$, then we have
      $\sS_{v}=\sS_{d,v}$;
    \item\label{item:normImRam} if $v$ is ramified in $k(\sqrt{d})$, then we have
      $\sS_v\cap \sS_{d,v} = 0$.   
  \end{enumerate}
\end{lemma}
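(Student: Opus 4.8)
The plan is to deduce both statements from Lemma \ref{lem:kummer_image_basic}, which describes $\sS_{d,v}$ explicitly for $v\notin\Sigma$, together with the special case $d\in k^{\times 2}$ (so that $\sS_v = H^1_{\ur}(k_v,E[2])$, since the trivial quadratic extension is unramified everywhere). For part \ref{item:normImUnram}: if $v$ is unramified in $k(\sqrt d)$, then by Lemma \ref{lem:kummer_image_basic} applied to the quadratic twist by $d$ we get $\sS_{d,v} = H^1_{\ur}(k_v,E[2])$, and applying the same lemma in the trivial-twist case gives $\sS_v = H^1_{\ur}(k_v,E[2])$; hence $\sS_v = \sS_{d,v}$. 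This part is essentially immediate.

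For part \ref{item:normImRam}, suppose $v$ ramifies in $k(\sqrt d)$. By Lemma \ref{lem:kummer_image_basic}, $\sS_{d,v} = \delta_{d,v}(E[2])$ and $\sS_v = H^1_{\ur}(k_v,E[2])$. I would argue that $\delta_{d,v}(E[2])\cap H^1_{\ur}(k_v,E[2]) = 0$. The key dimension input: from the proof of Lemma \ref{lem:kummer_image_basic} we know $E_d$ has no points of order $4$ over the maximal unramified extension $k_v^{\ur}$, equivalently $E_d(k_v^{\ur})[2^\infty] = E[2]$, and in particular $E[2]$ injects into $E_d(k_v^{\ur})/2E_d(k_v^{\ur})$. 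A class in $\delta_{d,v}(P)$ for $P\in E[2]$ is unramified (trivial on $I_v$) if and only if it comes from an unramified cocycle, which happens if and only if $P$ lies in $2E_d(k_v^{\ur})\cap E(k_v)$-type condition; more precisely the restriction-inflation sequence for $1\to I_v\to G_{k_v}\to G_{k_v^{\ur}}\to 1$ shows that $\delta_{d,v}(P)$ is unramified iff $P$ maps to $0$ in $E_d(k_v^{\ur})/2E_d(k_v^{\ur})$. Since $E[2]\hookrightarrow E_d(k_v^{\ur})/2E_d(k_v^{\ur})$, this forces $P=0$. Hence the intersection is trivial.

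I expect the main obstacle to be bookkeeping the identification between the cohomology of $E_d[2]$ and of $E[2]$ under $\xi$ — i.e. making sure that "unramified at $v$" is preserved by this identification (it is, since $\xi$ is defined over $k(\sqrt d)$, which is unramified at $v$ when $v$ is unramified in $k(\sqrt d)$, but in the ramified case one must be careful, as $\xi$ itself is only defined over a ramified extension). The cleanest way to handle part \ref{item:normImRam} is therefore to phrase everything intrinsically in terms of $E_d$ and its own mod-$2$ cohomology first, proving $\sS_{d,v}\cap H^1_{\ur}(k_v, E_d[2]) = 0$ there, and only then transporting via $\xi$; one checks that $H^1_{\ur}(k_v,E[2])$ corresponds to $H^1_{\ur}(k_v,E_d[2])$ because $E$ (i.e. the trivial twist) has good reduction at $v\notin\Sigma$ and the comparison of unramified subgroups of $H^1(k_v,-)$ under an isomorphism of the underlying $G_{k_v}$-modules is canonical. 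An alternative, perhaps more robust, approach is to use the fact that $\sS_v$ and $\sS_{d,v}$ are maximal isotropic for the local Tate pairing on $H^1(k_v,E[2])$ of dimension $g$, and that $\sS_v + \sS_{d,v}$ is isotropic as well when $v$ ramifies (using that the local norm map relations force orthogonality), so the sum has dimension $\le g$, forcing the intersection to vanish by counting; but the restriction-inflation argument above is more direct and I would lead with it.
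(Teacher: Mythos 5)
Your proof is correct, but it takes a different route from the paper, which disposes of both parts by citation: part \ref{item:normImUnram} is deduced from the identification of $\sS_v\cap\sS_{d,v}$ with the image under $\delta_v$ of the local norm group $\N_{K_w/k_v}E(K_w)$ (the local analogue of \Cref{norm from kummer map}) together with Mazur's result that the norm map is surjective on points for an unramified extension at a place of good reduction; part \ref{item:normImRam} is likewise outsourced to the literature. You instead read everything off from \Cref{lem:kummer_image_basic}: part \ref{item:normImUnram} becomes the tautology that both images equal $H^1_{\ur}(k_v,E[2])$ (using that the statement of that lemma covers the trivial twist $d\in k^{\times 2}$), and for part \ref{item:normImRam} you combine the description $\sS_{d,v}=\delta_{d,v}(E[2])$ with inflation--restriction and the fact, established inside the proof of \Cref{lem:kummer_image_basic}, that $E_d$ has no point of order $4$ over $k_v^{\ur}$, so that a nonzero $P\in E[2]$ cannot lie in $2E_d(k_v^{\ur})$ and hence $\delta_{d,v}(P)$ cannot be unramified. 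Your handling of the identification via $\xi$ is also right: since $\xi$ induces a $G_{k_v}$-module isomorphism $E_d[2]\cong E[2]$, it commutes with restriction to inertia and so matches up the unramified subgroups regardless of whether $\xi$ itself is defined over a ramified extension. What each approach buys: yours is self-contained modulo \Cref{lem:kummer_image_basic} and slightly redundant with it (it reuses the no-$4$-torsion input), while the paper's norm-theoretic route is shorter on the page and is the formulation that is actually reused later (in Section \ref{sec:NormsAndSelmer}) to relate Selmer conditions to global norms.
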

\begin{proof}
  Part \ref{item:normImUnram} follows from \cite{MR3519097}*{Lemma 4.1}
  (see also Lemma \ref{norm from kummer map} below) and
  \cite{MR0444670}*{Corollary 4.4}. For part \ref{item:normImRam}
  see \cite{MR3519097}*{Lemma 4.3}.
\end{proof}

\subsection{Parity of Selmer ranks}\label{sec:ParityRanks}
Let $\rk_2(E/k)$ denote the $2^\infty$-Selmer rank of $E/k$,
\[
  \rk_2(E/k)=\rk_{\Z_2}\Hom\Big(\varinjlim_{n}\Sel_{2^n}(E/k),\Q_2/\Z_2\Big).
\]

\begin{notation}
For a place $v\in \Sigma$ and an element $b\in k_v^{\times}/k_v^{\times 2}$, define $\kappa_v(b)\in \Z/2\Z$ as
\[\kappa_v(b)=\dim_{\F_2} \sS_{v}/\sS_{v}\cap \sS_{b,v} \pmod 2.\]
We write $\kappa_v(d)$ for the result of evaluating $\kappa_v$ at the class of $d$ in  $k_v^{\times}/k_v^{\times 2}$. 
\end{notation}

\begin{theorem}\label{thm:2infty-Selmer rank}
We have $\rk_2(E_d/k)\equiv \dim_{\F_2}\Sel_2(E_d/k) \pmod 2$. Moreover, we have  
$$
    \rk_2(E_d/k) \equiv \rk_2(E/k) + \sum_{v\in \Sigma} \kappa_v(d)\pmod 2.
  $$
\end{theorem}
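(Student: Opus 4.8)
The plan is to establish the two congruences in turn, the first being essentially formal and the second being the substantive part.

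\textbf{Step 1: the mod-2 comparison of $2$-Selmer and $2^\infty$-Selmer ranks.} The congruence $\rk_2(E_d/k)\equiv \dim_{\F_2}\Sel_2(E_d/k)\pmod 2$ is a standard consequence of the fact that $E_d$ admits a principal polarisation (which we have arranged in \Cref{sec:SelmerBackg}). First I would recall that $\Sel_{2^\infty}(E_d/k)$ fits into an exact sequence $0\to E_d(k)\otimes\Q_2/\Z_2\to\Sel_{2^\infty}(E_d/k)\to\Sha(E_d/k)[2^\infty]\to 0$, so $\rk_2(E_d/k)=\rk E_d(k)+ (\text{number of }\Z_2/2^n\Z_2\text{-summands of the divisible-plus-finite part})$; more precisely $\rk_2(E_d/k)$ differs from $\dim_{\F_2}\Sel_2(E_d/k)$ by $\dim_{\F_2}\Sha(E_d/k)[2^\infty]_{\mathrm{div}}[2]$ plus the $\F_2$-dimension of the non-trivial cokernel term, and the non-degenerate alternating Cassels--Tate pairing on the finite group $\Sha(E_d/k)[2^\infty]/\mathrm{div}$ forces its $2$-rank to have the same parity as that of its quotient by $2$, which cancels in the parity count. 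Rather than reproving this I would simply cite it (e.g. the parity statement is in the literature on the $2$-parity conjecture; it is implicit in the works already cited, such as \cite{MR2833483} or \cite{MR3951582}), since only the parity, not the exact relation, is needed.

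\textbf{Step 2: comparing the Selmer structures of $E$ and $E_d$ place by place.} For the second congruence the idea is that $\Sel_2(E/k)$ and $\Sel_2(E_d/k)$ are cut out inside the \emph{same} group $H^1(k,E[2])$ (via the identification $\xi$) by local conditions $\sS_v$ and $\sS_{d,v}$ that agree at every place outside $\Sigma$ where $d$ is a local square unit or local unit with $v$ split, and more importantly at every $v\notin\Sigma$ we have by \Cref{prop:KummerIntersection} that either $\sS_v=\sS_{d,v}$ (if $v$ is unramified in $k(\sqrt d)$) or $\sS_v\cap\sS_{d,v}=0$ (if ramified). The tool for comparing the dimensions of two Selmer groups defined by differing local conditions is the Greenberg--Wiles / Poitou--Tate formula: if $\mathcal L=(\mathcal L_v)$ and $\mathcal L'=(\mathcal L'_v)$ are two collections of local conditions for the self-dual (via the Weil pairing $e_2$) module $E[2]$, then
\[
\dim\Sel_{\mathcal L}-\dim\Sel_{\mathcal L'}=\sum_v\big(\dim\mathcal L_v-\dim\mathcal L'_v\big) \pmod{\text{(symmetric correction)}},
\]
and more usefully the difference of the two Selmer ranks, taken mod $2$, is congruent to $\sum_v \dim_{\F_2}\mathcal L_v/(\mathcal L_v\cap\mathcal L'_v)$ once one uses that the $\sS_v$ and $\sS_{d,v}$ are each maximal isotropic for the local Tate pairing (which holds because they are images of Kummer maps for principally polarised abelian varieties). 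So I would: (a) note that both $\sS_v$ and $\sS_{d,v}$ are maximal isotropic in $H^1(k_v,E[2])$ under the local cup-product pairing induced by $e_2$; (b) invoke the comparison formula for Selmer groups attached to two families of maximal isotropic local conditions differing at finitely many places — this gives $\dim\Sel_2(E_d/k)\equiv\dim\Sel_2(E/k)+\sum_v\dim_{\F_2}\sS_v/(\sS_v\cap\sS_{d,v})\pmod 2$; (c) observe that by \Cref{prop:KummerIntersection} the summand vanishes for $v\notin\Sigma$ (it is $0$ in the unramified case and, in the ramified case, $\dim\sS_v/(\sS_v\cap\sS_{d,v})=\dim\sS_v=g$, but then I need this to still be accounted for — see the caveat below); (d) combine with Step 1.

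\textbf{Caveat and the main obstacle.} The delicate point — and where I expect the real work to be — is bookkeeping the ramified places $v\notin\Sigma$. There $\sS_v\cap\sS_{d,v}=0$ with $\dim\sS_v=g$, so naively each such place contributes $g\pmod 2$ to the sum, which is nonzero when $g$ is odd, yet these places do not appear in the final formula (the sum is only over $v\in\Sigma$). The resolution must be that the Greenberg--Wiles comparison, applied correctly, replaces $\sum_v\dim\sS_v/(\sS_v\cap\sS_{d,v})$ by something that is automatically even over the ramified primes — concretely, one should compare $\Sel_2(E_d/k)$ not with $\Sel_2(E/k)$ directly but route through an auxiliary Selmer group with the $\sS_{d,v}$ conditions at \emph{all} $v\notin\Sigma$ and the $\sS_v$ conditions at $v\in\Sigma$, using that the number of ramified primes enters in pairs via the product formula / Hilbert reciprocity for the Hilbert symbol defining $\psi_d$, or equivalently via the fact that the parity contribution of the ``extra'' ramified places is governed by a global reciprocity relation. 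Making this precise is exactly the content of results like \cite{MR2833483}*{Cor.\ 4.\,x} or \cite[\S5]{MR4400944}; I would model the argument on those, isolating the statement that $\sum_{v\notin\Sigma}\kappa_v(d)\equiv 0\pmod 2$ as a consequence of reciprocity, so that $\sum_{\text{all }v}\kappa_v(d)=\sum_{v\in\Sigma}\kappa_v(d)$, where $\kappa_v(d)=\dim_{\F_2}\sS_v/(\sS_v\cap\sS_{d,v})\bmod 2$ for \emph{all} $v$ (consistent with the definition given for $v\in\Sigma$). That reciprocity step — reconciling the local contributions at ramified primes with the global Hilbert symbol relation — is the crux; everything else is assembling cited formulae.
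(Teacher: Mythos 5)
Your route for the second congruence (compare the two Selmer structures inside the same $H^1(k,E[2])$, using that $\sS_v$ and $\sS_{d,v}$ are maximal isotropic, via a Greenberg--Wiles/Kramer--Mazur--Rubin comparison) is viable and genuinely different from the paper, which simply quotes the quadratic-twist parity results of \cite{MR3951582}*{Theorem 10.12 and Lemma 10.8} and is done in one line. The problem is that the step you single out as ``the crux'' is both misdiagnosed and left unproven. At a place $v\notin\Sigma$ that ramifies in $k(\sqrt{d})$ one has $\dim_{\F_2}\sS_v=\dim_{\F_2}E(k_v)/2E(k_v)=\dim_{\F_2}E[2]=2g$, not $g$: for $v\nmid 2$ the order of $E(k_v)/2E(k_v)$ equals that of $E(k_v)[2]$, which is all of $E[2]$ by the full rational $2$-torsion hypothesis (this is exactly the dimension count in the proof of \Cref{lem:kummer_image_basic}). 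So by \Cref{prop:KummerIntersection} the contribution of each such place to $\sum_v\dim_{\F_2}\sS_v/(\sS_v\cap\sS_{d,v})$ is $2g\equiv 0\pmod 2$, the sum over all places reduces to the sum over $\Sigma$ for trivial reasons, and no Hilbert-reciprocity argument over the ramified primes is needed or relevant. Since your write-up defers precisely this point to an unspecified ``reciprocity step'' modelled on other papers, the proof as written is incomplete exactly where you flag it --- although the fix is immediate once the dimension is counted correctly.

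Two smaller points. First, in Step 1 you attribute the parity $\rk_2(E_d/k)\equiv\dim_{\F_2}\Sel_2(E_d/k)\pmod 2$ to the principal polarisation making the Cassels--Tate pairing alternating; that is not true in general (by Poonen--Stoll the pairing on a principally polarised abelian variety is only antisymmetric, and $\sha$ can have order twice a square). What the paper actually uses is the full rational $2$-torsion hypothesis: by \cite{MR3519097}*{Lemma 5.1} the $2$-torsion of the non-divisible part of $\sha(E_d/k)$ has even $\F_2$-dimension, and together with $\dim_{\F_2}E_d(k)[2]=2g$ this gives the parity statement. Your conclusion is correct under the paper's hypotheses, but the justification should invoke the $2$-torsion assumption rather than the polarisation alone. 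Second, be aware that your comparison argument produces the congruence for $\dim_{\F_2}\Sel_2$ of $E$ versus $E_d$ and then passes to $\rk_2$ via Step 1 applied to both curves, whereas the cited results of \cite{MR3951582} work with the $2^\infty$-Selmer rank directly; both give the stated theorem, so this is a difference of packaging, not of substance.
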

\begin{proof}
Let $\sha_{\nd}(E_d/k)$ denote the quotient of $\sha(E_d/k)$ by
its maximal divisible subgroup. We have 
 $\dim_{\F_2}\Sel_2(E_d/k) = \rk_2(E_d/k)+\dim_{\F_2}E_d(k)[2]+\dim_{\F_2}\sha_{\nd}(E_d/k)[2]$.
 By \cite{MR3519097}*{Lemma 5.1}, the assumption that $E[2]$ is defined over $k$
  implies that $\dim_{\F_2}\sha_{\nd}(E_d/k)[2]$ is even. Since  $\dim_{\F_2}E_d(k)[2]=2g$,
  we see that $\rk_2(E_d/k)\equiv \dim_{\F_2}\Sel_2(E_d/k) \pmod 2$.
  
  The second assertion follows from \cite{MR3951582}*{Theorem 10.12 and Lemma 10.8}.
\end{proof}

\begin{corollary}\label{cor:2-Selmer rank}  
  Let $d'\in k^\times $ be such that $d$ and $d'$
  have the same class in $\idelesmodsquaresset{k}{\Sigma}$.
  Then we have $\rk_2(E_d/k) \equiv \rk_2(E_{d'}/k) \pmod 2$
  and $$\dim_{\F_2}\Sel_2(E_d/k)\equiv \dim_{\F_2}\Sel_2(E_{d'}/k)\pmod 2.$$
\end{corollary}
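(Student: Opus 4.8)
The plan is to deduce the statement directly from \Cref{thm:2infty-Selmer rank}. The one observation that makes this work is that, for each $v\in\Sigma$, the quantity $\kappa_v$ is by construction a function on $k_v^\times/k_v^{\times 2}$ and not merely on $k^\times$: indeed $\sS_{b,v}$ is the image of the local coboundary map attached to $E_b$ over $k_v$, and the pair consisting of $E_b\times_k k_v$ together with the canonical identification $E_b[2]\cong E[2]$ is unchanged when $b$ is multiplied by a square in $k_v^\times$, since it depends only on the quadratic étale algebra $k_v(\sqrt{b})$. Consequently $\kappa_v(d)$ depends only on the image of $d$ in $k_v^\times/k_v^{\times 2}$.

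First I would unwind the hypothesis: to say that $d$ and $d'$ have the same class in $\idelesmodsquaresset{k}{\Sigma}=\prod_{v\in\Sigma}k_v^\times/k_v^{\times 2}$ is precisely to say that for every $v\in\Sigma$ the images of $d$ and $d'$ in $k_v^\times/k_v^{\times 2}$ coincide. Combined with the previous remark, this yields $\kappa_v(d)=\kappa_v(d')$ for all $v\in\Sigma$, hence $\sum_{v\in\Sigma}\kappa_v(d)=\sum_{v\in\Sigma}\kappa_v(d')$ in $\Z/2\Z$. Feeding this into the congruence of \Cref{thm:2infty-Selmer rank}, which expresses $\rk_2(E_d/k)\bmod 2$ as $\rk_2(E/k)$ plus that sum, gives $\rk_2(E_d/k)\equiv\rk_2(E_{d'}/k)\pmod 2$. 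Applying the first assertion of \Cref{thm:2infty-Selmer rank} to each of $E_d$ and $E_{d'}$ then converts this into $\dim_{\F_2}\Sel_2(E_d/k)\equiv\dim_{\F_2}\Sel_2(E_{d'}/k)\pmod 2$.

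There is no genuine obstacle here: all of the arithmetic is already packaged in \Cref{thm:2infty-Selmer rank}, and the only point needing a sentence of justification is that $\sS_{d,v}$, and therefore $\kappa_v$, factors through $k_v^\times/k_v^{\times 2}$, which is the elementary observation recorded in the first paragraph.
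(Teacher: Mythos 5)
Your proof is correct and is exactly the paper's argument: the paper's proof of this corollary is simply ``Immediate from \Cref{thm:2infty-Selmer rank}'', since $\kappa_v$ is by definition a function on $k_v^\times/k_v^{\times 2}$, so equal classes in $\idelesmodsquaresset{k}{\Sigma}$ force $\sum_{v\in\Sigma}\kappa_v(d)=\sum_{v\in\Sigma}\kappa_v(d')$. Your extra paragraph justifying that $\sS_{b,v}$ depends only on the quadratic \'etale algebra $k_v(\sqrt{b})$ is a correct elaboration of what the paper builds into the notation.
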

\begin{proof}
Immediate from  \Cref{thm:2infty-Selmer rank}.   
\end{proof}

 \subsection{Local Tate pairings} \label{sec:local_tate}

For each place $v$ of $k$, composition of cup-product associated to the Weil pairing $e$
and the local invariant map gives a non-degenerate pairing 
\begin{equation} \label{eq:local_tate_pairing_E}
  \left \langle ~,~\right \rangle_v: H^1(k_v,E[2])\times H^1(k_v,E[2])\longrightarrow \tfrac{1}{2}\Z/\Z\cong \F_2,
\end{equation}
with respect to which the subspace $\sS_v$   is maximal isotropic (see \cite[Proposition 4.10]{MR2833483}).

We denote by $\left \langle ~,~\right \rangle_\Sigma$ the sum of the local Tate
pairings \eqref{eq:local_tate_pairing_E} over $v\in \Sigma$.
It is a non-degenerate $\F_2$-valued bilinear pairing on $\oplus_{v\in \Sigma}H^1(k_v,E[2])$.
 
We denote by $H_v$ the  $\F_2$-valued pairing on $H^1(k_v,\F_2)$ given similarly by composition of cup-product and the local invariant map, and denote by $H_\Sigma$ its sum over $v\in \Sigma$.  For $d_1, d_2 \in k^{\times}$, we have $(d_1,d_2)_v=(-1)^{H_v(\psi_{d_1},\psi_{d_2})}$, where $(~,~)_v$ denotes the quadratic Hilbert symbol.

Since the action of $G_k$ on $E[2]$ is trivial, the natural cup-product map
\[H^1(k, \F_2)\otimes E[2] \longrightarrow H^1(k,E[2])=\Hom_{\cts}(G_k,E[2])\]  
is an isomorphism. We will frequently identify these groups, via this isomorphism, in what follows. By associativity of the cup-product, the analagous isomorphism with $k$ replaced by $k_v$ identifies the local Tate pairing $\left \langle ~,~\right \rangle_v$ with the pairing $H_v\otimes e$, and $\left \langle ~,~\right \rangle_\Sigma$ with $H_\Sigma \otimes e$.

\subsection{Tate quadratic forms}\label{tate_quad_form_subsec}
 
Let $\phi$ be an endomorphism of $E$ defined over $k$ that is invariant under the Rosati involution associated to the given principal polarisation.
Denote by $P_{\phi,2}\colon E[2]\otimes E[2]\rightarrow \rou_2$ the pairing defined by
$P_{\phi,2}(x,y)=e_2(x,\phi(y))$. This pairing is alternating and $G_k$-equivariant.
The same is true of the pairing $P_{\phi,4}\colon E[4]\otimes E[4] \rightarrow \rou_4$
defined by $P_{\phi,4}(x,y)=e_4(x,\phi(y))$.  

From these pairings, we can follow \cite[Definition 5.19, Example 5.20]{MS2021} to define
a Theta group $\mathcal{H}_\phi$ for $E[2]$ in the sense of \cite[Definition 5.1]{MS2021}. 
 Indeed, denote by $U_{\phi}$ the $G_k$-group with underlying $G_k$-set $\rou_4\times E[4]$, and with group structure given by $(a, x)\cdot (b,y)=(abP_{\phi,4}(x,y),x+y).$
This has a normal subgroup $N=\{(1,x)~~\colon~~x\in E[2]\}$, and we define  $\cH_{\phi}=U_{\phi}/N$. 
We have a commutative diagram of $G_k$-groups with exact rows 
\begin{equation} \label{theta_group_diag}
\xymatrix{1\ar[r]&\rou_4 \ar[r]^{}\ar@{=}[d]&U_{\phi}\ar[r]\ar[d]& E[4]\ar[r]\ar[d]^2&1\\
1\ar[r]&\rou_4\ar[r]^{} &\cH_{\phi}\ar[r]^{}& E[2]\ar[r]&1,}
\end{equation}
realising $\cH_{\phi}$ as a central extension of $E[2]$ by $\rou_4$.
The associated commutator pairing $E[2]\otimes E[2]\rightarrow \rou_4$ is $P_{\phi,2}$.   
For each place $v$ of $k$, define
$
  q_{\phi,v}\colon H^1(k_v,E[2])\rightarrow \Q/\Z 
$
to be the composition of the connecting map associated to the bottom row of \eqref{theta_group_diag} with the local invariant map,
and let
\[
  q_{\phi,\Sigma}\colon \oplus_{v\in \Sigma}H^1(k_v,E[2])\rightarrow \Q/\Z
\]
be the sum over $v\in \Sigma$ of the maps $q_{\phi,v}$.
We deduce from \cite[Proposition 2.9]{MR2915483} that $q_{\phi,v}$ and
$q_{\phi,\Sigma}$ are quadratic forms whose associated bilinear pairings send $(f,g)$ to
$\langle f, \phi(g)\rangle_v$, respectively $\langle f, \phi(g)\rangle_\Sigma$.
Since $\phi$ is an endomorphism of $E$, it follows from  \cite[Proposition 4.10]{MR2833483} that, for every $v\in \Sigma$, the isotropy condition \cite[Equation (5.18)]{MS2021}
is satisfied for the images $\sW_v$ of the multiplication-by-$4$ Kummer maps
\[
  \delta^{(4)}_v\colon E(k_v)/4E(k_v)\to H^1(k_v,E[4])
.\]
Since, for every $v\in \Sigma$,  we have $\sS_v=2\sW_v$,
it follows from \cite[Example 5.20]{MS2021} that we have
\[
  q_{\phi,v}(\sS_v)=0\text{ for }v\in \Sigma,\quad\text{ and }\quad
  q_{\phi,\Sigma}\big(\oplus_{v\in \Sigma}\sS_v\big)=0.
\]
By reciprocity for the Brauer group of $k$, the form $q_{\phi,\Sigma}$ also vanishes
on the image of the group $H^1_{\Sigma}(k,E[2])$ of global cocycles that are unramified
outside $\Sigma$. Moreover, if in the construction of $q_{\phi,v}$ the Galois module $E[4]$ is
replaced by $E_d[4]$ for $d\in k^\times$, a direct computation shows that one obtains the same quadratic form.
Thus, $q_{\phi,v}$ also vanishes on the image of $\sS_{d,v}$ for every $d\in k^\times$.

\begin{lemma}\label{boundary_map_theta}
Let $x\in E[2]$, let $v$ be a place of $k$, and let $\chi \in H^1(k_v,\rou_2)$.
Then the connecting maps associated to the bottom row of \eqref{theta_group_diag} send:
\begin{enumerate}[leftmargin=*,label=\upshape{(\roman*)}]
  \item\label{item:u} $ x\in H^0(k_v,E[2])$ to the image of $\delta(x)\cup_e \phi(x)$ in $H^1(k_v,\rou_4)$,
  \item\label{item:chicupu} $\chi \cup x \in H^1(k_v,E[2])$ to the image of $\delta(x)\cup_e \phi(x)\cup \chi$ in $H^2(k_v,\rou_4)$.
\end{enumerate}
Here $\cup_e$ denotes the cup-product with respect to the Weil pairing $e$ on $E[2]$.
\end{lemma}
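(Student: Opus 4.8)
The plan is to unwind the definition of the connecting map of the central extension $1\to\rou_4\to\cH_\phi\to E[2]\to 1$ by choosing an explicit set-theoretic section and computing the resulting cocycle. First I would fix the section $s\colon E[2]\to\cH_\phi$ obtained from the diagram \eqref{theta_group_diag}: choose for each $x\in E[2]$ a point $\tilde x\in E[4]$ with $2\tilde x=x$, and let $s(x)$ be the image of $(1,\tilde x)\in U_\phi$ in $\cH_\phi$. The failure of $s$ to be a homomorphism is governed by $(1,\tilde x)\cdot(1,\tilde y)\cdot(1,\widetilde{x+y})^{-1}$, which lies in the image of $\rou_4$; using the group law $(a,u)\cdot(b,w)=(abP_{\phi,4}(u,w),u+w)$ of $U_\phi$ and the fact that $\tilde x+\tilde y-\widetilde{x+y}\in E[2]$, this group-law defect is precisely $P_{\phi,4}(\tilde x,\tilde y)$ corrected by a term measuring $\tilde x+\tilde y-\widetilde{x+y}$. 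The key identity to extract here is that, modulo $N$, the $2$-cocycle $c\colon E[2]\times E[2]\to\rou_4$ representing the extension class, and hence the connecting homomorphism on $H^0$, is computed by the Bockstein-type formula $\delta(x)\cup_e\phi(x)$: the class $\delta(x)\in H^1(k_v,\rou_2)$ records the obstruction to lifting $x$ to a $G_v$-fixed point of $E[4]$, i.e.\ the cocycle $\sigma\mapsto \tilde x^\sigma-\tilde x$, and pairing this via $e$ against $\phi(x)$ reproduces exactly the $P_{\phi,4}$-contribution in $\cH_\phi$. This proves \ref{item:u}.

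For \ref{item:chicupu}, I would use that the connecting map $H^1(k_v,E[2])\to H^2(k_v,\rou_4)$ for the central extension is, by general nonsense (e.g.\ \cite{MR2915483}*{Proposition 2.9} or a direct cocycle computation), the cup-product with the extension class in $H^2(E[2]$-cohomology$)$ suitably interpreted, together with the quadratic refinement coming from the non-abelian nature of $\cH_\phi$. Concretely, an element of $H^1(k_v,E[2])$ of the form $\chi\cup x$ with $\chi\in H^1(k_v,\rou_2)$ and $x\in E[2]$ is represented by the cocycle $\sigma\mapsto \chi(\sigma)x$; lifting it to a $\cH_\phi$-valued cochain via the section $s$ and computing the coboundary yields, after collecting terms, the cup-product of $\chi$ with the $H^1$-class from part \ref{item:u}, namely $\delta(x)\cup_e\phi(x)\cup\chi\in H^2(k_v,\rou_4)$. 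The alternating/bilinear bookkeeping is controlled by the fact that $P_{\phi,2}(x,x)=e_2(x,\phi(x))$ is the relevant self-pairing and that $\chi$ has order $2$, so the only surviving term is the asserted triple cup-product.

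The main obstacle I anticipate is purely bookkeeping: keeping track of signs and of the ambiguity in the choice of the lift $\tilde x\in E[4]$, and making sure the $\rou_2$-valued versus $\rou_4$-valued pairings are composed in the right order so that $\delta(x)\cup_e\phi(x)$ genuinely lands in $H^1(k_v,\rou_4)$ rather than $H^1(k_v,\rou_2)$ — the point being that $\delta(x)$ is $\rou_2$-valued, $\phi(x)\in E[2]$, their $e$-cup-product is $\rou_2$-valued, and it is the extension $\rou_2\hookrightarrow\rou_4$ that promotes it. I expect that the cleanest route is to verify both formulae by a direct $2$-cocycle computation with the explicit section above, rather than by invoking abstract cup-product formalism, since the non-abelian Theta group makes the abstract statements slightly delicate; the explicit computation also makes the compatibility with \cite{MS2021}*{Definition 5.19, Example 5.20} transparent.
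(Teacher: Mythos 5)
Your proposal is correct and follows essentially the same route as the paper: lift $x$ to the image of $(1,\bar x)$ in $\cH_\phi$ with $2\bar x=x$, compute the Galois $1$-cocycle $\sigma\mapsto\theta^{-1}\sigma(\theta)$ for \ref{item:u}, and for \ref{item:chicupu} lift $\chi\cup x$ to $\sigma\mapsto\theta^{\omega(\sigma)}$ and compute its coboundary, using that the commutator pairing $P_{\phi,2}$ is alternating and that $\theta^2$ (the image of $(1,x)\in N$) is trivial in $\cH_\phi$. The only imprecision is your opening framing of \ref{item:u} in terms of the extension $2$-cocycle measuring the failure of the section to be a homomorphism --- the connecting map on $H^0$ is governed instead by the failure of the lift to be Galois-fixed, which is exactly the computation you then carry out.
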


\begin{proof}
\ref{item:u} Let $\overline{x}\in E[4]$ be such that $2\overline{x}=x$.
Then $(1,\overline{x})$ is a lift of $x$ to $U_{\phi}$ under the maps in \eqref{theta_group_diag}.
We then compute, for $\sigma \in G_{k_v}$, 
\[(1,\overline{x})^{-1}\cdot \sigma(1,\overline{x})=\big(P_{ \phi,4}(\overline{x},\sigma \overline{x}),\sigma \overline{x}-\overline{x}\big).\]
Since $\sigma \overline{x}-\overline{x}$ lies in $E[2]$, reducing the above expression modulo $N$
we see from commutativity of \eqref{theta_group_diag} that the image of $x$ under the connecting
map associated to the bottom row of \eqref{theta_group_diag} is the $1$-cocycle $\rho\colon G_{k_v}\rightarrow \rou_4$ defined by
\[\rho(\sigma)=P_{\phi,4}(\overline{x},\sigma \overline{x})=P_{\phi,4}(\overline{x}-\sigma \overline{x},\sigma \overline{x}),\]
the last equality following from the fact that $P_{\phi,4}$ is alternating.
Again using that $\sigma \overline{x}-\overline{x}$ lies in $E[2]$, this is equal to 
$e_2(\overline{x}-\sigma \overline{x}, \phi(x)) =\big(\delta(x)\cup \phi(x)\big)(\sigma),$
as desired.

\ref{item:chicupu} Let $\omega\colon G_{k_v}\rightarrow \{0,1\}\subseteq \mathbb{Z}$ be such that, for all $\sigma \in G_{k_v}$,
we have $\chi(\sigma)=(-1)^{\omega(\sigma)}$. Let $\theta \in \cH_{\phi}$ be any lift of $x$.
Then the function $\rho\colon G_{k_v}\rightarrow \cH_{\phi}$ sending $\sigma \in G_{k_v}$ to
$\rho(\sigma)=\theta^{\omega(\sigma)}$ gives a lift of $\chi \cup x$ to $\cH_{\phi}$.
We then compute, for $\sigma, \tau \in G_{k_v}$,
\begin{align*}
  \rho(\sigma)\cdot \sigma& \rho(\tau)\cdot \rho(\sigma \tau)^{-1}= 
  \theta^{\omega(\sigma)}\cdot \sigma(\theta)^{\omega(\tau)}\cdot \theta^{-\omega(\sigma \tau)}\\
  &= \theta^{\omega(\sigma)}\cdot \sigma(\theta)^{\omega(\tau)}\cdot \theta^{-\omega(\sigma)}\cdot \theta^{-\omega(\tau)}\cdot \theta^{(d\omega)(\sigma,\tau)}\\
  &= [\theta^{\omega(\sigma)},\sigma(\theta)^{\omega(\tau)}]\cdot (\sigma(\theta)\cdot \theta^{-1})^{\omega(\tau)}\cdot \theta^{(d\omega)(\sigma,\tau)}\\
  &=  [\theta^{\omega(\sigma)},\sigma(\theta)^{\omega(\tau)}]\cdot   [\sigma(\theta),\theta^{-1}]^{\omega(\tau)}\cdot (\theta^{-1}\cdot \sigma(\theta))^{\omega(\tau)}\cdot \theta^{(d\omega)(\sigma,\tau)},
\end{align*}
where $(d\omega)(\sigma,\tau)$ is equal to $2$ if $\omega(\sigma)=\omega(\tau)=1$, and is $0$ otherwise.
Since the commutator pairing associated to the bottom row of \eqref{theta_group_diag} is equal
to $P_{\phi,2}$, the first two terms vanish. With $\overline{x}$ as in the proof of \ref{item:u},
we can take $\theta$ to be the image of $(1,\overline{x})$ in $\cH_{\phi}$.
Then $\theta^2$ is the image of $(1,\overline{x})^2=(1,x)$ in $\cH_{\phi}$, which is trivial.
From this we conclude that the final term vanishes also. Finally, the computation in part \ref{item:u}
shows that  $(\theta^{-1}\cdot \sigma(\theta))^{\omega(\tau)}$ is a $2$-cocycle representing the class
of $\delta(x)\cup \phi(x)\cup \chi$. 
\end{proof}

\subsection{Galois action on the $4$-torsion}\label{sec:GalAction4Torsion}
Denote by $\Aut(E[4]/E[2])$ the subgroup of 
$\Aut E[4]$ consisting of elements that restrict to the trivial automorphism of $E[2]$. 

\begin{lemma} \label{lem:gamma_is_iso}
For $\phi \in \End(E[2])$, the map $f(\phi)\colon E[4] \to E[4]$ defined by \[f(\phi)(x)=x+\phi(2x)\]
 is an element of $\Aut(E[4]/E[2])$. The resulting map $f\colon\End(E[2]) \to \Aut(E[4]/E[2])$ is an isomorphism of groups.
\end{lemma}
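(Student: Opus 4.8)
The plan is to verify directly that $f(\phi)$ is a homomorphism of abelian groups, that it fixes $E[2]$ pointwise, and that it is invertible; then to check that $f$ itself is a group homomorphism; and finally to produce an inverse, thereby establishing the claimed isomorphism. First I would check that $f(\phi)$ is additive: for $x,y\in E[4]$ one has $f(\phi)(x+y)=x+y+\phi(2x+2y)=x+y+\phi(2x)+\phi(2y)=f(\phi)(x)+f(\phi)(y)$, using that $\phi$ is $\F_2$-linear and that $2\colon E[4]\to E[2]$ is additive. Next, if $x\in E[2]$ then $2x=0$, so $f(\phi)(x)=x$; hence $f(\phi)$ restricts to the identity on $E[2]$. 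To see that $f(\phi)$ is bijective, note that $\phi(2x)\in E[2]$ for all $x$, so $f(\phi)$ is the identity modulo the subgroup $E[2]$, and in particular $2\cdot f(\phi)(x)=2x$; thus $f(\phi)(x)=0$ forces $2x=0$, i.e. $x\in E[2]$, and then $f(\phi)(x)=x=0$. So $f(\phi)$ is injective, hence bijective since $E[4]$ is finite. This shows $f(\phi)\in\Aut(E[4]/E[2])$.

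Next I would verify that $f$ is a group homomorphism $\End(E[2])\to\Aut(E[4]/E[2])$, where the source is an abelian group under addition (composition is not relevant here) and the target a group under composition. For $\phi,\psi\in\End(E[2])$ and $x\in E[4]$,
\[
(f(\phi)\circ f(\psi))(x)=f(\phi)\bigl(x+\psi(2x)\bigr)=x+\psi(2x)+\phi\bigl(2x+2\psi(2x)\bigr).
\]
Since $\psi(2x)\in E[2]$, we have $2\psi(2x)=0$, so the right-hand side equals $x+\psi(2x)+\phi(2x)=x+(\phi+\psi)(2x)=f(\phi+\psi)(x)$. Hence $f(\phi)\circ f(\psi)=f(\phi+\psi)$, and in particular the image of $f$ is a subgroup of $\Aut(E[4]/E[2])$ and $f$ is a homomorphism.

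It remains to show $f$ is bijective. Injectivity: if $f(\phi)=\mathrm{id}$ then $\phi(2x)=0$ for all $x\in E[4]$; since $2\colon E[4]\to E[2]$ is surjective, $\phi=0$. For surjectivity I would argue by a counting argument combined with the construction of an explicit inverse. Given $\sigma\in\Aut(E[4]/E[2])$, the map $x\mapsto\sigma(x)-x$ takes values in $E[2]$ (as $\sigma$ is trivial on $E[2]$, equivalently $2\sigma(x)=2x$) and kills $E[2]$, so it factors through $2\colon E[4]\twoheadrightarrow E[2]$, yielding a well-defined $\phi_\sigma\in\End(E[2])$ with $\phi_\sigma(2x)=\sigma(x)-x$; one checks $\phi_\sigma$ is additive because $\sigma$ is, and then $f(\phi_\sigma)=\sigma$. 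Alternatively, one notes both groups have order $2^{(2g)^2}$: the source $\End(E[2])\cong M_{2g}(\F_2)$ has this order, and an element of $\Aut(E[4]/E[2])$ is determined by its effect on a basis of $E[4]$, each basis vector being sent to itself plus an arbitrary element of $E[2]\cong\F_2^{2g}$, giving $(2^{2g})^{2g}$ choices, all of which are automorphisms by the argument above; so $f$ is an isomorphism by injectivity and equality of cardinalities. The only mild subtlety — and the step I would be most careful about — is confirming that $x\mapsto\sigma(x)-x$ really does factor through multiplication by $2$, i.e. that it vanishes on $E[2]$ and that its image lies in $E[2]$; both follow immediately from $\sigma|_{E[2]}=\mathrm{id}$, but they are the crux of the surjectivity argument, so I would state them explicitly.
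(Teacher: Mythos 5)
Your proof is correct and follows essentially the same route as the paper's: direct verification that $f(\phi)$ is an endomorphism fixing $E[2]$, injectivity of $f(\phi)$ via $x=-\phi(2x)\in E[2]$, and construction of the explicit inverse $\sigma\mapsto\phi_\sigma$ with $\phi_\sigma(2x)=\sigma(x)-x$. You simply spell out the details (additivity, the homomorphism computation, well-definedness of $\phi_\sigma$) that the paper labels as straightforward.
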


\begin{proof}
Let $\phi \in \End(E[2])$ and let $f(\phi)$ be as in the statement. It is clear that $f(\phi)$ is an endomorphism of $E[4]$ which restricts to the identity on $E[2]$.  Let $x\in E[4]$ be such that $f(\phi)(x)=0$. Then $x=-\phi(2x)$ is an element of $E[2]$, hence $0=-\phi(2x)=x$. Thus $f(\phi)$ is injective, hence is an automorphism of $E[4]$ by counting. A straightforward computation shows that $f$ is a homomorphism. To complete the proof, we note that the inverse of $f$ is given by sending $\sigma \in \Aut(E[4]/E[2])$ to the endomorphism of $E[2]$ defined by 
\[y\longmapsto \sigma x - x \quad\text{ for any }x\in E[4]\text{ such that }2x=y.\]
Here the choice of $x$ is irrelevant, since $\sigma$ acts trivially on $E[2]$.
\end{proof}

\begin{definition} \label{not:gamma_def} \label{def:definition_of_gamma}
Since $E/k$ has full rational $2$-torsion, the absolute Galois group $G_k$ acts on $E[4]$ through a homomorphism $G_k \to \Aut(E[4]/E[2])$.  We define 
$\gamma\colon G_k \to \End(E[2])$
as the composition of this map with the isomorphism $\Aut(E[4]/E[2])\cong \End(E[2])$ afforded by Lemma \ref{lem:gamma_is_iso}. Explicitly, for $\sigma \in G_k$ and $x\in E[2]$, we have 
$\gamma(\sigma)(x)=\delta(x)(\sigma).$
\end{definition}

Note that $\gamma$ factors through $k(E[4])/k$, and as such is unramified at primes outside $\Sigma$.
It realises $\Gal(k(E[4])/k)$ as a subgroup of $\End(E[2])$.  

\begin{lemma} \label{lem:twisted_boundary_map}
Let $v$ be a place of $k$ and $d\in k^\times$. Then, for all $x\in E[2]$, we have $\delta_{d,v}(x)=\delta_{v}(x)+\psi_d \cup x$. For all $\sigma \in G_{k_{v}}$, we have 
\[\delta_{d,v}(x)(\sigma)=\gamma(\sigma)(x)+\psi_d(\sigma)x.\]
\end{lemma}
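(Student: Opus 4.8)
The plan is to establish the first displayed identity by a direct cocycle computation, and then to read off the second from it using the explicit description of $\gamma$ in Definition \ref{not:gamma_def}.

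For the first identity, fix $x\in E[2]$ and, using the $G_k$-isomorphism $\xi\colon E_d[2]\xrightarrow{\sim}E[2]$, regard $x$ also as an element of $E_d[2]\subseteq E_d(k_v)$. Choose a point $\bar{x}$ (over a separable closure of $k_v$) with $2\bar{x}=x$ in $E_d$, and put $\bar{y}=\xi(\bar{x})$, so that $2\bar{y}=\xi(x)=x$ in $E$; thus $\sigma\mapsto \sigma\bar{y}-\bar{y}$ is a cocycle representing $\delta_v(x)$, while by construction $\delta_{d,v}(x)$ is represented by $\sigma\mapsto \xi(\sigma\bar{x}-\bar{x})=\xi(\sigma\bar{x})-\bar{y}$. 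The only input needed is the defining relation $\xi^{-1}\xi^{\sigma}=[(-1)^{\psi_d(\sigma)}]$, i.e. $\xi^{\sigma}=\xi\circ[(-1)^{\psi_d(\sigma)}]$ as maps $E_d\to E$; combined with $\sigma\bar{y}=\sigma(\xi(\bar{x}))=\xi^{\sigma}(\sigma\bar{x})$ and the fact that $\xi$ is a homomorphism, this gives $\xi(\sigma\bar{x})=(-1)^{\psi_d(\sigma)}\sigma\bar{y}$, and hence
\[
  \delta_{d,v}(x)(\sigma)=(-1)^{\psi_d(\sigma)}\sigma\bar{y}-\bar{y}.
\]
Now I would split into cases: if $\psi_d(\sigma)=0$ the right-hand side is $\sigma\bar{y}-\bar{y}=\delta_v(x)(\sigma)$, and if $\psi_d(\sigma)=1$ it is $-\sigma\bar{y}-\bar{y}$, which equals $(\sigma\bar{y}-\bar{y})+x$ because $2\sigma\bar{y}=\sigma(x)=x$ as $x$ is a rational $2$-torsion point. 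In both cases the right-hand side equals $\delta_v(x)(\sigma)+\psi_d(\sigma)x$, which under the identification $H^1(k_v,\F_2)\otimes E[2]\cong H^1(k_v,E[2])$ is exactly $(\delta_v(x)+\psi_d\cup x)(\sigma)$. This proves $\delta_{d,v}(x)=\delta_v(x)+\psi_d\cup x$.

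For the second identity, note that $\delta_v(x)$ is the localisation at $v$ of the global class $\delta(x)$, so $\delta_v(x)(\sigma)=\delta(x)(\sigma)=\gamma(\sigma)(x)$ for all $\sigma\in G_{k_v}$ by the explicit formula for $\gamma$ in Definition \ref{not:gamma_def}; substituting this into the cocycle identity just obtained yields $\delta_{d,v}(x)(\sigma)=\gamma(\sigma)(x)+\psi_d(\sigma)x$, as desired. The computation presents no genuine obstacle; the only point requiring care is that $\xi$ is defined over $K$ rather than over $k$, so that $\xi^{\sigma}\neq\xi$ in general — but the discrepancy acts as $\pm 1$ and is therefore invisible on $E[2]$, which is precisely what makes the identification $E_d[2]\cong E[2]$ (hence $\delta_{d,v}$) well defined over $k$ and what produces the correction term $\psi_d\cup x$.
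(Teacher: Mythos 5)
Your proof is correct and follows essentially the same route as the paper: both reduce to the identity $\delta_{d,v}(x)(\sigma)=(-1)^{\psi_d(\sigma)}\sigma(\bar y)-\bar y$ for a $4$-division point $\bar y$ of $x$ (you derive the twisted action explicitly from $\xi^{-1}\xi^\sigma=(-1)^{\psi_d(\sigma)}$, the paper states it directly) and then rewrite this as $\gamma(\sigma)(x)+\psi_d(\sigma)x$, your case split on $\psi_d(\sigma)$ being the same computation as the paper's manipulation $((-1)^{\psi_d(\sigma)}-1)\sigma(\bar y)=\psi_d(\sigma)x$. The only cosmetic difference is that you prove the cup-product formula first and deduce the $\gamma$-formula, whereas the paper does it in the opposite order.
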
 

\begin{proof}
Via the fixed identification of $E$ and $E_d$ over $k(\sqrt{d})$ we identify $E[4]$ and $E_d[4]$.
Unlike in the case of $2$-torsion, this identification does not respect the Galois action;
instead, $E_d[4]$ corresponds to $E[4]$ with the new action 
$\sigma \cdot y=(-1)^{\psi_d(\sigma)}\sigma(y).$
Now with $\sigma$ and $x$ fixed as in the statement, take $y\in E[4]$ such that $2y=x$. We then have 
\begin{eqnarray*}
\delta_{d,v}(x)(\sigma)&=& (-1)^{\psi_d(\sigma)}(\sigma)\sigma(y)-y\\
&=& (\sigma(y)-y)+((-1)^{\psi_d(\sigma)}-1)\sigma(y)\\
&=& \gamma(\sigma)(x)+\psi_d(\sigma)x.
 \end{eqnarray*}
This proves the second part of the statement. For the first,   note that the right hand side of the above formula is the result of evaluating $\delta_{v}(x)+\psi_d \cup x$ at $\sigma$.
\end{proof}

\begin{lemma} \label{gamma_and_weil_lemma}
Let $\sigma \in G_k$. Then for all $x,y \in E[2]$ we have
\[e(x,\gamma(\sigma)y)=e(\gamma(\sigma)x,y)+\psi_{-1}(\sigma)e(x,y).\] 
\end{lemma}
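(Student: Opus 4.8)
The plan is to evaluate the action of $\sigma$ on a single value of the Weil pairing $e_4$ on $E[4]$ in two different ways. Fix $\sigma\in G_k$ and $x,y\in E[2]$, and choose lifts $\bar{x},\bar{y}\in E[4]$ with $2\bar{x}=x$ and $2\bar{y}=y$; set $\zeta=e_4(\bar{x},\bar{y})\in\rou_4$. First I would record the two standard compatibilities to be used repeatedly: Galois-equivariance, $e_4(\sigma\bar{x},\sigma\bar{y})={}^{\sigma}\zeta$; and the relation $e_4(S,T)=e_2(2S,T)$ for $S\in E[4]$, $T\in E[2]$, with values in $\rou_2\subseteq\rou_4$. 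The second of these has two immediate consequences: $\zeta^2=e_4(\bar{x},2\bar{y})=e_4(\bar{x},y)=e_2(x,y)$, and $e_4$ restricts to the trivial pairing on $E[2]\times E[2]$.

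Next I would compute ${}^\sigma\zeta$ cyclotomically. Since $k(\sqrt{-1})=k(\rou_4)$ and $\psi_{-1}$ is the quadratic character cutting out this extension, the mod-$4$ cyclotomic character satisfies $\chi_4(\sigma)\equiv 1+2\psi_{-1}(\sigma)\pmod 4$, so ${}^\sigma\zeta=\zeta^{\chi_4(\sigma)}=\zeta\cdot(\zeta^2)^{\psi_{-1}(\sigma)}=\zeta\cdot e_2(x,y)^{\psi_{-1}(\sigma)}$, where $\psi_{-1}(\sigma)$ is read as an element of $\{0,1\}$.

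On the other hand, by the explicit description of $\gamma$ (\Cref{def:definition_of_gamma}) we have $\sigma\bar{x}=\bar{x}+\gamma(\sigma)(x)$ and $\sigma\bar{y}=\bar{y}+\gamma(\sigma)(y)$, with $\gamma(\sigma)(x),\gamma(\sigma)(y)\in E[2]$. Expanding $e_4(\sigma\bar{x},\sigma\bar{y})$ by bilinearity, the term $e_4(\gamma(\sigma)x,\gamma(\sigma)y)$ is trivial, and the two cross terms equal $e_4(\bar{x},\gamma(\sigma)y)=e_2(x,\gamma(\sigma)y)$ and $e_4(\gamma(\sigma)x,\bar{y})=e_4(\bar{y},\gamma(\sigma)x)^{-1}=e_2(y,\gamma(\sigma)x)$, using $e_4(S,T)=e_2(2S,T)$ and antisymmetry of $e_4$. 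Hence ${}^\sigma\zeta=\zeta\cdot e_2(x,\gamma(\sigma)y)\cdot e_2(\gamma(\sigma)x,y)$. Comparing the two expressions for ${}^\sigma\zeta$ and cancelling $\zeta$ gives $e_2(x,\gamma(\sigma)y)\,e_2(\gamma(\sigma)x,y)=e_2(x,y)^{\psi_{-1}(\sigma)}$; writing $e_2=(-1)^{e}$ and reading exponents in $\F_2$ yields the asserted identity, recalling that $e$ is symmetric and that $+$ and $-$ agree in $\F_2$.

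I do not expect any serious obstacle: the argument is formal once the Weil-pairing compatibilities and the identification of the Galois action on $\rou_4$ with $\psi_{-1}$ are in hand, so the only thing requiring genuine care is the bookkeeping between $e_2$, $e_4$, and their $\rou_2$- versus $\rou_4$-valued incarnations.
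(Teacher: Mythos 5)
Your proof is correct and follows essentially the same route as the paper's: apply Galois-equivariance of $e_4$, substitute $\sigma\bar{x}=\bar{x}+\gamma(\sigma)(2\bar{x})$, and expand, with the $\psi_{-1}(\sigma)e(x,y)$ term arising from the action of $\sigma$ on $\rou_4$. The paper states this in one line; you have simply made explicit the compatibility $e_4(S,T)=e_2(2S,T)$ and the cyclotomic-character bookkeeping that the paper leaves implicit.
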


\begin{proof}
From the definition of $\gamma$, each $\sigma \in G_k$ acts on $E[4]$ as
\[x \longmapsto x+\gamma(\sigma)2x.\]
Moreover, for all $x,y \in E[4]$, we have $e_4(\sigma x, \sigma y)=\sigma e_4(x,y)$.
The result follows from substituting $\sigma x=x+\gamma(\sigma)2x$ and $\sigma y=y+\gamma(\sigma)2y$ into this identity. 
\end{proof}

\begin{definition}
Let $\End^+(E[2])$ denote the additive subgroup of $\End(E[2])$ consisting of elements $\phi$ such that
$e(x,\phi(y))=e(\phi(x),y)$
for all $x,y\in E[2]$ (i.e. consisting of endomorphisms that are self-adjoint for the Weil pairing). 
\end{definition}
 
 \begin{remark}
   By Lemma \ref{gamma_and_weil_lemma}, the image of $\Gal\big(k(E[4])/k(\sqrt{-1})\big)$ under $\gamma$ is contained in $\End^+(E[2])$. 
 \end{remark}
 
Note that $\End^+(E[2])$ is a sub-Lie algebra of $\End(E[2])$ (where we view the latter as a Lie algebra over $\F_2$ with Lie bracket given by the commutator).

\begin{proposition} \label{transvections_proposition}
For $x\in E[2]$, denote by $\phi_x\in \End(E[2])$ the endomorphism of $E[2]$ defined by $\phi_x(y)=e(y,x)x$.  
Then the following assertions hold.
\begin{enumerate}[leftmargin=*,label=\upshape{(\arabic*)}]
\item\label{item:generateEndPlus} The maps $\phi_x$, for $x\in E[2]$,
  lie in $ \End^+(E[2])$.
  \item \label{item:generateEndPlusLie} Let $x_1,...,x_{2g}$ be a basis for $E[2]$ as an $\F_2$-vector space. Suppose that, for all $i\neq j$, we have $e(x_i,x_j)=1$. Then the maps $\{\phi_{x_i}\}_{i=1}^{2g}$ generate $\End^+(E[2])$ as a Lie algebra over $\F_2$.
\item\label{item:EndPlusSimple} As an $\End^+(E[2])$-module, $E[2]$ is simple.
\item\label{item:endosScalars} The ring of $\End^+(E[2])$-equivariant endomorphisms of $E[2]$ is $\F_2$.
\end{enumerate}
\end{proposition}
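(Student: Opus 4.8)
The plan is to treat the four parts separately: (1), (3) and (4) are short direct arguments, while (2) is the substantive one. For (1), since $e$ is alternating and hence symmetric over $\F_2$, one computes $e(y,\phi_x(z))=e(z,x)e(y,x)=e(\phi_x(y),z)$ for all $y,z\in E[2]$, so $\phi_x\in\End^+(E[2])$. For (3), let $0\neq W\subseteq E[2]$ be an $\End^+(E[2])$-submodule and fix $0\neq w\in W$; by non-degeneracy of $e$, for every $x$ with $e(w,x)=1$ we get $x=\phi_x(w)\in W$, and the complement of the hyperplane $\{x:e(w,x)=0\}$ spans $E[2]$, so $W=E[2]$. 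For (4), if $\psi\in\End(E[2])$ commutes with every $\phi_x$, then evaluating $\psi\phi_x=\phi_x\psi$ at $v$ yields $e(v,x)\psi(x)=e(\psi(v),x)x$; choosing for each $x\neq 0$ a $v$ with $e(v,x)=1$ shows $\psi(x)\in\{0,x\}$ for all $x$, and then comparing $\psi(x+x_0)$ with $\psi(x)+\psi(x_0)$ for some $x_0$ with $\psi(x_0)=x_0\neq 0$ forces $\psi=0$ or $\psi=\mathrm{id}$. (Alternatively, (3) and Schur's lemma give that the endomorphism ring is a finite division ring, hence a field of characteristic $2$, and the displayed identity pins it down to $\F_2$.)

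For (2), the first step is the bracket identity $[\phi_x,\phi_y](z)=e(x,y)\bigl(e(z,y)x+e(z,x)y\bigr)$, which follows from $\phi_x\phi_y(z)=e(z,y)e(y,x)x$ together with working in characteristic $2$. Under the hypothesis $e(x_i,x_j)=1$ for $i\neq j$, this says that $[\phi_{x_i},\phi_{x_j}]$ is the self-adjoint endomorphism $z\mapsto e(z,x_j)x_i+e(z,x_i)x_j$. The key second step is to translate everything into symmetric bilinear forms: the $\F_2$-linear map $\phi\mapsto P_\phi$, with $P_\phi(u,v)=e(u,\phi(v))$, is an isomorphism from $\End(E[2])$ onto the space of bilinear forms on $E[2]$ that carries $\End^+(E[2])$ onto the symmetric forms, so $\dim_{\F_2}\End^+(E[2])=\binom{2g+1}{2}=g(2g+1)$. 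Writing $\xi_i:=e(-,x_i)$, which is a basis of $E[2]^{*}$ since $\{x_i\}$ is a basis and $e$ is non-degenerate, one computes $P_{\phi_{x_i}}=\xi_i\otimes\xi_i$ and $P_{[\phi_{x_i},\phi_{x_j}]}=\xi_i\otimes\xi_j+\xi_j\otimes\xi_i$ for $i\neq j$. These $2g+\binom{2g}{2}=g(2g+1)$ forms are precisely the standard basis of the space of symmetric bilinear forms, hence linearly independent, hence a basis. Therefore the $\phi_{x_i}$ together with their pairwise brackets already span $\End^+(E[2])$; since $\End^+(E[2])$ is a Lie subalgebra of $\End(E[2])$ (as observed just above the proposition) and contains all the $\phi_{x_i}$, the Lie subalgebra generated by $\{\phi_{x_i}\}$ must equal $\End^+(E[2])$.

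The main obstacle is the bookkeeping in (2): one has to be sure that no iterated brackets beyond the pairwise ones are needed, and this works out exactly because the $2g$ elements $\phi_{x_i}$ together with the $\binom{2g}{2}$ pairwise brackets match the dimension $g(2g+1)$ of $\End^+(E[2])$ and correspond, under $\phi\mapsto P_\phi$, to the standard basis of the symmetric bilinear forms. Everything else reduces to short direct verifications and repeated use of the non-degeneracy and symmetry of $e$.
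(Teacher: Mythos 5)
Your proof is correct and follows essentially the same route as the paper: parts (1), (3), (4) are the same direct computations, and for (2) your identification $\phi\mapsto e(-,\phi(-))$ of $\End^+(E[2])$ with symmetric bilinear forms, sending $\phi_{x_i}\mapsto\xi_i\otimes\xi_i$ and $[\phi_{x_i},\phi_{x_j}]\mapsto\xi_i\otimes\xi_j+\xi_j\otimes\xi_i$, is just the dual (via the Weil pairing) of the paper's identification $\End(E[2])\cong E[2]\otimes E[2]$ carrying $\End^+$ to $\Sym_2(E[2])$, with the same bracket computation and spanning argument. Nothing further is needed.
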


\begin{proof}
\ref{item:generateEndPlus} Straightforward computation. 

\ref{item:generateEndPlusLie} This is essentially \cite[Lemma 3.2]{MR4316889}. Consider the  isomorphism 
\begin{equation} \label{eq:tensor_identification}
\End(E[2])\cong E[2]\otimes \Hom(E[2],\F_2)\cong E[2]\otimes E[2]
\end{equation}
 of $\F_2$-vector spaces,
where the first map is the standard isomorphism, and the second map is induced by the
Weil pairing on the second factor. Under this map, $\End^+(E[2])$ corresponds to the
subgroup $\Sym_2(E[2])$ of symmetric tensors. Moreover, for $x,y\in E[2]$, one computes that the map $\phi_x$ corresponds to the pure tensor $x\otimes x$, and the commutator $\phi_x\phi_y-\phi_y\phi_x$ corresponds to $e(x,y)(x\otimes y+y\otimes x).$
In particular, our assumptions mean that the image under  \eqref{eq:tensor_identification}  of the Lie algebra generated by $\{\phi_{x_i}\}_{i=1}^{2g}$  contains the elements $x_i\otimes x_i$, for $i\in \{1,....,2g\}$, and $x_i\otimes x_j+x_j\otimes x_i$  for all $i,j\in \{1,...,2g\}$ with $i<j$. Such elements span $\Sym_2(E[2])$ as an $\F_2$-vector space.

\ref{item:EndPlusSimple} Let $V$ be a subspace of $E[2]$ which is invariant under the action of $\End^+(E[2])$,
and suppose that a non-zero vector $y$ lies in $V$.  Given any $x\in E[2]$ satisfying $e(x,y)=1$,
we see that $\phi_x(y)=e(x,y)x=x$ lies in $V$. In particular, $V$ contains both $0$ and the
unique non-trivial coset of $\left \langle y\right \rangle ^\perp$. This is only possible if $V=E[2]$. 

\ref{item:endosScalars} Suppose $\alpha\in \End(E[2])$ commutes with all elements of $\End^+(E[2])$. Then for any $x,y\in E[2]$,  we have 
\[0=(\alpha \phi_x-\phi_x\alpha)(y)=e(x,y)\alpha(x)-e(\alpha(y),x)x.\]
Take $x\neq 0$ and take $y$ to be any element of $E[2]$ with $e(x,y)=1$. Then the above identity shows that $x\in E[2]$ is an eigenvector for $\alpha$. That is, every $x\in E[2]$ is an eigenvector for $\alpha$. This is only possible if $\alpha$ is a scalar multiple of the identity (if $v_1$ and $v_2$ are linearly independent, and if $\alpha(v_i)=\lambda_iv_i$ for $i=1,2$, then considering $\alpha(v_1+v_2)$ we see that $\lambda_1=\lambda_2$).
\end{proof}

\section{Analytic preliminaries}\label{sec:analytic}

For the rest of the section we fix the following data.
Let $L$ be a finite Galois number field, let $n_L$ be its degree, let $\Sigma$ be a set
of places of $\Q$ containing $2$, $\infty$, and all places that ramify in $L$,
let $N$ be the product of the prime numbers corresponding to the finite places in $\Sigma$,
and let $\cP_L$ be the set of prime numbers $p\not\in \Sigma$ that are totally split in $L/\Q$.

Given a subset $U\subset (\Z/4NZ)^\times$, we will be interested in the family of squarefree integers
$$
\cF_{U,L} = \{n\in \Z_{\geq 1}: n\text{ square-free, } p|n\Rightarrow p\in \cP_L, n\hspace{-0.5em}\mod{4N}\in U\}.
$$
For all $X\in \R$, let $\cF_{U,L}(X) = \{n\in \cF_{U,L}: n < X\}$. 

Note that if a prime number $p\not\in \Sigma$ is totally split in $L$, then the $p$-Frobenius automorphism
in $\Gal(\Q(\zeta_{4N})/\Q)$ is contained in the subgroup $S=\Gal(\Q(\zeta_{4N})/L\cap \Q(\zeta_{4N}))$.
Thus, if we identify $S$ with its image in $(\Z/4N\Z)^\times$ under the usual isomorphism $\Gal(\Q(\zeta_{4N})/\Q)\cong (\Z/4NZ)^\times$,
then we have $\cF_{U,L} = \cF_{U\cap S,L}$. 

If $U=\{c\}$ is a
singleton, we will abbreviate $\{c\}$ to $c$ in the subscript. If we have $U=(\Z/4N\Z)^\times$, then we will
drop it from the notation and just write $\cF_L$, respectively $\cF_L(X)$.
In particular, we also have $\cF_L = \cF_{S,L}$.

\subsection{Asymptotics for the families $\cF_{U,L}(X)$}\label{sec:family_densities}

\begin{lemma}\label{lem:average_chi}
  Let $\chi$ be a Dirichlet character modulo $4N$. Then we have
  $$
    \sum_{n\in \cF_L(X)}\chi(n) = c_\chi X (\log X)^{m(\chi)-1} + O(X(\log X)^{m(\chi)-2}),
  $$
  where
  $$
    m(\chi) = \leftchoice{1/n_L}{\chi|_S = \triv;}{0}{\text{otherwise},}
  $$ and 
  $$
  c_{\chi} = \prod_{p\text{ prime}}\left(1+\frac{\chi(p)\charfunc_{\cP_L}(p)}{p}\right) \left(1-\frac{1}{p}\right)^{m(\chi)}.
  $$
\end{lemma}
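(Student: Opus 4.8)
The plan is to treat this as a standard Selberg--Delange / Wirsing-type mean value estimate for a multiplicative function supported on squarefree integers whose prime support is constrained to lie in $\cP_L$ and in a fixed residue class filter modulo $4N$. First I would set up the Dirichlet series
\[
  F_\chi(s) = \sum_{\substack{n\geq 1 \\ n\text{ squarefree},\ p\mid n\Rightarrow p\in\cP_L}} \frac{\chi(n)}{n^s} = \prod_{p\in\cP_L}\left(1+\frac{\chi(p)}{p^s}\right),
\]
valid for $\Re(s)>1$. The key structural observation is that membership in $\cP_L$ is itself a Frobenian condition: a prime $p\notin\Sigma$ lies in $\cP_L$ iff its Frobenius in $\Gal(L/\Q)$ is trivial, and the residue condition $n\bmod 4N\in U$ (here $U$ is everything, but we keep $\chi$) is a Frobenius condition in $\Gal(\Q(\zeta_{4N})/\Q)$. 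Combining these over the compositum $M = L\cdot\Q(\zeta_{4N})$, the indicator $\charfunc_{\cP_L}(p)\chi(p)$ can be written, up to the finitely many ramified primes in $\Sigma$, as a fixed finite linear combination of values of Artin characters / Hecke characters of $M/\Q$ evaluated at $p$. Concretely, $\charfunc_{\cP_L}(p)$ is $\frac{1}{n_L}\sum_{\psi}\psi(\mathrm{Frob}_p)$ summed over the characters $\psi$ of $\Gal(L/\Q)$ that are trivial on... — more cleanly, $\charfunc_{\cP_L}\cdot\chi$ restricted to the split primes is governed by the subgroup $S=\Gal(\Q(\zeta_{4N})/L\cap\Q(\zeta_{4N}))$, and $\chi$ restricted to $\cP_L$ is trivial on all such primes precisely when $\chi|_S=\triv$.

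Next I would compare $F_\chi(s)$ with a suitable power of the Dedekind zeta function of $M$ (or of the relevant subfield). When $\chi|_S=\triv$, the function $p\mapsto \chi(p)\charfunc_{\cP_L}(p)$ agrees for $p\notin\Sigma$ with $\charfunc_{\cP_L}(p)$, which has average value $1/n_L$ over primes (Chebotarev), so $F_\chi(s)$ extends to a function of the form $G_\chi(s)\,\zeta(s)^{1/n_L}$ with $G_\chi$ holomorphic and non-vanishing in a neighbourhood of $\Re(s)\geq 1$; here I would use the standard factorisation $\prod_{p\in\cP_L}(1+\chi(p)p^{-s}) = \prod_{p\in\cP_L}(1-\chi(p)^2 p^{-2s})(1-\chi(p)p^{-s})^{-1}$ to pass from the squarefree Euler product to an honest $L$-function Euler product, the quadratic correction factor being absolutely convergent for $\Re(s)>1/2$. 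When $\chi|_S\neq\triv$, the relevant Dirichlet/Hecke $L$-function is entire and non-zero at $s=1$, so $F_\chi(s)$ is holomorphic at $s=1$ and one gets $m(\chi)=0$, i.e. cancellation down to $O(X(\log X)^{-2})$ — actually one gets a power saving but the stated bound suffices. Then a contour shift (Selberg--Delange method, or the Landau--Selberg--Delange theorem as in Tenenbaum's book, Ch.~II.5, or Wirsing's theorem) yields
\[
  \sum_{n\in\cF_L(X)}\chi(n) = c_\chi X(\log X)^{m(\chi)-1} + O\!\left(X(\log X)^{m(\chi)-2}\right),
\]
with the leading constant $c_\chi$ expressed as the value at $s=1$ of $G_\chi(s)$ divided by the appropriate Gamma-factor; a routine rearrangement of the Euler product then gives exactly the product formula $c_\chi = \prod_p\bigl(1+\chi(p)\charfunc_{\cP_L}(p)/p\bigr)(1-1/p)^{m(\chi)}$ claimed in the statement. (In the case $m(\chi)=0$ the product $\prod_p(1+\chi(p)\charfunc_{\cP_L}(p)/p)$ converges conditionally, matching $c_\chi = F_\chi(1)$.)

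The main obstacle, and the only point requiring care rather than bookkeeping, is establishing that $F_\chi(s)$ really does have the asserted analytic behaviour \emph{uniformly} enough to apply a Selberg--Delange-type Tauberian theorem — i.e. exhibiting a holomorphic, zero-free continuation of the ratio $F_\chi(s)/\zeta(s)^{m(\chi)n_L \cdot(1/n_L)}$... more precisely $F_\chi(s)/L_M(s)^{1/n_L}$ slightly past $\Re(s)=1$, with polynomial growth in vertical strips, so that the contour-shift error term is genuinely $O(X(\log X)^{m(\chi)-2})$. This rests on standard zero-free regions for Hecke $L$-functions of $M/\Q$; since $M$ is a fixed number field (depending only on $L$ and $N$), no uniformity in the conductor is needed and the classical results apply directly. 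A secondary bookkeeping point is handling the finitely many primes in $\Sigma$ (where $\charfunc_{\cP_L}(p)=0$ by fiat but Euler factors of the $L$-functions may be non-trivial): these contribute only a finite, explicit product that gets absorbed into $G_\chi(s)$ and hence into $c_\chi$, and one checks the resulting constant still matches the stated Euler product, whose $p\in\Sigma$ factors are simply $(1-1/p)^{m(\chi)}$ since $\charfunc_{\cP_L}(p)=0$ there.
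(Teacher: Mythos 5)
Your proposal is correct and is in substance the same argument as the paper's: the paper observes that $n\mapsto \mu^2(n)\chi(n)\prod_{p\mid n}\charfunc_{\cP_L}(p)$ is a Frobenian multiplicative function with respect to the compositum $\tilde{L}=L\cdot\Q(\zeta_{4N})$ and then simply cites a ready-made mean-value theorem for such functions (\cite{DanLilian}*{Lemma 2.8}), computing $m(\chi)$ as the character inner product $\tfrac{1}{n_L}\langle\chi|_{\Gal(\tilde L/L)},\triv\rangle$ exactly as you do. The only difference is that you re-derive the analytic input (Selberg--Delange applied to $\prod_{p\in\cP_L}(1+\chi(p)p^{-s})$) rather than invoking the black-box lemma, which is how that lemma is proved anyway.
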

\begin{proof}
  Define the function $\rho_{\chi}\colon \Z_{\geq 1}\to \C$ by
  $$
    \rho_{\chi}(n) = \mu^2(n)\chi(n)\prod_{p|n}\charfunc_{\cP_L}(p),
  $$
  where recall that $\mu$ denotes the M\"obius function, and the product runs over the prime
  divisors of $n$. This is
  a Frobenian multiplicative function in the sense of \cite{DanLilian}*{Definition 2.7},
  with corresponding Galois field $\tilde{L}=L\cdot\Q(\zeta_{4N})$. More precisely,
  for every prime number $p$ that is unramified in $\tilde{L}$ we have
  $$
  \rho_{\chi}(p) = \chi(\Frob_p)\charfunc_{\Gal(\tilde{L}/L)}(\Frob_p),
  $$
  where $\Frob_p\in \Gal(\tilde{L}/\Q)$ denotes any choice of Frobenius element at $p$,
  and the character $\chi$ is viewed as a character of $\Gal(\tilde{L}/\Q)$ lifted from
  the quotient $\Gal(\Q(\zeta_{4N})/\Q)\cong (\Z/4N\Z)^\times$.
  The sum in the statement of the lemma is equal to $\sum_{n=1}^X\rho_{\chi}(n)$.
  By \cite{DanLilian}*{Lemma 2.8} we have
  $$
  \sum_{n=1}^X\rho_{\chi}(n) = c_{\chi} X(\log X)^{m(\chi)-1} + O(X(\log X)^{m(\chi)-2}),
  $$
  where $c_{\chi}=\prod_{p\text{ prime}}\left(1+\frac{\chi(p)\charfunc_{\cP_L}(p)}{p}\right)\left(1-\frac{1}{p}\right)^{m(\chi)}$
  and
  \begin{eqnarray*}
    m(\chi) & = & \frac{1}{[\tilde{L}:\Q]}\sum_{\sigma\in \Gal(\tilde{L}/\Q)}\chi(\sigma)\charfunc_{\Gal(\tilde{L}/L)}(\sigma)\\
                   & = & \frac{1}{[\tilde{L}:\Q]}\sum_{\sigma\in \Gal(\tilde{L}/L)}\chi(\sigma)\\
                   & = & \frac{1}{n_L}\langle\chi|_{\Gal(\tilde{L}/L)},\triv\rangle.
  \end{eqnarray*}
  Here $\langle\; ,\; \rangle$ denotes the inner product of group characters, and $\triv$, recall, denotes the trivial
  character. Now considering $\chi$ as a character of $\Q(\zeta_{4N})/\Q$, that inner product
  is equal to $\langle \chi|_U,\triv\rangle$, which is equal to $1$ if $\chi$ restricts to the trivial character
  of $U$ and $0$ otherwise. This proves the lemma.
\end{proof}

\begin{proposition}\label{prop:asymptotic_cF}
  There exists a constant $c\in \R_{>0}$ such that for all $U\subset S$ one has
  $$
  \#\cF_{U,L}(X) = \#U\cdot cX(\log X)^{\frac{1}{n_L}-1} + O\left(\frac{X}{\log X}\right).
  $$
\end{proposition}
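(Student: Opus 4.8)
The plan is to deduce \Cref{prop:asymptotic_cF} from \Cref{lem:average_chi} by a standard orthogonality-of-characters argument, taking care to extract a single constant $c$ that is independent of $U$. First I would note that the indicator function of the coset condition ``$n \bmod 4N \in U$'' can be written, for $n$ coprime to $4N$, as
\[
\charfunc_U(n \bmod 4N) = \frac{1}{\#(\Z/4N\Z)^\times}\sum_{\chi}\left(\sum_{u\in U}\overline{\chi(u)}\right)\chi(n),
\]
where $\chi$ runs over all Dirichlet characters modulo $4N$. Since every $n\in\cF_{U,L}$ is automatically coprime to $4N$ (its prime divisors lie in $\cP_L$, hence outside $\Sigma$, hence do not divide $N$, and $n$ is odd as $2\in\Sigma$), this identity is valid on the whole family, and summing over $n<X$ gives
\[
\#\cF_{U,L}(X) = \frac{1}{\#(\Z/4N\Z)^\times}\sum_{\chi}\left(\sum_{u\in U}\overline{\chi(u)}\right)\sum_{n\in\cF_L(X)}\chi(n).
\]

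Next I would feed in \Cref{lem:average_chi}. The characters $\chi$ with $\chi|_S=\triv$ contribute a main term of order $X(\log X)^{1/n_L-1}$, while all other $\chi$ contribute only $O(X(\log X)^{-1})=O(X/\log X)$ (here one uses that $1/n_L - 2 \le -1$, which holds since $n_L\ge 1$; if $n_L=1$ one must be slightly more careful, but then $L\subseteq\Q(\zeta_{4N})$, $S$ is the whole group or one argues directly, and the error is still $O(X/\log X)$). So up to the stated error,
\[
\#\cF_{U,L}(X) = \frac{cX(\log X)^{\frac{1}{n_L}-1}}{\#(\Z/4N\Z)^\times}\sum_{\chi|_S=\triv}\left(\sum_{u\in U}\overline{\chi(u)}\right)\cdot\frac{c_\chi}{c},
\]
and the task reduces to showing $c_\chi$ is the same for every $\chi$ with $\chi|_S=\triv$, and that the resulting character sum collapses to $\#U$ times the index $[(\Z/4N\Z)^\times : S]$ (or its reciprocal, absorbed into $c$). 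For the first point: if $\chi|_S=\triv$ then $\chi(p)=1$ for every $p\in\cP_L$ (since $\Frob_p\in S$), so the Euler factor $1+\chi(p)\charfunc_{\cP_L}(p)/p = 1+\charfunc_{\cP_L}(p)/p$ does not depend on $\chi$, and likewise $(1-1/p)^{m(\chi)}=(1-1/p)^{1/n_L}$ is independent of $\chi$; hence $c_\chi$ equals a common value, which I take to be $c$. For the second point, $\sum_{\chi|_S=\triv}\overline{\chi(u)} = \#\{\chi:\chi|_S=\triv\}\cdot\charfunc_S(u) = [(\Z/4N\Z)^\times:S]\cdot\charfunc_S(u)$; since $U\subseteq S$ this is $[(\Z/4N\Z)^\times:S]$ for every $u\in U$, so $\sum_{u\in U}\sum_{\chi|_S=\triv}\overline{\chi(u)} = \#U\cdot[(\Z/4N\Z)^\times:S]$. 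Dividing by $\#(\Z/4N\Z)^\times$ gives $\#U / \#S$, and redefining the constant (multiplying the common Euler-product value by $1/\#S$, which is a positive real independent of $U$) yields $\#\cF_{U,L}(X) = \#U\cdot cX(\log X)^{1/n_L-1} + O(X/\log X)$, as required.

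I expect the only genuine subtlety — and hence the main point requiring care rather than a real obstacle — to be the bookkeeping that makes the constant $c$ truly independent of $U$: one must check that all $U$-dependence is confined to the clean combinatorial factor $\#U$, which is exactly what the orthogonality computation above delivers, provided one has correctly used the hypothesis $U\subseteq S$ (so that $\charfunc_S(u)=1$ on $U$) and the fact that $\cF_{U,L}=\cF_{U\cap S,L}$ noted before the statement. A minor technical point is the error term when $n_L=1$: one should observe that in that case the non-principal characters still give $O(X/\log X)$ by \Cref{lem:average_chi} with $m(\chi)=0$, so the claimed bound holds uniformly. Everything else is routine.
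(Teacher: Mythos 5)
Your argument is correct and follows essentially the same route as the paper: expand the coset indicator in Dirichlet characters modulo $4N$, apply \Cref{lem:average_chi}, and use that the characters with $\chi|_S=\triv$ take the value $1$ on $U\subseteq S$ and carry the main term, while all other characters fall into the error $O(X/\log X)$ (your worry about $n_L=1$ is indeed vacuous). The one point you assert rather than prove is the positivity of $c$ --- an infinite product of factors each greater than $0$ can still converge to $0$ --- which the paper settles by applying the Frobenian-multiplicative-function lemma directly to $\#\cF_L(X)$, giving a positive leading constant equal to $\#S\cdot c$; your observation that the $c_\chi$ with $\chi|_S=\triv$ all coincide reduces the issue to the same input for the principal character, so this is easily patched.
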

\begin{proof}
  For every $u\in S$, define $\rho_u\colon \Z_{\geq 1}\to \C$ by
  $$
  \rho_u(n) = \mu^2(n)\charfunc_{\{u\}}(n\hspace{-0.5em}\mod 4N)\prod_{p|n}\charfunc_{\cP_L}(p).
  $$
  Then we have
  $$
  \#\cF_{U,L}(X) = \sum_{u\in U}\sum_{n=1}^X\rho_u(n).
  $$
  Let $u\in S$. We have $\charfunc_{\{u\}} = \tfrac{1}{\varphi(4N)}\sum_{\chi} \chi(u)\chi$,
  with the sum running over all characters of $(\Z/4N\Z)^\times$, and where $\varphi$ denotes Euler's totient function. Accordingly,
  we have
  \begin{eqnarray}\label{eq:sum_rho}
    \varphi(4N)\sum_{n=1}^X \rho_u(n) & = & \sum_{\chi}\chi(u)\sum_{n=1}^X \mu^2(n)\chi(n)\prod_{p|n}\charfunc_{\cP_L}(p)\nonumber\\
                           & = & \sum_{\chi}\chi(u)\sum_{n\in \cF_L(X)}\chi(n),
  \end{eqnarray}
  where the summands $\chi$ run over all Dirichlet characters modulo $4N$.
  We now apply Lemma \ref{lem:average_chi}. Note that the sum \eqref{eq:sum_rho}
  is dominated by the summands of the outer sum corresponding to those $\chi$
  for which $m(\chi)$ is maximal, which are those that restrict to the trivial character on $S$.
  These characters $\chi$, in particular, satisfy $\chi(u)=1$ for all $u\in U$.
  For each of those, the inner sum is $c_{\chi}X(\log X)^{\frac{1}{n_L}-1}+O(X(\log X)^{\frac{1}{n_L}-2})$,
  while for the others it is $O(X(\log X)^{-1})$.
  It follows that we have
  $$
    \#\cF_{U,L}(X) = \#U\cdot cX(\log X)^{\frac{1}{n_L}-1}+O(X(\log X)^{-1})
  $$
  with $c=\tfrac{1}{\varphi(4N)}\sum_{\genfrac{}{}{0pt}{}{\chi:}{\chi|_{S}=\charfunc}}c_{\chi}$, where the sum runs
  over the characters of $(\Z/4N\Z)^\times$.

  It remains to show that $c$ is non-zero. Now, the function $\rho\colon \Z_{\geq 1}\to \C$
  given by $\rho(n) = \mu^2(n)\prod_{p|n}\charfunc_{\cP_L}(p)$ is a Frobenian multiplicative
  function: for every prime number $p\not\in \Sigma$ we have 
  $\rho(p) = \charfunc_{\{1\}}(\Frob_p)$, where $\Frob_p$ is a
  Frobenius element at $p$ in $\Gal(L/\Q)$. Moreover, we have
  $$
    \#\cF_L(X) = \sum_{n=1}^X \rho(n).
  $$
  By \cite{DanLilian}*{Lemma 2.8} this is equal to $c_{\rho}X(\log X)^{m(\rho)-1} + O(X(\log X)^{m(\rho)-2})$,
  where $m(\rho) = \frac{1}{n_L}>0$ and $c_{\rho}$ is real and positive.
  Since we have $c_{\rho} = \#S\cdot c$, this proves the proposition.
\end{proof}

\subsection{Auxiliary analytic results}
In  the rest of this section, we collect some additional analytic results that will be useful later. We begin by recalling some well-known results. 

For a positive integer $n$, we denote by $\omega(n)$ the number of distinct prime factors of $n$.
\begin{lemma}[\cite{NormalNumberDivisors}*{\S2.2, Lemma A}]\label{lem:hardy_ramanujan}
There is an absolute constant $B_0$ such that, for every real number $X\geq 3$
and for every integer $l\geq 0$, we have
\[
  \#\{n\leq X~~\big \vert~~\omega(n)=l,~\mu^2(n)=1\}\leq \frac{B_0X}{\log(X)}\cdot \frac{(\log\log(X)+B_0)^l}{l!}.
\]
\end{lemma}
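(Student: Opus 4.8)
The plan is to derive this from the stated reference; the classical route, which I sketch now, is induction on $l$, going back to Hardy and Ramanujan. Write $\pi_l^\ast(X)$ for the left-hand side, that is, the number of squarefree $n\le X$ with exactly $l$ prime factors. I would aim to prove a bound of the shape $\pi_l^\ast(X)\le \frac{A\,X}{\log X}\cdot\frac{(\log\log X+B)^l}{l!}$, valid for all $X\ge 3$ and all $l\ge 0$, with $A$ and $B$ two absolute constants to be pinned down during the argument; the lemma then follows on taking $B_0=\max(A,B)$.

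For the base cases, $\pi_0^\ast(X)=1$ and $\pi_1^\ast(X)=\pi(X)$, the latter bounded by $O(X/\log X)$ by Chebyshev, so both are absorbed once $A$ is large enough. For the inductive step one uses that every squarefree $n$ with $\omega(n)=l\ge 1$ can be written as $n=pm$ with $p$ prime and $m$ squarefree with $\omega(m)=l-1$ in exactly $l$ ways; dropping the coprimality of $p$ and $m$ only enlarges the count, so
\[
  l\,\pi_l^\ast(X)\ \le\ \sum_{p\le X}\pi_{l-1}^\ast(X/p).
\]
One then splits the sum over $p$ at a suitable threshold, applies the inductive hypothesis to the terms $\pi_{l-1}^\ast(X/p)$ (for the large values of $p$ it is more efficient to swap the roles of $p$ and $m$ and sum $\pi(X/m)$ over $m$), and controls the resulting sum $\sum_{p}1/p$ — respectively $\frac{1}{(l-1)!}\bigl(\sum_p 1/p\bigr)^{l-1}$ — by Mertens' theorem, $\sum_{p\le y}1/p\le \log\log y+O(1)$.

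The one genuinely delicate point — and the reason I would simply cite \cite{NormalNumberDivisors}*{\S2.2, Lemma A} rather than reproduce the details — is the bookkeeping of the constants so that the induction actually closes: a careless split of the $p$-range loses a multiplicative constant (essentially because $\log(X/p)$ is only comparable to $\log X$ away from $p\approx X$), and such a loss, accumulated over the $l$ steps, would destroy the bound. Resolving this requires choosing the splitting point, and the constants $A$ and $B$, with care; this is exactly what the cited lemma does, and we invoke it as a black box.
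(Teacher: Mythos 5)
The paper offers no proof of this lemma beyond the citation to \cite{NormalNumberDivisors}*{\S2.2, Lemma A}, and your proposal ultimately does the same, invoking that reference as a black box. Your accompanying sketch of the classical Hardy--Ramanujan induction (writing $n=pm$, bounding $l\,\pi_l^\ast(X)$ by $\sum_p \pi_{l-1}^\ast(X/p)$, and closing via Mertens) is the standard argument and correctly flags the constant bookkeeping as the only delicate point, so this matches the paper's treatment.
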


\begin{proposition} \label{number of prime factors}
Let $A>0$ be a real number. Then there is a constant $c(A)>0$ such that, setting
$\Omega=c(A)e\cdot(\log\log(X)+B_0)$ for $B_0$ as in \Cref{lem:hardy_ramanujan}, we have
\begin{eqnarray}\label{eq:many_divisors}
  \#\{n\leq X ~~\big \vert~~\mu^2(n)=1,~\omega(n)>\Omega\}\ll X\log(X)^{-A}.
\end{eqnarray}
\end{proposition}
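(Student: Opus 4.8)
The plan is to deduce Proposition \ref{number of prime factors} from the Hardy--Ramanujan bound in \Cref{lem:hardy_ramanujan} by a tail estimate on the Poisson-type distribution. First I would fix $A>0$, set $\lambda = \log\log X + B_0$, and observe that by \Cref{lem:hardy_ramanujan} the quantity to be bounded satisfies
\[
  \#\{n\leq X : \mu^2(n)=1,\ \omega(n)>\Omega\} \leq \frac{B_0 X}{\log X}\sum_{l > \Omega}\frac{\lambda^l}{l!}.
\]
So the task reduces to a purely analytic statement: for a suitable choice of $\Omega = c(A)e\lambda$, the tail $\sum_{l>\Omega}\lambda^l/l!$ is $\ll (\log X)^{1-A}$ (the extra $\log X$ being absorbed against the $B_0 X/\log X$ prefactor, so that one actually wants the tail to be $\ll(\log X)^{1-A}$; I would keep track of the exponents carefully here).

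Next I would bound the tail sum. The standard device is that for $l \geq 2e\lambda$ one has $\lambda^l/l! \leq (\lambda e/l)^l \leq 2^{-l}$, so the tail is dominated by a geometric series and in fact $\sum_{l > \Omega}\lambda^l/l! \ll (e\lambda/\Omega)^{\Omega}$ once $\Omega \geq 2e\lambda$, say. Writing $\Omega = c(A) e\lambda$ with $c(A)\geq 2$, this gives a bound of the shape $c(A)^{-c(A)e\lambda}$, i.e. $\exp(-c(A)e\lambda \log c(A))$. Since $\lambda = \log\log X + B_0 \geq \log\log X$, this is at most $(\log X)^{-c(A)e\log c(A)}$ up to a constant. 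It therefore suffices to choose $c(A)$ large enough (depending only on $A$) that $c(A)e\log c(A) \geq A$, which is clearly possible since $t\log t\to\infty$; any such choice also automatically satisfies $c(A)\geq 2$ for $A$ not too small, and one can enlarge $c(A)$ freely. Combining with the prefactor $B_0 X/\log X$ yields the claimed bound $\ll X(\log X)^{-A}$.

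I do not anticipate a serious obstacle here: this is a routine Rankin-type tail estimate, and the only thing that needs care is bookkeeping of constants and logarithmic exponents, in particular making sure the definition $\Omega = c(A)e(\log\log X + B_0)$ is compatible with the inequality $\Omega \geq 2e\lambda$ needed to start the geometric-series comparison, and that the dependence of $c(A)$ on $A$ is made explicit via the inequality $c(A)e\log c(A) \geq A+1$ (taking $A+1$ rather than $A$ to comfortably absorb the $1/\log X$ from the prefactor). One subtlety worth a remark: for very small $X$ the quantity $\log\log X$ can be negative or the sum can be empty, but since the statement is asymptotic ($\ll$) one may assume $X$ large, and for $l$ not an integer the sum $\sum_{l>\Omega}$ is interpreted over integers $l$, so no issue arises. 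I would write this up in a few lines.
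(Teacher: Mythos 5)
Your proposal is correct and follows essentially the same route as the paper's proof: apply \Cref{lem:hardy_ramanujan}, bound $\lambda^l/l!$ by $(e\lambda/l)^l$ via Stirling, use $l>\Omega=c(A)e\lambda$ to dominate the tail by a geometric series with ratio $1/c(A)$, and then choose $c(A)$ large in terms of $A$. Your explicit criterion $c(A)e\log c(A)\geq A+1$ just makes precise the paper's closing phrase ``choosing $c(A)$ large enough gives the result.''
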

\begin{proof}
Fix some $c(A)>0$ to be determined later, and let $\Omega$ be as in the statement.
By \Cref{lem:hardy_ramanujan}, the left hand side of \eqref{eq:many_divisors} is bounded above by 
\[
  \frac{B_0X}{\log(X)}\sum_{l>\Omega}\frac{(\log\log(X)+B_0)^l}{l!}\ll \frac{X}{\log(X)}\sum_{l>\Omega}\left(\frac{e\cdot(\log\log(X)+B_0)}{l}\right)^l
\]
where for the $\ll$ we use Stirling's formula. Since $l$ is always larger than $\Omega$ in the sum, we find 
\[
  \#\{n\leq X ~~\big \vert~~\mu^2(n)=1,~\omega(n)>\Omega\}\ll \frac{X}{\log(X)} \sum_{l>\Omega}c(A)^{-l}\ll \frac{X}{\log(X)}\cdot c(A)^{-\Omega}.
\]
Choosing $c(A)$ large enough gives the result. 
\end{proof}

If $F/\Q$ is a Galois extension, $G$ is its Galois group, and $p$ is a prime number
that is unramified in $F/\Q$, then we denote by $(F/\mathbb{Q},p)$ the
conjugacy class in $G$ of the Frobenius elements at $p$ in $F/\Q$.
If $\chi$ is the character of a complex representation of $G$, we write 
\[
\chi(p)=\begin{cases}\chi\big((F/\mathbb{Q},p)\big),~~&~~p\textup{ unramified in }F/\Q; \\0,~~&~~\textup{otherwise.}\end{cases}\]

\begin{lemma} \label{eq:character_sum_chebotarev_lemma}
Let $d$ be a square-free integer, and let $\chi$ be  the character of a non-trivial
irreducible complex representation of $\Gal(L(\sqrt{d})/\Q)$. 

Then for all real numbers $A>1$ and $X\geq 2$  we have 
\begin{equation} \label{eq:cheb_char_sum}
\bigg| \sum_{p\leq X}\chi(p)\bigg|\ll_A \sqrt{d}X\log(X)^{-A},
\end{equation}
where the sum is taken over all primes $p\leq X$, and the implied constant is independent of $d$ and $\chi$ (but depends on $L$).
\end{lemma}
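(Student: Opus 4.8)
The plan is to deduce \Cref{eq:character_sum_chebotarev_lemma} from an effective form of the Chebotarev density theorem, carefully tracking the dependence on the field $L(\sqrt{d})$ and on $\chi$, and then controlling the possible Siegel zero by a standard argument. First I would set $M = L(\sqrt{d})$, $G = \Gal(M/\Q)$, and observe that $L(\sqrt{d})$ is obtained from $L$ by adjoining a single square root, so its discriminant is bounded by $\mathrm{disc}(L)^{2}\cdot(4d)^{[L:\Q]}$ up to a constant depending only on $L$; in particular $\log|\mathrm{disc}(M)| \ll_L \log(|d|+2)$ and $[M:\Q]\leq 2[L:\Q]$ is bounded independently of $d$. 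The quantity $\sum_{p\leq X}\chi(p)$ is, up to an error of size $O([M:\Q]\log|\mathrm{disc}(M)|)$ coming from ramified primes (which is $O_L(\log(|d|+2))$, absorbed into the right-hand side), a weighted count of primes in conjugacy classes, and can be written as a fixed finite linear combination of the prime-counting functions $\pi_C(X) = \#\{p\leq X: (M/\Q,p)=C\}$ over conjugacy classes $C$ of $G$, with coefficients $\chi(C)$ bounded by $\dim\chi \leq [M:\Q] \ll_L 1$.

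Next I would invoke the effective Chebotarev theorem in the form of Lagarias--Odlyzko (or the version of Serre, or the error term built from the explicit formula for $\zeta_M$): for each conjugacy class $C$,
\[
  \pi_C(X) = \frac{|C|}{|G|}\,\mathrm{Li}(X) + O\!\left(\frac{|C|}{|G|}\,X\exp\!\left(-c\sqrt{\tfrac{\log X}{[M:\Q]}}\right) \right)
\]
for $X$ large in terms of $\mathrm{disc}(M)$, \emph{unless} $\zeta_M$ has an exceptional real zero $\beta$, in which case there is an additional main term of size $O\!\left(\frac{|C|}{|G|}X^{\beta}\right)$. The key point is that the ``main terms'' $\frac{|C|}{|G|}\mathrm{Li}(X)$ cancel: since $\chi$ is the character of a \emph{non-trivial} irreducible representation, $\sum_C \chi(C)\frac{|C|}{|G|} = \langle \chi,\triv\rangle = 0$. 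Hence $\sum_{p\leq X}\chi(p)$ equals a bounded linear combination of the error terms (and the ramified-prime correction), so it remains to bound those by $\sqrt{d}\,X\log(X)^{-A}$.

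The main obstacle, as always with uniform Chebotarev, is the possible Siegel zero of $\zeta_M$. I would handle it exactly as in the classical treatment: the exceptional zero, if it exists, is attached to a unique real character, and by Siegel's theorem $\beta \leq 1 - c(\varepsilon)\,|\mathrm{disc}(M)|^{-\varepsilon}$ for every $\varepsilon>0$, with $c(\varepsilon)$ ineffective. Combined with $|\mathrm{disc}(M)| \ll_L d^{O_L(1)}$, this gives $X^{\beta - 1} \ll_{\varepsilon,L} \exp(-c(\varepsilon)\,d^{-\varepsilon_0}\log X)$ for a suitable $\varepsilon_0>0$ depending on $L$. Choosing $\varepsilon$ small and splitting into the ranges $d \leq (\log X)^{B}$ and $d > (\log X)^{B}$ for a large constant $B = B(A,L)$: in the first range the zero-free region (even allowing the Siegel zero) yields a saving of $\exp(-c'\sqrt{\log X})$, which beats $\log(X)^{-A}$; in the second range one simply uses the trivial bound $|\sum_{p\leq X}\chi(p)| \ll X \ll \sqrt{d}X(\log X)^{-A}$ since $\sqrt{d} > (\log X)^{B/2} \geq (\log X)^{A}$. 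This is where the otherwise puzzling factor $\sqrt{d}$ on the right-hand side is used — it makes the bound trivially true once $d$ is even a small power of $\log X$, so that all the analytic difficulty is confined to the range where effective Chebotarev (with no reliance on GRH) already applies. Finally I would note that the implied constant depends only on $A$ and $L$ because every field-theoretic quantity entering the estimates ($[M:\Q]$, the number of conjugacy classes, $\dim\chi$, and the implicit constants in the effective prime ideal theorem once the discriminant dependence is made explicit) is controlled purely in terms of $L$ and $d$, and the $d$-dependence has been absorbed into the $\sqrt{d}$ factor.
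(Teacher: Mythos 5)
Your proof is correct and follows essentially the same route as the paper: restrict to $d$ at most a fixed power of $\log X$ (the trivial/PNT bound handles larger $d$, which is exactly what the $\sqrt{d}$ factor is for), apply effective Chebotarev to $L(\sqrt{d})$ using $\log|\mathrm{disc}| \ll_L \log d$, note that the main terms cancel because $\chi$ is non-trivial, and control the possible exceptional zero via Siegel. The one step worth making explicit is that Siegel's theorem in the form $1-\beta \gg_\varepsilon |\mathrm{disc}(M)|^{-\varepsilon}$ for the exceptional zero of $\zeta_{L(\sqrt{d})}$ requires Stark's reduction of that zero to a zero of $\zeta_K$ for a quadratic subfield $K$ of bounded discriminant; the paper cites this explicitly, and your appeal to ``Siegel's theorem'' for a general number field implicitly uses it.
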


\begin{proof}
Fix $A>1$. Henceforth we let all implied constants depend on $L$ and $A$,
but not on $d$ or $\chi$. We  assume that $d\ll \log(X)^{2A-2}$, since
otherwise we can bound the left hand side of \eqref{eq:cheb_char_sum} by $\sum_{p\leq X}1$ and use the Prime Number Theorem. 

Write $F=L(\sqrt{d})$, write $G=\Gal(F/\Q)$, and let $C$ be a conjugacy class in $G$. Further, write
\[\pi_C(X)=\#\big\{p\leq X~~\colon ~~ (F/\mathbb{Q},p)  \in C\big\}.\]

\textbf{Claim:} We have 
\[\bigg| \pi_C(X)-\frac{|C|}{|G|}\Li(X) \bigg|\ll X\log(X)^{-A}.\]
\textbf{Proof of claim:} Denoting by $\Delta_F$ the discriminant of $F/\Q$, by
\cite[Lemma 7]{MR342472} we have $\log(\Delta_F)\ll \log(d)$. Given our assumption
on $d$, we may thus apply the effective Chebotarev density theorem \cite{MR0447191}
(we use the precise form stated as \cite[Th\'{e}or\`{e}me 2]{MR644559}) to give 
\[\bigg| \pi_C(X)-\frac{|C|}{|G|}\Li(X) \bigg|\ll \Li(X^\beta)+X\log(X)^{-A},\]
where $\beta$ is the possible exceptional zero of the Dedekind zeta function
$\zeta_F(s)$ of $F$. By \cite[Lemma 8]{MR342472}, if such a $\beta$ exists there
is a quadratic subextension $K/\Q$ of $F/\Q$ such that $\zeta_K(\beta)$ is $0$ also.
Since the discriminant of any quadratic subsextension is  $\ll n$, we can apply
Siegel's theorem \cite[Theorem 5.28 (2)]{MR2061214} to show that, for any fixed
$\epsilon>0$, we have $1-\beta\gg d^{-\epsilon}$. Choosing $\epsilon$ sufficiently
small in terms of $A$, our assumption on the size of $n$ gives
$\Li(X^\beta)\ll X\log(X)^{-A}$, proving the claim. 

Returning to the proof of the lemma, we compute 
\begin{eqnarray*}
\bigg| \sum_{p\leq X}\chi(p) \bigg| =\bigg|
\sum_{\substack{C\textup{ conj. class}\\\textup{of }G}}\quad \sum_{p\leq X,~ (F/\mathbb{Q},p)\in C}\chi(C)\bigg|= \bigg| \sum_{\substack{C\textup{ conj. class}\\\textup{of }G}}\chi(C)\pi_C(X) \bigg|,
\end{eqnarray*}
where $\chi(C)$ is the common value of $\chi$ on $C$. Both the number of conjugacy
classes of $G$, and the absolute value of any $\chi(C)$, is bounded indepepdently
of $d$. Since $\chi$ is non-trivial, we have $\sum_{g\in G}\chi(g)=0$,
so the result follows from the claim.
\end{proof}

\begin{proposition} \label{prop:main_chebotarev_consequence}
Let $m$ be a
fixed positive integer and let $\psi$ be a Dirichlet character modulo $m$, viewed
as a $1$-dimensional character of $\Gal(\Q(\zeta_m)/\Q)$ (we allow $\psi$ to be trivial).  
Let $q$ be a positive integer, and let $\chi$ be a real Dirichlet character modulo $q$.
Suppose that for each irreducible character $\chi_i$ of $\Gal(L/\Q)$, the product
$\chi \chi_i \psi$, viewed as an irreducible character of $\Gal(L(\zeta_{mq})/\Q)$, is non-trivial.

Then for each positive integer $r$ and real numbers $\kappa\geq 1$, $A>1$, and $2\leq Y\leq X$, we have 
\begin{equation} \label{disgusting_sum_lemma_equation}
\bigg|\sum_{\substack{Y\leq n\leq X\\ n\in \cF_L}}\mu^2(nr)\kappa^{-\omega(n)}\chi(n)\psi(n)\bigg|\ll_{A,\kappa}\frac{(\omega(r)+\sqrt{q})X}{\log(X)^{A}}, \end{equation}
where the implied constant is independent of $Y$ and $\chi$, but depends on $L$ and $\psi$.  
\end{proposition}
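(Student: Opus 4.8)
The plan is to reduce the sum in \eqref{disgusting_sum_lemma_equation} to a sum over primes to which one can apply \Cref{eq:character_sum_chebotarev_lemma}, while controlling the combinatorial losses coming from the weight $\kappa^{-\omega(n)}$ and the coprimality condition $\mu^2(nr)=1$. First I would open up $\mu^2(nr)$: since $r$ is fixed and $n$ is square-free, the condition $\mu^2(nr)=1$ simply says $\gcd(n,r)=1$, so by inclusion–exclusion (or a M\"obius expansion $\sum_{e\mid\gcd(n,r)}\mu(e)$) one replaces the constraint by a sum over divisors $e\mid r$ of an analogous sum over $n'$ with $n'=n/e$ ranging over square-free integers in $\cF_L$ coprime to nothing further; the factor $\kappa^{-\omega(n)}$ splits as $\kappa^{-\omega(e)}\kappa^{-\omega(n')}$ and likewise for the characters. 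The number of divisors $e\mid r$ is $2^{\omega(r)}$, and after bounding $\kappa^{-\omega(e)}\le 1$ this is where the $\omega(r)$ factor in the bound ultimately comes from; so it suffices to prove the bound for the $\gcd$-free sum $\sum_{Y\le n\le X,\, n\in\cF_L}\mu^2(n)\kappa^{-\omega(n)}\chi(n)\psi(n)$ with implied constant independent of $r$.

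Next I would handle the multiplicative weight $\kappa^{-\omega(n)}$ by the standard device of writing $\kappa^{-\omega(n)}$ as a Dirichlet convolution and reducing to sums of a completely multiplicative-type function over primes, or, more elementarily, by the Rankin/partial-summation trick: expand $\kappa^{-\omega(n)}=\sum_{d\mid n}h(d)$ for the multiplicative function $h$ with $h(p)=\kappa^{-1}-1$, $h(p^k)=0$ for $k\ge 2$, so that $\sum_n \mu^2(n)\kappa^{-\omega(n)}\chi\psi(n) = \sum_d h(d)\,\mu^2(d)\chi\psi(d)\sum_{m\in\cF_L/d} \mu^2(m)\chi\psi(m)$ with $m$ coprime to $d$; the tail of the $d$-sum converges because $\sum |h(d)|/d<\infty$. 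The inner sum over $m$ is now a character sum over square-free integers supported on $\cP_L$ with a bona fide Dirichlet-character twist $\chi\psi$, and the hypothesis is precisely that $\chi\chi_i\psi$ is non-trivial on $\Gal(L(\zeta_{mq})/\Q)$ for every irreducible $\chi_i$ of $\Gal(L/\Q)$, which is exactly the non-vanishing condition needed to run the argument of \Cref{lem:average_chi} / \Cref{prop:asymptotic_cF}: writing the indicator of $\cF_L$ via Frobenius conditions, the associated Frobenian multiplicative function has $m(\cdot)<$ (the value $1/n_L$ that produced the main term in \Cref{lem:average_chi}), in fact $m=0$, so by \cite{DanLilian}*{Lemma 2.8} the inner sum is $O(X/(\log X)^{A})$; the point is that cancellation from $\chi\chi_i\psi$ being non-trivial kills the main term entirely. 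Alternatively, one reduces directly to $\bigl|\sum_{p\le X}(\chi\psi\chi_i)(p)\bigr|\ll \sqrt{q}\,X(\log X)^{-A-1}$ via \Cref{eq:character_sum_chebotarev_lemma} (applied to the quadratic character whose field is $L(\sqrt q)$, twisted appropriately), and feeds this into a Selberg–Delange or elementary Wirsing-type estimate to pass from primes to square-free integers supported on $\cP_L$; this is where the explicit $\sqrt q$ dependence is tracked, since $q$ enters only through the discriminant bound in \Cref{eq:character_sum_chebotarev_lemma}.

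The main obstacle, I expect, is bookkeeping uniformity: one must ensure that all the implied constants are genuinely independent of $Y$, of $\chi$, and of $r$, with only the advertised polynomial dependence on $\omega(r)$ and $\sqrt q$, while $\kappa\ge 1$ and $L,\psi,A$ are allowed to affect the constant. The $Y$-uniformity is free (subtract the $n\le Y$ sum from the $n\le X$ sum, both of the same shape), and the $\chi$-uniformity is inherited from \Cref{eq:character_sum_chebotarev_lemma}; the delicate part is that the divisor sum over $e\mid r$ must be estimated so that the $\kappa^{-\omega(e)}$ and $\mu^2(e)$ weights do not let $2^{\omega(r)}$ degrade to something worse than the claimed linear $\omega(r)$ — this requires noting that the contribution of each $e$ is itself $O(X(\log X)^{-A-1})$ uniformly and that $\sum_{e\mid r}1 = 2^{\omega(r)}$ is absorbed by lowering $A$ slightly, or, more carefully, that one only needs $\sum_{e\mid r}\kappa^{-\omega(e)}\ll_\kappa \omega(r)+1$ which holds since $\kappa\ge 1$ forces each term $\le 1$ and one groups by $\omega(e)$ using $\binom{\omega(r)}{k}$ bounds — and then a convexity/Rankin estimate turns $2^{\omega(r)}$ into the linear bound after the $(\log X)^{-A}$ is spent. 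Everything else is a routine assembly of Chebotarev input plus the Frobenian multiplicative function machinery already invoked in \Cref{sec:family_densities}.
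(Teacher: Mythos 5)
Your proposal has the right endgame in its ``alternative'' branch (reduce to a sum over a single prime variable and apply \Cref{eq:character_sum_chebotarev_lemma}, whose discriminant dependence is the source of the $\sqrt q$), but the two preparatory reductions as written contain genuine errors. First, the treatment of $\mu^2(nr)$ by M\"obius expansion over divisors $e\mid r$ produces $2^{\omega(r)}$ inner sums, and your attempted repair is false: $\sum_{e\mid r}\kappa^{-\omega(e)}=\prod_{p\mid r}(1+\kappa^{-1})=(1+\kappa^{-1})^{\omega(r)}$ is exponential in $\omega(r)$, not $\ll \omega(r)+1$, and since $r$ is independent of $X$ one cannot ``absorb $2^{\omega(r)}$ by lowering $A$''. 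The paper avoids this entirely: it decomposes $n=n_0p$ with $p$ the largest prime factor, takes absolute values over $n_0$, and drops the single condition $(p,r)=1$ from the innermost sum over $p$ at an \emph{additive} cost of at most $\omega(r)$ per inner sum --- that is the true origin of the $\omega(r)$ term. Second, your convolution device for $\kappa^{-\omega(n)}$ rests on the claim $\sum_d|h(d)|/d<\infty$ with $h(p)=\kappa^{-1}-1$, which is false ($\prod_p(1+(1-\kappa^{-1})/p)$ diverges for $\kappa>1$); and it is unnecessary, since after the largest-prime-factor decomposition the weight $\kappa^{-\omega(n_0)-1}\le 1$ sits outside the oscillating inner sum and can simply be discarded.

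Two further points where the sketch would not go through as stated. Your primary branch proposes applying the Frobenian multiplicative function lemma (\cite{DanLilian}*{Lemma 2.8}) to the twist by $\chi\psi$; but the implied constants there depend on the Galois field attached to the function, hence on the modulus $q$ of $\chi$, whereas the whole point of the proposition is uniformity in $\chi$ with only a $\sqrt q$ loss (in the application $q$ grows with $X$). This forces the effective Chebotarev plus Siegel route of \Cref{eq:character_sum_chebotarev_lemma}, including its restriction to small discriminant with a trivial bound otherwise. Finally, the passage from prime sums back to squarefree $\cF_L$-integers --- which you defer to ``Selberg--Delange or Wirsing-type'' --- is where the actual work lies: one must first invoke \Cref{number of prime factors} to restrict to $\omega(n)\le \Omega\asymp\log\log X$, so that $n_0\le X^{1-1/\Omega}$ and the largest prime factor ranges over an interval long enough that $\log(X/n_0)\gg \log(X)/\Omega$, after which choosing $B$ large compared to $A$ absorbs the $(\log\log X)^B$ loss. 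Without that restriction the interval for $p$ can be too short for the Chebotarev bound to save anything.
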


\begin{proof}
The general case follows from the case $Y=2$, which we now restrict to. By
\Cref{number of prime factors} it suffices to prove the statement with the
sum running only over those $n$ that have at most
$\Omega$ prime factors, where $\Omega=\Omega(A)$ is defined as in the statement
of that Proposition.We now partition the sum \eqref{disgusting_sum_lemma_equation}
with that additional restriction according to the number $1\leq l\leq \Omega$ of
prime factors of $n$. Write $n=n_0p$ where $p$ is the largest prime factor of $n$,
denote by $\Pmax{n_0}$ the largest prime factor of $n_0$ if $n_0\neq 1$, and 
let $\Pmax{1}=1$.
Note that the condition $n_0p\leq X$ forces $n_0\leq X^{1-1/\Omega}$. By the
triangle inequality, it suffices to bound the sum 
\begin{equation}
 \sum_{l=0}^{\Omega-1}~\sum_{\substack{1\leq n_0\leq X^{1-1/\Omega}\\ n_0\in \cF_L\\ \omega(n_0)=l}}~\bigg| \sum_{\substack{\Pmax{n_0}< p\leq X/n_0\\ p\in \cP_L,~(p,r)=1} } \psi(p) \chi(p)\bigg|.
\end{equation}
We can drop the condition $(p,r)=1$ in the innermost sum at the expense of adding
at most $\omega(r)$ to its value, which we henceforth do. Replacing $r$ by $Nr$
we can assume that the condition $p\in \cP_L$ is simply the condition that $p$
splits completely in $L/\Q$. To treat this condition, we express the indicator
function of the identity element in $\Gal(L/\Q)$ as a sum $\sum_{1\leq i\leq t}a_i\chi_i$
for some complex numbers $a_i$ and irreducible characters $\chi_i$ of $\Gal(L/\Q)$.
Applying the triangle inequality once more, we see that it suffices to bound the sum 
\begin{equation} \label{disgusting_sum_equation_bound_1}
  \sum_{l=0}^{\Omega-1}~\sum_{\substack{1\leq n_0\leq X^{1-1/\Omega}\\ n_0\in \cF_L\\ \omega(n_0)=l}}~ \bigg(\max_{1\leq i \leq t}\bigg| \sum_{ \Pmax{n_0}\leq p\leq X/n_0 } \chi_i(p)\psi(p) \chi(p)\bigg|+\omega(r) \bigg).
\end{equation}
Since $\chi$ is a real Dirichlet character modulo $q$, it corresponds to an
irreducible character of $\Q(\sqrt{d})/\Q$ for some $d\ll q$. By assumption,
each $\chi_i \psi \chi$ is a non-trivial irreducible character of $L(\zeta_m)(\sqrt{d})$.
By \Cref{eq:character_sum_chebotarev_lemma}, for any $B>1$,  the quantity in \eqref{disgusting_sum_equation_bound_1}  is bounded above by some constant (which may depend on $B$) times the quantity
\begin{eqnarray*}  
 \sum_{l=0}^{\Omega-1}~\sum_{\substack{1\leq n_0\leq X^{1-1/\Omega}\\ n_0\in \cF_L\\ \omega(n_0)=l}}~ \left( \frac{\sqrt{q}X}{n_0}\log\left(\frac{X}{n_0}\right)^{-B}+\omega(r)\right) \ll_B  \omega(r) X^{1-1/\Omega}+\sqrt{q}X\left(\frac{\log(X)}{\Omega}\right)^{-B}\sum_{1\leq n_0\leq X}\frac{1}{n_0}.
\end{eqnarray*}
Choosing $B$ to be sufficiently large compared to $A$ gives the result. 
\end{proof}

We will also want the following bound on divisor sums over Chebotarev sets.

\begin{lemma} \label{lem:mertens:chebotarev_sum}
For every real $\kappa > 0$ we have 
\[\sum_{\substack{1\leq n \leq X\\ n\in \cF_L}}\mu^2(n)\kappa^{\omega(n)}\ll X\log(X)^{\frac{\kappa}{n_L}-1}.\]
\end{lemma}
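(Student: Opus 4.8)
The plan is to recognise the left-hand side as the partial sum of a nonnegative multiplicative function and to apply a standard mean-value bound, the improvement in the exponent over the naive $(\log X)^{\kappa-1}$ coming entirely from the positive density of $\cP_L$.

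\textbf{Step 1: set up the multiplicative function.} Put $f(n)=\mu^2(n)\kappa^{\omega(n)}\prod_{p\mid n}\charfunc_{\cP_L}(p)$, so that the sum in the statement equals $\sum_{n\le X}f(n)$. Then $f$ is a nonnegative multiplicative function supported on squarefree integers, with $f(p)=\kappa\,\charfunc_{\cP_L}(p)\le\kappa$ and $f(p^k)=0$ for all $k\ge 2$. In particular $\sum_{p\le t}f(p)\log p\le\kappa\,\theta(t)\ll_\kappa t$, and the prime-power tail $\sum_p\sum_{k\ge 2}\frac{f(p^k)\log p^k}{p^k}$ vanishes identically, so the hypotheses of the classical upper bound for mean values of nonnegative multiplicative functions (the Rankin/Wirsing/Hall–Tenenbaum estimate) are satisfied. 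It yields
\[
  \sum_{\substack{1\le n\le X\\ n\in\cF_L}}\mu^2(n)\kappa^{\omega(n)}
  \;=\;\sum_{n\le X}f(n)\;\ll_{\kappa}\;\frac{X}{\log X}\exp\Big(\sum_{p\le X}\frac{f(p)}{p}\Big)
  \;=\;\frac{X}{\log X}\exp\Big(\kappa\!\!\sum_{\substack{p\le X\\ p\in\cP_L}}\frac{1}{p}\Big).
\]

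\textbf{Step 2: the prime sum.} Since $\cP_L$ differs by at most the finitely many primes of $\Sigma$ from the set of $p$ with $\Frob_p=\mathrm{id}$ in $\Gal(L/\Q)$, the Chebotarev density theorem with its classical error term (equivalently, partial summation applied to \Cref{eq:character_sum_chebotarev_lemma} taken with $d=1$) gives $\pi_{\cP_L}(t)=\tfrac{1}{n_L}\mathrm{Li}(t)+O\!\big(t\exp(-c\sqrt{\log t})\big)$, and a further partial summation produces the Mertens-type asymptotic $\sum_{p\le X,\,p\in\cP_L}\tfrac1p=\tfrac1{n_L}\log\log X+O(1)$. Substituting this into the display of Step 1 gives $\exp\!\big(\kappa\sum_{p\le X,\,p\in\cP_L}1/p\big)\ll_\kappa(\log X)^{\kappa/n_L}$, and hence
\[
  \sum_{\substack{1\le n\le X\\ n\in\cF_L}}\mu^2(n)\kappa^{\omega(n)}\;\ll_{\kappa}\;X(\log X)^{\frac{\kappa}{n_L}-1},
\]
with implied constant depending only on $\kappa$, $L$ and $\Sigma$, all of which are fixed.

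\textbf{Main obstacle.} There is essentially no serious difficulty: both inputs — the general mean-value bound for nonnegative multiplicative functions and the Mertens–Chebotarev estimate for $\sum_{p\in\cP_L,\,p\le X}1/p$ — are classical, and the former's hypotheses are immediate because $f$ is squarefree-supported and bounded on primes. The only points needing (routine) care are verifying those hypotheses and confirming that the error term in the prime sum is genuinely $O(1)$ rather than growing. As an alternative, one could give a self-contained argument by combining \Cref{lem:hardy_ramanujan} with a Chebotarev-refined count of squarefree $n\le X$ having exactly $\ell$ prime factors all lying in $\cP_L$, and summing $\kappa^\ell$ against $\frac{X}{\log X}\cdot\frac{(\frac1{n_L}\log\log X+O(1))^\ell}{\ell!}$ over $\ell$; but the multiplicative-function route is shorter.
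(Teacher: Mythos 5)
Your proof is correct and follows essentially the same route as the paper: the paper also treats the sum as the mean value of the nonnegative multiplicative function supported on squarefree $n$ with $f(p)=\kappa\charfunc_{\cP_L}(p)$, applies the standard upper bound $\ll\frac{X}{\log X}\prod_{p\le X}(1+f(p)/p)$, and then invokes Mertens' theorem for Chebotarev sets to get the factor $(\log X)^{\kappa/n_L}$. The only cosmetic difference is that the paper bounds $\prod(1+\kappa/p)\ll\prod(1-1/p)^{-\kappa}$ and cites a reference for the Mertens asymptotic over $\cP_L$, whereas you write the product as $\exp\bigl(\kappa\sum_{p\in\cP_L,\,p\le X}1/p\bigr)$ and derive the same estimate by partial summation from Chebotarev.
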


\begin{proof}
Let $f(n)$ be the multiplicative function defined by 
\[f(p^k)=\begin{cases}\kappa,~~&~~  p\in \cP_L,~k=1;\\ 0,~~&~~\textup{otherwise.}\end{cases}\]
Then  \cite[Corollary 2.15]{MR2378655} with this choice of $f$ gives 
\begin{eqnarray*}
\sum_{\substack{1\leq n \leq X\\ n\in \cF_L}}\mu^2(n)\kappa^{\omega(n)}&\ll &
\frac{X}{\log(X)}\prod_{\substack{p\leq X \\
p\in \cP_L}}\left(1+\frac{\kappa}{p}\right)\ll
\frac{X}{\log(X)}\prod_{\substack{p\leq X \\ p\in \cP_L}}\left(1+\frac{1}{p}\right)^{\kappa}.
\end{eqnarray*} 
Now, we have
\[\prod_{\substack{p\leq X\\ p\in \cP_L}}\left(1+\frac{1}{p}\right)=
  \prod_{\substack{p\leq X\\ p\in \cP_L}}\left(1-\frac{1}{p^2}\right)
  \cdot \prod_{\substack{p\leq X\\ p\in \cP_L}}\left(1-\frac{1}{p}\right)^{-1}
\ll \prod_{\substack{p\leq X\\ p\in \cP_L}}\left(1-\frac{1}{p}\right)^{-1}\ll \log(X)^{1/n_L},
\]
where the final inequality is an analogue of Merten's theorem for Chebotarev sets;
see \cite{MR4439576} for a precise asymptotic for the product $ \prod_{\substack{p\leq X\\ p\in \cP_L}}(1-p^{-1})^{-1}$.
\end{proof}

\section{Moments of the Selmer group}\label{sec:fouvryklueners}

Let $E/\Q$ be a principally polarised abelian variety, let $g$ be its dimension, and assume that $E$ has full rational $2$-torsion.  In this section we derive a formula for the moments of the distribution of $\dim\Sel_2(E_d/\Q)$, where $d$ runs through a family of square-free integers defined by certain frobenian conditions.
In the next section we will evaluate this formula under additional assumptions
on the Galois module $E[4]$, and will deduce \Cref{thm:intro_distr}.

\subsection{Setup} \label{sec:moments_setup}
Let $L/\mathbb{Q}$ be a finite Galois number field.  Let $\Sigma$ be a finite set of places containing $2$, $\infty$,  all places of bad reduction for $E$, and all places that ramify in $L$. \\[0.3em]
\textit{The quadratic twist family.}
Let $N$ be the product of the prime numbers corresponding
to the finite places in $\Sigma$, let $d_0\in\Z$ be a   divisor of $N$ (we allow $d_0<0$),
and let $c\in (\Z/4N\Z)^\times$.
Let $\cP_L$, $\cF_L$, $\cF_{c,L}$, and $\cF_{c,L}(X)$ be as defined in Section \ref{sec:family_densities}.  To ease notation in what follows, we omit $L$ from the subscripts. We write $d_0\mathcal{F}_c=\{d_0D:D\in \mathcal{F}_c\}$. 
 For all $r\in \Z_{\geq 1}$, we will compute the $r$-th moment of $\#\Sel_2(E_{d}/\Q)$
as $d$ runs over $d_0\cF_{c}$.

Let $S$ be the image of $\cF$ in $(\Z/4N\Z)^\times$, so that the family $\cF_{c}$
is non-empty if and only if we have $c\in S$. Equivalently,
$S$ is the image of the Galois group of $\Q(\zeta_{4N})/L\cap \Q(\zeta_{4N})$
under the natural isomorphism $\Gal(\Q(\zeta_{4N})/\Q)\to (\Z/4N\Z)^\times$.
We assume henceforth that we have $c\in S$. We remark that the pair $(c,d_0)$ determines a unique class $b=(b_v)_v \in \prod_{v\in \Sigma}\mathbb{Q}_v^{\times}/\mathbb{Q}_v^{\times 2}$ with the property that, for every $d\in d_0\cF_{c}$, we have $d \equiv b \textup{ mod }\prod_{v\in \Sigma}\mathbb{Q}_v^{\times 2}$. The constructions in this section will typically depend on $(d_0,c)$ only through the class $b$.

Let $\ImageModSquares$ denote the image of $S$ in $(\Z/4N\Z)^\times/(\Z/4N\Z)^{\times 2}$. 
Write $K_{\Sigma}$ for the maximal multiquadratic extension of $\Q$  unramified at all primes outside $\Sigma$.
 For every positive integer $m$ that is  
coprime to $N$, let $\sigma_m\in \Gal(K_\Sigma/\Q)$ denote the image of $m$
under the Artin map. The map $m\mapsto \sigma_m$ factors through 
$(\Z/4N\Z)^\times/(\Z/4N\Z)^{\times 2}$, and the image of $\ImageModSquares$
under this map is $\Gal(K_{\Sigma}/K_{\Sigma}\cap L)$. Recall that, for a squarefree integer
$m$,  we denote by $\psi_m$ the corresponding $\mathbb{F}_2$-valued quadratic character. Later on, we will often view $\ImageModSquares$ as a subgroup of $\oplus_{v\in \Sigma}H^1(\mathbb{Q}_v,\mathbb{F}_2)$, as explained in Notation \ref{notat:image_mod_sq_interpretation}. This identification is made in such a way that if $m\in \cF$ is a lift of $s\in \ImageModSquares$, then $s$ corresponds to the tuple $(\psi_m)_{v\in \Sigma}$. \\[0.3em]
\textit{The abelian variety $A$.}
Fix $r\in \Z_{\geq 1}$. We write $A=E^r$, and equip $A$ with the product polarisation.
For a squarefree integer $d$, we have $\Sel_2(A_d/\Q)=\Sel_2(E_d/\Q)^r$.   
 
We identify $A[2]$ with $A_d[2]$ as in  Section \ref{sec:ellcurves}. 
For each place $v$ of $\Q$ we write 
\[
  \delta_{d,v}^{(r)}\colon A_d(\Q_v)\longrightarrow H^1(\Q_v,A[2])
\]
for the coboundary map associated with the multiplication-by-$2$ Kummer sequence for $A_d$,
and denote by $\sS_{d,v}^{(r)}$ the image of $\delta_{d,v}^{(r)}$. When $d=1$, we drop it from the notation. 
 As in Section \ref{sec:ellcurves}, we denote by
$
  e\colon A[2]\times A[2]\rightarrow \mathbb{F}_2
$
the Weil pairing, taken to have values in $\mathbb{F}_2$.
It is the sum of the corresponding pairings on $E[2]$.
For each place $v$ of $\Q$, we denote by
\begin{equation} \label{eq:local_tate_pairing}
  \left \langle ~,~\right \rangle_v\colon H^1(\Q_v,A[2])\times H^1(\Q_v,A[2])\longrightarrow \mathbb{F}_2
\end{equation}
the local Tate pairing associated to $e$, and denote by  $\left \langle ~,~\right \rangle_\Sigma$ the sum, over $v\in \Sigma$, of these pairings.  

 The map
$\gamma\colon G_{\Q}\to \End E[2]$ of \Cref{not:gamma_def} factors through $\textup{Gal}(\mathbb{Q}(E[4])/\mathbb{Q})$, hence through $\Gal(K_{\Sigma}/\Q)$. Let $\Gamma\subset\End E[2]$ be the image of $\Gal\big(\mathbb{Q}(E[4])/\mathbb{Q}(E[4])\cap L\big)\cong \textup{Gal}(L(E[4])/L)$ under this map. 
Let $\gamma^{(r)}\colon G_{\Q}\to \End A[2]$ be the composition of $\gamma$ with the
 diagonal embedding $\End E[2]\to \End A[2]$. We view $\Gamma$ as operating on $A[2]$ via this map.

For $i\in \{1,2\}$, we  denote by $\pi_i\colon A[2]^2\to A[2]$ the $i$-th projection. Define $\Phi \colon A[2]^2\times A[2]^2\to \F_2$
to be  the map sending $(\bfu,\bfv)$   to 
$\Phi(\bfu ,\bfv )=e(\pi_1(\bfu) +\pi_1(\bfv), \pi_2(\bfv)).$
\\[0.3em]
\textit{Parametrising cohomology groups.}
For a finite set $\Sigma'$ of places, 
write $H^1_{\Sigma'}(\Q,A[2])$ for the subgroup of $H^1(\Q,A[2])=\Hom_{\cts}(G_{\Q},A[2])$
consisting of homomorphisms unramified outside $\Sigma'$.  Define \[V_0=H^1_\Sigma(\mathbb{Q},A[2])=\textup{Hom}(\textup{Gal}(K_\Sigma/\mathbb{Q}),A[2])\quad \textup{ and }\quad Z_0=\oplus_{v\in \Sigma}\sS_{b_v,v}^{(r)}.\]

Given  $d\in d_0\cF_c$ we write $\cP(d)$ for the set of primes outside $\Sigma$ which divide $d$ (thus if $d=d_0D$ with $D\in \cF_c$, then $\cP(d)$ is the set of prime factors of $D$).   Define  finite-dimensional $\F_2$-vector spaces
\[
  V'(d)=H^1_{\Sigma \cup \cP(d)}(\Q,A[2]) \quad \textup{ and }\quad  Z'(d)=\bigoplus_{v\in \Sigma\cup \cP(d)}\sS_{d,v}^{(r)} .
\]
We also set  $Z(d)=\oplus_{p\in \cP(d)} \sS_{d,p}^{(r)}$. Writing $\cA(d)$ for the subgroup of $H^1(\mathbb{Q},\mathbb{F}_2)$ generated by the characters $\{\psi_p \colon p\in \cP(d)\}$, we define  $V(d)=\cA(d)\otimes A[2]$, viewed as a subgroup of $H^1_{\Sigma \cup \mathcal{P}(d)}(\mathbb{Q},A[2])$. Note that we have 
\[V'(d)=V_0\oplus V(d)\quad \textup{ and }\quad Z'(d)=Z_0\oplus Z(d).\]

 Summing the
local Tate pairings \eqref{eq:local_tate_pairing} over $v\in \Sigma \cup \cP(d)$ gives a pairing
\begin{equation} \label{eq:Kanes_pairing}
\left \langle~,~\right \rangle_d\colon V'(d)\times Z'(d)\longrightarrow \F_2,
\end{equation}
defined by the formula 
\[
\left \langle x,y\right \rangle =\sum_{v\in \Sigma \cup \cP(d)}\left \langle \res_v(x),y_v\right \rangle_v,
\]
where we write $y=(y_v)_{v\in \Sigma\cup \cP(d)}\in Z'(d)$. When the choice of $d$ is clear from context, we often omit it from the notation. Thus, for example, we will typically write  $V$ and $Z$ in place of $V(d)$ and $Z(d)$, respectively, and write $\left \langle ~,~\right \rangle$ in place of  $\left \langle ~,~\right \rangle_d$.

\subsection{Counting $2$-Selmer elements} \label{ssec:parameterising_selmer_elts}
From now until  Section \ref{sec:first_sum_simplification},
we \emph{fix} a $d\in d_0\cF_c$. Write $d=d_0D$, with $D\in \cF_c$.

\begin{lemma} \label{pairing_kernel_selmer}
The left kernel of the pairing \eqref{eq:Kanes_pairing} is  $\Sel_2(A_d/\Q)$.
\end{lemma}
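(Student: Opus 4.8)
The plan is to unwind the definition of $\Sel_2(A_d/\Q)$ and match it against the left kernel of the pairing \eqref{eq:Kanes_pairing}. Recall that $\Sel_2(A_d/\Q)$ is, by definition, the set of classes $x\in H^1(\Q,A[2])$ that are unramified outside $\Sigma\cup\cP(d)$ (since $A_d$ has good reduction away from there, and $\sS^{(r)}_{d,v}=H^1_{\ur}(\Q_v,A[2])$ at such $v$ by \Cref{lem:kummer_image_basic}) and whose localisation $\res_v(x)$ lies in the local image $\sS^{(r)}_{d,v}$ for every $v\in\Sigma\cup\cP(d)$. In other words, $\Sel_2(A_d/\Q)$ is exactly the subset of $V'(d)=H^1_{\Sigma\cup\cP(d)}(\Q,A[2])$ consisting of those $x$ with $\res_v(x)\in \sS^{(r)}_{d,v}$ for all $v\in\Sigma\cup\cP(d)$. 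So the first step is just to record this reformulation, citing \Cref{lem:kummer_image_basic} to justify that the unramified condition outside $\Sigma\cup\cP(d)$ is automatic once $x\in V'(d)$, and that away from $\Sigma$ the ramified places contribute the condition of being unramified (trivial on inertia), which at the finitely many primes of $\cP(d)$ where $d$ is \emph{ramified} becomes a genuine constraint already captured by membership in $\sS^{(r)}_{d,v}=\delta^{(r)}_{d,v}(A[2])$.

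The second step is to identify this with the left kernel. By definition, $x\in V'(d)$ lies in the left kernel of $\langle\,,\,\rangle_d$ if and only if $\sum_{v\in\Sigma\cup\cP(d)}\langle\res_v(x),y_v\rangle_v=0$ for all $y=(y_v)\in Z'(d)=\bigoplus_{v}\sS^{(r)}_{d,v}$. Since $Z'(d)$ is the direct sum of the $\sS^{(r)}_{d,v}$, this vanishing for all $y$ is equivalent to the vanishing of each individual term, i.e. $\langle\res_v(x),y_v\rangle_v=0$ for all $v\in\Sigma\cup\cP(d)$ and all $y_v\in\sS^{(r)}_{d,v}$. That is precisely the condition $\res_v(x)\in(\sS^{(r)}_{d,v})^{\perp}$ with respect to the local Tate pairing \eqref{eq:local_tate_pairing} for $A$, for every $v\in\Sigma\cup\cP(d)$.

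The key input is then that $\sS^{(r)}_{d,v}$ is its own annihilator under the local Tate pairing: the local image of the Kummer map is a maximal isotropic subspace of $H^1(\Q_v,A[2])$ for the pairing attached to the principal polarisation. This is stated for $E$ in \S\ref{sec:local_tate} (citing \cite[Proposition 4.10]{MR2833483}), and it passes to $A_d=E_d^r$ since the pairing on $A[2]$ is the orthogonal sum of those on the $E[2]$-factors and $\sS^{(r)}_{d,v}$ is the corresponding direct sum; maximal isotropy is preserved under orthogonal direct sums. Hence $(\sS^{(r)}_{d,v})^{\perp}=\sS^{(r)}_{d,v}$ for each $v$, and the condition $\res_v(x)\in(\sS^{(r)}_{d,v})^{\perp}$ for all $v$ becomes $\res_v(x)\in\sS^{(r)}_{d,v}$ for all $v$, which by the first step is exactly membership in $\Sel_2(A_d/\Q)$. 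This chain of equivalences proves the lemma.

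I do not expect any serious obstacle here; the lemma is essentially a packaging of the standard description of Selmer groups via the Poitou--Tate/local Tate duality formalism, exactly as used by Kane \cite{MR3101079} and Smith \cite{Smith1}. The only points needing a word of care are (a) confirming that no place outside $\Sigma\cup\cP(d)$ contributes — handled by \Cref{lem:kummer_image_basic}, which identifies $\sS^{(r)}_{d,v}$ with $H^1_{\ur}$ at good places and shows the unramified condition there is forced by $x\in V'(d)$ — and (b) the transfer of the maximal-isotropy property from $E$ to the power $A=E^r$, which is immediate from additivity of the Weil pairing.
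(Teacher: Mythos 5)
Your proposal is correct and follows essentially the same route as the paper's proof: use Lemma \ref{lem:kummer_image_basic} to reduce the Selmer condition to the places in $\Sigma\cup\cP(d)$, then invoke the fact that each $\sS^{(r)}_{d,v}$ is maximal isotropic (hence its own annihilator) for the local Tate pairing to identify the resulting condition with membership in the left kernel. The paper states this more tersely but with no additional content; your extra remark on transferring maximal isotropy from $E$ to $A=E^r$ is a harmless elaboration (one could also just apply the cited result directly to the principally polarised abelian variety $A_d$).
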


\begin{proof}
For a prime  $p\notin \Sigma\cup \mathcal{P}(d)$, Lemma \ref{lem:kummer_image_basic} gives $\sS_{d,p}^{(r)}= H^1_{\ur}(\Q_p,A[2])$.
Consequently, $\Sel_2(A_d/\Q)$ is the subspace of  $V'$ consisting of elements
whose restriction to $\oplus_{v\in \Sigma}H^1(\Q_v,A[2])$ lies in $Z'$.
The result now follows from the fact that each summand $\sS_{d,v}^{(r)}$ of $Z'$
is maximal isotropic with respect to the local Tate pairing.
\end{proof}

\begin{lemma} \label{lem:kane_formula}
We have 
\begin{equation} \label{eq:selmer_formula_basic}
\frac{1}{\#Z'}\sum_{b\in Z'}(-1)^{\left \langle a,b\right \rangle}= \begin{cases} 1, & a\in \Sel_2(E_d/\Q);\\ 0, & \textup{else}.\end{cases}
\end{equation}
\end{lemma}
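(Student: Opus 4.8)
The plan is to recognise the left-hand side as a normalised character sum over the finite $\F_2$-vector space $Z'$. For fixed $a\in V'$, the map $Z'\to \{\pm 1\}$, $b\mapsto (-1)^{\langle a,b\rangle}$, is a group homomorphism, because the pairing $\langle~,~\rangle$ of \eqref{eq:Kanes_pairing} is $\F_2$-bilinear. By the orthogonality relations for characters of a finite abelian group, the sum $\sum_{b\in Z'}(-1)^{\langle a,b\rangle}$ equals $\#Z'$ if this homomorphism is trivial, and $0$ otherwise.

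Next I would observe that the homomorphism $b\mapsto (-1)^{\langle a,b\rangle}$ is trivial precisely when $\langle a,b\rangle = 0$ for every $b\in Z'$, i.e. precisely when $a$ lies in the left kernel of the pairing \eqref{eq:Kanes_pairing}. By \Cref{pairing_kernel_selmer}, that left kernel is exactly $\Sel_2(A_d/\Q)$ (equivalently $\Sel_2(E_d/\Q)^r$, under the identification $A=E^r$). Combining the two observations, the sum equals $\#Z'$ when $a\in\Sel_2$ and $0$ otherwise; dividing by $\#Z'$ yields the claimed formula.

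There is essentially no obstacle here: the only content is the identification of the kernel, which is already \Cref{pairing_kernel_selmer}, and the character-sum orthogonality, which is elementary. The one point to state cleanly is that $Z'$ is finite (it is a finite direct sum of the finite groups $\sS_{d,v}^{(r)}$), so that the orthogonality relation applies verbatim.
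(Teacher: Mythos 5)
Your proposal is correct and matches the paper's argument: the paper likewise notes that $b\mapsto \langle a,b\rangle$ is a homomorphism $\alpha_a\colon Z'\to\F_2$, counts $\#\ker(\alpha_a)-\#\alpha_a^{-1}(1)$ (which is exactly the orthogonality relation you invoke), and identifies triviality of $\alpha_a$ with membership in the Selmer group via \Cref{pairing_kernel_selmer}. Nothing further is needed.
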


\begin{proof}
For any $a\in V'$, write $\alpha_a\colon Z'\rightarrow \F_2$ for the homomorphism
$b\mapsto \left \langle a,b\right \rangle$. By Lemma \ref{pairing_kernel_selmer},
one has $a\in \Sel_2(E_d/\Q)$ if and only if $\alpha_a$ is identically $0$. One then has 
\begin{eqnarray*}
\frac{1}{\#Z'}\sum_{b\in Z'}(-1)^{\left \langle a,b\right \rangle}&=&\frac{1}{\#Z'}(\#\ker(\alpha_a)-\#\alpha^{-1}_a(1))
=  \begin{cases} 1, & \alpha_a=0;\\ 0, & \textup{else},\end{cases}
\end{eqnarray*}
as claimed.
\end{proof}

Combining Lemma \ref{lem:kane_formula} with the decompositions
$Z'=Z_0\oplus Z$ and $V'=V_0\oplus V$, we obtain  
\begin{equation} \label{eq:kane_sum_split}
 \#\Sel_2(E_d/\Q)^r= \frac{1}{\#Z_0}\sum_{(v_0,z_0)\in V_0\times Z_0}(-1)^{\left \langle v_0,z_0\right \rangle_\Sigma}\frac{1}{\#Z}\sum_{(v,z)\in V\times Z}(-1)^{\left \langle v_0,z \right \rangle+\left \langle v,z_0\right \rangle+\left \langle v,z\right \rangle}. 
\end{equation}

The following lemma gives a parametrisation for the spaces $V$ and $Z$.

\begin{lemma} \label{lem:paramaterisations}
The associations $\Map(\cP(d),A[2])\rightarrow V$ and $\Map(\cP(d),A[2])\rightarrow Z$ given by 
\[\epsilon \mapsto \sum_{p\in \cP(d)}\psi_p\epsilon(p)\quad \textup{ and }\quad \eta \mapsto \sum_{p\mid \cP(d)}\delta_{d,p}^{(r)}(\eta(p))\]
are bijections.
\end{lemma}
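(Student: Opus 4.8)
The plan is to prove each of the two asserted bijections by exhibiting explicit inverse maps, using the structural decompositions already established.

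\textbf{The map $\epsilon\mapsto \sum_{p\in\cP(d)}\psi_p\epsilon(p)$.} First I would recall that, by construction in Section \ref{sec:moments_setup}, we have $V(d)=\cA(d)\otimes A[2]$, where $\cA(d)\subset H^1(\Q,\F_2)$ is generated by the characters $\{\psi_p:p\in\cP(d)\}$. Since $D=\prod_{p\in\cP(d)}p$ is square-free, the characters $\psi_p$ for $p\in\cP(d)$ are $\F_2$-linearly independent in $H^1(\Q,\F_2)$ (their fixed fields $\Q(\sqrt p)$ are linearly disjoint, as the $p$ are distinct primes), so they form an $\F_2$-basis of $\cA(d)$. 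Hence $\cA(d)\cong \Map(\cP(d),\F_2)$ via $\epsilon_0\mapsto\sum_p\psi_p\epsilon_0(p)$, and tensoring with $A[2]$ over $\F_2$ gives the stated bijection $\Map(\cP(d),A[2])=\Map(\cP(d),\F_2)\otimes A[2]\xrightarrow{\sim}\cA(d)\otimes A[2]=V(d)$. The inverse is: write an element of $V(d)$ uniquely as $\sum_p\psi_p\otimes a_p$ with $a_p\in A[2]$ and return $\epsilon(p)=a_p$.

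\textbf{The map $\eta\mapsto\sum_{p\in\cP(d)}\delta_{d,p}^{(r)}(\eta(p))$.} Here $Z(d)=\bigoplus_{p\in\cP(d)}\sS_{d,p}^{(r)}$ is by definition an internal direct sum, so it suffices to observe that for each $p\in\cP(d)$ the local Kummer map $\delta_{d,p}^{(r)}\colon A_d(\Q_p)/2A_d(\Q_p)\to H^1(\Q_p,A[2])$ restricts to a bijection $\Map(\{p\},A[2])=A[2]\xrightarrow{\sim}\sS_{d,p}^{(r)}$. For a single elliptic curve factor $E$ this is exactly Lemma \ref{lem:kummer_image_basic}: since $p\in\cP(d)$ lies outside $\Sigma$ and divides $d$, it ramifies in $\Q(\sqrt d)$, so $\sS_{d,p}=\delta_{d,p}(E[2])$ and moreover the proof of that lemma shows $\delta_{d,p}$ is injective on $E[2]$ (no $\Q_p$-points of order $4$), so it is an isomorphism $E[2]\xrightarrow{\sim}\sS_{d,p}$. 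Taking $r$-fold products, $\delta_{d,p}^{(r)}$ is an isomorphism $A[2]\xrightarrow{\sim}\sS_{d,p}^{(r)}$ as well. Assembling over $p\in\cP(d)$ and using the direct-sum decomposition of $Z(d)$ gives the claimed bijection, with inverse sending $z=(z_p)_{p\in\cP(d)}\in Z(d)$ to $\eta(p)=(\delta_{d,p}^{(r)})^{-1}(z_p)$.

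Neither step is genuinely hard; the only point requiring care is the injectivity and full-rank claim for $\delta_{d,p}^{(r)}$ on $A[2]$ at the ramified primes $p\in\cP(d)$, and that is already contained in (the proof of) Lemma \ref{lem:kummer_image_basic}, so I would simply cite it. I expect the write-up to be short: establish linear independence of the $\psi_p$, invoke the tensor decomposition $V(d)=\cA(d)\otimes A[2]$ and the direct-sum decomposition $Z(d)=\bigoplus_p\sS_{d,p}^{(r)}$, and quote Lemma \ref{lem:kummer_image_basic} for the local factors.
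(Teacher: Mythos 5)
Your proposal is correct and follows essentially the same route as the paper: the paper likewise deduces the $V$-parametrisation from the decomposition $V=\cA(d)\otimes A[2]$ (with the $\psi_p$ as a basis of $\cA(d)$), and the $Z$-parametrisation from Lemma \ref{lem:kummer_image_basic}, applied directly to $A$ rather than factor-by-factor as you do, which is an immaterial difference.
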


\begin{proof}
For the parameterization of $V$, 
having defined $V$ as $\cA\otimes A[2]$, we may uniquely write any element of $V$
in the form $\sum_{p\in \cP(d)}\psi_p  x_p$ for some $x_p\in A[2]$.  

For the parameterization of $Z$, note that by Lemma \ref{lem:kummer_image_basic} (applied to $A$), any element of $Z$ can be
expressed uniquely in the form $\sum_{p\in \cP(d)}\delta_{p,d}^{(r)}(a_p)$ for some elements $a_p$ of $A[2]$.  
\end{proof}

Recall that $d=d_0D$, with $D\in \cF_c$.  Suppose we have a pair $(v,z)\in V\times Z$. By Lemma \ref{lem:paramaterisations},
this corresponds to a pair of functions $(\epsilon,\eta)$ from $\cP(d)$ to $A[2]$. 
For fixed $\bfu \in A[2]^2$ we define the squarefree integer 
\begin{equation}\label{eq:Dab}
D_{\bfu }=\prod_{\substack{p\in \cP(d) \\ (\epsilon(p),\eta(p))=\bfu}}p.
\end{equation}
In this way the pair $(v,z)$ determines a factorisation 
\begin{equation} \label{eq:heath_brown_factorisation}
D=\prod_{\bfu \in A[2]^2}D_{\bfu }
\end{equation}
of $D$ into positive coprime integers.
Conversely, a factorisation of the form \eqref{eq:heath_brown_factorisation} uniquely
determines the functions $\epsilon$ and $\eta$: for each $p\mid D$, define
$(\epsilon(p),\eta(p))$ to be the unique $\bfu \in A[2]^2$ such that $p\mid D_{\bfu }$. We have thus shown the following. In the statement, recall that for $i\in \{1,2\}$, we denote by $\pi_i:A[2]^2\to A[2]$ the $i$-th projection. 

\begin{lemma} \label{lem:facotrisations_of_d}
  The rule \eqref{eq:Dab} defines a bijection between $V\times Z$ and the collection of factorisations
of $D=d/d_0$ as a product of pairwise coprime positive squarefree integers indexed by
elements $\bfu  \in A[2]^2$, as in \eqref{eq:heath_brown_factorisation}. The element $(v,z)\in V\times Z$ associated to such a factorisation is given by
\begin{equation} \label{eq:explicit_factorisation_to_element}
v=\sum_{\bfu \in A[2]^2}\psi_{D_{\bfu }}\pi_1(\bfu) \quad \textup{ and }\quad z=\sum_{\bfu \in A[2]^2}\sum_{p\mid D_{\textbf{u
}}}\delta_{p,d}^{(r)}(\pi_2(\bfu) ).
\end{equation}
\end{lemma}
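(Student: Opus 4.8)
The statement is a pure bookkeeping lemma, so the plan is to set up the two directions of the claimed bijection and then check they are mutually inverse; the only genuine content is that the data $(v,z)$ and the data of a factorisation of $D$ are literally two encodings of the same thing, namely a pair of functions $\cP(d)\to A[2]$.

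First I would invoke \Cref{lem:paramaterisations}: an element $(v,z)\in V\times Z$ is the same as a pair $(\epsilon,\eta)\in \Map(\cP(d),A[2])^2$, with $v=\sum_{p}\psi_p\epsilon(p)$ and $z=\sum_{p}\delta_{d,p}^{(r)}(\eta(p))$. Bundling the two functions into one, this is the same as a single function $\cP(d)\to A[2]^2$, $p\mapsto (\epsilon(p),\eta(p))=\bfu(p)$. Next I would observe that a function $\cP(d)\to A[2]^2$ is the same datum as an (ordered) partition of the finite set $\cP(d)$ indexed by $A[2]^2$, namely $p\mapsto$ the block $\{p'\in\cP(d): \bfu(p')=\bfu(p)\}$; and since the elements of $\cP(d)$ are distinct primes and $D=\prod_{p\in\cP(d)}p$ is their squarefree product, such a partition is in turn the same as a factorisation $D=\prod_{\bfu\in A[2]^2}D_{\bfu}$ into pairwise coprime positive squarefree integers, via $D_{\bfu}=\prod_{p:\bfu(p)=\bfu}p$, with the empty block corresponding to the factor $1$. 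Composing these identifications gives exactly the rule \eqref{eq:Dab}, and each step is visibly bijective, so the composite is a bijection between $V\times Z$ and the set of factorisations \eqref{eq:heath_brown_factorisation}.

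It then remains to record the formulas \eqref{eq:explicit_factorisation_to_element}. Starting from a factorisation, for each $p\mid D$ there is a unique $\bfu\in A[2]^2$ with $p\mid D_{\bfu}$, and one sets $(\epsilon(p),\eta(p))=\bfu$; feeding this into the formulas of \Cref{lem:paramaterisations} and grouping the sum over $p\in\cP(d)$ according to which $D_{\bfu}$ contains $p$ gives
\[
v=\sum_{p\in\cP(d)}\psi_p\,\pi_1(\bfu(p))=\sum_{\bfu\in A[2]^2}\sum_{p\mid D_{\bfu}}\psi_p\,\pi_1(\bfu)=\sum_{\bfu\in A[2]^2}\psi_{D_{\bfu}}\pi_1(\bfu),
\]
where the last equality uses $\psi_{D_{\bfu}}=\sum_{p\mid D_{\bfu}}\psi_p$ (additivity of the quadratic character in the squarefree part), and likewise
\[
z=\sum_{p\in\cP(d)}\delta_{d,p}^{(r)}(\pi_2(\bfu(p)))=\sum_{\bfu\in A[2]^2}\sum_{p\mid D_{\bfu}}\delta_{d,p}^{(r)}(\pi_2(\bfu)).
\]
This is precisely \eqref{eq:explicit_factorisation_to_element}. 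There is no real obstacle here; the one point to be slightly careful about is the bookkeeping in the first paragraph — making sure that ``function on $\cP(d)$'' and ``coprime factorisation of the squarefree number $D$'' are being matched up correctly, including the convention that indices $\bfu$ not hit by $\epsilon\times\eta$ contribute the trivial factor $D_{\bfu}=1$ — but this is exactly the content of \Cref{lem:paramaterisations} together with unique factorisation of the squarefree integer $D$.
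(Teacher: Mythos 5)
Your proof is correct and is essentially identical to the paper's own argument, which likewise invokes Lemma \ref{lem:paramaterisations} to identify $(v,z)$ with a function $\cP(d)\to A[2]^2$ and then matches such functions with coprime squarefree factorisations of $D$. The one detail you spell out that the paper leaves implicit — the additivity $\psi_{D_{\bfu}}=\sum_{p\mid D_{\bfu}}\psi_p$ needed to pass from the sum over primes to the grouped formula for $v$ — is correct and harmless.
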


\subsection{Aside: the support of an element of $V'\times Z'$}

The following definition will be useful later. 

\begin{definition} \label{def:support}
Let $(v,z)\in V\times Z$, and let $D=\prod_{\bfu\in A[2]^2}D_\bfu$ be the corresponding factorisation of $D$. Then we define the \textit{support} $T\subseteq A[2]^2$ of $(v,z)$ to be the subset
\[T=\{\bfu \in A[2]^2~~\colon~~ D_\bfu>1\}.\]
Given an element $(v',z')\in V'\times Z'$, written as $(v_0+v,z_0+z)$ according to the decompositions $V'=V_0\oplus V$ and $Z'=Z_0\oplus Z$ above, we define the support of $(v',z')$ to be the support of $(v,z)$. 
\end{definition}

Suppose we have $x,y\in A[2]$. Then $\delta_d^{(r)}(x)\in H^1(\Q,A[2])$ lies in
$V'$ and is an element of $\Sel_2(A_d/\Q)$. In particular, it is in the left
kernel of the pairing  \eqref{eq:Kanes_pairing}. Similarly, the element
$\sum_{v\in \Sigma \cup \cP(d)}\delta_{d,v}^{(r)}(y)$ of $Z'$ is in the right
kernel of the pairing, by reciprocity. Thus given $v'\in V'$ and $z'\in Z'$, we
can adjust $v'$ by $\delta_d^{(r)}(x)$ and $z'$ by
$\sum_{v\in \Sigma \cup \cP(d)}\delta_{d,v}^{(r)}(y)$ without changing the value
of $\left \langle v',z'\right \rangle$. For this flexibility to be useful later,
it will be necessary to understand how the corresponding factorisation of $D$ changes upon adjusting $v'$ and $z'$ in this way.

\begin{lemma} \label{support_lemma}
Let $\bfw\in A[2]^2$, and $T\subseteq A[2]^2$. Then the map 
\begin{equation}\label{action_by_torsion}
(v',z')\longmapsto \Big(v'+\delta_d^{(r)}(\pi_1(\bfw)), z'+\sum_{v\in \Sigma \cup \cP(D)}\delta_{d,v}^{(r)}(\pi_2(\bfw))\Big)
\end{equation}
is a bijection from the set of elements of $V'\times Z'$ with support $T$ to the set of elements with support $T+\bfw$.
\end{lemma}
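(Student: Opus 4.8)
The plan is to unwind everything in terms of the explicit parametrisation of $V\times Z$ by factorisations of $D$ established in \Cref{lem:facotrisations_of_d}, and to check that the map \eqref{action_by_torsion} acts on these factorisations by the ``translate all indices by $\bfw$'' operation. First I would observe that the map in question is visibly a bijection of $V'\times Z'$ to itself (it is translation by a fixed element, and is its own inverse up to sign, which is irrelevant over $\F_2$), so the only content is that it sends elements of support $T$ bijectively onto elements of support $T+\bfw$; for that it suffices to prove the one inclusion that support $T$ goes to support $T+\bfw$, since applying the same statement with $\bfw$ replaced by $\bfw$ again gives the reverse inclusion (as $\bfw+\bfw=0$ in $A[2]^2$).

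Next I would reduce to the ``$V\times Z$'' part. Write $(v',z') = (v_0+v, z_0+z)$ according to $V'=V_0\oplus V$, $Z'=Z_0\oplus Z$, and recall that by definition the support of $(v',z')$ is the support of $(v,z)$. The key point is that adding $\delta_d^{(r)}(\pi_1(\bfw))$ to $v'$ changes only the $V$-component, and adding $\sum_{v\in \Sigma\cup\cP(d)}\delta_{d,v}^{(r)}(\pi_2(\bfw))$ to $z'$ changes only the $Z$-component: indeed $\delta_d^{(r)}(\pi_1(\bfw))\in H^1_{\Sigma\cup\cP(d)}(\Q,A[2])=V'$ is a global class, and by \Cref{lem:twisted_boundary_map} its restriction at each prime $p\in\cP(d)$ is $\delta_{d,p}^{(r)}(\pi_1(\bfw)) = \gamma^{(r)}(\cdot)\pi_1(\bfw) + \psi_p\,\pi_1(\bfw)$; the unramified-at-$p$ part $\gamma^{(r)}(\cdot)\pi_1(\bfw)$ lies in $V_0$, so modulo $V_0$ the class $\delta_d^{(r)}(\pi_1(\bfw))$ equals $\sum_{p\in\cP(d)}\psi_p\,\pi_1(\bfw)\in V$. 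Similarly the $\Sigma$-part of $\sum_v \delta_{d,v}^{(r)}(\pi_2(\bfw))$ lies in $Z_0$, and its $\cP(d)$-part is $\sum_{p\in\cP(d)}\delta_{d,p}^{(r)}(\pi_2(\bfw))\in Z$. So under $V'/V_0\cong V$, $Z'/Z_0\cong Z$ the map \eqref{action_by_torsion} becomes
\[
(v,z)\longmapsto \Bigl(v+\sum_{p\in\cP(d)}\psi_p\,\pi_1(\bfw),\; z+\sum_{p\in\cP(d)}\delta_{d,p}^{(r)}(\pi_2(\bfw))\Bigr).
\]

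Finally I would translate this through the parametrisation by functions $\epsilon,\eta\colon\cP(d)\to A[2]$ of \Cref{lem:paramaterisations}. In those coordinates, $v=\sum_p\psi_p\epsilon(p)$ and $z=\sum_p\delta_{d,p}^{(r)}(\eta(p))$, and adding $\sum_p\psi_p\,\pi_1(\bfw)$ to $v$ and $\sum_p\delta_{d,p}^{(r)}(\pi_2(\bfw))$ to $z$ — using the injectivity in \Cref{lem:paramaterisations}, i.e. the uniqueness of the representations — replaces $\epsilon(p)$ by $\epsilon(p)+\pi_1(\bfw)$ and $\eta(p)$ by $\eta(p)+\pi_2(\bfw)$ for every $p\in\cP(d)$; in other words $(\epsilon(p),\eta(p))\mapsto (\epsilon(p),\eta(p))+\bfw$ in $A[2]^2$ for each $p$. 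By \eqref{eq:Dab}, the factor $D_\bfu$ of the new factorisation is the product of the primes $p$ with $(\epsilon(p),\eta(p))+\bfw=\bfu$, i.e. $(\epsilon(p),\eta(p))=\bfu-\bfw=\bfu+\bfw$; hence the new $D_{\bfu}$ equals the old $D_{\bfu+\bfw}$. Therefore the new support $\{\bfu : D_{\bfu}>1\}$ is exactly $\{\bfu : \text{old }D_{\bfu+\bfw}>1\} = T+\bfw$, as claimed. I expect the only mildly delicate step to be the bookkeeping in the second paragraph — checking via \Cref{lem:twisted_boundary_map} that the ``global torsion'' and ``$\Sigma$-diagonal'' adjustments really do act trivially on the $V_0$- and $Z_0$-components so that the whole argument takes place inside $V\times Z$ — but this is a short computation, not a genuine obstacle.
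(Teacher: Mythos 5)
Your proof is correct and follows essentially the same route as the paper's: both use \Cref{lem:twisted_boundary_map} to see that, modulo an adjustment lying in $V_0\times Z_0$, the shift amounts to adding $\psi_D\,\pi_1(\bfw)$ to $v$ and $\sum_{p\in\cP(d)}\delta_{d,p}^{(r)}(\pi_2(\bfw))$ to $z$, and then read off from the parametrisation of \Cref{lem:facotrisations_of_d} (equivalently \Cref{lem:paramaterisations}) that the factorisation is re-indexed by translation by $\bfw$, so the support moves from $T$ to $T+\bfw$. The only cosmetic looseness is writing $\psi_p$ for the local restriction of $\psi_d$ at $p\mid D$ (they differ by a character unramified at $p$, so the identification of the $V$-component is unaffected); the paper sidesteps this by using the global identity $\delta_d^{(r)}(x)=\delta_{d_0}^{(r)}(x)+\psi_D\cup x$ with $\delta_{d_0}^{(r)}(x)\in V_0$.
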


\begin{proof}
Suppose $(v',z')=(v_0+v,z_0+z)$ has support $T$, and let $D=\prod_\bfu D_\bfu$ be the corresponding factorisation of $(v,z)$. Then from \eqref{eq:explicit_factorisation_to_element} and Lemma \ref{lem:twisted_boundary_map} we have 
\begin{eqnarray*}
v_0+v+\delta_d^{(r)}(\pi_1(\bfw))&=& v_0+\delta_{d_0}^{(r)}(\pi_1(\bfw))+\psi_{D}\pi_1(\bfw)+\sum_{\bfu \in A[2]^2}\psi_{D_{\bfu }}\pi_1(\bfu)\\
&=& v_0+\delta_{d_0}^{(r)}(\pi_1(\bfw))+\sum_{\bfu \in A[2]^2}\psi_{D_{\bfu }}(\pi_1(\bfu)+\pi_1(\bfw))\\
&=& v_0+\delta_{d_0}^{(r)}(\pi_1(\bfw))+\sum_{\bfu \in A[2]^2}\psi_{D_{\bfu +\bfw}}\pi_1(\bfu).
\end{eqnarray*}
Similarly, one computes
\[z_0+z+\sum_{v\in \Sigma \cup \cP(D)}\delta_{d,v}^{(r)}(\pi_2(\bfw))=z_0+\sum_{v\in \Sigma}\delta_{d,v}^{(r)}(\pi_2(\bfw))+\sum_{\bfu \in A[2]^2}\sum_{p\mid D_{\bfu+\bfw}}\delta_{p,d}^{(r)}(\pi_2(\bfu )).\]
Consequently, we see that the right hand side of \eqref{action_by_torsion} is equal to   $(\widetilde{v_0}+\widetilde{v},\widetilde{z_0}+\widetilde{z})$, where 
\[(\widetilde{v_0},\widetilde{z_0})=\Big(v_0+\delta_{d_0}^{(r)}(\pi_1(\bfw)),z_0+\sum_{v\in \Sigma \cup \cP(d)}\delta_{d,v}^{(r)}(\pi_2(\bfw))\Big)\in V_0\times Z_0,\]
and where $(\widetilde{v},\widetilde{z})\in V\times Z$ corresponds to the factorisation $D=\prod_{\bfu}D_{\bfu +\bfw}$.  
\end{proof}

Recall that $A[2]=E[2]^r=E[2]\otimes \F_2^r$, so that $V'=H^1_{\Sigma \cup \cP(d)}(\Q,A[2])=H^1_{\Sigma\cup \cP(d)}(\Q,E[2])^r$.   

\begin{lemma} \label{lem:is_injection_criteria}
Let $(v',z')\in V'\times Z'$, let $T$ be its support. Then the $r$-tuple of
elements of $H^1_{\Sigma\cup \cP(d)}(\Q,E[2])$ corresponding to $v'$ consists
of $r$ vectors that are $\F_2$-linearly independent in  
\[\frac{H^1(\Q,E[2])}{ H^1_\Sigma(\Q,E[2])+\delta_{d}(E[2]) } \]
if and only if there exist $\bfw\in A[2]^2$, and a codimension $1$ subspace
$M\subseteq \F_2^r$, such that $\pi_1(T+\bfw)\subseteq E[2]\otimes M$.
\end{lemma}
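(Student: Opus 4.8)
The idea is to unwind both sides through the explicit parametrisation of $V\times Z$ by factorisations given in Lemma \ref{lem:facotrisations_of_d}, together with the ``shift'' bijection of Lemma \ref{support_lemma}. Write $v' = v_0 + v$ with $v_0\in V_0$ and $v\in V$, and let $D = \prod_{\bfu} D_\bfu$ be the factorisation attached to $v$ (and the $z$-component). By \eqref{eq:explicit_factorisation_to_element} the $V$-part is $v = \sum_{\bfu} \psi_{D_\bfu}\pi_1(\bfu)$. Viewing everything inside $H^1(\Q,E[2])^r = H^1(\Q, E[2]\otimes\F_2^r)$, I first record that modulo $H^1_\Sigma(\Q,E[2])$ the class $v_0$ vanishes, so $v'$ and $v$ have the same image in the quotient $Q := H^1(\Q,E[2])/(H^1_\Sigma(\Q,E[2])+\delta_d(E[2]))$; and writing $\bar\psi_D$ for the images of the characters $\{\psi_p : p\in\cP(d)\}$, these characters remain $\F_2$-independent in $H^1(\Q,\F_2)/H^1_\Sigma(\Q,\F_2)$ since the primes $p\in\cP(d)$ are distinct and outside $\Sigma$. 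Hence the image of $v'$ in $Q\otimes$-language is governed by the $E[2]$-coefficients $\pi_1(\bfu)$ of the supported indices.

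Next I make the reduction to $\bfw$ explicit. Adjusting $v'$ by $\delta_d^{(r)}(\pi_1(\bfw))$ (and correspondingly $z'$) does not change whether the $r$-tuple is independent in $Q$ — because $\delta_d^{(r)}(\pi_1(\bfw))$ lies in $\delta_d(E[2])^r$, which is quotiented out in $Q$ — and by Lemma \ref{support_lemma} this adjustment sends the support $T$ to $T+\bfw$. Concretely, after shifting by $\bfw$ the new $V$-part in $Q$ is $\sum_{\bfu}\bar\psi_{D_{\bfu+\bfw}}\,\pi_1(\bfu) = \sum_{\bfu\in T+\bfw}\bar\psi_{D_\bfu}\,\pi_1(\bfu-\bfw)$, equivalently (reindexing) a class whose $E[2]$-coefficients range over $\pi_1(T+\bfw)$. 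So the whole problem becomes: the $r$ components of $v'$ span a full $r$-dimensional subspace of $Q$ if and only if there is some $\bfw$ such that the set of $E[2]$-coefficients appearing, namely $\pi_1(T+\bfw)$, is contained in a proper $E[2]$-submodule $E[2]\otimes M$ with $\dim_{\F_2}M = r-1$.

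The remaining step is the linear-algebra heart. Using the $\F_2$-independence of the $\bar\psi_{D_\bfu}$ in $H^1(\Q,\F_2)/H^1_\Sigma(\Q,\F_2)$, the image of $v'$ in $Q$ (which sits inside $(H^1(\Q,\F_2)/H^1_\Sigma)\otimes E[2]$) has its $r$ components spanning the subspace $W\subseteq E[2]$ generated by $\{\pi_1(\bfu):\bfu\in T\}$ ``spread across'' the coordinate characters — more precisely, the components of $v'$ are linearly \emph{dependent} in $Q$ exactly when there is a nonzero linear functional $\lambda:\F_2^r\to\F_2$ annihilating all of them, i.e.\ when $\pi_1(\text{supp})$, viewed as lying in $E[2]\otimes\F_2^r$ via the assignment $\bfu\mapsto \pi_1(\bfu)$ times its $\psi$-coordinate, actually lands in $E[2]\otimes\ker\lambda$. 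Since adjusting by $\bfw$ moves the support by translation, the dependence condition for \emph{some} representative is the $\exists\bfw$ condition; taking the contrapositive gives exactly: the $r$-tuple is independent in $Q$ iff for all $\bfw$ and all codimension-$1$ $M$, $\pi_1(T+\bfw)\not\subseteq E[2]\otimes M$ — and negating back one more time, independence \emph{fails} iff such $\bfw, M$ exist. One must be careful here: the statement as worded asserts the tuple \emph{is} independent iff such $\bfw, M$ exist, so the correct bookkeeping is that the honest invariant is ``$v'$ is already, without shifting, of a form landing in $E[2]\otimes M$ for the tautological $\bfw=0$ when independent'' — I will instead phrase the argument by showing directly that independence of the $r$ components is equivalent to: the $E[2]$-valued function $p\mapsto \pi_1$ of the index of $p$, after some translation of indices, takes values spanning only a codimension-$1$ piece is \emph{impossible}, hence I expect the cleanest route is to track the dimension $\dim_{\F_2}\langle\pi_1(\bfu):\bfu\in T\rangle$ combined with a rank computation and to reconcile the quantifier direction at the end.

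\textbf{Main obstacle.} The genuine difficulty — and the place I expect to spend the most care — is precisely getting the logical direction and the role of $\bfw$ right: one has to see that translating the support index set by $\bfw$ is exactly the operation of adding $\delta_d^{(r)}(\pi_1(\bfw))$, which is invisible in $Q$, and then match ``the $r$ components become linearly dependent'' with ``$\pi_1(T+\bfw)$ is confined to $E[2]\otimes M$ for a hyperplane $M$''. Concretely I expect to prove: the $r$-tuple corresponding to $v'$ is \emph{linearly dependent} in $Q$ iff there is a codimension-$1$ $M\subseteq\F_2^r$ with $\pi_1(\mathrm{supp}(v',z'))\subseteq E[2]\otimes M$ after no shift; but since the statement quantifies over \emph{all} $(v',z')$ with a \emph{given} support $T$ and the support only determines $v$ up to the $V_0$-part and up to $\bfw$-shifts of $T$ that fix $Q$-class data, the honest equivalence is forced to carry the $\exists\bfw$. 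I will resolve this by first proving the clean no-shift version and then showing that because Lemma \ref{support_lemma} realises every support in the orbit $T+A[2]^2$ by an element with the same image in $Q$ up to $\delta_d^{(r)}(E[2])^r$, the statement for fixed $T$ must be stated with the translation quantifier — which is exactly the form in the lemma. The rest (independence of $\psi_p$'s, vanishing of $V_0$ and $\delta$-images in $Q$, the tensor-product description of submodules $E[2]\otimes M$) is routine.
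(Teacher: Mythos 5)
There is a genuine gap, and it sits exactly where you flagged your ``main obstacle''. The heart of the paper's argument is the following extraction, which your plan replaces by a false intermediate claim. Linear dependence of the components of $v'$ in the quotient means that some non-zero $(\lambda_1,\dots,\lambda_r)$ makes $\sum_{\bfu\in T}\psi_{D_\bfu}\sum_i\lambda_ip_i(\pi_1(\bfu))$ lie in $H^1_\Sigma(\Q,E[2])+\delta_d(E[2])$. By Lemma \ref{lem:twisted_boundary_map}, modulo $H^1_\Sigma(\Q,E[2])$ every element of $\delta_d(E[2])$ is of the form $\sum_{\bfu\in T}\psi_{D_\bfu}\,x$ for a single $x\in E[2]$; then the linear independence of the characters $\psi_{D_\bfu}$ forces $\sum_i\lambda_ip_i(\pi_1(\bfu))=x$ for \emph{all} $\bfu\in T$, and choosing $\bfw$ with $(1\otimes\phi)(\pi_1(\bfw))=x$ (where $\phi$ is the functional with coordinates $\lambda_i$) turns this into $\pi_1(T+\bfw)\subseteq E[2]\otimes\ker(\phi)$. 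So the shift $\bfw$ is forced, for a single fixed $(v',z')$, by the $\delta_d(E[2])$ part of the quotient: it is the constant $x$ in disguise. Your proposed route --- first prove a ``clean no-shift version'' (dependence iff $\pi_1(T)\subseteq E[2]\otimes M$ with $\bfw=0$) and then recover the $\exists\bfw$ from the fact that Lemma \ref{support_lemma} realises all supports $T+\bfw$ by elements with the same image in the quotient --- does not work, because the no-shift version is simply false (only its ``if'' direction holds): a relation can hold with $x\neq 0$, in which case $\pi_1(T)$ is contained in no $E[2]\otimes M$ while $\pi_1(T+\bfw)$ is. The $\exists\bfw$ is not bookkeeping for the support being defined up to translation; it is the image of $2$-torsion. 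The same omission appears earlier in your sketch, where ``$\lambda$ annihilating all components'' is read as the combination being literally a sum of $\psi$'s with coefficients in $E[2]\otimes\ker\lambda$, forgetting that annihilation in the quotient only means membership in $H^1_\Sigma(\Q,E[2])+\delta_d(E[2])$.

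On the quantifier direction that troubled you: your instinct is right that the natural equivalence is ``the $r$ components \emph{fail} to be independent if and only if such $\bfw$ and $M$ exist''. That is what the paper's proof establishes, and it is the form actually used in Theorem \ref{thm:inj_moments}, where injectivity corresponds to condition $(\star)'$, i.e.\ to the \emph{non}-existence of such $\bfw$ and $M$; the word ``independent'' in the printed statement is a slip. The correct response is to prove the dependence version directly by the computation above, not to contort the argument (via the false no-shift claim and orbit considerations) so as to match the literal wording. With that core step restored, the rest of your outline (discarding $v_0$ modulo $H^1_\Sigma$, independence of the $\psi_{D_\bfu}$, and $\ker(1\otimes\phi)=E[2]\otimes\ker(\phi)$) is in line with the paper's proof.
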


\begin{proof}
Let $p_i\colon E[2]^r\rightarrow E[2]$ denote the $i$-th projection. From \eqref{eq:explicit_factorisation_to_element} and the definition of the support of $(v',z')$, we see that the $r$-vectors $\{p_i(v')\}_{i=1}^r$ fail to be linearly independent in the quotient space appearing in the statement if and only if there exist $\lambda_1,...,\lambda_r\in \F_2$, not all $0$, such that 
\[\sum_{\bfu \in T}\psi_{D_\bfu}\sum_{i=1}^r\lambda_ip_i(\pi_1(\bfu))  \in H^1_\Sigma(\Q,E[2])+\delta_{d}(E[2]).\]
By an analogous argument to that used in the proof of Lemma \ref{support_lemma}, we see that this happens, for given $\lambda_1,...,\lambda_r$, if and only if there is $x \in E[2]$ such that 
\begin{equation} \label{eq:slightly_simplified_injection_condition}
\sum_{\bfu \in T}\psi_{D_\bfu}\sum_{i=1}^r\lambda_i p_i(\pi_1(\bfu)) = \sum_{\bfu \in T}\psi_{D_\bfu}x.
\end{equation}
Since the positive integers $\{D_\bfu\}_{\bfu\in T}$ are all greater than $1$ and coprime,
the characters $\{\psi_{D_\bfu}\}_{\bfu \in T}$ are linearly independent. We thus see
that \eqref{eq:slightly_simplified_injection_condition} is equivalent to the equality
$\sum_{i=1}^r \lambda_i p_i(\pi_1(\bfu))=x$ holding for all $\bfu \in T$. With
$\lambda_1,...,\lambda_r$ and $x$ fixed as in \eqref{eq:slightly_simplified_injection_condition},
define $\phi\colon \F_2^r\rightarrow \F_2$ to be the linear map sending the $i$-th
basis vector to $\lambda_i$. Note that this is surjective since some $\lambda_i$
is assumed non-zero. The induced surjective map
$1\otimes \phi\colon E[2]^r=E[2]\otimes \F_2^r\rightarrow E[2]$ sends $\pi_1(\bfu)$
to $\sum_{i=1}^r\lambda_i p_i(\pi_1(\bfu))$, so we can write this final equality
as $(1\otimes \phi)(\pi_1(\bfu+\bfw))=0$, where $\bfw \in A[2]^2$ is any element
mapping to $x$ under $(1\otimes \phi)\circ \pi_1$. Now
$\ker(1\otimes \phi)=E[2]\otimes \ker(\phi)$ (one inclusion is obvious and the other follows from counting dimensions), so we conclude that the condition \eqref{eq:slightly_simplified_injection_condition} is equivalent to the condition that $\pi_1(T+\bfw)\subseteq E[2]\otimes \ker(\phi)$. From this, one readily obtains the result.
\end{proof}

\subsection{Counting $2$-Selmer elements (cont.)}

We now use the explicit parameterization of $V\times Z$ given in Lemma  \ref{lem:facotrisations_of_d}   to convert \eqref{eq:kane_sum_split} into a sum of Jacobi symbols. Recall that we have fixed $d=d_0D\in d_0\cF_c$.  
Recall also that we denote by $\Phi$ the map  
$A[2]^2\times A[2]^2\rightarrow \F_2$
 sending a pair $(\bfu,\bfv)$ in  $A[2]^2\times  A[2]^2$ to 
$\Phi(\bfu ,\bfv )=e(\pi_1(\bfu) +\pi_1(\bfv), \pi_2(\bfv)).$

\begin{lemma} \label{lem:most_of_the_moment_formula_work}
Let $(v,z)\in V\times Z$, corresponding via Lemma \ref{lem:facotrisations_of_d} to a factorisation of $D$ of the form \eqref{eq:heath_brown_factorisation}. Then we have 
\begin{equation*}
  (-1)^{\left \langle v,z \right \rangle}=\prod_{\bfu }\left(\frac{-d_0}{D_{\bfu }}\right)^{e(\pi_1(\bfu) ,\pi_2(\bfu) )}(-1)^{e\left(\pi_1(\bfu) ,\gamma^{(r)}(\sigma_{D_{\bfu }})\pi_2(\bfu) \right)}~\cdot~\prod_{\bfu  \neq \bfv }\left(\frac{D_\bfu }{D_\bfv }\right)^{\Phi(\bfu,\bfv)},
\end{equation*}
where the first product is taken over all  $\bfu\in A[2]^2$, and the
second product is taken over all pairs of distinct elements $\bfu$ and $\bfv $
of $A[2]^2$.
\end{lemma}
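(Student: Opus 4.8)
The strategy is to unwind the local Tate pairing $\langle v,z\rangle = \sum_{v\in\Sigma\cup\cP(d)}\langle\res_v(v),z_v\rangle_v$ using the explicit parametrisation of $v$ and $z$ from \Cref{lem:facotrisations_of_d}, namely $v=\sum_{\bfu}\psi_{D_\bfu}\pi_1(\bfu)$ and $z=\sum_{\bfu}\sum_{p\mid D_\bfu}\delta^{(r)}_{p,d}(\pi_2(\bfu))$. Substituting these and using bilinearity, $\langle v,z\rangle$ becomes a double sum over pairs $(\bfu,\bfv)$ of terms of the shape $\sum_{p\mid D_\bfv}\langle \psi_{D_\bfu}\pi_1(\bfu)\, ,\, \delta^{(r)}_{p,d}(\pi_2(\bfv))\rangle_p$, where $p$ ranges over the primes dividing $D_\bfv$ and the pairing at $p$ is the one identified in \S\ref{sec:local_tate} with $H_p\otimes e$ after the cup-product identification $H^1(\Q_p,A[2])\cong H^1(\Q_p,\F_2)\otimes A[2]$. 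So the first step is to record, for a prime $p\notin\Sigma$ dividing $d$, a clean formula for $\langle \psi_m\otimes x\, ,\, \delta^{(r)}_{p,d}(y)\rangle_p$ with $x,y\in A[2]$ and $m$ coprime to $p$ (this is where I will need $\psi_m$ unramified at $p$ so that only the ``cross'' terms survive, together with the terms where $m$ itself is divisible by $p$).

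**Key steps.** (i) Use \Cref{lem:twisted_boundary_map}, which gives $\delta^{(r)}_{d,p}(y) = \delta^{(r)}_p(y) + \psi_d\cup y$ locally; since $d=d_0D$ and $p\mid D$, write $\psi_d = \psi_{-d_0}\cdot\psi_{p}\cdot\psi_{D/p}$ suitably (tracking signs via $\psi_{-1}$) so that the contribution splits into: an \emph{unramified} part $\delta_p(y)$, the self-term involving $\psi_p$, and terms involving $\psi_{D/p}$ and $\psi_{-d_0}$. (ii) For the unramified part, recall $\delta_p(y)(\sigma) = \gamma^{(r)}(\sigma)(y)$ by \Cref{def:definition_of_gamma}; pairing $\psi_m\otimes x$ against this and evaluating the local invariant at $p$ produces the factor $(-1)^{e(\pi_1(\bfu),\gamma^{(r)}(\sigma_{D_\bfv})\pi_2(\bfv))}$ when $\bfu=\bfv$ (the Frobenius $\sigma_{D_\bfu}$ enters because only $m=D_\bfu$ can be divisible by $p\mid D_\bfu$), using that $\gamma^{(r)}$ factors through $\Gal(K_\Sigma/\Q)$ and $\sigma_m$ is the Artin symbol of $m$. (iii) The Hilbert-symbol terms: $\langle \psi_{D_\bfu}\otimes x, (\psi_{-d_0}+\psi_{D_\bfv/p})\cup y\rangle_p$ summed over $p\mid D_\bfv$ assembles, by the product formula relating $\sum_p H_p$ to Jacobi symbols, into $\left(\frac{-d_0}{D_\bfv}\right)^{e(x,y)}$ for the diagonal $\bfu=\bfv$ piece and $\left(\frac{D_\bfu}{D_\bfv}\right)^{e(\pi_1(\bfu),\pi_2(\bfv))}$ for $\bfu\neq\bfv$; the self Hilbert symbol $(D_\bfu,D_\bfu)_p$ contributes $(-1,D_\bfu)_p$-type terms which I will fold into the $-d_0$ symbol (this is the origin of the sign $-d_0$ rather than $d_0$, and of the exponent $e(\pi_1(\bfu),\pi_2(\bfu))$ on the diagonal). (iv) Finally symmetrise: the cross term for an unordered pair $\{\bfu,\bfv\}$ contributes $\left(\frac{D_\bfu}{D_\bfv}\right)^{e(\pi_1(\bfu),\pi_2(\bfv))}\left(\frac{D_\bfv}{D_\bfu}\right)^{e(\pi_1(\bfv),\pi_2(\bfu))}$, and quadratic reciprocity (the reciprocity terms cancelling against the archimedean and $2$-adic contributions that are absent since $D_\bfu,D_\bfv$ are odd and positive, or being absorbed) rewrites this as $\left(\frac{D_\bfu}{D_\bfv}\right)^{\Phi(\bfu,\bfv)}$ with $\Phi(\bfu,\bfv)=e(\pi_1(\bfu)+\pi_1(\bfv),\pi_2(\bfv))$; the asymmetry of $\Phi$ is exactly what makes the ordered product over $\bfu\neq\bfv$ in the statement come out right.

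**Main obstacle.** The bookkeeping of signs — precisely, which Hilbert symbols at $p=2$, at $\infty$, and the self-pairings $(D_\bfu,D_\bfu)_p = (D_\bfu,-1)_p$ get absorbed into the ``$-d_0$'' factor versus cancelled by reciprocity — is the delicate part, and it is what forces the appearance of $-d_0$ and of $\gamma^{(r)}(\sigma_{D_\bfu})$ rather than naive alternatives. I would handle this by first proving the identity ``one prime at a time'': reduce to $D$ a single prime $p$ (so only self-terms appear), verify the formula there directly from \Cref{lem:twisted_boundary_map} and the definition of $q_{\phi,\Sigma}$-style local computations in \S\ref{tate_quad_form_subsec}, and then observe that both sides are ``multiplicative in the factorisation'' in the sense that passing from $D$ to $D\cdot p'$ multiplies both sides by the same cross-term factors computed via reciprocity. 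The one genuinely non-formal input is \Cref{boundary_map_theta}, or rather its shadow: identifying the local pairing of a character against $\delta_p(y)$ with a cup-product $\delta(y)\cup\phi(y)\cup\chi$-type expression, which is what makes the $\gamma^{(r)}$ term appear in the first product; everything else is reciprocity and careful sign-tracking.
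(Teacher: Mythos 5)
Your overall skeleton coincides with the paper's proof: substitute the parametrisation of $(v,z)$, split $\delta^{(r)}_{d,p}=\delta^{(r)}_p+\psi_d\cup(-)$ via \Cref{lem:twisted_boundary_map}, extract $e\big(\pi_1(\bfu),\gamma^{(r)}(\sigma_{D_\bfu})\pi_2(\bfu)\big)$ from the unramified piece using that only $p\mid D_\bfu$ contributes, and evaluate the $\psi_d$-piece through Hilbert symbols. The gap is in your steps (iii)--(iv). At a prime $p\mid D_\bfv$, the character $\psi_d$ contains $\psi_{D_\bfw}$ for \emph{every} index $\bfw$, so the diagonal term $\left\langle \psi_{D_\bfv}\pi_1(\bfv),\psi_d\,\pi_2(\bfv)\right\rangle_p$ contributes, besides $\left(\frac{-d_0}{p}\right)$, also $\left(\frac{D_\bfw}{p}\right)$ for each $\bfw\neq\bfv$; explicitly, for $p\mid D_\bfv$ one has $(D_\bfv,d)_p=\left(\frac{-d_0\,D/D_\bfv}{p}\right)$, where the $(D_\bfv/p)^2$ cancels but $D/D_\bfv$ survives. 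Summing over $p\mid D_\bfv$ these give $\prod_{\bfw\neq\bfv}\left(\frac{D_\bfw}{D_\bfv}\right)^{e(\pi_1(\bfv),\pi_2(\bfv))}$, and these are precisely the factors that raise the cross exponent from $e(\pi_1(\bfu),\pi_2(\bfv))$ to $\Phi(\bfu,\bfv)=e(\pi_1(\bfu),\pi_2(\bfv))+e(\pi_1(\bfv),\pi_2(\bfv))$. In your step (iii) you pair only against $\psi_{-d_0}+\psi_{D_\bfv/p}$ (inconsistently with the $\psi_{D/p}$ of your step (i)), so your diagonal produces only $\left(\frac{-d_0}{D_\bfv}\right)^{e(x,y)}$ and these terms are lost; you then try to recover $\Phi$ from quadratic reciprocity.

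That step fails. Reciprocity converts $\left(\frac{D_\bfv}{D_\bfu}\right)$ into $\left(\frac{D_\bfu}{D_\bfv}\right)$ at the cost of the sign $(-1)^{\frac{D_\bfu-1}{2}\cdot\frac{D_\bfv-1}{2}}$; it cannot change the Weil-pairing exponent from $e(\pi_1(\bfv),\pi_2(\bfu))$ to $e(\pi_1(\bfv),\pi_2(\bfv))$, and the missing factors are genuine Jacobi symbols depending on $D_\bfu$ modulo the primes dividing $D_\bfv$, not signs determined by congruences mod $4$. Moreover the lemma is an exact identity with no reciprocity signs at all: in the paper reciprocity is only invoked later, for supports that are unlinked, where $\Phi(\bfu,\bfv)=\Phi(\bfv,\bfu)$. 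Once you keep the full $\psi_{d_0D}$ in the diagonal terms, the identity drops out directly and your ``one prime at a time'' multiplicativity scheme is unnecessary. A minor further point: \Cref{boundary_map_theta} is not needed here; the $\gamma^{(r)}$-term follows from $H_p(\psi_{D_\bfu},\chi)=\chi(\sigma_p)$ for $\chi$ unramified at $p$ together with the definition $\gamma(\sigma)(x)=\delta(x)(\sigma)$.
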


\begin{proof}
From \eqref{eq:explicit_factorisation_to_element} we find that  $\left \langle v,z \right \rangle$ is equal to
\begin{equation*}  
\sum_{\substack{\bfu \in A[2]^2\\ \bfv \in A[2]^2}}\sum_{p\mid D_{\bfv }}\left \langle \psi_{D_{\bfu }}\pi_1(\bfu) ~,~\delta_{d,p}^{(r)}(\pi_2(\bfv) )\right \rangle_p=\sum_{\substack{\bfu \in A[2]^2\\ \bfv \in A[2]^2}}\sum_{p\mid D_{\bfv }}\left \langle \psi_{D_{\bfu }}\pi_1(\bfu) ~,~\delta_p^{(r)}(\pi_2(\bfv)) +\psi_d \cdot \pi_2(\bfv)\right \rangle_p,
\end{equation*}  
where the second equality follows from Lemma \ref{lem:twisted_boundary_map}.   Now $\pi_1(\bfu)\cup \delta_p^{(r)}(\pi_2(\bfv))$ is a quadratic character that is unramified outside $\Sigma$. Consequently, recalling the pairing $H_p$ for Section \ref{sec:local_tate}, we have 
\[\left \langle \psi_{D_{\bfu }} \pi_1(\bfu) ~,~  \delta_p^{(r)}(\pi_2(\bfv)) \right \rangle_p=H_p\big(\psi_{D_\bfu},\pi_1(\bfu)\cup \delta_p^{(r)}(\pi_2(\bfv))\big)=\begin{cases}(\pi_1(\bfu)\cup \delta_p^{(r)}(\pi_2(\bfv)))(\sigma_p),~~&~~p\mid D_{\bfu };\\ 0,~~&~~\textup{otherwise}. \end{cases}\]
Now $(\pi_1(\bfu)\cup \delta_p^{(r)}(\pi_2(\bfv)))(\sigma_p)$ is equal to $e(\pi_1(\bfu) , \gamma^{(r)}(\sigma_p)\pi_2(\bfv))$.
Since the $D_{\bfu }$ are coprime as $\bfu $ varies over elements of $A[2]^2$,
we conclude that we have
\begin{equation*} 
  \sum_{\substack{\bfu \in A[2]^2\\ \bfv \in A[2]^2}}\sum_{p\mid D_{\bfv }} \left \langle \psi_{D_{\bfu }} \pi_1(\bfu) ~,~\delta_p^{(r)}(\pi_2(\bfv))  \right \rangle_p= \sum_{\bfu \in A[2]^2}e\big(\pi_1(\bfu) ~,~\gamma^{(r)}(\sigma_{D_{\bfu }})\pi_2(\bfu) \big).
\end{equation*}
Since $\left \langle ~,~\right \rangle_p=H_p\otimes e$, we have 
\begin{equation*}
(-1)^{\left \langle \psi_{D_{\bfu }} \pi_1(\bfu) ~,~\psi_d   \pi_2(\bfv)  \right \rangle_p}=(-1)^{H_p(\psi_{D_\bfu},\psi_d)e(\pi_1(\bfu) ,\pi_2(\bfv) )}=(D_{\bfu }~,~d)_p^{e(\pi_1(\bfu) ,\pi_2(\bfv) )},
\end{equation*}
where $(~,~)_p$ is the quadratic Hilbert symbol at $p$.
Evaluating the Hilbert symbol in terms of Legendre symbols and recalling that $d=d_0D$, we find
\[(-1)^{\left \langle \psi_{D_{\bfu }} \pi_1(\bfu) ~,~\psi_d   \pi_2(\bfv)  \right \rangle_p} = \begin{cases} \left(\frac{D_{\bfu }}{p}\right)^{e(\pi_1(\bfu) ,\pi_2(\bfv) )},~~&~~p\nmid D_{\bfu };\\ 
\left(\frac{-d_0D /D_{\bfu }}{p}\right)^{e(\pi_1(\bfu) ,\pi_2(\bfv) )},~~&~~p\mid D_{\bfu }.\end{cases}\]
 Consequently, temporarily writing $t={\sum_{\substack{\bfu 
\in A[2]^2 \\\bfv \in A[2]^2}}\sum_{p\mid D_{\bfv }}\left \langle \psi_{D_{\bfu }} \pi_1(\bfu) ~,~\psi_d   \pi_2(\bfv)  \right \rangle_p}$, we have 
 \begin{eqnarray*}
 (-1)^t\quad =
 \prod_{\bfu \neq \bfv }\left(\frac{D_\bfu }{D_\bfv }\right)^{e(\pi_1(\bfu) ,\pi_2(\bfv) )} ~\cdot~\prod_{\bfu }\left(\frac{-d_0}{D_{\bfu }}\right)^{e(\pi_1(\bfu) ,\pi_2(\bfu) )} ~\cdot~\prod_{\bfu }\left(\frac{D/D_{\bfu }}{D_{\bfu }}\right)^{e(\pi_1(\bfu) ,\pi_2(\bfu) )}.
 \end{eqnarray*}
Now note that the rightmost product is equal to $\prod_{\bfu \neq \bfv }  \left(\frac{D_{\bfv }}{D_{\bfu }}\right)^{e(\pi_1(\bfu) ,\pi_2(\bfu) )}$.
\end{proof}

\begin{lemma} \label{eq:minor_fouvry_kluners_lemma}
Let $z\in Z$, written in the form \eqref{eq:explicit_factorisation_to_element}, and let $v_0\in V_0$. Then we have
\[
  (-1)^{\left \langle v_0,z\right \rangle}=\prod_{\bfu \in A[2]^2}(-1)^{\left \langle v_0 ,\psi_{D_\bfu}\pi_2(\bfu)\right \rangle_\Sigma}.
\]
\end{lemma}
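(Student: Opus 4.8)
The plan is to unwind the definitions of the pairing $\langle\,,\,\rangle_d$ and of the parametrisation of $z\in Z$, and then to use the fact that the characters $\psi_p$ for $p\in \cP(d)$ are supported at disjoint sets of places, together with the block structure of the local Tate pairing identified in Section \ref{sec:local_tate} as $\langle\,,\,\rangle_v = H_v\otimes e$. Recall that $z$ is written in the form \eqref{eq:explicit_factorisation_to_element}, i.e. $z=\sum_{\bfu\in A[2]^2}\sum_{p\mid D_\bfu}\delta_{p,d}^{(r)}(\pi_2(\bfu))$, so that for a prime $p\in \cP(d)$ the $p$-component of $z$ is $z_p = \delta_{p,d}^{(r)}(\pi_2(\bfu))$ where $\bfu$ is the unique element of $A[2]^2$ with $p\mid D_\bfu$. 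Since $v_0\in V_0 = H^1_\Sigma(\Q,A[2])$ is unramified outside $\Sigma$, its restriction $\res_p(v_0)$ to each prime $p\in \cP(d)$ lies in $H^1_{\ur}(\Q_p,A[2])$.

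First I would expand $\langle v_0,z\rangle = \sum_{v\in \Sigma\cup \cP(d)}\langle \res_v(v_0),z_v\rangle_v$ and argue that the contribution from the primes $p\in\cP(d)$ vanishes: by Lemma \ref{prop:KummerIntersection}\ref{item:normImRam}, since $p$ ramifies in $\Q(\sqrt d)$ (as $p\mid D\mid d/d_0$ and $D$ is a product of such primes — more precisely one uses that $\sS_p\cap \sS_{d,p}^{(r)}=0$ and $\sS_p = H^1_{\ur}(\Q_p,A[2])$ contains $\res_p(v_0)$), while $z_p\in \sS_{d,p}^{(r)}$, the local Tate pairing $\langle \res_p(v_0),z_p\rangle_p$ is not forced to be zero by isotropy alone, so instead I would compute it directly. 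Writing $\res_p(v_0) = \psi_u \cdot x$ for the unramified quadratic character $\psi_u$ at $p$ (where $u$ is the unramified unit) and $x\in A[2]$, and $z_p = \delta_p^{(r)}(\pi_2(\bfu)) + \psi_d\cdot \pi_2(\bfu)$ by Lemma \ref{lem:twisted_boundary_map}, one checks via $\langle\,,\,\rangle_p = H_p\otimes e$ and the fact that the Hilbert symbol $(u, d)_p$ is trivial for an unramified unit $u$ that this local term vanishes. Hence $\langle v_0,z\rangle = \sum_{v\in\Sigma}\langle \res_v(v_0),z_v\rangle_v = \langle v_0, z\rangle_\Sigma$, where on the right we regard $z$ as the element $\sum_\bfu \psi_{D_\bfu}\pi_2(\bfu)$ of $\oplus_{v\in\Sigma}H^1(\Q_v,\F_2)\otimes A[2]$ via the identification of Notation \ref{notat:image_mod_sq_interpretation}: indeed, for $v\in\Sigma$, the $v$-component of $z$ from \eqref{eq:explicit_factorisation_to_element} involves only the boundary contributions collected into $Z_0$, whereas the claimed right-hand side tracks precisely the ``$\psi_{D_\bfu}$ at $v\in\Sigma$'' part. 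Then bilinearity of $\langle\,,\,\rangle_\Sigma$ in the second variable gives $(-1)^{\langle v_0,z\rangle_\Sigma} = \prod_{\bfu\in A[2]^2}(-1)^{\langle v_0,\psi_{D_\bfu}\pi_2(\bfu)\rangle_\Sigma}$, which is the assertion.

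The main obstacle I anticipate is bookkeeping the identification of $z\in Z$ (a tuple in $\oplus_{p\in\cP(d)}\sS_{d,p}^{(r)}$) with the global-looking expression $\sum_\bfu\psi_{D_\bfu}\pi_2(\bfu)$ appearing inside the pairing $\langle\,,\,\rangle_\Sigma$ with $v_0\in V_0$: one must be careful that the ramified-primes contributions genuinely drop out and that the surviving $\Sigma$-local data is exactly $(\psi_{D_\bfu})_{v\in\Sigma}$ tensored with $\pi_2(\bfu)$, summed over $\bfu$. This is the same mechanism used implicitly in Lemma \ref{lem:most_of_the_moment_formula_work} (the step where $H_p(\psi_{D_\bfu},\psi_d)$ is evaluated), so I would cross-reference that computation to keep this proof short; the only genuinely new point here is that the \emph{unramified} class $v_0$ pairs trivially with the ramified local boundary classes, which is immediate from $\langle\,,\,\rangle_p=H_p\otimes e$ and triviality of the Hilbert symbol on unramified units.
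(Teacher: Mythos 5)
Your proposal breaks down at the local computation at the primes $p\in\cP(d)$, and this is not a repairable detail but the heart of the lemma. Since $z\in Z=\oplus_{p\in\cP(d)}\sS_{d,p}^{(r)}$ has zero components at the places of $\Sigma$, the pairing $\left\langle v_0,z\right\rangle$ is literally $\sum_{p\in\cP(d)}\left\langle \res_p(v_0),z_p\right\rangle_p$; there is no $\Sigma$-contribution to ``survive''. So if, as you claim, every term at $p\in\cP(d)$ vanished, the left-hand side of the lemma would be identically $1$, which is false in general. The error is in the Hilbert symbol: for $p\mid D_\bfu$ one has $p\mid d$ with $d$ squarefree, so for a nonsquare unit $u$ at $p$ the symbol is $(u,d)_p=\left(\frac{u}{p}\right)^{v_p(d)}=\left(\frac{u}{p}\right)=-1$, not $+1$ (triviality of $(u,\cdot)_p$ against unramified units only holds for arguments of even valuation at $p$). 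Hence, writing $\res_p(v_0)=\psi_u\cup x$, the term $\left\langle \res_p(v_0),\psi_d\cup\pi_2(\bfu)\right\rangle_p$ equals $e(x,\pi_2(\bfu))$, which is generically nonzero; these are exactly the terms that produce the right-hand side of the lemma. Relatedly, your attempt to then ``regard $z$'' at the places of $\Sigma$ as $\sum_\bfu\psi_{D_\bfu}\pi_2(\bfu)$ is not an identification available in this setup (those $\Sigma$-components belong to $z_0\in Z_0$, not to $z$), and it is precisely the step you have not justified.

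The missing ingredient is global reciprocity, which is how the paper converts the sum over $p\mid D_\bfu$ into a pairing over $\Sigma$. The correct route: split $z_p=\delta_p^{(r)}(\pi_2(\bfu))+\psi_d\cup\pi_2(\bfu)$ via Lemma \ref{lem:twisted_boundary_map} (as you do); kill the first term because at an odd prime of good reduction two unramified classes pair to zero (both $\res_p(v_0)$ and $\delta_p^{(r)}(\pi_2(\bfu))$ are unramified); then replace $\psi_d$ by $\psi_{D_\bfu}$, which is legitimate because they differ by $\psi_{d_0D/D_\bfu}$, a character unramified at every $p\mid D_\bfu$, against which the unramified class $v_0$ pairs trivially; finally, since $v_0$ and $\psi_{D_\bfu}\cup\pi_2(\bfu)$ are global classes that are unramified (hence pair trivially) at all places outside $\Sigma\cup\{p:p\mid D_\bfu\}$, reciprocity for the Brauer group gives $\sum_{p\mid D_\bfu}\left\langle v_0,\psi_{D_\bfu}\pi_2(\bfu)\right\rangle_p=\left\langle v_0,\psi_{D_\bfu}\pi_2(\bfu)\right\rangle_\Sigma$. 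Summing over $\bfu$ yields the stated product. Your write-up cross-references the mechanism of Lemma \ref{lem:most_of_the_moment_formula_work}, but note that there the analogous Hilbert symbols $(D_\bfu,d)_p$ are evaluated as genuinely nontrivial Legendre symbols, which should have signalled that the corresponding terms here cannot simply vanish.
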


\begin{proof}
From \eqref{eq:explicit_factorisation_to_element} and Lemma \ref{lem:twisted_boundary_map} we have 
\begin{equation*} 
\left \langle v_0,z\right \rangle =\sum_{\bfu \in A[2]^2}\sum_{p\mid D_{\bfu }}\left \langle v_0,\delta_{d,p}^{(r)}(\pi_2(\bfu) )\right \rangle_p=\sum_{\bfu \in A[2]^2}\sum_{p\mid D_{\bfu }}\left \langle v_0,\delta_p^{(r)}(\pi_2(\bfu)) +\psi_d \cdot \pi_2(\bfu)\right \rangle_p.
\end{equation*}
  Since both $\delta_p^{(r)}(\pi_2(\bfu))$ and $v_0$ are  
unramified at all $p\mid D_{\bfu }$, it follows that $ \big\langle v_0,\delta_p^{(r)}(\pi_2(\bfu))    \big \rangle_p$ is
zero at all $p\mid D_{\bfu }$.  
Next, since $\psi_d$ and $\psi_{D_\bfu}$ differ by a quadratic
character unramified at all $p\mid D_\bfu$, and since $v_0$ is unramified at all $p\mid D_\bfu$, it follows that 
\[\sum_{p\mid D_\bfu}\left \langle v_0,\psi_{d}\pi_2(\bfu)\right \rangle_p=\sum_{p\mid D_\bfu}\left \langle v_0,\psi_{D_\bfu}\pi_2(\bfu)\right \rangle_p.\]
Since both $v_0$ and $\psi_{D_\bfu}$ are unramified outside $\Sigma \cup \{p\mid D_\bfu\}$, it follows from reciprocity that we have 
\[\sum_{p\mid D_\bfu}\left \langle v_0,\psi_{D_\bfu}\pi_2(\bfu)\right \rangle_p=\left \langle v_0,\psi_{D_\bfu}\pi_2(\bfu)\right \rangle_\Sigma,\]
from which the result follows.
\end{proof}

The remaining term in \eqref{eq:kane_sum_split} is $(-1)^{\left \langle v,z_0\right \rangle}$. With $v$ written in the form \eqref{eq:explicit_factorisation_to_element} above, we have
\begin{equation} \label{eq:bad_local_condtions}
(-1)^{\left \langle v,z_0\right \rangle}=\prod_{\bfu \in A[2]^2}(-1)^{\left \langle \psi_{D_\bfu}\pi_1(\bfu),z_0\right \rangle_\Sigma}.
\end{equation}
To explicate \eqref{eq:bad_local_condtions} further we would have to examine more
closely the Selmer conditions at the `bad' primes $p\mid N$. However, it will 
suffice for now to note that the dependence of the value of \eqref{eq:bad_local_condtions}
on the factors $D_{\bfu }$ is only mild: it only depends on the values of $D_{\bfu }$ modulo $4N$, since
those values determine the classes of $D_{\bfu }$ in $\prod_{p\mid N}\Q_p^{\times}/\Q_{p}^{\times 2}$.

\begin{notation} \label{notation_including_phi}
For each $\bfu  \in A[2]^2$, $v_0 \in V_0$, and $z_0\in Z_0$,   there is a unique quadratic Dirichlet character $\Upsilon_{\bfu}^{(v_0,z_0)}$
modulo $4N$ that, for all squarefree integers $m$ coprime to $N$, satisfies
\[
  \Upsilon_{\bfu}^{ (v_0,z_0)}(m)=(-1)^{\left \langle \psi_{m}\pi_1(\bfu),z_{0}\right \rangle_\Sigma+
   \left \langle v_0 ,\psi_{m}\pi_2(\bfu)\right \rangle_\Sigma}\cdot
  \left(\frac{-d_0}{m}\right)^{e(\pi_1(\bfu) ,\pi_2(\bfu))}\cdot  (-1)^{e\left(\pi_1(\bfu) ,\gamma^{(r)}(\sigma_m)\pi_2(\bfu) \right)}.
\]
\end{notation}

\begin{proposition} \label{prop:explicit_selmer_formula_non-varying}
With the notation above, we have 

\begin{eqnarray*} 
 \#\Sel_2(E_d/\Q)^r= 
       \frac{1}{\#Z_0}\sum_{(v_0,z_0)\in V_0\times Z_0}(-1)^{\left \langle v_0,z_0\right \rangle_\Sigma}\sum_{D=\prod_{\bfu} D_{\bfu
}}\prod_{\bfu} 2^{-2rg\omega(D_{\bfu})}\Upsilon_{\bfu}^{ (v_0,z_0)}(D_{\bfu })\cdot \prod_{\bfu \neq \bfv }\left(\frac{D_\bfu }{D_\bfv }\right)^{\Phi(\bfu ,\bfv )},
 \end{eqnarray*}
\noindent where the inner sum is taken over all factorisations of $D=d/d_0$ as a product
of pairwise coprime positive squarefree integers indexed by $\bfu  \in A[2]^2$,
as  in \eqref{eq:heath_brown_factorisation}, the first product is taken
over all $\bfu \in A[2]^2$, and the second product is taken over all pairs $(\bfu,\bfv)$
of distinct elements of  $A[2]^2$. 
\end{proposition}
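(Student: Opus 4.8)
The plan is to start from the factorised expression \eqref{eq:kane_sum_split} for $\#\Sel_2(E_d/\Q)^r = \#\Sel_2(A_d/\Q)$ and substitute in the explicit parametrisations already established. First I would invoke Lemma \ref{lem:facotrisations_of_d} to replace the sum over $(v,z)\in V\times Z$ by a sum over factorisations $D=\prod_{\bfu\in A[2]^2}D_\bfu$ into pairwise coprime positive squarefree integers, with $v$ and $z$ given explicitly by \eqref{eq:explicit_factorisation_to_element}. The inner double sum $\sum_{(v,z)}(-1)^{\langle v_0,z\rangle + \langle v,z_0\rangle+\langle v,z\rangle}$ then becomes a sum over such factorisations of the product of three sign contributions, one for each of the three pairings appearing in the exponent.

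Next I would evaluate each of the three contributions using the lemmas just proved: Lemma \ref{lem:most_of_the_moment_formula_work} handles the ``interaction'' term $(-1)^{\langle v,z\rangle}$, expressing it as $\prod_\bfu \left(\tfrac{-d_0}{D_\bfu}\right)^{e(\pi_1(\bfu),\pi_2(\bfu))}(-1)^{e(\pi_1(\bfu),\gamma^{(r)}(\sigma_{D_\bfu})\pi_2(\bfu))}\cdot\prod_{\bfu\neq\bfv}\left(\tfrac{D_\bfu}{D_\bfv}\right)^{\Phi(\bfu,\bfv)}$; Lemma \ref{eq:minor_fouvry_kluners_lemma} handles $(-1)^{\langle v_0,z\rangle} = \prod_\bfu (-1)^{\langle v_0,\psi_{D_\bfu}\pi_2(\bfu)\rangle_\Sigma}$; and \eqref{eq:bad_local_condtions} handles $(-1)^{\langle v,z_0\rangle} = \prod_\bfu (-1)^{\langle \psi_{D_\bfu}\pi_1(\bfu),z_0\rangle_\Sigma}$. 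Collecting the three and using the definition of $\Upsilon_{\bfu}^{(v_0,z_0)}$ in Notation \ref{notation_including_phi}, the product over $\bfu$ of everything except the cross-terms $\prod_{\bfu\neq\bfv}\left(\tfrac{D_\bfu}{D_\bfv}\right)^{\Phi(\bfu,\bfv)}$ becomes exactly $\prod_\bfu \Upsilon_{\bfu}^{(v_0,z_0)}(D_\bfu)$; here one should note that $\Upsilon_\bfu^{(v_0,z_0)}$ is well-defined because each of the three ingredients (the local Tate pairings at $v\in\Sigma$, the Legendre symbol $\left(\tfrac{-d_0}{m}\right)$, and $\gamma^{(r)}(\sigma_m)$) depends on $m$ only through its class modulo $4N$ — the first two for elementary reasons, the last because $\gamma$ factors through $K_\Sigma$ and $\sigma_m$ depends only on $m\bmod 4N$.

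The remaining ingredient is the normalisation factor. Since $Z = \bigoplus_{p\in\cP(d)}\sS_{d,p}^{(r)}$ and by Lemma \ref{lem:kummer_image_basic} each $\sS_{d,p}^{(r)} = \delta_{d,p}^{(r)}(A[2])$ has $\F_2$-dimension $\dim A[2] = 2rg$, we have $\#Z = 2^{2rg\,\#\cP(d)} = \prod_\bfu 2^{2rg\,\omega(D_\bfu)}$ (using $\#\cP(d) = \sum_\bfu\omega(D_\bfu)$ since the $D_\bfu$ are coprime), which produces the factor $\prod_\bfu 2^{-2rg\omega(D_\bfu)}$. Assembling everything, multiplying by the outer factor $\frac{1}{\#Z_0}(-1)^{\langle v_0,z_0\rangle_\Sigma}$ from \eqref{eq:kane_sum_split}, and summing over $(v_0,z_0)\in V_0\times Z_0$ gives precisely the claimed formula.

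I do not anticipate a serious obstacle here: the proposition is essentially a bookkeeping assembly of Lemmas \ref{lem:facotrisations_of_d}, \ref{lem:most_of_the_moment_formula_work}, and \ref{eq:minor_fouvry_kluners_lemma} together with \eqref{eq:kane_sum_split} and \eqref{eq:bad_local_condtions}. The only place demanding a little care is verifying that $\Upsilon_\bfu^{(v_0,z_0)}$ as defined in Notation \ref{notation_including_phi} genuinely captures \emph{all} of the non-cross-term dependence on $D_\bfu$ — in particular that the ``bad primes'' contribution \eqref{eq:bad_local_condtions}, which is only asserted to depend on $D_\bfu\bmod 4N$, matches the Tate-pairing term $(-1)^{\langle\psi_m\pi_1(\bfu),z_0\rangle_\Sigma}$ appearing in the definition of $\Upsilon_\bfu^{(v_0,z_0)}$; this is immediate once one recalls that $\psi_{D_\bfu}$ and $\psi_m$ agree in $\prod_{p\mid N}\Q_p^\times/\Q_p^{\times 2}$ whenever $D_\bfu \equiv m \bmod 4N$.
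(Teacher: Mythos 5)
Your proposal is correct and follows exactly the paper's route: the paper's proof likewise notes $\#Z=\#A[2]^{\omega(D)}=2^{2rg\omega(D)}$ and then assembles \eqref{eq:kane_sum_split}, \eqref{eq:bad_local_condtions}, and Lemmas \ref{lem:most_of_the_moment_formula_work} and \ref{eq:minor_fouvry_kluners_lemma}, with \Cref{lem:facotrisations_of_d} providing the parametrisation by factorisations. Your additional remarks on the well-definedness of $\Upsilon_\bfu^{(v_0,z_0)}$ modulo $4N$ are accurate bookkeeping that the paper leaves implicit.
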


\begin{proof}
Note first that $\#Z=\#A[2]^{\omega(D)}=2^{2rg\omega(D)}$. The result now follows by combining
\eqref{eq:kane_sum_split} with \eqref{eq:bad_local_condtions} and  Lemmas
\ref{lem:most_of_the_moment_formula_work} and \ref{eq:minor_fouvry_kluners_lemma}.
\end{proof}

\subsection{A formula for the $r$-th moment of the Selmer group}
We now consider the effect of varying $d\in d_0\cF_c$ in the formula for
$\#\Sel_2(E_d/\Q)^r$ given in Proposition \ref{prop:explicit_selmer_formula_non-varying}.  
 We begin by noting that the subspace
$Z_0=\oplus_{v\in \Sigma} \sS_{d,v}^{(r)}$ of $\oplus_{v\in \Sigma}H^1(\Q_v,A[2])$
is independent of $d$, since the fixed image $b\in \prod_{v\in \Sigma}\Q_v^{\times}/\Q_v^{\times 2}$ of $d$  determines the $\Q_v$-isomorphism class of $A_d$ for each
$v\in \Sigma$ (recall that $b$ is a function of $d_0$ and $c$). 
The Dirichlet characters $\Upsilon_{\bfu}^{ (v_0,z_0)}$, and  the subspace $V_0$ of $H^1(\Q,A[2])$, are  independent of $d$  by definition. It will also be useful later to note that the characters $\Upsilon_{\bfu}^{ (v_0,z_0)}$ are independent of $c$, and that $V_0$ is independent of both $c$ and $d_0$.

\begin{definition} \label{def:the_sums_to_study}
We define $\cD(X)$ to be the set of
all $\#A[2]^2$-tuples $(D_\bfu)_{\bfu\in A[2]^2}$ of positive, pairwise coprime,
squarefree integers $D_\bfu$ satisfying
\begin{equation}
  \prod_{\bfu \in A[2]^2}D_{\bfu } \in \cF_c(X).
\end{equation}
For a subset $T\subseteq A[2]^2$, we define $\cD^T(X)$ to be the subset
of $\cD(X)$ consisting of tuples $(D_\bfu)_{\bfu\in A[2]^2}$ for which
$D_\bfu>1$ precisely when $\bfu \in T$ (cf. \Cref{def:support}). Note that
$\cD(X)$ is the disjoint union of the sets $\cD^T(X)$ as $T$ ranges over all subsets of $A[2]^2$. Finally, for $z_0\in Z_0$,  $v_0\in V_0$, and $T\subseteq A[2]^2$, define 
\begin{equation} \label{eq:main_sum_simplified_notation}
\cS^T_{v_0,z_0}(X) =\sum_{(D_\bfu)_\bfu\in \cD^T(X)}\prod_{\bfu}2^{-2rg\omega(D_\bfu)}\Upsilon_{\bfu}^{ (v_0,z_0)}(D_\bfu)\cdot \prod_{\bfu\neq \bfv}\left(\frac{D_\bfu}{D_{\bfv}}\right)^{\Phi(\bfu,\bfv)},
\end{equation}
where the products are taken over $A[2]^2$.
\end{definition}

\begin{proposition} \label{prop:selmer_moment_formulae}
We have 
\begin{eqnarray*}
 \sum_{d\in d_0\cF_c(X)}
  \#\Sel_2(E_{d}/\Q)^r=   \frac{1}{\#Z_0}~\sum_{(v_0,z_0)\in V_0\times Z_0} (-1)^{\left \langle v_0,z_0\right \rangle}\sum_{T\subseteq A[2]^2}\cS^T_{v_0,z_0}(X).
\end{eqnarray*}
\end{proposition}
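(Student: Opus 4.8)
The plan is to obtain the formula by summing the pointwise identity of \Cref{prop:explicit_selmer_formula_non-varying} over all $d\in d_0\cF_c(X)$ and then rearranging. The three facts that make the rearrangement clean are that the space $Z_0=\oplus_{v\in\Sigma}\sS_{b_v,v}^{(r)}$ (hence also the number $\#Z_0$), the space $V_0=H^1_\Sigma(\Q,A[2])$, and the Dirichlet characters $\Upsilon_\bfu^{(v_0,z_0)}$ modulo $4N$ are all independent of the particular $d\in d_0\cF_c$, depending only on the fixed data $(d_0,c)$ (equivalently on the class $b$); this is exactly what is recorded in the discussion immediately preceding \Cref{def:the_sums_to_study}. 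Consequently, as $d$ ranges over $d_0\cF_c(X)$, the only thing in \Cref{prop:explicit_selmer_formula_non-varying} that varies is the inner sum over factorisations of $D=d/d_0$.

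First I would substitute $d=d_0D$ with $D\in\cF_c(X)$, so that $\sum_{d\in d_0\cF_c(X)}$ becomes $\sum_{D\in\cF_c(X)}$, and insert \Cref{prop:explicit_selmer_formula_non-varying} for each term. By \Cref{lem:facotrisations_of_d} — or simply because unique prime factorisation assigns each prime dividing $D$ to exactly one index $\bfu\in A[2]^2$ — the factorisations $D=\prod_\bfu D_\bfu$ of a fixed $D\in\cF_c(X)$ into pairwise coprime positive squarefree integers indexed by $A[2]^2$ correspond bijectively to those tuples $(D_\bfu)_{\bfu\in A[2]^2}\in\cD(X)$ whose product equals $D$. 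Hence running first over $D\in\cF_c(X)$ and then over the factorisations of $D$ amounts to running over all of $\cD(X)$. Interchanging the resulting finite sum over $(v_0,z_0)\in V_0\times Z_0$ with the sum over $\cD(X)$ — harmless since for each fixed $X$ every sum in sight is finite — yields
\[
\sum_{d\in d_0\cF_c(X)}\#\Sel_2(E_d/\Q)^r=\frac{1}{\#Z_0}\sum_{(v_0,z_0)\in V_0\times Z_0}(-1)^{\langle v_0,z_0\rangle}\sum_{(D_\bfu)\in\cD(X)}\prod_\bfu 2^{-2rg\omega(D_\bfu)}\Upsilon_\bfu^{(v_0,z_0)}(D_\bfu)\prod_{\bfu\neq\bfv}\left(\frac{D_\bfu}{D_\bfv}\right)^{\Phi(\bfu,\bfv)}.
\]

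Next I would partition $\cD(X)$ according to the support $T=\{\bfu\in A[2]^2:D_\bfu>1\}$, using the decomposition $\cD(X)=\bigsqcup_{T\subseteq A[2]^2}\cD^T(X)$ from \Cref{def:the_sums_to_study}. The innermost sum over $\bigsqcup_T\cD^T(X)$ then breaks up as $\sum_{T\subseteq A[2]^2}\cS^T_{v_0,z_0}(X)$, directly by the definition of $\cS^T_{v_0,z_0}(X)$ in \eqref{eq:main_sum_simplified_notation}, which gives the claimed identity.

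I do not expect a genuine obstacle here: the proposition is a bookkeeping repackaging of \Cref{prop:explicit_selmer_formula_non-varying}, and the only points requiring (routine) care are checking that the factorisation bijection of \Cref{lem:facotrisations_of_d} is compatible with letting $D$ vary over $\cF_c(X)$, and that the asserted $d$-independence of $Z_0$, $V_0$ and the characters $\Upsilon_\bfu^{(v_0,z_0)}$ really holds — both dealt with above. All of the substance sits upstream, in \Cref{prop:explicit_selmer_formula_non-varying} and the lemmas feeding it; the role of this proposition is merely to present the $r$-th moment in a form suited to the asymptotic analysis of the sums $\cS^T_{v_0,z_0}(X)$ carried out in the subsequent sections.
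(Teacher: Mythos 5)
Your proposal is correct and is exactly the paper's argument: the paper's proof consists of the single sentence ``Sum the formula in Proposition \ref{prop:explicit_selmer_formula_non-varying} over $d\in d_0\cF_c(X)$,'' relying on the same $d$-independence of $V_0$, $Z_0$ and the characters $\Upsilon_\bfu^{(v_0,z_0)}$ noted before \Cref{def:the_sums_to_study}, and on the partition of $\cD(X)$ into the sets $\cD^T(X)$. You have simply made the bookkeeping explicit.
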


\begin{proof}
Sum the formula in the statement of Proposition \ref{prop:explicit_selmer_formula_non-varying} over $d\in d_0\cF_c(X)$.
\end{proof}

\begin{remark} \label{can_shift_support_rem}
It follows from \Cref{support_lemma} (or rather, its proof and the discussion preceding the statement) that for any $\bfw \in A[2]^2$, we have 
\[\cS^T_{v_0,z_0}(X)=\cS^{T+\bfw}_{\widetilde{v_0},\widetilde{z_0}}(X),\]
where 
\[\widetilde{v_0}=v_0+\delta_{d_0}^{(r)}(\pi_1(\bfw))\quad \textup{ and }\quad \widetilde{z_0}=z_0+\sum_{v\in \Sigma }\delta_{d,v}^{(r)}(\pi_2(\bfw)).\]
\end{remark}

\subsection{Asymptotics for the sums $\cS^T_{v_0,z_0}$}\label{sec:first_sum_simplification}

A key role in understanding the sum \eqref{eq:main_sum_simplified_notation} is played by the notion of \emph{linked indices}. 

\begin{definition} \label{quad_form_q_defi}
Let $\bfu$ and $\bfv$ be distinct elements of $A[2]^2$.  We say that $\bfu$ and $\bfv$ are \emph{linked} if precisely one of the symbols 
\[\left(\frac{D_\bfu }{D_{\bfv }}\right)\quad \textup{ or }\quad \left(\frac{D_\bfv }{D_{\bfu }}\right)\]
appears with exponent $1$ in the sum \eqref{eq:main_sum_simplified_notation}, equivalently
if 
$\Phi(\bfu ,\bfv )+\Phi(\bfv ,\bfu )=1.$ 
Denote by $q\colon A[2]^2\rightarrow \F_2$ the non-degenerate quadratic form on $A[2]^2$ sending $\bfu $ to 
$q(\bfu )=e(\pi_1(\bfu) ,\pi_2(\bfu) ).$
We have
\begin{eqnarray*}
\Phi(\bfu ,\bfv )+\Phi(\bfv ,\bfu )&=& e(\pi_1(\bfu) +\pi_1(\bfv) , \pi_2(\bfv) )+e(\pi_1(\bfv) +\pi_1(\bfu) ,\pi_2(\bfu) )\\
&=& q(\bfu +\bfv ).
\end{eqnarray*}
In particular, $\bfu $ and $\bfv $ are linked if and only if $q(\bfu +\bfv )=1$.
\end{definition}

We call a subset $T\subseteq A[2]^2$ a \emph{maximal unlinked subset} if no two
elements $\bfu \neq \bfv $ of $T$ are linked, and if $T$ is maximal with
respect to this property. The following lemma is essentially \cite[Lemma 7]{MR1292115} (see also \cite[Lemma 18]{MR2276261}).

\begin{lemma} \label{lem:geom_of_unlinked_indices}
The maximal unlinked subsets of $A[2]^2$  are precisely those of the form
$\bfc+W$ where $\bfc\in A[2]^2$, and $W$ is a maximal isotropic
subspace  with respect to the quadratic form $q$. In particular, every maximal unlinked subset has size $\#A[2]=4^{rg}$. 
\end{lemma}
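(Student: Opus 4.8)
The plan is to deduce the structure of maximal unlinked subsets from the fact, established in \Cref{quad_form_q_defi}, that two distinct elements $\bfu,\bfv\in A[2]^2$ are linked precisely when $q(\bfu+\bfv)=1$, where $q$ is the non-degenerate quadratic form $q(\bfu)=e(\pi_1(\bfu),\pi_2(\bfu))$. Fix a maximal unlinked subset $T$; since linkedness only depends on differences, $T$ is unchanged in character under translation, so I may first translate $T$ by a fixed $\bfc\in T$ to arrange that $0\in T$. Set $W=T-\bfc=T$ (after this translation); I will show $W$ is a maximal isotropic subspace for $q$.

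First I would check that $W$ is a subspace. Since $0\in W$ and $W$ is unlinked, for every $\bfw\in W$ we have $q(\bfw)=q(\bfw-0)=0$, so $W$ is contained in the quadric $q=0$ and is isotropic as a set. For closure under addition, take $\bfw_1,\bfw_2\in W$ and consider $\bfw_1+\bfw_2$. Using the polarisation identity for the quadratic form — writing $b_q(\bfx,\bfy)=q(\bfx+\bfy)+q(\bfx)+q(\bfy)$ for the associated bilinear form — and the relations $q(\bfw_1+\bfw_2)=0$ (as $\bfw_1,\bfw_2$ unlinked) together with $q(\bfw_i)=0$, one gets $b_q(\bfw_1,\bfw_2)=0$. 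Then for any $\bfw\in W$ one computes $q\big((\bfw_1+\bfw_2)+\bfw\big)=q(\bfw_1+\bfw_2)+q(\bfw)+b_q(\bfw_1+\bfw_2,\bfw)$ and, expanding $b_q(\bfw_1+\bfw_2,\bfw)=b_q(\bfw_1,\bfw)+b_q(\bfw_2,\bfw)=0$ since $\bfw_1,\bfw$ and $\bfw_2,\bfw$ are each unlinked pairs (or equal), so $q\big((\bfw_1+\bfw_2)+\bfw\big)=0$. Hence $W\cup\{\bfw_1+\bfw_2\}$ is still unlinked; maximality of $T$ forces $\bfw_1+\bfw_2\in W$. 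Thus $W$ is an isotropic subspace, and maximality of $T$ translates to maximality of $W$ among isotropic subspaces.

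Conversely, if $W$ is any maximal isotropic subspace for $q$ and $\bfc\in A[2]^2$ is arbitrary, then for $\bfu=\bfc+\bfw_1$, $\bfv=\bfc+\bfw_2$ in $\bfc+W$ we have $\bfu+\bfv=\bfw_1+\bfw_2\in W$, so $q(\bfu+\bfv)=0$ and $\bfu,\bfv$ are unlinked; maximality of $W$ gives maximality of $\bfc+W$ as an unlinked subset (any element that could be added without creating a link would, after translating back by $\bfc$, enlarge $W$ inside the isotropic locus). This establishes the claimed bijective description. Finally, the dimension count: $q$ is a non-degenerate quadratic form on the $\F_2$-vector space $A[2]^2$ of dimension $2\cdot 2rg=4rg$, and moreover it is split — indeed $A[2]^2=A[2]\oplus A[2]$ with $q$ vanishing on each summand $A[2]\times\{0\}$ and $\{0\}\times A[2]$ and pairing them via the (perfect) Weil pairing $e$, which exhibits $q$ as a hyperbolic form on a sum of $2rg$ hyperbolic planes. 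Hence every maximal isotropic subspace has dimension $2rg$, so $\#W=4^{rg}=\#A[2]$, and therefore every maximal unlinked subset $\bfc+W$ has size $\#A[2]=4^{rg}$.

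The only genuinely delicate point is the closure-under-addition step: one must be careful that the ambient form is a \emph{quadratic} form over $\F_2$ rather than merely its associated bilinear form, so that "isotropic as a set'' (all elements satisfy $q=0$) does not automatically give a subspace — it is precisely the unlinkedness of \emph{all pairs}, i.e. $q(\bfw_i+\bfw_j)=0$ for all $i,j$, combined with maximality, that forces linearity. This is exactly the content of \cite[Lemma 7]{MR1292115}, and I would simply transcribe that argument in the present notation, the one new feature being the explicit identification of $q$ as the split form attached to the decomposition $A[2]^2=A[2]\oplus A[2]$ with the Weil pairing as the hyperbolic pairing, which gives the clean value $4^{rg}$ for the size.
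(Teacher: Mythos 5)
Your proof is correct and follows essentially the same route as the paper: translate a maximal unlinked set to contain $0$, use the quadratic-form identity for $q$ to show closure under addition (hence that the set is a maximal isotropic subspace), and observe conversely that cosets of maximal isotropics are maximal unlinked. The only cosmetic difference is in the final dimension count, where the paper notes that $A[2]\times\{0\}$ is isotropic of half the dimension and cites the general fact that all maximal isotropic subspaces of a nondegenerate quadratic space have equal dimension, whereas you make the hyperbolic structure $A[2]^2=A[2]\oplus A[2]$ explicit; both yield $\#W=4^{rg}$.
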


\begin{proof}
As above, $\bfu, \bfv\in A[2]^2$ are unlinked if and only if $q(\bfu +\bfv )=0$.
From this  it follows that if $T$ is unlinked, then so is $\bfc+T$ for any
$\bfc\in A[2]^2$. Now fix a maximal unlinked subset $W$ of $A[2]^2$ with
$0\in W$, so that $q(\bfu)=0$ for all $\bfu \in W$. Given $\bfu, \bfv\in W$,
we claim that $\bfu+\bfv$ is in $W$. Indeed, given any $\bfw\in \cU$, since $q$ is a quadratic form we have
\[q(\bfu+\bfv+\bfw)=q(\bfu+\bfv)+q(\bfu+\bfw)+q(\bfv +\bfw)+q(\bfu)+q(\bfv)+q(\bfw)=0.\]
So $\bfu +\bfv $ and $\bfw$ are unlinked for each $\bfw\in W$. Maximality of $W$
then gives the claim. Consequently, $W$ is an isotropic subspace of $A[2]^2$ with
respect to $q$, and it is immediate that such subspaces are necessarily unlinked.
This proves the characterisation of maximal unlinked subsets.

Finally, since the first factor of $A[2]^2$ is visibly isotropic for $q$, we see that
$q$ contains at least one isotropic subspace of size $\dim A[2]=\frac{1}{2}\dim A[2]^2$.
By \cite[Corollary 8.12]{quadformsbook} it follows that all maximal isotropic
subspaces of $A[2]^2$ have this dimension, completing the proof. 
\end{proof}

\subsubsection{The error term}

The aim of this subsection is to prove the following bound. 
In what follows we will write $n_L$ for the degree of $L/\Q$.

\begin{proposition} \label{prop:error_term_F_K}  
Suppose that $T\subseteq A[2]^2$ is not maximal unlinked. Then  
\[|\cS^{T}_{v_0,z_0}(X)| \ll X\log(X)^{\frac{1}{n_L}-1- \frac{1}{n_L \cdot 4^{rg+1}}}.\]
\end{proposition}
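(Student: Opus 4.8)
The idea is to expand the Jacobi symbols $\left(\frac{D_\bfu}{D_\bfv}\right)$ in $\cS^T_{v_0,z_0}(X)$ using quadratic reciprocity and orthogonality, reducing everything to character sums of the type controlled by \Cref{prop:main_chebotarev_consequence}. Since $T$ is not maximal unlinked, by \Cref{lem:geom_of_unlinked_indices} there must exist two linked indices $\bfu_0,\bfv_0\in T$, i.e.\ $\Phi(\bfu_0,\bfv_0)+\Phi(\bfv_0,\bfu_0)=1$, so in the product $\prod_{\bfu\neq\bfv}\left(\frac{D_\bfu}{D_\bfv}\right)^{\Phi(\bfu,\bfv)}$ exactly one of the two symbols involving $D_{\bfu_0}$ and $D_{\bfv_0}$ appears. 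First I would fix all the variables $D_\bfu$ for $\bfu\notin\{\bfu_0\}$ (or some convenient choice among the linked pair) and isolate the sum over $D_{\bfu_0}$. Using quadratic reciprocity to flip the symbols $\left(\frac{D_\bfv}{D_{\bfu_0}}\right)$ into $\left(\frac{D_{\bfu_0}}{D_\bfv}\right)$ up to a sign depending only on $D_{\bfu_0},D_\bfv\bmod 4$, the inner sum over $D_{\bfu_0}$ becomes a sum of the form $\sum_{D_{\bfu_0}\in\cF,\,(\text{range})}\mu^2(D_{\bfu_0}\cdot\text{rest})\,2^{-2rg\,\omega(D_{\bfu_0})}\chi(D_{\bfu_0})\psi(D_{\bfu_0})$, where $\chi$ is a real Dirichlet character whose modulus is (up to factors dividing $4N$) the product of the $D_\bfv$ over those $\bfv$ linked to $\bfu_0$ — in particular the modulus is nontrivial precisely because $\bfv_0$ is linked to $\bfu_0$ — and $\psi$ packages the $\bmod\,4N$ contributions from $\Upsilon^{(v_0,z_0)}_{\bfu_0}$ and the reciprocity signs, while the $\gamma^{(r)}(\sigma_{D_{\bfu_0}})$ term contributes, via $\omega$-fold multiplicativity, to the choice of $\kappa$ and the character.

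The key input is that for this $\chi$ we can apply \Cref{prop:main_chebotarev_consequence} with $\kappa=2^{2rg}$: one needs that $\chi\chi_i\psi$ is a nontrivial character of $\Gal(L(\zeta_{mq})/\Q)$ for every irreducible $\chi_i$ of $\Gal(L/\Q)$. Here a subtlety arises because of the $\gamma^{(r)}$-twist: the relevant ``character'' $\chi_i$ ranges over irreducible constituents coming from $\Gal(L(E[4])/\Q)$, not just $\Gal(L/\Q)$; but since $E[4]$ ramifies only inside $\Sigma$ and $L(E[4])$ is still a fixed number field, one absorbs $\Q(E[4])$ into $L$ (enlarging $L$ does not change $\cF_L$ up to the already-accounted Frobenian condition, by the remark after the definition of $\cF_{U,L}$) and the hypothesis becomes: the relevant $\chi$ is nontrivial modulo the fixed field. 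Nontriviality is forced by the linked index: because $\bfv_0$ is linked to $\bfu_0$, the symbol $\left(\frac{D_{\bfu_0}}{D_{\bfv_0}}\right)$ genuinely appears, so $D_{\bfv_0}>1$ divides the modulus $q$ of $\chi$ and $\chi$ is a nontrivial real character. This yields, for the innermost sum over $D_{\bfu_0}$, a bound $\ll_A (\omega(r_0)+\sqrt{q})\,(X/\prod_{\bfu\neq\bfu_0}D_\bfu)\,\log(X)^{-A}$ for any $A$, where $r_0$ and $q$ are built from the other $D_\bfv$'s.

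Then I would sum this bound over all the remaining variables $(D_\bfu)_{\bfu\neq\bfu_0}$. The $\sqrt{q}$ factor is the dangerous one: $q$ can be as large as $\prod_{\bfv}D_\bfv$, so $\sqrt{q}/\prod D_\bfv$ is only as small as $\prod D_\bfv^{-1/2}$, and summing $\prod D_\bfv^{-1/2}$ over squarefree integers does not converge. The standard fix (as in Heath--Brown and Kane) is to exploit the factor $2^{-2rg\,\omega(D_\bfv)}=\kappa^{-\omega(D_\bfv)}$ together with the constraint $\prod_\bfu D_\bfu<X$: one splits the ranges dyadically and uses that only $O(\log X)$ dyadic boxes occur, and on each box $\prod_{\bfv\neq\bfu_0}D_\bfv$ has a fixed size $Y$, so $\sqrt{q}\le\sqrt{Y}$ while the number of tuples with $\prod D_\bfv\asymp Y$ and the prescribed $\cF$-condition is $\ll Y\log(Y)^{O(1)}$ by \Cref{lem:mertens:chebotarev_sum} — but $\sqrt{Y}\cdot Y > Y$, so this crude approach loses. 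Instead, following the Fouvry--Klüners/Heath--Brown device, I would choose $A$ large, sum over $D_{\bfv_0}$ \emph{last} rather than over $D_{\bfu_0}$, and crucially use that the worst case $q\gg X^\epsilon$ is handled by the trivial bound combined with the $\kappa^{-\omega}$-weighting (which, via \Cref{lem:mertens:chebotarev_sum}, gives a total mass $\ll X\log(X)^{\frac{\#T\cdot 2^{2rg}}{n_L}-\#T}$ — wait, this needs care), while for $q$ small one uses the strong Chebotarev bound with $A$ large to absorb everything into $\log(X)^{-A}$. Summing a geometric-type series over the at most $\Omega=O(\log\log X)$ possible values of $\omega$ (via \Cref{number of prime factors} to discard $n$ with too many prime factors) and over dyadic ranges contributes only powers of $\log\log X$ and $\log X$, and since each non-maximal-unlinked $T$ has $\#T\le 4^{rg}-1 < 4^{rg}$, the saving over the maximal size $4^{rg}$ is at least one full factor, which after dividing through by $\#\cF_c(X)\asymp X\log(X)^{\frac1{n_L}-1}$ and tracking the $\kappa$-weighted Mertens estimates produces the stated exponent $\frac{1}{n_L}-1-\frac{1}{n_L\cdot 4^{rg+1}}$. \textbf{The main obstacle} is precisely this last bookkeeping: controlling the $\sqrt{q}$ loss from the ``off-diagonal'' Jacobi symbols against the coprimality constraint $\prod D_\bfu<X$ and the $2^{-2rg\,\omega}$ weights, and checking that the exponent that comes out is no worse than $\frac{1}{n_L}-1-\frac{1}{n_L 4^{rg+1}}$; the nontriviality of the relevant Dirichlet character (ensured by the linked pair in $T$) and the Galois-theoretic hypothesis feeding \Cref{prop:main_chebotarev_consequence} are comparatively routine once the correct character is identified.
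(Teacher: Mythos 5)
Your proposal has two genuine gaps.

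First, the claim that ``since $T$ is not maximal unlinked, there must exist two linked indices $\bfu_0,\bfv_0\in T$'' is false, and \Cref{lem:geom_of_unlinked_indices} does not imply it. Failure of maximal unlinkedness has two distinct causes: either $T$ contains a linked pair, or $T$ is unlinked but not maximal, i.e.\ $\#T<4^{rg}$ with \emph{no} linked pair available. In the second case there is no Jacobi-symbol oscillation to exploit at all, and the character-sum machinery you build is inapplicable. The paper handles this case by a separate counting argument (its ``fourth family'': fewer than $4^{rg}$ large variables), using only the weights $2^{-2rg\,\omega(D_\bfu)}$ and \Cref{lem:mertens:chebotarev_sum}: with at most $t\le 4^{rg}-1$ large variables the Mertens-type bound yields $X\log(X)^{t\cdot 4^{-rg}n_L^{-1}-1}$, and it is exactly this case that produces the stated exponent $\tfrac{1}{n_L}-1-\tfrac{1}{n_L\cdot 4^{rg+1}}$. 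Your later remark that ``each non-maximal-unlinked $T$ has $\#T\le 4^{rg}-1$'' is also false (e.g.\ $T=A[2]^2$ is not unlinked and is much larger), so the two halves of your argument each cover only part of the situation and are mutually inconsistent as stated.

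Second, your treatment of the linked-pair case does not close the $\sqrt{q}$ loss. When both linked variables $D_{\bfu_0},D_{\bfv_0}$ are large, the Siegel--Walfisz input (\Cref{prop:main_chebotarev_consequence}) is useless because the modulus $q$ is comparable to $X$, and you acknowledge that the trivial bound plus the $\kappa^{-\omega}$ weights do not recover this. The missing ingredient is Heath--Brown's double-oscillation estimate for bilinear sums of Jacobi symbols $\left(\frac{D_{\bfu_0}}{D_{\bfv_0}}\right)$ (the paper's ``second family'', via \cite[Lemma 15]{MR2276261}), which gives a power saving in \emph{both} variables simultaneously. One then reserves the one-variable character-sum bound for the regime where the linked partner is small, say $A_{\bfv_0}<X^\dagger=\log(X)^{O(1)}$, so that $q$ is only a power of $\log X$ and $\sqrt{q}$ is harmless (the paper's ``third family''). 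Without this case split the argument does not go through. The remaining issues (absorbing the $\gamma^{(r)}(\sigma_{D_\bfu})$ term into the modulus-$4N$ character rather than enlarging $L$, which works because $\gamma$ is unramified outside $\Sigma$) are minor by comparison.
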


To begin the proof, we first divide the range of summation into intervals.
Set $\Delta=1+\log(X)^{-4^{rg}}$ and divide the ranges of the variables $D_{\bfu}$
into intervals of the form $[\Delta^n, \Delta^{n+1}]$ for $n=0,1,2,...$. Note that
the only integer in the $n=0$ interval is $1$. For $\bfu \in A[2]^2$ we let
$A_\bfu $ denote a real number\footnote{We have chosen the notation $A_\bfu$ to align with  \cite{MR2276261}. We hope that the similarity with the notation for twists of the abelian variety $A$ will not be confusing.} of the form $\Delta^n$ with $1\leq \Delta^n\leq X$,
let $\bfA=(A_{\bfu})_{\bfu \in A[2]^2}$, and define 
\begin{equation} \label{eq:sums_with_as}
\cS^{T}_{v_0,z_0}(X,\bfA) =\sum_{\substack{(D_\bfu)_\bfu \in \cD^{T}(X)\\ A_{\bfu}\leq D_{\bfu}\leq \Delta A_{\bfu}}}\prod_{\bfu}2^{-2rg\omega(D_\bfu)}\Upsilon_{\bfu}^{(v_0,z_0)}(D_\bfu)\cdot \prod_{\bfu \neq \bfv}\left(\frac{D_\bfu}{D_{\bfv}}\right)^{\Phi(\bfu,\bfv)}.
\end{equation}
Since for small positive $\alpha$ we have $\log(1+\alpha)\approx \alpha$, it
follows that for large $X$ we have $\log(X)/\log(\Delta)\approx \log(X)^{1+4^{rg}}$,
so the number of choices of $\bfA$ for which the sum in \eqref{eq:sums_with_as}
is non-empty is $O(\log(X)^{(1+4^{rg})\cdot 16^{rg}})$.
Following \cite[Section 5.4]{MR2276261} we split the collection of all such
choices of $\bfA$ into families. 

\textbf{First family: $\prod_\bfu A_\bfu$ large.} The first family is defined by the condition 
\begin{equation} \label{eq:first_family_of_A}
\prod_{\bfu \in A[2]^2}A_\bfu \geq \Delta^{-16^{rg}}X.
\end{equation}
We treat this family as in \cite[Equations (33)-(34)]{MR2276261}. We have 
\begin{eqnarray*}
\bigg|\sum_{\bfA\textup{ satisfies }\eqref{eq:first_family_of_A}}\cS^T_{v_0,z_0}(X,\bfA) 
  \bigg|&\leq &\sum_{\bfA\textup{ satisfies }\eqref{eq:first_family_of_A}}
  \sum_{\substack{(D_\bfu )_\bfu \in \cD^{T}(X)\\
  A_{\bfu}\leq D_{\bfu}\leq \Delta A_{\bfu}}}\prod_{\bfu}4^{-rg\omega(D_\bfu)} \\ 
&\leq & \sum_{\Delta^{-16^r}X\leq n\leq X}\mu^2(n)\cdot 4^{rg\omega(n)}\\
&\ll & (1-\Delta^{-16^{rg}})X\log(X)^{4^{rg}-1}\\
&\ll& X\log(X)^{-1}.
\end{eqnarray*}
Here for the second inequality we take $n=\prod_\bfu D_\bfu$. The term $4^{rg\omega(n)}$
arises as the product $4^{-rg\omega(n)}\cdot 16^{rg\omega(n)}$ where $16^{rg\omega(n)}$
is the number of ways of writing a squarefree  $n$ as an ordered product
of $16^{rg}$
positive integers. The third inequality is \cite[Lemma 12]{MR2276261}. For the final inequality, we note that 
\begin{eqnarray*}
1-\Delta^{-16^{rg}}=1-(1+\log(X)^{-4^{rg}})^{-16^{rg}}=1-(1-16^{rg}\log(X)^{-4^{rg}}+O(\log(X)^{-2\cdot 4^{rg}})).
\end{eqnarray*}
In summary, we have 
\begin{equation} \label{eq:end_of_first_family}
\bigg|\sum_{\bfA\textup{ satisfies }\eqref{eq:first_family_of_A}}\cS^{T}_{v_0,z_0}(X,\bfA) \bigg| \ll X\log(X)^{-1}.
\end{equation}

Note that if $\bfA$ does not satisfy \eqref{eq:first_family_of_A} then the
condition $\prod_{\bfu}D_{\bfu}\leq X$ is automatically satisfied for any
collection $(D_\bfu)_{\bfu}$ with $A_\bfu \leq D_\bfu \leq \Delta A_\bfu$.
We thus ignore this condition henceforth.

 \textbf{Second family: $2$ large linked variables.} Set $X^\dagger=\log(X)^{3(1+16^{rg}(1+4^{rg}))}$.
 We now consider $\bfA$ satisfying 
\begin{equation} \label{eq:second_family_defining_conditions}
 \prod_{\bfw\in A[2]^2}A_{\bfw}\leq \Delta^{-16^{rg}}X,\quad \textup{there exist linked }\bfu, \bfv \textup{ such that }A_\bfu , A_\bfv \geq X^\dagger.
\end{equation}
  The argument follows \cite[Equations (40)-(42)]{MR2276261} drawing on results of
  Heath-Brown \cite{MR1347489} exploiting double oscillation of Jacobi symbols.
  Specifically, given $\bfA$ satisfying \eqref{eq:second_family_defining_conditions}
  and corresponding linked variables $\bfu,\bfv$, after swapping $\bfu$ and $\bfv$ if necessary, we have  
\begin{eqnarray*}
|\cS^{T}_{v_0,z_0}(X,\bfA)|\leq  \sum_{\substack{A_\bfw\leq D_\bfw\leq \Delta A_\bfw \\
\bfw \neq \bfu,\bfv}}  \bigg| \sum_{\substack{1\leq D_\bfu \leq \Delta A_\bfu\\
1\leq D_\bfv \leq \Delta A_\bfv \\
D_\bfu D_\bfv \equiv h \pmod{4N}}}f_1(D_\bfu)f_2(D_\bfv)\left(\frac{D_\bfu}{D_{\bfv}}\right)\bigg|,
\end{eqnarray*}
where in the outer sum the $D_{\bfw}$ are coprime to $N$ and to each other, and
in the inner sum the $D_\bfu$ and $D_\bfv$ are odd and coprime integers with no
further constraints than those shown. The terms $h$, $f_1$ and $f_2$  depend on
$(D_{\bfw})_{\bfw \neq \bfu,\bfv}$. Specifically, they are defined as
$h=c(\prod_{\bfw \neq \bfu ,\bfv }D_\bfw )^{-1}\pmod{4N}$,
\begin{eqnarray*}
f_1(D_\bfu)=
  \charfunc_{\substack{D_\bfu\in\cF\\ D_\bfu \geq A_\bfu}}~4^{-rg\omega(D_\bfu)}
   \mu^2\bigg(D_\bfu \!\!\!\prod_{\bfw \neq \bfu,\bfv}\!\!\!\!D_\bfw \bigg)
   \Upsilon_{\bfu}(D_\bfu)\!\!\!\prod_{\bfw\neq\bfu,\bfw}\left(\frac{D_\bfu}{D_\bfw}\right)^{\Phi(\bfu,\bfw)}
   \!\left(\frac{D_\bfw}{D_\bfu}\right)^{\Phi(\bfw,\bfu)},
 \end{eqnarray*}
and $f_2(D_\bfv)$ is defined similarly by swapping $\bfu$ and $\bfv$ in the definition of $f_1$.
The constraint that the collections $(D_\bfw)_{\bfw \in A[2]^2}$ arising lie in
$\cD^{T}(X)$ rather than just $\cD(X)$ is automatically satisfied thanks to
\eqref{eq:second_family_defining_conditions}. Since the functions $f_1$ and $f_2$
are bounded above by $1$ in absolute value, we can use \cite[Lemma 15]{MR2276261}
to bound the inner sum, provided we can remove the congruence condition on
$D_{\bfu }D_\bfv $. To do this, we split the inner sum into sub-sums by
fixing values $h_1$ and $h_2$ modulo $4N$ whose product is $h$, and insisting
that $D_\bfu \equiv h_1$ and $D_\bfv \equiv h_2$. These conditions can then be
moved into the functions $f_1$ and $f_2$. Now applying  \cite[Lemma 15]{MR2276261}
with $\epsilon = 1/6$ to the inner sum, and summing the result over the remaining variables gives 
\begin{eqnarray*}
  |\cS^{T}(X,\bfA)| &\ll &\Delta^2A_\bfu A_\bfv(X^\dagger)^{-1/3}\cdot \prod_{\bfw \neq \bfu,\bfv}\Delta A_\bfw \leq X(X^\dagger)^{-1/3}.
\end{eqnarray*}
 Summing over the $O(\log(X)^{(1+4^{rg})\cdot 16^{rg}})$-many possibilities for $\bfA$   gives
\begin{equation} \label{end_of_second_family}
  \sum_{\bfA\textup{ satisfies }\eqref{eq:second_family_defining_conditions}}\cS^{T}(X,\bfA) \ll X\log(X)^{-1}.
\end{equation}

\textbf{Third family: $1$ large and $1$ small linked variables.}
We now take $\epsilon>0$ to be chosen later, and define $X^\ddagger=\Delta^l$
to be the least integral power of $\Delta$ that is larger than $\exp(\log(X)^\epsilon)$.
Note that for sufficiently large $X$ we have $X^\ddagger > X^\dagger$. We now consider all $\bfA$ satisfying 
\begin{equation} \label{eq:third_family_definition}
 \textup{Neither \eqref{eq:first_family_of_A} nor \eqref{eq:second_family_defining_conditions} hold,
 and }\exists~ \bfu \neq \bfv  \textup{ linked with } 1< A_\bfv < X^\dagger, ~A_\bfu\geq X^\ddagger.
\end{equation}
Note once again that, given $\bfA$ satisfying   \eqref{eq:third_family_definition},
if we have $(D_\bfw )_{\bfw }\in \cD(X)$ with $A_\bfw \leq D_\bfw \leq \Delta A_\bfw$
for each $\bfw$, then $(D_\bfw)_{\bfw }$ is automatically in $\cD^{T}(X)$.
 
Taking $\bfA$ satisfying \eqref{eq:third_family_definition}, with corresponding
linked indices $\bfv$ and $\bfu$, in the expression \eqref{eq:sums_with_as} for 
$\cS^{T}_{v_0,z_0}(X,\bfA)$ we group all terms involving $D_\bfu$. By quadratic
reciprocity for Jacobi symbols, if we fix $(D_\bfw)_{\bfw\neq \bfu}$, there is a
real Dirichlet character $\chi_{\bfu, (D_\bfw)_{\bfw \neq \bfu}}$ modulo $4N$ such that 
 \[
 \Upsilon^{(v_0,z_0)}_\bfu (D_\bfu)\prod_{\bfw \neq \bfu}\left(\frac{D_\bfu}{D_{\bfw }}\right)^{\Phi(\bfu ,\bfw )}
 \left(\frac{D_\bfw }{D_{\bfu }}\right)^{\Phi(\bfw ,\bfu )}=\chi_{\bfu, (D_\bfw)_{\bfw\neq \bfu}}(D_\bfu)
 \left(\frac{D_\bfu }{D_{\bfw }}\right)^{\Phi(\bfu ,\bfw )+\Phi(\bfw ,\bfu )}.
 \]
We then have 
\begin{equation*}
 |\cS^{T}_{v_0,z_0}(X,\bfA)| \leq \sum_{\substack{A_\bfw \leq D_\bfw \leq \Delta A_\bfw \\ \bfw \neq \bfu}}
 \Bigg|\sum_{\substack{A_\bfu \leq D_\bfu \leq \Delta A_\bfu \\ D_\bfu \equiv cd_1^{-1}\hspace{-0.9em}\pmod{4N} \\
  D_\bfu \in \cF}}\mu^2(d_1D_\bfu)4^{-{rg}\omega(D_\bfu)}\chi_{\bfu, (D_\bfw)_{\bfw \neq \bfu}}(D_\bfu) \left(\frac{D_\bfu}{d_2}\right)\Bigg|,
\end{equation*}
where we set $d_1=\prod_{\bfw \neq \bfu}D_\bfw$ and $d_2=\prod_{\bfw \textup{ linked to }\bfu}D_\bfw$,
supressing the dependency on the collection $(D_\bfw )_{\bfw \neq \bfu }$ to ease notation.
In the first sum we leave implicit the remaining conditions on the $D_\bfw$ (it will suffice
to take them only to be odd and coprime to $N$; relaxing the conditions in this way
only enlarges the sum). Now by the assumption \eqref{eq:third_family_definition} on $\bfA$,
$d_2$ is divisible by at least one odd prime $p$ which is coprime to $N$ and unramified in $L/\Q$.
It follows that for every irreducible character $\chi$ of $L(\zeta_{4N})$, the product
$\chi(-)\cdot  \left(\frac{-}{d_2}\right)$ corresponds to a non-trivial irreducible
character of $L(\zeta_{4N})(\sqrt{d})$ for some $d\ll (\Delta X^\dagger)^{16^{rg}}$.
Expressing the indicator function of the condition $D_\bfu \equiv cd_1^{-1} \pmod{4N}$
as a finite sum of Dirichlet characters modulo $4N$ (with complex coefficients bounded
in absolute value), it follows from \Cref{prop:main_chebotarev_consequence} that, for
every real $B>0$, we have 
\[
  |\cS^{T}_{v_0,z_0}(X,\bfA)| \ll_B \frac{X}{\Delta A_\bfu}\cdot \frac{\left(\log(X)+(X^\dagger\right)^{16^{rg}/2})\Delta A_\bfu }{\log(X^\ddagger)^B}\ll X(X^\dagger)^{16^{rg}/2}\log(X)^{-\epsilon B}.
\]
Since $X^\dagger$ is a fixed power of $\log(X)$, summing over the $O(\log(X)^{(1+4^{rg})\cdot 16^{rg}})$ possibilities for $\textbf{A}$ and taking $B$ large enough compared to $\epsilon$, we find
 \begin{equation} \label{eq:end_of_third_family}
 \sum_{\textbf{A}\textup{ satisfies }\eqref{eq:third_family_definition}}\cS^{T}_{v_0,z_0}(X,\textbf{A})  \ll X\log(X)^{-1}.
 \end{equation} 
 
 \textbf{Fourth family: fewer than $4^{rg}$ large variables.}
We now consider those $\textbf{A}$ that satisfy:
\begin{equation} \label{eq:4th_family_def}
\textup{we have }A_\bfu  \geq X^\ddagger\textup{ for at most }4^{rg}-1 \textup{ indices }\bfu .
\end{equation}
Here the argument proceeds as in \cite[Lemma 38]{MR2726105}. We trivially have 
\begin{equation} \label{eq:few_large_vars_1}
\bigg| \sum_{\textbf{A}\textup{ satisfies }\eqref{eq:4th_family_def}}\cS^{T}_{v_0,z_0}(X,\textbf{A})\bigg| \leq \sum_{ (D_\bfu )_\bfu  } \prod_{\bfu }4^{-rg\omega(D_\bfu )},
\end{equation}
where the right hand sum is taken over all collections $(D_\bfu )_{\bfu \in A[2]^2}$
of pairwise coprime integers satisfying $\prod_{\bfu }D_\bfu \leq X$, with each
$D_\bfu $ in $\cF_L$, and  at most $4^{rg}-1$ of the $D_\bfu$ larger than or equal
to $X^\ddagger$ in absolute value. We split this sum up according to the number $t$
of those $D_\bfu$ that are $\geq X^\ddagger$, let $n$ be the product of these
`large' variables, and let $m$ be the product of the remaining `small variables'. 
This gives 
\begin{equation*}
\textup{RHS of }\eqref{eq:few_large_vars_1} \leq \sum_{t=0}^{4^{rg}-1}\sum_{m\leq (X^\ddagger)^{16^{rg}-t}}\mu^2(m)4^{(rg-2t)\omega(m)}\sum_{\substack{n\leq X/m \\ n\in \cF}}(t\cdot 4^{-rg})^{\omega(n)}.
\end{equation*}
 Here, similarly to the treatment of the first family, the term $4^{(rg-2t)\omega(m)}$ arises as the product $(16^{rg-t})^{\omega(m)}\cdot 4^{-rg\omega(m)}$, the first factor being the number of ways to write a positive coprime integer $m$ as an ordered product of $16^{rg-t}$ positive integers. The term $(t\cdot 4^{-rg})^{\omega(n)}$ arises similarly. Applying \Cref{lem:mertens:chebotarev_sum} to the innermost sum gives
 \begin{eqnarray*}
\textup{RHS of }\eqref{eq:few_large_vars_1} &\ll &\sum_{t=0}^{4^{rg}-1}\sum_{m\leq (X^\ddagger)^{16^{rg}-t}}\mu^2(m)4^{(rg-2t)\omega(m)}\cdot(X/m)\log(X)^{t\cdot 4^{-rg}\cdot n_L^{-1}-1}\\
&\ll & X\left(\sum_{t=0}^{4^{rg}-1}\log(X)^{t\cdot 4^{-rg}n_L^{-1}-1}\right)\cdot \left(\sum_{m\leq (X^\ddagger)^{16^{rg}}}\mu^2(m)\frac{4^{rg\omega(m)}}{m}\right)\\
&\ll & X\log(X)^{\frac{1}{n_L}-1-\frac{1}{n_L\cdot 4^{rg}}}\cdot \log(X^\ddagger)^{4^{rg}}\ll X\log(X)^{\frac{1}{n_L}-1-\frac{1}{n_L \cdot 4^{rg}}+4^{rg}\epsilon}
\end{eqnarray*} 
 where for the second last inequality we use Merten's formula (see e.g. \cite[Theorem 4.2]{KMS2021}). Taking $\epsilon>0$ sufficiently small, we finally arrive at 
 \begin{equation} \label{eq:few_large_vars_2}
\bigg| \sum_{\textbf{A}\textup{ satisfies }\eqref{eq:4th_family_def}}\cS^{T}_{v_0,z_0}(X,\textbf{A})\bigg| \ll X\log(X)^{\frac{1}{n_L}-1- \frac{1}{n_L \cdot 4^{rg+1}}}.
\end{equation}

\textbf{Completion of the proof of \Cref{prop:error_term_F_K}.} Combining \eqref{eq:end_of_first_family},
\eqref{end_of_second_family}  \eqref{eq:end_of_third_family}, and \eqref{eq:few_large_vars_2},
it only remains to treat the $\bfA$ that fall outside each of the $4$ families above.
Take $X$ sufficiently large so that $X^\ddagger>X^\dagger$, and consider such an
exceptional $\bfA$. Since $\bfA$ lies outside the $4$-th family, there are at least
$4^r$ indices $\bfu$ such that $A_\bfu\geq X^\ddagger$. Since $\bfA$ lies outside
the $1$st and $2$nd families, the collection of all these $\bfu $ form an
unlinked subset. Thus by \Cref{lem:geom_of_unlinked_indices} the collection of
such indices $\bfu$ form a maximal unlinked subset of $A[2]^2$. Call this
subset $\cU$. Finally, since $\bfA$ lies outside the $3$rd family, we see that
$A_\bfv=1$ for each $\bfv \notin \cU$. But then from the definition of
$\cD^{T}(X)$ there can be no element $(D_\bfw)_{\bfw \in A[2]^2}$ in $\cD^{T}(X)$
with $A_\bfw \leq D_\bfw \leq \Delta A_\bfw$ for each $\bfw $. This completes the proof of \Cref{prop:error_term_F_K}.

\subsubsection{The main term}
We now turn to the sums
$\cS^{T}_{v_0,z_0}(X)$ where $T\subseteq A[2]^2$ is maximal unlinked. Recall that we have 
\begin{equation} \label{recalling_ST}
\cS^{T}_{v_0,z_0}(X)=\sum_{(D_\bfu)_\bfu \in \cD^{T}(X)}\prod_{\bfu}2^{-2rg\omega(D_\bfu)}\Upsilon_{\bfu}^{(v_0,z_0)}(D_\bfu)\cdot
  \prod_{\bfu \neq \bfv }\left(\frac{D_\bfu }{D_{\bfv }}\right)^{\Phi(\bfu ,\bfv )},
\end{equation}
where  $\cD^{T}(X)$ is the subset of elements $(D_\bfu)_{\bfu\in A[2]^2}$ of $\cD(X)$ for which $D_\bfu>1$ precisely when $\bfu \in T$. Recall  from \Cref{lem:geom_of_unlinked_indices} that any maximal unlinked subset $T$ of $A[2]^2$ has the form $\bfc+T_0$ where $\bfc \in A[2]^2$ and $T_0$ is a maximal isotropic subspace with respect to the quadratic form $q$ of \Cref{quad_form_q_defi}. By \Cref{can_shift_support_rem}, it suffices to understand the sums $\cS^{T}_{v_0,z_0}(X)$ where $T$ is a maximal isotropic subspace.   Recall also from the beginning of the section that the condition that all prime factors of elements of $\cF$ split completely in $L/\Q$ forces each element of $\cF$ to reduce
modulo $4N$ into a specific subgroup $S\leq (\Z/4N\Z)^{\times}$, and that we have chosen $c\in S$.  

\begin{notation} \label{notat:the_combinatorial_sum_notat}
Given $h\in S$, we define $\alpha(h)\in \{0,1\}$ so that $h\equiv (-1)^{\alpha(h)}\pmod 4$. 
Note that both $\alpha$ and all of the characters $\Upsilon_\bfu^{(v_0,z_0)}$, $\bfu \in A[2]^2$, factor through $\tfrac{ (\Z/4N\Z)^{\times}}{(\Z/4N\Z)^{\times 2}}$. Consequently, we can unambiguously evaluate them on elements of $\ImageModSquares$.

For $W\subseteq A[2]^2$ maximal isotropic, define 
\begin{equation} \label{defi_Y_sum}
 \sY_{v_0,z_0}(W)=\frac{1}{\#\ImageModSquares^{4^{rg}}}\sum_{\substack{(h_\bfu )_{\bfu }\in \ImageModSquares^W }}
 \prod_{\bfu \in W}\Upsilon^{(v_0,z_0)}_\bfu(h_\bfu)\cdot
 \prod_{\{\bfu,\bfv\}} (-1)^{\alpha(h_\bfu)\alpha(h_\bfv)\Phi(\bfu,\bfv)}.
\end{equation}
\end{notation}

\begin{proposition} \label{prop:main_term_sums_analysis}
Let $W\subseteq A[2]^2$ be a maximal isotropic subspace. Then we have
\begin{equation*}
\cS^{W}_{v_0,z_0}(X)=  \frac{\sY_{v_0,z_0}(W)}{\#S}\cdot  \#\cF(X)+
O\left(X\log(X)^{\frac{1}{n_L}-1- \frac{1}{n_L \cdot 4^{rg+1}}}\right) .
\end{equation*}
\end{proposition}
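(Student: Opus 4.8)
The plan is to use quadratic reciprocity to \emph{decouple} the variables $D_\bfu$, reducing the sum \eqref{recalling_ST} to one that depends on each $D_\bfu$ only through $\omega(D_\bfu)$ and through its residue modulo $4N$, and then to evaluate the decoupled sum by the Selberg--Delange method for Frobenian multiplicative functions, exactly in the style of \Cref{lem:average_chi} and \Cref{lem:mertens:chebotarev_sum}. \textbf{Step 1 (decoupling).} Since $W$ is isotropic for the quadratic form $q$ of \Cref{quad_form_q_defi}, every pair of distinct $\bfu,\bfv\in W$ is unlinked, i.e.\ $\Phi(\bfu,\bfv)=\Phi(\bfv,\bfu)$. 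The integers $D_\bfu$ occurring in $\cD^W(X)$ are odd (being coprime to $N$), so quadratic reciprocity for the Jacobi symbol gives $\bigl(\tfrac{D_\bfu}{D_\bfv}\bigr)\bigl(\tfrac{D_\bfv}{D_\bfu}\bigr)=(-1)^{\alpha(D_\bfu)\alpha(D_\bfv)}$ with $\alpha$ as in \Cref{notat:the_combinatorial_sum_notat}, whence
\[
\prod_{\bfu\neq\bfv}\Bigl(\tfrac{D_\bfu}{D_\bfv}\Bigr)^{\Phi(\bfu,\bfv)}=\prod_{\{\bfu,\bfv\}}(-1)^{\alpha(D_\bfu)\alpha(D_\bfv)\Phi(\bfu,\bfv)},
\]
so that
\[
\cS^W_{v_0,z_0}(X)=\sum_{(D_\bfu)_\bfu\in\cD^W(X)}\ \prod_{\bfu\in W}2^{-2rg\omega(D_\bfu)}\Upsilon^{(v_0,z_0)}_\bfu(D_\bfu)\cdot\prod_{\{\bfu,\bfv\}}(-1)^{\alpha(D_\bfu)\alpha(D_\bfv)\Phi(\bfu,\bfv)}.
\]
The summand now factors over $\bfu\in W$ up to the factor $\prod_{\{\bfu,\bfv\}}(-1)^{(\cdots)}$, which, like $\Upsilon^{(v_0,z_0)}_\bfu$, depends on each $D_\bfu$ only through its class in $(\Z/4N\Z)^\times/(\Z/4N\Z)^{\times2}$, hence (as $D_\bfu\in\cF$) only through its class in $\ImageModSquares$.

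\textbf{Step 2 (residue classes and Selberg--Delange).} Detect the condition $\prod_\bfu D_\bfu\equiv c\pmod{4N}$ by the orthogonality relation $\charfunc_{\prod D_\bfu\equiv c}=\tfrac1{\varphi(4N)}\sum_{\psi\bmod 4N}\overline{\psi(c)}\prod_\bfu\psi(D_\bfu)$, absorb $\psi$ into each $\Upsilon^{(v_0,z_0)}_\bfu$, and partition the remaining sum over tuples $(D_\bfu)_{\bfu\in W}$ of pairwise coprime squarefree integers in $\cF$, each $>1$, according to the tuple of residues $(h_\bfu)_\bfu\in S^W$ modulo $4N$; since the character factors are constant on each such class this writes $\cS^W_{v_0,z_0}(X)$ as a bounded linear combination (with the character values as coefficients) of the sums $N(\bfh,X)=\sum 2^{-2rg\omega(D_\bfu)}$ over such tuples with $D_\bfu\equiv h_\bfu\pmod{4N}$ and $\prod_\bfu D_\bfu<X$. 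To evaluate $N(\bfh,X)$, detect the residue conditions by Dirichlet characters modulo $4N$, the condition $p\mid D_\bfu\Rightarrow p\in\cP_L$ by class functions on $\Gal(L/\Q)$, and the conditions $D_\bfu>1$ by inclusion--exclusion over subsets of $W$; writing $n=\prod_\bfu D_\bfu$ and using that $2^{-2rg}=(\#A[2])^{-1}$, this expresses $N(\bfh,X)$ as a bounded combination of sums $\sum_{n<X}\rho(n)$ over squarefree-supported Frobenian multiplicative functions $\rho$ of the type in \Cref{lem:average_chi}, whose value at a split prime $p$ is $(\#A[2])^{-1}$ times a sum of at most $\#W$ character values. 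By \cite{DanLilian}*{Lemma~2.8} each such sum is $c_\rho X(\log X)^{\theta_\rho-1}+O(X(\log X)^{\theta_\rho-2})$ with $\Re\theta_\rho\le\tfrac1{n_L}$, and $\Re\theta_\rho=\tfrac1{n_L}$ only for the configuration in which all characters involved are principal on $S$ and all $\#W$ indices are retained; for that configuration all the relevant character values at the $h_\bfu\in S$ equal $1$, so the leading term of $N(\bfh,X)$ is independent of $\bfh$, say $N(\bfh,X)=mX(\log X)^{1/n_L-1}+O(X(\log X)^{1/n_L-1-\eta})$ uniformly in $\bfh\in S^W$ for some $\eta>0$. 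The error exponent $\tfrac1{n_L}-1-\tfrac1{n_L4^{rg+1}}$ is then a convenient uniform choice, dominating the contribution of the next-largest $\theta_\rho$ (which costs $\tfrac1{n_L4^{rg}}\ge\tfrac1{n_L4^{rg+1}}$ in the exponent), the combinatorial correction bounded by \Cref{lem:mertens:chebotarev_sum}, and, via \Cref{eq:character_sum_chebotarev_lemma} and \Cref{prop:main_chebotarev_consequence}, the non-principal contributions.

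\textbf{Step 3 (identifying the constant).} Summing $\sum_{\bfh\in S^W}N(\bfh,X)=\sum_{(D_\bfu)}\prod_\bfu2^{-2rg\omega(D_\bfu)}$ over all pairwise coprime squarefree tuples in $\cF$ with each entry $>1$ and $\prod<X$: with $n=\prod_\bfu D_\bfu$ the inner count of factorisations is the number of surjections from the $\omega(n)$ primes of $n$ onto $W$, namely $(\#A[2])^{\omega(n)}$ up to an error $O\bigl((\#A[2])^{\omega(n)}(1-\#A[2]^{-1})^{\omega(n)}\bigr)$, so the main term collapses to $\#\cF(X)$ and the correction is absorbed by \Cref{lem:mertens:chebotarev_sum}; combined with \Cref{prop:asymptotic_cF} this gives $(\#S)^{\#W}mX(\log X)^{1/n_L-1}=\#\cF(X)+O(X(\log X)^{1/n_L-1-\eta})$. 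Feeding this back (the $\psi$-sum collapsing onto $\psi|_S=\triv$, where $\overline{\psi(c)}=1$), the main term of $\cS^W_{v_0,z_0}(X)$ becomes $(\#S)^{-\#W}\#\cF(X)$ times the average over residue configurations $(h_\bfu)_\bfu\in\ImageModSquares^W$ of $\prod_\bfu\Upsilon^{(v_0,z_0)}_\bfu(h_\bfu)\cdot\prod_{\{\bfu,\bfv\}}(-1)^{\alpha(h_\bfu)\alpha(h_\bfv)\Phi(\bfu,\bfv)}$; comparing with \eqref{defi_Y_sum} and recalling $\#W=\#A[2]=4^{rg}$, a short computation (using orthogonality for the characters $\Upsilon^{(v_0,z_0)}_\bfu$ on $\ImageModSquares$) identifies this with $\tfrac{\sY_{v_0,z_0}(W)}{\#S}\#\cF(X)$, as required. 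By \Cref{can_shift_support_rem} this suffices for all maximal unlinked $T$, not just isotropic subspaces.

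\textbf{Main obstacle.} The crux is Step 2: controlling all the auxiliary character sums so that only the ``principal'' configuration survives in the main term, and doing so with enough of a power-of-$\log$ saving to match \Cref{prop:error_term_F_K}. This requires the full analytic apparatus of \Cref{sec:analytic} --- the Selberg--Delange estimate for Frobenian multiplicative functions together with the effective-Chebotarev inputs of \Cref{eq:character_sum_chebotarev_lemma} and \Cref{prop:main_chebotarev_consequence}. A secondary point requiring care is the bookkeeping in Step 3: one must verify that the residue-configuration average produced there is genuinely the \emph{unconstrained} average $\sY_{v_0,z_0}(W)$ of \eqref{defi_Y_sum}, rather than the average over configurations with $\prod_\bfu h_\bfu$ pinned to the class of $c$; this is where the precise interplay between $S$, $\ImageModSquares$, the parity character $\alpha$, and the characters $\Upsilon^{(v_0,z_0)}_\bfu$ is used, and it is also what makes the main term independent of $c$.
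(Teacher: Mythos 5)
Your proposal is correct in substance and follows the same skeleton as the paper's proof (reciprocity-based decoupling over the isotropic set $W$, conditioning on residues modulo $4N$, equidistribution of the weighted counts, then removal of the congruence constraint), but it differs in two respects worth recording. First, the equidistribution input --- that the weighted count of tuples with prescribed residues $(h_\bfu)_\bfu$ equals $\#\cF(X)/\#S^{4^{rg}}$ up to an acceptable error --- is proved in the paper (Lemma \ref{eq:equidistribution_input}) by expanding the indicator functions in Dirichlet characters and re-running the large-variable Chebotarev machinery of \Cref{prop:error_term_F_K} via \Cref{prop:main_chebotarev_consequence}; you instead collapse the multi-variable sum onto $n=\prod_\bfu D_\bfu$ (pairwise coprimality being automatic for squarefree $n$), observe that the resulting weight is a Frobenian multiplicative function whose mean value is at most $1/n_L$ with equality exactly in the all-principal, all-indices-retained configuration, and apply the Selberg--Delange-type estimate as in \Cref{lem:average_chi}, identifying the leading constant by comparison with $\#\cF(X)$ rather than by computing an Euler product. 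This is a legitimate and somewhat more self-contained alternative, and the saving of at least $\tfrac{1}{n_L 4^{rg}}$ in the exponent for every sub-principal configuration fits comfortably inside the stated error term; likewise your treatment of the constraints $D_\bfu>1$ by inclusion--exclusion plus \Cref{lem:mertens:chebotarev_sum} replaces the paper's passage to the sums $\cS^{\subseteq W}_{v_0,z_0}(X)$ via \Cref{prop:error_term_F_K}, and both routes work.

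Second, the one step you leave as ``a short computation'' is where the paper does something specific, and your hint points slightly in the wrong direction: it is not orthogonality of the characters $\Upsilon^{(v_0,z_0)}_\bfu$ that removes the constraint $\prod_\bfu h_\bfu\equiv c$, but the fact that the summand is independent of the component $h_\bfu$ at $\bfu=0$. Indeed $\Upsilon^{(v_0,z_0)}_{0}$ is the trivial character (see \Cref{notation_including_phi}), and every unordered pair containing $0$ carries $\Phi$-exponent $q(\bfv)=0$ because $W$ is isotropic. Freeing $h_0$ is exactly what makes your $\psi$-sum collapse onto the characters trivial on $S$ (where $\overline{\psi(c)}=1$), converts the sum pinned by $\prod_\bfu h_\bfu= c$ into $\tfrac{1}{\#S}$ times the unconstrained sum over $S^W$, and is the reason the main term is independent of $c$; after replacing $S$ by $\ImageModSquares$ this yields $\tfrac{\sY_{v_0,z_0}(W)}{\#S}\,\#\cF(X)$ as required. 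With that point made explicit, your argument is complete; the closing remark about general maximal unlinked $T$ via \Cref{can_shift_support_rem} is correct but not needed for the proposition itself.
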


\begin{proof}  
Denote by $\cD^{\subseteq W}(X)$ the subset of elements $(D_\bfu)_{\bfu \in A[2]^2}$ of $\cD(X)$ for which $D_\bfu=1$ for all $\bfu \notin W$. 
Further, write $\cS^{\subseteq W}_{v_0,z_0}(X)=\sum_{T\subseteq W}\cS_{v_0,z_0}^{T}(X)$, and note that we have
\[\cS^{\subseteq W}_{v_0,z_0}(X) =\sum_{(D_\bfu)_\bfu \in \cD^{\subseteq W}(X)}\prod_{\bfu}2^{-2rg\omega(D_\bfu)}\Upsilon^{(v_0,z_0)}_{\bfu}(D_\bfu)\cdot
  \prod_{\bfu \neq \bfv }\left(\frac{D_\bfu }{D_{\bfv }}\right)^{\Phi(\bfu ,\bfv )}.\]
It suffices to prove the result for $\cS^{\subseteq W}_{v_0,z_0}(X)$ in place of $\cS^{W}_{v_0,z_0}(X)$, since by \Cref{prop:error_term_F_K} these two quantities agree up to an acceptable error. 

Projecting onto $(D_\bfu)_{\bfu\in W}$,
we identify $\cD^{\subseteq W}(X)$ with the set of tuples $(D_\bfu )_{\bfu \in W}$
of pairwise coprime elements of $\cF$ such that $\prod_{\bfu \in W}D_\bfu \leq X$
and $\prod_{\bfu \in W}D_\bfu \equiv c \pmod{4N}$. Since $W$ is unlinked, given
$\bfu \neq \bfv$ in $W$ we have $\Phi(\bfu, \bfv)=\Phi(\bfv,\bfu)$. In particular,
by quadratic reciprocity for Jacobi symbols we have 
\[
\left(\frac{D_\bfu}{D_\bfv}\right)^{\Phi(\bfu,\bfv)}\left(\frac{D_\bfv}{D_\bfu }\right)^{\Phi(\bfv,\bfu)}=
  (-1)^{ \Phi(\bfu ,\bfw )\cdot \frac{D_\bfu-1}{2} \cdot \frac{D_\bfv -1}{2}}.
\]

 Conditioning on the value of each $D_\bfu$
modulo $4N$, and recalling that each $\Upsilon_\bfu^{(v_0,z_0)}$ is a Dirichlet character modulo $4N$, we find 
\begin{equation} \label{eq:splitting_out_main_term}
\cS^{\subseteq W}_{v_0,z_0}(X) 
 = \!\! \sum_{\substack{(h_\bfu )_{\bfu }\in S^W\\\prod_{\bfu}h_\bfu=c}}
  \prod_{\bfu \in W}\Upsilon^{(v_0,z_0)}_\bfu (h_\bfu)\cdot
  \prod_{\{\bfu,\bfv\}} (-1)^{\alpha(h_\bfu)\alpha(h_\bfv)\Phi(\bfu,\bfv)}\!\!\!\!\!
  \sum_{\substack{(D_\bfu)_{\bfu}\in \cD^{W}(X)\\ D_\bfu \equiv h_\bfu\hspace{-0.8em} \pmod{4N}}}
  \prod_{\bfu \in W}4^{-rg\omega(D_\bfu)},
\end{equation}
where the second product is taken over all unordered pairs $\{\bfu ,\bfv \}$ of (distinct) elements of $W$. 

\Cref{eq:equidistribution_input} below, whose
proof is similar to the treatment of the $3$rd family in the proof of \Cref{prop:error_term_F_K},
gives asymptotics for the inner sum in \eqref{eq:splitting_out_main_term}.  Specifically it shows that, up to an acceptable error, its value is  $\#\cF(X) \cdot |S|^{-4^{rg}}$. Thus, again up to an acceptable error, we have 
\begin{equation}\label{just_c_to_remove}
\cS^{\subseteq W}_{v_0,z_0}(X) =\frac{\#\cF(X)}{\#S^{4^{rg}}}\cdot \!\! \sum_{\substack{(h_\bfu )_{\bfu }\in S^W\\\prod_{\bfu}h_\bfu=c}}
  \prod_{\bfu \in W}\Upsilon^{(v_0,z_0)}_\bfu (h_\bfu)\cdot
  \prod_{\{\bfu,\bfv\}} (-1)^{\alpha(h_\bfu)\alpha(h_\bfv)\Phi(\bfu,\bfv) }.
  \end{equation}
Now, consulting \Cref{notation_including_phi} we see that, when $\bfu=0$, the character $\Upsilon_\bfu^{(v_0,z_0)}$ is trivial. Further, if either $\bfu$ or $\bfv$ is zero, then  $\Phi(\bfu,\bfv)=0$. In particular, given any tuple $(h_\bfu)_\bfu \in S^{W}$, we can change the value  at $\bfu=0$ by an arbitrary element of $S$, without changing the value of 
 \[\prod_{\bfu \in W}\Upsilon^{(v_0,z_0)}_\bfu (h_\bfu)\cdot
  \prod_{\{\bfu,\bfv\}} (-1)^{\alpha(h_\bfu)\alpha(h_\bfv)\Phi(\bfu,\bfv)}.\]
  Consequently, the sum on the right hand side of \eqref{just_c_to_remove} is indepenent of the choice of $c\in S$. In particular, we may remove the constraint $\prod_{\bfu}h_\bfu=c$ at the cost of dividing by $|S|$.  Finally, since $\alpha$, and the functions $\Upsilon^{(v_0,z_0)}_\bfu$, $\bfu \in U$, depend only on $h\in S$ via the image of $h$ in $ \ImageModSquares$, we can replace $S$ by $\ImageModSquares$ in the expression 
  \[\frac{1}{\#S^{4^{rg}}}\sum_{ (h_\bfu )_{\bfu }\in S^W  }
  \prod_{\bfu \in W}\Upsilon^{(v_0,z_0)}_\bfu (h_\bfu)\cdot
  \prod_{\{\bfu,\bfv\}} (-1)^{\alpha(h_\bfu)\alpha(h_\bfv)\Phi(\bfu,\bfv) }\]
  without changing its value.
\end{proof}

 \begin{lemma} \label{eq:equidistribution_input}
Let $W\subseteq A[2]^2$ be a maximal isotropic subspace. Then
for all elements $(h_\bfu )_{\bfu \in W}\in S^{W}$, we have 
 \[
 \sum_{\substack{(D_\bfu )_{\bfu }\in \cD^{W}(X)\\ D_\bfu \equiv h_\bfu \hspace{-0.8em}\pmod{4N}}}
 \prod_{\bfu \in W}4^{-rg\omega(D_\bfu)}=\frac{\#\cF(X)}{\#S^{4^{rg}}}+O\left(X\log(X)^{\frac{1}{n_L}-1- \frac{1}{n_L \cdot 4^{rg+1}}}\right).\]
 \end{lemma}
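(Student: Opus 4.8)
The plan is to prove \Cref{eq:equidistribution_input} by Dirichlet-character orthogonality combined with the Landau–Selberg–Delange-type asymptotics for Frobenian multiplicative functions already used in \Cref{sec:analytic}. First, recall from \Cref{lem:geom_of_unlinked_indices} that $\#W=4^{rg}$, and that, projecting onto the coordinates indexed by $W$ exactly as in the proof of \Cref{prop:main_term_sums_analysis}, the tuples in $\cD^W(X)$ with $D_\bfu\equiv h_\bfu\pmod{4N}$ for all $\bfu$ are in bijection with the tuples $(D_\bfu)_{\bfu\in W}$ of pairwise coprime elements of $\cF$ with each $D_\bfu>1$, $\prod_{\bfu}D_\bfu\le X$, and $D_\bfu\equiv h_\bfu\pmod{4N}$ (so $\prod_\bfu h_\bfu$ equals the residue $c$ built into $\cD(X)$, which is the case arising in the application, and the global congruence $\prod_\bfu D_\bfu\equiv c$ then imposes nothing beyond the individual ones). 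Since every element of $\cF$ reduces modulo $4N$ into $S$ and $h_\bfu\in S$, orthogonality of the characters of $S$ lets me write the indicator of $D_\bfu\equiv h_\bfu\pmod{4N}$ as $\tfrac1{\#S}\sum_{\chi\in\widehat{S}}\overline{\chi}(h_\bfu)\chi(D_\bfu)$. Inserting this for each $\bfu\in W$, grouping by $n=\prod_\bfu D_\bfu$ (there being $(4^{rg})^{\omega(n)}$ ways to distribute the prime factors of $n$ among the $\#W=4^{rg}$ coordinates), and using \Cref{lem:mertens:chebotarev_sum} to absorb the inclusion–exclusion that enforces each $D_\bfu>1$ (which costs at most $\sum_{n\in\cF(X)}\lambda^{\omega(n)}\ll X\log(X)^{\lambda/n_L-1}$ with $\lambda=1-4^{-rg}$), the left-hand side of \Cref{eq:equidistribution_input} equals
\[
\frac1{\#S^{4^{rg}}}\sum_{(\chi_\bfu)\in\widehat{S}^{W}}\Big(\prod_{\bfu\in W}\overline{\chi}_{\bfu}(h_\bfu)\Big)\sum_{n\in\cF(X)}4^{-rg\omega(n)}\prod_{p\mid n}\Big(\sum_{\bfu\in W}\chi_\bfu(p)\Big)\;+\;O\!\left(X\log(X)^{\frac1{n_L}-1-\frac1{n_L4^{rg}}}\right).
\]

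The all-trivial tuple $(\chi_\bfu)=(\triv)_\bfu$ contributes $\tfrac1{\#S^{4^{rg}}}\sum_{n\in\cF(X)}4^{-rg\omega(n)}(4^{rg})^{\omega(n)}=\tfrac{\#\cF(X)}{\#S^{4^{rg}}}$, the advertised main term. For every other tuple I would consider the function $g(n)=\charfunc_{\cF}(n)\,4^{-rg\omega(n)}\prod_{p\mid n}\big(\sum_{\bfu\in W}\chi_\bfu(p)\big)$: it is multiplicative, supported on squarefree integers, bounded by $1$ in absolute value, and for $p$ unramified in $\widetilde L:=L(\zeta_{4N})$ its value depends only on $\Frob_p\in\Gal(\widetilde L/\Q)$, so $g$ is a Frobenian multiplicative function in the sense of \cite{DanLilian}*{Definition 2.7} with Galois field $\widetilde L$. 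By \cite{DanLilian}*{Lemma 2.8}, $\sum_{n\le X}g(n)\ll X(\log X)^{m(g)-1}$ with $m(g)=[\widetilde L:\Q]^{-1}\sum_{\sigma\in\Gal(\widetilde L/\Q)}g(\sigma)$. Since $g(\sigma)$ vanishes unless $\sigma$ restricts trivially to $L$, and the restriction $\Gal(\widetilde L/L)\to\Gal(\Q(\zeta_{4N})/\Q)\cong(\Z/4N\Z)^\times$ is an isomorphism onto $S$, a direct computation gives $m(g)=4^{-rg}n_L^{-1}\cdot\#\{\bfu\in W:\chi_\bfu|_S=\triv\}$. As the tuple is not the all-trivial one, this cardinality is $\le 4^{rg}-1$, so $m(g)\le \tfrac1{n_L}-\tfrac1{n_L4^{rg}}$; summing the resulting $O(X\log(X)^{\frac1{n_L}-1-\frac1{n_L4^{rg}}})$ bounds over the finitely many tuples (a number depending only on $L$ and $\Sigma$) completes the proof, the exponent $\tfrac1{n_L}-1-\tfrac1{n_L4^{rg}}$ being more than enough since $\tfrac1{n_L4^{rg}}>\tfrac1{n_L4^{rg+1}}$.

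As an alternative to invoking \cite{DanLilian}*{Lemma 2.8} for the off-diagonal tuples, one can argue directly as in the treatment of the third family in \Cref{prop:error_term_F_K}: restrict to $\omega(n)\le\Omega$ via \Cref{number of prime factors}, peel off the largest prime factor of $n$, separate the coordinates $\bfu$ with $\chi_\bfu$ trivial on $S$ (whose prime sums contribute the density, not cancellation) from the rest, and apply the effective Chebotarev bound of \Cref{eq:character_sum_chebotarev_lemma} (with $L(\zeta_{4N})$ in place of $L$ and $d=1$) to the remaining characters.

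The only genuine difficulty is this last step: recognising $g$ as Frobenian and, above all, correctly computing its mean value $m(g)$ — in particular verifying that having \emph{every} coordinate character trivial on $S$ is the unique way of attaining the maximal density $1/n_L$, so that each off-diagonal tuple genuinely saves a positive power of $\log X$ over the main term. The character expansion, the identity $\sum_{f\colon\{p\mid n\}\to W}\prod_{p\mid n}\chi_{f(p)}(p)=\prod_{p\mid n}\big(\sum_{\bfu\in W}\chi_\bfu(p)\big)$, and the bookkeeping around the constraint $D_\bfu>1$ are all routine.
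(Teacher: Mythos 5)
Your proof is correct, and its first half is the same as the paper's: expand each indicator $\charfunc_{h_\bfu}(D_\bfu)$ by orthogonality of characters of $S$ (the paper uses Dirichlet characters mod $4N$, noting the ones lifted from $(\Z/4N\Z)^\times/S$ are identically $1$ on $\cF$ — the same thing), and identify the all-trivial tuple as the main term $\#\cF(X)/\#S^{4^{rg}}$. Where you diverge is the treatment of the off-diagonal character tuples. The paper keeps the sum in its multi-variable form: it first restricts to tuples with every $D_\bfu\geq X^\ddagger$ by the fourth-family argument of \Cref{prop:error_term_F_K}, then isolates one coordinate $\bfu$ carrying a non-trivial character and bounds the resulting single-variable character sum via \Cref{prop:main_chebotarev_consequence} (this is your stated "alternative"). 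You instead collapse the entire tuple sum into $\sum_{n\in\cF(X)}g(n)$ with $g(p)=4^{-rg}\sum_{\bfu}\chi_\bfu(p)$ via the identity $\sum_{f}\prod_{p\mid n}\chi_{f(p)}(p)=\prod_{p\mid n}\sum_{\bfu}\chi_\bfu(p)$, and read off the saving from the mean $m(g)=4^{-rg}n_L^{-1}\#\{\bfu:\chi_\bfu=\triv\}\leq n_L^{-1}-n_L^{-1}4^{-rg}$. This is cleaner (pairwise coprimality is absorbed into the parametrisation by maps $f$, and no restriction to large variables is needed), at the cost of requiring \cite{DanLilian}*{Lemma 2.8} for Frobenian multiplicative functions whose value at $p$ is a general bounded class function of $\Frob_p$ rather than a single character value; the way the paper computes $m(\chi)$ as an inner product in \Cref{lem:average_chi} indicates the cited lemma is indeed stated at that level of generality, so this is a legitimate appeal. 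Two of your bookkeeping points are in fact more careful than the paper's own write-up: the explicit Mertens-type bound $\ll X\log(X)^{(1-4^{-rg})/n_L-1}$ for passing between "$D_\bfu>1$ for all $\bfu\in W$" and the unconstrained factorisation count, and the observation that the statement is only correct (and only ever invoked) for tuples with $\prod_\bfu h_\bfu=c$, since otherwise the left-hand side is empty while the main term is not.
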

 
\begin{proof}
Recall that $S$ is the image of $\Gal(\Q(\zeta_{4N})/F)$ in $(\Z/4N\Z)^\times$, where $F=L\cap \Q(\zeta_{4N})$.
Let $\chi_1,...,\chi_n$ be the irreducible characters of $(\Z/4N\Z)^\times$,
chosen so that the first $k=\frac{\varphi(4N)}{|S|}$ characters are precisely
those that lift from the quotient $(\Z/4N\Z)^\times/S$. Here $\varphi$ is
Euler's phi function. For any $h_\bfu \in S$ we can write the indicator
function of $h_\bfu $ as the sum
 \begin{equation*}
\charfunc_{h_\bfu }=\frac{1}{\varphi(4N)} \sum_{i=1}^k\chi_i+\frac{1}{\varphi(4N)} \sum_{i=k+1}^n\chi_i(h_\bfu ^{-1})\chi_i.
 \end{equation*}
Since each $p\in \cP_L$ splits completely in $L$ (so in particular in $F$),
we have, for every $D_{\bfu}\in \cF_L$ and for every $i\in \{1,\ldots,k\}$, that
$\chi_i(D_\bfu )=1$. Thus we have 
 \begin{equation} \label{eq:indicator_identity}
 \charfunc_{h_\bfu }(D_\bfu )=\frac{1}{\#S}+\frac{1}{\varphi(4N)}\sum_{i=k+1}^n\chi_i(h_\bfu ^{-1})\chi_i(D_\bfu ).
 \end{equation}
In the sum in the statement of the lemma, we drop the conditions $D_{\bfu }\equiv h_{\bfu }\pmod{4N}$
at the expense of multiplying the summand by $\prod_{\bfu \in W}\charfunc_{h_\bfu }(D_\bfu)$.
Expanding this product using \eqref{eq:indicator_identity} leaves the sum 
\[
\frac{1}{\#S^{4^{rg}}}\sum_{\substack{(D_\bfu )_{\bfu }\in D_\cP^{W}(X)}}
  \prod_{\bfu\in W}4^{-rg\omega(D_\bfu)}=\frac{1}{\#S^{4^{rg}}}\cdot \#\cF_L(X),
\]
along with $O(1)$-many sums of the shape
\begin{equation} \label{eq:the_error_term_sums_non_triv_chi}
 \textup{constant}\cdot\sum_{\substack{(D_\bfu )_{\bfu }\in D^{W}(X)}}
 \bigg(\prod_{u\in W}4^{-rg\omega(D_\bfu)}\bigg)\cdot\bigg(\prod_{\bfu \in U}\chi_{i_\bfu}(D_\bfu)\bigg)
\end{equation}
for some non-empty subsets $U\subseteq W$ and indices $k+1\leq i_\bfu \leq n$.
The argument used to treat the $4$th family in the proof of \Cref{prop:error_term_F_K}
shows that, at the expense of an acceptable error term, in
\eqref{eq:the_error_term_sums_non_triv_chi} we may restrict the range of summation
to elements $(D_\bfu)_{\bfu}$ such that for each $\bfu $ we have
$D_{\bfu}\geq X^\ddagger=\exp(\log(X)^\epsilon)$ for some sufficiently small $\epsilon>0$.
Let $\bfu $ be an index lying in $U$. Then we have 
\begin{equation}\label{eq:3rd_family_analogue_sum}
\left| \eqref{eq:the_error_term_sums_non_triv_chi} \right| \ll
  \sum_{\substack{(D_\bfv)_{\bfv \in W\setminus \{\bfu\}}\\
  \prod_{\bfv }D_\bfv \leq X}}\mu^2(m) 4^{-rg\omega(m)}
  \bigg| \sum_{\substack{D_\bfu \leq X/m\\ D_\bfu \in \cF}}
  \mu^2(mD_\bfu)4^{-rg\omega(D_\bfu)}\chi_{i_\bfu}(D_\bfu)\bigg|,
\end{equation}
where we have set $m=\prod_{\bfv\neq\bfu}D_\bfv$. Now from the
definition of $S$ as the subgroup of $\Gal(\Q(\zeta_{4N})/\Q)$
consisting of elements that act trivially on $L\cap \Q(\zeta_{4N})$, we see that
for each $k+1\leq i\leq n$  and every irreducible character $\chi$ of $\Gal(L/\Q)$,
the product $\chi_i\chi$ is a non-trivial irreducible character of
$\Gal(L(\zeta_{4N})/\Q).$ Thus applying \Cref{disgusting_sum_lemma_equation},
for all $B>0$ we have 
\begin{eqnarray*}
 \textup{RHS of \eqref{eq:3rd_family_analogue_sum} }& \ll_B & 
 \frac{X\log(X)}{\log(X^\ddagger)^B}\sum_{m\leq X}\frac{1}{m}\ll \frac{X\log(X)^2}{\log(X)^{\epsilon B} }.
\end{eqnarray*}  
Choosing $B$ sufficiently large in terms of $\epsilon$ we see that each of the
sums \eqref{eq:the_error_term_sums_non_triv_chi} go into the error term, giving the result.
\end{proof}

\subsection{A combinatorial formula for the moments}

We begin with a few simplifications of the summands in  \eqref{defi_Y_sum}. 

\begin{notation} \label{notat:image_mod_sq_interpretation}
The natural map $\mathcal{F}\to \oplus_{v\in \Sigma}H^1(\mathbb{Q}_v,\mathbb{F}_2)$ sending $m$ to $(\textup{res}_v\psi_m)_v$ factors through an injection $\ImageModSquares\hookrightarrow \oplus_{v\in \Sigma}H^1(\mathbb{Q}_v,\mathbb{F}_2)$. More intrinsically, this is the restriction to $\ImageModSquares$ of the natural map $\tfrac{(\Z/4N\Z)^{\times}}{(\Z/4N\Z)^{\times 2}} \cong \prod_{p\mid N}\mathbb{Z}_p^{\times}/\mathbb{Z}_p^{\times 2} \hookrightarrow \oplus_{v\in \Sigma}H^1(\mathbb{Q}_v,\mathbb{F}_2),$ where the second map takes a tuple to the corresponding tuple of local quadratic characters, along with the trivial character at the real place.
Via this map, we will view $\ImageModSquares$ as a subgroup of  $\bigoplus_{v\in \Sigma}H^1(\mathbb{Q}_v,\mathbb{F}_2)$, and $\ImageModSquares \otimes A[2]$ as a subgroup of $\oplus_{v\in \Sigma}H^1(\mathbb{Q}_v,A[2])$.  Given $u\in A[2]$, and $m\in \cF$ lifting $h\in \ImageModSquares$, this latter identification takes the class of $h\otimes u$ in $\ImageModSquares\otimes A[2]$ to the tuple $(\textup{res}_v \psi_mu)_{v\in \Sigma}$.

Let $W\subseteq A[2]^2$ be a maximal isotropic subspace for $q$.  Define 
\[\eta\colon\ImageModSquares^W\longrightarrow \ImageModSquares\otimes W\]
sending $(h_w)_w$ to $\sum_{w\in W}h_w\otimes w$.
Note that for $i=1,2$, the projection $\pi_i:A[2]^2\to A[2]$ induces a map $\ImageModSquares\otimes W\rightarrow \ImageModSquares\otimes A[2]$, which we denote $\pi_i$ also. Via the identification above, we view these maps as being valued in $\oplus_{v\in \Sigma}H^1(\mathbb{Q}_v,A[2])$.

The natural map restriction map $V_0=H^1_{\Sigma}(\mathbb{Q}_v,A[2])\to \oplus_{v\in \Sigma}H^1(\mathbb{Q}_v,A[2])$ is injective. In what follows, as with $\ImageModSquares \otimes A[2]$, we will typically identify $V_0$ with its image in  $\oplus_{v\in \Sigma}H^1(\mathbb{Q}_v,A[2])$.
\end{notation}

 \begin{lemma} \label{preliminary_evaluation_of_sum}
We have 
\begin{equation} \label{SWsum}
\sY_{v_0,z_0}(W)=\frac{1}{|\ImageModSquares|^{4^{rg}}}\sum_{t\in \ImageModSquares\otimes W}(-1)^{\left \langle v_0,\pi_2(t)\right \rangle_\Sigma+
\left \langle \pi_1(t),z_0\right \rangle_\Sigma}\sum_{\substack{(h_w)_w\in \ImageModSquares^W\\ \eta((h_w)_w)=t}}(-1)^{\Xi_W((h_w)_w)},
\end{equation}
where
\[\Xi_W((h_w)_w)=\sum_{\{w,w'\}\subseteq W}\alpha(h_w)\alpha(h_{w'})e(\pi_1(w),\pi_2(w'))+\sum_{w\in W}e(\pi_1(w),\gamma(\sigma_{h_w})\pi_2(w)).\]
 \end{lemma}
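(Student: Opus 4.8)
The plan is to start from the definition of $\sY_{v_0,z_0}(W)$ in \eqref{defi_Y_sum} and track carefully how each of the three kinds of factors in the summand ($\Upsilon^{(v_0,z_0)}_\bfu(h_\bfu)$, the Hilbert-symbol-type term $(-1)^{\alpha(h_\bfu)\alpha(h_\bfv)\Phi(\bfu,\bfv)}$, and the implicit combinatorial bookkeeping) depends on the tuple $(h_\bfu)_\bfu \in \ImageModSquares^W$. First I would unfold $\Upsilon^{(v_0,z_0)}_\bfu(h_\bfu)$ using \Cref{notation_including_phi}: it is a product of four pieces, namely $(-1)^{\langle \psi_{h_\bfu}\pi_1(\bfu),z_0\rangle_\Sigma}$, $(-1)^{\langle v_0,\psi_{h_\bfu}\pi_2(\bfu)\rangle_\Sigma}$, the Legendre-symbol factor $\left(\frac{-d_0}{h_\bfu}\right)^{e(\pi_1(\bfu),\pi_2(\bfu))} = \left(\frac{-d_0}{h_\bfu}\right)^{q(\bfu)}$, and $(-1)^{e(\pi_1(\bfu),\gamma^{(r)}(\sigma_{h_\bfu})\pi_2(\bfu))}$. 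Since $W$ is isotropic for $q$ we have $q(\bfu)=0$ for all $\bfu\in W$, so the Legendre-symbol factor is identically $1$ and drops out entirely — this is the key place where the hypothesis that $W$ is a maximal \emph{isotropic} subspace (rather than just a maximal unlinked set) is used, and it is what allows the $-d_0$ dependence to disappear.

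Next I would collect the remaining pieces. The two pairing factors involving $v_0$ and $z_0$, summed over $\bfu\in W$, combine by bilinearity: $\sum_{\bfu\in W}\langle \psi_{h_\bfu}\pi_1(\bfu),z_0\rangle_\Sigma = \langle \sum_\bfu \psi_{h_\bfu}\pi_1(\bfu), z_0\rangle_\Sigma$, and under the identification of Notation \ref{notat:image_mod_sq_interpretation} the element $\sum_{\bfu\in W}\psi_{h_\bfu}\pi_1(\bfu)\in \oplus_{v\in\Sigma}H^1(\Q_v,A[2])$ is precisely $\pi_1(t)$ where $t=\eta((h_\bfu)_\bfu) = \sum_\bfu h_\bfu\otimes \bfu$; similarly the $v_0$-terms assemble into $\langle v_0,\pi_2(t)\rangle_\Sigma$. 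This is exactly why the outer sum over $t\in \ImageModSquares\otimes W$ with the factor $(-1)^{\langle v_0,\pi_2(t)\rangle_\Sigma + \langle \pi_1(t),z_0\rangle_\Sigma}$ appears, with the inner sum ranging over the fibre $\eta^{-1}(t)$. What remains inside the fibre sum is the product over $\bfu\in W$ of $(-1)^{e(\pi_1(\bfu),\gamma^{(r)}(\sigma_{h_\bfu})\pi_2(\bfu))}$ times the product over unordered pairs of $(-1)^{\alpha(h_\bfu)\alpha(h_\bfv)\Phi(\bfu,\bfv)}$. For the $\Phi$-term, since both $\bfu,\bfv$ lie in the unlinked set $W$ we have $\Phi(\bfu,\bfv)=\Phi(\bfv,\bfu)$, and $\Phi(\bfu,\bfv) = e(\pi_1(\bfu)+\pi_1(\bfv),\pi_2(\bfv))$; but in the symmetric sum over pairs one can check $\alpha(h_\bfu)\alpha(h_\bfv)\Phi(\bfu,\bfv)$ contributes the same as $\alpha(h_\bfu)\alpha(h_\bfv)e(\pi_1(\bfu),\pi_2(\bfv))$ after symmetrisation (the extra $e(\pi_1(\bfv),\pi_2(\bfv))=q(\bfv)=0$ vanishes, and the cross term $e(\pi_1(\bfu),\pi_2(\bfv))+e(\pi_1(\bfv),\pi_2(\bfu)) = \Phi(\bfu,\bfv)+\Phi(\bfv,\bfu)+q(\bfu+\bfv)$-type identities from \Cref{quad_form_q_defi} reduce things correctly). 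So the pair-sum becomes $\sum_{\{w,w'\}\subseteq W}\alpha(h_w)\alpha(h_{w'})e(\pi_1(w),\pi_2(w'))$, which is exactly the first summand of $\Xi_W$. The $\gamma$-term gives the second summand of $\Xi_W$ directly (using $\gamma$ and $\gamma^{(r)}$ agree on the relevant tensor factor, or just writing $\gamma$ for $\gamma^{(r)}$).

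Assembling all of this yields exactly \eqref{SWsum}. The main obstacle I anticipate is purely bookkeeping rather than conceptual: namely verifying that replacing $\Phi(\bfu,\bfv)$ by $e(\pi_1(\bfu),\pi_2(\bfv))$ in the quadratic-character exponent is legitimate once one restricts to $W$ unlinked and accounts correctly for the unordered-versus-ordered pair conventions and for the diagonal ($\bfu=\bfv$) terms, together with the sign identities $\Phi(\bfu,\bfv)+\Phi(\bfv,\bfu)=q(\bfu+\bfv)$ from \Cref{quad_form_q_defi}. A secondary point requiring care is confirming, via Notation \ref{notat:image_mod_sq_interpretation}, that $\sum_{\bfu\in W}\psi_{h_\bfu}\pi_i(\bfu)$ really does correspond under the stated identification to $\pi_i(\eta((h_\bfu)_\bfu))$ — this is essentially the statement that $\eta$ is compatible with the tensor-product description of $\ImageModSquares\otimes A[2]\subseteq \oplus_{v\in\Sigma}H^1(\Q_v,A[2])$, which follows immediately from the definitions but should be spelled out. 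Everything else is a direct substitution and regrouping.
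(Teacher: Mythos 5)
Your proposal is correct and follows essentially the same route as the paper: unfold $\Upsilon_\bfu^{(v_0,z_0)}$, use isotropy of $W$ to kill the $\left(\frac{-d_0}{h_\bfu}\right)^{e(\pi_1(\bfu),\pi_2(\bfu))}$ factors and to replace $\Phi(\bfu,\bfv)$ by $e(\pi_1(\bfu),\pi_2(\bfv))$, then observe by bilinearity that the $v_0$- and $z_0$-terms depend only on $\eta((h_w)_w)$ and group the sum over fibres of $\eta$. The only remark is that your symmetrisation discussion for the $\Phi$-term is unnecessarily involved: the identity $\Phi(\bfu,\bfv)=e(\pi_1(\bfu),\pi_2(\bfv))+q(\bfv)=e(\pi_1(\bfu),\pi_2(\bfv))$ already does the job for each unordered pair.
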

 
 \begin{proof}
Since $W$ is isotropic for $q$, we have $e_2(\pi_1(\bfu),\pi_2(\bfu))=0$
for all $\bfu\in W$. 
 It now follows  straight from the definition of $\sY_{v_0,z_0}(W)$ that 
\begin{align*}
  \sY_{v_0,z_0}(W)=&\frac{1}{|\ImageModSquares|^{4^{rg}}}\sum_{(h_w)\in \ImageModSquares^W }(-1)^{\left \langle v_0,\sum_{w\in W}h_w\pi_2(w)\right \rangle_\Sigma+\left \langle \sum_{w\in W}h_w\pi_1(w),z_0\right \rangle_\Sigma+\Xi_W((h_w)_w)}.
\end{align*}
The result follows from noting that the first two terms in the exponent depend only on the image of the tuple $(h_w)_w$ under $\eta$. 
\end{proof}

\begin{definition}  \label{contrib_main_W_sum}
Define
\begin{equation*}
\sY(W)=\frac{1}{|Z_0|}\sum_{(v_0,z_0)\in V_0\times Z_0}(-1)^{\left \langle v_0,z_0\right \rangle_\Sigma}\sY_{v_0,z_0}(W). 
\end{equation*} 
\end{definition}

 \begin{theorem} \label{thm:main_selmer_asymptotic_moment_dist}
 Let $L/\Q$ be a finite Galois extension, let $c\in S\subseteq \left(\Z/4N\Z\right)^{\times}$ and let $d_0$ be a (positive or negative)
 divisor of $N$.  Then we have 
\begin{eqnarray*}
\lim_{X\rightarrow \infty} \frac{1}{\# \cF_{c}(X)}\sum_{ d\in d_0\cF_{c}(X) }
 \frac{ \#\Sel_2(E_{d}/\Q)^r}{\#E[2]^r}=   \sum_{W\subseteq A[2]^2\textup{ max. iso.}}\sY(W).
\end{eqnarray*}
 \end{theorem}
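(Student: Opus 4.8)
The plan is to assemble \Cref{thm:main_selmer_asymptotic_moment_dist} from the pieces developed in \Cref{sec:moments_setup}--\Cref{sec:first_sum_simplification}, combining the exact formula of \Cref{prop:selmer_moment_formulae}, the error bound of \Cref{prop:error_term_F_K}, the main-term asymptotics of \Cref{prop:main_term_sums_analysis}, the shift-invariance of \Cref{can_shift_support_rem}, and the density count of \Cref{prop:asymptotic_cF}. First I would start from \Cref{prop:selmer_moment_formulae}, which expresses $\sum_{d\in d_0\cF_c(X)}\#\Sel_2(E_d/\Q)^r$ as $\tfrac{1}{\#Z_0}\sum_{(v_0,z_0)\in V_0\times Z_0}(-1)^{\langle v_0,z_0\rangle_\Sigma}\sum_{T\subseteq A[2]^2}\cS^T_{v_0,z_0}(X)$, and split the inner sum over $T$ into the maximal unlinked $T$ and the rest. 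By \Cref{prop:error_term_F_K} each $\cS^T_{v_0,z_0}(X)$ with $T$ not maximal unlinked is $O\bigl(X\log(X)^{\frac{1}{n_L}-1-\frac{1}{n_L\cdot 4^{rg+1}}}\bigr)$, and since there are only finitely many subsets $T$ (a number depending only on $r$ and $g$, not on $X$), all these contributions are absorbed into a single error term of the same order.

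Next I would handle the maximal unlinked $T$. By \Cref{lem:geom_of_unlinked_indices} every maximal unlinked $T$ has the form $\bfc+W$ for a maximal isotropic subspace $W\subseteq A[2]^2$ and some $\bfc\in A[2]^2$; moreover \Cref{can_shift_support_rem} tells us that $\cS^{\bfc+W}_{v_0,z_0}(X)=\cS^W_{\widetilde v_0,\widetilde z_0}(X)$ for appropriately shifted $(\widetilde v_0,\widetilde z_0)\in V_0\times Z_0$, where the shift by $\bfc$ is realised by adding images of $2$-torsion under $\delta^{(r)}_{d_0}$ and $\delta^{(r)}_{d,v}$, which leaves $(-1)^{\langle v_0,z_0\rangle_\Sigma}$ unchanged (the relevant classes lie in the kernels of the Tate pairing). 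Hence for each fixed $W$, summing $(-1)^{\langle v_0,z_0\rangle_\Sigma}\cS^{\bfc+W}_{v_0,z_0}(X)$ over all $(v_0,z_0)\in V_0\times Z_0$ reindexes, via $(v_0,z_0)\mapsto(\widetilde v_0,\widetilde z_0)$, to the same sum with support exactly $W$; in other words the $\#A[2]/\#W = 1$... more precisely the $4^{rg}$ cosets $\bfc+W$ of a given $W$ all contribute equally after reindexing the $(v_0,z_0)$-sum, so we may replace the sum over all maximal unlinked $T$ by $\bigl(\#A[2]\big/\#W\bigr)$ copies — but $\#W=4^{rg}=\#A[2]$ by \Cref{lem:geom_of_unlinked_indices}, so in fact each maximal isotropic $W$ corresponds to a single coset orbit and we simply sum over maximal isotropic $W$. (I would write this coset-bookkeeping step carefully, since it is the one place where an off-by-a-factor error could creep in.)

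Then I would invoke \Cref{prop:main_term_sums_analysis}, which gives $\cS^W_{v_0,z_0}(X)=\tfrac{\sY_{v_0,z_0}(W)}{\#S}\cdot\#\cF(X)+O\bigl(X\log(X)^{\frac{1}{n_L}-1-\frac{1}{n_L\cdot 4^{rg+1}}}\bigr)$ for maximal isotropic $W$. Substituting this, using $\#\cF_c(X)=\tfrac{1}{\#S}\#\cF(X)+O(X/\log X)$ from \Cref{prop:asymptotic_cF} (with $\#U=1$), dividing through by $\#\cF_c(X)$, and recalling $\#E[2]^r=\#A[2]=4^{rg}$ together with $\#Z=2^{2rg\omega(D)}$, the main term becomes $\sum_{W\text{ max. iso.}}\tfrac{1}{\#Z_0}\sum_{(v_0,z_0)\in V_0\times Z_0}(-1)^{\langle v_0,z_0\rangle_\Sigma}\sY_{v_0,z_0}(W)$, which is exactly $\sum_{W}\sY(W)$ by \Cref{contrib_main_W_sum}. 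The error term is $O\bigl(\log(X)^{-\frac{1}{n_L\cdot 4^{rg+1}}}\bigr)\to 0$ after dividing by $\#\cF_c(X)\asymp X\log(X)^{\frac1{n_L}-1}$, so the limit exists and equals the claimed value.

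The main obstacle I anticipate is the coset-reindexing step: one must verify cleanly that the family of shifts $(v_0,z_0)\mapsto(\widetilde v_0,\widetilde z_0)$ coming from \Cref{support_lemma} genuinely permutes $V_0\times Z_0$ (i.e. that adding $\delta^{(r)}_{d_0}(\pi_1(\bfw))$ and $\sum_{v\in\Sigma}\delta^{(r)}_{d,v}(\pi_2(\bfw))$ lands back in $V_0\times Z_0$ and is a bijection), that the sign $(-1)^{\langle v_0,z_0\rangle_\Sigma}$ is preserved under it (because the shifted classes pair trivially, by \Cref{pairing_kernel_selmer} and reciprocity), and that consequently the full sum over maximal unlinked $T$ collapses to the sum over maximal isotropic $W$ with the correct multiplicity. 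Everything else is bookkeeping: matching normalisation constants and checking that the finitely many discarded sums are genuinely $o(\#\cF_c(X))$.
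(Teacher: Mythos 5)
Your overall route is the same as the paper's: start from \Cref{prop:selmer_moment_formulae}, discard the supports that are not maximal unlinked via \Cref{prop:error_term_F_K}, convert cosets of maximal isotropic subspaces into the subspaces themselves using \Cref{lem:geom_of_unlinked_indices} and \Cref{can_shift_support_rem}, evaluate the surviving sums with \Cref{prop:main_term_sums_analysis}, and normalise with \Cref{prop:asymptotic_cF}. However, the coset-bookkeeping step -- precisely the one you flag as delicate -- is carried out incorrectly, and in a way that changes the final constant. A maximal isotropic subspace $W$ has size $\#A[2]=4^{rg}$, so its number of cosets in the ambient space $A[2]^2$ is $\#A[2]^2/\#W=\#A[2]$, not $\#A[2]/\#W=1$: you first (correctly) say there are $4^{rg}$ cosets of a given $W$, but then conclude that each maximal isotropic $W$ "corresponds to a single coset orbit and we simply sum over maximal isotropic $W$". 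Since each of the $\#A[2]$ cosets $\bfc+W$ contributes the same amount after reindexing the $(v_0,z_0)$-sum, the sum over maximal unlinked $T$ equals $\#A[2]=\#E[2]^r$ times the sum over maximal isotropic $W$, and this factor is exactly what cancels the division by $\#E[2]^r$ on the left-hand side of the theorem. As written, your argument establishes a quantity smaller by a factor of $\#E[2]^r$ than the claimed limit, and merely "recalling $\#E[2]^r=\#A[2]$" in the final assembly does not reintroduce the missing factor. The repair is to use the index $[A[2]^2:W]=\#A[2]$, which is how the paper phrases it: replacing "maximal unlinked" by "maximal isotropic" costs a factor $\#A[2]=\#E[2]^r$.

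A secondary caution concerns your justification that the sign $(-1)^{\left\langle v_0,z_0\right\rangle_\Sigma}$ is untouched by the shift of \Cref{can_shift_support_rem}. What \Cref{pairing_kernel_selmer} and reciprocity actually give is invariance of the full pairing $\left\langle v',z'\right\rangle$ over $\Sigma\cup\cP(d)$ under adding the global Selmer class $\delta_d^{(r)}(\pi_1(\bfw))$ (the twist by $d$, not by $d_0$) to $v'$ and $\sum_{v\in\Sigma\cup\cP(d)}\delta_{d,v}^{(r)}(\pi_2(\bfw))$ to $z'$; the $\Sigma$-component $\delta_{d_0}^{(r)}(\pi_1(\bfw))$ on its own need not pair trivially with $Z_0$. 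It is this invariance, together with the support bijection of \Cref{support_lemma}, that makes the $(v_0,z_0)$-sums over a coset $\bfc+W$ agree with those over $W$ itself. Once these two points are fixed, your assembly of the error terms, \Cref{prop:main_term_sums_analysis}, \Cref{prop:asymptotic_cF} and \Cref{contrib_main_W_sum} coincides with the paper's proof.
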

 
 \begin{proof}
 Combining  Propositions \ref{prop:selmer_moment_formulae} and \ref{prop:error_term_F_K} with Proposition \ref{prop:asymptotic_cF} (the latter giving asymptotics for $\# \cF_{c}(X)$), gives 
 \begin{eqnarray*}
 \lefteqn{\frac{1}{\#  \cF_{c}(X)}\sum_{ d\in  d_0\cF_{c}(X)}
  \#\Sel_2(E_{d}/\Q)^r =} \\
& & \frac{1}{\#  \cF_{c}(X)}\cdot \frac{1}{\left|Z_0\right|}~\sum_{(v_0,z_0)\in V_0\times Z_0} (-1)^{\left \langle v_0,z_0\right \rangle_\Sigma}\sum_{\substack{T\subseteq A[2]^2\\T\textup{ max. unlinked}}} \cS^T_{v_0,z_0}(X) \quad +o(1).
\end{eqnarray*}
 By Lemma \ref{lem:geom_of_unlinked_indices}, the maximal unlinked subsets of $A[2]^2$ are precisely those of the form $W+\bfc$, where $\bfc \in A[2]^2$ and $W$ is a maximal isotropic subspace for the quadratic form $q$. Note that each maximal isotropic subspace for $q$, having size $\#A[2]$, has $\#A[2]$ distinct cosets in $A[2]^2$. By Remark \ref{can_shift_support_rem} we may thus replace the condition that $T$ be maximal unlinked in the above displayed formula with the condition that $T$ be maximal isotropic for $q$, at the expense of multiplying the right hand side by $\#A[2]=\#E[2]^r$. Now Proposition \ref{prop:main_term_sums_analysis} and a further application of Proposition \ref{prop:asymptotic_cF} gives the result.
 \end{proof}
 
 \begin{remark} \label{moments_bounded_remark}
Theorem \ref{thm:main_selmer_asymptotic_moment_dist} shows that 
 the limit 
\[
 \lim_{X\rightarrow \infty} \frac{1}{\#  \cF_{c}(X)}\sum_{ d\in  d_0\cF_{c}(X)}
\#\Sel_2(E_{d}/\Q)^r
\]
exists and is finite.
 \end{remark}
 
 We end this section by giving a useful variant of Theorem \ref{thm:main_selmer_asymptotic_moment_dist}. 
 Note that we have an isomorphism 
$\Sel_2(E_d/\Q)^r\cong\Hom(\F_2^r,\Sel_2(E_d/\Q)),$
the map from right to left sending a homomorphism $\phi$ to $(\phi(f_i))_{i=1}^r$, where $f_i$ denotes the $i$-th standard basis vector of $\F_2^r$. In particular, we can replace the quantity  $\#\Sel_2(E_{d_0D}/\Q)^r$ in Proposition \ref{prop:selmer_moment_formulae} with $\#\Hom(\F_2^r,\Sel_2(E_d/\Q))$. Using Lemma \ref{lem:is_injection_criteria}, we can prove a version of Theorem \ref{thm:main_selmer_asymptotic_moment_dist} where we instead count  (a slight variant of) injective homomorphisms into the Selmer group. The resulting sums arising this way will turn out to be easier to work with. Before giving this result however, we need to define a certain `systematic subspace' of the Selmer group. 

\subsection{The systematic subspace}\label{sec:syst_subspace} 

\begin{definition}\label{def:syst_subspace}
Let $L/\Q$, $c$ and $d_0$ be as above. Recall that the pair $(d_0,c)$ determines a unique class $b=(b_v)_v\in \idelesmodsquaresset{\Q}{\Sigma}$. We define the \emph{systematic subspace}
$\cS_{b}$ to be the subgroup of $H^1(L/\Q,E[2])$ consisting of elements
that  satisfy the Selmer conditions associated to $b_v$ at all places $v\in \Sigma$. That is, we set  
\[\cS_{b}= \ker\Big(H^1\big(L/\Q,E[2]\big)\longrightarrow  \bigoplus_{v\in \Sigma}  H^1(\Q_v,E[2])/\sS_{b_v,v} \Big).\]
\end{definition}

\begin{remark}
We have $\cS_{b}\subseteq \Sel_2(E_{d}/\Q)$ for every $d\in d_0\cF_c$. Indeed, the Selmer conditions at $v\in \Sigma$ are satisfied by assumption, while the condition that all elements of $\cS_{b}$ have trivial restriction to $L$ ensures that, for all primes $p\notin \Sigma$ with $p\mid d$, we have  $\res_p(\cS_{b})=0$. 

Note that any element of $H^1(L/\mathbb{Q},E[2])$ is trivial on restriction to the maximal multi-quadratic subextension of $L/\mathbb{Q}$. In particular, since $\Sigma$ contains all primes that ramify in $L$, we see that $\cS_b$ is naturally a subspace of $H^1(K_\Sigma\cap L/\mathbb{Q},E[2])$. Thus, $\cS_b^r$ is naturally a subspace of $V_0=H^1(K_\Sigma/\mathbb{Q},A[2])$.
\end{remark}

 Recall that, per Notation \ref{notat:image_mod_sq_interpretation}, we view both $\ImageModSquares\otimes A[2]$ and $V_0$ as  subgroups of $\oplus_{v\in \Sigma}H^1(\mathbb{Q}_v,A[2])$.  

\begin{lemma} \label{lem:alternative_syst_sub_desc}
We have 
\[\cS_{b}^r=V_0\cap (\ImageModSquares\otimes A[2])^\perp \cap Z_0,\]
where the orthogonal complement is taken with respect to the local Tate pairing $\left \langle~,~\right \rangle_\Sigma$.
\end{lemma}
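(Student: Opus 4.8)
The plan is to unwind all three conditions on the right-hand side into concrete cocycle-theoretic statements and show that together they cut out exactly the Selmer conditions defining $\cS_b^r$. First I would recall that $\cS_b^r \subseteq V_0 = H^1(K_\Sigma/\Q, A[2])$ by the remark following Definition \ref{def:syst_subspace}, so all three of $\cS_b^r$ and $V_0$ and $Z_0$ are being compared inside $\bigoplus_{v\in\Sigma}H^1(\Q_v,A[2])$ via the injective restriction map. With this understood, an element $x\in V_0$ lies in $\cS_b^r$ if and only if, for every $v\in\Sigma$, its localisation $\res_v(x)$ lies in $\sS_{b_v,v}^{(r)}$; since $Z_0 = \bigoplus_{v\in\Sigma}\sS_{b_v,v}^{(r)}$, this is precisely the condition $x\in Z_0$ (under the identification). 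So the content of the lemma is that, for $x\in V_0$, the conditions ``$x\in Z_0$'' and ``$x\in Z_0$ and $x\in(\ImageModSquares\otimes A[2])^\perp$'' coincide — in other words, that $V_0\cap Z_0\subseteq (\ImageModSquares\otimes A[2])^\perp$. That inclusion is what I would actually prove.

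To prove $V_0\cap Z_0 \subseteq (\ImageModSquares\otimes A[2])^\perp$, take $x\in V_0$ with $\res_v(x)\in\sS_{b_v,v}^{(r)}$ for all $v\in\Sigma$, and take a generator $h\otimes u$ of $\ImageModSquares\otimes A[2]$ with $h\in\ImageModSquares$ lifted by some $m\in\cF$ and $u\in A[2]$. Per Notation \ref{notat:image_mod_sq_interpretation}, $h\otimes u$ corresponds to the tuple $(\res_v(\psi_m u))_{v\in\Sigma}$, so I must show $\sum_{v\in\Sigma}\langle \res_v(x), \res_v(\psi_m\cdot u)\rangle_v = 0$. The key point is that $\psi_m\cdot u\in H^1(\Q,A[2])$ is a \emph{global} class which is unramified outside $\Sigma$ (since $m\in\cF$ has all prime factors split in $L$, hence $\psi_m$ is unramified away from $\Sigma$), and $x$ is likewise global and unramified outside $\Sigma$. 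Hence both $x$ and $\psi_m u$ lie in $H^1_\Sigma(\Q,A[2])$, and by global reciprocity for the Brauer group (the sum of local invariants of a global Brauer class vanishes) the pairing $\langle\,,\,\rangle_\Sigma$ is identically zero on $H^1_\Sigma(\Q,A[2])\times H^1_\Sigma(\Q,A[2])$. Thus the sum vanishes, giving $x\in(\ImageModSquares\otimes A[2])^\perp$.

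For the reverse inclusion there is nothing to prove: $V_0\cap(\ImageModSquares\otimes A[2])^\perp\cap Z_0 \subseteq V_0\cap Z_0 = \cS_b^r$ by the identification in the first paragraph. Putting the two inclusions together yields the claimed equality. I would also double-check the one subtle bookkeeping point — that the identification of $\cS_b^r$ with $V_0\cap Z_0$ inside $\bigoplus_{v\in\Sigma}H^1(\Q_v,A[2])$ is exactly the content of Definition \ref{def:syst_subspace} tensored up $r$-fold, using that $\Sel_2(A_d/\Q) = \Sel_2(E_d/\Q)^r$ and $\sS_{b_v,v}^{(r)} = (\sS_{b_v,v})^r$ respect the product decomposition $A = E^r$.

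The main obstacle is purely one of careful identification: making sure that the three objects $\cS_b^r$, $V_0$, $Z_0$ are genuinely being regarded as subspaces of the same ambient space $\bigoplus_{v\in\Sigma}H^1(\Q_v,A[2])$ with compatible maps, and that ``$x\in Z_0$'' for $x\in V_0$ really does encode the Selmer conditions at all $v\in\Sigma$ and nothing more. Once the diagram of inclusions is set up correctly, the nontrivial inclusion is a one-line application of Brauer reciprocity, exactly as already invoked in Section \ref{tate_quad_form_subsec} for the vanishing of $q_{\phi,\Sigma}$ on $H^1_\Sigma(k,E[2])$, and I would cite that same principle here.
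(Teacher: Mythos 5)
There is a genuine gap, and it sits at the very first step. You identify $\cS_b^r$ with $V_0\cap Z_0$, but this is not what Definition \ref{def:syst_subspace} says: $\cS_b$ is the set of classes in $H^1(L/\Q,E[2])$ (equivalently, by the remark following the definition, in $H^1(K_\Sigma\cap L/\Q,E[2])$) satisfying the local conditions at $\Sigma$. Membership in $V_0=H^1(K_\Sigma/\Q,A[2])$ only records being unramified outside $\Sigma$; the additional requirement that the cocycle be trivial on restriction to $L$ — i.e.\ that it factor through $\Gal(K_\Sigma\cap L/\Q)$ rather than all of $\Gal(K_\Sigma/\Q)$ — is a genuinely stronger condition whenever $K_\Sigma\cap L\subsetneq K_\Sigma$. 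So in general $\cS_b^r\subsetneq V_0\cap Z_0$, and the whole point of the lemma is that the orthogonality condition $(\ImageModSquares\otimes A[2])^\perp$ is exactly what cuts $V_0\cap Z_0$ down to $\cS_b^r$; it is not a vacuous condition.

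Your argument that $V_0\cap Z_0\subseteq(\ImageModSquares\otimes A[2])^\perp$ rests on the claim that $\psi_m$ is unramified away from $\Sigma$ for $m\in\cF$. This is false: $m$ is by construction a product of primes $p\notin\Sigma$ (totally split in $L$), and $\psi_m$ ramifies precisely at those primes. Consequently $\psi_m\cup u$ does \emph{not} lie in $H^1_\Sigma(\Q,A[2])$, and reciprocity does not kill the pairing. What reciprocity actually gives is
\[
\sum_{v\in\Sigma}\left\langle \res_v(x),\res_v(\psi_m\cup u)\right\rangle_v=\sum_{p\mid m}\left\langle \res_p(x),\res_p(\psi_m\cup u)\right\rangle_p,
\]
and the local terms on the right pair the unramified class $x$ against a ramified class; each one evaluates, essentially, to $e(x(\sigma_p),u)$. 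The correct argument (which is the paper's) is that vanishing of these terms for all $p\in\cP_L$ and all $u$ is, by the Chebotarev density theorem, equivalent to $x$ factoring through $\Gal(K_\Sigma\cap L/\Q)$ — i.e.\ to the missing triviality-on-$L$ condition. Your two errors thus cancel into a proof of a true-looking but trivialised statement: you need to (a) keep track of the restriction-to-$L$ condition in the definition of $\cS_b$, and (b) replace the "both classes are unramified outside $\Sigma$" reciprocity step with the reciprocity-plus-Chebotarev computation of $H_\Sigma(\chi,\psi_p)$ as in the paper's proof.
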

 
 \begin{proof}
 Note that $\cS_{b}^r$ is the subgroup of $V_0\cap Z_0$ consisting of cocycles which factor through  $\textup{Gal}(K_\Sigma \cap L/\mathbb{Q})$.  Recall that  $\left \langle~,~\right \rangle_\Sigma$ is equal to $H_\Sigma\otimes e$, where
\[H_\Sigma\colon\oplus_{v\in \Sigma}H^1(\Q_v,\mathbb{F}_2)\times \oplus_{v\in \Sigma} H^1(\Q_v,\mathbb{F}_2)\longrightarrow \F_2\]
is   the sum of the local  `Hilbert symbol'  pairings. We thus have 
\[V_0\cap (\ImageModSquares\otimes A[2])^\perp = \big(H^1(K_\Sigma/\mathbb{Q},\mathbb{F}_2)\cap \ImageModSquares^\perp\big)\otimes A[2],\]
where the orthogonal complement on the left, respectively right is taken with respect to
$\left \langle~,~\right \rangle_\Sigma$, respectively $H_\Sigma$. By definition,
$\cF$ is generated by primes $p\nmid N$ that split completely in the Galois extension $L/\Q$.
Given such a prime $p$, and given $\chi\in H^1(K_\Sigma/\mathbb{Q},\mathbb{F}_2)$, it follows from reciprocity that $H_\Sigma(\chi,\psi_p)=0$ if and only if $\chi\big(\sigma_p)=0$.
By the Chebotarev density theorem, this last condition holds for all $p\in \cF$ if and only if $\chi$ factors through $\Gal(K_\Sigma \cap L/\Q)$.
\end{proof}
The following result shows that $\Sel_2(E_d/\Q)\cap H^1(K_\Sigma/\Q,E[2])$  is equal to the systematic subspace for $100\%$ of $d\in d_0\cF_c$.

\begin{proposition} \label{lem:systematic_subspace_main_lemma}
We have 
\[\lim_{X\rightarrow \infty} \frac{\#\{d\in  d_0\cF_{c}(X)~~\colon~~\Sel_2(E_d/\Q)\cap H^1(K_\Sigma/\Q,E[2])=\cS_{b}\}}{ \# \cF_{c}(X) }=1.\]
\end{proposition}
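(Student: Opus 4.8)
The inclusion $\cS_b \subseteq \Sel_2(E_d/\Q)\cap H^1(K_\Sigma/\Q,E[2])$ holds for \emph{every} $d\in d_0\cF_c$ by the remark following \Cref{def:syst_subspace}, so the content is the reverse inclusion for $100\%$ of $d$. The plan is to show that the probability that this inclusion is strict tends to $0$, by bounding, for each fixed nonzero class $\zeta \in H^1(K_\Sigma/\Q,E[2])\setminus \cS_b$, the density of $d\in d_0\cF_c$ for which $\zeta\in \Sel_2(E_d/\Q)$, and then summing over the finitely many such $\zeta$ (the group $H^1(K_\Sigma/\Q,E[2])$ is finite since $K_\Sigma$ is the fixed finite multiquadratic field and $E[2]$ is finite). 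So it suffices to prove: for each $\zeta\in H^1(K_\Sigma/\Q,E[2])$ with $\zeta\notin \cS_b$,
\[
  \lim_{X\to\infty}\frac{\#\{d\in d_0\cF_c(X) : \zeta\in\Sel_2(E_d/\Q)\}}{\#\cF_c(X)} = 0.
\]

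First I would unwind what $\zeta\notin\cS_b$ means via \Cref{def:syst_subspace}: there is a place $v_0\in\Sigma$ such that $\res_{v_0}(\zeta)\notin \sS_{b_{v_0},v_0}$. Next, since $\zeta\in H^1(K_\Sigma/\Q,E[2])$ is unramified outside $\Sigma$, for a prime $p\notin\Sigma$ dividing $d$ one has $\res_p(\zeta)\in H^1_{\ur}(\Q_p,E[2])$ automatically, so by \Cref{lem:kummer_image_basic} the local Selmer condition at such $p$ is satisfied iff $p$ is unramified in $\Q(\sqrt d)$, which holds since $d$ is squarefree and $p\mid d$ forces... wait — actually $p\mid d$ squarefree means $p$ \emph{ramifies} in $\Q(\sqrt d)$, so the condition at $p$ becomes $\res_p(\zeta)\in \delta_{d,p}(E[2])$. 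Using \Cref{lem:twisted_boundary_map}, $\delta_{d,p}(E[2]) = \delta_p(E[2]) + \psi_d\cup E[2]$; since $\res_p(\zeta)$ is unramified and $\delta_p(E[2])\subseteq H^1_{\ur}(\Q_p,E[2])$ with $\res_p(\zeta)\in H^1_{\ur}$, membership forces a compatibility that, after projecting to the ramified part, pins down $\gamma(\sigma_p)$ applied to a fixed element determined by $\zeta$ — concretely it forces $\gamma(\sigma_p)(x) = (\text{something depending on }\zeta)$ for the relevant $x\in E[2]$. The upshot (this is the Frobenian-condition heart) is that $\zeta\in\Sel_2(E_d/\Q)$ imposes a Chebotarev condition on the Frobenius $\sigma_p$ in $\Gal(\Q(E[4])/\Q)$ for every $p\mid D = d/d_0$, namely $\gamma(\sigma_p)$ must lie in a \emph{proper} coset inside $\End E[2]$, or else a fixed quadratic-residue/Hilbert-symbol condition must hold — the point being it is a nontrivial Frobenian condition on the primes dividing $D$ which is not automatically satisfied by membership in $\cF_c$. (One has to check carefully that this condition is genuinely proper, i.e. not already forced by $p\in\cP_L$ split in $L$; this is where $\zeta\notin\cS_b$, unwound at $v_0$, is used, via \Cref{lem:alternative_syst_sub_desc} relating orthogonality at $\Sigma$ to the factoring-through-$\Gal(K_\Sigma\cap L/\Q)$ condition — if the Frobenian condition on all $p\mid D$ were vacuous then $\zeta$ would be orthogonal to all of $\ImageModSquares\otimes A[2]$ and hence lie in $\cS_b$, contradiction).

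Having reduced to: the set of $d\in d_0\cF_c$ with $\zeta\in\Sel_2(E_d/\Q)$ is contained in $d_0\cF_{c,L'}$ for a strictly larger field $L'\supseteq L$ (namely $L' = L\cdot(\text{appropriate subfield of }\Q(E[4])(\sqrt{-1}))$), I would then invoke \Cref{prop:asymptotic_cF}: $\#\cF_{c,L'}(X) \asymp X(\log X)^{1/n_{L'} - 1}$ with $n_{L'} > n_L$, whereas $\#\cF_{c}(X)\asymp X(\log X)^{1/n_L-1}$, so the ratio is $O((\log X)^{1/n_{L'} - 1/n_L})\to 0$. Summing over the finitely many bad $\zeta$ completes the proof.

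\textbf{Main obstacle.} The technical crux is Step two: pinning down precisely the Chebotarev/Frobenian condition that $\zeta\in\Sel_2(E_d/\Q)$ imposes on each prime $p\mid D$, and \emph{verifying it is a proper condition} (positive-codimension, not already implied by splitting in $L$) exactly when $\zeta\notin\cS_b$. This requires carefully juggling \Cref{lem:kummer_image_basic}, \Cref{lem:twisted_boundary_map}, \Cref{gamma_and_weil_lemma}, and the identification in \Cref{lem:alternative_syst_sub_desc} between the $\Sigma$-orthogonality description of $\cS_b^r$ and the ``factors through $\Gal(K_\Sigma\cap L/\Q)$'' description; in particular one must rule out the degenerate possibility that the local discrepancy at $v_0$ is somehow compensated so that no constraint survives on the primes dividing $D$. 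Everything after that reduction is a routine appeal to the density asymptotics already proved in \Cref{prop:asymptotic_cF}.
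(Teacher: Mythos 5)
Your overall strategy is correct, but it is genuinely different from the paper's. The paper proves this proposition as a first-moment computation using the machinery it has already built: elements of $\Sel_2(E_d/\Q)\cap H^1(K_\Sigma/\Q,E[2])$ correspond to pairs $(v,z)$ with support in $0\times A[2]$, the only maximal unlinked subset there is $0\times A[2]$ itself, and evaluating $\sY(0\times A[2])$ via \Cref{preliminary_evaluation_of_sum} and \Cref{lem:alternative_syst_sub_desc} shows the average of $\#\big(\Sel_2(E_d/\Q)\cap H^1(K_\Sigma/\Q,E[2])\big)$ is exactly $\#\cS_b$; since the intersection always contains $\cS_b$, equality must hold for $100\%$ of $d$. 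Your route is a direct union bound over the finitely many $\zeta\in H^1(K_\Sigma/\Q,E[2])\setminus\cS_b$, reducing each to a proper Frobenian condition and then to \Cref{prop:asymptotic_cF}. This is more elementary (it bypasses \Cref{prop:error_term_F_K} and \Cref{prop:main_term_sums_analysis} entirely), at the cost of not producing the moment identity that the paper reuses elsewhere.

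Two corrections to your "main obstacle" step, which is in fact simpler than you anticipate. First, for $p\notin\Sigma$ with $p\mid d$, the condition $\res_p(\zeta)\in\sS_{d,p}=\delta_{d,p}(E[2])$ does not pin down $\gamma(\sigma_p)$ at all: writing $\res_p(\zeta)=\delta_{d,p}(x)$ and restricting to inertia, \Cref{lem:twisted_boundary_map} gives $0=\psi_d(\sigma)x$ on $I_p$, and since $\psi_d$ ramifies at $p$ this forces $x=0$, whence the condition is exactly $\res_p(\zeta)=0$, i.e.\ $\zeta(\Frob_p)=0$. Second, the enlarged field is therefore not a subfield of $\Q(E[4])(\sqrt{-1})$ but $L'=L\cdot L_\zeta$, where $L_\zeta\subseteq K_\Sigma$ is the fixed field of $\ker\zeta$. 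The dichotomy then closes cleanly: if $\zeta$ fails a local condition at some $v\in\Sigma$ it never lies in $\Sel_2(E_d/\Q)$ for $d\in d_0\cF_c$; otherwise $\zeta\notin\cS_b$ forces $\zeta$ to be nontrivial on $\Gal(K_\Sigma/K_\Sigma\cap L)$, which by Chebotarev is exactly the image of $\{\Frob_p:p\in\cP_L\}$, so $L_\zeta\not\subseteq L$, $[L':\Q]>[L:\Q]$, and the bad set is contained in a family of size $O\big(X(\log X)^{1/[L':\Q]-1}\big)=o(\#\cF_c(X))$. With these repairs your argument is complete.
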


\begin{proof}
Examining the explicit parameterisation of Selmer elements given in \Cref{ssec:parameterising_selmer_elts}, we see that 
\begin{eqnarray*}
 \lefteqn{\frac{1}{\#  \cF_{c}(X)}\sum_{ d\in  d_0\cF_{c}(X)}
  \#\Big(\Sel_2(E_d/\Q)\cap H^1(K_\Sigma/\Q,E[2])\Big)^r =} \\
& & \frac{1}{\#  \cF_{c}(X)}\cdot \frac{1}{\#Z_0}~\sum_{(v_0,z_0)\in V_0\times Z_0} (-1)^{\left \langle v_0,z_0\right \rangle_\Sigma}\sum_{ T\subseteq 0\times E[2] } \cS^T_{v_0,z_0}(X).
\end{eqnarray*}
The single maximal unlinked subset of $0\times A[2]$ is $0\times A[2]$ itself.
Arguing as in the proof of Theorem \ref{thm:main_selmer_asymptotic_moment_dist}, we thus see that we have
\begin{eqnarray*}
 \lim_{X\rightarrow \infty} \frac{1}{\# \cF_{c}(X)}\sum_{ d\in  d_0\cF_{c}(X)}
  \#\Big(\Sel_2(E_d/\Q)\cap H^1(K_\Sigma/\Q,E[2])\Big)^r =   \sY(0\times A[2]).
\end{eqnarray*}
Taking $W=0\times A[2]$ in Lemma \ref{preliminary_evaluation_of_sum} (noting that the map $\eta$ is surjective) we see that
\[ \sY_{v_0,z_0}(0\times A[2])=\frac{1}{\#(\ImageModSquares\otimes A[2])}\sum_{t\in \ImageModSquares\otimes A[2]}(-1)^{\left \langle v_0,t\right \rangle_\Sigma}=\mathbbm{1}_{v_0\in (\ImageModSquares\otimes A[2])^\perp }.\]
Since $Z_0$ is its own orthogonal complement with
respect to the pairing $\left \langle~,~\right \rangle_\Sigma$, we have
\begin{eqnarray*}
\lim_{X\rightarrow \infty} \frac{1}{\# \cF_{c}(X)}\sum_{ d\in  d_0\cF_{c}(X)}
\#\Big(\Sel_2(E_d/\Q)\cap H^1(K_\Sigma/\Q,E[2])\Big)^r     
& =& \#V_0\cap (\ImageModSquares\otimes A[2])^\perp \cap Z_0=\#\cS_{b}^r,
\end{eqnarray*}
where the final equality follows from Lemma \ref{lem:alternative_syst_sub_desc}.
The result follows  from the case $r=1$.
\end{proof}

\subsection{Counting injections}
In the statement of the following result, for $\F_2$-vector spaces $U$, $W$, we denote by $\Inj(U,W)$  the set of injective homomorphisms $U\to W$. 

\begin{theorem} \label{thm:inj_moments}
Let $L/\Q$ be a finite Galois extension, let $c\in S\subseteq \left(\Z/4N\Z\right)^{\times}$ and let $d_0$ be a (possibly negative)
divisor of $N$.  

Then we have 
\begin{eqnarray*}
 \lim_{X\rightarrow \infty} \frac{1}{\#  \cF_{c}(X)}\sum_{d\in  d_0\cF_{c}(X)}
  \#\Inj\bigg(\F_2^r,\frac{\Sel_2(E_{d}/\Q)}{\delta_{d}(E[2])\oplus\cS_{b}}\bigg)   
=  \frac{1}{ \#\cS_{b}^r} \sum_{\substack{W\subseteq A[2]^2\textup{ max iso.}\\W\textup{ satisfies }(\star)}}\sY(W),
\end{eqnarray*}
where the condition in the sum on the right hand side is:  
\begin{equation*}
  \textup{there is no codimension }1\textup{ subspace }N\subseteq \F_2^r \textup{ such that }\pi_1(W)\subseteq E[2]\otimes N.\tag{$\star$}
\end{equation*}
\end{theorem}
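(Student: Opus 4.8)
The plan is to adapt the proof of \Cref{thm:main_selmer_asymptotic_moment_dist} verbatim, replacing the count of all homomorphisms $\F_2^r \to \Sel_2(E_d/\Q)$ by the count of injections into the quotient $\Sel_2(E_d/\Q)/(\delta_d(E[2])\oplus\cS_b)$, and tracking how this change cuts down the set of supports $T$ that contribute. First I would observe that, under the identification $\Sel_2(E_d/\Q)^r \cong \Hom(\F_2^r,\Sel_2(E_d/\Q))$ recalled at the end of Section~\ref{sec:fouvryklueners}, an $r$-tuple $(v'_1,\ldots,v'_r)$ of Selmer elements corresponds to an injection $\F_2^r\hookrightarrow \Sel_2(E_d/\Q)/(\delta_d(E[2])\oplus\cS_b)$ precisely when the images of the $v'_i$ are $\F_2$-linearly independent modulo $\delta_d(E[2])$ and modulo the systematic subspace. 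Since $\cS_b^r\subseteq H^1(K_\Sigma/\Q,E[2])^r = V_0$, being independent modulo $\delta_d(E[2])\oplus\cS_b^r$ is (for all but finitely many $d$, by \Cref{lem:systematic_subspace_main_lemma}) the same as being independent modulo $H^1_\Sigma(\Q,E[2])+\delta_d(E[2])$, which is exactly the hypothesis of \Cref{lem:is_injection_criteria}. Hence the number of such injections equals the number of $r$-tuples in $\Hom(\F_2^r,\Sel_2(E_d/\Q))$ whose associated $V'$-component $v'$ has support $T$ satisfying: there exists $\bfw\in A[2]^2$ and a codimension-$1$ subspace $M\subseteq\F_2^r$ with $\pi_1(T+\bfw)\subseteq E[2]\otimes M$ — that is, the tuples one must \emph{exclude} are exactly those failing $(\star)$ after a shift.

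The core combinatorial point is then: in the moment sum $\sum_{d}\#\Hom(\F_2^r,\Sel_2(E_d/\Q))$, expanded via \Cref{prop:selmer_moment_formulae}, the contributing supports are (up to the error term of \Cref{prop:error_term_F_K}) the maximal unlinked subsets, i.e. the cosets $\bfc+W$ with $W$ maximal isotropic for $q$. Passing to injections removes precisely those $\bfc+W$ for which $\pi_1(\bfc+W)$ — equivalently, after absorbing $\bfc$ into the shift $\bfw$, $\pi_1(W)$, since $\pi_1(W)$ is a subspace while the coset just translates it — lies inside $E[2]\otimes N$ for some codimension-$1$ $N\subseteq\F_2^r$. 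So I would re-run the derivation of \Cref{thm:main_selmer_asymptotic_moment_dist}: apply \Cref{prop:selmer_moment_formulae}, discard the non-maximal-unlinked supports by \Cref{prop:error_term_F_K}, use \Cref{can_shift_support_rem} to replace maximal unlinked $T=\bfc+W$ by the maximal isotropic $W$ (each such $W$ having $\#A[2]$ cosets, contributing the factor $\#A[2]=\#E[2]^r$), but now restricting the sum over $W$ to those satisfying $(\star)$, and finally invoke \Cref{prop:main_term_sums_analysis} to convert each surviving $\cS^W_{v_0,z_0}(X)$ into $\frac{\sY_{v_0,z_0}(W)}{\#S}\#\cF(X)$ plus acceptable error. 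Dividing by $\#\cF_c(X)$ via \Cref{prop:asymptotic_cF}, summing over $(v_0,z_0)\in V_0\times Z_0$ with the sign $(-1)^{\langle v_0,z_0\rangle_\Sigma}$ and the factor $1/\#Z_0$, recognizing $\sY(W)$ per \Cref{contrib_main_W_sum}, and normalizing by $\#E[2]^r$ on the left and by $\#\cS_b^r$ (coming from the passage to the quotient, exactly as in \Cref{lem:systematic_subspace_main_lemma}) yields the stated formula.

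The step requiring the most care is the bookkeeping that translates ``injective mod $\delta_d(E[2])\oplus\cS_b$'' into a clean condition on the support $T$, and in particular justifying that the shift $\bfw$ in \Cref{lem:is_injection_criteria} and the shift $\bfc$ parametrizing the coset $T=\bfc+W$ can be simultaneously normalized so that the surviving supports correspond bijectively to pairs $(W,\text{coset})$ with $W$ maximal isotropic satisfying $(\star)$ — this uses that $\pi_1$ is linear, so $\pi_1(\bfc+W)$ is a coset of $\pi_1(W)$, and that $\pi_1(\bfc+W)\subseteq E[2]\otimes N$ for some hyperplane $N$ forces $\pi_1(W)\subseteq E[2]\otimes N$ (as $E[2]\otimes N$ is a subspace and $\pi_1(W)$ is the linear span of the difference set of the coset). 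One must also confirm the negligibility of the finitely many $d$ excluded by \Cref{lem:systematic_subspace_main_lemma}, and check that the shift in \Cref{can_shift_support_rem} is compatible with the simultaneous shift in \Cref{lem:is_injection_criteria} — i.e. that moving $(v_0,z_0)$ to $(\widetilde{v_0},\widetilde{z_0})$ does not disturb the injectivity criterion, which holds because the criterion only sees $v'$ modulo $V_0+\delta_d(E[2])$. Modulo these verifications, which are routine given the machinery already assembled in Section~\ref{sec:fouvryklueners}, the result follows.
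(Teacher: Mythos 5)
Your overall route is the paper's route: translate the injectivity condition into the support condition via \Cref{lem:is_injection_criteria}, re-run the moment machinery of \Cref{prop:selmer_moment_formulae}, \Cref{prop:error_term_F_K}, \Cref{can_shift_support_rem} and \Cref{prop:main_term_sums_analysis} with the sum over supports restricted to those satisfying $(\star)'$, and normalise by $\#E[2]^r$ and $\#\cS_b^r$. Your observation that $\pi_1(\bfc+W)\subseteq E[2]\otimes N$ forces $\pi_1(W)\subseteq E[2]\otimes N$, so that cosets can be absorbed and $(\star)'$ reduces to $(\star)$ on maximal isotropic subspaces, is exactly what is needed and matches the paper.

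There is, however, one genuine gap. You invoke \Cref{lem:systematic_subspace_main_lemma} as saying that $\Sel_2(E_d/\Q)\cap H^1(K_\Sigma/\Q,E[2])=\cS_b$ for ``all but finitely many $d$'', and later describe checking ``the negligibility of the finitely many $d$ excluded'' as routine. That lemma only gives a \emph{density-one} statement: the exceptional set has density zero but may well be infinite. Since the quantity you are averaging (the number of homomorphisms $\F_2^r\to\Sel_2(E_d/\Q)$ that become injective in the relevant quotient) is unbounded in $d$ --- it can be as large as $\#\Sel_2(E_d/\Q)^r$ --- you cannot simply discard a density-zero set of $d$ without a further argument; a sparse set of twists with very large Selmer groups could a priori distort the average. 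The paper closes exactly this gap by comparing the two counts $\lambda_d$ (injective modulo $\delta_d(E[2])+\Sel_2(E_d/\Q)\cap H^1(K_\Sigma/\Q,E[2])$, which is what the support condition $(\star)'$ literally detects) and $\lambda_d'$ (injective modulo $\delta_d(E[2])+\cS_b$) via Cauchy--Schwarz, bounding $|\lambda_d-\lambda_d'|^2$ by $4\#\Sel_2(E_d/\Q)^{2r}$ and using the finiteness of the $2r$-th moment (\Cref{moments_bounded_remark}) together with \Cref{lem:systematic_subspace_main_lemma}. This second-moment step is a necessary ingredient your proposal is missing. A smaller slip: the normalisation factor $\#\cS_b^r$ does not come from \Cref{lem:systematic_subspace_main_lemma}; it comes from counting lifts of an injection out of the quotient, which requires $\delta_d(E[2])\cap\cS_b=0$ and injectivity of $\delta_d$ on $E[2]$, valid for all $d\neq d_0$ by \Cref{lem:twisted_boundary_map} and ramification at the primes dividing $D$.
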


\begin{proof}
For $T\subseteq A[2]^2$, we say that $T$ satisfies condition $(\star)'$ if
there does not exist an element $\bfw \in A[2]^2$ and a codimension $1$ subspace
$N\subseteq \F_2^r$ such that one has $\pi_1(T+\bfw)\subseteq E[2]\otimes N$.
When $T$ is a maximal isotropic subspace, this condition specialises to condition $(\star)$ in the statement.
Arguing as in Proposition \ref{prop:selmer_moment_formulae} and using Lemma \ref{lem:is_injection_criteria} we see that
\begin{equation}
\frac{1}{|Z_0|}~\sum_{(v_0,z_0)\in V_0\times Z_0}
  (-1)^{\left \langle v_0,z_0\right \rangle}\sum_{\substack{T\subseteq A[2]^2\\T\textup{ satisfies }(\star)'}}\cS^T_{v_0,z_0}(X)
\end{equation}
is equal to the sum, over $d\in d_0\cF_{c}(X)$, of the number of homomorphisms $\F_2^r \rightarrow \Sel_2(E_d/\Q)$
that are injective after composing with the quotient map 
\[ \Sel_2(E_d/\Q)	 \longrightarrow \frac{\Sel_2(E_d/\Q)}{\delta_d(E[2])+\Sel_2(E_d/\Q)\cap H^1(K_\Sigma/\Q,E[2])}.\]
 Denote by $\lambda_d$ the number of such homomorphisms.
Arguing as in the proof of Theorem \ref{thm:main_selmer_asymptotic_moment_dist}, we see that the limit, as $X$ tends to infinity, of the quantity $ \frac{1}{\#  \cF_{c}(X)}\sum_{\substack{d\in  d_0\cF_{c}(X)}}
  \lambda_d$ is equal to  
\[   \#E[2]^r  \sum_{\substack{W\subseteq A[2]^2 \textup{ max iso.}\\W\textup{ satisfies }(\star)}}\sY(W). \]

Denote by $\lambda_d'$ the number of homomorphisms  $\F_2^r \rightarrow \Sel_2(E_d/\Q)$ that are injective after composing with the quotient map 
\[ \Sel_2(E_d/\Q)	 \longrightarrow \frac{\Sel_2(E_d/\Q)}{\delta_d(E[2])+ \cS_{b}}.\]
We claim that we have
\begin{eqnarray}\label{eq:sum_lambdas}\lim_{X\rightarrow \infty} \frac{1}{\#  \cF_{c}(X)}\sum_{ d\in  d_0\cF_{c}(X)  }
  \lambda_d=\lim_{X\rightarrow \infty} \frac{1}{\#  \cF_{c}(X)}\sum_{ d\in  d_0\cF_{c}(X) }
\lambda_d'.
\end{eqnarray}
Indeed, by the Cauchy--Schwarz inequality, we have 
\begin{eqnarray*}
\frac{1}{\#  \cF_{c}(X)} \sum_{ d\in  d_0\cF_{c}(X)}
 \mid\lambda_d-\lambda_d'\mid&=& \frac{1}{\# \cF_{c}(X)}\sum_{ d\in  d_0\cF_{c}(X) }\charfunc_{\lambda_d\neq \lambda_d'}\cdot \mid \lambda_d-\lambda_d'\mid
 \end{eqnarray*}
 \[\phantom{lots of space}\leq \bigg(\frac{\#\{d\in d_0\cF_{c}(X)~~\colon~~\lambda_d\neq \lambda_d'\}}{ \# \cF_{c}(X) }\bigg)^{\tfrac{1}{2}} \cdot \bigg( \frac{1}{\# \cF_{c}(X)}\sum_{ d\in  d_0\cF_{c}(X)}
 \mid\lambda_d-\lambda_d'\mid^2\bigg)^{\tfrac{1}{2}}.\]
 Now $ \mid\lambda_d-\lambda_d'\mid^2$ is bounded above by $4\#\Sel_2(E_d/\Q)^{2r}$. It thus follows from Remark \ref{moments_bounded_remark} that the limit, as $X$ tends to infinity, of the quantity  
 \[\frac{1}{\# \cF_{c}(X)}\sum_{ d\in  d_0\cF_{c}(X) } \mid\lambda_d-\lambda_d'\mid^2 \] exists and is finite.
We have $\lambda_d = \lambda_d'$ whenever $\Sel_2(E_d/\Q)\cap H^1(K_\Sigma/\Q,E[2])=\cS_{b}$, so that the claim follows from \Cref{lem:systematic_subspace_main_lemma}.

Each injective homomorphism   $\F_2^r\to \Sel_2(E_d/\Q)/\big(\delta_d(E[2])+ \cS_{b}\big)$ lifts to 
$\#\big(\delta_d(E[2])+ \cS_{b}\big)^r$ 
homomorphisms $\F_2^r\rightarrow \Sel_2(E_d/\Q)$. By Lemma \ref{lem:twisted_boundary_map} we have $\# \big(\delta_d(E[2])+ \cS_{b}\big)^r=\#E[2]^r\cdot \#\cS_{b}^r$ whenever $d\neq d_0$, hence 
\[\lambda_d'=\#E[2]^r\cdot \#\cS_{b}^r\cdot 
  \#\Inj\bigg(\F_2^r,\frac{\Sel_2(E_d/\Q)}{\delta_d(E[2])\oplus\cS_{b}}\bigg)   \]
for all but finitely many $d$, so that the result follows from
\eqref{eq:sum_lambdas}.
\end{proof}

\section{Combinatorics of the leading term}\label{sec:combinatorics}
Take all the notation of the previous section. The main purpose of this section
is to prove \Cref{thm:intro_distr}. This will be achieved by determining the ``moments''
of the distribution obeyed by the $2$-Selmer groups in suitable families. In light
of Theorem \ref{thm:inj_moments}, the purely combinatorial statement of
\Cref{thm:main_term_inj_contribution} below, the proof of which will occupy most of the section, will be the final missing ingredient.

 Recall from Definition \ref{def:definition_of_gamma} that we have a natural injection $\gamma\colon \Gal(\Q(E[4])/\Q)\hookrightarrow \End(E[2])$,
 which is a homomorphism for the additive structure on $\End(E[2])$.  Recall also that we denote by $\Gamma$ the image of $\Gal\big(\Q(E[4])/L\cap \Q(E[4])\big)\cong \Gal(L(E[4])/L)$ under this map. In the notation of the previous section, $\Gamma$ is the additive subgroup of $\End E[2]$ generated by $\{\gamma(\sigma_m)\colon m\in \ImageModSquares\}$. Denote by $\End_{\Gamma}(E[2])$ the additive subgroup of endomorphisms of $E[2]$ commuting with all elements of $\Gamma$.   We will sometimes assume the following condition.
 
\begin{condition}\label{assumption:simple_gamma_mod}
There are no $\Gamma$-invariant subspaces of $E[2]$ other than $0$ and $E[2]$. Moreover, we have $\End_{\Gamma}(E[2])=\F_2$. 
\end{condition}

\begin{theorem} \label{thm:main_term_inj_contribution}
Take the notation and assumptions of \Cref{thm:inj_moments}. Suppose further that Condition \ref{assumption:simple_gamma_mod} is satiafied. Then we have 
\begin{equation} \label{main_term_inj_contribution}
    \sum_{\substack{W\subseteq A[2]^2\textup{ max iso.}\\W\textup{ satisfies }(\star)}}\sY(W)=
      \#\cS_{b}^r\cdot 2^{\frac{r(r+1)}{2}}.
\end{equation}
\end{theorem}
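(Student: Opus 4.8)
The plan is to run a linear-algebra reduction on the maximal isotropic subspaces of $q$ on $A[2]^2$. First I would invoke \Cref{desc_of_max_iso}: every maximal isotropic $W\subseteq A[2]^2$ is $W_{U,P}$ for a unique pair $(U,P)$, where $U\subseteq A[2]$ is a subspace and $P$ is an alternating pairing on $U$; concretely $W_{U,P}=\{(u,v)\in U\times A[2]\,:\,e(x,v)=P(x,u)\ \forall x\in U\}$, so that $\pi_1(W_{U,P})=U$ and $\pi_1|_{W_{U,P}}$ has kernel $0\times U^\perp$. With $A[2]=E[2]^r$, condition $(\star)$ then becomes a condition on $U=\pi_1(W)$ alone: it says that for every nonempty $I\subseteq\{1,\dots,r\}$ some $(u_1,\dots,u_r)\in U$ has $\sum_{i\in I}u_i\neq 0$, equivalently that $U$ lies in no $E[2]\otimes N$ with $N\subseteq\F_2^r$ of codimension $1$. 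So the left-hand side of \eqref{main_term_inj_contribution} rewrites as $\sum_U\sum_P\sY(W_{U,P})$, the outer sum over $U$ satisfying the translated $(\star)$ and the inner over alternating pairings on $U$.

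Next I would cut this range down using the Galois hypothesis. By \Cref{main_invariant_gamma_computation_prop}, $\sY(W_{U,P})$ vanishes unless $U$ is a $\Gamma$-stable subspace of $A[2]$, and for $\Gamma$-stable $U$ the value is governed by whether $P$ is $\Gamma$-invariant, with $\sY(W_{A[2],P})=\#\cS_b^r$ for every $\Gamma$-invariant alternating pairing $P$ on all of $A[2]$. Under \Cref{assumption:simple_gamma_mod}, \Cref{simple_subspace_lemma} identifies the $\Gamma$-stable subspaces of $A[2]=E[2]\otimes\F_2^r$ — with $\Gamma$ acting diagonally through $E[2]$ and trivially on $\F_2^r$ — as exactly the $E[2]\otimes N'$ for $N'\subseteq\F_2^r$. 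If $N'\subsetneq\F_2^r$ then $E[2]\otimes N'\subseteq E[2]\otimes N$ for any codimension-$1$ $N\supseteq N'$, so such a $U$ fails $(\star)$; whereas $E[2]\otimes\F_2^r=A[2]$ satisfies $(\star)$. Hence the only surviving terms are the $W_{A[2],P}$ with $P$ a $\Gamma$-invariant alternating pairing on $A[2]$, each contributing $\#\cS_b^r$.

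It then remains to count these $P$ and assemble the total. A pairing $P$ on $A[2]$ corresponds to the self-adjoint endomorphism $\phi_P$ with $e(x,\phi_P(y))=P(x,y)$, and $\Gamma$-invariance of $P$ forces $\phi_P$ into the commutant $\End_\Gamma(A[2])$. Since $\End_\Gamma(E[2])=\F_2$ by \Cref{assumption:simple_gamma_mod} and $\Gamma$ acts on $E[2]\otimes\F_2^r$ only through the first factor, $\End_\Gamma(A[2])$ consists precisely of the operators $\mathrm{id}_{E[2]}\otimes M$; self-adjointness for the Weil pairing $e=e_2\otimes\langle\,,\,\rangle_{\mathrm{std}}$ forces $M$ to be symmetric, and the corresponding pairing is $P=e_2\otimes\langle\,,\,\rangle_M$, which is automatically alternating since $e_2(x,x)=0$ and the off-diagonal cross-terms pair off and cancel in characteristic $2$. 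The symmetric $r\times r$ matrices over $\F_2$ form an $\F_2$-space of dimension $r(r+1)/2$, so there are $2^{r(r+1)/2}$ admissible $P$, and summing $\sY(W_{A[2],P})=\#\cS_b^r$ over them gives exactly $\#\cS_b^r\cdot 2^{r(r+1)/2}$.

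The real work — and the main obstacle — is \Cref{main_invariant_gamma_computation_prop} itself: extracting from the character sum $\sY(W)=\frac{1}{|Z_0|}\sum_{(v_0,z_0)\in V_0\times Z_0}(-1)^{\langle v_0,z_0\rangle_\Sigma}\sY_{v_0,z_0}(W)$ of \Cref{preliminary_evaluation_of_sum} both the $\Gamma$-vanishing and the clean value $\#\cS_b^r$. I would first run the outer $(v_0,z_0)$-sum, using that $Z_0=\bigoplus_{v\in\Sigma}\sS_{b_v,v}^{(r)}$ is maximal isotropic for $\langle\,,\,\rangle_\Sigma$ and that $\cS_b^r=V_0\cap(\ImageModSquares\otimes A[2])^\perp\cap Z_0$ by \Cref{lem:alternative_syst_sub_desc}; the resulting orthogonality relations collapse the double sum to a count over $t\in\ImageModSquares\otimes W$ with prescribed $\pi_1$- and $\pi_2$-projections, the systematic subspace producing the overall factor $\#\cS_b^r$. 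The remaining inner sum of $(-1)^{\Xi_W((h_w)_w)}$ over the fibres of $\eta$, where $\Xi_W$ mixes the symmetric term $\alpha(h_w)\alpha(h_{w'})e(\pi_1 w,\pi_2 w')$ with the $\gamma$-twist $\sum_w e(\pi_1 w,\gamma(\sigma_{h_w})\pi_2 w)$, is where \Cref{assumption:simple_gamma_mod} genuinely enters: analysing the $\gamma$-twist via \Cref{gamma_and_weil_lemma} together with the simplicity and commutant hypotheses is what kills every $W_{U,P}$ with $U\neq A[2]$ or $P$ not $\Gamma$-invariant. This is the heaviest piece of bookkeeping, and the point at which all the structural input on the Galois action on $E[4]$ is consumed.
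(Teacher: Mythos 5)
Your overall architecture is sound and, in its first and third paragraphs, essentially reproduces the paper's route: parametrise maximal isotropics by pairs $(U,\phi)$ via \Cref{desc_of_max_iso}, use \Cref{main_invariant_gamma_computation_prop} plus \Cref{simple_subspace_lemma} to see that among $\Gamma$-stable $U=E[2]\otimes N'$ only $U=A[2]$ survives condition $(\star)$, and count the commutant pairings via $\mathscr{A}^\Gamma=1\otimes\Sym_r(\F_2)$ (this is exactly \Cref{lem:desc_of_a_gamma}), giving the factor $2^{r(r+1)/2}$. The per-$\phi$ claims you make along the way are in fact true: for $U=A[2]$ one has $\sY(W_{A[2],\phi})=0$ when $\phi$ does not commute with $\Gamma$, and $\sY(W_{A[2],\phi})=\#\cS_b^r$ when it does. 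But they are \emph{not} contained in \Cref{main_invariant_gamma_computation_prop}, which only gives vanishing when $U$ is not $\Gamma$-stable together with a formula for $\sY_{v_0,z_0}(W_{U,\phi})$; attributing the $\Gamma$-invariance-of-$P$ dichotomy and the value $\#\cS_b^r$ to that proposition is a misstatement, and these two facts are precisely the content of Sections \ref{sec:maintermcomb}--\ref{sec:main_term_inj_proof}.

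The genuine gap is in your final paragraph, where you sketch how to obtain these facts. Summing over $(v_0,z_0)$ using that $Z_0$ is maximal isotropic, and invoking \Cref{gamma_and_weil_lemma} with the simplicity/commutant hypotheses, is not enough. The paper's mechanism is: (i) the quadraticity statement \Cref{quad_form_phi_upsilon_lemma}, which makes each fibre sum $\sum_{\eta_1(\bfx)=t_1}(-1)^{\Upsilon_\phi(\bfx)}$ a character sum over $\ker(\eta_1)$, whence only $\phi$ in the annihilator $\im(\Psi)^\circ$ survive; (ii) the identification $\im(\Psi)^\circ=\mathscr{A}^\Gamma$, whose easy inclusion is \Cref{commuting_ref} but whose hard inclusion $\mathscr{A}^\Gamma\subseteq\im(\Psi)^\circ$ --- i.e.\ the reason the $\Gamma$-commuting $\phi$ are \emph{not} also killed by the inner character sum --- rests on the identity $\Upsilon_\phi(\bfx)+\left\langle v_0,\phi(t_1)\right\rangle_\Sigma=q_{\overline{\phi},\Sigma}(v_0+t_1)$ of \Cref{lem:identifying_as_quad_form}; and (iii) the evaluation of the surviving main term as $\#\cS_b^r$ via \Cref{arriving_at_thm} and \Cref{lem:what_is_this_space?}. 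All of (ii)--(iii) consume global arithmetic input that your sketch never mentions: one must lift $\phi=1\otimes\phi_0\in\mathscr{A}^\Gamma$ to a Rosati-invariant endomorphism $\overline{\phi}$ of $A$ (possible exactly because of \Cref{lem:desc_of_a_gamma}), form the theta-group quadratic forms $q_{\overline{\phi},\Sigma}$ of Section \ref{tate_quad_form_subsec}, use their vanishing on $V_0$ (reciprocity for the Brauer group) and on $Z_0$ (isotropy of the Kummer images, via the $4$-torsion theta group and \Cref{boundary_map_theta}), and finally use that $V_0$ is maximal isotropic for $\left\langle~,~\right\rangle_\Sigma$ (Poitou--Tate) in the counting identity $\#(\fA\cap Z_0)/\#(\ImageModSquares\otimes A[2])=\#\cS_b^r$, which is where \Cref{lem:alternative_syst_sub_desc} and hence the systematic subspace actually enter. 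Without these ingredients the double sum does not ``collapse'' to produce $\#\cS_b^r$, so the crux of the theorem remains unproved in your proposal.
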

The theorem will be proved in Section \ref{sec:main_term_inj_proof}.

\subsection{Description of maximal isotropic subspaces} \label{ssec:max_iso_descr}
  
We begin by explicitly describing the maximal isotropic subspaces of $A[2]^2$. 
Recall from Definition \ref{quad_form_q_defi} the quadratic form $q\colon A[2]^2\rightarrow \F_2$
sending $\bfu$ to $e(\pi_1(\bfu),\pi_2(\bfu))$. 

\begin{definition}\label{def:U_phi_max_iso}
Let $(U,\phi)$ be a pair consisting of a subspace $U$ of $A[2]$, and a homomorphism
$\phi\colon U\rightarrow A[2]/U^\perp$ satisfying $e(u,\phi(u))=0$ for all $u\in U$.
Here $U^\perp$ denotes the orthogonal complement of $U$ with respect to the Weil pairing
on $A[2]$. Define the subspace $W_{U,\phi}$ of $A[2]^2$ as  
\[
  W_{U,\phi}=\big\{(u,\phi(u)+u')~~\colon~~u\in U, u' \in U^\perp\big\}.
\]
By construction, $W_{U,\phi}$ is isotropic for $q$. Since we also have
\[
  \dim W_{U,\phi}=\dim U+\dim U^\perp=\dim A[2],
\]
we conclude that $W_{U,\phi}$ is a maximal isotropic subspace for $q$.
Note  that the subspaces $W_{U,\phi}$ are distinct as $U$ and $\phi$ vary.
\end{definition}
   
\begin{lemma} \label{desc_of_max_iso}
Let $W\subseteq A[2]^2$ be a maximal isotropic subspace for $q$. Then $W$ has the form $W_{U,\phi}$ for some $U,\phi$
as in \Cref{def:U_phi_max_iso}.
\end{lemma}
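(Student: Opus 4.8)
\textbf{Proof plan for Lemma \ref{desc_of_max_iso}.}

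The plan is to reconstruct the pair $(U,\phi)$ from $W$ via the two projections $\pi_1,\pi_2\colon A[2]^2\to A[2]$, and then verify that $W=W_{U,\phi}$ by a dimension count. First I would set $U=\pi_1(W)$, the image of $W$ under the first projection. The key structural observation is that the first factor $A[2]\times 0$ is totally isotropic for $q$ and has dimension $\tfrac12\dim A[2]^2=\dim A[2]$, hence is itself a maximal isotropic subspace; since any two maximal isotropic subspaces of a nondegenerate quadratic space of this type have the same dimension (as already invoked in the proof of \Cref{lem:geom_of_unlinked_indices} via \cite[Corollary 8.12]{quadformsbook}), we have $\dim W=\dim A[2]$. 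Next I would identify the kernel of $\pi_1|_W$: an element $(0,w')\in W$ satisfies $q((0,w'))=e(0,w')=0$ automatically, but the crucial constraint is bilinear — for $(u,w)\in W$ arbitrary one has $0=q((u,w)+(0,w'))+q((u,w))+q((0,w'))$, and expanding the associated bilinear form of $q$ (which, as computed in \Cref{quad_form_q_defi}, sends $(\bfu,\bfv)$ to $q(\bfu+\bfv)-q(\bfu)-q(\bfv)$ and works out to $e(\pi_1(\bfu),\pi_2(\bfv))+e(\pi_2(\bfu),\pi_1(\bfv))$) this forces $e(u,w')=0$ for all $u\in U$. Thus $\ker(\pi_1|_W)\subseteq 0\times U^\perp$. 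Conversely $\dim\ker(\pi_1|_W)=\dim W-\dim U=\dim A[2]-\dim U=\dim U^\perp$, so in fact $\ker(\pi_1|_W)=0\times U^\perp$.

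With this in hand, I would define $\phi\colon U\to A[2]/U^\perp$ by choosing, for each $u\in U$, any $w$ with $(u,w)\in W$ and setting $\phi(u)=w+U^\perp$. This is well-defined precisely because the fibre of $\pi_1|_W$ over $u$ is a coset of $\ker(\pi_1|_W)=0\times U^\perp$, and it is linear since $W$ is a subspace. The isotropy condition $e(u,\phi(u))=0$ for all $u\in U$ is exactly the statement that $q((u,w))=0$, which holds because $W$ is isotropic for $q$ and $q((u,w))=e(u,w)$ depends on $w$ only modulo $U^\perp$. By construction $W\subseteq W_{U,\phi}$, and since both are maximal isotropic of dimension $\dim A[2]$ we conclude $W=W_{U,\phi}$.

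The only genuinely delicate point is the polarisation identity computation pinning down $\ker(\pi_1|_W)$: one must be careful that over $\F_2$ the bilinear form associated to $q$ is the one described in \Cref{quad_form_q_defi}, and that the pairing $e$ on $A[2]$ is (anti)symmetric so that $e(\pi_1(\bfu),\pi_2(\bfv))+e(\pi_2(\bfu),\pi_1(\bfv))=0$ collapses correctly when one of the arguments is of the form $(0,w')$. Everything else is a routine dimension count using the equicardinality of maximal isotropic subspaces already established in \Cref{lem:geom_of_unlinked_indices}. I do not anticipate any serious obstacle beyond keeping the bookkeeping of $U$ versus $U^\perp$ straight.
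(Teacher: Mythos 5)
Your proposal is correct and follows essentially the same route as the paper's proof: set $U=\pi_1(W)$, use isotropy of $W$ (the paper computes $0=e(u,\lambda+k)=e(u,k)$ directly rather than via the polarisation identity, but it is the same computation) together with a dimension count to identify $\ker(\pi_1|_W)$ with $0\times U^\perp$, then define $\phi$ by lifting along $\pi_1$ and conclude $W=W_{U,\phi}$. No gaps.
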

 
\begin{proof}  
Define $U=\pi_1(W)\subseteq A[2]$, and define
\[
  K=\pi_2\left(\ker(\pi_1\colon W\rightarrow A[2])\right)\subseteq A[2].
\]
We have a short exact sequence 
\begin{equation}
0\longrightarrow K \stackrel{v\mapsto (0,v)}{\longrightarrow}W\stackrel{\pi_1}{\longrightarrow} U\longrightarrow 0. 
\end{equation}
 We claim that $K=U^\perp$. Indeed, as in Lemma \ref{lem:geom_of_unlinked_indices}, since $W$ is maximal isotropic, we have 
$\dim W=\dim A[2].$
Since  we also have $\dim A[2]=\dim U+\dim U^\perp$, it suffices to show
that $K\subseteq U^\perp$. Let $u\in U$ and let $(u,\lambda)$ be any element of $W$ mapping to $u$ under $\pi_1$. Since $W$ is isotropic  for $q$, we have  $e(u,\lambda)=0$. Now given any $k\in K$, the element $(u,\lambda+k)$ is in $W$, so we have 
\[0=e(u,\lambda+k)=e(u,\lambda)+e(u,k)=e(u,k).\]
Thus $e(u,k)=0$. Since $u\in U$ and $k\in K$ were arbitrary, we conclude that $K$ is contained in $U^\perp$, and the claim is proved.

In light of the claim, we have a well-defined homomorphism $\phi\colon U=\pi_1(W)\rightarrow V/U^\perp$ given by lifting elements of $U$ to $W$ under $\pi_1$, and then applying $\pi_2$. Moreover, we have
\[W=\big\{(u,\phi(u)+U^\perp)~~\colon~~u\in U\big\}.\]
The  condition that $W$ be isotropic for $q$ forces $e(u,\phi(u))=0$ for all $u\in U$.
\end{proof}
 
 \begin{definition}
 For a given $U\subseteq V$, define 
\[\mathscr{A}_U= \big\{\phi\in \Hom(U,A[2]/U^\perp) ~~\colon~~e(u,\phi(u))=0 \textup{ for all }u\in U \big\}.\]
\end{definition}

\begin{remark} \label{alt_pairings_correspondence_rmk}
Since the pairing 
$ U\otimes A[2]/U^\perp \rightarrow \F_2$
induced by $e$ is non-degenerate, the natural map $A[2]/U^\perp \rightarrow U^*$
sending $x$ to $e(-,x)$ is an isomorphism. From this we see that the map sending
$\phi \in \mathscr{A}_U$ to the pairing $P_\phi\colon U\otimes U\rightarrow \F_2$, defined by
$ P_\phi(u,w)=e(u,\phi(w))$,
identifies  $\mathscr{A}_U$ with the vector space of alternating pairings on $U$.
In particular, we have $\dim \mathscr{A}_U=\tfrac{1}{2}n(n-1)$, where $n=\dim U$.
\end{remark}

 \subsection{Reduction to $\Gamma$-invariant subspaces}
 We now turn to understanding the left hand side of \eqref{main_term_inj_contribution}, i.e. the sum
   $$ \sum_{\substack{W\subseteq A[2]^2\textup{ max iso.}\\W\textup{ satisfies }(\star)}}\sY(W)= \sum_{\substack{W\subseteq A[2]^2\textup{ max iso.}\\W\textup{ satisfies }(\star)}}\frac{1}{|Z_0|}\sum_{(v_0,z_0)\in V_0\times Z_0}(-1)^{\left \langle v_0,z_0\right \rangle_\Sigma}\sY_{v_0,z_0}(W),$$
   where condition $(\star)$ is as in the statement of  \Cref{thm:inj_moments}.
 The group $\Gamma\subseteq \End(E[2])$ operates diagonally on $A[2]=E[2]^r$.
The key observation making the analysis of the sums $\sY_{v_0,z_0}(W)$ manageable is that
for $U$, $\phi$ as in \Cref{def:U_phi_max_iso} we have $\sY_{v_0,z_0}(W_{U,\phi})=0$
unless $U$ is invariant under $\Gamma$. 
We will prove a refined version of this that additionally gives information
about the sums $\sY_{v_0,z_0}(W_{U,\phi})$ when $\Gamma U\subseteq U$. Our starting point will be the expression for the sums $\sY_{v_0,z_0}(W)$ given in the statement of Lemma \ref{preliminary_evaluation_of_sum}, see also Notation \ref{notat:image_mod_sq_interpretation}.  We refer to that lemma for the definition of the function $\Xi_W$ appearing in the statement of Lemma \ref{innermost_sum_lemma} below.

Finally, we introduce the following additional notation.

\begin{notation} \label{random_notat_for_this_prop} 
For a subspace $U$ of $A[2]$,
write $T_1=\ImageModSquares\otimes U$ and $T_2=\ImageModSquares\otimes U^\perp$.
Define $\eta_1\colon \ImageModSquares^U\rightarrow T_1$ and $\eta_2\colon\ImageModSquares^{U^\perp}\rightarrow T_2$ by 
\[
  \eta_1(\bfx)=\sum_{u\in U}x_u\otimes u\quad \textup{ and }\quad \eta_2(\bfy)=\sum_{\lambda \in U^\perp}y_\lambda \otimes \lambda.
\] 
\end{notation}
 
For $\phi \in \mathscr{A}_U$, we have an isomorphism 
\begin{equation} \label{W_parametrisation}
  U\oplus U^\perp \stackrel{\sim}{\longrightarrow}W_{U,\phi}
\end{equation}
sending $(u,\lambda)$ to $(u, \phi(u)+\lambda)$. Via this isomorphism we may write
an element $(h_w)_{w}\in \ImageModSquares^W$ as a tuple $\bfh=(h_{u,\lambda})_{u\in U,\lambda\in U^\perp}$. 
The isomorphism \eqref{W_parametrisation} induces an isomorphism $\ImageModSquares\otimes W_{U,\phi}\cong  T_1\times T_2$.

\begin{lemma} \label{innermost_sum_lemma}
Let $U\subseteq A[2]$, $\phi\in \mathscr{A}_U$, and $W=W_{U,\phi}$.
Let $t\in \ImageModSquares\otimes W$ have image  $(t_1,t_2)\in T_1\times T_2$. 
Denote by
\begin{equation} \label{eq:IT_definition}
I_t=\sum_{\substack{(h_w)_w\in \ImageModSquares^W\\ \eta((h_w)_w)=t}}(-1)^{\Xi_W((h_w)_w)}
\end{equation}
the innermost sum in \eqref{SWsum}.
Then we have $I_t=0$ unless $\Gamma U\subseteq U$. When $\Gamma U\subseteq U$, we have 
\[
I_t= \frac{\#\ImageModSquares^{2^{2r}}}{\#\ImageModSquares^{\#U}\cdot \#T_2}
  \sum_{\substack{\bfx\in \ImageModSquares^U\\ \eta_1(\bfx)=t_1}}(-1)^{\Upsilon_\phi(\bfx)},
\]
where 
\begin{equation} \label{defi_of_upsilon}
\Upsilon_\phi(\bfx)=\sum_{\{u,u'\}\subseteq U}\alpha(x_u)\alpha(x_{u'})e\big(u,\phi(u')\big)+\sum_{u\in U }e\big(\gamma^{(r)}(\sigma_{x_u})u,\phi(u)\big).
\end{equation}
\end{lemma}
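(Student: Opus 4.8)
The plan is to evaluate $I_t$ by a direct computation, grouping the sum over tuples $(h_w)_w \in \ImageModSquares^W$ according to the decomposition $W_{U,\phi} \cong U \oplus U^\perp$ from \eqref{W_parametrisation}, and then extracting the part of the phase $\Xi_W$ that depends on the $U^\perp$-coordinates. Writing $(h_w)_w$ as $\bfh = (h_{u,\lambda})_{u \in U, \lambda \in U^\perp}$, and letting $x_u = \sum_{\lambda \in U^\perp} h_{u,\lambda} \in \ImageModSquares$ and $y_\lambda = \sum_{u \in U} h_{u,\lambda} \in \ImageModSquares$, the constraint $\eta(\bfh) = t$ becomes $\eta_1(\bfx) = t_1$ and $\eta_2(\bfy) = t_2$, using that $\eta(\bfh) = \sum_{u,\lambda} h_{u,\lambda} \otimes (u, \phi(u)+\lambda) = \bigl(\sum_u x_u \otimes u, \ \phi\text{-twist} + \sum_\lambda y_\lambda \otimes \lambda\bigr)$, which lands in $T_1 \times T_2$ with the appropriate identification. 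The first task is therefore to write $\Xi_W(\bfh)$ explicitly in these coordinates.

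Next I would expand $\Xi_W$ in terms of $w = (u, \phi(u) + \lambda)$, $w' = (u', \phi(u') + \lambda')$. The pairing term $e(\pi_1(w), \pi_2(w')) = e(u, \phi(u') + \lambda') = e(u, \phi(u'))$ since $u \in U$ and $\lambda' \in U^\perp$, so the $\alpha\alpha$-part of $\Xi_W$ contributes, after summing over $\lambda, \lambda'$, a term depending only on the $\alpha(h_{u,\lambda})$'s; here the key algebraic fact is that $\alpha$ is a group homomorphism (it is $\psi_{-1}$ composed with an identification, so $\alpha(h h') = \alpha(h) + \alpha(h')$), which lets one collapse the $\lambda$-sums. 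Similarly the Galois term $e(\pi_1(w), \gamma^{(r)}(\sigma_{h_w}) \pi_2(w)) = e(u, \gamma^{(r)}(\sigma_{h_{u,\lambda}})(\phi(u) + \lambda))$; using the self-adjointness relation from \Cref{gamma_and_weil_lemma} (up to a $\psi_{-1}$-correction which I will need to track carefully) this rewrites as $e(\gamma^{(r)}(\sigma_{h_{u,\lambda}}) u, \phi(u) + \lambda)$. The crucial point is the $\lambda$-dependence: the only genuinely $\lambda$-dependent piece is $e(\gamma^{(r)}(\sigma_{h_{u,\lambda}}) u, \lambda)$. Summing $(-1)$ to this power over the $h_{u,\lambda}$ subject to the marginal constraints gives a character sum over $\ImageModSquares$ in each slot; this sum vanishes unless $\gamma^{(r)}(\sigma_m) u \in U$ for all relevant $m$ — equivalently unless $\Gamma u \subseteq U$, i.e. $\Gamma U \subseteq U$ — because $e(-, \lambda)$ for $\lambda$ ranging over $U^\perp$ detects exactly the image of $U$ inside $A[2]/U^\perp$, via the nondegenerate pairing of Remark \ref{alt_pairings_correspondence_rmk}. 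This is the step that produces the dichotomy in the statement.

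When $\Gamma U \subseteq U$, the $\lambda$-slots decouple completely: the $\lambda$-dependent phase drops out, and the remaining sum over the $(h_{u,\lambda})$ with fixed row-sums $x_u$ and fixed column-sums $y_\lambda$ is a pure counting problem — the number of matrices over the abelian group $\ImageModSquares$ with prescribed margins. A short computation (the number of such matrices of size $\#U \times \#U^\perp$ with all row and column marginals fixed, when they are consistent, is $\#\ImageModSquares^{(\#U - 1)(\#U^\perp - 1)}$, times a consistency indicator) together with the constraint $\eta_2(\bfy) = t_2$ yields the prefactor $\frac{\#\ImageModSquares^{2^{2r}}}{\#\ImageModSquares^{\#U} \cdot \#T_2}$; here one uses $\#U \cdot \#U^\perp = \#A[2] = 2^{2rg}$, hmm — I should double-check the exponent $2^{2r}$ against $\dim A[2] = 2rg$, so the bookkeeping of dimensions will need care, and the consistency condition $\sum_u x_u = \sum_\lambda y_\lambda$ in $\ImageModSquares$ must be reconciled with the fixed value of $t$. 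After factoring all of this out, what survives inside the sum is exactly $(-1)^{\Upsilon_\phi(\bfx)}$ with $\Upsilon_\phi$ as in \eqref{defi_of_upsilon}, the first term coming from the collapsed $\alpha\alpha$-pairing contribution and the second from the $\lambda$-independent part $e(\gamma^{(r)}(\sigma_{x_u}) u, \phi(u))$ of the Galois term.

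The main obstacle I anticipate is the careful tracking of the $\psi_{-1}$ twist in \Cref{gamma_and_weil_lemma} when moving $\gamma^{(r)}(\sigma)$ across the Weil pairing: the identity $e(x, \gamma(\sigma) y) = e(\gamma(\sigma) x, y) + \psi_{-1}(\sigma) e(x,y)$ is only self-adjointness up to that correction term, so I must check that the correction either cancels (because $\psi_{-1}$ restricted to $\ImageModSquares$ interacts with $\alpha$ in a controlled way, or because the relevant $e(u, \cdot)$ terms vanish) or gets absorbed into the $\alpha\alpha$-piece of $\Upsilon_\phi$. A secondary but real obstacle is getting the combinatorial prefactor exactly right — the margin-counting, the role of the consistency constraint, and making sure the case $0 \in W$ (where the $u=0$ slot behaves specially, as noted after \eqref{just_c_to_remove}) does not spoil the count. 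Once those two points are pinned down, the rest is bookkeeping.
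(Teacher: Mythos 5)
Your proposal is correct and follows essentially the same route as the paper's proof: decompose $W_{U,\phi}\cong U\oplus U^{\perp}$, rewrite $\Xi_W$ as $\Upsilon_\phi$ evaluated at the row sums plus the $\lambda$-dependent character $\sum_{u,\lambda}e\big(\gamma^{(r)}(\sigma_{h_{u,\lambda}})u,\lambda\big)$, observe that summing this character over the coset cut out by the marginal constraints vanishes unless it is trivial there, which by nondegeneracy of $e$ on $U\times A[2]/U^{\perp}$ is equivalent to $\Gamma U\subseteq U$, and then count fibres to extract the prefactor (your margin-counting with the consistency indicator gives the same count as the paper's surjectivity argument for $\bfh\mapsto(\eta_2\circ p_2(\bfh),p_1(\bfh))$). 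Both issues you flagged resolve exactly as you suspected: the $\psi_{-1}$ correction in Lemma \ref{gamma_and_weil_lemma} drops out because $e(u,\phi(u)+\lambda)=0$ for $u\in U$ and $\lambda\in U^{\perp}$, and the exponent should indeed be $\#U\cdot\#U^{\perp}=\#A[2]=2^{2rg}$ (the ``$2^{2r}$'' in the statement is a typo), which is precisely what cancels against $|\ImageModSquares|^{4^{rg}}$ in \eqref{SWsum}.
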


\begin{proof}
Temporarily define projections $p_1\colon \ImageModSquares^{U\oplus U^\perp}\rightarrow \ImageModSquares^U$ and
$p_2\colon \ImageModSquares^{U\oplus U^\perp}\rightarrow \ImageModSquares^{U^\perp}$ by setting 
\[
  p_1(\bfh)=(\textstyle\sum_{\lambda}h_{u,\lambda})_u\quad \textup{ and }
  \quad  p_2(\bfh)=(\textstyle\sum_{u}h_{u,\lambda})_\lambda.
\]
Further, write $\widetilde{\eta}$ for the map $\ImageModSquares^{U\oplus U^\perp}\rightarrow T_1\times T_2$ defined by 
\[
  \widetilde{\eta}(\bfh)=\big(\eta_1\circ p_1(\bfh),\eta_2\circ p_2(\bfh)\big).
\] 
Under the identifications $\ImageModSquares\otimes W\cong T_1\times T_2$ and
$\ImageModSquares^W\cong \ImageModSquares^{U\oplus U^\perp}$ above,
the map $\widetilde{\eta}$ identifies with the map
$\eta\colon \ImageModSquares^W\rightarrow \ImageModSquares\otimes W$ defined previously  

Writing an element $(h_w)_{w}\in \ImageModSquares^W$ as a tuple $\bfh=(h_{u,\lambda})_{u\in U,\lambda\in U^\perp}$
using the isomorphism \eqref{W_parametrisation}, we have
\begin{equation*}
\Xi_W(\bfh)=\sum_{\{(u,\lambda),(u',\lambda')\}\subseteq U\oplus U^\perp}\alpha(h_{u,\lambda})\alpha(h_{u',\lambda'})e(u,\phi(u')+\lambda')+\sum_{u\in U, \lambda \in U^\perp}e(u,\gamma^{(r)}(\sigma_{h_{u,\lambda}})(\phi(u)+\lambda)).
\end{equation*}
In the first sum, we have $e(u,\lambda')=0$ since $u\in U$ and $\lambda'\in U^\perp$.
Further, this sum is over unordered distinct pairs $(u,\lambda)$ and $(u',\lambda')$ of $U\oplus U^\perp$.
If $u=u'$ then $e(u,\phi(u'))=0$, so we can assume that $u\neq u'$. Thus in the first
sum above, we can sum instead over unordered distinct pairs $u,u'$ of $U$, and then
sum freely over pairs $\lambda, \lambda'$ of $U^\perp$. 
For the second sum, using \Cref{gamma_and_weil_lemma} we see that, for $u\in U$ and $\lambda \in U^\perp$, we have
\[e(u,\gamma^{(r)}(\sigma_{h_{u,\lambda}})(\phi(u)+\lambda))=e(\gamma^{(r)}(h_{u,\lambda})u,\phi(u))+e(\gamma^{(r)}(\sigma_{h_{u,\lambda}})u,\lambda).\]
For $u\in U$, set  $x_u(\bfh)=\sum_{\lambda \in U^\perp}h_{u,\lambda}$. From the above discussion, we conclude that
\begin{equation} \label{eq:slowly_simplifying_Xi}
 \Xi_W(\bfh) = \Upsilon_\phi((x_u(\bfh))_u)+\sum_{u\in U,\lambda \in U^\perp}e(\gamma^{(r)}(\sigma_{h_{u,\lambda}})u,\lambda).
\end{equation}

Substituting the expression \eqref{eq:slowly_simplifying_Xi} for  $\Xi_W(\bfh)$ into \eqref{eq:IT_definition} we have
\begin{equation} \label{IT_simplification_formula}
I_t= \sum_{\substack{\bfx\in \ImageModSquares^U\\ \eta_1(\bfx)=t_1}}(-1)^{\Upsilon_\phi(\bfx)}\cdot  \sum_{\substack{\bfh\in \ImageModSquares^{U\oplus U^\perp}\\ \eta_2\circ p_2(\bfh)=t_2\\ p_1(\bfh)=\bfx}}(-1)^{\sum_{(u,\lambda)\in U\oplus U^\perp}e(\gamma^{(r)}(h_{u,\lambda})u,\lambda)}.
\end{equation}
The function $\Theta\colon \ImageModSquares^{U\oplus U^\perp} \rightarrow \{\pm 1\}$ defined by
\[
  \Theta\colon \bfh\longmapsto (-1)^{\sum_{(u,\lambda)\in U\oplus U^\perp}e(\gamma^{(r)}(\sigma_{h_{u,\lambda}})u,\lambda)}
\]
is a homomorphism.  
Let us temporarily denote by $B$ the subgroup of $\ImageModSquares^{U\oplus U^\perp}$
consisting of elements $\bfh$ for which $\eta_2(\bfh)=0$ and $p_1(\bfh)=0$.
Then the innermost sum 
\begin{equation} \label{eq:inner_inner_sum}
\sum_{\substack{\bfh\in \ImageModSquares^{U\oplus U^\perp}\\ \eta_2\circ p_2(\bfh)=t_2\\ p_1(\bfh)=\bfx}}(-1)^{\sum_{(u,\lambda)\in U\oplus U^\perp}e(\gamma^{(r)}(\sigma_{h_{u,\lambda}})u,\lambda)}
\end{equation}
in the above expression for $I_t$ is the sum of the values of $\Theta$ over a coset of $B$.
Consequently, this sum is $0$ unless $\Theta$ is identically $1$ on $B$, that is, unless the quantity
\begin{equation} \label{has_to_be_zero_on_A}
\sum_{(u,\lambda)\in U\oplus U^\perp}e(\gamma^{(r)}(\sigma_{h_{u,\lambda}})u,\lambda)
\end{equation}
vanishes identically on $B$. We will show that this happens precisely when $U$ is invariant under $\Gamma$.
This will show that $I_t$ is zero unless $\Gamma U\subseteq U$.

Given $\bfh\in \ImageModSquares^{U\oplus U^\perp}$, we have 
\[
  \eta_2\circ p_2(\bfh)=\sum_{\lambda \in U^\perp}\big(\textstyle{\sum_{u\in U}}h_{u,\lambda}\big)\otimes \lambda\quad
  \textup{ and }\quad p_1(\bfh)=\big(\sum_{\lambda \in U^\perp}h_{u,\lambda}\big)_{u\in U}.
\]
We claim that the condition that \eqref{has_to_be_zero_on_A} vanish on all elements of $B$
is equivalent to the condition that it vanish on all elements of $\ImageModSquares^{U\oplus U^\perp}$.
Indeed, given an arbitrary element $\bfh\in\ImageModSquares^{U\oplus U^\perp}$,
we can ensure that $\eta_2\circ p_2(\bfh)=0=p_1(\bfh)$ by modifying only the values of
$h_{0,\lambda}$ and $h_{u,0}$ for varying $u\in U$ and $\lambda \in U^\perp$;
such a modification does not change the value of \eqref{has_to_be_zero_on_A}. Now pick $u\in U$,
$\lambda \in U^\perp$, and $h\in \ImageModSquares$. Take $\bfh\in \ImageModSquares^{U\oplus U^\perp}$ to be equal to $h$ in the $(u,\lambda)$-slot, and equal to $0$ everywhere else. The condition that \eqref{has_to_be_zero_on_A} vanish on $\bfh$ is the condition that 
$e(\gamma^{(r)}(\sigma_{h})u,\lambda)=0.$
Since $h$ and $\lambda$ were chosen arbitrarily, we conclude that $\sigma u \in (U^\perp)^\perp=U$ for all $\sigma\in \Gamma$. Since $u\in U$ was chosen arbitrarily, we conclude that $\Gamma U\subseteq U$. Conversely, if $\Gamma U\subseteq U$ then the right hand side of \eqref{has_to_be_zero_on_A} visibly vanishes for all $\bfh\in \ImageModSquares^{U\oplus U^\perp}$. 

Henceforth, suppose that $\Gamma U\subseteq U$. Then  \eqref{eq:inner_inner_sum} is equal to
\[N_{t_2,\bfx}:=\#\{\bfh\in \ImageModSquares^{U\oplus U^\perp}~~\colon~~\eta_2\circ p_2(\bfh)=t_2,~~p_1(\bfh)=\bfx\}.\]
The map $\ImageModSquares^{U\oplus U^\perp}\rightarrow T_2\times \ImageModSquares^{U}$ sending $\bfh$ to $(\eta_2\circ p_2(\bfh),p_1(\bfh))$ is readily seen to be surjective (even when restricted to the subset of elements $\bfh\in \ImageModSquares^{U\oplus U^\perp}$ for which $\bfh_{u,\lambda}=0$ whenever both $u$ and $\lambda$ are non-zero). From this it follows that, for all $\bfx\in \ImageModSquares^U$ and $t_2\in T_2$, we have 
 \[N_{t_2,\bfx}=\frac{\#\ImageModSquares^{2^{2r}}}{\#\ImageModSquares^{\#U}\cdot \#T_2}.\]
Substituting this into \eqref{IT_simplification_formula} gives the result. 
\end{proof}
\begin{proposition} \label{main_invariant_gamma_computation_prop} 
Let $U\subseteq A[2]$ and $\phi\in \mathscr{A}_U$. Then
we have $\sY_{v_0,z_0}(W_{U,\phi})=0$ unless $\Gamma U\subseteq U$. Suppose that $\Gamma U\subseteq U$. Then we have 
\[\sY_{v_0,z_0}(W_{U,\phi})=\mathbbm{1}_{v_0\in (\ImageModSquares\otimes U^\perp)^\perp}\cdot \frac{1}{\#\ImageModSquares^{\#U}}\sum_{t_1\in T_1}(-1)^{\left \langle v_0,\phi(t_1)\right \rangle_\Sigma+\left \langle t_1,z_0\right \rangle_\Sigma}\sum_{\substack{\bfx\in \ImageModSquares^U\\ \eta_1(\bfx)=t_1}}(-1)^{\Upsilon_\phi(\bfx)},\]
where $\Upsilon_\phi(\bfx)$ is defined as in \Cref{innermost_sum_lemma}.
\end{proposition}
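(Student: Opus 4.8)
The plan is to start from the expression for $\sY_{v_0,z_0}(W)$ provided by \Cref{preliminary_evaluation_of_sum}, namely
\[
\sY_{v_0,z_0}(W)=\frac{1}{|\ImageModSquares|^{4^{rg}}}\sum_{t\in \ImageModSquares\otimes W}(-1)^{\left \langle v_0,\pi_2(t)\right \rangle_\Sigma+\left \langle \pi_1(t),z_0\right \rangle_\Sigma}\, I_t,
\]
with $W=W_{U,\phi}$ and $I_t$ the innermost sum. \Cref{innermost_sum_lemma} already handles $I_t$: it vanishes unless $\Gamma U\subseteq U$, and when $\Gamma U\subseteq U$ it equals $\tfrac{\#\ImageModSquares^{2^{2r}}}{\#\ImageModSquares^{\#U}\cdot \#T_2}\sum_{\bfx:\eta_1(\bfx)=t_1}(-1)^{\Upsilon_\phi(\bfx)}$, depending on $t$ only through its first coordinate $t_1\in T_1$. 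The first assertion of the proposition (vanishing unless $\Gamma U\subseteq U$) is then immediate. So the real content is to assemble the second formula, which is essentially bookkeeping: substitute the formula for $I_t$ back in, and re-express the sum over $t\in \ImageModSquares\otimes W$ in terms of the decomposition $\ImageModSquares\otimes W\cong T_1\times T_2$ coming from the isomorphism \eqref{W_parametrisation}.

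Concretely, I would write $t=(t_1,t_2)$ under $\ImageModSquares\otimes W\cong T_1\times T_2$. Under the isomorphism $U\oplus U^\perp\xrightarrow{\sim}W_{U,\phi}$ sending $(u,\lambda)\mapsto (u,\phi(u)+\lambda)$, the projection $\pi_1\colon W\to A[2]$ corresponds to $(u,\lambda)\mapsto u$, so $\pi_1(t)$ depends only on $t_1$ and equals the image of $t_1$ in $\ImageModSquares\otimes A[2]$; I will just write $\pi_1(t)=t_1$. The projection $\pi_2\colon W\to A[2]$ corresponds to $(u,\lambda)\mapsto \phi(u)+\lambda$, so $\pi_2(t)=\phi(t_1)+t_2$, where $\phi(t_1)\in \ImageModSquares\otimes(A[2]/U^\perp)$ is lifted to $\ImageModSquares\otimes A[2]$ — which, after pairing with $v_0$, is well-defined modulo $(\ImageModSquares\otimes U^\perp)^\perp$. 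Hence
\[
\left \langle v_0,\pi_2(t)\right \rangle_\Sigma=\left \langle v_0,\phi(t_1)\right \rangle_\Sigma+\left \langle v_0,t_2\right \rangle_\Sigma.
\]
The term $\left \langle \pi_1(t),z_0\right \rangle_\Sigma$ becomes $\left \langle t_1,z_0\right \rangle_\Sigma$. Therefore the double sum over $t$ factors as a sum over $t_1\in T_1$ times a sum over $t_2\in T_2$ of $(-1)^{\left \langle v_0,t_2\right \rangle_\Sigma}$, since $I_t$ is independent of $t_2$. The $t_2$-sum is a character sum on the group $T_2=\ImageModSquares\otimes U^\perp$: it equals $\#T_2$ if $v_0\in (\ImageModSquares\otimes U^\perp)^\perp$ and $0$ otherwise, which produces the indicator $\mathbbm 1_{v_0\in(\ImageModSquares\otimes U^\perp)^\perp}$.

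Putting these together, the $\#T_2$ from the $t_2$-character-sum cancels the $\#T_2$ in the denominator of $I_t$, and $\#\ImageModSquares^{2^{2r}}=|\ImageModSquares|^{4^{rg}}$ (since $2^{2r}$ should read $2^{2rg}\cdot r=2rg$-dimensional, i.e. $\#A[2]=4^{rg}$) cancels the prefactor $1/|\ImageModSquares|^{4^{rg}}$, leaving exactly
\[
\sY_{v_0,z_0}(W_{U,\phi})=\mathbbm 1_{v_0\in (\ImageModSquares\otimes U^\perp)^\perp}\cdot\frac{1}{\#\ImageModSquares^{\#U}}\sum_{t_1\in T_1}(-1)^{\left \langle v_0,\phi(t_1)\right \rangle_\Sigma+\left \langle t_1,z_0\right \rangle_\Sigma}\sum_{\substack{\bfx\in \ImageModSquares^U\\ \eta_1(\bfx)=t_1}}(-1)^{\Upsilon_\phi(\bfx)}.
\]
The only points requiring a little care — and the closest thing to an obstacle, though it is really just careful bookkeeping — are (i) checking that $\left \langle v_0,\phi(t_1)\right \rangle_\Sigma$ is well-defined once we know $v_0\in(\ImageModSquares\otimes U^\perp)^\perp$, i.e. that the ambiguity in lifting $\phi(t_1)$ across $\ImageModSquares\otimes A[2]\twoheadrightarrow \ImageModSquares\otimes(A[2]/U^\perp)$ lies in $\ImageModSquares\otimes U^\perp$ and hence pairs trivially with $v_0$; and (ii) correctly tracking the cancellation of cardinalities, in particular identifying $\#\ImageModSquares^{2^{2r}}$ with $|\ImageModSquares|^{4^{rg}}=\#(\ImageModSquares\otimes A[2])$ and $\#T_1\cdot\#T_2=\#(\ImageModSquares\otimes A[2])$ via $\dim U+\dim U^\perp=\dim A[2]$. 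Everything else is a direct substitution.
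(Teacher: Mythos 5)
Your proposal is correct and follows essentially the same route as the paper: substitute the formula for $I_t$ from Lemma \ref{innermost_sum_lemma} into the expression of Lemma \ref{preliminary_evaluation_of_sum}, write $t=(t_1,t_2)$ via \eqref{W_parametrisation} so that $\pi_1(t)=t_1$ and $\pi_2(t)=\phi(t_1)+t_2$, factor out the character sum $\sum_{t_2\in T_2}(-1)^{\left\langle v_0,t_2\right\rangle_\Sigma}$ to produce the indicator, and cancel cardinalities. The only cosmetic difference is that the paper fixes an explicit section $s\colon A[2]/U^\perp\to A[2]$ to lift $\phi$ (which also extends the definition of $\Upsilon_\phi$ beyond the $\Gamma$-invariant case), whereas you handle the lifting ambiguity by noting it lies in $\ImageModSquares\otimes U^\perp$ and pairs trivially with $v_0$ under the indicator condition.
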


\begin{proof}
Write $W=W_{U,\phi}$.
Fix a linear map $s\colon A[2]/U^\perp \longrightarrow A[2]$
giving a section to the quotient map $A[2]\rightarrow A[2]/U^\perp$. By an abuse of notation,
we denote the composition $s\circ \phi$ by $\phi$ also, and thus view $\phi$ as a linear map $U\rightarrow A[2]$.
We then define the function $\Upsilon_\phi\colon \ImageModSquares^U\rightarrow \F_2$
by the expression in \eqref{defi_of_upsilon}, that is, with the help of the section $s$,
we extend the definition in the statement to the case when $U$ is not necessarily $\Gamma$-invariant.

An immediate consequence of Lemma \ref{innermost_sum_lemma} and  \eqref{SWsum} is that one has
$\sY_{v_0,z_0}(W)=0$ unless $\Gamma U\subseteq U$. When $\Gamma U\subseteq U$, substituting the
expression for $I_t$ afforded by Lemma \ref{innermost_sum_lemma} into  \eqref{SWsum} we see that
 \begin{eqnarray*}
\sY_{v_0,z_0}(W)&=&\frac{1}{\#\ImageModSquares^{\#U}\cdot \#T_2}\sum_{(t_1,t_2)\in T_1\times T_2}(-1)^{\left \langle v_0,\phi(t_1)+t_2\right \rangle_\Sigma+\left \langle t_1,z_0\right \rangle_\Sigma}\sum_{\substack{\bfx\in \ImageModSquares^U\\ \eta_1(\bfx)=t_1}}(-1)^{\Upsilon_\phi(\bfx)}\\
  &=& \frac{1}{\#\ImageModSquares^{\#U}}\sum_{t_1\in T_1}(-1)^{\left \langle v_0,\phi(t_1)\right \rangle_\Sigma+\left \langle t_1,z_0\right \rangle_\Sigma}\sum_{\substack{\bfx\in \ImageModSquares^U\\ \eta_1(\bfx)=t_1}}(-1)^{\Upsilon_\phi(\bfx)}\frac{1}{\#T_2}\sum_{t_2\in T_2}(-1)^{\left \langle v_0,t_2\right \rangle_\Sigma}.
 \end{eqnarray*}
The innermost sum is equal to $\#T_2$ if $v_0$ is orthogonal to $T_2=\ImageModSquares\otimes U^\perp$, and is equal to $0$ otherwise. This completes the proof.
\end{proof}

 \subsection{Description of $\Gamma$-invariant subspaces in the case of interest}
 
 \begin{lemma} \label{simple_subspace_lemma}
Suppose that Condition \ref{assumption:simple_gamma_mod} is satisfied.
Then every $\Gamma$-invariant
subspace of $A[2]=E[2]\otimes\F_2^r$ has the form $E[2]\otimes T$ for some subspace $T\subseteq \F_2^r$. 
\end{lemma}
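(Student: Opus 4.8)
The plan is to recast the statement as one about modules over a finite-dimensional $\F_2$-algebra and then invoke standard facts on isotypic semisimple modules. First I would observe that a subspace of $E[2]$ (resp.\ of $A[2]=E[2]\otimes_{\F_2}\F_2^r$, on which $\Gamma$ acts diagonally, i.e.\ through the first tensor factor) is $\Gamma$-invariant if and only if it is a submodule over the unital $\F_2$-subalgebra $R\subseteq\End(E[2])$ generated by $\Gamma$: invariance under the \emph{set} $\Gamma$ forces invariance under all $\F_2$-linear combinations and compositions of its elements, and conversely $\Gamma\subseteq R$. For the same reason $\End_\Gamma(E[2])=\End_R(E[2])$. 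Thus Condition \ref{assumption:simple_gamma_mod} says exactly that $M:=E[2]$ is a simple $R$-module with $\End_R(M)=\F_2$, and the claim becomes: every $R$-submodule of $N:=M\otimes_{\F_2}\F_2^r$ is of the form $M\otimes_{\F_2}T$ for a subspace $T\subseteq\F_2^r$.

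Next I would prove the key local fact by Schur's lemma: every $R$-module homomorphism $f\colon M\to N$ has the form $m\mapsto m\otimes w$ for a unique $w\in\F_2^r$. Indeed, composing $f$ with each of the $r$ coordinate projections $N\to M$ gives an $R$-endomorphism of the simple module $M$, hence an element of $\End_R(M)=\F_2$, i.e.\ a scalar $c_i\in\F_2$; then $w=(c_1,\dots,c_r)$ recovers $f$. Since $N\cong M^{\oplus r}$ is a finite direct sum of copies of the simple module $M$, it is semisimple and isotypic of type $M$: any simple submodule $S\subseteq N$ projects nontrivially, hence isomorphically, to some coordinate copy of $M$ (Schur again), so $S$ is the image of an injection $M\hookrightarrow N$ and therefore $S=M\otimes_{\F_2}w_S$ for some nonzero $w_S\in\F_2^r$ by the previous sentence.

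Finally I would assemble these. Given an $R$-submodule $U\subseteq N$, set $T:=\{\,w\in\F_2^r:\ M\otimes_{\F_2}w\subseteq U\,\}$; this is an $\F_2$-subspace, since $M\otimes w_1,\,M\otimes w_2\subseteq U$ forces $M\otimes\langle w_1,w_2\rangle\subseteq U$. By construction $M\otimes_{\F_2}T\subseteq U$. Conversely $U$, being a submodule of the semisimple module $N$, is itself semisimple and equals the sum of its simple submodules $S$; each such $S$ equals $M\otimes_{\F_2}w_S$ with $w_S\in T$, so $S\subseteq M\otimes_{\F_2}T$, whence $U=\sum_{S\subseteq U}S\subseteq M\otimes_{\F_2}T\subseteq U$ and $U=M\otimes_{\F_2}T$, as required.

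This is more bookkeeping than a genuine obstacle: the substantive point is simply the combined use of \emph{both} halves of Condition \ref{assumption:simple_gamma_mod} — simplicity of $M$ to apply Schur's lemma and to recognise simple submodules of $N$, and $\End_R(M)=\F_2$ to force the Schur endomorphisms to be $\F_2$-scalars (if $\End_R(M)$ were a proper extension $\F_{2^k}$, submodules of $M^{\oplus r}$ would instead correspond to $\F_{2^k}$-subspaces of $\F_{2^k}^{\,r}$ and the conclusion would fail). The only care needed is in the first paragraph's identification of $\Gamma$-invariant subspaces with $R$-submodules; everything after that is finite-dimensional linear algebra.
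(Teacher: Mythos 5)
Your proof is correct, and it uses the same two inputs from Condition \ref{assumption:simple_gamma_mod} in the same way the paper does (simplicity of $E[2]$ over the ring $R$ generated by $\Gamma$, and $\End_R(E[2])=\F_2$), but the execution is genuinely different. The paper argues dually: it restricts the coordinate functionals, i.e.\ considers $\Theta\colon \Hom(\F_2^r,\F_2)\to\Hom_R(M,E[2])$, $\phi\mapsto (1\otimes\phi)|_M$ for the given invariant subspace $M$, sets $N=\ker\Theta$, normalises by $\GL_r(\F_2)$ so that $M\subseteq E[2]\otimes(0\times\F_2^{r-s})$, and then forces equality by a dimension count, using semisimplicity to write $M\cong E[2]^t$ and the identity $\dim\Hom_R(E[2]^t,E[2])=t$ (this is precisely where $\End_R(E[2])=\F_2$ enters). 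You instead work primally: Schur's lemma plus $\End_R(E[2])=\F_2$ shows every $R$-homomorphism $E[2]\to E[2]\otimes\F_2^r$ is $m\mapsto m\otimes w$, hence every simple submodule is a ``line'' $E[2]\otimes w$; since a submodule of the semisimple module $E[2]^{\oplus r}$ is the sum of its simple submodules, it equals $E[2]\otimes T$ with $T=\{w\in\F_2^r : E[2]\otimes w\subseteq U\}$, which you correctly check is a subspace. Your route avoids the coordinate normalisation and the dimension count and has the small added benefit of producing $T$ explicitly inside the proof; the paper recovers the analogous explicit description ($M=E[2]\otimes N^\circ$, with $N^\circ$ the annihilator of $N$) only in the remark following the lemma. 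Your closing observation that the conclusion fails if $\End_R(E[2])$ were a proper extension of $\F_2$ is also a correct and useful sanity check on why both halves of Condition \ref{assumption:simple_gamma_mod} are needed.
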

 
\begin{proof}
Let $R$ be the subring of $\End(E[2])$ generated by $\Gamma$, and let $M$ be a $\Gamma$-invariant
subspace of $E[2]^r$. Thus $M$ is an $R$-submodule of $E[2]^r$. Consider the map 
\[\Theta\colon \Hom(\F_2^r,\F_2)\rightarrow \Hom_R(M,E[2])\]
sending $\phi$ to the restriction of $1\otimes \phi$ to $M$. Let $N$ be the kernel of $\Theta$, so that we have an induced injection 
\begin{equation} \label{inj_to_homs_xi_map}
  \Hom(\F_2^r,\F_2)/N\hookrightarrow \Hom_{R}(M,E[2]).
\end{equation}
Let $s=\dim N$. The collection of subspaces of $E[2]^r$ of the form $E[2]\otimes T$
is stable under the action of $\GL_r(\F_2)$ on the second factor of $E[2]\otimes \F_2^r$.
Consequently, acting by an element of $\GL_r(\F_2)$ if necessary, we can assume that $N$
is the span of the first $s$ coordinate projections, hence we have
\[M\subseteq 0\times E[2]^{r-s}=E[2]\otimes (0\times \F_2^{r-s}).\]
We will show that this containment is in fact an equality. By assumption,
$E[2]$ is a simple $R$-module. In particular, the $R$-submodule $M$ of $E[2]^r$ is semisimple,
and every simple $R$-submodule of $M$ is isomorphic to $E[2]$. Thus we have $M\cong E[2]^t$ for some
integer $t\geq 0$. Counting dimensions, it suffices to show that $t\geq r-s$. Since
we assumed that $\End_R(E[2])=\F_2$, we have
\[\dim \Hom_R(M,E[2])=\dim \Hom_R(E[2]^t,E[2])=\dim \End_R(E[2])^t=t.\] 
Taking dimensions in \eqref{inj_to_homs_xi_map} gives the desired inequality.  
\end{proof}

\begin{remark}
In the notation of the proof of \Cref{simple_subspace_lemma}, before acting by $\GL_r(\F_2)$,
let $N^\circ\subseteq \F_2^r$ be the annihilator of $N\subseteq \Hom(\F_2^r,\F_2)$.
Then the proof of \Cref{simple_subspace_lemma} shows that we have $M=E[2]\otimes N^\circ$.
\end{remark}

\subsection{Combinatorics of the main term}\label{sec:maintermcomb}

When Condition \ref{assumption:simple_gamma_mod} is satisfied, it follows from \Cref{simple_subspace_lemma} and \Cref{main_invariant_gamma_computation_prop} that
the only maximal isotropic subspaces $W=W_{U,\phi}$ that give a non-zero contribution
to \eqref{main_term_inj_contribution} are those for which we have $U=A[2]$.  Motivated by this, we restrict attention to these subspaces (but do not presently assume Condition \ref{assumption:simple_gamma_mod}).

 By \Cref{desc_of_max_iso}, the set of such subspaces is in bijection with the set  
\[\mathscr{A}:=\mathscr{A}_{A[2]}=\{\phi \in \End(A[2])~~\colon~~e_2(u,\phi(u))=0\textup{ for all }u\in A[2]\}.\]
For $\phi\in \mathscr{A}$ and $(v_0,z_0)\in V_0\times Z_0$, write $\sY_{v_0,z_0}(\phi)=\sY_{v_0,z_0}(W)$, where $W=W_{A[2],\phi}$. By \Cref{main_invariant_gamma_computation_prop} we have 
\begin{equation} \label{expression_for_S_phi_again}
\sY_{v_0,z_0}(\phi)=\frac{1}{|\ImageModSquares|^{2^{2k}}}\sum_{t\in \ImageModSquares\otimes A[2]}(-1)^{\left \langle v_0,\phi(t)\right \rangle_\Sigma+\left \langle t,z_0\right \rangle_\Sigma}\sum_{\substack{\bfx\in \ImageModSquares^U\\ \eta_1(\bfx)=t}}(-1)^{\Upsilon_\phi(\bfx)}.
\end{equation}

The next step is to sum this expression over all $\phi \in \mathscr{A}$. The following lemma establishes the key property of $\Upsilon_\phi$ we will need in order to do this. It is the analogue of \cite[Lemma 13]{MR1292115}. For the statement, we denote by $L_\phi\colon T_1\otimes T_1\rightarrow \F_2$ the bilinear pairing defined by
\[L_\phi(t,t')=e\big((\alpha \otimes 1)(t), (\alpha \otimes \phi)(t')\big).\]

\begin{lemma} \label{quad_form_phi_upsilon_lemma}
Let $\phi \in \mathscr{A}$. Then, for any $\bfx, \bfy \in \ImageModSquares^{A[2]}$, we have
\[  \Upsilon_\phi(\bfx+\bfy)+\Upsilon_\phi(\bfx)+\Upsilon_\phi(\bfy)=L(\eta_1(\bfx),\eta_1(\bfy)).\]
In particular, the function $\Upsilon_\phi\colon \ImageModSquares^{A[2]}\rightarrow \F_2$ is a quadratic form whose associated bilinear pairing factors through $\eta_1$. 
\end{lemma}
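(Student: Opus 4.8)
\textbf{Proof strategy for Lemma \ref{quad_form_phi_upsilon_lemma}.}
The plan is to expand $\Upsilon_\phi(\bfx+\bfy)$ directly from the defining formula \eqref{defi_of_upsilon} and track which terms survive the subtraction of $\Upsilon_\phi(\bfx)$ and $\Upsilon_\phi(\bfy)$. Recall that $\Upsilon_\phi$ has two pieces: a ``quadratic'' double sum $\sum_{\{u,u'\}}\alpha(x_u)\alpha(x_{u'})e(u,\phi(u'))$ over unordered pairs, and a ``$\gamma$-linear'' single sum $\sum_u e(\gamma^{(r)}(\sigma_{x_u})u,\phi(u))$. The key input for the first piece is that $\ImageModSquares\subseteq H^1(\Q,\F_2)$ is an $\F_2$-vector space, so $\alpha\colon \ImageModSquares\to\F_2$ is $\F_2$-linear (since $h\equiv(-1)^{\alpha(h)}\bmod 4$ is a group homomorphism $\ImageModSquares\to\{\pm1\}$), hence $\alpha(x_u+y_u)=\alpha(x_u)+\alpha(y_u)$. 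The key input for the second piece is that $\gamma\colon G_\Q\to\End E[2]$ is \emph{additive} (Definition \ref{def:definition_of_gamma}), so $\gamma^{(r)}(\sigma_{x_u+y_u})=\gamma^{(r)}(\sigma_{x_u})+\gamma^{(r)}(\sigma_{y_u})$, since $m\mapsto\sigma_m$ is itself a homomorphism to $\Gal(K_\Sigma/\Q)$ and $\gamma$ factors through it additively.

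\textbf{First, the $\gamma$-linear piece.} Since $\gamma^{(r)}$ is additive in its argument, $\sum_u e(\gamma^{(r)}(\sigma_{x_u+y_u})u,\phi(u)) = \sum_u e(\gamma^{(r)}(\sigma_{x_u})u,\phi(u)) + \sum_u e(\gamma^{(r)}(\sigma_{y_u})u,\phi(u))$, so this piece contributes exactly $0$ to $\Upsilon_\phi(\bfx+\bfy)+\Upsilon_\phi(\bfx)+\Upsilon_\phi(\bfy)$. \textbf{Second, the quadratic piece.} Here I would rewrite the unordered double sum as an ordered sum: because $\phi\in\mathscr{A}$ means $e(u,\phi(u))=0$, the diagonal drops out and $\sum_{\{u,u'\}}\alpha(x_u)\alpha(x_{u'})e(u,\phi(u')) = \sum_{u\neq u'}\alpha(x_u)\alpha(x_{u'})e(u,\phi(u'))$ after symmetrizing — more precisely, one uses that the bilinear form $(u,u')\mapsto e(u,\phi(u'))$ is alternating (this is exactly the content of $\phi\in\mathscr{A}$, cf. Remark \ref{alt_pairings_correspondence_rmk}), so the unordered sum over pairs equals the ordered sum $\sum_{(u,u')}$ restricted by an ordering, but it is cleanest to note $B_\phi(u,u'):=e(u,\phi(u'))$ satisfies $B_\phi(u,u)=0$ and then $\sum_{\{u,u'\}}\alpha(x_u)\alpha(x_{u'})B_\phi(u,u')$ is a well-defined quadratic-type expression in the tuple $(\alpha(x_u))_u$. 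Expanding with $\alpha(x_u+y_u)=\alpha(x_u)+\alpha(y_u)$ and collecting, the surviving cross terms are $\sum_{\{u,u'\}}\big(\alpha(x_u)\alpha(y_{u'})+\alpha(y_u)\alpha(x_{u'})\big)B_\phi(u,u')$, which (again using $B_\phi$ alternating to merge the unordered pair sum into an ordered double sum) equals $\sum_{u,u'}\alpha(x_u)\alpha(y_{u'})e(u,\phi(u'))$.

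\textbf{Finally, identify this with $L_\phi(\eta_1(\bfx),\eta_1(\bfy))$.} By definition $\eta_1(\bfx)=\sum_u x_u\otimes u\in\ImageModSquares\otimes A[2]$, and the pairing $L_\phi$ on $T_1\otimes T_1$ is $L_\phi(t,t')=e((\alpha\otimes 1)(t),(\alpha\otimes\phi)(t'))$. Applying this with $t=\eta_1(\bfx)$, $t'=\eta_1(\bfy)$: $(\alpha\otimes 1)(\eta_1(\bfx))=\sum_u\alpha(x_u)u$ and $(\alpha\otimes\phi)(\eta_1(\bfy))=\sum_{u'}\alpha(y_{u'})\phi(u')$, so $L_\phi(\eta_1(\bfx),\eta_1(\bfy))=\sum_{u,u'}\alpha(x_u)\alpha(y_{u'})e(u,\phi(u'))$, matching the cross term computed above. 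Hence $\Upsilon_\phi(\bfx+\bfy)+\Upsilon_\phi(\bfx)+\Upsilon_\phi(\bfy)=L_\phi(\eta_1(\bfx),\eta_1(\bfy))$, which is bilinear, so $\Upsilon_\phi$ is a quadratic form; and since the right-hand side depends on $\bfx,\bfy$ only through $\eta_1(\bfx),\eta_1(\bfy)$, the associated bilinear form factors through $\eta_1$. The only genuine subtlety — the ``main obstacle'' — is bookkeeping the passage between unordered-pair sums and ordered double sums while correctly using that $e(-,\phi(-))$ is alternating rather than merely bilinear; there is no deep difficulty once the two homomorphism properties ($\alpha$ additive, $\gamma$ additive) are in hand, and both are immediate from the definitions recalled above.
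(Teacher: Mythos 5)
Your proposal is correct and follows essentially the same route as the paper's proof: discard the $\gamma$-term by linearity (which, as you note, comes from $\gamma$ being a homomorphism into the additive group $\End(E[2])$ composed with the multiplicative Artin map), expand the unordered double sum using additivity of $\alpha$, and use that $(u,u')\mapsto e(u,\phi(u'))$ is alternating — hence symmetric with vanishing diagonal — to fold the cross terms into the full ordered double sum $\sum_{u,u'}\alpha(x_u)\alpha(y_{u'})e(u,\phi(u'))=L_\phi(\eta_1(\bfx),\eta_1(\bfy))$. The paper carries out the unordered-to-ordered bookkeeping via an auxiliary linear ordering on $A[2]$, but this is only a presentational difference.
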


\begin{proof}
 By definition, for any $\bfx\in \ImageModSquares^{A[2]}$, we have 
\[\Upsilon_\phi(\bfx)=\sum_{\{u,u'\}\subseteq A[2]}e\big(\alpha(x_u)u,\alpha(x_{u'})\phi(u')\big)+\sum_{u\in A[2] }e\big(u,\gamma(\sigma_{x_u})\phi(u)\big).\]
The second sum is linear in $\bfx$, so we may ignore it for the purposes of the lemma. The first sum is bilinear in $\bfx$, from which we deduce that 
\[ \Upsilon_\phi(\bfx+\bfy)+\Upsilon_\phi(\bfx)+\Upsilon_\phi(\bfy)=\sum_{\{u,u'\}\subseteq A[2]}\Big(e\big(\alpha(x_u)u,\alpha(y_{u'})\phi(u')\big)+e\big(\alpha(y_u)u,\alpha(x_{u'})\phi(u')\big)\Big).\]
It will be convenient to temporarily fix a linear ordering $<$ on $A[2]$. Having done this, the above expression is equal to 
\begin{equation} \label{HB_linear_ordering}
\sum_{u }\sum_{w>u}e\big(\alpha(x_u)u,\alpha(y_{u'})\phi(u')\big)+\sum_u\sum_{w>u}e\big(\alpha(y_u)u,\alpha(x_{u'})\phi(u')\big).\end{equation}
Since $e(u,\phi(u))=0$ for all $u\in A[2]$, the pairing $(u,u')\mapsto e(u,\phi(u'))$ is alternating, and in particular symmetric. Thus the second of the two sums above is equal to 
\[\sum_{u}\sum_{u'>u}e\big(\alpha(x_{u'})u',\alpha(y_u)\phi(u)\big)=\sum_{u}\sum_{u'<u}e\big(\alpha(x_u)u,\alpha(y_{u'})\phi(u')\big)\]
where the equality follows from relabelling $u\leftrightarrow u'$ and swapping the order of summation. Inserting this into \eqref{HB_linear_ordering} and again using that $e(u,\phi(u))=0$ for all $u$, we have 
\begin{eqnarray*}
\Upsilon_\phi(\bfx+\bfy)+\Upsilon_\phi(\bfx)+\Upsilon_\phi(\bfy)&=& \sum_{u}\sum_{u'}e\big(\alpha(x_u)u,\alpha(y_{u'})\phi(u')\big),
\end{eqnarray*}
which is equal to $L(\eta_1(\bfx),\eta_1(\bfy))$ as desired.
\end{proof}
 
\begin{definition}
Define 
\[
  \Psi\colon \ker(\eta_1)\rightarrow \Hom(\mathscr{A},\F_2)
\]
sending $\bfx$ to the map $\phi \mapsto \Upsilon_\phi(\bfx)$. Note that $\Psi$ is
a homomorphism, since by \Cref{quad_form_phi_upsilon_lemma}, for every
$\phi \in \mathscr{A}$, the restriction of $\Upsilon_\phi$ to $\ker(\eta_1)$ is a homomorphism.
Let $\im(\Psi)^\circ \subseteq \mathscr{A}$ denote the annihilator of $\im(\Psi)$.
\end{definition}

\begin{lemma}\label{lem:sum_over_phi}
For each $t_1\in \ImageModSquares\otimes A[2]$, fix an $\bfx(t_1)\in \ImageModSquares^{A[2]}$
with $\eta_1(\bfx)=t_1$. Given $t_1\in \ImageModSquares\otimes A[2]$,
$\bfx\in \ImageModSquares^{A[2]}$, and $v_0\in V_0$, let
$\lambda_{t_1,\bfx,v_0}\colon \mathscr{A}\to \F_2$ be given by $\phi\mapsto
\Upsilon_{\phi}(\bfx) + \left \langle v_0,\phi(t_1)\right \rangle_\Sigma$.
Then the left hand side of \eqref{main_term_inj_contribution} is equal to 
\begin{eqnarray*}
\frac{ \#\im(\Psi)^\circ}{\#(\ImageModSquares\otimes A[2])}\sum_{v_0 \in V_0}
\sum_{t_1\in \ImageModSquares\otimes A[2]} \mathbbm{1}_{\lambda_{t_1,\bfx(t_1),v_0}\in \im(\Psi)}\mathbbm{1}_{v_0+t_1\in Z_0}.
\end{eqnarray*}
\end{lemma}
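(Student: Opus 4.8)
\textbf{Proof plan for Lemma \ref{lem:sum_over_phi}.}

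The plan is to start from \Cref{thm:inj_moments}, which expresses the left hand side of \eqref{main_term_inj_contribution} as $\sum_{W\text{ max.\ iso.}, W\text{ sat. }(\star)}\sY(W)$, and to combine it with the reduction from \Cref{simple_subspace_lemma} and \Cref{main_invariant_gamma_computation_prop}. Wait: actually the cleaner route is to \emph{not} invoke Condition \ref{assumption:simple_gamma_mod} yet, but to work with the expression $\sum_{\phi\in\mathscr{A}}\frac{1}{|Z_0|}\sum_{(v_0,z_0)}(-1)^{\left \langle v_0,z_0\right \rangle_\Sigma}\sY_{v_0,z_0}(\phi)$, where $\sY_{v_0,z_0}(\phi)$ is given by \eqref{expression_for_S_phi_again}; one first argues (using \Cref{desc_of_max_iso}, \Cref{simple_subspace_lemma}, \Cref{main_invariant_gamma_computation_prop}) that the only maximal isotropic subspaces contributing to the sum over those $W$ satisfying $(\star)$ are the $W_{A[2],\phi}$ for $\phi\in\mathscr{A}$ — indeed $(\star)$ with $U = \pi_1(W)$ forces $\pi_1(W)$ not to be contained in $E[2]\otimes N$ for any hyperplane $N$, while $\Gamma$-invariance of $U$ plus \Cref{simple_subspace_lemma} forces $U = E[2]\otimes T$; these are compatible only when $T = \F_2^r$, i.e. $U = A[2]$. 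So first I would record this reduction, noting that $(\star)$ is automatic for $U=A[2]$.

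Next I would expand $\sY(W_{A[2],\phi}) = \frac{1}{|Z_0|}\sum_{(v_0,z_0)\in V_0\times Z_0}(-1)^{\left \langle v_0,z_0\right \rangle_\Sigma}\sY_{v_0,z_0}(\phi)$ using \eqref{expression_for_S_phi_again}, and sum over $\phi\in\mathscr{A}$. The inner sum over $z_0\in Z_0$ of $(-1)^{\left \langle v_0,z_0\right \rangle_\Sigma + \left \langle t,z_0\right \rangle_\Sigma}$ collapses, by non-degeneracy of $\left \langle~,~\right \rangle_\Sigma$ and the fact that $Z_0$ is its own orthogonal complement, to $\#Z_0$ if $v_0+t\in Z_0^\perp = Z_0$ and $0$ otherwise; this produces the indicator $\mathbbm{1}_{v_0+t_1\in Z_0}$ (here $t = t_1$ since $U^\perp = 0$, so $T_2 = 0$ and $T_1 = \ImageModSquares\otimes A[2]$). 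After this step the remaining expression is
\[
\frac{1}{\#(\ImageModSquares\otimes A[2])}\sum_{\phi\in\mathscr{A}}\sum_{v_0\in V_0}\sum_{t_1}\sum_{\substack{\bfx:\eta_1(\bfx)=t_1}}(-1)^{\Upsilon_\phi(\bfx)+\left \langle v_0,\phi(t_1)\right \rangle_\Sigma}\mathbbm{1}_{v_0+t_1\in Z_0}.
\]
By \Cref{quad_form_phi_upsilon_lemma}, $\Upsilon_\phi(\bfx)$ depends on $\bfx$ only through $t_1 = \eta_1(\bfx)$ up to the homomorphism $\phi\mapsto\Upsilon_\phi(\cdot)$ restricted to $\ker(\eta_1)$; more precisely, replacing $\bfx$ by $\bfx(t_1)$ (the chosen representative) changes $\Upsilon_\phi(\bfx)$ by $\Upsilon_\phi(\bfx - \bfx(t_1))$ with $\bfx - \bfx(t_1)\in\ker(\eta_1)$, which by definition of $\Psi$ is $\Psi(\bfx-\bfx(t_1))(\phi)$. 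So for fixed $t_1, v_0$, the innermost double sum $\sum_{\phi}\sum_{\bfx:\eta_1(\bfx)=t_1}(-1)^{\Upsilon_\phi(\bfx)+\left \langle v_0,\phi(t_1)\right \rangle_\Sigma}$ equals $\sum_{\bfn\in\ker(\eta_1)}\sum_{\phi\in\mathscr{A}}(-1)^{\lambda_{t_1,\bfx(t_1),v_0}(\phi) + \Psi(\bfn)(\phi)}$. The inner sum over $\phi\in\mathscr{A}$ of $(-1)^{\lambda_{t_1,\bfx(t_1),v_0}(\phi)+\Psi(\bfn)(\phi)}$, where $\lambda_{t_1,\bfx(t_1),v_0}$ and $\Psi(\bfn)$ are linear functionals on $\mathscr{A}$, vanishes unless $\lambda_{t_1,\bfx(t_1),v_0} = \Psi(\bfn)$ in $\Hom(\mathscr{A},\F_2)$, in which case it is $\#\mathscr{A}$; but the condition $\lambda_{t_1,\bfx(t_1),v_0}\in\im(\Psi)$ is exactly the condition that such an $\bfn$ exists, and the number of such $\bfn$ is $\#\ker(\Psi)$, so the total over $\bfn$ and $\phi$ is $\#\ker(\Psi)\cdot\#\mathscr{A}\cdot\mathbbm{1}_{\lambda_{t_1,\bfx(t_1),v_0}\in\im(\Psi)} = \#\mathscr{A}\cdot\#\ker(\Psi)\cdot\mathbbm{1}_{\lambda_{t_1,\bfx(t_1),v_0}\in\im(\Psi)}$. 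Finally $\#\ker(\Psi)\cdot\#\mathscr{A}/\#\im(\Psi) $: one has $\#\mathscr{A} = \#\im(\Psi)\cdot\#\im(\Psi)^\circ$ (annihilator), and $\#\ker(\eta_1) = \#\ker(\Psi)\cdot\#\im(\Psi)$, but we have divided by $\#(\ImageModSquares\otimes A[2])$ and summed over all $t_1$ with chosen representatives $\bfx(t_1)$ rather than over all $\bfx$ — so the $\#\ker(\eta_1)$ bookkeeping must be tracked carefully. The upshot is a factor $\#\im(\Psi)^\circ$ together with $1/\#(\ImageModSquares\otimes A[2])$, giving the stated formula.

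The main obstacle, and the step requiring the most care, is the bookkeeping of the various group orders: reconciling the sum over $\bfx\in\ImageModSquares^{A[2]}$ (which appears in \eqref{expression_for_S_phi_again}) with the sum over $t_1$ and fixed representatives $\bfx(t_1)$, and correctly identifying the combination $\#\mathscr{A}$, $\#\ker(\Psi)$, $\#\ker(\eta_1)$, $\#\im(\Psi)$, $\#\im(\Psi)^\circ$ that collapses to the single factor $\#\im(\Psi)^\circ$. One must also be careful that $\lambda_{t_1,\bfx(t_1),v_0}$ is genuinely linear in $\phi$ — linearity of $\phi\mapsto\Upsilon_\phi(\bfx)$ for fixed $\bfx$ follows by inspecting \eqref{defi_of_upsilon} (both summands are manifestly linear in $\phi$), and $\phi\mapsto\left \langle v_0,\phi(t_1)\right \rangle_\Sigma$ is linear since $\phi\mapsto\phi(t_1)$ is — so that the character-orthogonality argument over $\mathscr{A}$ applies. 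Everything else is a routine assembly of the lemmas already established.
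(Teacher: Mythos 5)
Your plan is correct and follows essentially the same route as the paper's proof: sum \eqref{expression_for_S_phi_again} over $\phi\in\mathscr{A}$, use linearity in $\phi$ together with \Cref{quad_form_phi_upsilon_lemma} to reduce the inner sum to counting $\bfn\in\ker(\eta_1)$ with $\Psi(\bfn)=\lambda_{t_1,\bfx(t_1),v_0}$, perform the $\#\mathscr{A}$, $\#\ker(\Psi)$, $\#\ker(\eta_1)$, $\#\im(\Psi)^\circ$ bookkeeping, and collapse the $z_0$-sum via self-orthogonality of $Z_0$. The only differences are cosmetic (you collapse the $z_0$-sum first rather than last, and exchange the order of the $\phi$- and $\bfn$-sums before applying orthogonality), and the group-order relations you record ($\#\mathscr{A}=\#\im(\Psi)\cdot\#\im(\Psi)^\circ$, $\#\ker(\eta_1)=\#\ker(\Psi)\cdot\#\im(\Psi)$, $\#\ker(\eta_1)=\#\ImageModSquares^{2^{2r}}/\#(\ImageModSquares\otimes A[2])$) are exactly what is needed to finish.
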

\begin{proof}
Summing \eqref{expression_for_S_phi_again} over $\phi \in \mathscr{A}$ gives
\begin{equation} \label{after_summing_over_phi}
 \sum_{\phi \in \mathscr{A}}\sY_{v_0,z_0}(\phi)=
 \frac{1}{\#\ImageModSquares^{2^{2k}}}\sum_{t_1\in T_1}(-1)^{\left \langle t_1,z_0\right \rangle_\Sigma}
 \sum_{\substack{\bfx\in \ImageModSquares^{A[2]}\\ \eta_1(\bfx)=t_1}}
 \sum_{\phi \in \mathscr{A}}(-1)^{ \Upsilon_\phi(\bfx)+\left \langle v_0,\phi(t_1)\right \rangle_\Sigma}.
\end{equation}
First we fix $t_1\in T_1$, and consider the sum
\begin{equation} \label{inner_phi_sum}
 \sum_{\substack{\bfx\in \ImageModSquares^{A[2]}\\ \eta_1(\bfx)=t_1}}\sum_{\phi \in \mathscr{A}}(-1)^{ \Upsilon_\phi(\bfx)+\left \langle v_0,\phi(t_1)\right \rangle_\Sigma}.
\end{equation}
 For fixed $\bfx$, the map $\mathscr{A}\rightarrow \F_2$ given by 
\[\phi \mapsto \Upsilon_\phi(\bfx)+\left \langle v_0,\phi(t_1)\right \rangle_\Sigma\]
is a homomorphism. Consequently, the expression in \eqref{inner_phi_sum} is equal to 
\begin{equation*}
\#\mathscr{A}\cdot \#\{\bfx\in \ImageModSquares^{A[2]}~~\colon~~ \eta_1(\bfx)=t_1 \textup{ and }\Upsilon_\phi(\bfx)=\left \langle v_0,\phi(t_1)\right \rangle_\Sigma \textup{ for all }\phi \in \mathscr{A}\}.
\end{equation*}
By \Cref{quad_form_phi_upsilon_lemma}, the above expression is equal to 
\begin{align} \label{slowly_moving_in_right_direction}
&  \#\mathscr{A}\cdot \#\{\bfx\in \ker(\eta_1)~~\colon~~   \Upsilon_\phi(\bfx)=
  \Upsilon_{\phi}(\bfx(t_1)) + \left \langle v_0,\phi(t_1)\right \rangle_\Sigma\textup{ for all }\phi \in \mathscr{A}\}\nonumber\\
& = \#\mathscr{A}\cdot \#\{\bfx\in \ker(\eta_1)~~\colon~~   \Psi(\bfx)(\phi)=
  \left \langle v_0,\phi(t_1)\right \rangle_\Sigma + \Psi(\bfx(t_1))(\phi)\quad\forall\phi \in \mathscr{A}\}.
\end{align}
Since $\Psi$ is a homomorphism, and so is the map $\lambda_{t_1,\bfx(t_1),v_0}\colon\mathscr{A}\rightarrow \F_2$,
it follows that the quantity \eqref{slowly_moving_in_right_direction} is equal to 
\begin{eqnarray*}
\#\mathscr{A}\cdot \#\{\bfx\in \ker(\eta_1)~~\colon~~   \Psi(\bfx)=\lambda_{t_1,\bfx(t_1),v_0} \}&=&\#\mathscr{A}\cdot \#\ker(\Psi)\cdot  \mathbbm{1}_{\lambda_{t_1,\bfx(t_1),v_0}\in \im(\Psi)}.
\end{eqnarray*}
Now, we have $\#\ker(\Psi)=\#\ker(\eta_1)/\#\im(\Psi)$ and $\#\im(\Psi)=\#\mathscr{A}/\#\im(\Psi)^\circ$.
Combining the above, we have
\begin{eqnarray*}
 \sum_{\substack{\bfx\in \ImageModSquares^{A[2]}\\ \eta_1(\bfx)=t_1}}\sum_{\phi \in \mathscr{A}}(-1)^{ \Upsilon_\phi(\bfx)+\left \langle v_0,\phi(t_1)\right \rangle_\Sigma}&=&\#\ker(\eta_1)\cdot \#\im(\Psi)^\circ\cdot \mathbbm{1}_{\lambda_{t_1,\bfx(t_1),v_0}\in \im(\Psi)}\\
 &=& \frac{\#\ImageModSquares^{2^{2k}}\cdot \#\im(\Psi)^\circ}{\#T_1}\cdot \mathbbm{1}_{\lambda_{t_1,\bfx(t_1),v_0}\in \im(\Psi)}.
\end{eqnarray*}
 Substituting into \eqref{after_summing_over_phi}, we obtain
\begin{eqnarray*}
  \sum_{\phi \in \mathscr{A}} \sY_{v_0,z_0}(\phi)&=& \#\im(\Psi)^\circ\cdot  \frac{1}{\#T_1}\sum_{t_1\in T_1} (-1)^{\left \langle t_1,z_0\right \rangle_\Sigma} \cdot \mathbbm{1}_{\lambda_{t_1,\bfx(t_1),v_0}\in \im(\Psi)}.
\end{eqnarray*}
 Substituting this into \eqref{contrib_main_W_sum}, we see that the left hand side of \eqref{main_term_inj_contribution} is equal to 
\begin{eqnarray*}
\frac{  \#\im(\Psi)^\circ}{\#Z_0}\sum_{(v_0,z_0)\in V_0\times Z_0}(-1)^{\left \langle v_0,z_0\right \rangle_\Sigma}
  \cdot\frac{1}{\#T_1}\sum_{t_1\in T_1} (-1)^{\left \langle t_1,z_0\right \rangle_\Sigma} \cdot \mathbbm{1}_{\lambda_{t_1,\bfx(t_1),v_0}\in \im(\Psi)}    \\
= \frac{\#\im(\Psi)^\circ}{ \#(\ImageModSquares\otimes A[2])}\sum_{v_0 \in V_0} \sum_{t_1\in \ImageModSquares\otimes A[2]} \mathbbm{1}_{\lambda_{t_1,\bfx(t_1),v_0}\in \im(\Psi)}\frac{1}{\#Z_0}\sum_{z_0\in Z_0}(-1)^{\left \langle v_0+t_1,z_0\right \rangle_\Sigma} \\
= \frac{ \#\im(\Psi)^\circ}{ \#(\ImageModSquares\otimes A[2])}\sum_{v_0 \in V_0} \sum_{t_1\in \ImageModSquares\otimes A[2]} \mathbbm{1}_{\lambda_{t_1,\bfx(t_1),v_0}\in \im(\Psi)}\mathbbm{1}_{v_0+t_1\in Z_0}, \phantom{lots of spacea}
\end{eqnarray*}
where for the last equality we are using that $Z_0$ is its own orthogonal complement
with respect to the pairing $\left \langle ~,~\right \rangle_\Sigma$.
\end{proof} 
 
\begin{lemma} \label{commuting_ref}
For all $\phi \in \im(\Psi)^\circ$ and $\sigma \in \Gamma$, we have $\phi \sigma= \sigma\phi$.
\end{lemma}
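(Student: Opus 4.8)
The plan is to unwind the definition of $\im(\Psi)^\circ$ and translate the condition $\phi\in\im(\Psi)^\circ$ into a statement about the quadratic forms $\Upsilon_\phi$ restricted to $\ker(\eta_1)$, then compare with the explicit formula \eqref{defi_of_upsilon}. By definition, $\phi\in\im(\Psi)^\circ$ means that $\Psi(\bfx)(\phi)=\Upsilon_\phi(\bfx)=0$ for every $\bfx\in\ker(\eta_1)$. So the hypothesis is precisely that the linear form on $\ker(\eta_1)$ given by $\bfx\mapsto\Upsilon_\phi(\bfx)$ vanishes identically.

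Next I would plug suitable test vectors into $\Upsilon_\phi$. Recall from \eqref{defi_of_upsilon} that
\[
\Upsilon_\phi(\bfx)=\sum_{\{u,u'\}\subseteq A[2]}\alpha(x_u)\alpha(x_{u'})e\big(u,\phi(u')\big)+\sum_{u\in A[2]}e\big(\gamma^{(r)}(\sigma_{x_u})u,\phi(u)\big).
\]
For a fixed $h\in\ImageModSquares$ and fixed $u,w\in A[2]$, consider the vector $\bfx\in\ImageModSquares^{A[2]}$ supported at the two slots $u$ and $w$, with value $h$ at both; since $h\otimes u+h\otimes w=h\otimes(u+w)$, this $\bfx$ lies in $\ker(\eta_1)$ exactly when $u+w$ can be written so that the total tensor vanishes — more directly, I would take $\bfx$ supported at $u$ and $u+v$ with equal values and note the image under $\eta_1$ is $h\otimes v$; choosing things so that the cross term and the linear term combine appropriately. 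The cleanest route: the bilinear part of $\Upsilon_\phi$ factors through $\eta_1$ by \Cref{quad_form_phi_upsilon_lemma}, so on $\ker(\eta_1)$ the form $\Upsilon_\phi$ is \emph{linear}, and equals $\bfx\mapsto\sum_{u\in A[2]}e\big((\gamma^{(r)}(\sigma_{x_u})+\text{lin. part})u,\phi(u)\big)$; vanishing of this linear form for all $\bfx\in\ker(\eta_1)$, using \Cref{gamma_and_weil_lemma} to move $\gamma^{(r)}$ across the Weil pairing, forces $e\big(u,(\phi\gamma^{(r)}(\sigma)-\gamma^{(r)}(\sigma)\phi)(u')\big)=0$ for all $u,u'\in A[2]$ and all $\sigma$ in the relevant subgroup, i.e. $[\phi,\gamma^{(r)}(\sigma)]=0$ since $e$ is non-degenerate. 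Because $\Gamma$ is by definition the additive group generated by $\{\gamma(\sigma_m):m\in\ImageModSquares\}$ and $\gamma^{(r)}$ is $\gamma$ post-composed with the diagonal embedding, commuting with all $\gamma^{(r)}(\sigma_m)$ is the same as commuting with all of $\Gamma$.

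The main obstacle I anticipate is bookkeeping: isolating, from the formula for $\Upsilon_\phi$, exactly the part that survives on $\ker(\eta_1)$, and producing enough elements of $\ker(\eta_1)$ to detect the commutator pairing. One must be careful that $\eta_1(\bfx)=\sum_u x_u\otimes u=0$ is a genuinely nontrivial constraint on the tuple $(x_u)_u$, so the test vectors need to be built as differences: e.g. $\bfx-\bfx'$ where $\bfx,\bfx'$ are single-slot tuples with the same $\eta_1$-image, which exists whenever $h\otimes u=h'\otimes u'$; handling the degenerate cases $u=0$ (where $\phi(u)=0$ and $\gamma^{(r)}(\sigma)u=0$, so nothing is tested) requires only noting those terms are identically zero. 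Apart from this, the argument is a short computation: reduce to linearity on $\ker(\eta_1)$ via \Cref{quad_form_phi_upsilon_lemma}, apply \Cref{gamma_and_weil_lemma} to rewrite $e(\gamma^{(r)}(\sigma)u,\phi(u))$, deduce $[\phi,\gamma^{(r)}(\sigma)]=0$ from non-degeneracy of $e$, and conclude $\phi\sigma=\sigma\phi$ for all $\sigma\in\Gamma$.
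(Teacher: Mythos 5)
Your overall strategy is the same as the paper's: since $\phi\in\im(\Psi)^\circ$ means $\Upsilon_\phi$ vanishes on $\ker(\eta_1)$, one evaluates $\Upsilon_\phi$ on well-chosen kernel elements, rewrites via \Cref{gamma_and_weil_lemma}, and extracts $[\phi,\gamma^{(r)}(\sigma_h)]=0$ from non-degeneracy of $e$. However, there is a genuine gap at the decisive step: none of the test vectors you propose actually lies in $\ker(\eta_1)$, and the ones that do are trivial. A two-slot tuple with value $h$ at $u$ and $w$ has $\eta_1$-image $h\otimes(u+w)\neq 0$ in general; your tuple supported at $u$ and $u+v$ has image $h\otimes v$, which you yourself note is nonzero; and a difference of two single-slot tuples with equal $\eta_1$-image forces (for pure tensors over $\F_2$) $u=u'$ and $h=h'$, hence gives only the zero vector. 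So your constructions produce no nontrivial elements of $\ker(\eta_1)$ and cannot detect the commutator. The element that works is the \emph{three}-slot tuple supported on $\{u_1,u_2,u_1+u_2\}$ with constant value $h$, which lies in $\ker(\eta_1)$ precisely because $h\otimes u_1+h\otimes u_2+h\otimes(u_1+u_2)=0$; evaluating $\Upsilon_\phi$ there and expanding yields exactly $e\big(u_1,(\phi\gamma^{(r)}(\sigma_h)-\gamma^{(r)}(\sigma_h)\phi)u_2\big)$. This computation is the content of the lemma, and it is missing from your argument.

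A secondary imprecision: you describe the restriction of $\Upsilon_\phi$ to $\ker(\eta_1)$ as $\bfx\mapsto\sum_u e\big((\gamma^{(r)}(\sigma_{x_u})+\text{lin.\ part})u,\phi(u)\big)$, as if only the second sum in \eqref{defi_of_upsilon} survived. While \Cref{quad_form_phi_upsilon_lemma} does show that $\Upsilon_\phi$ is linear on $\ker(\eta_1)$, the cross-terms $\alpha(x_u)\alpha(x_{u'})e(u,\phi(u'))$ do \emph{not} vanish there; they contribute a term $\alpha(h)e(u_1,\phi(u_2))$ on the three-slot test vector, and this term is essential: it is exactly what cancels the correction $\psi_{-1}(\sigma_h)e(u_2,\phi(u_1))$ arising when \Cref{gamma_and_weil_lemma} is used to move $\gamma^{(r)}(\sigma_h)$ across the Weil pairing (note $\psi_{-1}(\sigma_h)=\alpha(h)$). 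Without accounting for this cancellation you would obtain an extra term and not the clean commutator. Finally, you should also record the degenerate cases (one of $u_1,u_2$ zero, or $u_1=u_2$) separately, as the three-slot construction requires $\{u_1,u_2,u_1+u_2\}$ to have size $3$.
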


\begin{proof}   
Let  $\phi \in \im(\Psi)^\circ$. By definition, we have 
\[
\im(\Psi)^\circ=\{\phi \in \mathscr{A}~~\colon~~ \Upsilon_\phi(\bfx)=0~~\textup{ for all }\bfx\in \ker(\eta_1)\}.
\]
Recall that $\ker(\eta_1)$ is the set of elements $\bfx\in \ImageModSquares^{A[2]}$ for which $\sum_{u\in A[2]}x_u\otimes u=0$.
 
Fix $h\in \ImageModSquares$ and $u_1, u_2\in A[2]$. Suppose that $\{u_1,u_2,u_1+u_2\}$ has size $3$. Define $\bfx=(x_u)_u\in \ImageModSquares^{A[2]}$ by taking $x_{u_1}=x_{u_2}=x_{u_1+u_2}=h,$ and setting $x_u=0$ otherwise.
Note that $\bfx$ is in $\ker(\eta_1)$. For $\phi \in \mathscr{A}$ we have
\begin{eqnarray*}
\Upsilon_\phi(\bfx)&\stackrel{\textup{def}}{=}& \sum_{\{u,u'\}\subseteq A[2]}e\big(\alpha(x_u)u,\alpha(x_{u'})\phi(u')\big)+\sum_{u\in A[2] }e\big(\gamma(\sigma_{x_u})u,\phi(u)\big)\\
&=& e\big(\alpha(h)u_2,\alpha(h)\phi(u_1)\big)+e\big(u_1,\gamma^{(r)}(\sigma_{h})\phi(u_2)\big)+e\big(u_2,\gamma^{(r)}(h)\phi(u_1)\big)\\
&\stackrel{}{=}&e\big(u_1,(\phi \gamma^{(r)}(\sigma_{h})-\gamma^{(r)}(\sigma_{h})\phi)u_2\big),
\end{eqnarray*}
where the final equality follows from \Cref{gamma_and_weil_lemma} and the fact that
the pairing $(u,u')\mapsto e(u,\phi(u'))$ is symmetric. Since we have $\phi \in \im(\Psi)^\circ$, we deduce that
\begin{equation} \label{eq:gamma_phi_commute}
e(u_1,(\phi \gamma^{(r)}(\sigma_h)-\gamma^{(r)}(\sigma_h)\phi)u_2)=0.
\end{equation}
This equality also holds, for any $h$, when $\{u_1,u_2,u_1+u_2\}$ has size smaller than $3$. Indeed, if one of $u_1, u_2$ is zero then it is trivial, while the case $u_1=u_2$ again follows by combining   \Cref{gamma_and_weil_lemma} with the fact that
the pairing $(u,u')\mapsto e(u,\phi(u'))$ is symmetric. Thus \eqref{eq:gamma_phi_commute} holds for all 
  $u_1,u_2\in A[2]$ and $h\in \ImageModSquares$, giving the result.  
\end{proof}

 \subsection{Proof of Theorem \ref{thm:main_term_inj_contribution}}\label{sec:main_term_inj_proof}
   Recall that $A[2]=E[2]^r=E[2]\otimes \F_2^r$ and that, under this identification, $\sigma \in \Gamma$ operates on $A[2]$ as $\sigma\otimes 1$; this is our convention for viewing $\Gamma$ as a subgroup of $\End(A[2])$. 
  
    \begin{notation}
 Denote by $\mathscr{A}^\Gamma$ the subspace of $\mathscr{A}$ consisting of elements commuting with all elements of $\Gamma$. We also denote by $\Sym_r(\F_2)$ the vector space of $r\times r$ symmetric matrices over $\F_2$. Recall that, in this section, we are denoting by $e$ the Weil pairing on $A[2]$, which by convention takes values in $\F_2$. When we wish to refer to the corresponding pairing on $E[2]$, we will denote it $e'$. We denote by $P$ the standard `dot-product' pairing on $\F_2^r$.
 \end{notation}
  
  \begin{lemma} \label{lem:desc_of_a_gamma}
  Suppose that Condition \ref{assumption:simple_gamma_mod} is satisfied. Then we have 
   \begin{equation*} \label{desc_of_a_gamma}
 \mathscr{A}^\Gamma=1\otimes \Sym_r(\F_2).
 \end{equation*}
  \end{lemma}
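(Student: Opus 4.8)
The goal is to identify $\mathscr{A}^\Gamma$, the space of self-adjoint (for the Weil pairing $e$ on $A[2]$) endomorphisms $\phi$ of $A[2]=E[2]\otimes\F_2^r$ satisfying $e(u,\phi(u))=0$ for all $u$, that additionally commute with $\Gamma$ (acting as $\Gamma\otimes 1$). First I would unwind the constraint $e(u,\phi(u))=0$: since $\phi\in\mathscr{A}$ means exactly that the bilinear form $(u,u')\mapsto e(u,\phi(u'))$ is alternating, and an endomorphism is self-adjoint for $e$ iff that form is symmetric, the condition $\phi\in\mathscr{A}$ is the assertion that this form is alternating (hence symmetric, hence $\phi$ is automatically self-adjoint in characteristic $2$). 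So $\mathscr{A}$ is identified, via the non-degenerate Weil pairing $e$ on $A[2]$, with the space of alternating bilinear forms on $A[2]$, exactly as in Remark \ref{alt_pairings_correspondence_rmk} with $U=A[2]$, $U^\perp=0$.

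Next I would bring in the tensor decomposition. Write $e=e'\otimes P$ under $A[2]=E[2]\otimes\F_2^r$, where $e'$ is the Weil pairing on $E[2]$ (alternating, non-degenerate) and $P$ is the dot product on $\F_2^r$ (symmetric, non-degenerate) — this is the standard fact that the product-polarisation Weil pairing on $E^r$ is the orthogonal sum, which in tensor language is $e'\otimes P$. Under the identification $\End(A[2])\cong\End(E[2])\otimes\End(\F_2^r)$, commuting with $\Gamma\otimes 1$ where $\Gamma$ acts on the $E[2]$-factor: by Condition \ref{assumption:simple_gamma_mod}, the commutant of $\Gamma$ (indeed of the subring it generates) in $\End(E[2])$ is $\F_2$, so $\End_\Gamma(A[2])=\F_2\otimes\End(\F_2^r)=1\otimes\End(\F_2^r)$. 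Thus any $\phi\in\mathscr{A}^\Gamma$ has the form $\phi=1\otimes M$ for some $M\in\End(\F_2^r)$, i.e. $M$ is an $r\times r$ matrix. It remains to pin down which matrices $M$ occur.

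Then I would compute the self-adjointness/alternating constraint for $\phi=1\otimes M$. For pure tensors $u=x\otimes a$, $u'=y\otimes b$ we get $e(u,\phi(u'))=e'(x,y)\cdot P(a,Mb)$. For this to be a symmetric function of $(u,u')$ — equivalently, using bilinearity, for the form $(u,u')\mapsto e(u,\phi(u'))$ to equal $(u,u')\mapsto e(\phi(u),u')=e'(x,y)P(Ma,b)$ — we need $e'(x,y)P(a,Mb)=e'(x,y)P(Ma,b)$ for all $x,y,a,b$; since $e'$ is non-degenerate this forces $P(a,Mb)=P(Ma,b)$ for all $a,b$, i.e. $M$ is symmetric for the dot product, i.e. $M\in\Sym_r(\F_2)$. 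The alternating condition $e(u,\phi(u))=0$ gives $e'(x,x)P(a,Ma)=0$, which is automatic since $e'(x,x)=0$; so no further constraint. Conversely any $1\otimes M$ with $M\in\Sym_r(\F_2)$ lies in $\mathscr{A}^\Gamma$: it commutes with $\Gamma\otimes 1$ trivially, and the symmetric-$M$ computation above shows the form is symmetric, hence (char.\ $2$) alternating on the diagonal as just noted. This yields $\mathscr{A}^\Gamma=1\otimes\Sym_r(\F_2)$.

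\textbf{Main obstacle.} The only delicate point is the interaction of the two non-degenerate pairings under the tensor identification — specifically making sure that "self-adjoint for $e$" translates cleanly to "symmetric for $P$" after factoring out the $\F_2$-commutant of $\Gamma$ on the $E[2]$-side, and that nothing is lost because $e'$ is alternating rather than symmetric. In characteristic $2$ alternating forms are symmetric, so this works, but one must be careful that the passage from $\phi\in\mathscr{A}$ (alternating associated form) to "$\phi$ self-adjoint" is valid — it is, precisely because symmetric $=$ self-adjoint and alternating $\Rightarrow$ symmetric over $\F_2$. Everything else is routine linear algebra, leaning on Condition \ref{assumption:simple_gamma_mod} for $\End_\Gamma(E[2])=\F_2$ exactly as in Lemma \ref{simple_subspace_lemma}.
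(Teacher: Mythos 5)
Your proof is correct and follows essentially the same route as the paper's: first use $\End_\Gamma(E[2])=\F_2$ from Condition \ref{assumption:simple_gamma_mod} to get $\End_\Gamma(A[2])=1\otimes\End(\F_2^r)$, then use the factorisation $e=e'\otimes P$ to translate the condition $e(u,\phi(u))=0$ into self-adjointness of the matrix for the dot product. Your extra care about checking the diagonal condition only on pure tensors (and why symmetry lets you conclude for general $u$) is a point the paper leaves implicit, but the argument is the same.
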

  
  \begin{proof}
  Since $\End_\Gamma(E[2])=\F_2$  it follows that $\End_{\Gamma}(A[2])=1\otimes \End(\F_2^r)$. Take $\phi_0\in \End(\F_2^r)$. Then $1\otimes \phi_0 \in \mathscr{A}$ if and only if $e(u,(1\otimes \phi_0)u)=0$ for all $u\in E[2]\otimes \F_2^r$. We have $e=e'\otimes P$, from which we see that $1\otimes \phi_0$ lies in $\mathscr{A}$ if and only if $\phi_0$ is self-adjoint with respect to  $P$. That is, if and only if $\phi_0$ is represented by a symmetric matrix in the standard basis for $\F_2^r$. 
  \end{proof}
  
\begin{remark}
 Note that if $\phi=1\otimes \phi_0 \in \mathscr{A}^\Gamma$, then the pairing $e(-,\phi(-))$ is given by $e'\otimes P_{\phi_0}$, where $P_{\phi_0}$ is the symmetric pairing on $\F_2^k$ given by $(u,w)\mapsto P(u,\phi_0(w))$. 
 \end{remark}

Lemma \ref{lem:desc_of_a_gamma} will be crucial for making further progress. We thus make the following assumption.

\begin{assumption}
For the rest of this subsection, assume that  Condition \ref{assumption:simple_gamma_mod} is satisfied. 
\end{assumption}
 
Let $\phi \in \mathscr{A}^\Gamma$. The description of $\mathscr{A}^\Gamma$ afforded by Lemma \ref{lem:desc_of_a_gamma} means that we can lift $\phi$ to a $\Q$-endomorphism $\overline{\phi}$ of $A$, invariant under the Rosati involution (indeed, with $\phi=1\otimes \phi_0$ as above, we can take $\overline{\phi}$ to be any endomorphism of $A=E^r$ given by a lift of $\phi_0$ to a symmetric $r\times r$ matrix with coefficients in $\Z$). Via Section \ref{tate_quad_form_subsec} we associate to $\overline{\phi}$ a theta group $\cH_{\overline{\phi}}$ for $A[2]$, with associated quadratic form \[q_{\overline{\phi},\Sigma}\colon\oplus_{v\in \Sigma}H^1(\Q_v,A[2])\longrightarrow \tfrac{1}{2}\Z/\Z\cong \F_2.\]
The bilinear pairing associated to $q_{\overline{\phi},\Sigma}$ sends $(f,g)$ to $\left \langle f, \phi(g) \right \rangle_\Sigma$. 
As explained in Section \ref{tate_quad_form_subsec} we have $q_{\overline{\phi},\Sigma}(Z_0)=0$ and $q_{\overline{\phi},\Sigma}(V_0)=0$.

 \begin{lemma} \label{lem:identifying_as_quad_form}
 For any $t_1\in T_1$, $v_0\in V_0$, $\bfx\in \ImageModSquares^{A[2]}$ lifting $t_1$, and $\phi \in \mathscr{A}^\Gamma$, we have  
 \begin{equation} \label{lambda_and_q_identity}
 \lambda_{t_1,\bfx,v_0}(\phi)=q_{\overline{\phi},\Sigma}(v_0+t_1).
 \end{equation}
Further, we have $\im(\Psi)^\circ=\mathscr{A}^\Gamma$, and $\lambda_{t_1,\bfx,v_0}\in \im(\Psi)$ if and only if 
$q_{\overline{\phi},\Sigma}(v_0+t_1)=0$ for all $\phi \in \mathscr{A}^\Gamma.$
 \end{lemma}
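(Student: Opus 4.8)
The plan is to prove Lemma \ref{lem:identifying_as_quad_form} in three stages: first establish the pointwise identity \eqref{lambda_and_q_identity}, then deduce $\im(\Psi)^\circ = \mathscr{A}^\Gamma$, and finally read off the membership criterion for $\lambda_{t_1,\bfx,v_0}$.

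\emph{Step 1: the quadratic-form identity.} Fix $\phi = 1\otimes\phi_0 \in \mathscr{A}^\Gamma$ and a Rosati-invariant lift $\overline{\phi}\in\End_{\Q}(A)$ as constructed above. Recall from \Cref{notat:image_mod_sq_interpretation} that $t_1 \in \ImageModSquares\otimes A[2]$ is identified with an element of $\oplus_{v\in\Sigma}H^1(\Q_v,A[2])$, and writing $\bfx = (x_u)_u$ with $\eta_1(\bfx) = t_1$ means $t_1 = \sum_u x_u\otimes u = \sum_u \psi_{x_u}\cup u$ locally at each $v\in\Sigma$. The plan is to expand $q_{\overline{\phi},\Sigma}(v_0 + t_1)$ using that $q_{\overline{\phi},\Sigma}$ is a quadratic form vanishing on $V_0$ (so the $v_0$-$v_0$ term dies), whose associated bilinear form sends $(f,g)$ to $\langle f,\phi(g)\rangle_\Sigma$. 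This gives
\[
q_{\overline{\phi},\Sigma}(v_0+t_1) = q_{\overline{\phi},\Sigma}(t_1) + \langle v_0,\phi(t_1)\rangle_\Sigma.
\]
For the $q_{\overline{\phi},\Sigma}(t_1)$ term I would apply \Cref{boundary_map_theta}: writing $t_1 = \sum_u \psi_{x_u}\cup u$, the quadratic-form value of a sum decomposes via the polar identity into the sum of $q_{\overline{\phi},v}(\psi_{x_u}\cup u)$ over $u$ (diagonal terms) plus cross terms $\langle \psi_{x_u}\cup u,\phi(\psi_{x_{u'}}\cup u')\rangle_\Sigma$ for $u\neq u'$. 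Part \ref{item:chicupu} of \Cref{boundary_map_theta} identifies $q_{\overline{\phi},\Sigma}(\psi_{x_u}\cup u)$ with the invariant of $\delta(u)\cup_e\phi(u)\cup\psi_{x_u}$, which unwinds to $e(\gamma^{(r)}(\sigma_{x_u})u,\phi(u))$ (using $\gamma(\sigma)(u) = \delta(u)(\sigma)$ from \Cref{def:definition_of_gamma} and evaluating the global invariant at $\sigma_{x_u}$); the cross terms unwind similarly, using $\alpha$ to track the sign coming from the local Hilbert symbol at the finite places dividing each $x_u$, to give $\sum_{\{u,u'\}}\alpha(x_u)\alpha(x_{u'})e(u,\phi(u'))$. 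Comparing with the definition \eqref{defi_of_upsilon} of $\Upsilon_\phi$ and with $\lambda_{t_1,\bfx,v_0}(\phi) = \Upsilon_\phi(\bfx) + \langle v_0,\phi(t_1)\rangle_\Sigma$ yields \eqref{lambda_and_q_identity}. The bookkeeping here — matching signs from Hilbert symbols at bad primes and confirming the diagonal/cross split — is the main obstacle, since it requires carefully tracking which cocycles are ramified where; but \Cref{boundary_map_theta} was proved precisely to make this mechanical.

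\emph{Step 2: $\im(\Psi)^\circ = \mathscr{A}^\Gamma$.} By definition $\im(\Psi)^\circ = \{\phi\in\mathscr{A} : \Upsilon_\phi(\bfx) = 0 \text{ for all }\bfx\in\ker(\eta_1)\}$. \Cref{commuting_ref} already shows $\im(\Psi)^\circ \subseteq \mathscr{A}^\Gamma$. For the reverse inclusion, take $\phi\in\mathscr{A}^\Gamma$ and $\bfx\in\ker(\eta_1)$; applying Step 1 with $v_0 = 0$ and $t_1 = \eta_1(\bfx) = 0$ gives $\Upsilon_\phi(\bfx) = \lambda_{0,\bfx,0}(\phi) = q_{\overline{\phi},\Sigma}(0) = 0$. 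Hence $\phi\in\im(\Psi)^\circ$, and the two sets coincide. (Alternatively one can argue directly from \Cref{quad_form_phi_upsilon_lemma}, since the bilinear form $L_\phi$ associated to $\Upsilon_\phi$ factors through $\eta_1$ and hence vanishes on $\ker(\eta_1)$, reducing the claim to the linear term — but routing through Step 1 is cleaner.)

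\emph{Step 3: the membership criterion.} Since $\Psi$ is a homomorphism, $\lambda_{t_1,\bfx,v_0}\in\im(\Psi)$ if and only if $\lambda_{t_1,\bfx,v_0}$ annihilates $\im(\Psi)^\circ = \mathscr{A}^\Gamma$, i.e. if and only if $\lambda_{t_1,\bfx,v_0}(\phi) = 0$ for all $\phi\in\mathscr{A}^\Gamma$. By \eqref{lambda_and_q_identity} this is exactly the condition $q_{\overline{\phi},\Sigma}(v_0 + t_1) = 0$ for all $\phi\in\mathscr{A}^\Gamma$, which is the assertion of the lemma. This finishes the proof.
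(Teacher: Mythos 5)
Your proposal is correct and follows essentially the same route as the paper's proof: expand $q_{\overline{\phi},\Sigma}(v_0+t_1)$ using that it vanishes on $V_0$ with polar form $\langle\,\cdot\,,\phi(\cdot)\rangle_\Sigma$, evaluate the diagonal terms via \Cref{boundary_map_theta}\ref{item:chicupu} and reciprocity and the cross terms via the Hilbert-symbol computation producing the $\alpha$'s, then deduce $\im(\Psi)^\circ=\mathscr{A}^\Gamma$ from \Cref{commuting_ref} plus the identity \eqref{lambda_and_q_identity} applied to $\bfx\in\ker(\eta_1)$, and finish by double-annihilator duality. No gaps.
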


 \begin{proof}
 Recall that, by definition, we have 
$\lambda_{t_1,\bfx,v_0}(\phi)=\left \langle v_0,\phi(t_1)\right \rangle_\Sigma+\Upsilon_\phi(\bfx).$
We now compute
 \[q_{\overline{\phi},\Sigma}(v_0+t_1)=q_{\overline{\phi},\Sigma}(v_0)+q_{\overline{\phi},\Sigma}(t_1)+\left \langle v_0,\phi(t_1)\right \rangle_\Sigma=q_{\overline{\phi},\Sigma}(t_1)+\left \langle v_0,\phi(t_1)\right \rangle_\Sigma.\]
Write $\bfx=(x_u)_{u\in A[2]}$. Then 
$t_1=\eta_1(\bfx)=\sum_{u\in A[2]}x_u\otimes u.$ Thus we have
\begin{eqnarray*}
q_{\overline{\phi},\Sigma}(t_1)&=& \sum_{u\in A[2]}q_{\overline{\phi},\Sigma}(x_u\otimes u)+\sum_{\{u,u'\}\subseteq A[2]}\left \langle x_u\otimes u ,x_{u'}\otimes \phi(u')\right \rangle_\Sigma.
 \end{eqnarray*}
Recall that we have $\left \langle ~,~\right \rangle_\Sigma=H_\Sigma\otimes e$, where $H_\Sigma$ is the Hilbert symbol pairing of Section \ref{sec:local_tate}.   
 Thus 
 \[\left \langle x_u\otimes u ,x_{u'}\otimes \phi(u')\right \rangle_\Sigma=H_\Sigma(x_u,x_{u'})e_2(u,\phi(u'))=\alpha(x_u)\alpha(x_{u'})e_2(u,\phi(u')).\]
 For the final  equality, recall that $x_u, x_{u'} \in \ImageModSquares$ are the classes of some $m,m'\in \mathcal{F}$, and that $H_\Sigma(x_u,x_{u'})$ is equal to $\sum_{v\in \Sigma}H_v(\psi_m,\psi_{m'})$. As elements of $\mathcal{F}$, $m$ and $m'$ are positive and are units locally at all primes $p\in \Sigma$. Thus  $H_v(\psi_m,\psi_{m'})$ is zero save possibly at $v=2$; at $v=2$ it is equal to $1$ if and only if both $m$ and $m'$ are congruent to $-1$ modulo $4$, that is, if and only if $\alpha(x_u)\alpha(x_{u'})=1$.

 Comparing with the expression for $\lambda_{t_1,\bfx,v_0}(\phi)$, to prove \eqref{lambda_and_q_identity} it is enough to show that we have $q_{\overline{\phi},\Sigma}(x\otimes u)=e(\gamma^{(r)}(\sigma_x)u,\phi(u))$
  for all  $x\in \ImageModSquares$, $u\in A[2]$.    Fix $m\in \mathcal{F}$ lifting $x\in \ImageModSquares$, and take $u\in A[2]$. Then we have
 \[q_{\overline{\phi},\Sigma}(x\otimes u)=q_{\overline{\phi},\Sigma}(\psi_m\cup u)=H_\Sigma(\delta^{(r)}(u)\cup \phi(u),\psi_m)=\sum_{p\mid m}e(\delta^{(r)}(u)(\sigma_p),\phi(u))=e(\gamma^{(r)}(\sigma_x)u,\phi(u)),\]
 where the second equality is a consequence of \Cref{boundary_map_theta}(ii), the third equality follows from reciprocity, and the final equality follows from the definitions of $\gamma^{(r)}$ and $\sigma_x$.  
  
To deduce that $\im(\Psi)^\circ=\mathscr{A}^\Gamma,$ note that by \Cref{commuting_ref} we have $\im(\Psi)^\circ\subseteq \mathscr{A}^\Gamma$.
To prove the reverse inclusion, take $\phi \in \mathscr{A}^\Gamma$ and  $\bfx\in \ker(\eta_1)$. Then by  \eqref{lambda_and_q_identity} we have 
\[\Upsilon_\phi(\bfx)=\left \langle v_0,\eta_1(\bfx)\right \rangle_\Sigma+q_{\overline{\phi}}(v_0+\eta_1(\bfx))=q_{\overline{\phi}}(v_0)=0.\]

 Finally, having shown that $\im(\Psi)^\circ=\mathscr{A}^\Gamma,$ the remaining claim in the statement is an immediate consequence of \eqref{lambda_and_q_identity}. 
 \end{proof}
 
 Recall that we are viewing both $V_0$ and $\ImageModSquares \otimes A[2]$ inside $\oplus_{v\in \Sigma}H^1(\Q_v,A[2])$, as explained in Notation \ref{notat:image_mod_sq_interpretation}.  In this ambient space, $V_0$ and $\ImageModSquares \otimes A[2]$ have trivial intersection. In particular, $\fA=V_0 \oplus (\ImageModSquares\otimes A[2])$ is naturally a subspace of $\oplus_{v\in \Sigma}H^1(\Q_v,A[2])$. Recall that we have assumed Condition \ref{assumption:simple_gamma_mod}.
 
 \begin{proposition} \label{arriving_at_thm}
 The left hand side of \eqref{main_term_inj_contribution} is equal to 
\begin{eqnarray*}
\frac{ \#\Sym_r(\F_2)\#(\fA\cap Z_0) }{\#(\ImageModSquares\otimes A[2])}.
\end{eqnarray*}
 \end{proposition}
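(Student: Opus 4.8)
\textbf{Proof plan for Proposition \ref{arriving_at_thm}.}
The plan is to start from the formula established in Lemma \ref{lem:sum_over_phi}, namely that the left-hand side of \eqref{main_term_inj_contribution} equals
\[
\frac{\#\im(\Psi)^\circ}{\#(\ImageModSquares\otimes A[2])}\sum_{v_0\in V_0}\sum_{t_1\in \ImageModSquares\otimes A[2]}\mathbbm{1}_{\lambda_{t_1,\bfx(t_1),v_0}\in \im(\Psi)}\,\mathbbm{1}_{v_0+t_1\in Z_0},
\]
and then to evaluate each ingredient using Lemma \ref{lem:identifying_as_quad_form} and Lemma \ref{lem:desc_of_a_gamma}. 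First I would substitute $\im(\Psi)^\circ=\mathscr{A}^\Gamma$ (from Lemma \ref{lem:identifying_as_quad_form}) and then invoke Lemma \ref{lem:desc_of_a_gamma}, which identifies $\mathscr{A}^\Gamma$ with $1\otimes \Sym_r(\F_2)$, so that $\#\im(\Psi)^\circ=\#\Sym_r(\F_2)$. This already produces the factor $\#\Sym_r(\F_2)$ in the numerator and the factor $\#(\ImageModSquares\otimes A[2])$ in the denominator; it remains to show that the double sum equals $\#(\fA\cap Z_0)$.

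The key step is to reinterpret the two indicator functions in the double sum in terms of the subspace $\fA = V_0\oplus(\ImageModSquares\otimes A[2])\subseteq \oplus_{v\in\Sigma}H^1(\Q_v,A[2])$. By Lemma \ref{lem:identifying_as_quad_form}, for $t_1\in \ImageModSquares\otimes A[2]$, $v_0\in V_0$, and any lift $\bfx(t_1)$ of $t_1$, the condition $\lambda_{t_1,\bfx(t_1),v_0}\in \im(\Psi)$ is equivalent to $q_{\overline\phi,\Sigma}(v_0+t_1)=0$ for all $\phi\in\mathscr{A}^\Gamma$. Since the direct sum $\fA=V_0\oplus(\ImageModSquares\otimes A[2])$ expresses each of its elements uniquely as $v_0+t_1$, the double sum is exactly
\[
\#\{w\in \fA : q_{\overline\phi,\Sigma}(w)=0\ \forall \phi\in\mathscr{A}^\Gamma,\ w\in Z_0\}.
\]
So I need to argue that the first condition is automatic on $\fA$: that is, $q_{\overline\phi,\Sigma}$ vanishes on all of $\fA$ for every $\phi\in\mathscr{A}^\Gamma$. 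We already know from Section \ref{tate_quad_form_subsec} that $q_{\overline\phi,\Sigma}$ vanishes on $V_0$; the point to nail down is that it also vanishes on $\ImageModSquares\otimes A[2]$, which in turn follows because $\ImageModSquares\otimes A[2]$ is contained in $Z_0$-compatible local images at all places of $\Sigma$ — more precisely, by the computation in the proof of Lemma \ref{lem:identifying_as_quad_form} one has $q_{\overline\phi,\Sigma}(x\otimes u)=e(\gamma^{(r)}(\sigma_x)u,\phi(u))$, and since $\phi$ commutes with $\Gamma$ and the pairing $e(-,\phi(-))$ is alternating this expression equals $e(u,\phi(\gamma^{(r)}(\sigma_x)u))=0$; bilinearity then gives vanishing on all of $\ImageModSquares\otimes A[2]$, and since $q_{\overline\phi,\Sigma}$ is quadratic with associated bilinear form $\langle f,\phi(g)\rangle_\Sigma$, one checks the cross terms $\langle v_0,\phi(t_1)\rangle_\Sigma$ also disappear after summing — here the cleanest route is to note that $\langle v_0,\phi(t_1)\rangle_\Sigma$ need not vanish pointwise, so instead I would argue directly that the full indicator is $1$ for \emph{every} $(v_0,t_1)$, i.e. that $q_{\overline\phi,\Sigma}(v_0+t_1)=0$ unconditionally: since $v_0+t_1$ ranges over $\fA$ and one reduces via the identity in Lemma \ref{lem:identifying_as_quad_form} to checking $\Upsilon_\phi(\bfx(t_1))+\langle v_0,\phi(t_1)\rangle_\Sigma = 0$, and $\Upsilon_\phi(\bfx)$ is a quadratic form whose polarisation factors through $\eta_1$ with associated bilinear form $L_\phi$ — reconciling these is exactly what Lemma \ref{lem:identifying_as_quad_form} does. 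The upshot is that the first indicator is identically $1$ on $\fA$.

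With the first indicator trivial, the double sum collapses to $\#\{w\in\fA: w\in Z_0\}=\#(\fA\cap Z_0)$, and assembling the constants yields
\[
\frac{\#\Sym_r(\F_2)\,\#(\fA\cap Z_0)}{\#(\ImageModSquares\otimes A[2])},
\]
as claimed. The main obstacle I anticipate is the bookkeeping around the cross term $\langle v_0,\phi(t_1)\rangle_\Sigma$: one must be careful that it is the \emph{combination} $\Upsilon_\phi(\bfx(t_1))+\langle v_0,\phi(t_1)\rangle_\Sigma = q_{\overline\phi,\Sigma}(v_0+t_1)$ that vanishes, not the two summands separately, and that $q_{\overline\phi,\Sigma}$ really does vanish on the whole of $\fA$ (not merely on $V_0$ and on $\ImageModSquares\otimes A[2]$ individually) — this requires the identity $q_{\overline\phi,\Sigma}(x\otimes u)=e(\gamma^{(r)}(\sigma_x)u,\phi(u))$ together with the commutation $\phi\gamma^{(r)}(\sigma)=\gamma^{(r)}(\sigma)\phi$ for $\phi\in\mathscr{A}^\Gamma$ from Lemma \ref{commuting_ref}, and the symmetry of $e(-,\phi(-))$, to see the relevant bilinear form restricted to $\fA$ is identically zero. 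Everything else is routine cardinality arithmetic.
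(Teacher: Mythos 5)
Your reduction of the left-hand side of \eqref{main_term_inj_contribution} to
\[
\frac{\#\Sym_r(\F_2)}{\#(\ImageModSquares\otimes A[2])}\cdot\#\bigl\{a\in\fA\cap Z_0~:~q_{\overline{\phi},\Sigma}(a)=0\text{ for all }\phi\in\mathscr{A}^\Gamma\bigr\}
\]
via Lemmas \ref{lem:sum_over_phi}, \ref{lem:identifying_as_quad_form} and \ref{lem:desc_of_a_gamma} is exactly the paper's route and is fine. The problem is the closing step. You claim that the condition $q_{\overline{\phi},\Sigma}(w)=0$ is automatic on \emph{all} of $\fA$, i.e.\ that the first indicator in Lemma \ref{lem:sum_over_phi} is identically $1$. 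That is false in general, and your justification is circular: starting from $q_{\overline{\phi},\Sigma}(x\otimes u)=e(\gamma^{(r)}(\sigma_x)u,\phi(u))$, applying symmetry of $(a,b)\mapsto e(a,\phi(b))$ gives $e(u,\phi(\gamma^{(r)}(\sigma_x)u))$, then commutation gives $e(u,\gamma^{(r)}(\sigma_x)\phi(u))$, and Lemma \ref{gamma_and_weil_lemma} (together with $e(u,\phi(u))=0$) returns you to $e(\gamma^{(r)}(\sigma_x)u,\phi(u))$ — you have proved a tautology, not vanishing. Indeed the quantity genuinely need not vanish: take $r=1$ and $\phi=\mathrm{id}\in\mathscr{A}^\Gamma$; then $q_{\overline{\phi},\Sigma}(x\otimes u)=e(\gamma(\sigma_x)u,u)$, and if $\gamma(\sigma_x)$ is a transvection $\phi_w$ with $e(u,w)=1$ this equals $1$. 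Under Condition \ref{assumption:simple_gamma_mod} the group $\Gamma$ is large enough that such elements occur, so the indicator $\mathbbm{1}_{\lambda_{t_1,\bfx(t_1),v_0}\in\im(\Psi)}$ is a non-trivial condition on $\fA$ — which is precisely why Lemma \ref{lem:identifying_as_quad_form} is needed at all.

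The correct (and simpler) closing move only requires vanishing on the part of $\fA$ that survives the \emph{second} indicator: since $q_{\overline{\phi},\Sigma}(Z_0)=0$ for every $\phi$ arising from a Rosati-invariant lift $\overline{\phi}$ (this is established in Section \ref{tate_quad_form_subsec} via the theta-group construction and the isotropy of the images of the multiplication-by-$4$ Kummer maps), the condition $q_{\overline{\phi},\Sigma}(a)=0$ is vacuous for $a\in\fA\cap Z_0$. The count therefore collapses to $\#(\fA\cap Z_0)$ without any claim about $q_{\overline{\phi},\Sigma}$ on $\ImageModSquares\otimes A[2]$. Everything else in your write-up (the identification $\#\im(\Psi)^\circ=\#\Sym_r(\F_2)$, the bijection $(v_0,t_1)\mapsto v_0+t_1$ onto $\fA$) is correct.
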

 
 \begin{proof}
 An immediate consequence of Lemmas \ref{lem:sum_over_phi}, \ref{lem:desc_of_a_gamma} and \ref{lem:identifying_as_quad_form}, is that we have 
 \begin{eqnarray*}
\frac{ \#\Sym_r(\F_2)}{\#(\ImageModSquares\otimes A[2])}\#\{a\in \mathfrak{A}\cap Z_0~~\colon~~ q_{\overline{\phi},\Sigma}(a)=0 \textup{ for all }\phi \in \mathscr{A}^\Gamma\}.
\end{eqnarray*}
As above  we have $q_{\overline{\phi},\Sigma}(Z_0)=0$ (see Section \ref{tate_quad_form_subsec}), giving the result.
 \end{proof}

\begin{lemma} \label{lem:what_is_this_space?}
We have 
\[\frac{\#(\fA\cap Z_0)}{ \#(\ImageModSquares\otimes A[2])}=\# (V_0\cap (\ImageModSquares\otimes A[2])^\perp \cap Z_0),\]
where the orthogonal complement is taken with respect to the local Tate pairing.
\end{lemma}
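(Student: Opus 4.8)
The identity is a purely linear-algebraic fact about the three subspaces $V_0$, $\ImageModSquares\otimes A[2]$, and $Z_0$ of the $\F_2$-vector space $\bigoplus_{v\in\Sigma}H^1(\Q_v,A[2])$, together with the non-degenerate pairing $\langle\,,\,\rangle_\Sigma$. Write $W=\ImageModSquares\otimes A[2]$ for brevity, so that $\fA=V_0\oplus W$ (recall $V_0\cap W=0$ in the ambient space, as noted in Notation \ref{notat:image_mod_sq_interpretation} and just before Proposition \ref{arriving_at_thm}), and we must show
\[
\frac{\#(\fA\cap Z_0)}{\#W}=\#\big(V_0\cap W^\perp\cap Z_0\big).
\]

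First I would set up the map that makes the left-hand side computable. Since $\fA=V_0\oplus W$, projection onto the first summand gives a surjection $p_1\colon\fA\cap Z_0\to p_1(\fA\cap Z_0)\subseteq V_0$ with kernel $W\cap Z_0$; hence $\#(\fA\cap Z_0)=\#(W\cap Z_0)\cdot\#p_1(\fA\cap Z_0)$. The key point is then to identify $p_1(\fA\cap Z_0)$. An element $v_0\in V_0$ lies in $p_1(\fA\cap Z_0)$ iff there exists $w\in W$ with $v_0+w\in Z_0$. Because $Z_0$ is its own orthogonal complement with respect to $\langle\,,\,\rangle_\Sigma$ (maximal isotropy of each $\sS_{b_v,v}^{(r)}$, as used repeatedly in Section \ref{sec:fouvryklueners}), the condition $v_0+w\in Z_0$ is equivalent to $\langle v_0+w, z_0\rangle_\Sigma=0$ for all $z_0\in Z_0$. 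Thus $v_0\in p_1(\fA\cap Z_0)$ iff the functional $z_0\mapsto\langle v_0,z_0\rangle_\Sigma$ on $Z_0$ agrees with some functional of the form $z_0\mapsto\langle w,z_0\rangle_\Sigma$ for $w\in W$; equivalently, iff $\langle v_0,\cdot\rangle_\Sigma|_{Z_0}$ lies in the image of the restriction map $W\to Z_0^\vee=\Hom(Z_0,\F_2)$.

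Next I would count this using duality. The restriction map $W\to Z_0^\vee$ has kernel $W\cap Z_0^\perp=W\cap Z_0$ (again using $Z_0^\perp=Z_0$), so its image has size $\#W/\#(W\cap Z_0)$. Similarly the map $V_0\to Z_0^\vee$ sending $v_0$ to $\langle v_0,\cdot\rangle_\Sigma|_{Z_0}$ has kernel $V_0\cap Z_0^\perp=V_0\cap Z_0$, and $p_1(\fA\cap Z_0)$ is precisely the preimage under this map of the image of $W\to Z_0^\vee$. Hence $\#p_1(\fA\cap Z_0)=\#(V_0\cap Z_0)\cdot\#\big(\mathrm{im}(V_0\to Z_0^\vee)\cap\mathrm{im}(W\to Z_0^\vee)\big)$. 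Assembling, $\#(\fA\cap Z_0)=\#(W\cap Z_0)\cdot\#(V_0\cap Z_0)\cdot\#\big(\mathrm{im}(V_0)\cap\mathrm{im}(W)\big)$ inside $Z_0^\vee$. On the other hand, an element $v_0\in V_0\cap W^\perp\cap Z_0$ is one with $v_0\in Z_0$ and $\langle v_0,w\rangle_\Sigma=0$ for all $w\in W$; I would show that $\#(V_0\cap W^\perp\cap Z_0)$ likewise factors through these same data — concretely, by comparing $V_0\cap Z_0$ with $V_0\cap W^\perp\cap Z_0$ via the pairing $V_0\cap Z_0$ against $\mathrm{im}(W\to Z_0^\vee)$.

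\textbf{Where the work is.} The routine part is the dualisation bookkeeping; the one genuine subtlety is that the two sides involve $\fA\cap Z_0$ versus $V_0\cap W^\perp\cap Z_0$, and matching them cleanly requires care about which orthogonal complements are taken in the ambient space versus inside $Z_0$. The cleanest route, which I would actually write, is to observe that $\fA^\perp=V_0^\perp\cap W^\perp$ and that inside the ambient non-degenerate space one has the general identity $\#(\fA\cap Z_0)\cdot\#(\fA^\perp+Z_0^\perp)=\#Z_0\cdot\#\fA$ (rank–nullity for the pairing restricted appropriately), equivalently $\#(\fA\cap Z_0)=\#Z_0\cdot\#\fA/\#(\fA+Z_0)$ after using $Z_0^\perp=Z_0$. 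Then $\#\fA=\#V_0\cdot\#W$, and a parallel computation gives $\#(V_0\cap W^\perp\cap Z_0)=\#(V_0\cap Z_0)\cdot\#W^\perp/\#\big((V_0\cap Z_0)+W^\perp\big)$ — at which point the desired equality reduces to the single relation $\#(\fA+Z_0)\cdot\#\big((V_0\cap Z_0)+W^\perp\big)=\#V_0\cdot\#W\cdot\#W^\perp/\#(\ImageModSquares\otimes A[2])\cdot(\dots)$, which after cancelling $\#W=\#W^\perp\cdot(\#\text{ambient})^{-1}\cdot\dots$ — i.e. using $\dim W+\dim W^\perp=\dim(\text{ambient})$ — collapses to an identity of dimensions that one checks directly. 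I expect the main obstacle to be purely organisational: choosing the formulation (the $\#(A\cap B)\cdot\#(A^\perp+B^\perp)$ identity versus the explicit projection argument above) that makes the cancellation transparent rather than a morass of exponents. Either way the proof is short once the right identity is invoked.
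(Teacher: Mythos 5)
Your overall skeleton (count $\#(\fA\cap Z_0)$ via the second isomorphism theorem and orthogonal complements, using $Z_0=Z_0^\perp$) is the same as the paper's, but there is a genuine gap: you never use, or even mention, the fact that $V_0=H^1_\Sigma(\Q,A[2])$ is \emph{its own} orthogonal complement with respect to $\langle\,,\,\rangle_\Sigma$ (equivalently, that $V_0$ is maximal isotropic, so that $\prod_{v\in\Sigma}\#H^1(\Q_v,A[2])=\#V_0\,\#Z_0$). This is not a formality: it is a consequence of Poitou--Tate duality, and it is needed twice. Writing $W=\ImageModSquares\otimes A[2]$, the general computation only gives
\[
\#(\fA\cap Z_0)\;=\;\frac{\#\fA\,\#Z_0\,\#(\fA^\perp\cap Z_0)}{\prod_{v\in\Sigma}\#H^1(\Q_v,A[2])},
\qquad \fA^\perp=V_0^\perp\cap W^\perp ,
\]
so to reach the stated identity one must (i) cancel the ambient order against $\#V_0\,\#Z_0$, and (ii) replace $V_0^\perp$ by $V_0$ in $\fA^\perp\cap Z_0$. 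Both steps require $V_0^\perp=V_0$. Your proposal treats the lemma as ``a purely linear-algebraic fact'' with $Z_0=Z_0^\perp$ as the only input, and both of your routes stall exactly at this point: in the projection argument you stop at ``I would show that $\#(V_0\cap W^\perp\cap Z_0)$ likewise factors through these same data'', and in the ``cleanest route'' you end with an unverified ``identity of dimensions that one checks directly'' (with an explicit $(\dots)$ left in the formula). That identity is false in general: with only $Z_0=Z_0^\perp$ the two sides differ by the factor $\#V_0\,\#Z_0/\#(\oplus_{v\in\Sigma}H^1(\Q_v,A[2]))$ and involve $V_0^\perp$ rather than $V_0$ (e.g.\ for $V_0=0$ and $W\not\subseteq Z_0$ the two sides of the lemma already disagree), so the missing input is precisely where the content of the lemma lies.

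Two smaller points. Your ``general identity'' $\#(\fA\cap Z_0)\cdot\#(\fA^\perp+Z_0^\perp)=\#Z_0\cdot\#\fA$ is wrong as stated: since $(\fA\cap Z_0)^\perp=\fA^\perp+Z_0^\perp$, its left-hand side equals the order of the ambient space, which is not $\#\fA\,\#Z_0$ here; the correct statement you then use, $\#(\fA\cap Z_0)=\#\fA\,\#Z_0/\#(\fA+Z_0)$, needs no orthogonality at all. Finally, once you add the maximal isotropy of $V_0$ (citing Poitou--Tate, as the paper does) the argument collapses to three lines, exactly as in the paper's proof; so the fix is to state and invoke that fact explicitly rather than to reorganise the bookkeeping.
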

 
 \begin{proof}
We have
\begin{equation} \label{what_is_this_intersection_size}
\#(\fA\cap Z_0)=\frac{\#\fA\#Z_0}{\#(\fA+Z_0)}=\frac{\#\fA\#Z_0 \#(\fA^\perp \cap Z_0)}{\prod_{v\in \Sigma}\#H^1(\Q_v,A[2])},\end{equation}
where for the last equality we are using that $Z_0$ is its own orthogonal complement. Now $\#\fA=\#V_0 \#(\ImageModSquares\otimes A[2])$.
Moreover, we have $\prod_{v\in \Sigma}\#H^1(\Q_v,A[2])=\#V_0\#Z_0$, since both $V_0$ and $Z_0$
are maximal isotropic for $\left \langle~,~\right \rangle_\Sigma$ (for $Z_0$ see \cite[Proposition 4.10]{MR2833483}, while for $V_0$ this is a consequence of Poitou--Tate duality, see \cite[Theorem I.4.10]{MR2261462}).  
Substituting into \eqref{what_is_this_intersection_size} we deduce that
\[\#(\fA\cap Z_0)=\# (\ImageModSquares\otimes A[2])\cdot  \# (\fA^\perp \cap Z_0).\]
Since $V_0$ is its own orthogonal complement under $\left \langle~,~\right \rangle_\Sigma$,
we have $\fA^\perp=V_0\cap (\ImageModSquares\otimes A[2])^\perp$.  
 \end{proof}

\begin{proof}[Proof of Theorem \ref{thm:main_term_inj_contribution}] 
By \Cref{lem:alternative_syst_sub_desc}  we have $\cS_{b}^r=V_0\cap (\ImageModSquares\otimes A[2])^\perp \cap Z_0$.   The result now follows from Proposition \ref{arriving_at_thm} and Lemma \ref{lem:what_is_this_space?}. 
 \end{proof}
 
 \subsection{Distribution of Selmer groups}\label{sec:mainproof} 
  
\begin{theorem} \label{thm:inj_moments_main}
Let $L/\Q$ be a finite Galois extension, let 
$b\in \idelesmodsquaresset{\Q}{\Sigma}$ be such that $\FbL$ is non-empty. Suppose that \Cref{assumption:simple_gamma_mod} holds.
Then for all $r\in \Z_{\geq 0}$ we have 
\[\lim_{X\rightarrow \infty} \frac{1}{\#\FbL(X)}\sum_{d\in\FbL(X)}
  \#\Inj\bigg(\F_2^r,\frac{\Sel_2(E_{d}/\Q)}{\delta_{d}(E[2])\oplus\cS_{b}}\bigg)  =2^{\frac{r(r+1)}{2}}.
\]
\end{theorem}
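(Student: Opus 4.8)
The plan is to combine the injective-moment formula of \Cref{thm:inj_moments} with the combinatorial evaluation in \Cref{thm:main_term_inj_contribution}, after reducing the family $\FbL$ to a finite disjoint union of families of the shape $d_0\cF_{c,L}$ treated in the previous section. First I would recall that, since elements of $\FbL$ are square-free integers congruent to a fixed class $b\in\idelesmodsquaresset{\Q}{\Sigma}$ whose prime divisors outside $\Sigma$ are totally split in $L$, each such $d$ factors uniquely as $d=d_0D$ where $d_0\mid N$ is the ``$\Sigma$-part'' (here $d_0$ may be negative, absorbing the sign of $d$) and $D=|d/d_0|\in\cF_L$ has all its prime factors in $\cP_L$. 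The reduction $D\bmod 4N$ then lies in the subgroup $S\leq(\Z/4N\Z)^\times$, and only finitely many residues $c\in S$ occur; moreover the class $b$ is determined by the pair $(d_0,c)$. Hence $\FbL$ is a finite disjoint union of the families $d_0\cF_{c,L}$ over the finitely many admissible pairs $(d_0,c)$ giving rise to $b$, and by \Cref{prop:asymptotic_cF} each such family has the same order of growth $\asymp X(\log X)^{1/n_L-1}$ with the count $\#\cF_{c,L}(X)$ independent of $c\in S$ up to the leading constant.

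Next I would apply \Cref{thm:inj_moments} to each piece $d_0\cF_{c,L}$: it gives
\[
\lim_{X\to\infty}\frac{1}{\#\cF_{c,L}(X)}\sum_{d\in d_0\cF_{c,L}(X)}\#\Inj\!\left(\F_2^r,\frac{\Sel_2(E_d/\Q)}{\delta_d(E[2])\oplus\cS_b}\right)=\frac{1}{\#\cS_b^r}\sum_{\substack{W\subseteq A[2]^2\textup{ max. iso.}\\W\textup{ satisfies }(\star)}}\sY(W),
\]
and then invoke \Cref{thm:main_term_inj_contribution}, whose hypothesis is exactly \Cref{assumption:simple_gamma_mod}, to evaluate the right-hand side as $\frac{1}{\#\cS_b^r}\cdot\#\cS_b^r\cdot 2^{r(r+1)/2}=2^{r(r+1)/2}$. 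Crucially, this limiting value does not depend on the pair $(d_0,c)$ — it depends only on $b$ through the cancellation of the $\#\cS_b^r$ factors and on the dimension $r$. One then patches the finitely many pieces together: writing $\FbL(X)$ as the disjoint union of the $d_0\cF_{c,L}(X)$, the numerator $\sum_{d\in\FbL(X)}\#\Inj(\cdots)$ is the sum of the per-piece numerators, each asymptotic to $2^{r(r+1)/2}\cdot\#\cF_{c,L}(X)$, while the denominator $\#\FbL(X)$ is the sum of the $\#\cF_{c,L}(X)$; since the limit $2^{r(r+1)/2}$ is common to all pieces, a standard $\varepsilon/2$ argument (or simply additivity of the ratios weighted by $\#\cF_{c,L}(X)/\#\FbL(X)$, which is legitimate because all these counts are comparable by \Cref{prop:asymptotic_cF}) yields the claimed limit for the whole family $\FbL$.

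\textbf{Main obstacle.} The genuinely substantive input is \Cref{thm:main_term_inj_contribution}, but that is being assumed here; within the present proof the only real care needed is the bookkeeping of the decomposition $\FbL=\bigsqcup_{(d_0,c)}d_0\cF_{c,L}$ — one must check that the pair $(d_0,c)$ genuinely determines, and is determined by (up to the finitely many choices), the class $b$, and in particular that $\cS_b$, the parity invariant $m_b$, and the systematic dimension $n_b$ depend only on $b$ and not on the chosen representative pair. This is where I would be most careful: the constructions in \Cref{sec:moments_setup} are stated to depend on $(d_0,c)$ only through $b$, so the per-piece limits coincide, but one should spell out that the finitely many $(d_0,c)$ mapping to a given $b$ correspond exactly to the different ways of splitting a square-free $d\equiv b$ into its $\Sigma$-part and its $\cP_L$-part together with the sign, and that $\cF_{c,L}$ is empty unless $c\in S$. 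Once this indexing is pinned down, the conclusion is immediate from \Cref{thm:inj_moments} and \Cref{thm:main_term_inj_contribution}.
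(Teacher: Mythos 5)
Your proposal is correct and takes essentially the same route as the paper: the paper's own proof simply recalls that a class $b$ corresponds to a pair $(d_0,c)$ and declares the result an immediate consequence of \Cref{thm:inj_moments} and \Cref{thm:main_term_inj_contribution}, exactly the two inputs you combine. Your additional bookkeeping — decomposing $\FbL$ into the finitely many families $d_0\cF_{c,L}$ (with $c$ only determined by $b$ up to squares) and patching the identical per-piece limits via \Cref{prop:asymptotic_cF} — is a careful elaboration of the same argument, legitimate precisely because the constructions, and hence the limiting value $2^{r(r+1)/2}$, depend on $(d_0,c)$ only through $b$.
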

\begin{proof}
  Recall that a choice of $b\in \idelesmodsquaresset{\Q}{\Sigma}$ is
  equivalent to a choice of a pair consisting of a (possibly negative) divisor $d_0$ of $N$
  and a $c\in \left(\Z/4N\Z\right)^{\times}$; and that moreover $\FbL$ is non-empty
  if and only if we have $c\in S$. The result is therefore
  an immediate consequence of \Cref{thm:inj_moments} and
  \Cref{thm:main_term_inj_contribution}.
\end{proof}

\begin{lemma}\label{lem:hom_moments}
  With the assumptions and notation of \Cref{thm:inj_moments_main},
we have, for all $r\in \Z_{\geq 0}$, 
\[
  \lim_{X\rightarrow \infty} \frac{1}{\#\FbL(X)}\sum_{d \in \FbL(X)}
  \#\Hom\bigg(\frac{\Sel_2(E_{d}/\Q)}{\delta_{d}(E[2])\oplus\cS_{b}},\F_2^r\bigg) = \prod_{j=1}^r (1+2^j).
\]
\end{lemma}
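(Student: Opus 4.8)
The plan is to deduce Lemma \ref{lem:hom_moments} from Theorem \ref{thm:inj_moments_main} by the standard combinatorial passage between the number of homomorphisms and the number of injective homomorphisms. Write $\bar S_d = \Sel_2(E_d/\Q)/(\delta_d(E[2])\oplus\cS_b)$, a finite-dimensional $\F_2$-vector space, and let $s_d = \dim_{\F_2}\bar S_d$. For any $\F_2$-vector space $V$ of dimension $s$, there is the elementary identity
\[
  \#\Hom(V,\F_2^r) = \sum_{j\geq 0}\binom{s}{j}_{\!2}\,\#\Inj(\F_2^j,\F_2^r),
\]
obtained by stratifying a homomorphism $V\to\F_2^r$ according to the dimension $j$ of its image (here $\binom{s}{j}_2$ is the Gaussian binomial coefficient, the number of $j$-dimensional subspaces of $V$). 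Dually, applying this to $V=\bar S_d$ and summing over $d\in\FbL(X)$, one gets
\[
  \sum_{d\in\FbL(X)}\#\Hom(\bar S_d,\F_2^r)
  = \sum_{j=0}^{r}\#\Inj(\F_2^j,\F_2^r)\sum_{d\in\FbL(X)}\binom{s_d}{j}_{\!2}.
\]
Now $\binom{s_d}{j}_2$ counts $j$-dimensional subspaces of $\bar S_d$, while $\#\Inj(\F_2^j,\bar S_d)$ counts injections $\F_2^j\hookrightarrow\bar S_d$; these differ by the factor $\#\mathrm{GL}_j(\F_2) = \prod_{i=0}^{j-1}(2^j-2^i)$, which is independent of $d$. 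Hence
\[
  \sum_{d\in\FbL(X)}\binom{s_d}{j}_{\!2}
  = \frac{1}{\#\mathrm{GL}_j(\F_2)}\sum_{d\in\FbL(X)}\#\Inj\bigl(\F_2^j,\bar S_d\bigr),
\]
and dividing by $\#\FbL(X)$ and letting $X\to\infty$, Theorem \ref{thm:inj_moments_main} gives that this converges to $2^{j(j+1)/2}/\#\mathrm{GL}_j(\F_2)$, which is the number of $j$-dimensional subspaces of $\F_2^\infty$ weighted in the Cohen--Lenstra fashion — more precisely it equals $\prod_{i=1}^{j}\frac{2^{j+1-i}}{2^i-1}$ after a short rearrangement, but it is cleanest to just carry the symbol $2^{j(j+1)/2}/\#\mathrm{GL}_j(\F_2)$ forward. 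Combining, the limit in the statement equals
\[
  \sum_{j=0}^{r}\#\Inj(\F_2^j,\F_2^r)\cdot\frac{2^{j(j+1)/2}}{\#\mathrm{GL}_j(\F_2)}
  = \sum_{j=0}^{r}\binom{r}{j}_{\!2}\,2^{j(j+1)/2},
\]
using $\#\Inj(\F_2^j,\F_2^r) = \binom{r}{j}_2\#\mathrm{GL}_j(\F_2)$ in reverse.

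It remains to identify $\sum_{j=0}^{r}\binom{r}{j}_2 2^{j(j+1)/2}$ with $\prod_{j=1}^{r}(1+2^j)$. This is a classical $q$-binomial identity: the $q$-binomial theorem states $\prod_{j=0}^{r-1}(1+xq^j) = \sum_{j=0}^{r}\binom{r}{j}_q q^{j(j-1)/2}x^j$, and specialising $q=2$, $x=2$ gives $\prod_{j=0}^{r-1}(1+2^{j+1}) = \sum_{j=0}^{r}\binom{r}{j}_2 2^{j(j-1)/2}2^j = \sum_{j=0}^{r}\binom{r}{j}_2 2^{j(j+1)/2}$, and the left side is exactly $\prod_{j=1}^{r}(1+2^j)$. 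I would cite this as a standard fact (e.g. from a reference on $q$-series or basic hypergeometric identities) rather than reprove it.

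One technical point that needs care: the interchange of the finite sum over $j\in\{0,\dots,r\}$ with the limit $X\to\infty$ is harmless since there are only finitely many terms, but one should confirm that for each fixed $j$ the quantity $\frac{1}{\#\FbL(X)}\sum_{d}\#\Inj(\F_2^j,\bar S_d)$ genuinely converges — this is precisely the content of Theorem \ref{thm:inj_moments_main} applied with $r$ replaced by $j$, so there is nothing extra to prove. A second point: $\#\FbL(X)>0$ for all large $X$ because $\FbL$ is assumed non-empty and has positive density in the sense of Proposition \ref{prop:asymptotic_cF}, so the averages are well-defined. The main (very mild) obstacle is simply bookkeeping the Gaussian-binomial identities correctly and matching conventions; there is no analytic difficulty here, as all the hard work has already been done in Theorem \ref{thm:inj_moments_main}. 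If one prefers to avoid the $q$-binomial theorem entirely, an alternative is to observe that the generating identity $\#\Hom(V,\F_2^r) = \prod$ of something does not hold directly, so instead one can induct on $r$: $\#\Hom(\bar S_d,\F_2^{r}) = \#\Hom(\bar S_d,\F_2^{r-1})\cdot\#\bar S_d / (\text{correction})$ — but this is messier than the subspace-stratification argument above, so I would present the latter.
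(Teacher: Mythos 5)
Your proof is correct and follows essentially the same route as the paper: both expand the hom-count as a finite Gaussian-binomial combination of the injection-moments supplied by \Cref{thm:inj_moments_main}, exchange the finite sum with the limit, and evaluate $\sum_{j=0}^{r}\binom{r}{j}_{2}\,2^{j(j+1)/2}=\prod_{j=1}^{r}(1+2^{j})$ by the finite $q$-binomial theorem (the paper cites Hardy--Wright, Theorem 348, with $a=1$, $x=2$). The only cosmetic difference is that you stratify homomorphisms by their kernel and divide by $\#\GL_j(\F_2)$ to reach $\#\Inj(\F_2^j,\bar S_d)$ directly, whereas the paper stratifies by image and uses Pontryagin duality ($\#\Inj(B_1,B_2)=\#\Surj(B_2,B_1)$) to arrive at the same sum.
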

\begin{proof}
  Let $r\in \Z_{\geq 0}$.
  For a finite abelian group $B$, let $B^\lor$ denote the Pontryagin dual $\Hom(B,\Q/\Z)$. For
  two abelian groups $B_1$, $B_2$, we have $\Inj(B_1,B_2) = \Surj(B_2^\lor,B_1^\lor)$, where
  $\Surj$ denotes the set of surjective homomorphisms. Moreover, for $i=1,2$, $B_i^\lor$ is
  (non-canonically) isomorphic to $B_i$, so that we have $\#\Inj(B_1,B_2) = \#\Surj(B_2,B_1)$.
  In particular, \Cref{thm:inj_moments_main} also gives the average number of surjections
  from the $2$-Selmer group onto $\F_2^r$.

  For $k\in \Z_{\geq 0}$, let $\fn(k,r)$
  denote the number of $k$-dimensional subspaces of $\F_2^r$. Then, for all
  finite dimensional $\F_2$-vector spaces $B$, we have
  \[
    \#\Hom(B,\F_2^r) = \sum_{k=0}^r \fn(k,r)\#\Surj(B,\F_2^k),
  \]
  and an analogous formula for the sought-for average number of homomorphisms from the $2$-Selmer group to $\F_2^r$.
  It is well-known that we have $\fn(k,r) = \prod_{i=1}^k\frac{1-2^{r-i+1}}{1-2^i}$,
  so by \Cref{thm:inj_moments_main} the sought-for average is equal to
  \[
    \sum_{k=0}^r \left(\prod_{i=1}^k\Big(\frac{1-2^{r-i+1}}{1-2^i}\Big)2^{k(k+1)/2}\right).
  \]
  The result follows from \cite{MR2445243}*{Theorem 348}, applied with $a=1$ and $x=2$.
\end{proof}

We now deduce Theorem \ref{thm:intro_distr}.
\begin{theorem}\label{thm:main_distr}
  Let $L/\Q$ be a finite Galois extension,
  and $b\in \idelesmodsquaresset{\Q}{\Sigma}$ be such that $\FbL$ is non-empty.
  Suppose that \Cref{assumption:simple_gamma_mod} is satisfied. For $r\in \Z_{\geq 0}$, let 
  \[
    \alpha(r) = \prod_{j\geq 1}(1+2^{-j})^{-1}\prod_{j=1}^{r}\frac{2}{2^{j}-1}.
  \]
  Let $n_b=\dim_{\F_2}\cS_{b}$. Then there exists $m_b\in \{0,1\}$ such that
  for all  $d\in \FbL$, we have $\dim\Sel_2(E_{d}/\Q)\equiv n_b+m_b\pmod 2$, and
  for all $r\in\Z_{\geq 0}$ we have
  \[
    \lim_{X\to \infty} \frac{\#\{d\in \FbL(X): \dim\Sel_2(E_{d}/\Q)=2g+n_b+m_b+2r\}}{\#\FbL(X)}=\alpha(m_b+2r).
  \]
\end{theorem}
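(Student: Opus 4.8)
The plan is to deduce \Cref{thm:main_distr} from the moment computation in \Cref{thm:inj_moments_main} (equivalently \Cref{lem:hom_moments}) by a method-of-moments argument, together with the parity statement from \Cref{thm:2infty-Selmer rank} and \Cref{cor:2-Selmer rank}. First I would dispose of the parity claim: by \Cref{cor:2-Selmer rank}, $\dim_{\F_2}\Sel_2(E_d/\Q)\pmod 2$ depends only on the class $b$ of $d$ in $\idelesmodsquaresset{\Q}{\Sigma}$, hence is constant on $\FbL$; and by \Cref{thm:2infty-Selmer rank} this parity equals $\rk_2(E/\Q)+\sum_{v\in\Sigma}\kappa_v(d)\pmod 2$. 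Since $\dim_{\F_2}E_d(\Q)[2]=2g$ and $\cS_b\subseteq\Sel_2(E_d/\Q)$ with $\delta_d(E[2])\cap\cS_b$ of controlled dimension (as in the proof of \Cref{thm:inj_moments}, $\#(\delta_d(E[2])+\cS_b)^r = \#E[2]^r\cdot\#\cS_b^r$ for $d\neq d_0$, i.e. $\delta_d(E[2])\cap\cS_b=0$), the quantity $\dim\Sel_2(E_d/\Q)-2g-n_b$ is a constant even or odd integer on $\FbL$; define $m_b\in\{0,1\}$ to be this constant parity. So writing $s_d = \dim_{\F_2}\big(\Sel_2(E_d/\Q)/(\delta_d(E[2])\oplus\cS_b)\big)$, we have $s_d = m_b+2r_d$ with $r_d\in\Z_{\geq 0}$, and $\dim\Sel_2(E_d/\Q)=2g+n_b+m_b+2r_d$ for all but the finitely many exceptional $d$ (which do not affect densities).

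Next I would set up the method of moments. Let $\nu_X$ be the probability measure on $\Z_{\geq 0}$ assigning to $s$ the proportion of $d\in\FbL(X)$ with $s_d=s$; I want to show $\nu_X$ converges to the measure $\nu$ with $\nu(m_b+2r)=\alpha(m_b+2r)$ and zero mass elsewhere. By \Cref{lem:hom_moments}, the $r$-th ``homomorphism moment'' $\frac{1}{\#\FbL(X)}\sum_{d}\#\Hom(\F_2^{s_d},\F_2^r) = \frac{1}{\#\FbL(X)}\sum_d 2^{r s_d} = \int 2^{rs}\,d\nu_X(s)$ converges to $M_r:=\prod_{j=1}^r(1+2^j)$ for every $r\geq 0$. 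These are the moments of $\nu_X$ against the ``test functions'' $s\mapsto 2^{rs}$; equivalently, thinking of $V_d = \F_2^{s_d}$ as a random $\F_2$-vector space, the average number of homomorphisms $V_d\to\F_2^r$ converges. One then invokes the standard uniqueness and convergence theorem for such vector-space moments: a sequence of distributions on finite-dimensional $\F_2$-vector spaces whose homomorphism-moments $\E[\#\Hom(V,\F_2^r)]$ converge to values $M_r$ satisfying the growth bound $M_r\ll C^{r^2}$ converges in distribution to the unique distribution with those moments — this is exactly the framework of \cite{MR3101079} (Kane) and the moment-determinacy results it relies on; alternatively one can cite \cite{Smith2} or the general moment machinery in the Cohen–Lenstra literature. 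The growth bound $\prod_{j=1}^r(1+2^j) \asymp 2^{r(r+1)/2}$ is comfortably sub-$C^{r^2}$, so determinacy holds. This identifies $\lim_X\nu_X$ as the unique distribution $\mu$ on $\Z_{\geq 0}$ with $\int 2^{rs}\,d\mu(s)=\prod_{j=1}^r(1+2^j)$ for all $r$.

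It then remains to check that the measure $\nu$ supported on $\{m_b,m_b+2,m_b+4,\dots\}$ with $\nu(m_b+2k)=\alpha(m_b+2k)$ has precisely these homomorphism-moments; this is a finite bookkeeping computation with $q$-series identities. Concretely, one must verify $\sum_{k\geq 0}\alpha(m_b+2k)\,2^{r(m_b+2k)} = \prod_{j=1}^r(1+2^j)$ for $m_b\in\{0,1\}$; this follows from the same type of identity used in the proof of \Cref{lem:hom_moments} (the $q$-binomial/Rogers–Fine identity as in \cite{MR2445243}*{Theorem 348}), after substituting the explicit formula for $\alpha$. Since $\sum_r\alpha(r)=1$ (also a standard $q$-series evaluation), $\nu$ is a genuine probability measure, and by the uniqueness just invoked, $\nu = \lim_X\nu_X$. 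Reading off the density of $d$ with $\dim\Sel_2(E_d/\Q)=2g+n_b+m_b+2r$ gives $\alpha(m_b+2r)$, as claimed. The main obstacle is making the moment-determinacy invocation fully rigorous: one needs the convergence-of-moments-implies-convergence-in-distribution statement for $\F_2$-vector-space-valued random variables together with the uniqueness of the limiting distribution given the growth rate $2^{r(r+1)/2}$ of the moments. I would handle this by citing the precise statement from \cite{MR3101079} (which treats exactly $2$-Selmer distributions via these moments) or from the general formalism in \cite{Smith2}, rather than reproving it; the only thing genuinely new here is that \Cref{thm:inj_moments_main} supplies the required moments for our sparse family $\FbL$, and the shift by $2g+n_b$ is exactly what the injection/quotient normalisation in \Cref{thm:inj_moments} was designed to produce.
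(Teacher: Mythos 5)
Your route is essentially the paper's: the parity statement comes from \Cref{cor:2-Selmer rank}, the moments from \Cref{lem:hom_moments} (via \Cref{thm:inj_moments_main}), and the conclusion from a moments-determine-distribution lemma, which the paper takes ready-made from \cite{MR1292115}*{Lemma 19}. One caution about how you phrase the determinacy step: the moments $\prod_{j=1}^r(1+2^j)$ do \emph{not} single out a unique distribution on all of $\Z_{\geq 0}$ --- the measure supported on even integers with masses $\alpha(2k)$ and the one supported on odd integers with masses $\alpha(2k+1)$ have exactly the same moments, so a blanket criterion of the form ``$M_r\ll C^{r^2}$ implies determinacy'' cannot be correct as stated. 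The uniqueness only holds once you restrict to a fixed parity class, which is precisely the extra input your first paragraph supplies (constancy of $s_d\bmod 2$ on $\FbL$) and precisely what Heath-Brown's Lemma 19 is formulated to exploit; so you should invoke determinacy within the parity class rather than on $\Z_{\geq 0}$. Relatedly, $\sum_{r\geq 0}\alpha(r)=2$, not $1$: the correct normalisation is $\sum_{k\geq 0}\alpha(m_b+2k)=1$ for each $m_b\in\{0,1\}$ (note e.g. $\alpha(1)=2\alpha(0)\approx 0.84$), and it is this parity-restricted identity, together with $\sum_k\alpha(m_b+2k)2^{r(m_b+2k)}=\prod_{j=1}^r(1+2^j)$, that your final bookkeeping step must verify. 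With these two points repaired, your argument coincides with the paper's proof.
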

\begin{proof}
  The existence of $m_b$ follows from \Cref{cor:2-Selmer rank}.
  The final assertion then follows by combining this constancy of $2$-Selmer
  rank parity, \Cref{lem:hom_moments}, and \cite{MR1292115}*{Lemma 19}.
\end{proof}

\part{Applications: Galois module structures and the Hasse principle}

\section{Galois module structure of Mordell--Weil groups}\label{sec:galmodMordellWeil}
Our first application of Part 1 
is to \Cref{qn:GalMod_modified}.

\subsection{Finitely generated \texorpdfstring{$\Z[C_2]$}{Z[C2]}-modules}\label{sec:modules}
For the duration of the section, let $G=\langle g\rangle$ be a cyclic group of
order $2$. 

If $M$ is a $\Z[G]$-module, then $M^G$ will denote
the maximal submodule of $M$ on which $G$ acts trivially.
Let $\Gnorm=1+g\in \Z[G]$.
A \emph{$\Z[G]$-lattice} is a finitely generated $\Z[G]$-module that is
free over $\Z$. A $\Z[G]$-lattice is \emph{indecomposable} if it is not
a direct sum of proper sublattices.

\begin{theorem}\label{thm:indecs}
  Every indecomposable $\Z[G]$-lattice is isomorphic to exactly one of the
  following:
  \begin{itemize}
    \item $\Z$ with $G$ acting trivially;
    \item $\Z(-1)$, a free $\Z$-module of rank $1$ with $g$ acting by $-1$;
    \item $\Z[G]$, a free $\Z[G]$-module of rank $1$.
  \end{itemize}
\end{theorem}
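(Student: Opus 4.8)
This is the classical theorem of Diederichsen and Reiner on integral representations of $C_2$. The plan is to prove it by a descent on the $\Z$-rank of the lattice, reducing at each step to the case where the lattice contains no direct summand isomorphic to one of the three listed indecomposables, and then showing such a lattice must be zero. First I would record the ring-theoretic setup: over $\Z[1/2]$ the group ring splits as $\Z[1/2]\times\Z[1/2]$ via the idempotents $\tfrac12(1\pm g)$, so rationally every $\Z[G]$-lattice is a sum of copies of the trivial and sign characters; the only subtlety is at the prime $2$, where $\Z[G]$ is not maximal and $\rou_2$-type phenomena (the congruence $1+g\equiv 0$ modulo the radical) intervene.

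The key inductive step runs as follows. Let $M$ be a nonzero $\Z[G]$-lattice. Consider the submodule $\Gnorm M = (1+g)M$ and the fixed submodule $M^G$; one has $\Gnorm M \subseteq M^G$, and both are $\Z$-free with trivial $G$-action, hence are free $\Z$-modules. I would first dispose of the case $\Gnorm M = 0$: then $g$ acts as $-1$ on $M$, so $M\cong \Z(-1)^{\oplus \rk M}$ and we are done. Otherwise pick $x\in M$ with $\Gnorm x = x + gx \neq 0$ and primitive in $M^G$ (divide out by the content). There are two cases according to whether $x$ itself is already $G$-fixed or not. If $gx = x$, then $\langle x\rangle\cong\Z$ is a trivial sublattice, and one checks it is a direct summand: since $x$ is primitive in $M$ and $G$-fixed, the quotient $M/\langle x\rangle$ is again $\Z$-free and the projection splits (a primitive fixed vector spans a pure submodule with trivial action, which always splits off because $\Z$ is injective relative to... more precisely one uses that $\mathrm{Hom}_{\Z[G]}(M,\Z)\to\mathrm{Hom}_{\Z[G]}(\langle x\rangle,\Z)$ is surjective, as $\langle x\rangle$ is pure). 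If $gx\neq x$, then $\{x, gx\}$ spans a sublattice isomorphic to $\Z[G]$ (free of rank one: $x$ and $gx$ are $\Z$-independent since otherwise $gx=\pm x$, and $gx=-x$ would give $\Gnorm x=0$), and again one argues this $\Z[G]$-summand splits off. The splitting in the $\Z[G]$-summand case is where the real content lies: $\Z[G]$ is a (relatively) injective $\Z[G]$-module over $\Z$ because it is self-dual and cohomologically trivial, so any pure embedding $\Z[G]\hookrightarrow M$ splits; concretely one produces a $\Z[G]$-linear retraction using the trace/norm map combined with a $\Z$-linear retraction of the underlying pure inclusion. Then induct on $\rk M$, which has strictly dropped.

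For uniqueness — that the listed lattices are pairwise non-isomorphic and that the decomposition multiplicities are well-defined — I would extract numerical invariants: the $\Z$-rank $\rk M$, the rank of $M^G$, and the index $[M^G : \Gnorm M]$, which is a power of $2$. Writing $M\cong \Z^a\oplus\Z(-1)^b\oplus\Z[G]^c$ one computes $\rk M = a+b+2c$, $\rk M^G = a+c$, and $[M^G:\Gnorm M] = 2^c$; these three equations determine $(a,b,c)$ uniquely, giving both non-isomorphism of the indecomposables and uniqueness of the decomposition. (As the paper itself remarks just before the statement, it is exactly this triple of invariants that classifies $\Z[C_2]$-lattices, so this bookkeeping is anticipated.)

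\textbf{Main obstacle.} The genuinely non-formal point is the splitting assertion in the case $gx\neq x$: showing that a pure copy of $\Z[G]$ inside $M$ is a direct summand. This is the place where the failure of $\Z[G]$ to be maximal at $2$ could in principle bite, and one must use honestly that $\Z[G]$ is relatively injective over $\Z$ (equivalently, is an induced module, $\Z[G] = \mathrm{Ind}_1^G\Z$, so by a Frobenius-reciprocity/Shapiro argument $\mathrm{Ext}^1_{\Z[G]}(N,\Z[G])\cong\mathrm{Ext}^1_{\Z}(N,\Z)=0$ for any lattice $N$). Everything else — the reduction to primitive fixed vectors, the rank induction, and the invariant count — is routine once this splitting is in hand.
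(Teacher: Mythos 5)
There is a genuine gap, and it sits exactly where you locate the ``real content''. Your first splitting claim is false as stated: a primitive $G$-fixed vector need not span a $\Z[G]$-direct summand. Take $M=\Z[G]$ and $x=1+g$. Then $x$ is primitive in $M$ (it belongs to the $\Z$-basis $\{1,1+g\}$), is $G$-fixed, and satisfies $\Gnorm x=2(1+g)\neq 0$, yet $\langle x\rangle\cong\Z$ does not split off, because $\Z[G]$ is indecomposable (compare invariants: $\Z[G]^G=\Gnorm\Z[G]=\Z(1+g)$ has index $1$ over the norm submodule, whereas a summand $\Z$ would force index $2$, as in the paper's Lemma \ref{lem:index}). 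The purity argument you offer only kills the obstruction coming from $\Ext^1_{\Z}$; the obstruction to your splitting lives in $\Ext^1_{\Z[G]}\big(M/\langle x\rangle,\Z\big)\cong H^1\big(G,\Hom_\Z(M/\langle x\rangle,\Z)\big)$, and this is nonzero whenever the quotient has a $\Z(-1)$-constituent: $\Ext^1_{\Z[G]}(\Z(-1),\Z)\cong H^1(G,\Z(-1))\cong \Z/2\Z$, with $\Z[G]$ itself as the nonsplit extension. So your induction, run on $M=\Z[G]$ with the perfectly admissible choice $x=1+g$, would wrongly decompose $\Z[G]$. The second case has a related unaddressed point: your Shapiro-type vanishing $\Ext^1_{\Z[G]}(N,\Z[G])\cong\Ext^1_\Z(N,\Z)=0$ is correct for $N$ a lattice, but it only applies once $M/(\Z x+\Z gx)$ is $\Z$-free, i.e.\ once the copy of $\Z[G]$ is pure; for an arbitrary $x$ with $\Gnorm x\neq 0$ it need not be (e.g.\ $x=(1,1)$ in $\Z\oplus\Z(-1)$ generates a non-pure copy of $\Z[G]$, and its saturation is no longer isomorphic to $\Z[G]$). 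Producing a suitable $x$ for which one of the three lattices genuinely splits off is precisely the content of the theorem, and it is missing.

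For comparison, the paper does not reprove this statement at all: its proof is a citation to Reiner. A standard correct route, close in spirit to what you attempt, is: set $M^-=\ker(\Gnorm)$; then $g$ acts as $-1$ on $M^-$ and trivially on the $\Z$-free quotient $M/M^-$, so one has an extension of $\Z^{\oplus b}$ by $\Z(-1)^{\oplus a}$ whose class is an $a\times b$ matrix over $\F_2$ via $\Ext^1_{\Z[G]}(\Z,\Z(-1))\cong\Z/2\Z$; using surjectivity of $\GL_n(\Z)\to\GL_n(\F_2)$ one puts this class in diagonal $0/1$ form, and the diagonal entries $1$ give copies of $\Z[G]$ (the unique nonsplit extension of $\Z$ by $\Z(-1)$), the rest giving $\Z$ and $\Z(-1)$ summands. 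Your uniqueness bookkeeping via $\rk M$, $\rk M^G$ and $[M^G:\Gnorm M]$ is correct and is exactly the paper's Lemma \ref{lem:index}.
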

\begin{proof}
  See e.g. \cite{Reiner}.
\end{proof}

In the remainder of the section, we will continue using the notation from
Theorem \ref{thm:indecs} for the representatives of the isomorphism
classes of the indecomposable $\Z[G]$-lattices.

\begin{lemma}\label{lem:trace}
  We have the following identities:
  \begin{itemize}
    \item $\Gnorm \Z = 2\Z = 2(\Z^G)$;
    \item $\Gnorm \Z(-1) = 0$;
    \item $\Gnorm \Z[G] = \Z[G]^G$.
  \end{itemize}
\end{lemma}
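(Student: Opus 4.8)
This lemma is a routine verification, so the plan is simply to unwind the definitions of the three indecomposable lattices from Theorem \ref{thm:indecs} and compute the action of $\Gnorm = 1+g$ on each.

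First, for $\Z$ with $G$ acting trivially: every element $n \in \Z$ satisfies $gn = n$, so $\Gnorm n = (1+g)n = n + n = 2n$. Hence $\Gnorm\Z = 2\Z$. Since $G$ acts trivially on all of $\Z$, we have $\Z^G = \Z$, so $2\Z = 2(\Z^G)$, giving the first identity.

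Second, for $\Z(-1)$: here $g$ acts by $-1$, so for $n \in \Z(-1)$ we get $\Gnorm n = n + gn = n + (-n) = 0$. Thus $\Gnorm\Z(-1) = 0$, the second identity.

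Third, for the free module $\Z[G] = \Z \oplus \Z g$: a general element is $a + bg$ with $a,b \in \Z$, and $\Gnorm(a+bg) = (1+g)(a+bg) = a + bg + ag + bg^2 = (a+b) + (a+b)g = (a+b)\Gnorm$. As $a+b$ ranges over all of $\Z$, this shows $\Gnorm\Z[G] = \Z\cdot\Gnorm = \{c + cg : c \in \Z\}$. On the other hand, $\Z[G]^G$ consists of those $a + bg$ fixed by $g$, i.e. those with $a+bg = ag+bg^2 = b + ag$, which forces $a = b$; so $\Z[G]^G = \{c + cg : c \in \Z\}$ as well. Therefore $\Gnorm\Z[G] = \Z[G]^G$, the third identity. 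There is no real obstacle here; the only mild subtlety worth noting explicitly is the identification $\Z[G]^G = \Z\cdot\Gnorm$ in the last case, which is the content of the computation that fixed points are exactly the multiples of the norm element.
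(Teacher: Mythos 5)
Your proof is correct and is exactly the direct calculation the paper has in mind (its proof is simply ``This is an easy direct calculation''). All three verifications, including the identification $\Z[G]^G=\Z\cdot\Gnorm$, are carried out properly.
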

\begin{proof}
This is an easy direct calculation.
\end{proof}

\begin{lemma}\label{lem:index}
  Let $M\cong \Z^{\oplus n_1}\oplus \Z(-1)^{\oplus n_2}\oplus \Z[G]^{\oplus n_3}$ be an
  arbitrary $\Z[G]$-lattice, where the multiplicities $n_1$, $n_2$, $n_3$ are
  non-negative integers. Then we have $(M^G : \Gnorm M) = 2^{n_1}$, and the
  multiplicities are uniquely determined by the isomorphism class of $M$.
\end{lemma}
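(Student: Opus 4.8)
\textbf{Proof plan for Lemma \ref{lem:index}.}

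The plan is to compute $M^G$ and $\Gnorm M$ summand by summand, using that both operations commute with finite direct sums. Concretely, for $M \cong \Z^{\oplus n_1} \oplus \Z(-1)^{\oplus n_2} \oplus \Z[G]^{\oplus n_3}$ one has $M^G = (\Z^G)^{\oplus n_1} \oplus (\Z(-1)^G)^{\oplus n_2} \oplus (\Z[G]^G)^{\oplus n_3}$ and likewise $\Gnorm M = (\Gnorm\Z)^{\oplus n_1} \oplus (\Gnorm\Z(-1))^{\oplus n_2} \oplus (\Gnorm\Z[G])^{\oplus n_3}$, and the inclusion $\Gnorm M \subseteq M^G$ respects this decomposition. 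Applying \Cref{lem:trace} termwise, the $\Z[G]$ and $\Z(-1)$ summands contribute index $1$ (for $\Z[G]$ because $\Gnorm\Z[G] = \Z[G]^G$, for $\Z(-1)$ because both $\Gnorm\Z(-1)$ and $\Z(-1)^G$ vanish — note $\Z(-1)^G = 0$ since $-1 \neq 1$ on a free $\Z$-module of rank $1$), while each $\Z$ summand contributes $(\Z^G : \Gnorm\Z) = (\Z : 2\Z) = 2$. Multiplying, $(M^G : \Gnorm M) = 2^{n_1}$.

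For the uniqueness of the multiplicities, I would extract three numerical invariants of the isomorphism class of $M$: the $\Z$-rank $\rk_\Z M = n_1 + n_2 + 2n_3$, the $\Z$-rank of the fixed submodule $\rk_\Z M^G = n_1 + n_3$ (immediate from the termwise description of $M^G$, since $\Z^G$ and $\Z[G]^G$ are free of rank $1$ while $\Z(-1)^G = 0$), and the index $(M^G : \Gnorm M) = 2^{n_1}$ just computed, equivalently $\log_2$ of that index gives $n_1$ directly. From $n_1$ and the two ranks one solves the linear system: $n_3 = \rk_\Z M^G - n_1$, and then $n_2 = \rk_\Z M - n_1 - 2n_3$. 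Since each of these invariants depends only on the isomorphism class of $M$ (ranks and the index of one characteristic submodule in another are manifestly isomorphism-invariant), the multiplicities are determined.

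There is essentially no obstacle here; the only point requiring a word of care is that $(M^G : \Gnorm M)$ really is finite, which follows because $\Gnorm M$ and $M^G$ have the same $\Z$-rank (both equal $n_1 + n_3$, using \Cref{lem:trace} to see $\Gnorm\Z$ and $\Gnorm\Z[G]$ are free of rank $1$ while $\Gnorm\Z(-1) = 0$), so the quotient is a finitely generated torsion abelian group, and the termwise computation shows it is in fact $(\Z/2\Z)^{n_1}$. Together with \Cref{thm:indecs}, which guarantees every $\Z[G]$-lattice has a decomposition of the assumed shape, this completes the proof.
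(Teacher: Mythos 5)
Your proposal is correct and follows essentially the same route as the paper: both compute $(M^G:\Gnorm M)=2^{n_1}$ termwise from \Cref{lem:trace} and then recover the multiplicities from the three invariants $(M^G:\Gnorm M)$, $\rk_\Z M = n_1+n_2+2n_3$, and $\rk_\Z M^G = n_1+n_3$. The only difference is that you spell out details (such as $\Z(-1)^G=0$ and the finiteness of the index) that the paper leaves implicit.
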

\begin{proof}
  The first assertion immediately follows from Lemma \ref{lem:trace}.
  The second assertion follows from the first, together with the observations
  that one has $\rk_{\Z} M = n_1+n_2+2n_3$ and $\rk_{\Z}M^G = n_1+n_3$.
\end{proof}

\begin{lemma}\label{lem:ext}
  Let $C_2$ be a $\Z[G]$-module of order $2$
  (necessarily with trivial $G$-action), let $n\in \Z_{\geq 0}$,
  let $U\cong C_2^{\oplus n}$, and let $M$ be a $\Z[G]$-module that is  free of rank $1$ over $\Z$.
  Then one has $\Ext^1_{\Z[G]}(M,U)\cong U$. Moreover, the class of isomorphism classes
  of extensions of $M$ by $U$ is parametrised by elements of $U$ as follows:
  for every $u\in U$, there is a unique, up to isomorphism, $\Z[G]$-module
  $M_u$ that is an extension of $M$ by $U$ with the property that for every
  lift $m$ of every $\Z$-generator of $M$ one has $g\cdot m = \pm m+u$,
  with the sign being $+$ if $M\cong \Z$ and $-$ if $M\cong \Z(-1)$; and every
  extension of $M$ by $U$ is isomorphic to $M_u$ for a unique $u\in U$.
\end{lemma}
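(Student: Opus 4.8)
The plan is to work entirely with the group ring $\Z[G]$, where $G = \langle g\rangle$ has order $2$, and exploit the fact that $\Z$ and $\Z(-1)$ are both free of rank $1$ over $\Z$. First I would compute $\Ext^1_{\Z[G]}(M,U)$ using the standard free resolution of $M$. Since $M$ is free of rank $1$ over $\Z$, write $M = \Z\cdot m$ with $g\cdot m = \epsilon m$ where $\epsilon = +1$ if $M\cong\Z$ and $\epsilon = -1$ if $M\cong\Z(-1)$. A free $\Z[G]$-resolution of $M$ is
\[
  \cdots \xrightarrow{\ g-\epsilon\ } \Z[G] \xrightarrow{\ g+\epsilon\ } \Z[G] \xrightarrow{\ g-\epsilon\ } \Z[G] \longrightarrow M \longrightarrow 0,
\]
which is periodic because $(g-\epsilon)(g+\epsilon) = g^2 - \epsilon^2 = 1-1 = 0$ in $\Z[G]$ and the kernel of $\Z[G]\to M$ is generated by $g-\epsilon$. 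Applying $\Hom_{\Z[G]}(-,U)$ and using $\Hom_{\Z[G]}(\Z[G],U) = U$, the complex computing $\Ext^*$ becomes $U \xrightarrow{g-\epsilon} U \xrightarrow{g+\epsilon} U \xrightarrow{g-\epsilon}\cdots$; since $G$ acts trivially on $U$ and $2U = 0$, each map $g-\epsilon$ is multiplication by $1-\epsilon$ and $g+\epsilon$ is multiplication by $1+\epsilon$, and since $\Ch U = 2$ we get $1-\epsilon = 1+\epsilon = 0$ in either case. Hence all differentials vanish, and in particular $\Ext^1_{\Z[G]}(M,U) = U$.

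Next I would make the parametrisation explicit and match it with the stated description. An extension class in $\Ext^1_{\Z[G]}(M,U) = U/(g-\epsilon)U = U$ corresponds to a $2$-cocycle, but more concretely: given an extension $0\to U\to M'\to M\to 0$, choose any $\Z$-module lift $\widetilde{m}\in M'$ of $m$. Then $g\cdot\widetilde{m} - \epsilon\widetilde{m}$ lies in $U$ (because its image in $M$ is $g\cdot m - \epsilon m = 0$), and I would check that the resulting element $u := g\widetilde{m} - \epsilon\widetilde{m}\in U$ is independent of the choice of $\Z$-lift $\widetilde{m}$: any two lifts differ by an element $u_0\in U$, which changes $u$ by $(g-\epsilon)u_0 = (1-\epsilon)u_0 = 0$. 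One must also check the cocycle condition is automatic: applying $g$ twice, $g^2\widetilde m = \widetilde m$ forces $g(gu + \epsilon\widetilde m + u - \epsilon\widetilde m)$... more simply, $g\widetilde m = \epsilon\widetilde m + u$ and then $\widetilde m = g^2\widetilde m = g(\epsilon\widetilde m + u) = \epsilon(\epsilon\widetilde m + u) + u = \widetilde m + (\epsilon+1)u = \widetilde m$, so no constraint. Conversely, given $u\in U$, define $M_u := U\oplus \Z\widetilde m$ as a $\Z$-module with $g$ acting by $g|_U = \mathrm{id}$ and $g\widetilde m = \epsilon\widetilde m + u$; the computation just given shows this is a well-defined $\Z[G]$-module structure, and it is an extension of $M$ by $U$. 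The assignment $u\mapsto M_u$ is inverse to $M'\mapsto u$, so it descends to a bijection between $U$ and isomorphism classes of extensions once we verify that $M_u\cong M_{u'}$ as extensions iff $u = u'$: a morphism of extensions is the identity on $U$ and on $M$, hence sends $\widetilde m$ to $\widetilde m + u_0$ for some $u_0\in U$, and compatibility with $g$ forces $u' = u + (g-\epsilon)u_0 = u$.

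The last point to address is the passage from \emph{isomorphism classes of extensions} to \emph{isomorphism classes of $\Z[G]$-modules}: the lemma asserts a bijection with $U$ for the former, which is exactly what the $\Ext$ computation gives, so strictly speaking there is nothing more to prove for the stated claim — the phrase ``up to isomorphism, $\Z[G]$-module $M_u$ that is an extension\ldots'' should be read as ``up to isomorphism of extensions,'' and the uniqueness of $u$ is the injectivity just established. I would phrase the final statement carefully to make clear the $u$ is uniquely determined once one fixes the extension structure (not merely the abstract module $M_u$, since for instance $M_u$ and $M_{u'}$ can be abstractly isomorphic as $\Z[G]$-modules while representing different extension classes — indeed by Theorem~\ref{thm:indecs} there are only finitely many iso classes of lattices of each rank). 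The only genuinely non-routine step is the $\Ext$ computation, and the potential subtlety there is confirming the periodic resolution is correct and that $\Ch U = 2$ makes \emph{both} differentials vanish; everything else is a direct unwinding of definitions. I do not expect any real obstacle.
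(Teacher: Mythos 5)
Your proof is correct, and it reaches the same endpoint as the paper by a slightly different route. The paper computes $\Ext^1_{\Z[G]}(M,U)$ via the local-global Ext spectral sequence: since $M$ is $\Z$-free, $\Ext^1_{\Z}(M,U)=0$, so $\Ext^1_{\Z[G]}(M,U)\cong H^1(G,\Hom_{\Z}(M,U))\cong \Hom(G,U)\cong U$, and the explicit description of $M_u$ is read off from the cocycle defining the twisted action on $U\oplus M$. You instead use the $2$-periodic free resolution $\cdots\xrightarrow{g+\epsilon}\Z[G]\xrightarrow{g-\epsilon}\Z[G]\to M\to 0$ and observe that both induced differentials on $U$ vanish because $2U=0$; your resolution and the vanishing check are correct. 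The two computations are of course the same group cohomology in different clothing, but your write-up is more self-contained (no spectral sequence) and, notably, more careful about the explicit parametrisation: you verify that $u=g\widetilde m-\epsilon\widetilde m$ is independent of the lift, that the putative action on $M_u$ satisfies $g^2=\mathrm{id}$, and that equivalent extensions give equal $u$ — steps the paper compresses into ``compose the three isomorphisms.'' Your closing remark on the distinction between isomorphism of extensions and abstract isomorphism of $\Z[G]$-modules is also apposite: the lemma is indeed to be read as classifying extensions, which is all that is needed for its application in \Cref{thm:classifmodules}.
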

\begin{proof}
Since $M$ is free of rank $1$ over $\Z$, we have $\Ext^1_\Z(M,U)=0$.
Consequently, the local-global Ext spectral sequence (cf. \cite[Example I.0.8]{MR2261462})
yields an isomorphism
\[H^1\big(G,\Hom_\Z(M,U)\big)\cong \Ext^1_{\Z[G]}(M,U),\]
with the map from left to right sending a $1$-cocycle $\phi$ to the trivial
extension $U\oplus M$ but with twisted $G$-action given by  $g\cdot (u,m)=(u+\phi(g)(gm), g m)$.
We also have isomorphisms
\[H^1\big(G,\Hom_\Z(M,U)\big)\cong \Hom(G,U)\cong U,\]
the first induced by identification of $G$-modules $\Hom_\Z(M,U)\cong U$ given
by evaluation of homomorphisms at a chosen generator of $M$, and the second given by evaluating cocycles at the generator $g$ of $G$.
The result follows by composing these three isomorphisms.
\end{proof}

\begin{definition}
  Let $M$ be an abelian group and $m\in M$. We say that $m$ is \emph{$2$-divisible in
  $M$} if there exists $m'\in M$ such that $2m'=m$. We say that $M$ is \emph{$2$-divisible} if every element of $M$ is $2$-divisible in $M$.
\end{definition}
\begin{theorem}\label{thm:classifmodules}
  Let $U=C_2^{\oplus n}$ be as in Lemma \ref{lem:ext}, and let $M$ be a
  $\Z[G]$-module whose $\Z$-torsion subgroup $M_{\tors}$ is isomorphic to $U$ 
  and such that $\rk_{\Z} M = 2$ and $\rk_{\Z} M^G = 1$. Then exactly one of
  the following assertions holds:
  \begin{enumerate}[leftmargin=*, label=\upshape{(\arabic*)}]
    \item\label{item:regular} there is an isomorphism $M\cong \Z[G] \oplus U$;
      in this case one has $M^G = \Gnorm M + M_{\tors}$;
    \item\label{item:nonsplit} there exists $m\in M$ of infinite order and
      $u\in U\setminus\{0\}$ such that $g\cdot m = m+u$; in this case
      $2m\in M^G$ is $2$-divisible in $M$, but is not $2$-divisible in $M^G$, and
      we have $M/M_{\tors}\cong \Z\oplus \Z(-1)$;
    \item\label{item:trivialplussign} $M$ has a direct summand isomorphic to
      $\Z$; in this case one has $\#(M^G/(\Gnorm M + M_{\tors})) = 2$, and any
      generator of $\Z\subset M$ represents the non-trivial class in this quotient.
  \end{enumerate}
\end{theorem}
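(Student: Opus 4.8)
The classification follows from the structure theorem for $\Z[G]$-lattices (\Cref{thm:indecs}) applied to the $\Z$-free quotient $M/M_{\tors}$, together with the extension analysis of \Cref{lem:ext}. First I would observe that $M/M_{\tors}$ is a $\Z[G]$-lattice of $\Z$-rank $2$, hence by \Cref{thm:indecs} it is isomorphic to one of $\Z\oplus\Z$, $\Z\oplus\Z(-1)$, $\Z(-1)\oplus\Z(-1)$, or $\Z[G]$. The hypothesis $\rk_\Z M^G=1$, combined with the fact that $(M/M_{\tors})^G$ surjects onto a finite-index subgroup of $M^G/(M^G\cap M_{\tors})$ and hence also has $\Z$-rank $1$ (using $\rk_\Z M_{\tors}=0$), rules out $\Z\oplus\Z$ (whose fixed rank is $2$) and $\Z(-1)\oplus\Z(-1)$ (whose fixed rank is $0$). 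So $M/M_{\tors}$ is either $\Z[G]$ or $\Z\oplus\Z(-1)$, and these two cases will correspond, respectively, to case \ref{item:regular} and to the disjunction of cases \ref{item:nonsplit} and \ref{item:trivialplussign}.

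\textbf{Case $M/M_{\tors}\cong\Z[G]$.} Here I would show $M$ splits as $\Z[G]\oplus U$. By \Cref{lem:ext} applied with the free $\Z$-module $\Z[G]$ in place of $M$ (note $\Z[G]$ is free of rank $2$ over $\Z$, so $\Ext^1_\Z(\Z[G],U)=0$ and the argument of that lemma still gives $\Ext^1_{\Z[G]}(\Z[G],U)\cong H^1(G,\Hom_\Z(\Z[G],U))$), one computes that this $\Ext$ group vanishes because $\Z[G]$ is $\Z[G]$-free, so every extension of $\Z[G]$ by $U$ splits; thus $M\cong\Z[G]\oplus U$. Then $M^G = \Z[G]^G\oplus U = \Gnorm\Z[G]\oplus U$ by \Cref{lem:trace}, which equals $\Gnorm M + M_{\tors}$ since $\Gnorm M = \Gnorm\Z[G]$ (as $\Gnorm U = 0$) and $M_{\tors}=U$. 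This gives \ref{item:regular}.

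\textbf{Case $M/M_{\tors}\cong\Z\oplus\Z(-1)$.} Here $M$ is an extension of $\Z\oplus\Z(-1)$ by $U$. Pulling back along the two summand inclusions and using \Cref{lem:ext}, I would write $M = M_u\oplus_U M'_v$ in an appropriate sense: more concretely, choose lifts $m_1$ of a generator of the $\Z$-summand and $m_2$ of a generator of the $\Z(-1)$-summand, so that $g\cdot m_1 = m_1 + u$ and $g\cdot m_2 = -m_2 + v$ for unique $u,v\in U$. The key dichotomy is whether $u=0$. If $u=0$, then $\Z m_1$ is a $G$-stable direct summand isomorphic to $\Z$, giving \ref{item:trivialplussign}; in that case $m_1\in M^G$, and I would check that $m_1$ is not in $\Gnorm M + M_{\tors}$ by computing $\Gnorm M$ modulo $M_{\tors}$ (it lies in $\Z(-1)$-type directions plus torsion, missing the $\Z m_1$ direction modulo $2$), and that $M^G/(\Gnorm M + M_{\tors})$ has order $2$ via a rank/index count as in \Cref{lem:index}. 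If $u\neq 0$, then $m := m_1$ has infinite order with $g\cdot m = m+u$, $u\in U\setminus\{0\}$; then $\Gnorm m = 2m+u$... wait, more carefully $\Gnorm m = m + gm = 2m + u$, so $2m = \Gnorm m - u \in M^G + M_{\tors}$; I would instead argue directly that $2m\in M^G$ (clear: $g(2m) = 2m+2u = 2m$), that $2m$ is $2$-divisible in $M$ (namely by $m$), but that $2m$ is not $2$-divisible in $M^G$ — this is the crux: if $2m = 2w$ with $w\in M^G$, then $m-w\in M[2]\subseteq M_{\tors}=U$, so $m\in M^G + U$, forcing $g\cdot m = m + (\text{element of }U\text{ acted trivially})$, i.e. $gm - m\in U$, which is consistent, but then $m$ modulo $U$ lies in $(M/M_{\tors})^G$, contradicting that the image of $m$ generates the $\Z$-summand which is genuinely mixed with $\Z(-1)$ — I need to be careful here and will use the nonsplitness of the extension $u\neq 0$ to get the contradiction. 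Finally $M/M_{\tors}\cong\Z\oplus\Z(-1)$ is exactly the stated conclusion.

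\textbf{Main obstacle.} The routine parts (rank counts, \Cref{lem:trace}/\Cref{lem:index} bookkeeping) are straightforward; the subtle point is proving, in case \ref{item:nonsplit}, that $2m$ fails to be $2$-divisible \emph{in $M^G$} while being $2$-divisible in $M$ — this is precisely the invariant (the index $(M^G:\Gnorm M + M_{\tors})$ versus divisibility) that distinguishes \ref{item:nonsplit} from \ref{item:trivialplussign}, and it is where the hypothesis $u\neq 0$ (nonsplitness of the relevant extension class from \Cref{lem:ext}) must be used essentially rather than cosmetically. I also need to verify the three cases are genuinely mutually exclusive, which amounts to checking that the numerical invariant $\#(M^G/(\Gnorm M + M_{\tors}))$ takes value $1$ in case \ref{item:regular}, while in the $\Z\oplus\Z(-1)$ situation one must distinguish by whether the class $2m$ (for a suitable infinite-order $m$) is divisible in $M^G$ — equivalently whether $M$ has a $\Z$ summand.
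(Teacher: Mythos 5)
Your overall route is the same as the paper's: classify $M/M_{\tors}$ via \Cref{thm:indecs} (the rank hypotheses ruling out $\Z^{\oplus 2}$ and $\Z(-1)^{\oplus 2}$), split off case \ref{item:regular} using projectivity of $\Z[G]$ (your $\Ext$-vanishing argument is the same thing), analyse the remaining extension of $\Z\oplus\Z(-1)$ by $U$ through the lifts $g\cdot m_1=m_1+u$, $g\cdot m_2=-m_2+v$ as in \Cref{lem:ext}, and for case \ref{item:nonsplit} reduce the divisibility claim to the observation that the solutions of $2m'=2m$ are exactly the elements $m+u'$, $u'\in U$. The bookkeeping in cases \ref{item:regular} and \ref{item:trivialplussign} (via \Cref{lem:trace}) is fine, as is your closing remark that the index $\#(M^G/(\Gnorm M+M_{\tors}))$ alone does not separate \ref{item:regular} from \ref{item:nonsplit} (it equals $1$ in both), so one must use $M/M_{\tors}$ or the divisibility property.

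However, at the step you yourself flag as the crux you do not close the argument, and your fallback would fail. From $2m=2w$ with $w\in M^G$ you correctly get $m=w+u'$ with $u'\in U$; but then, since $g$ fixes $w$ and acts trivially on $U$ (recall $U=C_2^{\oplus n}$ carries the trivial action), you get $g\cdot m=w+u'=m$ exactly, contradicting $g\cdot m=m+u$ with $u\neq 0$. You instead only record the weaker statement ``$gm-m\in U$, which is consistent'' and then try to derive a contradiction from the image of $m$ in $M/M_{\tors}$: that attempt is wrong, because $gm-m\in M_{\tors}$ already forces the image of $m$ to lie in $(M/M_{\tors})^G$ — indeed $\bar m$ generates the $G$-fixed $\Z$-summand of $\Z\oplus\Z(-1)$ — so there is no contradiction to be had there, and ``nonsplitness'' enters nowhere in that form. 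The fix is the one-line computation above (this is exactly how the paper concludes). A second, minor point: in case \ref{item:trivialplussign} your parenthetical description of $\Gnorm M$ modulo torsion is off — one has $\Gnorm(am_1+bm_2+u')=2am_1+bv$, so $\Gnorm M+M_{\tors}=2\Z m_1+U$ — but the conclusion you draw from it (index $2$, with $m_1$ the nontrivial class) is correct; you should also say why $\Z m_1$ is a direct summand when $u=0$, namely that $\Z m_2+U$ is a $G$-stable complement.
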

\begin{proof}
  By Theorem \ref{thm:indecs} the lattice $M/M_{\tors}$ is isomorphic to one of
  $\Z[G]$ or $\Z\oplus \Z(-1)$. If it is the former, then we are in case \ref{item:regular},
  since the module $\Z[G]$ is projective. The claimed trichotomy follows from this and
  from Lemma \ref{lem:ext}. The second claims in cases \ref{item:regular}
  and \ref{item:trivialplussign} easily follow from Lemma \ref{lem:trace}.
  For the second claim in case \ref{item:nonsplit}, note that the elements
  $m'\in M$ such that $2m'=2m$ are precisely the elements of the form $m'=m+u'$
  for $u'\in U$. It follows from the explicit description of the $G$-action
  that none of these are contained in $M^G$.
\end{proof}

\subsection{Identifying norms via cohomology}

Now let $\Gamma$ be a profinite group, and let $M$ a $2$-divisible abelian group 
with a continuous $\Gamma$-action. Let $\chi\colon \Gamma\rightarrow \{\pm 1\}$ be a 
non-trivial continuous homomorphism, and let $H$ be its kernel, so that
$\Gamma/H$ is isomorphic to $G$ via a unique isomorphism. From now on, we will identify $\Gamma/H$ with $G$.
Let $\Z(\chi)$ denote $\Z$ with each $\sigma\in \Gamma$ acting as
multiplication by $\chi(\sigma)$, and define $M(\chi)=M\otimes_{\Z}\Z(\chi)$
with diagonal $G$-action.
Note, that the $\Gamma$-modules $M[2]$ and $M(\chi)[2]$ may be
canonically identified with each other, since multiplication by $-1$ acts
trivially on them. Similarly, $M^H$ and $M(\chi)^H$ may be canonically
identified with each other. Below, we will make these identifications.

Since $M$ is $2$-divisible, we have a short exact sequence
\[0\longrightarrow M[2]\longrightarrow M \stackrel{2}{\longrightarrow} M \longrightarrow 0. \]
Taking cohomology gives a coboundary map
\[\delta\colon M^\Gamma/2M^\Gamma\hookrightarrow H^1(\Gamma,M[2]).\]
We obtain similarly another coboundary map
\[\delta_\chi\colon M(\chi)^\Gamma/2M(\chi)^\Gamma\hookrightarrow H^1(\Gamma,M[2]).\]
Note, that under the identification between $M(\chi)^H$ and $M^H$,
the group $M(\chi)^\Gamma\subset M(\chi)^H$
is identified with the kernel of $\Gnorm\colon M^H\to M^\Gamma$. 

\begin{lemma}\label{norms via cohomology}
The map $\delta$ induces an isomorphism
\[
\delta\colon \Gnorm(M^H)/2M^\Gamma\stackrel{\sim}{\longrightarrow}\im(\delta)\cap \im(\delta_\chi)\subseteq H^1(\Gamma,M[2]).
\]
In particular, the image $\im(\Gnorm)$ of $\Gnorm\colon M^H\to M^\Gamma$
consists of precisely those $m\in M^\Gamma$ for which one has
$\delta(m)\in \im(\delta_\chi)$. 
\end{lemma}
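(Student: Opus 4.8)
The statement is a Hochschild--Serre / inflation-restriction computation together with a careful identification of the relevant subspaces. The plan is to compare the coboundary maps for $M$ and $M(\chi)$ through the restriction map to $H = \ker\chi$, where the two twists become identified. Throughout write $G = \Gamma/H = \langle g\rangle$.

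\emph{Step 1: restriction kills both images away from $M^H$.} Consider the restriction map $\res\colon H^1(\Gamma, M[2])\to H^1(H, M[2])$. For $m\in M^\Gamma$, the class $\delta(m)$ restricts to $\delta_H(m)\in H^1(H,M[2])$, where $\delta_H$ is the coboundary for the sequence $0\to M[2]\to M\xrightarrow{2} M\to 0$ of $H$-modules. Since $M(\chi)|_H = M|_H$ canonically, $\delta_\chi(m')$ restricts to $\delta_H(m')$ for $m'\in M(\chi)^\Gamma\subseteq M^H$. Thus an element lies in $\im(\delta)\cap\im(\delta_\chi)$ only if its restriction to $H$ lies in $\im(\delta_H|_{M^\Gamma})\cap \im(\delta_H|_{M(\chi)^\Gamma})\subseteq \im(\delta_H|_{M^H})$. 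The first observation to record is that $M(\chi)^\Gamma$, viewed inside $M^H$, is exactly $\ker(\mathrm{N}_G\colon M^H\to M^\Gamma)$: indeed $\sigma$ acts on $M(\chi)$ as $\chi(\sigma)$ times its action on $M$, so a class fixed by $H$ is fixed by all of $\Gamma$ in $M(\chi)$ iff $g$ acts by $-1$ on it in $M$, i.e.\ iff $gm = -m$, i.e.\ iff $m + gm = \mathrm{N}_G m = 0$.

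\emph{Step 2: the inflation-restriction sequence.} Apply inflation-restriction to $H\trianglelefteq \Gamma$ with coefficients $M[2]$, on which $H$ (and $G$) act; since $M[2]$ has exponent $2$ and $G$ has order $2$, $H^1(G, M[2]^H) = \Hom(G, M[2]^H)\cong M[2]^H$ and $H^2(G, M[2]^H) \cong M[2]^H/\mathrm{N}_G$ can be computed, giving
\[
0\longrightarrow H^1(G, M[2]^H)\longrightarrow H^1(\Gamma, M[2])\stackrel{\res}{\longrightarrow} H^1(H, M[2])^G.
\]
Now unwind: $\delta(M^\Gamma/2M^\Gamma)$ and $\delta_\chi(M(\chi)^\Gamma/2M(\chi)^\Gamma)$ both land in the preimage under $\res$ of $\im(\delta_H|_{M^H})$. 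I would show that $\delta(m)$ and $\delta_\chi(m')$ have equal restriction iff $m \equiv m' \pmod{2M^H}$ in $M^H$ (using injectivity of $\delta_H$ modulo $2M^H$, which holds because $M$ is $2$-divisible). Combined with Step 1, equality of restrictions forces $m' = \mathrm{N}_G(m'')$ for some $m''\in M^H$ and $m\equiv m' \pmod{2M^H}$; but $m\in M^\Gamma$ and $m'\in \ker(\mathrm{N}_G)$, and since $\mathrm{N}_G(m) = 2m$ for $m \in M^\Gamma$ while $\mathrm{N}_G$ is $\Z$-linear, a short diagram chase shows $m \in \mathrm{N}_G(M^H)$. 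Conversely if $m = \mathrm{N}_G(m'')$ then $\delta(m)$ restricts to $\delta_H(\mathrm{N}_G m'') $; one checks $\mathrm{N}_G m'' \in M(\chi)^\Gamma$ after the twist (this is where $\mathrm{N}_G m'' + g\mathrm{N}_G m'' = \mathrm{N}_G(\mathrm{N}_G m'')$ and the twist sign combine), so $\delta(m)$ and $\delta_\chi$ of the same element agree on $H$; then the difference lies in $\inf H^1(G, M[2]^H)$, and a direct cocycle computation identifies this inflation class as $\delta_\chi$ of an appropriate element, showing $\delta(m)\in\im(\delta_\chi)$.

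\emph{Step 3: assembling the isomorphism.} It remains to compute the source. The map $\delta$ is injective on $M^\Gamma/2M^\Gamma$, hence restricts to an injection on $\mathrm{N}_G(M^H)/2M^\Gamma$ (noting $2M^\Gamma = \mathrm{N}_G M^\Gamma \subseteq \mathrm{N}_G(M^H)$), and by Step 2 its image is exactly $\im(\delta)\cap\im(\delta_\chi)$. The final sentence of the statement is then a restatement: $m \in M^\Gamma$ satisfies $\delta(m)\in\im(\delta_\chi)$ iff $\delta(m)\in \im(\delta)\cap\im(\delta_\chi) = \delta(\mathrm{N}_G(M^H)/2M^\Gamma)$ iff $m\in \mathrm{N}_G(M^H)$ by injectivity of $\delta$.

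\emph{Main obstacle.} The genuinely delicate point is Step 2: tracking precisely how the character twist interacts with the $G$-action on cocycles, so as to match up $\inf H^1(G, M[2]^H)$ with the ``extra'' classes coming from $\delta_\chi$ and to verify that the conditions ``$m\in M^\Gamma$'' and ``$\delta(m)$ restricts into $\im(\delta_H)$ via an element of $M(\chi)^\Gamma = \ker\mathrm{N}_G$'' together pin down $m\in \mathrm{N}_G(M^H)$ and nothing larger. All of this is elementary cocycle bookkeeping with $2$-torsion coefficients and a group of order $2$, but it must be done carefully because both $\delta$ and $\delta_\chi$ have the \emph{same} target and are being intersected there; the cleanest route is probably to fix an explicit $1$-cocycle representing $\delta(m)$ (namely $\sigma\mapsto \sigma(\tfrac12 m) - \tfrac12 m$ for a chosen square root) and compare with the analogous cocycle for $\delta_\chi$, using $2$-divisibility of $M$ to choose the square roots compatibly.
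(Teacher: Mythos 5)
Your route (compare $\delta$ and $\delta_\chi$ via restriction to $H$ and inflation--restriction) is genuinely different from the paper's, which instead embeds both Kummer sequences into the single sequence $0\to M[2]\to M\otimes_\Z\Z[G]\to M\oplus M(\chi)\to 0$ and reads off the intersection inside one common injection $\big(M^\Gamma\oplus M(\chi)^\Gamma\big)/\psi(M^H)\hookrightarrow H^1(\Gamma,M[2])$. However, as written, Step 2 has a real gap in the forward inclusion $\im(\delta)\cap\im(\delta_\chi)\subseteq\delta\big(\Gnorm(M^H)\big)$. Suppose $\delta(m)=\delta_\chi(m')$ with $m\in M^\Gamma$ and $m'\in M(\chi)^\Gamma=\ker(\Gnorm)$. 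Equality of restrictions to $H$ gives only $m=m'+2n$ with $n\in M^H$; applying $\Gnorm$ and using $\Gnorm(m)=2m$, $\Gnorm(m')=0$ yields $2\big(m-\Gnorm(n)\big)=0$, i.e.\ $m\in \Gnorm(M^H)+M[2]^\Gamma$ --- and nothing more. The leftover $2$-torsion ambiguity is exactly the kernel of restriction, $H^1(G,M[2]^H)$, and no diagram chase that only sees the restricted classes can remove it: in the Pell application of the Appendix one has $m=m'=-1$, whose restrictions to $H=G_F$ trivially agree, yet whether $-1\in\Gnorm(\cO_F^\times)$ is precisely the nontrivial content of the lemma. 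So ``a short diagram chase shows $m\in\Gnorm(M^H)$'' is not available from restriction data alone. Two intermediate claims are also wrong or unjustified: equality of restrictions does not force $m'\in\Gnorm(M^H)$, and for $m=\Gnorm(m'')$ the element $\Gnorm(m'')$ does \emph{not} lie in $M(\chi)^\Gamma$ in general (it does only if $2\Gnorm(m'')=0$); the correct norm-one companion is $m''-\widetilde{g}m''$, where $\widetilde{g}\in\Gamma$ lifts the nontrivial element of $G$.

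The gap can be closed, but only by using the equality $\delta(m)=\delta_\chi(m')$ at the level of $\Gamma$-cocycles, which is the explicit computation you defer to the ``main obstacle'' paragraph. Concretely: choose $x,y$ with $2x=m$, $2y=m'$ (possible since $M$ is $2$-divisible), so that $\delta(m)(\sigma)=\sigma x-x$ and $\delta_\chi(m')(\sigma)=\chi(\sigma)\sigma y-y$; equality in $H^1(\Gamma,M[2])$ gives $t\in M[2]$ with $\sigma x-x=\chi(\sigma)\sigma y-y+\sigma t-t$ for all $\sigma$. Then $z:=x-y-t$ satisfies $\sigma z-z=(\chi(\sigma)-1)\sigma y$, hence $z\in M^H$, and $\Gnorm(z)=2z+(\widetilde{g}z-z)=(m-m')-\widetilde{g}m'=m$, using $\widetilde{g}m'=-m'$. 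This one computation is the actual content of the forward direction; with it (and the easy converse, which your Step 3 assembles correctly), your argument becomes a valid alternative to the paper's induced-module proof, at the cost of explicit cocycle bookkeeping that the paper's three-row diagram avoids.
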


\begin{proof}
Let $g$ denote the non-trivial element of $G=\Gamma/H$. We have $\Gamma$-equivariant
maps $M\rightarrow M\otimes_{\Z}\Z[G]$ and $M(\chi)\rightarrow M\otimes_{\Z}\Z[G]$
given by $m\mapsto m\otimes (1+g)$ and $m\mapsto m\otimes (1-g)$ respectively.
Moreover, since $M$ is $2$-divisible, there is a surjective $\Gamma$-equivariant
map $M\otimes_{\Z}\Z[G]\rightarrow M\oplus M(\chi)$ given by 
\[
m_1\otimes 1+m_2\otimes g\mapsto (m_1+m_2,m_1-m_2),
\]
whose kernel is $M[2]\otimes 1$. These maps fit into a commutative diagram
\[
\xymatrix{0\ar[r]&M[2] \ar[r]^{}\ar@{=}[d]&M\ar[r]^{2}\ar[d]& M\ar[r]\ar[d]&0\\0\ar[r]&M[2]\ar[r]^{}\ar@{=}[d]&M\otimes_\Z\Z[G]\ar[r]^{}& M \oplus M(\chi)\ar[r]&0\\ 0\ar[r]&M[2]\ar[r]&
M(\chi)\ar[u]\ar[r]^{2} &M(\chi)\ar[r]\ar[u]&0,}
\] 
where the right-most vertical maps are the canonical inclusions. Since $M^H$ is isomorphic
to $(M\otimes_\Z\Z[G])^\Gamma$  via the map $m\mapsto m\otimes 1+g(m)\otimes g$, taking
cohomology  yields the commutative diagram
\[
\xymatrix{M^\Gamma/2M^\Gamma~~\ar@{^{(}->}[r]^{\delta}\ar[d]&H^1(\Gamma,M[2])\ar@{=}[d]\\
\big(M^\Gamma\oplus M(\chi)^\Gamma\big)/\psi(M^H)~~\ar@{^{(}->}[r]&H^1(\Gamma,M[2])\ar@{=}[d]\\
M(\chi)^\Gamma/2M(\chi)^\Gamma~~\ar[u]\ar@{^{(}->}[r]^{\phantom{hihihi}\delta_\chi}&H^1(\Gamma,M[2]),}
\]
where $\psi\colon M^H\rightarrow M^\Gamma\oplus M(\chi)^\Gamma$ is given by
$m\mapsto (m+gm,m-gm)$. Since each horizontal arrow is injective, to compute
$\im(\delta)\cap \im(\delta_\chi)$ it suffices to compute the
intersection of the images of the two lefthand vertical maps. From the explicit
description of the map $\psi$ we see that an element of
$M^\Gamma\oplus M(\chi)^\Gamma$ of the form $(m,0)$ is equivalent modulo
$\psi(M^H)$ to an element of the form $(0,m')$  if and only if $m=\Gnorm(n)$ for
some $n\in M^H$. The result follows.
\end{proof}

\subsection{Norms and Selmer conditions}\label{sec:NormsAndSelmer}

Let $k$ be a number field, and let $E/k$ be a principally polarised
abelian variety with full rational $2$-torsion. Let $\Sigma$ be a finite set
of places of $k$, containing all places above $2\infty$ and all places at which
$E$ has bad reduction. Recall that
$\delta\colon E(k)/2E(k)\hookrightarrow H^1(k,E[2])$ denotes the coboundary
homomorphism associated with the multiplication-by-$2$ Kummer sequence, and that
for every $d\in k^\times$, we get the analogous map $\delta_d$ for the quadratic
twist $E_d$ in place of $E$. Recall also that we identify the Galois modules $E[2]$
and $E_d[2]$, which allows us to view the image of $\delta_d$ as also lying in $H^1(k,E[2])$.
Write $K=k(\sqrt{d})$.
If $\bottomfield$ is either $k$ or $k_v$ for a place $v$ of $k$, and $\topfield$ is either $K$ or
$K_w$ for a place $w$ of $K$ above $v$, 
denote by $\N_{\topfield/\bottomfield}\colon E(\topfield)\rightarrow E(\bottomfield)$ the norm map
\[
 P \mapsto \sum_{\sigma \in \Gal(\topfield/\bottomfield)}\sigma P.
\]

\begin{lemma} \label{norm from kummer map}
 The map $\delta$ induces an isomorphism
 \[
  \delta\colon \N_{K/k}E(K)/2E(k)\stackrel{\sim}{\longrightarrow}\im\delta\cap \im\delta_d,
 \]
 and for every place $v$ of $k$ the map $\delta_v$ induces an isomorphism
  \[
    \delta_v\colon \N_{K_w/k_v}E(K_w)/2E(k_v)\stackrel{\sim}{\longrightarrow}\im\delta_v\cap \im\delta_{d,v},
 \]
 where $w$ is an arbitrary place of $K$ above $v$.
\end{lemma}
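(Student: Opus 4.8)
The plan is to derive \Cref{norm from kummer map} as a direct application of the abstract cohomological result \Cref{norms via cohomology}, by making the right choices of the profinite group $\Gamma$ and the $2$-divisible module $M$. Concretely, for the global statement I would take $\Gamma = G_k$, $H = G_K$, $\chi = \psi_d$, and $M = E(k^s)$, which is $2$-divisible since $E(k^s)$ is a divisible group. Then $M^{\Gamma} = E(k)$, $M^H = E(K)$, and the norm map $\Gnorm\colon M^H \to M^{\Gamma}$ in the notation of \Cref{sec:modules} is exactly $\N_{K/k}\colon E(K)\to E(k)$ (the sum over the two elements of $\Gal(K/k)$). The twist $M(\chi) = E(k^s)\otimes_{\Z}\Z(\psi_d)$ is, as a $G_k$-module, canonically isomorphic to $E_d(k^s)$: indeed, the chosen $K$-isomorphism $\xi\colon E_d\to E$ with $\xi^{-1}\xi^{\sigma} = (-1)^{\psi_d(\sigma)}$ from \Cref{sec:SelmerBackg} furnishes precisely such an identification, and under it the map $\delta_{\chi}$ of \Cref{norms via cohomology} becomes the map $\delta_d$ of the present section (this is the compatibility already built into the definition of $\delta_d$ as taking values in $H^1(k,E[2])$ via $\xi$). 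With these translations, \Cref{norms via cohomology} immediately gives that $\delta$ restricts to an isomorphism $\N_{K/k}E(K)/2E(k)\xrightarrow{\sim}\im\delta\cap\im\delta_d$, which is the first assertion.

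For the local statement I would run the identical argument with $k$ replaced by $k_v$ and $K$ replaced by $K_w$ for a chosen place $w\mid v$. The only point requiring a word is that when $v$ splits in $K/k$ (so $K\otimes_k k_v \cong k_v\times k_v$ and $\psi_d$ restricts to the trivial character of $G_{k_v}$) the hypothesis of \Cref{norms via cohomology} that $\chi$ be \emph{non-trivial} fails; but in that case $d\in k_v^{\times 2}$, the map $\delta_{d,v}$ coincides with $\delta_v$, the norm $\N_{K_w/k_v}$ is the identity, and the claimed isomorphism reads $E(k_v)/2E(k_v)\xrightarrow{\sim}\im\delta_v\cap\im\delta_v$, which is trivially true. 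When $v$ is inert or ramified in $K/k$, $K_w/k_v$ is a genuine quadratic extension, $\psi_d\colon G_{k_v}\to\{\pm1\}$ is non-trivial with kernel $G_{K_w}$, and the argument goes through verbatim, with $M = E(\bar k_v)$ again $2$-divisible.

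I do not expect any serious obstacle here: the substance of the proof is entirely contained in \Cref{norms via cohomology}, and what remains is bookkeeping. The one step that needs genuine care — and which I would write out — is verifying that the identification $E_d(k^s)\cong E(k^s)(\psi_d)$ coming from $\xi$ is compatible with everything in sight: that it induces the canonical identification $E_d[2]\cong E[2]$ of Galois modules used throughout (so that both coboundary maps land in the same $H^1(k,E[2])$), and that under it $\delta_{d}$ is carried to the map called $\delta_{\chi}$ in \Cref{norms via cohomology}. This is where the precise normalisation $\xi^{-1}\xi^{\sigma} = (-1)^{\psi_d(\sigma)}$ is used, and checking it amounts to unwinding the definition of $\delta_d$ in \Cref{sec:SelmerBackg}; once that is in place, the two displayed isomorphisms follow by citing \Cref{norms via cohomology} for $(\Gamma,H,\chi,M) = (G_k, G_K, \psi_d, E(k^s))$ and $(G_{k_v}, G_{K_w}, \psi_d|_{G_{k_v}}, E(\bar k_v))$ respectively.
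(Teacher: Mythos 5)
Your proposal is correct and takes exactly the paper's route: the paper's proof is a one-line application of \Cref{norms via cohomology} with $M=E(k^s)$, $\Gamma=G_k$ (respectively $M=E(k_v^s)$, $\Gamma=G_{k_v}$). Your additional care about the split places, where $\psi_d$ restricts to the trivial character and \Cref{norms via cohomology} does not literally apply, is a legitimate point the paper glosses over, and your direct verification of that degenerate case is the right fix.
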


\begin{proof}
The statement follows from  \Cref{norms via cohomology} with $M=E(k^s)$ and $\Gamma=G_k$,
respectively with $M=E(k_v^s)$ and $\Gamma=G_{k_v}$. See also \cite[Lemma 4.1]{MR3519097}, which proves this result via a direct cocyle computation.
\end{proof}

\begin{lemma}\label{lem:regular_iff_norm}
  Suppose that we have $E(k)[2^\infty] = E(K)[2^\infty]=E[2]$ and $\rk E(k) = \rk E_d(k)=1$.
  Then:
  \begin{enumerate}[leftmargin=*, label=\upshape{(\arabic*)}]
    \item if we have $E(K)/E(K)_{\tors}\cong \Z[G]$, then
      some Mordell--Weil generator of $E(k)$ is in the image of the norm map $\N_{K/k}$;
    \item if, moreover, for all Mordell--Weil generators $P\in E(k)$
      one has $K\not\subset k(\tfrac12P)$, then the converse is also true, i.e. if
      a Mordell--Weil generator of $E(k)$ is in the image of the norm map $\N_{K/k}$,
      then we have $E(K)/E(K)_{\tors}\cong \Z[G]$.
  \end{enumerate}
\end{lemma}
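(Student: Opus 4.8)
\textbf{Proof plan for Lemma \ref{lem:regular_iff_norm}.}

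The plan is to invoke the module classification of \Cref{thm:classifmodules} together with the cohomological characterisation of norms in \Cref{norms via cohomology} (equivalently \Cref{norm from kummer map}), applied to the $\Z[G]$-module $M=E(K)/E(K)_{\tors}$, where $G=\Gal(K/k)$. First I would observe that, since $E(K)[2^\infty]=E[2]$ is defined over $k$, we have $E(K)_{\tors}=E(k)_{\tors}$ and $M_{\tors}=0$, so $M$ is a $\Z[G]$-lattice; moreover $\rk_\Z M = \rk E(K) = 2$ (this uses $\rk E(k)=\rk E_d(k)=1$ together with the standard decomposition $E(K)\otimes\Q \cong (E(k)\oplus E_d(k))\otimes\Q$) and $\rk_\Z M^G = \rk E(k) = 1$. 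By \Cref{thm:indecs}, $M$ is isomorphic either to $\Z[G]$ or to $\Z\oplus\Z(-1)$.

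For part (1): if $E(K)/E(K)_{\tors}\cong \Z[G]$, then already on the level of $M=E(K)/E(K)_{\tors}$ itself (applying \Cref{lem:trace} or the first case of \Cref{thm:classifmodules} with $U=0$) we get $M^G = \Gnorm M$, i.e. every element of $M^G$ is a norm from $M$. Lifting back to $E(K)$, and using that the torsion $E(k)_{\tors}=E(K)_{\tors}$ is $2$-divisible modulo itself in an appropriate sense, I would deduce that $E(k)$ modulo torsion is $\N_{K/k}E(K)$ modulo torsion; in particular a Mordell--Weil generator $P$ of $E(k)$ equals $\N_{K/k}(Q) + t$ for some $Q\in E(K)$ and some torsion point $t\in E(k)_{\tors}$. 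Since $E[2]\subset E(k)$ we may write $t=\N_{K/k}(t')$ for a suitable $2$-torsion point (as $\N_{K/k}$ restricted to $E[2]$ is multiplication by $2$, which is zero, I would instead absorb $t$ by replacing $P$ by $P+t$, which is still a Mordell--Weil generator since $t$ is torsion); either way one obtains a Mordell--Weil generator in $\im\N_{K/k}$. The one point requiring a little care is the passage from "norm surjective modulo torsion" to "some honest Mordell--Weil generator is a norm", which I expect to be routine given $E[2]\subset E(k)$.

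For part (2): now suppose some Mordell--Weil generator $P\in E(k)$ lies in $\im\N_{K/k}$, and suppose for contradiction that $M = E(K)/E(K)_{\tors}\cong \Z\oplus\Z(-1)$, i.e. we are in case \ref{item:nonsplit} or \ref{item:trivialplussign} of \Cref{thm:classifmodules} (with $U=0$, these collapse: $M^G = \Gnorm M$ would force $\Z[G]$, so in fact $M\cong\Z\oplus\Z(-1)$ gives $\#(M^G/\Gnorm M)=2$). Then $\im\N_{K/k}$ has index $2$ in $E(k)$ modulo torsion, and $P\bmod{E(k)_{\tors}}$ generates $E(k)/E(k)_{\tors}\cong\Z$; since $P$ is a norm, $P$ lies in $\Gnorm M$, forcing $\Gnorm M = M^G$, contradiction. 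To make this contradiction precise and to use the hypothesis $K\not\subset k(\tfrac12 P)$, I would argue via \Cref{norm from kummer map}: $P\in\im\N_{K/k}$ means $\delta(P)\in\im\delta_d$, hence $\delta(P)$ lies in $\Sel_2(E/k)\cap\Sel_2(E_d/k)$; and the hypothesis $K\not\subset k(\tfrac12 P)$ says precisely that the field $k(\tfrac12 P)$ cut out by $\delta(P)$ does not contain $K$, which (combined with the explicit description of the $G$-action on a lift of $P$ in the non-split case, where $g\cdot\tilde P = \pm\tilde P + u$ with $u\in E[2]$) is exactly what obstructs $M/M_{\tors}\cong\Z\oplus\Z(-1)$ from being compatible with $P$ being a norm. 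The main obstacle, and the step I would spend the most effort on, is pinning down this last equivalence — translating "$P$ is a norm from $E(K)$" and "$K\subset k(\tfrac12 P)$" into statements about the $\Z[G]$-module structure of $E(K)$ near the $2$-divisibility of $P$, using \Cref{lem:ext} and \Cref{thm:classifmodules} to see that the only way a norm Mordell--Weil generator fails to force $\Z[G]$ is when $\tfrac12 P$ generates $K$ over $k$.
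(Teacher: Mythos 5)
Your plan uses the same two ingredients as the paper (the classification \Cref{thm:indecs}/\Cref{thm:classifmodules} together with the norm description of \Cref{norms via cohomology}), but in part (2) it contains a step that fails. Working with the lattice $M=E(K)/E(K)_{\tors}$, you argue: $P$ is a norm, so $\bar P\in \Gnorm M$, hence $\Gnorm M=M^G$, contradicting $M\cong\Z\oplus\Z(-1)$. This deduction needs $\bar P$ to generate $M^G$, i.e.\ the map $E(k)/E(k)_{\tors}\to M^G$ to be surjective, and that is exactly what breaks in case \ref{item:nonsplit} of \Cref{thm:classifmodules} applied to $E(K)$: if there is $m\in E(K)$ of infinite order with $g\cdot m=m+u$, $0\neq u\in E[2]$, then $2m$ generates $E(k)$ modulo torsion, the index of $E(k)/E(k)_{\tors}$ in $M^G$ is $2$, and $\N_{K/k}(m)=2m+u$ is an honest Mordell--Weil generator of $E(k)$ lying in the norm image while $E(K)/E(K)_{\tors}\cong\Z\oplus\Z(-1)$. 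So no contradiction can be extracted from ``$P$ is a norm'' alone --- the converse is simply false without the hypothesis $K\not\subset k(\tfrac12P)$ --- yet your first paragraph of part (2) derives one without using that hypothesis. You do name the correct mechanism at the end (the failure mode is precisely a generator becoming $2$-divisible in $E(K)$, i.e.\ $k(\tfrac12 P)=K$), but you leave exactly this step, which is the entire content of part (2), unproved and flagged as ``the main obstacle''.

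The repair is the paper's route: apply \Cref{thm:classifmodules} to $M=E(K)$ itself rather than to its torsion-free quotient, where cases \ref{item:nonsplit} and \ref{item:trivialplussign} are indistinguishable. The hypothesis excludes case \ref{item:nonsplit}, because there the Mordell--Weil generator $2m$ satisfies $2m=2\cdot m$ with $m\in E(K)\setminus E(k)$, so $k(\tfrac12(2m))=K$ and in particular $K\subset k(\tfrac12(2m))$. In case \ref{item:trivialplussign} every Mordell--Weil generator differs from a generator of the $\Z$-summand by torsion, hence represents the non-trivial class of $M^G/(\Gnorm M+M_{\tors})$ and is not a norm. A norm generator therefore forces case \ref{item:regular}, i.e.\ $E(K)/E(K)_{\tors}\cong\Z[G]$; the Selmer-theoretic detour through \Cref{norm from kummer map} is not needed here (it belongs to the later results, not to this lemma). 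Finally, in part (1) your fallback justification that $P-t$ (you wrote $P+t$) ``is still a Mordell--Weil generator since $t$ is torsion'' is false as a general principle (in $\Z R\oplus\Z/2\Z$ the element $2R+t$ is non-divisible while $2R$ is not); in case \ref{item:regular} the conclusion does hold, most cleanly by taking the norm of a $\Z[G]$-generator of the free summand of $E(K)$, so this is a small local fix rather than a structural problem.
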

\begin{proof}
  The result follows by applying \Cref{thm:classifmodules} to $M=E(K)$.
  Note that the condition that no Mordell--Weil generator of $E(k)$
  become $2$-divisible in $E(K)$ excludes option \ref{item:nonsplit}
  of \Cref{thm:classifmodules}.
\end{proof}

For the rest of the section, we specialise to $k=\Q$.
\begin{definition}\label{def:cF(P)}
  For a point $P\in E(\Q)$ that is not $2$-divisible,
  let $\cF(P)$ be the family of all square-free integers $d'$ such that
  the $2^\infty$-Selmer rank $\rk_2(E_{d'}/\Q)$ of $E_{d'}$ 
  is odd and such that one has $\delta(P)\in \Sel_2(E_{d'}/\Q)$.
\end{definition}

\begin{theorem} \label{necessary conditions theorem}
Suppose that we have $E(\Q)[2^\infty] = E(K)[2^\infty]=E[2]$,
assume that the $2$-part of the Shafarevich--Tate group of $E_d/\Q$ is finite,
and suppose that we have $E(K)/E(K)_{\tors}\cong \Z[G]$ as $G$-modules.
Let $P$ be a Mordell--Weil generator of $E(\Q)$.
Then we have $d\in \cF(P+Q)$ for some $Q\in E[2]$.
\end{theorem}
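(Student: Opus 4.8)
The strategy is to combine the module-theoretic trichotomy of \Cref{thm:classifmodules} with the cohomological characterisation of norms in \Cref{norms via cohomology} (equivalently \Cref{norm from kummer map}), and then translate the resulting global norm condition into the Selmer-theoretic condition defining $\cF(P+Q)$. First I would apply \Cref{lem:regular_iff_norm}(1): since $E(\Q)[2^\infty]=E(K)[2^\infty]=E[2]$, $\rk E(\Q)=1$, and by assumption $E(K)/E(K)_{\tors}\cong \Z[G]$, we are in case \ref{item:regular} of \Cref{thm:classifmodules} applied to $M=E(K)$, so some Mordell--Weil generator $P'$ of $E(\Q)$ lies in the image of the norm map $\N_{K/\Q}$. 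Any two Mordell--Weil generators of $E(\Q)$ differ by an element of $E[2]$ (modulo $\pm 1$), so $P' = \pm P + Q$ for some $Q\in E[2]$; replacing $Q$ by $-Q$ if needed and noting $-P'$ is also a norm, we may take $P' = P+Q$.

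Next, by \Cref{norm from kummer map} (applied with $k=\Q$), the fact that $P' = P+Q \in \N_{K/\Q}E(K)$ means precisely that $\delta(P') \in \im\delta \cap \im\delta_d$, and in particular $\delta(P') \in \im\delta_d$. It remains to upgrade ``$\delta(P')\in\im\delta_d$'' to ``$\delta(P')\in\Sel_2(E_d/\Q)$'', and to check the parity condition on $\rk_2(E_d/\Q)$. For the first point: $\delta(P')\in\im\delta_d = \delta_d(E_d(\Q)/2E_d(\Q))$ lands in the global cohomology, and localising at each place $v$ shows $\res_v\delta(P') = \res_v\delta_d(\text{something}) \in \sS_{d,v}^{(r)}$ for $r=1$; hence $\delta(P')$ satisfies all local Selmer conditions, i.e. $\delta(P')\in\Sel_2(E_d/\Q)$. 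For the parity: the hypothesis $E(K)/E(K)_{\tors}\cong\Z[G]$ forces $\rk E(K) = 2$, hence (using $\rk E(\Q)=1$) $\rk E_d(\Q)=1$; combined with the assumed finiteness of the $2$-part of $\Sha(E_d/\Q)$, this gives that $\rk_2(E_d/\Q)$ equals the algebraic rank, which is odd. Therefore $d\in\cF(P+Q)$ by \Cref{def:cF(P)}.

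The main obstacle — and the point needing the most care — is the interface between the global norm statement and the \emph{Selmer} condition rather than the bare $\im\delta_d$ condition, together with pinning down exactly which translate $P+Q$ occurs. One has to be slightly careful that $\delta(P')$, a priori only known to lie in $\im\delta_d$, automatically lies in the Selmer group: this is immediate because $\im\delta_d \subseteq \Sel_2(E_d/\Q)$ by the very definition of the Selmer group as the classes that are everywhere locally in the image of the local Kummer maps. A secondary subtlety is that \Cref{def:cF(P)} is phrased in terms of $\rk_2$ (the $2^\infty$-Selmer rank) being odd, so one genuinely needs the finiteness of $\Sha(E_d/\Q)[2^\infty]$ (as hypothesised) to identify $\rk_2(E_d/\Q)$ with the Mordell--Weil rank and conclude it is odd; without that hypothesis one would only know the parity of the Selmer rank via \Cref{thm:2infty-Selmer rank}, which would in fact still suffice but by a slightly different route. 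I would present the argument in the order: (i) invoke \Cref{lem:regular_iff_norm}(1) to get a norm Mordell--Weil generator; (ii) write it as $P+Q$; (iii) apply \Cref{norm from kummer map} to land $\delta(P+Q)$ in $\im\delta_d\subseteq\Sel_2(E_d/\Q)$; (iv) deduce $\rk E_d(\Q)=1$ and hence $\rk_2(E_d/\Q)$ odd; (v) conclude $d\in\cF(P+Q)$.
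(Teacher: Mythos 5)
Your proof is correct and follows essentially the same route as the paper's (very terse) argument: \Cref{lem:regular_iff_norm}\,(1) (via case \ref{item:regular} of \Cref{thm:classifmodules}) produces a Mordell--Weil generator in the image of the norm map, \Cref{norm from kummer map} places its image under $\delta$ in $\im\delta\cap\im\delta_d\subseteq\Sel_2(E_d/\Q)$, and the $\Z[G]$ hypothesis together with finiteness of $\Sha(E_d/\Q)[2^\infty]$ gives $\rk_2(E_d/\Q)=1$, which is odd. The only cosmetic imprecision is your claim that two Mordell--Weil generators differ by an element of $E[2]$ up to sign — the hypotheses only constrain the $2$-primary torsion, so they may also differ by odd torsion — but since odd torsion lies in $2E(\Q)$ this does not affect the identity $\delta(P')=\delta(P+Q)$ with $Q\in E[2]$, and the argument goes through as in the paper.
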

\begin{proof}
  The hypothesis that one has $E(K)/E(K)_{\tors}\cong \Z[G]$ implies
  that the ranks of $E(\Q)$ and of $E_d(\Q)$ are both equal to $1$.
  The assumption on finiteness of Shafarevich--Tate group implies that
  $\rk_2(E_d/\Q)$ is equal to the rank of $E_d/\Q$, and in particular is odd.

  The result follows from this observation, and a combination of \Cref{norm from kummer map}
  and \Cref{lem:regular_iff_norm}.
\end{proof}

Informally, \Cref{necessary conditions theorem} says that if $E$ has full
rational $2$-torsion and $d$ is not contained in $\cF(P)$ for any
Mordell--Weil generator $P$, then there is a local obstruction to $E(K)$
being isomorphic to $\Z[G]$ as a Galois module. The families $\cF(P)$, for
Mordell--Weil generators $P\in E(\Q)$, are therefore the natural ones in which
to investigate frequencies of the Galois modules $\Z[G]$ and $\Z\oplus \Z(-1)$.

We also have the following partial converse to \Cref{necessary conditions theorem}.

\begin{proposition}\label{prop:1dim_systematic_regular}
Assume the hypotheses of \Cref{lem:regular_iff_norm}, and
assume, moreover, that the $2$-part of the Shafarevich--Tate group of $E_d/\Q$ is finite.
Let $P$ be a Mordell--Weil generator of $E(\Q)$,
suppose that we have $d\in \cup_{Q\in E[2]}\cF(P+Q)$, $K\not\subset \Q(\tfrac12(P+Q))$
for any $Q\in E[2]$, and $\Sel_2(E_d/\Q)\cong (\Z/2\Z)^{2\dim E +1}$.
Then $E(K)/E(K)_{\tors}$ is isomorphic to $\Z[G]$.
\end{proposition}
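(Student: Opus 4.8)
The plan is to combine \Cref{norm from kummer map}, \Cref{lem:regular_iff_norm}, and the classification in \Cref{thm:classifmodules}, just as in the proof of \Cref{necessary conditions theorem}, but now exploiting the hypothesis $\Sel_2(E_d/\Q)\cong(\Z/2\Z)^{2\dim E+1}$ to \emph{produce} a Mordell--Weil generator in the image of the norm rather than merely obstructing one. First I would use the hypotheses $E(\Q)[2^\infty]=E(K)[2^\infty]=E[2]$ together with finiteness of the $2$-part of $\Sha(E_d/\Q)$ to identify $\rk_2(E_d/\Q)$ with $\rk E_d(\Q)$; since $d\in\cup_{Q\in E[2]}\cF(P+Q)$, this rank is odd, so $\rk E_d(\Q)\geq 1$, and combined with $\rk E(\Q)=1$ we get $\rk E(K)=2$.

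Next I would extract a Mordell--Weil generator of $E(\Q)$ that is a norm. The assumption $\Sel_2(E_d/\Q)\cong(\Z/2\Z)^{2\dim E+1}$ forces $E_d(\Q)/2E_d(\Q)$ (equivalently $E(K)^-/2$, via the identification of $\delta_d$ with the twisted Kummer map) to be as small as possible given the odd parity: concretely, $\dim_{\F_2}\Sel_2(E_d/\Q)=2\dim E+1$ together with $\dim_{\F_2}E_d(\Q)[2]=2\dim E$ and finiteness of $\Sha$ gives that $\delta_d$ has image exactly $\delta_d(E_d[2])$ plus one extra dimension coming from the free part of $E_d(\Q)$. The hypothesis $d\in\cF(P+Q)$ for some $Q\in E[2]$ means $\delta(P+Q)\in\Sel_2(E_d/\Q)$; since $\delta(P)$ and $\delta(P+Q)$ differ by $\delta(Q)\in\delta(E[2])\subseteq\Sel_2(E_d/\Q)$ (using that $E[2]\subseteq E(\Q)$), we in fact have $\delta(P)\in\Sel_2(E_d/\Q)$ as well. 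So $\delta(P)$ lies in $\Sel_2(E_d/\Q)=\im(\delta)\cap(\text{stuff})$; the point is to upgrade this to $\delta(P)\in\im\delta_d$. Here I would invoke finiteness of $\Sha(E_d/\Q)[2^\infty]$ a second time: it gives $\Sel_2(E_d/\Q)=\im\delta\oplus(\text{image of }\Sha[2])$-type control, and because the Selmer group has the minimal dimension $2\dim E+1$ consistent with the rank being (at least) $1$, the class $\delta(P)$, which is not in $\delta(E[2])$ (as $P$ is non-divisible and $E(\Q)[2^\infty]=E[2]$), must actually come from $E_d(\Q)$, i.e.\ $\delta(P)\in\im\delta_d$. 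Thus $\delta(P)\in\im\delta\cap\im\delta_d$, so by \Cref{norm from kummer map} (applied with $k=\Q$, $K=\Q(\sqrt d)$, noting $E(K)$ is $2$-divisible in the relevant sense after tensoring appropriately — or rather applying the lemma directly) $P$ is, modulo $2E(\Q)$, a norm from $E(K)$; adjusting $P$ by an element of $2E(\Q)$ if necessary (which preserves being a Mordell--Weil generator), $P$ itself lies in $\N_{K/\Q}E(K)$.

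Finally I would apply \Cref{lem:regular_iff_norm}(2): the standing hypotheses of \Cref{lem:regular_iff_norm} hold by assumption, the extra condition $K\not\subset\Q(\tfrac12(P+Q))$ for all $Q\in E[2]$ is assumed — and note that if $P'$ is the adjusted generator, $P'=P+2R$, then $\Q(\tfrac12 P')\subseteq\Q(\tfrac12 P,E[2])=\Q(\tfrac12 P)$ since $E[2]\subseteq E(\Q)$, so the hypothesis transfers to $P'$ — and we have just shown a Mordell--Weil generator is a norm. Hence \Cref{lem:regular_iff_norm}(2) yields $E(K)/E(K)_{\tors}\cong\Z[G]$, as desired.

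The main obstacle I anticipate is the second step: rigorously extracting from the bound $\dim_{\F_2}\Sel_2(E_d/\Q)=2\dim E+1$ the conclusion that $\delta(P)$ genuinely lifts to $E_d(\Q)$ (rather than living in the $\Sha$-part), and keeping careful track of the identifications between $E(K)$, $E_d(\Q)$, the twisted Kummer map $\delta_d$, and the norm map under the fixed isomorphism $E_d[2]\cong E[2]$ — this is where the hypothesis on the Selmer dimension does the real work, and where one must be sure the finiteness of $\Sha$ is used correctly to pin down $E_d(\Q)/2E_d(\Q)$ inside $\Sel_2(E_d/\Q)$.
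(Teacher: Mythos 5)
Your overall route is the same as the paper's: force $\Sha(E_d/\Q)[2]=0$ from the Selmer dimension so that $\delta_d$ surjects onto $\Sel_2(E_d/\Q)$, conclude that a Mordell--Weil generator is a norm via \Cref{norm from kummer map}, and finish with \Cref{lem:regular_iff_norm}(2) using the hypothesis on the half-point fields. (The paper gets $\rk(E_d/\Q)=1$ and $\Sha(E_d/\Q)[2]=0$ from the evenness of $\dim\Sha(E_d/\Q)[2]$, citing the full-$2$-torsion parity lemma, rather than from oddness of $\rk_2$; your variant also works, but you should replace the vague ``$\Sel_2=\im\delta\oplus(\text{image of }\Sha[2])$-type control'' by the precise exact sequence $0\to E_d(\Q)/2E_d(\Q)\to\Sel_2(E_d/\Q)\to\Sha(E_d/\Q)[2]\to 0$, noting also that the relevant image is that of $\delta_d$, not of $\delta$.)

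There is, however, one genuinely false step: the claim that $\delta(E[2])\subseteq\Sel_2(E_d/\Q)$, and hence that $\delta(P)\in\Sel_2(E_d/\Q)$. What lies in $\Sel_2(E_d/\Q)$ is $\delta_d(E_d[2])$, and by \Cref{lem:twisted_boundary_map} one has $\delta(Q)=\delta_d(Q)+\psi_d\cup Q$, which is ramified at primes $p\mid d$ unless $Q$ is locally $2$-divisible there. Concretely, for $p\mid d$ with $p\notin\Sigma$ the local condition is $\sS_{d,p}=\delta_{d,p}(E[2])$ (\Cref{lem:kummer_image_basic}), and $\sS_p\cap\sS_{d,p}=0$ (\Cref{prop:KummerIntersection}(2)), so $\delta(Q)\in\Sel_2(E_d/\Q)$ would force $\delta_p(Q)=0$, i.e.\ $p$ split in $\Q(\tfrac12Q)$ --- which is not implied by $d\in\cF(P+Q)$, since that family only constrains splitting in $\Q(\tfrac12(P+Q))$. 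Consequently $\delta(P)$ need not be a Selmer class, and the later assertion $\delta(P)\in\im\delta_d$ may fail. The fix is immediate and is what the paper does: drop the reduction to $P$ entirely and work with the generator $P+Q$ itself. Since $Q$ is torsion, $P+Q$ is again a Mordell--Weil generator; $d\in\cF(P+Q)$ gives $\delta(P+Q)\in\Sel_2(E_d/\Q)=\im\delta_d$, so $\delta(P+Q)\in\im\delta\cap\im\delta_d$ and \Cref{norm from kummer map} produces a norm generator (after the harmless adjustment by $2E(\Q)$ that you describe), and the hypothesis $K\not\subset\Q(\tfrac12(P+Q'))$ for all $Q'\in E[2]$ is exactly what is needed to invoke \Cref{lem:regular_iff_norm}(2).
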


\begin{proof}
Since $\Sha(E_d/\Q)[2^\infty]$ is assumed to be finite, \cite{MR3519097}*{Lemma 5.1} implies
that its $2$-torsion has even $\F_2$-dimension. Since we have
$E_d(\Q)[2]\cong (\Z/2\Z)^{2\dim E}$, the assumption on $\Sel_2(E_d/\Q)$
implies that we have $\rk(E_d/\Q)=1$ and that $\sha(E_d/\Q)[2]$ is trivial, so that
$\delta_d\colon E_d(\Q)/2E_d(\Q)\hookrightarrow \Sel_2(E_d/\Q)$
is an isomorphism. Since we have $d\in \cF(P+Q)$ for some $Q\in E[2]$, we also have
$\delta(P+Q)\in \Sel_2(E_d/\Q)=\im(\delta_d)$ for some $Q\in E[2]$.
The assumption that $K\not\subset \Q(\tfrac12(P+Q))$ for any $Q\in E[2]$
implies that no Mordell--Weil generator of $E(\Q)$ becomes $2$-divisible in $E(K)$,
so the result follows from \Cref{lem:regular_iff_norm}.
\end{proof}

\begin{lemma}\label{lem:Sel2E_d}
  Let $P\in E(\Q)$.
  Then there exists a subset $S$ of $\idelesmodsquaresset{\Q}{\Sigma}$ such that
  we have $\delta(P)\in \Sel_2(E_d/\Q)$ if and only if the image of $d$ in 
  $\idelesmodsquaresset{\Q}{\Sigma}$ is contained in $S$ and all prime
numbers $p\notin \Sigma$ with $p\mid d$ are totally split in $\Q(\tfrac12P)$.
\end{lemma}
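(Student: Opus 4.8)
\textbf{Proof proposal for \Cref{lem:Sel2E_d}.}

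The plan is to analyse the membership of $\delta(P)$ in $\Sel_2(E_d/\Q)$ place by place, and to show that for primes outside $\Sigma$ the local condition is exactly a splitting condition in $\Q(\tfrac12 P)$, while at the finitely many places in $\Sigma$ the condition depends only on the class of $d$ in $\idelesmodsquaresset{\Q}{\Sigma}$. Recall that $\delta(P)\in \Sel_2(E_d/\Q)$ means precisely that $\res_v(\delta(P))\in \sS_{d,v}$ for every place $v$ of $\Q$, where we have used the fixed identification $H^1(\Q,E_d[2])\cong H^1(\Q,E[2])$ of \Cref{sec:SelmerBackg}. So the task is to understand, for each $v$, when $\res_v(\delta(P))$ lies in the image of the local Kummer map $\delta_{d,v}$.

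First I would dispose of the primes $p\notin\Sigma$. Fix such a $p$. By \Cref{lem:kummer_image_basic} (applied to $E$), if $p$ is unramified in $\Q(\sqrt d)$ then $\sS_{d,p}=H^1_{\ur}(\Q_p,E[2])$, while if $p$ ramifies in $\Q(\sqrt d)$ then $\sS_{d,p}=\delta_{d,p}(E[2])$. In the unramified case, $\res_p(\delta(P))\in H^1_{\ur}(\Q_p,E[2])$ automatically, since $\delta(P)$ is the image of a rational point and hence unramified outside $\Sigma$ (it lies in $H^1_\Sigma(\Q,E[2])$); so the condition is vacuous at such $p$. In the ramified case, $p\mid d$, and by \Cref{lem:twisted_boundary_map} we have $\sS_{d,p}=\delta_{d,p}(E[2])=\{\delta_p(x)+\psi_d\cup x : x\in E[2]\}$; using that $\delta_p(x)$ and $\res_p(\delta(P))$ are unramified at $p$ while $\psi_d\cup x$ is ramified (for $x\neq 0$), one sees that $\res_p(\delta(P))\in\sS_{d,p}$ forces $\res_p(\delta(P))\in\delta_p(E[2])$, and since $E$ has good reduction at $p$ this image is $H^1_{\ur}(\Q_p,E[2])\cap\delta_p(E(\Q_p))$; in fact the condition $\res_p(\delta(P))\in \delta_p(E[2])$ translates, via the isomorphism $H^1(\Q_p,E[2])\cong \Hom(G_{\Q_p},E[2])$ and the explicit formula $\delta(P)(\sigma)$ ``$=\sigma Q-Q$'' for $2Q=P$, into the statement that the homomorphism $\res_p(\delta(P))$ is unramified \emph{and} its unramified-quotient value is killed by passing to $E[2]$ — equivalently, that $p$ splits completely in $\Q(\tfrac12 P)=\Q(Q)$. (Here one uses that $\delta(P)$ factors through $\Gal(\Q(\tfrac12P)/\Q)$ and that this extension is unramified outside $\Sigma$ since $E$ has good reduction at $p$ and $\Q$ has class number one in the relevant sense.) Thus for $p\notin\Sigma$ dividing $d$, the local condition at $p$ is exactly ``$p$ totally split in $\Q(\tfrac12 P)$'', and for $p\notin\Sigma$ not dividing $d$ there is no condition. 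I would double-check the direction that makes the converse work: if all $p\mid d$ with $p\notin\Sigma$ are totally split in $\Q(\tfrac12P)$, then $\res_p(\delta(P))$ really does land in $\sS_{d,p}$, which again follows from the same cocycle computation.

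It then remains to handle the places $v\in\Sigma$. For such $v$, the subgroup $\sS_{d,v}$ of $H^1(\Q_v,E[2])$ depends only on the $\Q_v$-isomorphism class of $E_d$ over $\Q_v$, hence only on the class of $d$ in $\Q_v^\times/\Q_v^{\times 2}$ (via the chosen $K$-isomorphism $\xi$ and \cite[Lemma 4.16]{MR3951582}), and therefore only on the image of $d$ in $\idelesmodsquaresset{\Q}{\Sigma}=\prod_{v\in\Sigma}\Q_v^\times/\Q_v^{\times 2}$; meanwhile $\res_v(\delta(P))$ is completely independent of $d$. So the conjunction of the conditions $\res_v(\delta(P))\in\sS_{d,v}$ over all $v\in\Sigma$ defines a subset of $\idelesmodsquaresset{\Q}{\Sigma}$, and we simply take $S$ to be that subset. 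Combining with the analysis at primes outside $\Sigma$ gives the stated equivalence. The only mildly delicate point — and the one I would be most careful about — is the precise cocycle manipulation at ramified $p\notin\Sigma$ identifying ``$\res_p(\delta(P))\in\sS_{d,p}$'' with ``$p$ totally split in $\Q(\tfrac12P)$'' in both directions; this is routine given \Cref{lem:twisted_boundary_map} and \Cref{lem:kummer_image_basic} but must be done carefully to get the biconditional rather than just one implication. (Compare \cite{Sadek}*{\S 3--4}, where the analogous computation for a general class $\zeta\in H^1(\Q,E[2])$ is carried out; here the situation is simpler because $\zeta=\delta(P)$ already comes from a global point.)
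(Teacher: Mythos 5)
Your overall strategy is the same as the paper's: analyse the condition $\res_v(\delta(P))\in\sS_{d,v}$ place by place, observe that it is vacuous at $p\notin\Sigma$ with $p\nmid d$, that at $v\in\Sigma$ it depends only on the class of $d$ in $\Q_v^\times/\Q_v^{\times 2}$, and that at $p\notin\Sigma$ with $p\mid d$ it is a splitting condition in $\Q(\tfrac12P)$. The only structural difference is at the ramified primes $p\mid d$, $p\notin\Sigma$: the paper simply notes that $\delta_p(P)\in\sS_p$ automatically and invokes \Cref{prop:KummerIntersection}\ref{item:normImRam} ($\sS_p\cap\sS_{d,p}=0$), so the condition becomes $\delta_p(P)=0$, which is equivalent to $p$ splitting completely in $\Q(\tfrac12P)$. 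You instead redo this by hand from \Cref{lem:kummer_image_basic} and \Cref{lem:twisted_boundary_map}, which is legitimate (it is essentially the proof of \Cref{prop:KummerIntersection}\ref{item:normImRam}).

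However, your write-up of that step is mis-stated in a way you should fix. The ramification comparison $\res_p(\delta(P))=\delta_p(x)+\psi_d\cup x$ with the left side and $\delta_p(x)$ unramified and $\psi_d\cup x$ ramified for $x\neq 0$ forces $x=0$, hence $\res_p(\delta(P))=0$ — not merely $\res_p(\delta(P))\in\delta_p(E[2])$ as you wrote. This matters because your subsequent claim that ``$\res_p(\delta(P))\in\delta_p(E[2])$'' is equivalent to $p$ splitting completely in $\Q(\tfrac12P)$ is false in general: $\delta_p(E[2])$ is typically nonzero at a good prime $p$ (its elements are the homomorphisms $\sigma\mapsto\gamma(\sigma)(x)$, which vanish only when $p$ splits in $\Q(E[4])$), so membership in $\delta_p(E[2])$ is strictly weaker than vanishing. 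The correct chain is: $\res_p(\delta(P))\in\sS_{d,p}$ iff $\res_p(\delta(P))=0$ iff $G_{\Q_p}\subseteq\ker(\delta(P))=G_{\Q(\frac12P)}$ iff $p$ splits completely in $\Q(\tfrac12P)$ (using that $\delta(P)$ is unramified at $p$). Your own argument already yields the vanishing, and the converse direction ($p$ split $\Rightarrow\res_p(\delta(P))=0\in\sS_{d,p}$) is then immediate, so the biconditional holds; but as literally written the middle link of your equivalence is broken and should be replaced by the vanishing statement.
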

\begin{proof}
  By definition we have $\delta(P)\in \Sel_2(E_d/\Q)$ if and only if for all places $v$ of $\Q$
  we have $\delta_v(P) \in \sS_v\cap \sS_{d,v}$, where recall from Section \ref{sec:ellcurves} that $\sS$ denotes the
  image of the respective Kummer map. By \Cref{prop:KummerIntersection} \ref{item:normImUnram}, at the primes $p\notin \Sigma$
  with $p\nmid d$ this condition is empty. For the places $v\in \Sigma$,
  the intersection $\sS_v\cap \sS_{d,v}$ only depends on the class of $d$ in $\Q_v^\times/\Q_v^{\times 2}$. Finally,
  by \Cref{prop:KummerIntersection}, at the primes $p\notin \Sigma$
  with $p\mid d$ we have $\delta_v(P)\in \sS_v\cap \sS_{d,v}$ if and only $\delta_v(P)=0$.  
  This is equivalent to $v$ splitting completely in $\Q(\tfrac12 P)$.
\end{proof}

The next result shows that for every $P\in E(\Q)\setminus 2E(\Q)$, the family
$\cF(P)$ is a union of families
of the form $d_0\cF_{c,L}$, where $L=\Q(\tfrac12 P)$, and $\cF_{c,L}$ is as in Section~\ref{sec:analytic},
equivalently that it is a union of families $\FbL$, defined as in Section
\ref{sec:introSelmer}, for appropriate $b\in \idelesmodsquaresset{\Q}{\Sigma}$.
\begin{proposition}\label{prop:cF(P)}
  Let $d$ be a square-free integer, and let $P\in E(\Q)\setminus 2E(\Q)$.
  Then we have $d\in \cF(P)$ if and only if all of the following conditions are satisfied:
\begin{enumerate}[leftmargin=*, label=\upshape{(\arabic*)}]
  \item\label{item:famgoodredn} all primes $p\not\in\Sigma$ of $\Q$
    with $p|d$ are totally split in $\Q(\tfrac12 P)/\Q$;
  \item\label{item:famrealemb} for all places $v\in \Sigma$ of $\Q$
    we have $\delta(P)\in \im\delta_v\cap \im\delta_{d,v}$;
  \item\label{item:famrootnumbers} the $2^{\infty}$-Selmer rank $\rk_2(E_{d}/\Q)$ is odd.
\end{enumerate}
  Moreover, there exists a subset $S'\subset \idelesmodsquaresset{\Q}{\Sigma}$
  such that one has $d\in \cF(P)$ if and only if all primes $p\not\in\Sigma$ of $\Q$
  with $p|d$ are totally split in $\Q(\tfrac12 P)/\Q$ and the image of $d$
  in $\idelesmodsquaresset{\Q}{\Sigma}$ is contained in $S'$.
\end{proposition}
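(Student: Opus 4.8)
The plan is to unwind the definition of $\cF(P)$ from \Cref{def:cF(P)} and show that each of its two defining requirements---that $\delta(P)\in\Sel_2(E_d/\Q)$ and that $\rk_2(E_d/\Q)$ be odd---translates into exactly the stated shape of condition. For the first requirement I would simply invoke \Cref{lem:Sel2E_d}: it already gives a subset $S\subseteq\idelesmodsquaresset{\Q}{\Sigma}$ such that $\delta(P)\in\Sel_2(E_d/\Q)$ holds if and only if the image of $d$ in $\idelesmodsquaresset{\Q}{\Sigma}$ lies in $S$ and every prime $p\notin\Sigma$ dividing $d$ is totally split in $\Q(\tfrac12 P)$. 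The equivalence of $\delta(P)\in\Sel_2(E_d/\Q)$ with conditions \ref{item:famgoodredn} and \ref{item:famrealemb} then just repeats the place-by-place analysis inside the proof of \Cref{lem:Sel2E_d}: at primes $p\notin\Sigma$ with $p\nmid d$ the Selmer condition on $\delta(P)$ is automatic by \Cref{prop:KummerIntersection}\ref{item:normImUnram}; at $v\in\Sigma$ it is the condition $\delta_v(P)\in\sS_v\cap\sS_{d,v}$, which is \ref{item:famrealemb}; and at primes $p\notin\Sigma$ with $p\mid d$, part \ref{item:normImRam} of \Cref{prop:KummerIntersection} forces $\delta_p(P)=0$, i.e. total splitting of $p$ in $\Q(\tfrac12 P)$, which is \ref{item:famgoodredn}.

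Next I would handle the parity condition \ref{item:famrootnumbers}. By \Cref{cor:2-Selmer rank}, the parity of $\rk_2(E_d/\Q)$ depends only on the class of $d$ in $\idelesmodsquaresset{\Q}{\Sigma}$. Hence ``$\rk_2(E_d/\Q)$ odd'' carves out a subset $S_{\mathrm{odd}}\subseteq\idelesmodsquaresset{\Q}{\Sigma}$ of admissible classes, and imposes no further constraint on the prime divisors of $d$ beyond what is already present. Combining this with the previous paragraph shows that $d\in\cF(P)$ if and only if conditions \ref{item:famgoodredn}, \ref{item:famrealemb}, \ref{item:famrootnumbers} all hold, proving the first assertion.

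For the ``moreover'' clause I would set $S' = S\cap S_{\mathrm{odd}}\subseteq\idelesmodsquaresset{\Q}{\Sigma}$, where $S$ is as in \Cref{lem:Sel2E_d} and $S_{\mathrm{odd}}$ as above. One then checks that condition \ref{item:famrealemb} is itself cut out by a condition on the class of $d$ in $\idelesmodsquaresset{\Q}{\Sigma}$: since $\delta(P)\in\im\delta_v\cap\im\delta_{d,v}$ depends only on the local class $d\bmod\Q_v^{\times 2}$ for each $v\in\Sigma$ (the module $E_d$ and hence $\sS_{d,v}$ being determined up to $\Q_v$-isomorphism by that class), membership in the intersection is governed by a subset of $\prod_{v\in\Sigma}\Q_v^\times/\Q_v^{\times 2}$. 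Absorbing this subset into $S$ (it is already contained in $S$ by construction in \Cref{lem:Sel2E_d}) and intersecting with $S_{\mathrm{odd}}$ yields the desired $S'$; the only remaining prime-divisor constraint is then total splitting in $\Q(\tfrac12 P)/\Q$, exactly as claimed.

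I do not expect a serious obstacle here: the statement is essentially a bookkeeping consolidation of \Cref{lem:Sel2E_d}, \Cref{prop:KummerIntersection}, and \Cref{cor:2-Selmer rank}. The one point requiring a little care is verifying that the archimedean and bad-reduction local conditions \ref{item:famrealemb} genuinely factor through $\idelesmodsquaresset{\Q}{\Sigma}$ rather than depending on $d$ more finely---but this follows from the fact, used repeatedly in Section~\ref{sec:ellcurves}, that the $\Q_v$-isomorphism class of the twist $E_d$ (and with it the local Kummer image $\sS_{d,v}$) depends only on the class of $d$ in $\Q_v^\times/\Q_v^{\times 2}$.
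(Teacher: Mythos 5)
Your proposal is correct and follows essentially the same route as the paper: the paper's proof is a two-line appeal to \Cref{cor:2-Selmer rank} (parity depends only on the class of $d$ in $\idelesmodsquaresset{\Q}{\Sigma}$) and \Cref{lem:Sel2E_d} (the Selmer membership condition), which is exactly your decomposition with $S'=S\cap S_{\mathrm{odd}}$. Your additional place-by-place unwinding simply re-expands the proof of \Cref{lem:Sel2E_d} and is consistent with it.
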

\begin{proof}
  By \Cref{cor:2-Selmer rank} the class of $d$ in $\idelesmodsquaresset{\Q}{\Sigma}$
  determines the parity of $\rk_2(E_{d}/k)$.
  The result therefore follows from \Cref{lem:Sel2E_d}.
\end{proof}

For the rest of the section we will assume that \Cref{assumption:simple_gamma_mod} is satisfied.  

We now prove \Cref{prop:intro_globalprobregular} from the introduction.
For an integer $d$, write $F_d = \Q(\sqrt{d})$.
\begin{corollary}\label{cor:globalprobregular}
  Let $E/\Q$ be a principally polarised abelian variety with $E(\Q)\cong \Z\oplus (\Z/2\Z)^{2\dim E}$.
 Let
$
\cF^{\temp}=\{d\in \Z : d \text{ square-free}, \rk_2 E_d(\Q)=1\}.
$
Assume that $\cF^{\temp}$ is non-empty.
Then one has
  $$
  \lim_{X\to \infty}\frac{\#\{d\in \cF^{\temp}:|d|<X, E(F_d)/E(F_d)_{\tors} \cong \Z[G]\}}{\#\{d\in \cF^{\temp}:|d|<X\}} = 0.
  $$
\end{corollary}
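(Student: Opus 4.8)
The plan is to deduce \Cref{cor:globalprobregular} from \Cref{necessary conditions theorem} together with the density statement in \Cref{prop:cF(P)} and the asymptotics of \Cref{prop:asymptotic_cF}. The key point is that \Cref{necessary conditions theorem} shows that, outside of the finitely many $d$ for which $\sha(E_d/\Q)[2^\infty]$ is infinite (of which there are none, under standard conjectures, but in any case this does not affect the argument since we only need an upper bound), if $E(F_d)/E(F_d)_{\tors}\cong \Z[G]$ then $d\in \cF(P+Q)$ for some $Q\in E[2]$, where $P$ is a fixed Mordell--Weil generator of $E(\Q)$. Since $E[2]$ is finite, it therefore suffices to show that each of the (finitely many) families $\cF(P+Q)$, $Q\in E[2]$, has density zero inside $\cF^{\temp}$.

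Next I would invoke \Cref{prop:cF(P)}: for each $Q\in E[2]$, the family $\cF(P+Q)$ is the set of square-free $d$ whose prime divisors outside $\Sigma$ are totally split in $L_Q:=\Q(\tfrac12(P+Q))$ and whose class in $\idelesmodsquaresset{\Q}{\Sigma}$ lies in some fixed subset $S'_Q$. In particular $\cF(P+Q)$ is contained in a finite union of families of the form $d_0\cF_{c,L_Q}$ in the notation of \Cref{sec:analytic}, and by \Cref{prop:asymptotic_cF} we have $\#(\cF(P+Q)\cap[-X,X]) \ll X(\log X)^{\frac{1}{[L_Q:\Q]}-1}$. On the other hand, $\cF^{\temp}$ is a union of families of square-free integers whose prime divisors outside $\Sigma$ are unconstrained (being determined only by a congruence condition modulo a fixed modulus, via \Cref{cor:2-Selmer rank}), so once it is non-empty it has positive density; in particular $\#(\cF^{\temp}\cap[-X,X])\gg X$. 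Hence the ratio is $\ll (\log X)^{\frac{1}{[L_Q:\Q]}-1}$.

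It remains to observe that $[L_Q:\Q]>1$ for every $Q\in E[2]$, so that $\frac{1}{[L_Q:\Q]}-1<0$ and the ratio tends to $0$. Indeed, $L_Q=\Q(\tfrac12(P+Q))$ strictly contains $\Q$: if $\tfrac12(P+Q)$ were rational then $P+Q\in 2E(\Q)$, whence $P\in 2E(\Q)+E[2]=2E(\Q)$ (using $E[2]\subset 2E(\Q)$? — no: rather $P = 2R - Q$ with $2Q=O$ gives $2P = 4R$, so $P$ is $2$-divisible in $E(\Q)$ modulo $2$-torsion, contradicting that $P$ is a Mordell--Weil generator, i.e. not $2$-divisible). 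Thus $[L_Q:\Q]\geq 2$ for each of the finitely many $Q$, and summing the resulting $O\!\left((\log X)^{\frac{1}{[L_Q:\Q]}-1}\right)$ bounds over $Q\in E[2]$ gives the claim. Note also that by the remark after \Cref{thm:intro_distr}, \Cref{assumption:simple_gamma_mod} is exactly the condition needed for the inputs to apply; the hypothesis $E(\Q)\cong \Z\oplus(\Z/2\Z)^{2\dim E}$ guarantees $E$ has rank $1$ and full rational $2$-torsion with no extra torsion, so the hypotheses of \Cref{necessary conditions theorem} and \Cref{prop:cF(P)} are met for $d\in\cF^{\temp}$ (after possibly discarding finitely many $d$ with larger $2$-power torsion over $F_d$, which again does not affect a density-zero conclusion). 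The main obstacle is simply book-keeping: checking that $\cF^{\temp}$ genuinely has positive density (so the denominator is $\gg X$) and that passing to the union over $Q\in E[2]$ and over the finitely many classes $b$ defining each $\cF(P+Q)$ does not spoil the power-of-$\log$ saving — both of which follow cleanly from \Cref{prop:asymptotic_cF}.
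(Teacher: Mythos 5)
Your treatment of the numerator follows the paper's route: combine \Cref{necessary conditions theorem}, \Cref{prop:cF(P)} and \Cref{prop:asymptotic_cF} to bound the count of $d$ with $E(F_d)/E(F_d)_{\tors}\cong\Z[G]$ by a sum over $Q\in E[2]$ of terms $O\big(X(\log X)^{1/[L_Q:\Q]-1}\big)$, and the claim $[L_Q:\Q]\geq 2$ is correct once $P$ is taken to be a genuine generator of $E(\Q)$ modulo torsion (which the hypothesis $E(\Q)\cong\Z\oplus(\Z/2\Z)^{2\dim E}$ allows; your parenthetical ``$2$-divisible modulo $2$-torsion contradicts not $2$-divisible'' is not literally a contradiction, but it is one for a generator modulo torsion). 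The genuine gap is in the denominator. You assert that $\cF^{\temp}$ is determined by congruence conditions ``via \Cref{cor:2-Selmer rank}'' and hence has positive density once non-empty. But \Cref{cor:2-Selmer rank} only fixes the \emph{parity} of $\rk_2(E_d/\Q)$ in terms of the class of $d$ in $\idelesmodsquaresset{\Q}{\Sigma}$; membership in $\cF^{\temp}$ requires $\rk_2(E_d/\Q)=1$ exactly, and nothing you cite rules out that almost all twists with odd parity have $2^\infty$-Selmer rank $3,5,\dots$. Non-emptiness of $\cF^{\temp}$ gives no asymptotic information whatsoever, so the bound $\#\{d\in\cF^{\temp}:|d|<X\}\gg X$ is unsubstantiated, and without it the ratio cannot be shown to tend to $0$. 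This is precisely where the paper invokes Smith's theorem \cite{Smith2}*{Theorem 2.14} on the distribution of $2^\infty$-Selmer ranks in the full quadratic twist family, verifying its hypotheses by means of \Cref{assumption:simple_gamma_mod} (a standing assumption at that point of the section), to conclude $\#\{d\in\cF^{\temp}:|d|<X\}\asymp X$.

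A second, smaller defect: you discard ``the finitely many $d$'' for which $\Sha(E_d/\Q)[2^\infty]$ is infinite. There is no reason (unconditionally) for this set to be finite, and saying ``we only need an upper bound'' does not help, because \Cref{necessary conditions theorem} assumes finiteness of the $2$-part of the Shafarevich--Tate group, so your covering of the numerator by $\bigcup_Q\cF(P+Q)$ would simply miss such $d$. The correct fix, which is the paper's, is to observe that if $\Sha(E_d/\Q)$ contains $\Q_2/\Z_2$ and $d\in\cF^{\temp}$, then $E_d(\Q)$ has rank $0$, so $E(F_d)$ has rank $1$ and $E(F_d)/E(F_d)_{\tors}$ cannot be isomorphic to $\Z[G]$; hence such $d$ contribute nothing to the numerator and one may restrict to $d$ with finite $2$-primary Shafarevich--Tate group before applying the theorem.
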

\begin{proof}
  We claim that the hypotheses of \cite{Smith2}*{Theorem 2.14, Case 2.13}
  are satisfied in our situation, with $N=E[2^\infty]$. Indeed, since we have full rational $2$-torsion,
  there are no universal Selmer elements, and
  the condition of being ``potentially favoured'' is satisfied by taking $w=1$
  in \cite{Smith2}*{Proposition 4.1}; the condition of being ``uncofavoured''
  is equivalent to our assumption that $E[2]$ have no proper non-trivial $\Gamma$-invariant subgroups;
  the alternating structure on $N$ is induced by the principal polarisation on $E$;
  and in light of the uncofavoured condition, the condition in the fourth bullet point
  of \cite{Smith2}*{Case 2.13} can be checked to be equivalent to the assumption that
  $\End_{\Gamma}E[2] = \F_2$; see also \cite{Smith2}*{Remark 4.6}.

  Now, it follows from \cite{Smith2}*{Theorem 2.14} that
  we have $\#\{d\in \cF^{\temp}:|d|<X\}\asymp X$.
  If $d\in \cF^{\temp}$ is such that $\sha(E_d/\Q)$ contains $\Q_2/\Z_2$, then
  the rank of $E_d(\Q)$ is $0$, so that we certainly cannot have $E(F_d)/E(F_d)_{\tors} \cong \Z[G]$.
  Thus, we may restrict to those $d$ for which the $2$-part of the Shafarevich--Tate group
  of $E_d/\Q$ is finite.
  The result follows from the combination of \Cref{necessary conditions theorem},
  \Cref{prop:cF(P)}, and \Cref{prop:asymptotic_cF}.
\end{proof}

\begin{remark}
Note that if $\sha(E/\Q)[2^\infty]$ is finite, then $\cF^{\temp}$
in \Cref{cor:globalprobregular} contains
$d=1$, and in particular is non-empty.
\end{remark}

On the other hand, assuming finiteness of relevant (parts of) Shafarevich--Tate groups, we now deduce from the main result of Part 1 that in some
situations, the density of $d$ for which one has $E(F_d)/E(F_d)_{\tors} \cong \Z[G]$
among those that satisfy the necessary local conditions is positive.

Let $L=\Q(\tfrac12 P)$.
For $b=(b_v)_v\in \idelesmodsquaresset{\Q}{\Sigma}$, recall the definition
of $\FbL$ from Section \ref{sec:introSelmer}.
Let $b$ be such that $\FbL\cap \cF(P)$ is non-empty, equivalently, by \Cref{prop:cF(P)}, such that we have $\FbL\subset \cF(P)$.
Recall from \Cref{def:syst_subspace} the definition of the systematic subspace $\cS_b$:
\[\cS_{b}=\ker\Big(H^1(L/\Q,E[2])\longrightarrow  \bigoplus_{v\in \Sigma}  H^1(\Q_v,E[2])/\sS_{b_v,v} \Big),\]
where 
$\sS_{b_v,v}$ is the image of the coboundary map $\delta_{b_v,v}$ in $H^1(\Q_v,E[2])$.
Recall also that we have $\cS_b\subset \Sel_2(E_d/\Q)$ for all $d\in \FbL$.

\begin{theorem}\label{thm:regularRepLowerBound}
  Let $E/\Q$ be a principally polarised abelian variety of rank $1$ and with
  full rational $2$-torsion, let $P\in E(\Q)$ be a non-divisible point of infinite order,
  and let $b\in \idelesmodsquaresset{\Q}{\Sigma}$ be such that we have $\FbL\subset \cF(P)$.
  Assume that \Cref{assumption:simple_gamma_mod} is satisfied.
  Suppose that the systematic subspace $\cS_{b}$ is $1$-dimensional.
  Assume that for all $d\in \FbL$, the $2$-part of the Shafarevich--Tate group of the quadratic twist
  $E_{d}$ over $\Q$ is finite.
  Then we have
  $$
    \liminf_{X\to \infty}\frac{\#\{d\in \FbL:|d|<X, E(F_{d})/E(F_{d})_{\tors} \cong \Z[G]\}}{\#\{d\in \FbL:|d|<X\}} > 0.419.
  $$
\end{theorem}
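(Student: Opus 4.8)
The plan is to combine the Selmer distribution result \Cref{thm:main_distr} with the module-theoretic dichotomy of \Cref{prop:1dim_systematic_regular}. Write $L=\Q(\tfrac12 P)$. Since $\FbL\subset \cF(P)$, by \Cref{prop:cF(P)} we know that for every $d\in\FbL$ the $2^\infty$-Selmer rank $\rk_2(E_d/\Q)$ is odd and $\delta(P)\in\Sel_2(E_d/\Q)$; moreover $\cS_b$ is $1$-dimensional, so in the notation of \Cref{thm:main_distr} we have $n_b=1$, and since the parity $n_b+m_b$ must agree with the (odd) parity of $\dim\Sel_2(E_d/\Q)-2g$, we get $m_b=0$. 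First I would invoke \Cref{thm:main_distr} with $r=0$-through-all, which gives that the proportion of $d\in\FbL(X)$ with $\dim\Sel_2(E_d/\Q)=2g+1$ (the minimal possible value, corresponding to $r=0$) tends to $\alpha(0)=\prod_{j\ge1}(1+2^{-j})^{-1}$; numerically this quantity exceeds $0.419$ (indeed it is $>0.41$ as already recorded in \Cref{cor:intro_3dim}, and a slightly sharper estimate gives $>0.419$).

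Next I would show that, after discarding a density-zero subfamily, every $d\in\FbL$ with $\dim\Sel_2(E_d/\Q)=2g+1$ satisfies $E(F_d)/E(F_d)_{\tors}\cong\Z[G]$. For this I apply \Cref{prop:1dim_systematic_regular}. Its hypotheses require: (i) $E(\Q)[2^\infty]=E(F_d)[2^\infty]=E[2]$; (ii) finiteness of $\sha(E_d/\Q)[2^\infty]$, which is assumed; (iii) $d\in\bigcup_{Q\in E[2]}\cF(P+Q)$ — satisfied since $d\in\cF(P)$; (iv) $F_d\not\subset\Q(\tfrac12(P+Q))$ for all $Q\in E[2]$; and (v) $\Sel_2(E_d/\Q)\cong(\Z/2\Z)^{2\dim E+1}$ — which is exactly the condition $\dim\Sel_2(E_d/\Q)=2g+1$. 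The torsion condition (i) holds for all but finitely many $d$: $E(F_d)$ can only acquire new $2$-power torsion if $F_d$ is one of the finitely many quadratic subfields of $\Q(E[4])$ (or of the field generated by a point of order $4$ on some twist), and one checks using \Cref{assumption:simple_gamma_mod} that no such field occurs for $d\in\FbL$ outside a finite set. Condition (iv) similarly excludes only finitely many $d$: the set of $d$ with $F_d\subseteq\Q(\tfrac12(P+Q))$ for a fixed $Q$ is finite, and there are only $\#E[2]$ choices of $Q$.

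Assembling these, for $X$ large we have
\[
\frac{\#\{d\in\FbL:|d|<X,\ E(F_d)/E(F_d)_{\tors}\cong\Z[G]\}}{\#\{d\in\FbL:|d|<X\}}
\ \geq\ \frac{\#\{d\in\FbL:|d|<X,\ \dim\Sel_2(E_d/\Q)=2g+1\}}{\#\{d\in\FbL:|d|<X\}}\ -\ o(1),
\]
and the right-hand side tends to $\alpha(0)=\prod_{j\ge1}(1+2^{-j})^{-1}>0.419$, giving the claimed bound on the $\liminf$. (One should double-check the numerical inequality $\prod_{j\ge1}(1+2^{-j})^{-1}>0.419$; truncating the product after a few terms and bounding the tail suffices.)

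\textbf{Main obstacle.} The substantive point is not the asymptotics — those are handed to us by \Cref{thm:main_distr} — but rather verifying carefully that the module-structure criterion \Cref{prop:1dim_systematic_regular} applies to a density-one subset of the $d$ with minimal Selmer rank. The delicate part is condition (iv), that no Mordell--Weil generator of $E(\Q)$ becomes $2$-divisible in $E(F_d)$: one must argue that $F_d\not\subseteq\Q(\tfrac12(P+Q))$ fails for only finitely many $d$ (this is genuinely finite because each $\Q(\tfrac12(P+Q))$ contains only finitely many quadratic subfields), and one must also handle the torsion-growth condition (i), which relies on \Cref{assumption:simple_gamma_mod} to rule out $E$ or any twist $E_d$ with $d\in\FbL$ acquiring a point of order $4$ over $F_d$. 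Once these finitely-many-exceptions claims are pinned down, the rest is bookkeeping with the probabilities $\alpha(2r)$ summed over $r\ge1$ going into the complementary event.
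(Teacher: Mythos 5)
Your proposal is correct and follows essentially the same route as the paper: the paper's proof of \Cref{thm:regularRepLowerBound} is exactly the two-step combination of \Cref{thm:main_distr} (with $n_b=1$, $m_b=0$, $r=0$, giving proportion $\alpha(0)\approx 0.4194$) and \Cref{prop:1dim_systematic_regular}. Your additional verification that the auxiliary hypotheses of that proposition (no growth of $2$-power torsion over $F_d$, and $F_d\not\subset\Q(\tfrac12(P+Q))$) fail for only finitely many $d$ is sound and merely makes explicit what the paper's two-sentence proof leaves implicit.
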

\begin{proof}
  By \Cref{thm:main_distr}, the proportion among $d\in \FbL$ of those for which one has
  $\dim \Sel_2(E_{d}/\Q) = 2\dim E +1$ is $\alpha(0) \approx 0.4194\ldots$.
  The result follows from this and \Cref{prop:1dim_systematic_regular}.
\end{proof}

\section{Hasse principle for genus one hyperelliptic curves} \label{sec:HassePrinc}
Let $f(x)\in \Q[x]$ be a separable monic quartic polynomial.
For $d\in \Q^\times$, let $C_d$ be the hyperelliptic curve with affine model $dy^2 = f(x)$.
Let $E_d$ be the Jacobian of $C_d$, so that $C_d$ defines a $2$-covering of $E_d$,
and let $[C_d]$ denote the corresponding class is $H^1(\Q,E_d[2])$.
When $d$ is a square in $\Q^\times$, we drop it from the subscripts.

Since $f$ is monic, the curve $C$ has a rational point, so the corresponding
class $[C]\in H^1(\Q,E[2])$ lies in the image of the coboundary map
$\delta\colon E(\Q)/2E(\Q)\to H^1(\Q,E[2])$.
One checks readily that under the identification 
$H^1(\Q,E[2])\cong H^1(\Q,E_d[2])$ of Section \ref{sec:SelmerBackg},
the class $[C]\in H^1(\Q,E[2])$ gets identified with
$[C_d]\in H^1(\Q,E_d[2])$, so that $C_d$ has a rational point if and only
if $[C]$ lies in the image of $\delta_d$, and it has a point everywhere
locally if and only if $[C]$ lies in $\Sel_2(E_d/\Q)$.

Let $L$ be the splitting field of $f$.
Assume for the rest of the subsection that $E/\Q$ has full rational $2$-torsion,
and that Condition \ref{assumption:simple_gamma_mod} is satisfied for $E$.
Define 
$$
\cF'=\{d\in \Z_{\neq 0} : d\text{ is square-free and }[C]\in \Sel_2(E_d/\Q)\}.
$$
The condition that $E$ have full rational $2$-torsion implies that the Galois group of $L/\Q$ is contained in the Klein $4$-group.
By \Cref{prop:KummerIntersection}, the condition that $[C]\in \Sel_2(E_d/\Q)$
implies that all odd primes $p|d$ at which $E$ has good reduction split completely in $L$, so that
$\cF'$ is a union of families of the form $\FbL$, 
as defined in Section \ref{sec:introSelmer}. 
In particular, the following is an immediate consequence of \Cref{prop:asymptotic_cF}.
\begin{proposition}
  There is a positive constant $c_{\cF'}$ such that 
  we have
  \[
    \#\{d\in \cF' : d<X\} \sim c_{\cF'}X(\log X)^{\frac{1}{[L:\Q]}-1}.
  \]
\end{proposition}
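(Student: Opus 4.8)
The plan is to reduce the claimed asymptotic directly to \Cref{prop:asymptotic_cF}. First I would recall the structural observation made in the paragraph preceding the statement: since $E$ has full rational $2$-torsion, $\Gal(L/\Q)$ embeds in the Klein $4$-group, and by \Cref{prop:KummerIntersection}\ref{item:normImRam} the condition $[C]\in \Sel_2(E_d/\Q)$ forces every prime $p\nmid 2\disc(f)$ (more precisely every prime $p\notin\Sigma$) dividing $d$ to split completely in $L$; moreover, once that splitting condition holds, \Cref{lem:Sel2E_d}-type reasoning (or rather the discussion in Section~\ref{sec:introSelmer} recording \cite{Sadek}*{\S3--4}) shows that membership $[C]\in\Sel_2(E_d/\Q)$ depends only on the class $b$ of $d$ in $\idelesmodsquaresset{\Q}{\Sigma}$. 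Hence $\cF'$ is exactly the disjoint union of those finitely many families $\FbL$ for which the local/idelic condition at $\Sigma$ is met — there are at most $\#\idelesmodsquaresset{\Q}{\Sigma}<\infty$ such $b$, so this really is a finite union.

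Next I would translate each $\FbL$ into the notation of Section~\ref{sec:analytic}. A class $b\in\idelesmodsquaresset{\Q}{\Sigma}$ corresponds, as explained in the setup of Section~\ref{sec:moments_setup}, to a pair $(d_0,c)$ with $d_0\mid N$ (possibly negative) and $c\in(\Z/4N\Z)^\times$, and then $\FbL = d_0\cF_{c,L}$ up to the finitely many $d$ that are not coprime to $N$ in the relevant way (which contribute $O(1)$ and are irrelevant to the asymptotic). The family $\cF_{c,L}$ is non-empty precisely when $c\in S$, where $S\le(\Z/4N\Z)^\times$ is the image of $\Gal(\Q(\zeta_{4N})/L\cap\Q(\zeta_{4N}))$. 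So I would write $\cF' = \bigsqcup_{b\in \cB} \FbL$ for an explicit finite index set $\cB$ of admissible classes, discard the finitely many exceptional integers, and note $\#\{d\in\FbL: |d|<X\} = \#\cF_{c,L}(X) + O(1)$ since $d_0$ is a fixed nonzero integer (scaling by $d_0$ only changes the count by a bounded multiplicative effect on the range, absorbed into the error term, and in fact $\#\{D\in\cF_{c,L}: |d_0D|<X\} = \#\cF_{c,L}(X/|d_0|)$, whose leading term is still $c\,X(\log X)^{1/n_L-1}$ up to the constant $|d_0|^{-1}$).

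Then I would invoke \Cref{prop:asymptotic_cF}: for each admissible $b$, with $n_L=[L:\Q]$, one has $\#\cF_{c,L}(X) = \#\{c\}\cdot c_0 X(\log X)^{\frac{1}{n_L}-1} + O(X/\log X) = c_0 X(\log X)^{\frac{1}{n_L}-1}+O(X/\log X)$ for the single positive constant $c_0$ furnished by that proposition (here $U=\{c\}$ is a singleton). Summing over the finitely many $b\in\cB$ and accounting for the $|d_0|$-rescaling gives
\[
  \#\{d\in\cF' : d<X\} = c_{\cF'} X(\log X)^{\frac{1}{[L:\Q]}-1} + O\!\left(\frac{X}{\log X}\right),
\]
with $c_{\cF'}=\sum_{b\in\cB}|d_0(b)|^{-1}c_0 > 0$, which is stronger than the claimed $\sim$. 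Two small bookkeeping points need care: confirming that the exceptional integers $d$ sharing a factor with $N$ (and the at-most-finitely-many $d$ where the reduction to $\FbL$ fails) genuinely form an $O(1)$ — or at worst $O(X^{1-\varepsilon})$ — set, and confirming that $\cB\ne\emptyset$, i.e. that $\cF'$ is actually non-empty; but the latter is automatic because $d=1\in\cF'$ (the class $[C]$ lies in the image of $\delta=\delta_1$, since $C$ has a rational point as $f$ is monic), so $c_{\cF'}>0$. The only genuinely non-trivial ingredient is \Cref{prop:asymptotic_cF} itself, already proved; the argument here is purely organisational, so I do not expect any real obstacle beyond carefully matching the two notational frameworks ($\FbL$ versus $d_0\cF_{c,L}$).
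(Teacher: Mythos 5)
Your proposal is correct and follows essentially the same route as the paper: the paper likewise observes (via \Cref{prop:KummerIntersection}) that $\cF'$ is a finite union of families $\FbL$ and then declares the asymptotic an immediate consequence of \Cref{prop:asymptotic_cF}. Your extra bookkeeping (the $(d_0,c)$ translation, absorbing the rescaling by $|d_0|$, and noting $d=1\in\cF'$ to get $c_{\cF'}>0$) just makes explicit what the paper leaves implicit.
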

Analogously to \Cref{thm:regularRepLowerBound}, we deduce the following result.

\begin{theorem}\label{thm:globalPointsLowerBound}
  Let $\Sigma$ be a finite set of places of $\Q$ containing $2$, $\infty$,
  and all places of bad reduction for $E$, and let $b\in \idelesmodsquaresset{\Q}{\Sigma}$
  be such that we have $\FbL\subset \cF'$ and such that for all $d\in \FbL$ the $2^\infty$-Selmer rank $\rk_{2}(E_d/\Q)$ is odd.
  Suppose that the systematic subspace $\cS_b$, as defined in \Cref{def:syst_subspace}, is $1$-dimensional.
  Assume that for all $d\in \FbL$, the
  $2$-part of the Shafarevich--Tate group of $E_d$ over $\Q$ is finite.
  Then we have
  $$
    \liminf_{X\to \infty}\frac{\#\{d\in \FbL:|d|<X, C_d(\Q)\neq \emptyset\}}
    {\#\{d\in \FbL:|d|<X\}} > 0.4194.
  $$
\end{theorem}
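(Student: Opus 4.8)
The plan is to deduce \Cref{thm:globalPointsLowerBound} from \Cref{thm:main_distr} in exactly the same way that \Cref{thm:regularRepLowerBound} was deduced, with ``$E(F_d)/E(F_d)_{\tors}\cong \Z[G]$'' replaced by ``$C_d(\Q)\neq\emptyset$''. First I would recall the dictionary established in Section \ref{sec:HassePrinc}: since $f$ is monic, $C$ has a rational point at infinity, so $[C]\in H^1(\Q,E[2])$ lies in $\im\delta = \im\delta_1$; under the identification $H^1(\Q,E[2])\cong H^1(\Q,E_d[2])$ the class $[C]$ corresponds to $[C_d]$, and $C_d(\Q)\neq\emptyset$ if and only if $[C]\in\im\delta_d$. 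The hypothesis $\FbL\subset\cF'$ guarantees $[C]\in\Sel_2(E_d/\Q)$ for all $d\in\FbL$, and the hypothesis that $\rk_2(E_d/\Q)$ is odd for all $d\in\FbL$ means, in the notation of \Cref{thm:main_distr}, that $m_b = 1 - (n_b\bmod 2)$ is forced; with $n_b = \dim\cS_b = 1$ by assumption we get $m_b = 0$.

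Next I would apply \Cref{thm:main_distr} with $r=0$: the proportion of $d\in\FbL$ with $\dim\Sel_2(E_d/\Q) = 2g + n_b + m_b = 2g+1$ equals $\alpha(m_b) = \alpha(0) = \prod_{j\geq 1}(1+2^{-j})^{-1} \approx 0.41942\ldots > 0.4194$. It therefore suffices to show that for all but a density-zero set of such $d$, one has $C_d(\Q)\neq\emptyset$. So restrict to $d\in\FbL$ with $\dim\Sel_2(E_d/\Q)=2g+1$ and with $\Sha(E_d/\Q)[2^\infty]$ finite (the latter is assumed for all $d\in\FbL$). By \cite{MR3519097}*{Lemma 5.1}, finiteness of $\Sha(E_d/\Q)[2^\infty]$ together with full rational $2$-torsion forces $\dim_{\F_2}\Sha_{\nd}(E_d/\Q)[2]$ to be even; since $\dim_{\F_2}E_d(\Q)[2] = 2g$ and $\dim\Sel_2(E_d/\Q) = 2g+1$, the identity $\dim\Sel_2 = \rk_2 + \dim E_d(\Q)[2] + \dim\Sha_{\nd}[2]$ from the proof of \Cref{thm:2infty-Selmer rank} forces $\rk_2(E_d/\Q)=1$ and $\Sha_{\nd}(E_d/\Q)[2]=0$. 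Hence $\delta_d\colon E_d(\Q)/2E_d(\Q)\hookrightarrow\Sel_2(E_d/\Q)$ is an isomorphism, so $[C]\in\Sel_2(E_d/\Q) = \im\delta_d$, i.e. $C_d(\Q)\neq\emptyset$. This already covers the whole of the set of $d\in\FbL$ with $\dim\Sel_2(E_d/\Q)=2g+1$ (no further exceptional set is needed), so the $\liminf$ in the statement is bounded below by $\alpha(0) > 0.4194$.

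The only subtlety, and the step I would be most careful about, is the bookkeeping that makes $n_b=1$ and $m_b=0$ consistent with the parity constraint in \Cref{thm:main_distr}: one must check that the assumption ``$\rk_2(E_d/\Q)$ odd for all $d\in\FbL$'' is genuinely compatible with $\dim\cS_b=1$, i.e. that $m_b=0$ rather than $m_b=1$. This follows because $\dim\Sel_2(E_d/\Q)\equiv n_b+m_b\pmod 2$ and $\dim\Sel_2(E_d/\Q)\equiv\rk_2(E_d/\Q)\pmod 2$ (\Cref{thm:2infty-Selmer rank}), and $n_b=1$, $\rk_2$ odd together give $m_b\equiv 0$. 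Everything else is a direct quotation of \Cref{thm:main_distr} and the elementary Selmer-rank arithmetic already recorded in Section \ref{sec:ellcurves}. The final numerical inequality $\prod_{j\geq 1}(1+2^{-j})^{-1} > 0.4194$ is the same constant appearing in \Cref{cor:intro_3dim} and \Cref{thm:regularRepLowerBound}, and can be verified by truncating the product.
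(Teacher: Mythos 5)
Your proposal is correct and follows essentially the same route as the paper: the paper deduces \Cref{thm:globalPointsLowerBound} "analogously to \Cref{thm:regularRepLowerBound}", i.e.\ by applying \Cref{thm:main_distr} with $n_b=1$, $m_b=0$ to get the proportion $\alpha(0)>0.4194$ of $d\in\FbL$ with $\dim\Sel_2(E_d/\Q)=2\dim E+1$, and then running the argument of \Cref{prop:1dim_systematic_regular} (finiteness of $\Sha[2^\infty]$ plus even $2$-rank of $\Sha_{\nd}[2]$ forces $\rk=1$ and $\Sha[2]=0$, so $\delta_d$ surjects onto $\Sel_2(E_d/\Q)$ and $[C]\in\im\delta_d$, i.e.\ $C_d(\Q)\neq\emptyset$). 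Your parity bookkeeping ($m_b=0$ via \Cref{thm:2infty-Selmer rank}) and the observation that, unlike the Mordell--Weil application, no $2$-divisibility caveat is needed here are both accurate.
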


\section{Hasse principle for Kummer varieties}
 
Let $E/\Q$ be a principally polarised abelian variety with full rational $2$-torsion.
Let $g=\dim E$. Fix a non-zero class $\zeta \in H^1(\Q,E[2])$ and denote by $Y_\zeta$ the corresponding $2$-covering of $E$.
Thus $Y_\zeta$ is a torsor under $E$ equipped with an involution $\iota$ compatible with multiplication by $-1$ on $E$.
For a field extension $F/\Q$, we have $Y_\zeta(F)\neq \emptyset$ if and only if the image of $\zeta$ in $H^1(F,E)$ is zero. 

For a squarefree integer $d$, denote by $Y_{\zeta,d}$ the corresponding quadratic twist of $Y_\zeta$.
For a place $v$ of $\Q$ and $b_v\in \Q_v^{\times}/\Q_v^{\times 2}$,
we denote by $Y_{\zeta,b_v}$ the quadratic twist by $b_v$ of the base-change of $Y_\zeta$ to $\Q_v$.
Noting that $H^1(\Q,E[2])=\Hom_{\cts}(G_\Q,E[2])$, we denote by $L/\Q$ the fixed field of the kernel of $\zeta$.
Let $\Sigma$ be a finite set of places of $\Q$, containing $2$, $\infty$, all places at which $E$ has bad reduction, and all places that ramify in $L/\Q$. 

Recall from Section \ref{sec:GalAction4Torsion} that we can naturally view $\Gal(L(E[4])/L)$ as an additive subgroup $\Gamma$ of $\End(E[2])$.
We remind the reader that Condition \ref{assumption:simple_gamma_mod} is the assertion
that $\End_{\Gamma}(E[2])=\F_2$ and that the only subspaces of $E[2]$ stable under $\Gamma$ are $0$ and $E[2]$. 

For  $b=(b_v)_v\in \prod_{v\in \Sigma}\Q_v^{\times}/\Q_v^{\times 2}$, recall from Definition \ref{def:syst_subspace} that the systematic subspace $\cS_b$ is defined as the subspace of $H^1(L/\Q,E[2])$ consisting of elements which, for all $v\in \Sigma$, satisfy the Selmer conditions associated to $E_{b_v}$. 

Finally, recall from Section \ref{sec:ParityRanks} the definition of the local functions $\kappa_v\colon \Q_v^{\times}/\Q_v^{\times 2}\to \Z/2\Z$. Given $b=(b_v)_v\in \prod_{v\in \Sigma}\Q_v^{\times}/\Q_v^{\times 2}$, we define $\kappa(b) \in \Z/2\Z$ by  
\[\kappa(b)=\rk_2(E/\Q)+\sum_{v\in \Sigma}\kappa_v(b_v) \pmod 2.\]
By Theorem \ref{thm:2infty-Selmer rank}, given any squarefree integer $d$ whose class in $\prod_{v\in \Sigma}\Q_v^{\times}/\Q_v^{\times 2}$ is equal to $b$, we have 
$\rk_2(E_d/\Q)\equiv \dim\Sel_2(E_d/\Q)\equiv \kappa(b) \pmod 2.$ 

We will sometimes impose the following condition.
\begin{condition} \label{cond_E_new} 
There exists $b=(b_v)_{v\in \Sigma}\in \prod_{v\in \Sigma}\Q_v^{\times}/\Q_v^{\times 2}$ such that (1): we have $\cS_b=\{0,\zeta\}$;   and (2): the parity $\kappa(b)$ is odd.
\end{condition}

\begin{remark} \label{rmk:condition_E}
The condition that $\cS_b$ be generated by $\zeta$ can be rephrased as follows. The condition that $\zeta\in \cS_b$ is equivalent
to the condition that for all $v\in \Sigma$, we have $Y_{\zeta,b_v}(\Q_v)\neq \emptyset$; while the condition that 
no element $\zeta' \in H^1(L/\Q,E[2])\setminus \{0,\zeta\}$  lies in $\cS_b$ is equivalent to the condition that
for each class $\zeta' \in H^1(L/\Q,E[2])$ with $\zeta'\notin \{0,\zeta\}$, there is a place $v\in \Sigma$ such that $Y_{\zeta',b_v}(\Q_v)=\emptyset$.

Phrased in this way, we see that the existence of an element $b$ satisfying part (1) of Condition \ref{cond_E_new} is a version of Condition E of \cite{MR2183385}.
\end{remark}

\begin{theorem} \label{thm:kummer_starting} 
Suppose that Conditions \ref{assumption:simple_gamma_mod} and \ref{cond_E_new} are satisfied. 
Then there is a positive constant $c_0$ such that we have
\[\#\{d\in \Z  ~~:~~d\textup{ square-free},  ~|d|<X,~ \Sel_2(E_d/\Q) =\left \langle \zeta, \delta_d(E[2]) \right \rangle \} \sim  c_0X(\log X)^{\frac{1}{[L:\Q]}-1}.\]
\end{theorem}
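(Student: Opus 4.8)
The plan is to combine \Cref{thm:main_distr} with \Cref{prop:asymptotic_cF} and a careful bookkeeping of the families $\FbL$, exactly as in the proofs of \Cref{thm:regularRepLowerBound} and \Cref{thm:globalPointsLowerBound}, but now extracting an \emph{exact} asymptotic rather than just a lower bound. First I would recast the counted set. For a squarefree integer $d$ with class $b(d)=(b_v)_{v\in\Sigma}\in\idelesmodsquaresset{\Q}{\Sigma}$, the condition $\Sel_2(E_d/\Q)=\langle\zeta,\delta_d(E[2])\rangle$ forces in particular $\zeta\in\Sel_2(E_d/\Q)$ and $\dim_{\F_2}\Sel_2(E_d/\Q)=2g+1$ (using $\delta_d$ injective on $E[2]$ and $\dim E[2]=2g$, together with $\zeta\notin\delta_d(E[2])$, which holds for all but finitely many $d$ since $\zeta\notin H^1_\Sigma$-nonsense — more precisely $\zeta$ is ramified at some prime dividing $d$). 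By \Cref{prop:KummerIntersection} and the discussion following \Cref{qn:the_real_question}, the condition $\zeta\in\Sel_2(E_d/\Q)$ implies all primes $p\notin\Sigma$ dividing $d$ split completely in $L$, and then whether $\zeta\in\Sel_2(E_d/\Q)$ depends only on $b(d)$. Hence the set of $d$ with $\zeta\in\Sel_2(E_d/\Q)$ is a finite disjoint union $\bigsqcup_{b\in B}\FbL$ over a finite set $B\subseteq\idelesmodsquaresset{\Q}{\Sigma}$; Condition \ref{cond_E_new} guarantees $B\neq\emptyset$, and for $b\in B$ Condition \ref{cond_E_new}(1) forces $n_b=\dim\cS_b=1$ with $\cS_b=\{0,\zeta\}$, while (2) forces the parity, so that by \Cref{thm:2infty-Selmer rank} and \Cref{thm:main_distr} we have $m_b=0$ for each such $b$.

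Next I would isolate, within each $\FbL$, the subfamily on which $\Sel_2(E_d/\Q)$ has the minimal possible dimension $2g+1$. By \Cref{thm:main_distr} (applied with $n_b=1$, $m_b=0$), the density of $d\in\FbL$ with $\dim\Sel_2(E_d/\Q)=2g+1$ inside $\FbL$ is exactly $\alpha(0)=\prod_{j\geq1}(1+2^{-j})^{-1}>0$. The key point is that on this subfamily one automatically has $\Sel_2(E_d/\Q)=\langle\zeta,\delta_d(E[2])\rangle$: indeed $\delta_d(E[2])\oplus\cS_b\subseteq\Sel_2(E_d/\Q)$ always holds (the sum is direct for $d\neq d_0$ by \Cref{lem:twisted_boundary_map}, cf.\ the proof of \Cref{thm:inj_moments}), this subspace has dimension $2g+1$, and $\cS_b=\{0,\zeta\}$; so equality of dimensions gives $\Sel_2(E_d/\Q)=\delta_d(E[2])\oplus\{0,\zeta\}=\langle\zeta,\delta_d(E[2])\rangle$. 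Conversely, if $\Sel_2(E_d/\Q)=\langle\zeta,\delta_d(E[2])\rangle$ then (for $d$ outside the finite exceptional set, so that $\zeta\notin\delta_d(E[2])$) its dimension is $2g+1$, and one checks $d$ lies in some $\FbL$, $b\in B$. Thus up to $O(1)$ terms,
\[
\#\{d:|d|<X,\ \Sel_2(E_d/\Q)=\langle\zeta,\delta_d(E[2])\rangle\}
=\sum_{b\in B}\#\{d\in\FbL:|d|<X,\ \dim\Sel_2(E_d/\Q)=2g+1\}.
\]

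Finally I would assemble the asymptotics. For each $b\in B$, write $b=(d_0,c)$ in the notation of \Cref{sec:moments_setup}, so $\FbL=d_0\cF_{c,L}$ and $\#\FbL(X)=\#\cF_{c,L}(X)\sim c\,X(\log X)^{\frac1{[L:\Q]}-1}$ by \Cref{prop:asymptotic_cF} (here I use that the $d<0$ and $d>0$ ranges are handled symmetrically, writing $|d|<X$). Multiplying by the density $\alpha(0)$ from \Cref{thm:main_distr} and summing over the finitely many $b\in B$ yields
\[
\#\{d:|d|<X,\ \Sel_2(E_d/\Q)=\langle\zeta,\delta_d(E[2])\rangle\}\sim c_0\,X(\log X)^{\frac1{[L:\Q]}-1},
\qquad c_0=\alpha(0)\sum_{b\in B}c_b>0,
\]
giving the theorem. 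I expect the main obstacle to be the bookkeeping in the two-sided implication of the second paragraph: one must check that $\Sel_2(E_d/\Q)=\langle\zeta,\delta_d(E[2])\rangle$ really is equivalent (up to finitely many $d$) to the pair of conditions ``$d\in\FbL$ for some $b\in B$'' and ``$\dim\Sel_2(E_d/\Q)=2g+1$'', which requires knowing that $\cS_b$ is \emph{exactly} $\{0,\zeta\}$ and not larger for every $b$ arising, that $\delta_d$ is injective on $E[2]$ (true outside $\Sigma$-primes by \Cref{lem:kummer_image_basic}, and one needs $E(\Q)$ to have no $4$-torsion, which follows from full rational $2$-torsion plus the Weil pairing being nondegenerate — or can simply be absorbed into the finite exceptional set), and that $\zeta\notin\delta_d(E[2])$ for all but finitely many $d$ (since $\zeta$ is ramified at any prime $p\notin\Sigma$ dividing $d$, whereas $\delta_d(E[2])$ is unramified there). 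Everything else is a direct citation of \Cref{thm:main_distr} and \Cref{prop:asymptotic_cF}.
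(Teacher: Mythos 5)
Your overall strategy --- decompose the set of squarefree $d$ with $\zeta\in\Sel_2(E_d/\Q)$ into finitely many Frobenian families $\cF^{b'}_L$, identify the equality $\Sel_2(E_d/\Q)=\left\langle\zeta,\delta_d(E[2])\right\rangle$ with the Selmer group attaining the minimal dimension $2g+1$, and apply \Cref{thm:main_distr} and \Cref{prop:asymptotic_cF} family by family --- is exactly the paper's. But there is one genuine gap. You assert that Condition \ref{cond_E_new} guarantees $B\neq\emptyset$, i.e.\ that the class $b$ it produces is realised by a nonempty family $\FbL$. Condition \ref{cond_E_new} only supplies a tuple of \emph{local} classes $(b_v)_{v\in\Sigma}$; it does not by itself produce a squarefree integer $d$ with $d\equiv b$ in $\idelesmodsquaresset{\Q}{\Sigma}$ all of whose prime divisors outside $\Sigma$ split completely in $L$. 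Establishing this is the heart of the paper's proof: one must check the vanishing of the reciprocity obstruction $\sum_{v\in\Sigma}\inv_v(\psi_{b_v}\cup\chi)$ for every quadratic character $\chi$ of $\Gal(L/\Q)$, which the paper does by writing $\chi=x\cup\zeta$ for some $x\in E[2]$ (non-degeneracy of the Weil pairing), invoking \Cref{lem:twisted_boundary_map}, isotropy of the local Kummer images for the Tate pairing, and global reciprocity. Without this step your constant $c_0$ could be zero and the claimed asymptotic false.

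There is also a logical error in the bookkeeping: you claim that \emph{every} $b\in B$ satisfies $\cS_b=\{0,\zeta\}$ and $m_b=0$. Condition \ref{cond_E_new} asserts only that \emph{some} such $b$ exists; other classes $b'$ for which $\zeta$ satisfies the local conditions at $\Sigma$ may have $\dim\cS_{b'}\geq 2$ or the wrong parity, and for those families $\Sel_2(E_d/\Q)\supseteq\delta_d(E[2])\oplus\cS_{b'}$ has dimension exceeding $2g+1$ for all but finitely many $d$, so they contribute a limit equal to zero rather than $\alpha(0)$. The fix is what the paper does: each $b'$ contributes a limit that exists but may vanish, and positivity of $c_0$ comes only from the distinguished $b$ of Condition \ref{cond_E_new}; your formula should read $c_0=\alpha(0)\sum_{b'}c_{b'}$ with the sum restricted to those $b'$ with $n_{b'}=1$ and $m_{b'}=0$. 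Finally, a small slip: your justification that $\zeta\notin\delta_d(E[2])$ has the ramification backwards --- for $p\notin\Sigma$ dividing $d$ it is the nonzero elements of $\delta_d(E[2])$ that are ramified at $p$ (\Cref{lem:kummer_image_basic} and \Cref{prop:KummerIntersection}\ref{item:normImRam}), while $\res_p\zeta$ is trivial because $p$ splits completely in $L$.
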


\begin{proof}
Let $b=(b_v)_{v\in \Sigma}$ be as in Condition \ref{cond_E_new}. We claim that there exists a squarefree integer $d$ whose class in $\prod_{v\in \Sigma}\Q_v^{\times}/\Q_v^{\times 2}$ is equal to $b$, and such that each prime $p\notin \Sigma$ dividing $d$ splits completely in $L/\Q$. For $v\in \Sigma$, denote by $\psi_{b_v}$ the quadratic character $G_{\Q_v}\to \F_2$ associated to $b_v$. To prove the claim it suffices to show that, for every quadratic character $\chi\colon \Gal(L/\Q)\to \F_2$, we have 
$\sum_{v\in \Sigma}\inv_v(\psi_{b_v}\cup \chi)=0$ (see e.g. \cite[Proposition 5.2]{MR4934780}).

Fix a character $\chi\colon \Gal(L/\Q)\to \F_2$.  Since $L$ is defined as the fixed field of the kernel of $\zeta$, the map $\Gal(L/\Q)\to E[2]$ sending $\sigma\in \Gal(L/\Q)$ to $\zeta(\sigma)$ is an injection. By non-degeneracy of the Weil pairing on $E[2]$,   there is $x\in E[2]$ such that  $\chi=x \cup \zeta$. We then have 
\begin{eqnarray*}
\sum_{v\in \Sigma}\inv_v(\psi_{b_v}\cup \chi)=\sum_{v\in \Sigma}\inv_v\big((\psi_{b_v}\cup x)\cup \zeta\big)=\sum_{v\in \Sigma}\inv_v(\delta_{b_v,v}(x)\cup \zeta)+\sum_{v\in \Sigma}\inv_v(\delta_v(x) \cup \zeta),
\end{eqnarray*}
where the second equality follows from Lemma \ref{lem:twisted_boundary_map}.
Let $v\in \Sigma$. By the assumption that $Y_{\zeta,b_v}(\Q_v)\neq \emptyset$,
both $\delta_{b_v,v}(x)$ and $\res_v(\zeta)$ lie in $\sS_{b_v,v}$.
Since $\sS_{b_v,v}$ is isotropic for the local Tate pairing, we have
$\inv_v(\delta_{b_v,v}(x)\cup \zeta)=0$. Thus we have
$\sum_{v\in \Sigma}\inv_v(\delta_{b_v,v}(x)\cup \zeta)=0$. Further, by reciprocity for the Brauer group, we have 
\[\sum_{v\in \Sigma}\inv_v(\delta_v(x) \cup \zeta)=\sum_{v\notin \Sigma}\inv_v(\delta_v(x) \cup \zeta)=0,\]
where the final equality follows from the fact that, for each $v\notin \Sigma$,
both $\delta_v(x)$ and $\res_v(\zeta)$ are unramified. This proves the claim. 

By the claim, the family denoted $\FbL$ in the statement of Theorem \ref{thm:intro_distr} is non-empty.
In the presence of Condition \ref{cond_E_new}, the $r=0$ case of Theorem \ref{thm:intro_distr}
shows that the proportion of $d\in \FbL$ with $\dim\Sel_2(E_d/\Q)=2g+1$ is equal to  $\alpha(0)>0.41$.
In particular, this proportion is positive. Condition \ref{cond_E_new}  guarantees that, for any
$d\in \FbL$, we have $\zeta \in \Sel_2(E_d/\Q)$. Moreover, for any $d\in \FbL$ which is divisible
by at least one prime $p\notin \Sigma$, it follows from the cases $v=p$ of Lemmas
\ref{lem:kummer_image_basic}  and \ref{prop:KummerIntersection}(2) that $\dim \delta_d(E[2])=2g$
and $\delta_d(E[2])\cap \{0,\zeta\}=0$. Consequently, we have $\dim\Sel_2(E_d/\Q)\geq 2g+1$, with equality if and only if  $\Sel_2(E_d/\Q) =\left \langle \zeta, \delta_d(E[2]) \right \rangle$. Combining this discussion with \Cref{prop:asymptotic_cF}, we see that there is a positive constant $c_b$ such that 
\[\#\{d\in \FbL  ~~:~~  ~|d|<X,~ \Sel_2(E_d/\Q) =\left \langle \zeta, \delta_d(E[2]) \right \rangle \} \sim  c_bX(\log X)^{\frac{1}{[L:\Q]}-1}.\]
On the other hand, if $d\in \Z$ is a squarefree integer such that $\zeta \in \Sel_2(E_d/\Q)$, then it follows from  \Cref{prop:KummerIntersection}(2) that any prime divisor $p$ of $d$ with $p\notin \Sigma$ splits completely in $L/\Q$. Consequently, we have $d\in \mathcal{F}_{L}^{b'}$ for some $b'\in  \prod_{v\in \Sigma}\Q_v^{\times}/\Q_v^{\times 2}$. Up to a possibly different positive leading constant, the family $\mathcal{F}_{L}^{b'}$ has the same asymptotics as the family $\mathcal{F}_{L}^{b}$.  Theorem \ref{thm:intro_distr} then shows that the limit governing the proportion of $d$ in $\mathcal{F}_{L}^{b'}$ for which  $\Sel_2(E_d/\Q) =\left \langle \zeta, \delta_d(E[2]) \right \rangle$ exists (depending on the value of $b'$ it may be zero). Summing the contribution to the left hand side of the statement from each of the finitely many possibilities for $b'$ gives the result. 
\end{proof}

\begin{corollary} \label{cor:loc_soluble_fibres}
Suppose that Conditions \ref{assumption:simple_gamma_mod} and \ref{cond_E_new} are satisfied. 
Assume further that the $2$-primary part of the Shafarevich--Tate group of each quadratic twist of $E$ is finite.  Then we have
\[\#\{d\in \Z  ~~:~~d\textup{ square-free},  ~|d|<X,~  Y_{\zeta,d}(\Q)\neq \emptyset\} \asymp  X(\log X)^{\frac{1}{[L:\Q]}-1}.\]
Moreover, when squarefree integers $d$ for which $Y_{\zeta,d}$ is everywhere locally soluble are ordered by $|d|$, a positive proportion satisfy $Y_{\zeta,d}(\Q)\neq \emptyset$.  
\end{corollary}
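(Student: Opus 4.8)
The plan is to deduce \Cref{cor:loc_soluble_fibres} from \Cref{thm:kummer_starting} by relating the condition $\Sel_2(E_d/\Q) = \langle \zeta, \delta_d(E[2])\rangle$ to the solubility of $Y_{\zeta,d}$. First I would recall that $Y_{\zeta,d}(\Q)\neq\emptyset$ if and only if the class of $\zeta$ in $H^1(\Q,E_d)$ vanishes, equivalently (via the identification of Galois modules $E[2]\cong E_d[2]$ of \Cref{sec:SelmerBackg} and the fact that the $2$-covering $Y_{\zeta,d}$ represents $\zeta$) if and only if $\zeta$ lies in the image of $\delta_d\colon E_d(\Q)/2E_d(\Q)\to H^1(\Q,E[2])$. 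Thus the left-hand count in \Cref{cor:loc_soluble_fibres} is the count of square-free $d$ with $\zeta\in\delta_d(E_d(\Q))$, whereas \Cref{thm:kummer_starting} counts $d$ with $\dim\Sel_2(E_d/\Q)=2g+1$ and $\zeta\in\Sel_2(E_d/\Q)$. The inclusion $\delta_d(E_d(\Q))\subseteq\Sel_2(E_d/\Q)$ gives immediately that the set in \Cref{cor:loc_soluble_fibres} is contained in $\{d : \zeta\in\Sel_2(E_d/\Q)\}$, which by the proof of \Cref{thm:kummer_starting} has size $\asymp X(\log X)^{\frac{1}{[L:\Q]}-1}$; this yields the upper bound $\ll$.

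For the lower bound, and for the final ``positive proportion'' assertion, I would argue as follows. Restrict to square-free $d\in\FbL$ divisible by at least one prime outside $\Sigma$, where $b$ is as in \Cref{cond_E_new}; as in the proof of \Cref{thm:kummer_starting}, for such $d$ one has $\dim\delta_d(E[2])=2g$ and $\delta_d(E[2])\cap\{0,\zeta\}=0$, so $\langle\zeta,\delta_d(E[2])\rangle$ has dimension $2g+1$. Now invoke the finiteness of $\Sha(E_d/\Q)[2^\infty]$: by \cite{MR3519097}*{Lemma 5.1} its $2$-torsion has even dimension, and since $\zeta$ certifies that $\kappa(b)$ is odd, a dimension count forces that whenever $\dim\Sel_2(E_d/\Q)=2g+1$ one must have $\Sha(E_d/\Q)[2]=0$ and $E_d(\Q)$ of rank $1$, so that $\delta_d\colon E_d(\Q)/2E_d(\Q)\xrightarrow{\sim}\Sel_2(E_d/\Q)$ is an isomorphism. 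For such $d$, $\Sel_2(E_d/\Q)=\langle\zeta,\delta_d(E[2])\rangle=\delta_d(E_d(\Q))$, so in particular $\zeta\in\delta_d(E_d(\Q))$ and $Y_{\zeta,d}(\Q)\neq\emptyset$. Hence the set counted in \Cref{thm:kummer_starting} (intersected with $d$ having a prime factor outside $\Sigma$, which only removes $O(1)$ elements) is contained in the set counted in \Cref{cor:loc_soluble_fibres}, giving the matching lower bound $\gg X(\log X)^{\frac{1}{[L:\Q]}-1}$ and hence $\asymp$.

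For the ``positive proportion'' statement, I would note that every square-free $d$ with $Y_{\zeta,d}$ everywhere locally soluble satisfies $\zeta\in\Sel_2(E_d/\Q)$, hence by \Cref{prop:KummerIntersection}\ref{item:normImRam} all its prime factors outside $\Sigma$ split completely in $L$, so $d$ lies in $\FbL[b']$ for one of finitely many $b'\in\idelesmodsquaresset{\Q}{\Sigma}$; by \Cref{prop:asymptotic_cF} each such family, and hence their union, has size $\asymp X(\log X)^{\frac{1}{[L:\Q]}-1}$. Among the $d\in\FbL$ (the family attached to the specific $b$ from \Cref{cond_E_new}), \Cref{thm:intro_distr} with $r=0$ shows that the proportion with $\dim\Sel_2(E_d/\Q)=2g+1$ is $\alpha(0)>0.41>0$; by the argument of the previous paragraph all but finitely many of these have $Y_{\zeta,d}(\Q)\neq\emptyset$. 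Since the everywhere-locally-soluble $d$ form a set of the same order of magnitude $X(\log X)^{\frac{1}{[L:\Q]}-1}$ as $\FbL$ itself, this ratio being bounded below by a positive constant on $\FbL$ translates into a positive proportion of the locally soluble $d$ having a rational point. The main obstacle here is purely bookkeeping: one must be careful that the constants in \Cref{prop:asymptotic_cF} make the denominator (all everywhere-locally-soluble $d$, a finite union of $\FbL[b']$'s) comparable to the single family $\FbL$ appearing in the numerator, so that the positive density on $\FbL$ is not diluted to zero; but since every family $\FbL[b']$ with $\FbL[b']\neq\emptyset$ has the same growth order $X(\log X)^{\frac{1}{[L:\Q]}-1}$ by \Cref{prop:asymptotic_cF}, this dilution is by at most a bounded factor and the conclusion follows.
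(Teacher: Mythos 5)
Your proposal is correct and follows essentially the same route as the paper: the lower bound comes from Theorem \ref{thm:kummer_starting} together with the observation that finiteness of $\Sha(E_d/\Q)[2^\infty]$ and the evenness of $\dim\Sha(E_d/\Q)[2]$ force $\Sha(E_d/\Q)[2]=0$ whenever $\dim\Sel_2(E_d/\Q)=2g+1$ (the paper concludes directly that the class of $\zeta$ in $\Sha$ vanishes, rather than passing through the rank-$1$ computation and the surjectivity of $\delta_d$, but this is the same argument), and the upper bound and positive-proportion claim come from local solubility forcing $d$ into finitely many Frobenian families each of size $O(X(\log X)^{1/[L:\Q]-1})$. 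Your aside about $\kappa(b)$ being odd is not needed for the dimension count, but it does no harm.
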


\begin{proof}
Suppose that $d$ is a squarefree integer such that $\Sel_2(E_d/\Q)=\left \langle \zeta, \delta_d(E[2])\right \rangle.$  Since the quotient of $\Sel_2(E_d/\Q)$ by $\delta_d(E[2])$ surjects onto $\Sha(E_d/\Q)[2]$, we see that $\dim \Sha(E_d/\Q)[2]\leq 1$.
On the other hand, since we assume that the $2$-primary part of $\Sha(E_d/\Q)$ is finite, it follows from \cite{MR3519097}*{Lemma 5.1} that $\dim \Sha(E_d/\Q)[2]$ is even.
Thus $\Sha(E_d/\Q)[2]=0$. In particular, the class of $\zeta$ in $\Sha(E_d/\Q)$ is trivial,
hence $Y_{\zeta,d}(\Q)\neq \emptyset$.
As in the final paragraph of the proof of Theorem \ref{thm:kummer_starting}, it follows from \Cref{prop:asymptotic_cF} that
the number of squarefree integers $d$ such that $|d|<X$ and $Y_{\zeta,d}$ is everywhere locally soluble, is $\ll X(\log X)^{\frac{1}{[L:\Q]}-1}$.
The result therefore follows from Theorem~\ref{thm:kummer_starting}.
\end{proof}

Per \cite[Section 6]{MR3519097}, we can associate to $\zeta$ a \textit{generalized Kummer variety} $\sX_\zeta/\Q$. Explicitly, denote by $\widetilde{Y}_\zeta$ the blow up of $Y_\zeta$ at the fixed points of the involution $\iota$. Then $\iota$ extends to an involution $\widetilde{\iota}$ of $\widetilde{Y}_\zeta$, and the quotient $\sX_\zeta=\widetilde{Y}_\zeta/\widetilde{\iota}$ is smooth and is the sought Kummer variety.  For each squarefree integer $d$, when we repeat this construction with $Y_{\zeta,d}$ in place of $Y_\zeta$, we obtain the same Kummer variety. Thus for each $d$, we have a natural degree $2$ morphism $f_d\colon \widetilde{Y}_{\zeta,d} \to \sX_\zeta$. For each point $x\in \sX_\zeta(\Q)$, there is a squarefree integer $d$ such that $x$ lifts to $\widetilde{Y}_{\zeta,d}(\Q)$. Thus we have  
\[\sX_{\zeta}(\Q)=\bigcup_{d\in \mathbb{Z }\textup{ sq. free}}f_d\big(\widetilde{Y}_{\zeta,d}(\Q)\big).\]

\begin{corollary} \label{cor:Hasse_Kummer_first}
Suppose that Conditions \ref{assumption:simple_gamma_mod} and \ref{cond_E_new} are satisfied. 
Assume further that the $2$-primary part of the Shafarevich--Tate group of each quadratic twist of $E$ is finite.  Then we have
\[\#\{d\in \Z  ~~:~~d\textup{ square-free},  ~|d|<X,~  f_d(\widetilde{Y}_{\zeta,d}\big(\Q)\big)\neq \emptyset\} \asymp  X(\log X)^{\frac{1}{[L:\Q]}-1}.\]
In particular, we have $\sX_\zeta(\Q)\neq \emptyset$. 
\end{corollary}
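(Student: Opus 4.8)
The plan is to reduce the statement to Corollary \ref{cor:loc_soluble_fibres} by comparing the two counting functions
\[
  \#\{d : |d|<X,\ Y_{\zeta,d}(\Q)\neq\emptyset\}\quad\text{and}\quad
  \#\{d : |d|<X,\ f_d(\widetilde Y_{\zeta,d}(\Q))\neq\emptyset\}.
\]
The key geometric observation is that $\widetilde Y_{\zeta,d}\to Y_{\zeta,d}$ is the blow-up at the (finitely many) fixed points of $\iota_d$, so it is a birational morphism of smooth projective varieties over $\Q$; hence $\widetilde Y_{\zeta,d}$ has a rational point as soon as $Y_{\zeta,d}$ has a \emph{smooth} rational point, and conversely a rational point of $\widetilde Y_{\zeta,d}$ lying off the exceptional locus maps to a rational point of $Y_{\zeta,d}$. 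Moreover $f_d\colon\widetilde Y_{\zeta,d}\to\sX_\zeta$ is a morphism, so $\widetilde Y_{\zeta,d}(\Q)\neq\emptyset$ immediately gives $f_d(\widetilde Y_{\zeta,d}(\Q))\neq\emptyset$. Therefore the upper bound is immediate from the trivial inclusion of counting sets into $\{d : |d|<X,\ p\mid d,\ p\notin\Sigma\Rightarrow p\text{ split in }L\}$, whose size is $\ll X(\log X)^{1/[L:\Q]-1}$ by \Cref{prop:asymptotic_cF} exactly as in the last paragraph of the proof of \Cref{thm:kummer_starting}; and for the lower bound it suffices to produce, for $\asymp X(\log X)^{1/[L:\Q]-1}$ values of $d$, a rational point on $\widetilde Y_{\zeta,d}$.

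First I would run the argument of \Cref{thm:kummer_starting} to get $\asymp X(\log X)^{1/[L:\Q]-1}$ squarefree $d$, each divisible by at least one prime $p\notin\Sigma$, with $\Sel_2(E_d/\Q)=\langle\zeta,\delta_d(E[2])\rangle$. For such $d$, the finiteness of $\Sha(E_d/\Q)[2^\infty]$ combined with \cite{MR3519097}*{Lemma 5.1} (evenness of $\dim_{\F_2}\Sha(E_d/\Q)[2]$) forces $\Sha(E_d/\Q)[2]=0$, exactly as in the proof of \Cref{cor:loc_soluble_fibres}; in particular the class of $\zeta$ in $\Sha(E_d/\Q)$ is trivial, so $Y_{\zeta,d}(\Q)\neq\emptyset$. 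It then remains to upgrade ``$Y_{\zeta,d}$ has a rational point'' to ``$\widetilde Y_{\zeta,d}$ has a rational point''. Since $Y_{\zeta,d}$ is a smooth variety (it is a torsor under the abelian variety $E_d$), every rational point of $Y_{\zeta,d}$ is a smooth point, and the blow-up morphism $\widetilde Y_{\zeta,d}\to Y_{\zeta,d}$ is an isomorphism over the complement of the fixed locus of $\iota_d$; hence any $\Q$-point of $Y_{\zeta,d}$ not among the $2$-torsion fixed points of $\iota_d$ lifts to $\widetilde Y_{\zeta,d}(\Q)$. If, for a given $d$, all rational points of $Y_{\zeta,d}$ happened to be fixed points of $\iota_d$, then in particular $Y_{\zeta,d}(\Q)$ would be finite, forcing $\rk E_d(\Q)=0$; but for the $d$ produced above we have $\dim_{\F_2}\bigl(\Sel_2(E_d/\Q)/\delta_d(E[2])\bigr)=1$ and $\Sha(E_d/\Q)[2]=0$, so $E_d(\Q)/2E_d(\Q)$ has $\F_2$-dimension $2g+1$ while $E_d(\Q)[2]=E[2]$ has dimension $2g$, whence $\rk E_d(\Q)\geq 1$ and $E_d(\Q)$, hence $Y_{\zeta,d}(\Q)$, is infinite. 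Thus for all but (at worst) finitely many of these $d$ there is a $\Q$-point of $Y_{\zeta,d}$ avoiding the finite fixed locus, which lifts to $\widetilde Y_{\zeta,d}(\Q)$ and maps into $\sX_\zeta(\Q)$ under $f_d$. This establishes the lower bound $\gg X(\log X)^{1/[L:\Q]-1}$, and combined with the upper bound gives the asymptotic order of magnitude; the final assertion $\sX_\zeta(\Q)\neq\emptyset$ follows since the counting function is positive for $X$ large.

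The main obstacle I anticipate is the bookkeeping around the fixed locus of $\iota_d$: one must make sure that producing a rational point \emph{away from} the finitely many ramification points of the double cover is genuinely possible, i.e.\ that the lower-bound family is not pathologically concentrated on $d$ for which $Y_{\zeta,d}(\Q)$ consists only of $2$-torsion translates of those fixed points. The rank argument above handles this, but it relies on knowing $\Sha(E_d/\Q)[2]=0$ (not merely finiteness of $\Sha[2^\infty]$), which is why the proof must invoke the sharper conclusion $\Sel_2(E_d/\Q)=\langle\zeta,\delta_d(E[2])\rangle$ from \Cref{thm:kummer_starting} rather than only everywhere-local solubility of $Y_{\zeta,d}$. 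A secondary, purely expository point is to be careful that ``$f_d(\widetilde Y_{\zeta,d}(\Q))\neq\emptyset$'' is literally equivalent to ``$\widetilde Y_{\zeta,d}(\Q)\neq\emptyset$'' because $f_d$ is a morphism defined over $\Q$, so no descent issue on $\sX_\zeta$ itself enters.
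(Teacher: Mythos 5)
Your proposal is correct and follows the paper's overall skeleton — reduce to the count of squarefree $d$ with $\Sel_2(E_d/\Q)=\langle\zeta,\delta_d(E[2])\rangle$ from \Cref{thm:kummer_starting}, use finiteness of $\Sha(E_d/\Q)[2^\infty]$ together with the evenness of $\dim\Sha(E_d/\Q)[2]$ to conclude $Y_{\zeta,d}(\Q)\neq\emptyset$, and bound the count from above by everywhere-local solubility via \Cref{prop:asymptotic_cF}; this is exactly \Cref{cor:loc_soluble_fibres}, which the paper proves first and then cites. Where you genuinely diverge is in passing from $Y_{\zeta,d}(\Q)\neq\emptyset$ to $\widetilde Y_{\zeta,d}(\Q)\neq\emptyset$. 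The paper's proof is a one-liner: since $\zeta\neq 0$, the fixed locus of $\iota_d$ (which is a torsor under $E_d[2]$ whose class is essentially $\zeta$) has no rational points at all, so $Y_{\zeta,d}(\Q)$ is automatically disjoint from the blown-up centre and the equivalence $\widetilde Y_{\zeta,d}(\Q)\neq\emptyset\Leftrightarrow Y_{\zeta,d}(\Q)\neq\emptyset$ holds for \emph{every} $d$, with no input about ranks. You instead rule out the degenerate case by showing $\rk E_d(\Q)\geq 1$ for the $d$ in the lower-bound family (via $\dim\Sel_2=2g+1$ and $\Sha[2]=0$), so that $Y_{\zeta,d}(\Q)$ is infinite and some point avoids the finite fixed locus. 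Your argument is valid but strictly heavier: it only applies to the specially constructed $d$, whereas the paper's observation disposes of the blow-up issue uniformly and also makes the upper-bound direction ("$f_d(\widetilde Y_{\zeta,d}(\Q))\neq\emptyset$ implies $Y_{\zeta,d}(\Q)\neq\emptyset$, hence local solubility") completely formal. Your hedge "for all but finitely many of these $d$" is unnecessary — your own rank argument applies to each $d$ in the family — and, as you note, even a rational point on the exceptional divisor of $\widetilde Y_{\zeta,d}$ maps to a rational point of $Y_{\zeta,d}$, so no case distinction is needed there either.
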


\begin{proof}
Since $\zeta \neq 0$, we have $\widetilde{Y}_{\zeta,d}(\Q)\neq \emptyset$ if and only if $Y_{\zeta,d}(\Q)\neq \emptyset.$ The result now follows from Corollary \ref{cor:loc_soluble_fibres}. 
\end{proof}

\begin{remark}
Condition \ref{cond_E_new} implies, in particular, that $\sX_{\zeta}$ is everywhere locally soluble. Indeed, as explained in Remark \ref{rmk:condition_E}, for places $v\in \Sigma$ it is built into the definition of Condition \ref{cond_E_new} that $\sX_{\zeta}(\Q_v)\neq \emptyset$. For a place $v\notin \Sigma$, it follows from the fact that $\zeta$ is unramified at $v$ and $E$ has good reduction at $v$ that $Y_{\zeta}$, hence  also $\sX_{\zeta}$, has a $\Q_v$-point.  
\end{remark}

Assuming finiteness of  relevant Shafarevich--Tate groups, a consequence of Corollary \ref{cor:Hasse_Kummer_first} is that, when  Conditions \ref{assumption:simple_gamma_mod} and \ref{cond_E_new} are satisfied,  the Kummer variety $\sX_\zeta$ satisfies the Hasse principle. Results of this type appear in several places in the literature \cite{MR2183385,MR3519097,MR3932783,MR4934780}. Of those, the works of Skorobogatov--Swinnerton-Dyer \cite{MR2183385} and Harpaz \cite{MR3932783} treat Kummer varieties associated to abelian varieties with full rational $2$-torsion.  Like Corollary \ref{cor:Hasse_Kummer_first}, these results are also conditional on finiteness of relevant (parts of) Shafarevich--Tate groups, and similarly establish the existence of rational points on $\sX_\zeta$ by deducing the existence of points on the $2$-coverings $Y_{\zeta,d}$. However, they establish only the existence of a $d$ for which $Y_{\zeta,d}(\Q)\neq \emptyset$,  while Corollary \ref{cor:Hasse_Kummer_first} gives precise information about the number of such $d$.

To the best of our knowledge, even the existence result of Corollary \ref{cor:Hasse_Kummer_first} is new. As in Remark \ref{rmk:condition_E}, with the parity condition (2) removed,  Condition \ref{cond_E_new} is a direct analogue of  \cite[Condition E]{MR2183385}. On the other hand, even accounting for the difference in setup, Condition \ref{assumption:simple_gamma_mod} appears at first sight to be quite different to both the remaining condition in \cite[Theorem 1]{MR2183385}  and the single condition in \cite[Theorem 2.8]{MR2183385} (as in \cite[Remark 2.10]{MR3932783}, the absence of a version of \cite[Condition E]{MR2183385}  in the work of Harpaz is due to the presence of a second descent step). 
However, we show in Proposition \ref{cond_Z_prop} below that variants of those conditions in fact imply Condition \ref{assumption:simple_gamma_mod}. Combined with Example \ref{kummer_hyp_ex} below, Proposition \ref{cond_Z_prop}  also serves to provide a large supply of abelian varieties for which Condition \ref{assumption:simple_gamma_mod} holds.

\begin{notation}
For a place $v$ of $\Q$, denote by $\Phi_v$ the group of connected components of the geometric special fibre of the N\'{e}ron model of $E$ at $v$.
Recall from  \cite[Definition 2.1, Remark 2.2]{MR3932783} that a \textit{$2$-structure} $M$ for $E$ is a set of $2g$ odd places such that $E$
has semistable reduction of toric rank $1$ at each $v\in M$, and such that the natural map  $E[2] \to \oplus_{v\in M}\Phi_v/2\Phi_v$ is an isomorphism.
By \cite[Remark 2.2]{MR3932783}, for each $v\in M$, we have $\Phi_v[2^\infty]\cong \Z/2\Z$. For such $v$, by non-degeneracy of the Weil pairing
$e\colon E[2]\times E[2] \to \F_2$, there is a unique $P_v\in E[2]$ such that the reduction map $E[2] \to \Phi_v/2\Phi_v\cong \F_2$  is given by
$x \mapsto e(x,P_v)$. 
\end{notation}

\begin{definition}
We say that a $2$-structure $M$ is \textit{strict} if, for any two distinct elements $v,v'$ of $M$, the Weil pairing between $P_v$ and $P_{v'}$ is non-trivial, i.e. $e(P_v,P_{v'})=1$.
\end{definition}

\begin{example} \label{kummer_hyp_ex}
Let $g$ be a positive integer and let $a_1,...,a_{2g+1}\in \Z$ be pairwise distinct. Let $f(x)=\prod_{i=1}^{2g+1}(x-a_i)$. Let $y^2=f(x)$ be the corresponding genus $g$ hyperelliptic curve, and let $E$ be its Jacobian. Let $M=\{p_1,...,p_{2g}\}$ be a set of odd primes such that, for each $i\in \{1,...,2g\}$, we have $\ord_p(a_i-a_{2g+1})=1$ and  $\ord_{p}(a_j-a_{j'})=0$ for all distinct  $j,j'\in \{1,...,2g+1\}$ with $\{j,j'\}\neq \{i,2g+1\}$. Arguing as in  \cite[Proof of Theorem 1.3 assuming Theorem 2.8]{MR3932783}, we see that $M$ is a $2$-structure for $E$, and that for each $p_i\in M$, the point $P_{p_i}\in E[2]$ is given by the class of the divisor $(0,a_i)-(0,a_{2g+1})$. From the description of the Weil pairing on $E[2]$ given in \cite[Section 5.2]{MR2964027}, we see that this $2$-structure is strict. 
\end{example}

\begin{proposition} \label{cond_Z_prop}
Suppose that $E$ has a strict $2$-structure $M$ over which $\zeta$ is unramified (i.e. such that $\res_v\zeta$ is unramified for all $v\in M$). Then:
\begin{enumerate}[leftmargin=*, label=\upshape{(\arabic*)}]
  \item\label{item:condsimplegamma} Condition \ref{assumption:simple_gamma_mod} is satisfied;
  \item\label{item:condEnew} if there is $b\in \prod_{v\in \Sigma}\Q_v^{\times}/\Q_v^{\times 2}$ satisfying part (1) of Condition \ref{cond_E_new}, then Condition \ref{cond_E_new} is satisfied.  
\end{enumerate}
\end{proposition}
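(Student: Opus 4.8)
The plan is to unwind the definitions and reduce the statement to pure linear algebra on $E[2]$ controlled by the reductions $P_v\in E[2]$ at the places $v\in M$. The key input is the theory of the component groups at a place of toric rank $1$: if $v\in M$, then $E$ has semistable reduction of toric rank $1$ at $v$, and by standard results on the Galois action on the Tate module of such a reduction (see e.g.\ the description in \cite[Remark 2.2]{MR3932783} together with Grothendieck's monodromy analysis), the inertia action on $E[4]$ at $v$ is a transvection-type unipotent element: explicitly, writing $I_v$ for an inertia generator and using $\gamma$ from Definition \ref{def:definition_of_gamma}, one computes that $\gamma(\sigma)\in \End(E[2])$ for $\sigma$ in the image of $I_v$ is a scalar multiple of the map $\phi_{P_v}\colon x\mapsto e(x,P_v)P_v$ of Proposition \ref{transvections_proposition}. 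Here is where the hypothesis that $\zeta$ is \emph{unramified} over $M$ matters: it guarantees that the places of $M$ remain available as ramification data for $\gamma$ even after we pass to the field $L$ cutting out $\zeta$, so that each $\phi_{P_v}$, for $v\in M$, lies in the group $\Gamma=\gamma\big(\Gal(L(E[4])/L)\big)\subseteq \End(E[2])$. (One has to be slightly careful: a priori $\gamma$ is only defined on $G_\Q$, so I would argue that $\gamma$ restricted to an inertia group at $v\in M$ already lands in $\Gamma$ because $v$ is unramified in $L$, hence $\mathrm{Gal}(L(E[4])/L)$ surjects onto the relevant inertia quotient.)

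Once the maps $\phi_{P_v}$, $v\in M$, are known to lie in $\Gamma$, part \ref{item:condsimplegamma} is essentially Proposition \ref{transvections_proposition}. The strictness of the $2$-structure says precisely that $e(P_v,P_{v'})=1$ for distinct $v,v'\in M$, and since $\#M=2g=\dim_{\F_2}E[2]$ and the reduction maps assemble into an isomorphism $E[2]\xrightarrow{\sim}\oplus_{v\in M}\Phi_v/2\Phi_v$, the points $\{P_v\}_{v\in M}$ form an $\F_2$-basis of $E[2]$ with $e(P_v,P_{v'})=1$ for $v\neq v'$ — exactly the hypothesis of Proposition \ref{transvections_proposition}\ref{item:generateEndPlusLie}. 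Thus the $\phi_{P_v}$ generate $\End^+(E[2])$ as a Lie algebra, and a fortiori the subring of $\End(E[2])$ they generate contains $\End^+(E[2])$. By Proposition \ref{transvections_proposition}\ref{item:EndPlusSimple} this already forces $E[2]$ to be simple as a module over the subring $R\subseteq \End(E[2])$ generated by $\Gamma$ (since $R\supseteq \End^+(E[2])$ and $E[2]$ is simple over the latter), and by Proposition \ref{transvections_proposition}\ref{item:endosScalars} any endomorphism of $E[2]$ commuting with all of $\End^+(E[2])$ — in particular with all of $\Gamma$ — is a scalar, i.e.\ $\End_\Gamma(E[2])=\F_2$. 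That is Condition \ref{assumption:simple_gamma_mod}.

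Part \ref{item:condEnew} is then a formality: Condition \ref{cond_E_new} consists of two parts, (1) the existence of $b$ with $\cS_b=\{0,\zeta\}$ and (2) the parity $\kappa(b)$ being odd. Part (1) is assumed in the hypothesis of \ref{item:condEnew}. For part (2), I would use the freedom to modify $b$ at one auxiliary place outside the ``essential'' support without disturbing (1): since $\kappa(b)=\rk_2(E/\Q)+\sum_{v\in\Sigma}\kappa_v(b_v)\pmod 2$ and the local terms $\kappa_v$ can be adjusted — concretely, at a place $v\in M$ (enlarging $\Sigma$ to contain it if necessary, which does not change $\cS_b$ since $\cS_b\subseteq H^1(L/\Q,E[2])$ is insensitive to adding places not ramified in $L$) one checks using Lemma \ref{prop:KummerIntersection} and the toric-rank-$1$ reduction that $\kappa_v$ takes both values $0$ and $1$ on $\Q_v^\times/\Q_v^{\times 2}$, while both classes give rise to the same intersection pattern with $\zeta$ needed for (1), because $\zeta$ is unramified at $v\in M$. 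Hence one can flip the parity of $\sum_v\kappa_v(b_v)$ if needed to make $\kappa(b)$ odd. The main obstacle is the first paragraph — pinning down exactly which element of $\End(E[2])$ the inertia action at $v\in M$ induces under $\gamma$, and verifying the ``unramifiedness of $\zeta$ over $M$'' really does place $\phi_{P_v}$ inside $\Gamma$ rather than merely inside the image of $\gamma$ on all of $G_\Q$; the rest is bookkeeping with the already-established Proposition \ref{transvections_proposition}.
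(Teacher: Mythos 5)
Your treatment of part \ref{item:condsimplegamma} is essentially the paper's argument: identify the image under $\gamma$ of a tame inertia generator $\sigma_v$ at $v\in M$ with the transvection $\phi_{P_v}$, use the unramifiedness of $\zeta$ at $v$ to see that $\sigma_v\in\ker(\zeta)=G_L$ and hence that $\phi_{P_v}\in\Gamma$, and then combine strictness of $M$ with Proposition \ref{transvections_proposition} to obtain Condition \ref{assumption:simple_gamma_mod}. The one caveat is the step you yourself flag: over $\F_2$ ``a scalar multiple of $\phi_{P_v}$'' could be the zero map, and what is needed is the exact equality $\gamma(\sigma_v)=\phi_{P_v}$; the paper proves this in a few lines from the component group (if $e(x,P_v)=0$ then $x$ is $2$-divisible in $E(\Q_v^{\ur})$, so $\gamma(\sigma_v)(x)=0$, while if $e(x,P_v)=1$ then $\delta(x)(\sigma_v)=P_v$ by Lemma 3.6 of \cite{MR3932783}). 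Modulo writing out that computation, part (1) is fine and follows the same route.

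Part \ref{item:condEnew} contains a genuine gap. First, the place at which you want to modify $b$ lies in $M$, hence is a place of bad (semistable, toric rank $1$) reduction and is automatically in $\Sigma$; there is no ``enlarging $\Sigma$'', and Lemma \ref{prop:KummerIntersection}, which you invoke, applies only to places $v\notin\Sigma$, so it says nothing about the local images $\sS_{b_w,w}$ at $w\in M$. Second, and more seriously, knowing that $\kappa_w$ takes both values $0$ and $1$ somewhere on $\Q_w^{\times}/\Q_w^{\times 2}$ does not suffice: you must produce two classes $b_w,b_w'$ such that (i) swapping them leaves the systematic subspace equal to $\{0,\zeta\}$ — which requires that every element of $H^1(L/\Q,E[2])$ (all of which are unramified at $w$, since $w$ is unramified in $L$) satisfies the local condition at $w$ for $b_w$ if and only if it does for $b_w'$ — and simultaneously (ii) $\kappa_w(b_w)\neq\kappa_w(b_w')$. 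The class realising the other value of $\kappa_w$ could a priori be one for which $\zeta$ leaves, or some other $\zeta'$ enters, the local image at $w$, destroying part (1). The paper's proof deals with exactly this: it first reduces to the case where $b_w$ is a unit class (automatic when $\res_w\zeta\neq 0$, by the first claim in the proof of Lemma 3.16 of \cite{MR3932783}, and by a separate replacement argument when $\res_w\zeta=0$), and then compares the two unit classes $1$ and $u$ using Lemma 3.25 of \cite{MR3932783}, which gives both that unramified classes lie in $\sS_{1,w}$ if and only if they lie in $\sS_{u,w}$ (so (i) holds) and that $\kappa_w(u)=1\neq 0=\kappa_w(1)$ (so (ii) holds, flipping the parity $\kappa(b)$). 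None of these steps appears in your sketch; ``one checks using the toric-rank-$1$ reduction'' is precisely where the content of part (2) lies.
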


\begin{proof}
  \ref{item:condsimplegamma}. Let $v\in M$ and let $\sigma_v$ be any element of $G_\Q$ that topologically generates the tame inertia group at $v$.
Recall from Definition \ref{def:definition_of_gamma} the map $\gamma\colon G_\Q\to \End(E[2])$.
We claim that we have $\gamma(\sigma_v)=\phi_{P_v}$, where $\phi_{P_v}\in \End(E[2])$ sends each $x\in E[2]$ to $e(x,P_v)P_v$
(cf. Proposition \ref{transvections_proposition}). Indeed, fix $x\in E[2]$. If we have $e(x,P_v)=0$, then the image
of $x$ is $2$-divisible in $\Phi_v$, hence also in $E(\Q_v^{\ur})$, where $\Q_v^{\ur}$ denotes the maximal unramified extension of $\Q_v$. Thus we have $\gamma(\sigma_v)(x)=\delta(x)(\sigma_v)=0$. Conversely, if $e(x,P_v)=1$, then $x$ has non-trivial image in $\Phi_v/2\Phi_v$, and  by \cite[Lemma 3.6]{MR3932783} we have $\delta(x)(\sigma_v)=P_v$. That is, $\gamma(\sigma_v)(x)=P_v$. This proves the claim. 

To complete the proof, we begin by noting that for each $v\in M$, the image of $\sigma_v$ in $\Gal(\Q(E[4])/\Q)$ lies in $\Gal(L(E[4])/L)$.
Indeed, since $\res_v\zeta$ is unramified, we have $\zeta(\sigma_v)=0$, so that $\sigma_v$ is contained in $\ker(\zeta)=G_L$.
By the claim, the image $\Gamma\leq \End(E[2])$ of  $\Gal(L(E[4])/L)$ under $\gamma$ contains the transvections $\{\phi_{P_v}\}_{v\in M}$. Since $M$ is strict, the result follows from Proposition \ref{transvections_proposition}. 

\ref{item:condEnew}. Cf.  \cite[Lemmas 5 and 8]{MR2183385}. Suppose $b=(b_v)_{v\in \Sigma}$ satisfies part  (1) of Condition \ref{cond_E_new}. Fix a choice of $w\in M$, noting that $w\in \Sigma$. We claim that we can find $b_w'\in \Z_w^{\times}/\Z_w^{\times 2}$ such that the tuple $b'$, obtained from $b$ by replacing the $w$-component with $b_w'$, again satisfies part (1)  of Condition \ref{cond_E_new}. Indeed, first suppose that $\res_w(\zeta)=0$. Then every quadratic twist of $Y_{\zeta}$ has a $\Q_v$-point. Moreover, $w$ splits completely in $L/\Q$, hence for any element   $\zeta'\in H^1(L/\Q,E[2])$, we have $\res_w(\zeta')=0$ also. If $\zeta'\notin \{0,\zeta\}$, then  $\zeta'\notin \cS_b$ by assumption, so there is $v\in \Sigma$ such that $Y_{\zeta',b_v}(\Q_v)= \emptyset$ (cf. Remark \ref{rmk:condition_E}).  Since $\res_w(\zeta')=0$, we cannot have $v=w$. Thus we can replace $b_w$ with \textit{any} $b_w' \in \Z_w^{\times}/\Z_w^{\times 2}$, and part  (1)  of Condition \ref{cond_E_new} continue to hold. Next, suppose that $\res_w(\zeta)\neq 0$. In this case, since $Y_{\zeta,b_w}(\Q_w)\neq \emptyset$, it follows from the first claim in the proof of \cite[Lemma 3.16]{MR3932783} that we already have $b_w\in   \Z_w^{\times}/\Z_w^{\times 2}$.  

Now fix $b=(b_v)_{v\in \Sigma}$ satisfying part  (1)  of Condition \ref{cond_E_new}. As is possible by the claim, we suppose that $b_w\in \Z_w^{\times}/\Z_w^{\times 2}$, where $w\in M$ is fixed as above. By assumption, $E_{b_w}$ is the quadratic twist of $E$ by an unramified (possibly trivial) character.   Let $u$ be the unique non-trivial class in  $\Z_w^{\times}/\Z_w^{\times 2}$. Since $\zeta\in \sS_{b_w,w}$ is unramified at $w$,  it follows from \cite[Lemma 3.25]{MR3932783} that $\zeta$ lies in both  $\sS_{b_w,w}$ and  $\sS_{ub_w,w}$. Moreover, given any $\zeta'\in H^1(L/\Q,E[2])$, we have that $\zeta'$ is unramified at $w$ also, hence by the same lemma we have $\zeta' \in  \sS_{b_w,w}$ if and only if  $\zeta'\in \sS_{ub_w,w}$. Letting $b'$ denote the tuple obtained from $b$ by replacing the $w$-component with $ub_w$, we see that both $b$ and $b'$ satisfy part  (1)   of Condition \ref{cond_E_new}. We claim that exactly one of $b$ and $b'$ satisfies part (2) of Condition \ref{cond_E_new}. For this, it suffices to show that the invariants $\kappa_w(b_w)$ and $\kappa_w(ub_w)$ (the contributions from the place $w$ to the parity) are distinct. Now $\{b_w,ub_w\}=\{1,u\}$, and we trivially have $\kappa_w(1)=0$.  On the other hand, $\kappa_w(u)$ is, by definition, precisely the dimension  modulo $2$ of the vector space denoted $\overline{W}_w$ in  \cite[Lemma 3.25]{MR3932783}. It is shown there that this dimension is equal to $1$.  
\end{proof}

 \begin{corollary} \label{cor:kummer_finite}
Suppose that $E$ has a strict $2$-structure over which $\zeta$ is unramified. Suppose further that there exists $b\in \prod_{v\in\Sigma}\Q_v^{\times}/\Q_v^{\times 2}$ satisfying part  (1)  of Condition \ref{cond_E_new}.
 
Assume  that the $2$-primary part of the Shafarevich--Tate group of each quadratic twist of $E$ is finite. Then we have
\[\#\{d\in \Z  ~~:~~d\textup{ square-free},  ~|d|<X,~  f_d(\widetilde{Y}_{\zeta,d}\big(\Q)\big)\neq \emptyset\} \asymp  X(\log X)^{\frac{1}{[L:\Q]}-1}.\]
In particular, we have $\sX_\zeta(\Q)\neq \emptyset$. 
\end{corollary}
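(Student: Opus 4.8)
The plan is to deduce Corollary \ref{cor:kummer_finite} directly from Corollary \ref{cor:Hasse_Kummer_first} by checking that the two hypotheses of that corollary --- namely Conditions \ref{assumption:simple_gamma_mod} and \ref{cond_E_new} --- are implied by the assumptions of the present statement. The assumption on finiteness of Shafarevich--Tate groups of quadratic twists of $E$ is common to both statements and carries over unchanged, so no work is needed there.

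First I would invoke Proposition \ref{cond_Z_prop}\ref{item:condsimplegamma}: the hypothesis that $E$ has a strict $2$-structure $M$ over which $\zeta$ is unramified is precisely the hypothesis of that proposition, and its first conclusion is that Condition \ref{assumption:simple_gamma_mod} holds. Next, I would invoke Proposition \ref{cond_Z_prop}\ref{item:condEnew}: we are additionally assuming the existence of some $b\in \prod_{v\in\Sigma}\Q_v^{\times}/\Q_v^{\times 2}$ satisfying part (1) of Condition \ref{cond_E_new}, which is exactly the extra input needed for part \ref{item:condEnew} of the proposition to yield that the full Condition \ref{cond_E_new} is satisfied (both parts, including the parity condition (2), which the proposition produces by toggling the local component at a place $w\in M$). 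At this point both Conditions \ref{assumption:simple_gamma_mod} and \ref{cond_E_new} hold, and the finiteness hypothesis on Shafarevich--Tate groups is in force, so all the hypotheses of Corollary \ref{cor:Hasse_Kummer_first} are met.

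Applying Corollary \ref{cor:Hasse_Kummer_first} then gives immediately that
\[
  \#\{d\in \Z  : d\textup{ square-free},  |d|<X,  f_d(\widetilde{Y}_{\zeta,d}(\Q))\neq \emptyset\} \asymp  X(\log X)^{\frac{1}{[L:\Q]}-1},
\]
and in particular $\sX_\zeta(\Q)\neq \emptyset$, which is exactly the assertion of the corollary. Thus the proof is a short deduction and involves no new estimates.

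There is essentially no genuine obstacle here: all the real work --- the analytic input from Theorem \ref{thm:intro_distr} (used via Theorem \ref{thm:kummer_starting} and Corollary \ref{cor:loc_soluble_fibres}), the descent bookkeeping relating $\Sel_2(E_d/\Q)$ to rational points on $Y_{\zeta,d}$ and hence on $\widetilde{Y}_{\zeta,d}$, and the translation of the strict $2$-structure hypothesis into Conditions \ref{assumption:simple_gamma_mod} and \ref{cond_E_new} --- has already been carried out in the cited results. The only point requiring a moment's care is making sure the hypotheses line up verbatim: in particular that ``$\zeta$ unramified over $M$'' in Proposition \ref{cond_Z_prop} is the same as what is assumed here, and that the set $\Sigma$ (which must contain $2$, $\infty$, the bad places of $E$, and the places ramifying in $L$, and which in Proposition \ref{cond_Z_prop} also needs to contain the places of $M$) can be chosen consistently; this is automatic since we are free to enlarge $\Sigma$. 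Once that alignment is confirmed, the corollary follows formally.
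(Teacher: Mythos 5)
Your proposal is correct and matches the paper's own proof, which is exactly the one-line deduction ``combine Proposition \ref{cond_Z_prop} with Corollary \ref{cor:Hasse_Kummer_first}''. Your only added remark, about the places of the $2$-structure lying in $\Sigma$, is in fact automatic because those places are places of bad (semistable, toric rank $1$) reduction and hence already belong to $\Sigma$, so no enlargement is needed.
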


\begin{proof}
Combine Proposition \ref{cond_Z_prop} with Corollary \ref{cor:Hasse_Kummer_first}.
\end{proof}

 \begin{remark} \label{rmk:final_kummer}
As alluded to above, the existence part of Corollary \ref{cor:kummer_finite} is closely related to \cite[Theorem 1]{MR2183385}. These results are not directly comparable. While \cite[Theorem 1]{MR2183385} considers Kummer varieties associated to a product of two elliptic curves over a general number field, Condition \ref{assumption:simple_gamma_mod} ensures that any abelian variety satisfying the conditions of Corollary \ref{cor:kummer_finite} is simple. Nevertheless, the similarity between the statements of these results should be clear. While the existence part of  Corollary \ref{cor:kummer_finite} can very likely be established by the methods of \cite{MR3932783} (even without the innovative second descent step, cf. \cite[Remark 2.10]{MR3932783}), we include it to highlight that the analytic methods underpinning the proof Theorem \ref{thm:intro_distr} are capable of establishing results on the Hasse principle for Kummer varieties of an appreciable quality to those in the literature. 

In fact, we would speculate that much more may be within reach. Indeed, the $L=\Q$ case of Theorem \ref{thm:intro_distr} respresents only a small part of the breakthrough results of Smith \cite{Smith1,Smith2}. Adapting Smith's results in full to Frobenian families of twists  such as those appearing in Theorem \ref{thm:intro_distr}  would likely allow one to prove a version of Corollary \ref{cor:Hasse_Kummer_first} that applies over general number fields, allows very general $2$-torsion structures, and incorporates information from higher descents.  
 \end{remark}

\section{The conjecture}\label{sec:equidistribution}
In this section we explain two heuristics that lead to \Cref{conj:intro_prob_explicit}.

\subsection{The assumptions}\label{sec:assumptions}
Throughout this section, we fix an elliptic curve $E/\Q$ with full rational
$2$-torsion. In particular $E$ has a model of the form
\[
  E/\Q:y^3=(x-a_1)(x-a_2)(x-a_3),\quad a_1,a_2,a_3\in \Q.
\]
For each $i \in \{1,2,3\}$, let $P_i=(a_i,0)\in E[2]$, and write $\lambda_i$
for the ($G_{\Q}$-equivariant) homomorphism $E[2]\rightarrow \boldsymbol \mu_2$ given
by $P\mapsto e_2(P,P_i)$.
Identifying $H^1(\Q,\boldsymbol \mu_2)$ with $\Q^\times/\Q^{\times 2}$, we obtain an isomorphism
\[
  (\lambda_1,\lambda_2)\colon H^1(\Q,E[2])\stackrel{\sim}{\longrightarrow}\left(\Q^\times/\Q^{\times 2}\right)^2.
\]
We will use this identification throughout this section.
Under it, the map
$\delta\colon E(\Q)/2E(\Q)\hookrightarrow H^1(\Q,E[2])\cong \left(\Q^\times/\Q^{\times 2}\right)^2$
is given by
\begin{equation}\label{eq:KummerMap}
(x,y)\longmapsto \begin{cases} (x-a_1, x-a_2), & x\notin\{a_1,a_2\};\\
\left((a_1-a_2)(a_1-a_3),a_1-a_2\right),  & (x,y)=(a_1,0);\\
\left(a_2-a_1,(a_2-a_1)(a_2-a_3)\right), & (x,y)=(a_2,0).\end{cases}\end{equation}
If $d$ is an integer, the quadratic twist $E_d$ is given by the Weierstrass equation
\begin{equation} \label{weierstrass equation}
E_d/\Q:y^2=(x-a_1d)(x-a_2d)(x-a_3d),
\end{equation}
and we have the isomorphism $\xi\colon E_d\to E$, 
$(x,y)\longmapsto (x/d,y/\sqrt{d})$, defined over $\Q(\sqrt{d})$.
Since $\xi$ identifies the Weil pairings on $E_d[2]$ and $E[2]$, it follows that
$\delta_d\colon E_d(\Q)/2E_d(\Q)\hookrightarrow H^1(\Q,E[2])= \left(\Q^\times/\Q^{\times 2}\right)^2$ is given by
\[
  (x,y) \longmapsto \begin{cases}
    (x-da_1,x-da_2), & x\notin \{da_1, da_2\};\\
  \left((a_1-a_2)(a_1-a_3),d(a_1-a_2)\right), & (x,y)=(da_1,0);\\
  \left(d(a_2-a_1),(a_2-a_1)(a_2-a_3)\right), & (x,y)=(da_2,0).
\end{cases}
\]

Fix a point $P=(x_P,y_P)\in E(\Q)$ that is not $2$-divisible in $E(\Q)$, and let $L=\Q(\tfrac12 P)$.

In the special case of elliptic curves, Condition \ref{assumption:simple_gamma_mod} can be replaced
by an easier to state, although not quite equivalent one.
\begin{proposition}\label{prop:no4isogeny}
  \begin{enumerate}[leftmargin=*, label=\upshape{(\arabic*)}]
    \item Suppose that \Cref{assumption:simple_gamma_mod} is satisfied. Then $E/\Q$ has no
      rational cyclic $4$-isogeny.
    \item\label{item:disjointfields} Conversely, suppose that $E/\Q$ has no rational cyclic $4$-isogeny,
      and that the fields $\Q(E[4])$ and $\Q(\tfrac12P)$ are linearly disjoint.
      Then \Cref{assumption:simple_gamma_mod} is satisfied.
  \end{enumerate}
\end{proposition}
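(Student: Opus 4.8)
The statement links \Cref{assumption:simple_gamma_mod} to the more geometric condition that $E$ have no rational cyclic $4$-isogeny, together with a linear disjointness hypothesis in the converse direction. The plan is to translate everything into the language of the map $\gamma\colon G_\Q\to \End(E[2])$ from \Cref{def:definition_of_gamma} and its image, which realises $\Gal(\Q(E[4])/\Q)$ as a subgroup of $\End(E[2])\cong M_2(\F_2)$. Recall that $\Gamma$ is the image of $\Gal(\Q(E[4])/\Q(E[4])\cap L)$, so when $\Q(E[4])$ and $L$ are linearly disjoint, $\Gamma$ is all of $\Gal(\Q(E[4])/\Q)$. The key dictionary is: a $\Gamma$-invariant line in $E[2]$ corresponds to a $G_\Q$-stable order-$2$ subgroup of $E[2]$ that is the kernel of a cyclic $4$-isogeny (the point being that $\gamma$ records exactly the obstruction to lifting $2$-torsion to $4$-torsion rationally — see \Cref{lem:twisted_boundary_map} and the explicit formula $\gamma(\sigma)(x) = \delta(x)(\sigma)$).

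\textbf{Part (1).} Assume \Cref{assumption:simple_gamma_mod}. I would argue contrapositively: if $E/\Q$ has a rational cyclic $4$-isogeny with kernel $C\cong \Z/4\Z$, then $C[2]$ is a $G_\Q$-stable line in $E[2]$, and moreover every $2$-torsion point in $C[2]$ lifts to a $4$-torsion point in $C$ that is fixed by $G_\Q$ only up to $C[2]$; concretely this forces $\gamma(\sigma)(C[2]) \subseteq C[2]$ for all $\sigma$, i.e. $C[2]$ is $\Gal(\Q(E[4])/\Q)$-invariant, hence $\Gamma$-invariant. Since $C[2]$ is a proper non-trivial subspace, this contradicts the first half of \Cref{assumption:simple_gamma_mod}. (One should check carefully that the containment $\gamma(\sigma)(C[2])\subseteq C[2]$ genuinely holds: pick $x \in C[2]$, a lift $\bar x \in C$ with $2\bar x = x$; then $\sigma\bar x - \bar x \in C \cap E[2] = C[2]$ because $C$ is $G_\Q$-stable, and $\gamma(\sigma)(x) = \sigma\bar x - \bar x$ by the explicit description in \Cref{def:definition_of_gamma}.)

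\textbf{Part (2).} Now assume $E$ has no rational cyclic $4$-isogeny and that $\Q(E[4])$, $\Q(\tfrac12 P)$ are linearly disjoint, so $\Gamma = \gamma(G_\Q) = \Gal(\Q(E[4])/\Q)$. First verify there are no proper non-trivial $\Gamma$-invariant subspaces of $E[2]$: a $\Gamma$-invariant line $\ell$ would, reversing the dictionary above, give a $G_\Q$-stable cyclic subgroup $C$ of order $4$ with $C[2] = \ell$ — one builds $C$ as the preimage of $\ell$ under $E[4]\xrightarrow{2}E[2]$ intersected appropriately, using that $\gamma(\sigma)(\ell)\subseteq \ell$ means the $4$-torsion lifts of $\ell$ are permuted within a single $G_\Q$-orbit closed under the group law — contradicting the no-$4$-isogeny hypothesis. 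For the second half, $\End_\Gamma(E[2]) = \F_2$: here I expect the main subtlety. One knows $\End_\Gamma(E[2])$ is a subring of $M_2(\F_2)$ containing $\F_2$; the possibilities are $\F_2$, $\F_4$, $\F_2[\epsilon]$, the diagonal $\F_2\times\F_2$, or all of $M_2(\F_2)$ (the last forces $\Gamma$ central, hence trivial or $\{1,-1\}=\{1\}$ over $\F_2$, putting $\Q(E[4])=\Q$, excluded). The cases $\F_2\times\F_2$ and $\F_2[\epsilon]$ both produce a $\Gamma$-invariant line (an idempotent's image, resp. the nilpotent's image), already excluded. The remaining case to rule out is $\End_\Gamma(E[2]) \cong \F_4$, i.e. $\Gamma$ lies in a non-split Cartan (the norm-one elements, over $\F_2$ this is a cyclic group of order $3$); here $E[2]$ is irreducible so no $\Gamma$-line, and one must instead use the no-$4$-isogeny or disjointness input more carefully. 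The cleanest route is to observe that $\Gamma\subseteq \End^+(E[2])$ lies in the self-adjoint endomorphisms for the Weil pairing modulo the $\psi_{-1}$-correction of \Cref{gamma_and_weil_lemma}; intersecting the non-split Cartan with $\End^+$ (or with the additive structure, since $\gamma$ is additive, so $\Gamma$ is an $\F_2$-subspace of $\End(E[2])$ as well as multiplicatively structured) is extremely restrictive and forces $\Gamma$ to be at most $1$-dimensional as an $\F_2$-vector space, which one then shows is incompatible with $\gamma$ being nontrivial and the $4$-torsion field being as large as the no-$4$-isogeny hypothesis demands — the key leverage is that $\gamma(G_\Q)$ being an additive subgroup makes ``$\Gamma$ sits in a non-split Cartan'' nearly contradictory since that Cartan has order $3$ and is not closed under addition except for the trivial subgroup.

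\textbf{Expected main obstacle.} The genuine difficulty is the $\F_4$ / non-split Cartan case in part (2): unlike the reducible cases it does not directly yield an invariant subspace, so one cannot simply invoke the no-$4$-isogeny hypothesis via the subspace dictionary. I expect to resolve it by exploiting that $\gamma$ is \emph{additive} (so $\Gamma$ is simultaneously an $\F_2$-vector subspace of $\End(E[2])$ and has the multiplicative constraints coming from $\Gal$), and that $\End^+(E[2])$ from \Cref{transvections_proposition} together with \Cref{gamma_and_weil_lemma} pins down where $\Gamma$ can live; a short case analysis on subspaces of $\End^+(E[2])\cong \Sym_2(E[2])$ (a $3$-dimensional $\F_2$-space) that happen to centralise only scalars should finish it, possibly after separately handling the role of $\psi_{-1}$ by passing to $\Q(E[4])$ over $\Q(\sqrt{-1})$.
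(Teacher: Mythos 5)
Your part (1), and the ``no $\Gamma$-invariant line'' half of part (2), are fine: the dictionary you use (a line $\ell=\langle x\rangle$ is $\Gamma_0$-invariant for $\Gamma_0=\gamma(G_\Q)$ if and only if any lift $\bar x$ of $x$ generates a Galois-stable cyclic subgroup of order $4$, since $\sigma\bar x-\bar x=\gamma(\sigma)(x)$) is exactly the right mechanism, and linear disjointness correctly reduces $\Gamma$ to the full image. (For comparison, the paper gives no argument at all here; it cites Smith's Example 3.1 and leaves the details to the reader.)

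The gap is in the case you yourself flagged: showing $\End_\Gamma(E[2])=\F_2$. If $\End_\Gamma(E[2])\neq\F_2$ then $\Gamma$ is contained in the centraliser $\F_2[N]$ of a non-scalar $N$, and the only case not already excluded by the no-$4$-isogeny hypothesis is $\F_2[N]\cong\F_4$, i.e.\ $\Gamma\subseteq\{0,I,A,A+I\}$ with $A$ of order $3$. Your two proposed ways out do not work. First, the relevant object is this $4$-element \emph{subalgebra}, which is an $\F_2$-subspace; so the additivity of $\gamma$ imposes no constraint (``the Cartan has order $3$ and is not additively closed'' conflates the multiplicative torus $\{I,A,A^2\}$ with its span), and even a $1$-dimensional $\Gamma=\{0,A\}$ has commutant $\F_4$, so bounding $\dim\Gamma$ would not finish the argument either. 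Second, \Cref{gamma_and_weil_lemma} only says $\mathrm{tr}\,\gamma(\sigma)=\psi_{-1}(\sigma)$, which is perfectly compatible with $\Gamma$ containing order-$3$ elements (these have trace $1$, and $\Q(i)\subseteq\Q(E[4])$ guarantees trace-$1$ elements exist). What actually kills this case is an arithmetic input at the real place, not algebra: by \Cref{rmk:explicitConditions}, $\Gamma\subseteq\F_4$ would force square-class relations such as $(a_1-a_2)(a_2-a_3)\in\Q^{\times2}$ and $-(a_1-a_2)(a_1-a_3)\in\Q^{\times2}$, which are incompatible with $(a_1-a_2)+(a_2-a_3)=(a_1-a_3)$ and the ordering of the real numbers $a_i$. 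Equivalently and more structurally: for complex conjugation $c$, the endomorphism $\gamma(c)$ is non-zero and non-invertible (exactly two of the four $2$-torsion points are $2$-divisible in $E(\R)$) with trace $1$, hence a rank-one idempotent; any non-scalar element of $\End_\Gamma(E[2])$ would then lie in the split Cartan $\F_2[\gamma(c)]$, forcing $\Gamma\subseteq\F_2[\gamma(c)]$ and hence a $\Gamma$-invariant line, i.e.\ a rational cyclic $4$-isogeny. Some such input is genuinely necessary: over a totally imaginary field such as $\Q(\sqrt{-5})$ one can choose the $a_i$ so that $\Gamma$ is exactly the $\F_4$-subalgebra, with no invariant line and commutant $\F_4$, so a purely algebraic argument of the kind you sketch cannot succeed. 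This real-place step (which is the content of the cited Example 3.1 of Smith) is missing from your proposal, so part (2) as written is incomplete.
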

\begin{proof}
  This is essentially shown in \cite{Smith2}*{Example 3.1}. We leave the details to the interested reader.
\end{proof}
\begin{remark}\label{rmk:explicitConditions}
  The fields appearing in \Cref{prop:no4isogeny} can be described 
  explicitly: we have 
  \[
    \Q(E[4])=\Q\left(\sqrt{a_1-a_2},\sqrt{a_1-a_3},\sqrt{a_2-a_3},\sqrt{-1}\right)
  \]
  and 
  \[
    \Q\left(\tfrac{1}{2}P\right)=\Q(\sqrt{x_P-a_1},\sqrt{x_P-a_2}).
  \]
\end{remark}

\begin{assumption}\label{ass:simple_gamma_mod_elliptic}
For the rest of the section we assume that \Cref{assumption:simple_gamma_mod} is satisfied.
\end{assumption}

\subsection{Main heuristic: equidistribution hypothesis}\label{sec:heuristic_take1}
We recall some of the definitions from Part 1, and also give
a very concrete description of the systematic subspace in our situation.
\begin{definition}\label{def:systematicSub}
Let $\Sigma$ be a finite set of places of $\Q$ containing $2$, $\infty$, and all
places at which $E$ has bad reduction.
Let $b\in \idelesmodsquaresset{\Q}{\Sigma}$ be such that for all $d\in \FbL$,
the $2^\infty$-Selmer rank of $E_d/\Q$ is odd and one has $\delta(P)\in \Sel(E_d/\Q)$, in other words such that one has $\FbL\subset \cF(P)$,
where $\cF(P)$ is as in \Cref{def:cF(P)}.

Recall that we have $\delta(P)=(x_P-a_1,x_P-a_2)$.
Let $\cU=\left \langle x_P-a_1,x_P-a_2\right \rangle \subseteq \Q^{\times}/\Q^{\times 2}$.
Note that for all $d,d'\in \FbL$, we have
\[
  \Sel_2(E_d/\Q)\cap \cU^2=\Sel_2(E_{d'}/\Q)\cap \cU^2,
\]
since by \Cref{rmk:explicitConditions} and the definition of $\FbL$, we have that for all $d\in \FbL$ and
all primes $p\mid d$ with $p\not\in \Sigma$, both $x_P-a_1$ and $x_P-a_2$ are trivial
in $\Q_p^{\times}/\Q_p^{\times 2}$. Let $\cS_b$ be this common intersection,
and define $n_b=\dim_{\F_2} \cS_b$.
Since we have $(x_P-a_1,x_P-a_2)\in \cS_b$, we have $1\leq n_b\leq 4$.
\end{definition}

The space $\cS_b$ just defined is nothing but the systematic subspace of
\Cref{def:syst_subspace}.

\begin{remark}\label{rmrk:sparseRk2}
By \Cref{prop:asymptotic_cF} we have
$\#\{d\in \FbL : |d|<X\}\asymp X\log(X)^{\tfrac{1}{t}-1}$ 
for some $t\in \Z_{>1}$. 
Assuming existing conjectures \cite{MR2410120} on the density of
elliptic curves in quadratic twist families having ($2^\infty$-)rank at least $2$,
asymptotically 100\% of  $d\in \FbL$  have
$\rk_2(E_d/\Q)=1$, so that, up to a negligible error, $\FbL$ is the family of all
square-free $d\equiv b \pmod{\prod_{v\in\Sigma}\Q_v^{\times 2}}$ such that we have $\rk(E_{d}/\Q)=1$ and $\delta(P)\in \Sel_2(E_{d}/\Q)$.
\end{remark}

Note that for 100\% of all $d\in \FbL$ the intersection $\delta(E(\Q))\cap \delta_d(E_d[2])$ is trivial, since for $p\mid d$, $p\not\in \Sigma$, all elements of
$\delta(E(\Q))$ are unramified at $p$, while all non-trivial classes in $\delta_d(E_d[2])$
are ramified at $p$.
By \Cref{rmrk:sparseRk2} we expect the rank of $E_d(\Q)$ to be $1$
for 100\% of $d\in \cF$, so that the image of $\delta_d$ in $\Sel_2(E_d/\Q)/\delta_d(E_d[2])$
typically contains exactly one non-zero element.
Our main heuristic assumption is that if one fixes an odd integer $r\geq n_{b}$, then
among those $d\in \FbL$ for which one has $\dim\Sel_2(E_d/\Q)=2+r$, the
probability that this non-zero element of $\Sel_2(E_d/\Q)/\delta_d(E_d[2])$ coincides 
with $\delta(P)$ is $1/(2^r-1)$. 
Combined with \Cref{thm:main_distr}, this heuristic leads to the following concrete conjecture.

\begin{conjecture}\label{conj:frequencyRegular}
Write $\FbL(X)$
for the set of $d\in \FbL$ with $|d|\leq X$, and let $m_{b}\in \{0,1\}$
be such that $m_{b}\equiv 1+n_{b}\pmod 2$. Then one has
\begin{eqnarray*}
  \lim_{X\rightarrow \infty}\frac{\#\{d\in \FbL(X):\delta(P)\in \im\delta_d\}}{\#\FbL(X)}=
  \sum_{r=0}^\infty \frac{\alpha(2r+m_{b})}{2^{n_{b}+2r+m_{b}}-1},
\end{eqnarray*}
where $\alpha(2r+m_b)$ is as in \Cref{thm:main_distr}.
\end{conjecture}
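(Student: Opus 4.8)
Conjecture~\ref{conj:frequencyRegular} is a heuristic assertion rather than a theorem, so the ``proof'' consists of deriving the displayed formula from the stated equidistribution hypothesis together with results already established in Part~1, and then recording the consistency check that the formula is independent of the auxiliary choices. The plan is as follows. First, restrict to the density~$1$ subfamily of $\FbL$ on which $\rk(E_d/\Q) = 1$, the intersection $\delta(E(\Q))\cap \delta_d(E_d[2])$ is trivial, and $\Sel_2(E_d/\Q)\cap \cU^2 = \cS_b$ has dimension exactly $n_b$; that this is a density~$1$ subfamily follows from \Cref{rmrk:sparseRk2}, \Cref{lem:systematic_subspace_main_lemma} (applied with $r=1$), and the ramification argument immediately preceding the conjecture. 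On this subfamily, $\delta_d(E_d(\Q))$ has dimension $2+1 = 3$ over $\delta_d(E_d[2])$ being $2$-dimensional, so its image in the quotient $\Sel_2(E_d/\Q)/\delta_d(E_d[2])$ is exactly $1$-dimensional, spanned by a single nonzero class $\overline{\delta_d(Q_d)}$ for a Mordell--Weil generator $Q_d$.

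Next, I would stratify the subfamily according to the value of $s := \dim\Sel_2(E_d/\Q) - 2 = \dim\bigl(\Sel_2(E_d/\Q)/\delta_d(E_d[2])\bigr)$. By \Cref{thm:main_distr} (equivalently \Cref{thm:intro_distr}), for each $r\geq 0$ the proportion of $d\in\FbL$ with $s = n_b + m_b + 2r$ is $\alpha(m_b+2r)$, and these are the only values of $s$ that occur (here I am using $2g = 2$ since $E$ is an elliptic curve, and the shift is by $2g + n_b + m_b + 2r$ in the statement of \Cref{thm:main_distr}). Wait --- one must be slightly careful: \Cref{thm:main_distr} gives the distribution of $\dim\Sel_2(E_d/\Q)$, and the systematic subspace $\cS_b$ of dimension $n_b$ is \emph{already contained} in $\Sel_2(E_d/\Q)$, so $s$ ranges over $n_b + m_b + 2r$ for $r\geq 0$, i.e. $s\geq n_b$ and $s\equiv n_b+m_b \equiv 1\pmod 2$, consistent with the hypothesis ``$r\geq n_b$ odd'' in the text preceding the conjecture. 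On the stratum with $s = n_b+m_b+2r$, the equidistribution hypothesis asserts that, conditionally, $\overline{\delta_d(Q_d)}$ is uniformly distributed over the $2^{s}-1$ nonzero elements of $\Sel_2(E_d/\Q)/\delta_d(E_d[2])$; since $\delta(P) = (x_P-a_1, x_P-a_2)$ is a fixed nonzero class lying in $\Sel_2(E_d/\Q)/\delta_d(E_d[2])$ (nonzero because the intersection with $\delta_d(E_d[2])$ is trivial on our subfamily, and in $\Sel_2$ because $\FbL\subseteq\cF(P)$ by hypothesis), the conditional probability that $\delta(P)\in\im\delta_d$, i.e. that $\overline{\delta_d(Q_d)} = \delta(P)$, equals $1/(2^{s}-1) = 1/(2^{n_b+2r+m_b}-1)$. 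Summing over the strata with weights $\alpha(m_b+2r)$ gives the displayed series.

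Finally, I would verify that the right-hand side is a well-defined function of $b$ through $n_b$ and $m_b$ only, and convergent: convergence is clear since $\alpha(2r+m_b)\to 0$ superexponentially while $2^{n_b+2r+m_b}-1$ grows, and by \Cref{prop:intro_explicitsums} (to be proved as \Cref{cor:explicit_prob}) the series in fact telescopes to the rational values $1/2, 1/8, 5/64, 29/1024$ for $n_b = 1,2,3,4$, recovering \Cref{conj:intro_prob_explicit}. One should also note $\delta(P)$ and $\delta(P+Q)$ for $Q\in E[2]$ generate the same coset modulo $\delta_d(E_d[2])$ only up to the action of $2$-torsion, but since we work in the quotient by $\delta_d(E_d[2])\supseteq$ (image of $E[2]$) this is immaterial, as remarked in the text. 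The genuine content --- and the part that is not a proof at all --- is the equidistribution hypothesis itself; the ``main obstacle'' is thus not a gap in this derivation but the fact that the hypothesis is only justified heuristically, via the random alternating matrix model of Section~\ref{sec:randMatrix} (and via the analogy with Stevenhagen's conjecture and its proof by Koymans--Pagano). Establishing it rigorously would require extending \Cref{thm:intro_distr} from $2$-Selmer to $2^\infty$-Selmer groups in these Frobenian twist families, which is exactly the open problem flagged in the introduction.
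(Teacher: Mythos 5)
Your derivation is correct and is essentially the paper's own justification of this conjecture: since the statement is a heuristic prediction rather than a theorem, the paper likewise obtains the displayed series by restricting to the density-one locus where $\rk(E_d/\Q)=1$ and $\delta(E(\Q))\cap\delta_d(E_d[2])=0$, stratifying $\FbL$ by $\dim\Sel_2(E_d/\Q)=2+n_b+m_b+2r$ via \Cref{thm:main_distr}, and applying the equidistribution hypothesis with conditional probability $1/(2^{n_b+2r+m_b}-1)$ on each stratum, with the random-matrix model of Section \ref{sec:randMatrix} serving as the alternative heuristic and route to the rational values. (Only your parenthetical aside that $\delta(P)$ and $\delta(P+Q)$ agree modulo $\delta_d(E_d[2])$ is imprecise, since $\delta(Q)$ need not lie in $\delta_d(E_d[2])$, but this plays no role in the derivation.)
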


It turns out that the infinite sum in \Cref{conj:frequencyRegular} can be evaluated completely
explicitly, and turns out to be a rational number (which, of course,
depends on $n_{b}$). This can be shown through direct manipulation of the infinite sum,
using some identities from \cite{MR1292115}. Instead, we will now present an alternative
form of the main heuristic, and use that to evaluate the probability in \Cref{conj:frequencyRegular}.

\subsection{Main heuristic via random matrices}\label{sec:randMatrix}

Fix, for this subsection only, a prime number $p$.
Let $t\in \Z_{\geq 0}$. It is conjectured in \cite{MR3393023} that the $p^\infty$-torsion subgroups of
Tate-Shafarevich groups of ``random'' elliptic curves of rank $t$
are distributed like cokernels of Haar-random alternating $s\times s$ matrices over $\Z_p$
of rank $s-t$, in the limit as $s\to \infty$ over all integers $s$ for which $s-t$ is even.
The $p^\infty$-Selmer groups of such elliptic curves are then modelled
as $(\Q_p/\Z_p)^t\oplus R$, where $R$ is a random group as just described.
If $t\in \{0,1\}$, then the probability
that the cokernel of such a random matrix has $p$-rank equal to $2r$ for a given
$r\in \Z_{\geq 0}$ is, again in the limit as $s\to \infty$, shown to be
$\alpha(p,2r+t)$ as defined in \S \ref{sec:heuristic_take1} -- see \cite{MR3393023}*{Theorem 1.10} and \cite{MR2833483}*{Proposition 2.6}.

We will now explain how one may, in fact, equivalently model the whole $p^\infty$-Selmer
group in terms of random matrices, recovering both the divisible part and the
finite quotient. We will then adapt this heuristic to our situation by
accounting for the systematic subspace $\cS_b$ defined in \S \ref{sec:heuristic_take1}.
As we will see, this random matrix model already predicts the equidistribution
hypothesis of \S \ref{sec:heuristic_take1}, so that we will naturally recover
the infinite sum of \Cref{conj:frequencyRegular}. Finally, we will use this
reformulation to evaluate this sum
as an explicit \emph{rational} number.

\begin{lemma}\label{lem:from_coker_to_ker}
  Let $T$ be a free finite rank $\Z_p$-module,
  let $M_T$ be an endomorphism of $T$, and let $M_W$ be the corresponding
  endomorphism of $T\otimes_{\Z_p}(\Q_p/\Z_p)$ obtained
  by applying the functor $-\otimes_{\Z_p} \Q_p/\Z_p$ to $M_T$. Then one has
  \[
    \ker M_W \cong (\ker M_W)_{\div} \oplus (\coker M_T)_{\tors},
  \]
  where $(\ker M_W)_{\div}$ is the maximal divisible subgroup of $\ker M_W$. Moreover
  one has $(\ker M_W)_{\div}\cong (\Q_p/\Z_p)^r$, where $r$ is the rank of $\ker M_T$.
\end{lemma}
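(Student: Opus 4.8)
The plan is to reduce everything to the structure theorem for finitely generated $\Z_p$-modules applied to $\coker M_T$, and to identify $\ker M_W$ with the Pontryagin dual of that cokernel. First I would observe that since $T$ is free of finite rank over $\Z_p$, the module $T\otimes_{\Z_p}(\Q_p/\Z_p)$ is canonically the Pontryagin dual of $T$ (using that $T$ is self-dual as a $\Z_p$-module, $T^\vee=\Hom(T,\Q_p/\Z_p)\cong T$, and that $\Hom(T,\Q_p/\Z_p)=T^\vee\otimes_{\Z_p}\Q_p/\Z_p$ reduces to $T\otimes\Q_p/\Z_p$ after a choice of basis). Under this identification, the endomorphism $M_W$ is the transpose (Pontryagin dual) of $M_T$, so $\ker M_W=(\coker M_T^{\mathrm{t}})^\vee$. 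Since transpose does not change elementary divisors, $\coker M_T^{\mathrm t}\cong\coker M_T$ as abstract $\Z_p$-modules, hence $\ker M_W\cong(\coker M_T)^\vee$.

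Next I would invoke the structure theorem: $\coker M_T\cong \Z_p^{\,r}\oplus F$, where $F=(\coker M_T)_{\tors}$ is finite and $r$ is the rank of $\coker M_T$, which equals $\dim T-\rk(\mathrm{im}\,M_T)=\dim(\ker M_T)$ by rank-nullity for the map $M_T$ on the finite-rank free module $T$ (the last equality because $M_T$ is an endomorphism of $T$, so domain and codomain have the same rank). Taking Pontryagin duals gives $\ker M_W\cong(\Z_p)^\vee{}^{\,r}\oplus F^\vee\cong(\Q_p/\Z_p)^r\oplus F$, using $\Z_p^\vee\cong\Q_p/\Z_p$ and $F^\vee\cong F$ for finite $F$. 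This is exactly the asserted decomposition, and since $(\Q_p/\Z_p)^r$ is divisible while $F$ is finite, the divisible part of $\ker M_W$ is precisely the $(\Q_p/\Z_p)^r$ summand and the complementary finite quotient is $(\coker M_T)_{\tors}$; this also pins down $(\ker M_W)_{\div}\cong(\Q_p/\Z_p)^r$ with $r=\rk(\ker M_T)$, giving the second claim.

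Alternatively, and perhaps more transparently for readers who prefer to avoid duality, one can argue directly by Smith normal form: choose $\Z_p$-bases of $T$ so that $M_T$ is diagonal with entries $p^{e_1},\dots,p^{e_k},0,\dots,0$ (allowing $e_i=0$), with $r$ trailing zeros. Then $M_W$ acts diagonally on $(\Q_p/\Z_p)^{\dim T}$, and its kernel is $\bigoplus_i (p^{-e_i}\Z_p/\Z_p)\oplus(\Q_p/\Z_p)^r\cong\bigoplus_i\Z/p^{e_i}\oplus(\Q_p/\Z_p)^r$, while $\coker M_T\cong\bigoplus_i\Z/p^{e_i}\oplus\Z_p^{\,r}$; matching the two identifies the torsion of the cokernel with the non-divisible part of the kernel. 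I expect no serious obstacle here: the only point requiring a little care is the identification of $M_W$ with the dual of $M_T$ (or, in the Smith-normal-form approach, that $e_i=0$ contributes a trivial summand $p^{0}\Z_p/\Z_p=0$ to the kernel but also a trivial summand $\Z/p^0=0$ to the cokernel torsion, so the bookkeeping of which diagonal entries are units versus genuine prime powers is consistent on both sides). I would write the duality version as the main argument and mention the Smith-normal-form computation as a remark.
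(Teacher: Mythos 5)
Your proposal is correct, but it proves the lemma by a genuinely different route from the paper. The paper's proof is a snake-lemma argument: it places $M_T$, $M_V$, $M_W$ as vertical maps on the short exact sequence $0\to T\to T\otimes\Q_p\to T\otimes(\Q_p/\Z_p)\to 0$, extracts the exact sequence $0\to\ker M_T\to\ker M_V\to\ker M_W\to\coker M_T\to\coker M_V$, identifies the kernel of the last map with $(\coker M_T)_{\tors}$ and the image of $\ker M_V\to\ker M_W$ with $(\Q_p/\Z_p)^r$, and then splits using injectivity of $(\Q_p/\Z_p)^r$. You instead identify $\ker M_W$ with the Pontryagin dual of $\coker M_T^{\mathrm t}$ and apply the structure theorem (or, in your remark, compute both sides via Smith normal form). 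Both arguments are sound. What each buys: the snake-lemma proof is basis-free, avoids duality and the transpose bookkeeping entirely, and produces the splitting directly from injectivity of the divisible part; your duality argument is a one-line conceptual identification once the dictionary $\ker(f^\vee)\cong(\coker f)^\vee$ is in place, and your Smith-normal-form version is the most elementary of the three. The only soft spot in your write-up is the phrase ``$T$ is self-dual, $\Hom(T,\Q_p/\Z_p)\cong T$'' --- the Pontryagin dual of $\Z_p^n$ is $(\Q_p/\Z_p)^n$, not $T$; what you mean (and what your next clause makes clear) is that the $\Z_p$-linear dual $\Hom(T,\Z_p)$ is non-canonically isomorphic to $T$, whence $T^\vee\cong T\otimes(\Q_p/\Z_p)$ after a choice of basis and $M_W$ becomes the Pontryagin dual of $M_T^{\mathrm t}$. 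With that phrasing tightened, the argument is complete.
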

\begin{proof}
  Write $V=T\otimes_{\Z_p}\Q_p$ and $W=T\otimes_{\Z_p}(\Q_p/\Z_p)$.
  We have a commutative diagram
  \[
    \xymatrix{
      0\ar[r] & T\ar[d]^{M_T}\ar[r] & V\ar[d]^{M_V}\ar[r] & W \ar[d]^{M_W}\ar[r] & 0\\
      0\ar[r] & T \ar[r] & V \ar[r] & W \ar[r] & 0 
    }
  \]
  with exact rows, where $M_V$ is the endomorphism of $V$ induced by $M_T$. Applying
  the snake lemma yields the exact sequence
  \[
    0\to \ker M_T \to \ker M_V \to \ker M_W \to \coker M_T\to \coker M_V.
  \]
  Here, the right-most map is induced by $-\otimes_{\Z_p}\Q_p$, so that its kernel
  is exactly the torsion subgroup of $\coker M_T$, while the map $\ker M_V\to \ker M_W$
  is induced by $-\otimes_{\Z_p}(\Q_p/\Z_p)$, so that its image is isomorphic
  to $(\Q_p/\Z_p)^r$. Since $(\Q_p/\Z_p)^r$ is an injective group, the sequence
  \[
    0\to (\Q_p/\Z_p)^r\to \ker M_W \to (\coker M_T)_{\tors} \to 0
  \]
  splits, and the result follows.
\end{proof}

From now on, whenever $M$ is an $s\times s$ matrix over $\Z_p$ for some
$s\in \Z_{\geq 1}$, we denote by $M_W$ the
endomorphism of $(\Q_p/\Z_p)^s$ induced by $M$ as above.

It follows from \Cref{lem:from_coker_to_ker} that the model for the $p^\infty$-Selmer
group of a ``random'' elliptic curve of rank $t\in \Z_{\geq 0}$
proposed in \cite{MR3393023} is equivalent to modelling such a Selmer group
as the kernel
of $M_W$, where $M$ is a Haar-random alternating $s\times s$ matrix of rank $s-t$ over $\Z_p$,
in the limit as $s\to \infty$ over integers for which $s-t$ is even. Concretely,
we have the following result.

\begin{proposition}\label{prop:BKL+_reformulation}
  Let $t\in \Z_{\geq 0}$. Then:
  \begin{enumerate}[leftmargin=*, label=\upshape{(\arabic*)}]
    \item for $s\in \Z_{\geq t}$, the pushforward of Haar
      measure on the set of alternating $s\times s$ matrices over $\Z_p$ of rank
      $s-t$ under the function $M\mapsto \ker M_W$ induces a discrete probability
      distribution $\cT_{s,t}$ on the class of isomorphism classes of co-finitely
      generated $\Z_p$-modules;
    \item as $s\to\infty$ over integers satisfying $s-t\in 2\Z$, the sequence of
      distibutions $\cT_{s,t}$ converges to a probability distribution $\cT_t$,
      with respect to which the class of $\Z_p$-modules of the form
      $(\Q_p/\Z_p)^t\oplus R$ where $R$ is a finite $\Z_p$-module has mass $1$;
    \item the pushforward of $\cT_t$ under the map $A \mapsto A/A_{\div}$
      defines a probability distribution on the class of finite $\Z_p$-modules
      that coincides with the distributions $\sT_t=\sA_t$ from \cite{MR3393023}.
  \end{enumerate}
\end{proposition}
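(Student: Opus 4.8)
\textbf{Proof strategy for Proposition \ref{prop:BKL+_reformulation}.}
The plan is to reduce everything to facts already established in the literature, principally \cite{MR3393023}, by passing through \Cref{lem:from_coker_to_ker}. The key observation is that for an alternating $s\times s$ matrix $M$ over $\Z_p$ of rank $s-t$, the associated map $M_W$ on $(\Q_p/\Z_p)^s$ satisfies, by \Cref{lem:from_coker_to_ker},
\[
  \ker M_W \cong (\Q_p/\Z_p)^t \oplus (\coker M)_{\tors},
\]
since $\ker M$ (over $\Z_p$) has rank $s-(s-t)=t$. Thus the random variable $M\mapsto \ker M_W$ is, up to the fixed divisible summand $(\Q_p/\Z_p)^t$, the same as the random variable $M\mapsto (\coker M)_{\tors}$, whose distribution is exactly what is studied in \cite{MR3393023}.

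First I would address (1). The set of alternating $s\times s$ matrices over $\Z_p$ of a fixed rank $s-t$ is a measurable (indeed, locally closed) subset of the compact group of all alternating matrices, of positive Haar measure when $t\equiv s\pmod 2$ and $0\le t\le s$ (and empty otherwise, in which case $\cT_{s,t}$ is undefined; since we take $s\ge t$ with $s-t$ even in the relevant range this is not an issue, but for a general $s\ge t$ one restricts to the non-empty case). Normalising Haar measure on this subset gives a probability measure, and pushing forward along $M\mapsto\ker M_W$ — a measurable map, since $\ker M_W$ depends only on $M$ modulo $p^k$ for $k$ large relative to the elementary divisors, hence is locally constant on the rank stratum — yields a discrete probability distribution $\cT_{s,t}$ on isomorphism classes of cofinitely generated $\Z_p$-modules. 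This is routine.

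For (2) and (3) I would invoke \cite{MR3393023} directly. By the displayed isomorphism, the pushforward of $\cT_{s,t}$ under $A\mapsto A/A_{\div}$ is precisely the distribution of $(\coker M)_{\tors}$ for $M$ Haar-random alternating of rank $s-t$; by \cite[Theorem 1.10]{MR3393023} (see also \cite[Proposition 2.6]{MR2833483}) this converges, as $s\to\infty$ through $s\equiv t\pmod 2$, to the distribution $\sA_t$ on finite $\Z_p$-modules. Meanwhile the divisible part of $\ker M_W$ is deterministically $(\Q_p/\Z_p)^t$ for every matrix in the rank-$(s-t)$ stratum, so $\cT_{s,t}$ is supported on modules of the form $(\Q_p/\Z_p)^t\oplus R$ with $R$ finite, and $\cT_{s,t}$ is determined by its pushforward to the finite part together with this fixed divisible part. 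Convergence of the finite-part pushforwards therefore upgrades to convergence of the $\cT_{s,t}$ themselves (in the sense of pointwise convergence of the mass assigned to each isomorphism class, equivalently weak convergence since all measures are discrete on a countable set), defining $\cT_t$ with the stated support property; and (3) is then immediate from the construction, as the pushforward of $\cT_t$ to finite modules is by definition the limit $\sA_t=\sT_t$.

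\textbf{Main obstacle.} None of the steps is deep; the only point requiring a little care is the measurability/continuity claim in (1) and the justification that weak convergence of the finite-part marginals implies convergence of the full distributions $\cT_{s,t}\to\cT_t$. The latter is clean precisely because the divisible part is constant on each rank stratum — so there is a canonical bijection between the support of $\cT_{s,t}$ and the support of its finite-part pushforward — but one should state it carefully rather than treat it as obvious. I expect the write-up to be short, essentially a formal consequence of \Cref{lem:from_coker_to_ker} and the cited results.
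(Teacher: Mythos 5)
Your proposal is correct and follows essentially the same route as the paper, whose proof simply observes that all three assertions are immediate consequences of \Cref{lem:from_coker_to_ker} together with the analysis in \cite{MR3393023}; your write-up merely spells out the measurability and rank-stratum details that the paper leaves implicit.
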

\begin{proof}
  All the assertions are immediate consequences of \Cref{lem:from_coker_to_ker}
  and of the analysis in \cite{MR3393023}.
\end{proof}

Since $\ker M_W$ for random ``large'' alternating matrices $M$ over $\Z_p$
is a model for $p^\infty$-Selmer groups of elliptic curves, the resulting
model for $\Sel_p(E/\Q)/\delta(E(\Q)_{\tors})$ is $(\ker M_W)[p]$ for those
same matrices $M$. Note that $(\ker M_W)[p]= \ker \bar{M}$, where $\bar{M}$
is the reduction of $M$ modulo $p$.

We now modify the above model so as to take into account the systematic subspace
defined in \S \ref{sec:heuristic_take1}. Let $n\in \Z_{\geq 1}$
(playing the r\^ole of $n_{b}$ from Section \ref{sec:heuristic_take1}),
let $m\in \{0,1\}$ be such that $m\equiv 1+n\pmod 2$. For every
odd integer $s>n$, fix an $n$-dimensional subspace $S\subset \F_p^{s}$
(playing the r\^ole of $S_{b}$), let $\cL\subset S$ be a $1$-dimensional subspace
(playing the role of $\langle \delta(P)\rangle \in \Sel_2(E_d/\Q)$), and let $\rA_s$
be the set of all alternating $s\times s$ matrices $M$ over $\Z_p$
of rank $s-1$ such that $S\subset \ker \bar{M}$. In the interest of keeping the notation light,
we have suppressed some of the dependencies from the notation.
Equip $\rA_s$ with Haar measure.

\begin{remark}\label{rmrk:minrank}
  If $s\in \Z_{\geq 1}$ and $M$ is a random alternating $s\times s$ matrix
  over $\Z_p$ with respect to Haar measure, then by \cite{MR3393023}*{Proposition 2.1 and Lemma 3.10},
  with probability $1$ the matrix $M$ has rank $s$, respectively $s-1$ when
  $s$ is even, respectively odd. It follows that in the definition
  of $\rA_s$ we could have omitted the condition of $M$ having rank
  $s-1$.
\end{remark}

\begin{lemma}\label{lem:prob_distr}
  Let $\cP_s$ be the probability distribution on $\Z_{\geq 0}$ given
  by pushforward of the probability distribution on $\rA_s$ under the
  function $M\mapsto \dim_{\F_p}(\ker \bar M)$.
  Then for every $r\in \Z_{\geq 0}$, we have
  \[
    \lim_{s\to \infty} \cP_s(n+2r+m) = \alpha(p,2r+m),
  \]
  where the limit is taken over all odd integers $s$.
\end{lemma}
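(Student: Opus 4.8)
The plan is to reduce the count over $\rA_s$ to a count over alternating matrices with no constraint, via the standard trick of splitting off the prescribed kernel directions, and then to invoke \cite{MR3393023} directly. First I would change coordinates: since $\mathrm{GL}_s(\mathbb{Z}_p)$ acts transitively on $n$-dimensional subspaces of $\mathbb{F}_p^s$ that lift to free rank-$n$ direct summands of $\mathbb{Z}_p^s$, and since Haar measure is invariant under $M\mapsto g^\top M g$, I may assume $S$ is spanned by the first $n$ standard basis vectors. An alternating matrix $M$ over $\mathbb{Z}_p$ satisfies $S\subseteq\ker\bar M$ precisely when $\bar M$ kills the first $n$ coordinate vectors, i.e.\ when the first $n$ rows (equivalently, by antisymmetry, the first $n$ columns) of $M$ are divisible by $p$. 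Writing $M$ in block form with an $n\times n$ block, an $n\times(s-n)$ block, and an $(s-n)\times(s-n)$ block $M'$, the condition is that the first two blocks are in $p\,\mathrm{Mat}$, while $M'$ is an arbitrary alternating $(s-n)\times(s-n)$ matrix over $\mathbb{Z}_p$. (By Remark~\ref{rmrk:minrank} the rank condition is automatic for a.e.\ such $M$ once $s-n$ has the right parity; note $s-n$ is even since $s$ is odd and $n\equiv m+1\equiv \ldots$, $m\equiv 1+n\pmod 2$ forces $s-n$ even when $s$ is odd.)

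Next I would compute $\dim_{\mathbb{F}_p}\ker\bar M$ in terms of $\bar{M'}$. Because the off-diagonal blocks of $\bar M$ vanish and the top-left $n\times n$ block vanishes, $\bar M$ is the direct sum of the zero matrix on $\mathbb{F}_p^n$ and $\bar{M'}$ on $\mathbb{F}_p^{s-n}$. Hence $\dim\ker\bar M = n + \dim\ker\bar{M'}$. Therefore, under the measure-preserving identification of $\rA_s$ (up to the measure-zero rank locus) with the space of Haar-random alternating $(s-n)\times(s-n)$ matrices $M'$ over $\mathbb{Z}_p$ together with an independent uniform choice of the $p$-divisible off-diagonal and top-left blocks, the pushforward distribution $\cP_s$ is exactly the law of $n + \dim\ker\bar{M'}$. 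Thus
\[
  \cP_s(n+2r+m) \;=\; \mathbb{P}\bigl[\dim_{\mathbb{F}_p}\ker\bar{M'} = 2r+m\bigr],
\]
where $M'$ ranges over Haar-random alternating $(s-n)\times(s-n)$ matrices over $\mathbb{Z}_p$, and $s-n$ is even with $s-n\to\infty$ as $s\to\infty$ through odd integers. Wait --- I must be careful about parity: $s-n$ is even but in \cite{MR3393023} the relevant statement for cokernel $p$-rank equal to $t+2r$ concerns matrices of corank $t$; here $M'$ has generic corank $0$ (since $s-n$ is even), so $\dim\ker\bar{M'}$ should be even, i.e.\ this matches $2r+m$ only when $m=0$. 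When $m=1$, $n$ is even, $s-n$ is odd, $M'$ has generic corank $1$, and $\dim\ker\bar{M'} = 1 + (\text{even})$; either way $\dim\ker\bar{M'} = m' + 2r$ where $m'\in\{0,1\}$ is the parity of $s-n$, which equals the parity of $s-n = $ (odd) $- n \equiv 1 - n \equiv m \pmod 2$. So indeed $m' = m$, consistent.

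Finally I would quote the relevant convergence result. By \Cref{prop:BKL+_reformulation}(3), or directly \cite{MR3393023}*{Theorem 1.10} combined with \cite{MR2833483}*{Proposition 2.6}, as $N\to\infty$ through integers with $N\equiv m\pmod 2$, the probability that a Haar-random alternating $N\times N$ matrix over $\mathbb{Z}_p$ has mod-$p$ kernel of dimension $2r+m$ converges to $\alpha(p,2r+m)$. Applying this with $N = s-n$ (which runs through integers of parity $m$ tending to infinity) gives $\lim_{s\to\infty}\cP_s(n+2r+m) = \alpha(p,2r+m)$, as claimed. The one genuinely delicate point is the measure-theoretic bookkeeping: one must check that the change-of-variables $M\mapsto (M', \text{divisible blocks})$ is measure-preserving up to normalization and that the rank-$(s-n)$ deficiency locus is null, so that ignoring it (as licensed by Remark~\ref{rmrk:minrank} and \cite{MR3393023}*{Proposition 2.1 and Lemma 3.10}) does not affect the limit; this is routine but is the step where a careless argument could go wrong.
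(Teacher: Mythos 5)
Your proposal is correct and follows essentially the same route as the paper: reduce to $S$ spanned by the first $n$ standard basis vectors, observe that $\ker\bar M = S\oplus\ker\bar{M'}$ with $M'$ a Haar-random alternating $(s-n)\times(s-n)$ matrix, and conclude via \Cref{prop:BKL+_reformulation}. The extra parity and measure-theoretic bookkeeping you supply (including the self-correction about the parity of $s-n$) is sound and merely makes explicit what the paper leaves implicit.
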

\begin{proof}
  Without loss of generality, we may assume that $S$ is generated by
  the first $n$ standard basis vectors, so that $\rA_s$ is the set of all
  alternating $s\times s$ matrices over $\Z_p$ of rank $s-1$ such that
  all entries in the first $n$ columns are divisible by $p$. We then
  have $\ker \bar{M} = S\oplus \ker \bar{M'}$, where $M'$ is the submatrix
  of $M$ obtained by deleting the first $n$ rows and the first $n$ columns,
  and $\bar{M'}$ is its reduction modulo $p$.
  The matrix $M'$ is distributed as a random alternating $(s-n)\times (s-n)$
  matrix, so the assertion follows from \Cref{prop:BKL+_reformulation}.
\end{proof}

For $s\in \Z_{>n}$ and for an alternating $s\times s$ matrix $M$
over $\Z_p$ of rank $s-1$, let $y(M)\in \Z_p^s$ be a generator of $\ker M$.
It is uniquely determined by $M$ up to scalar multiplication by $\Z_p^\times$.
In particular, $\langle y(M) \mod p\rangle\subset \F_p^s$ is a well-defined
line in the kernel of $\bar{M}$, which we may view as an
$(\F_p^s\setminus \{0\})/\F_p^\times$-valued random variable.

\begin{lemma}\label{lem:equidistr}
  Let $s\in \Z_{>n}$, let $U\subset \F_p^s$ be a subspace containing $S$ such that $s-\dim U$ is even,
  and let $\rA_s(U)\subset \rA_s$ be the subset consisting of all matrices
  $M$ such that $\ker(\bar{M}) = U$. Then the pushforward of the probability
  Haar measure on $\rA_s(U)$ under the function $M\mapsto \langle y(M)\mod p\rangle\subset \F_p^s$
  defines the uniform probability distribution on the set of lines in $U$.
\end{lemma}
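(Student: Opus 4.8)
The plan is to exploit the natural congruence action $M\mapsto g^{\mathsf T}Mg$ of a suitable subgroup of $\GL_s(\Z_p)$ on $\rA_s(U)$, and to show that the induced action on lines makes the pushforward measure invariant under all of $\GL(U)$, which forces it to be the uniform measure.

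First I would set up the action. Let $H=\{g\in\GL_s(\Z_p) : \bar g(U)=U\}$, where $\bar g\in\GL_s(\F_p)$ denotes the reduction of $g$ modulo $p$. For $M\in\rA_s(U)$ and $g\in H$, the matrix $g^{\mathsf T}Mg$ is again alternating over $\Z_p$, has the same $\Z_p$-rank $s-1$ as $M$, and satisfies $\ker(\overline{g^{\mathsf T}Mg})=\bar g^{-1}(\ker\bar M)=\bar g^{-1}(U)=U$; hence $g^{\mathsf T}Mg\in\rA_s(U)$, so $H$ acts on $\rA_s(U)$. The map $M\mapsto g^{\mathsf T}Mg$ is a $\Z_p$-linear automorphism of the additive group of alternating $s\times s$ matrices over $\Z_p$ preserving the compact subgroup $\rA_s$, so it is a topological automorphism of $\rA_s$ and therefore preserves its Haar measure; consequently it also preserves the conditional probability measure on $\rA_s(U)$. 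Here one uses that $\rA_s(U)$ is non-null, which is where the hypotheses $S\subseteq U$ and $s-\dim U$ even enter: they guarantee the existence of an alternating form on $\F_p^s$ with radical exactly $U$, and a standard genericity argument (as in \cite{MR3393023}) shows that a positive-measure set of $\Z_p$-lifts then has rank $s-1$.

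Next I would record the equivariance of $\ell\colon M\mapsto\langle y(M)\bmod p\rangle$. The key point is that $\ker_{\Z_p}M$ is a direct summand of $\Z_p^s$: if $v\in\ker M$ lies in $p\Z_p^s$, writing $v=pw$ gives $Mw=0$, so $v\in p\ker M$; hence $\ker M/p\ker M\hookrightarrow\F_p^s$, so the generator $y(M)$ is primitive and $y(M)\bmod p\neq 0$. Reducing $My(M)=0$ modulo $p$ shows $\ell(M)\subseteq\ker\bar M=U$, so $\ell$ indeed takes values in the set of lines of $U$. Moreover $\ker(g^{\mathsf T}Mg)=g^{-1}\ker M$, so $y(g^{\mathsf T}Mg)=u\,g^{-1}y(M)$ for some unit $u\in\Z_p^\times$, and therefore $\ell(g^{\mathsf T}Mg)=\bar g^{-1}\,\ell(M)$. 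It follows that the pushforward under $\ell$ of the conditional Haar measure on $\rA_s(U)$ is invariant under the image of $H$ in $\GL(U)$ via $g\mapsto\bar g|_U$.

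Finally, this image is all of $\GL(U)$: any element of $\GL(U)$ extends to an element of $\GL_s(\F_p)$ stabilising $U$ (acting as the identity on a chosen complement), which lifts to $\GL_s(\Z_p)$ since reduction $\GL_s(\Z_p)\twoheadrightarrow\GL_s(\F_p)$ is surjective, and any such lift lies in $H$. The pushforward measure is thus a $\GL(U)$-invariant probability measure on the set of lines of $U$; as $\GL(U)$ acts transitively on that set, the unique such measure is the uniform one, proving the lemma. The main obstacle is the bookkeeping in the first paragraph — verifying that congruence by $H$ preserves the conditional measure on $\rA_s(U)$ and that $\rA_s(U)$ is non-null — but this is routine once the group action is identified; everything after that is formal.
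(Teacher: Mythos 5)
Your proof is correct and follows essentially the same route as the paper's: the paper likewise picks, for any two lines $\cL_1,\cL_2\subset U$, a matrix $\bar X\in\GL_s(\F_p)$ with $\bar XU=U$ and $\bar X\cL_1=\cL_2$, lifts it to $X\in\GL_s(\Z_p)$, and observes that $M\mapsto X^{\tr}MX$ is a measure-preserving bijection between the corresponding fibres. Your version merely packages this as invariance under the full congruence action of the stabiliser of $U$, and supplies some details (primitivity of $y(M)$, non-nullity of $\rA_s(U)$) that the paper leaves implicit.
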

\begin{proof}
  Let $\cL_1$, $\cL_2$ be two lines in $U$, and let $\bar{X}\in \GL_s(\F_p)$ be such that
  $\bar{X}U=U$ and $\bar{X}\cL_1=\cL_2$. Let $X\in \GL_s(\Z_p)$ be any lift of
  $\bar{X}$. Then the function $\rA_s(U)\to \rA_s(U)$, $M\mapsto X^{\tr}MX$
  defines a measure preserving bijection between the set of $M\in \rA_s(U)$
  with $\langle y(M)\mod p\rangle=\cL_2$ and the set of those with $\langle y(M)\mod p\rangle=\cL_1$.
\end{proof}

\begin{definition}
  For $s\in \Z_{>n}$, denote by $\gamma_n(s)$ the probability that a random element
  $M$ of $\rA_s$ satisfies $\langle y(M) \mod p\rangle=\cL$.
\end{definition}
\begin{proposition}\label{prop:prob_hitting_elt_of_kernel}
  We have
  \[
    \lim_{s\to \infty} \gamma_n(s) = \sum_{r=0}^\infty \frac{(p-1)\alpha(p,2r+m)}{p^{n+2r+m}-1},
  \]
  where the limit is taken over all odd integers $s$.
\end{proposition}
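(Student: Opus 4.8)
The plan is to compute $\gamma_n(s)$ by conditioning on the kernel of $\bar{M}$. Write $\rA_s = \bigsqcup_U \rA_s(U)$, where the union is over all subspaces $U \subseteq \F_p^s$ with $S \subseteq U$ and $s - \dim U$ even (by \Cref{rmrk:minrank} and the rank-$(s-1)$ condition this is the correct set of possibilities, since the kernel of an alternating matrix of corank $1$ over $\F_p$ that arises as a reduction has the expected parity). On each piece $\rA_s(U)$, \Cref{lem:equidistr} says that $\langle y(M) \bmod p\rangle$ is uniformly distributed among the $\frac{p^{\dim U}-1}{p-1}$ lines in $U$. Since $\cL \subseteq S \subseteq U$ for every $U$ appearing in the decomposition, the conditional probability that $\langle y(M)\bmod p\rangle = \cL$ given $M \in \rA_s(U)$ is exactly $\frac{p-1}{p^{\dim U}-1}$. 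Therefore
\[
  \gamma_n(s) = \sum_{U} \Pr\nolimits_{\rA_s}(M \in \rA_s(U)) \cdot \frac{p-1}{p^{\dim U}-1}
             = \sum_{r \geq 0} \Pr\nolimits_{\rA_s}(\dim \ker \bar{M} = n + 2r + m) \cdot \frac{p-1}{p^{n+2r+m}-1},
\]
where in the second equality I have grouped the subspaces $U$ by their dimension, written $\dim U = n + 2r + m$ (note $\dim U \equiv s \pmod 2$ forces $\dim U \equiv n+m \equiv 1 \pmod 2$ when... — more precisely $\dim U - n$ is even iff $\dim U \equiv n \pmod 2$, and combined with $s - \dim U$ even and $s$ odd one gets $\dim U$ odd, consistent with $n + m$ being odd), and used that the term $\frac{p-1}{p^{\dim U}-1}$ depends on $U$ only through $\dim U$.

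Next I would take the limit as $s \to \infty$ over odd integers. By \Cref{lem:prob_distr}, for each fixed $r$ we have $\cP_s(n+2r+m) = \Pr_{\rA_s}(\dim\ker\bar{M} = n+2r+m) \to \alpha(p, 2r+m)$. To pass the limit inside the (infinite) sum I need a domination argument: the tail of the sum $\sum_{r \geq R}$ is bounded, uniformly in $s$, by $\sum_{r \geq R} \Pr_{\rA_s}(\dim\ker\bar{M} = n+2r+m) \cdot \frac{p-1}{p^{n+2r+m}-1} \leq \frac{p-1}{p^{n+2R+m}-1}$, since the probabilities sum to at most $1$. This bound is independent of $s$ and tends to $0$ as $R \to \infty$, which justifies interchanging limit and sum (e.g. by a standard $\varepsilon/2$ argument, or dominated convergence on $\Z_{\geq 0}$ with counting measure, bounding the $r$-th term uniformly by $\min(1, \frac{p-1}{p^{n+2r+m}-1})$ after noting $\cP_s$ is a genuine probability distribution). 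This yields
\[
  \lim_{s \to \infty} \gamma_n(s) = \sum_{r=0}^\infty \frac{(p-1)\,\alpha(p, 2r+m)}{p^{n+2r+m}-1},
\]
as claimed.

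The main point requiring care — the only genuine obstacle — is the bookkeeping in the decomposition $\rA_s = \bigsqcup_U \rA_s(U)$: one must check that every $M \in \rA_s$ has $\bar{M}$ of corank exactly... well, of corank $\geq 1$ with $\ker \bar M \supseteq S$, and that the possible kernels are precisely the subspaces $U$ with $S \subseteq U$ and $s - \dim U$ even. The parity constraint is the subtle bit: it comes from the fact that an alternating matrix over $\F_p$ has even rank, so $\mathrm{corank}(\bar M) = s - \mathrm{rank}(\bar M) \equiv s \pmod 2$, hence $\dim U \equiv s \pmod 2$; combined with $s$ odd and $m \equiv n+1 \pmod 2$ this gives $\dim U - n \equiv s - n \equiv 1 - n \equiv m \pmod 2$, i.e. $\dim U = n + 2r + m$ for some $r \geq 0$ once $\dim U \geq n$ (which holds since $S \subseteq U$). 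One should also confirm that $\Pr_{\rA_s}(\dim \ker \bar M = n+2r+m)$ really equals $\cP_s(n+2r+m)$ in the notation of \Cref{lem:prob_distr} — this is immediate from the definition of $\cP_s$ as the pushforward under $M \mapsto \dim_{\F_p}(\ker \bar M)$. Everything else is a routine limiting argument, and the reduction-to-submatrix trick used in the proof of \Cref{lem:prob_distr} (conjugating so that $S$ is spanned by the first $n$ basis vectors) already shows that the relevant probabilities are genuinely governed by the alternating-matrix distribution $\alpha(p,-)$ of \cite{MR3393023}.
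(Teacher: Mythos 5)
Your argument is correct and is essentially the paper's own proof: condition on $\dim\ker(\bar M)$, apply \Cref{lem:equidistr} to get the conditional probability $\tfrac{p-1}{p^{\dim U}-1}$, apply \Cref{lem:prob_distr} for the limiting distribution, and interchange limit and sum by domination. The paper simply cites the dominated convergence theorem where you spell out the uniform tail bound; the parity bookkeeping you flag as the delicate point is handled identically (even rank of alternating matrices over $\F_p$ plus $s$ odd).
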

\begin{proof}
  For every odd integer $s=2q+1\geq 1$ we may compute $\gamma_n(s)$ by conditioning on
  $\dim\ker(\bar{M})$ for a random $M\in \rA_s$: by \Cref{lem:equidistr} we have
  \[
    \gamma_n(s) = \sum_{d=0}^{q} \frac{\prob_s(\dim\ker(\bar{M})=2d+1)}{(p^{2d+1}-1)/(p-1)},
  \]
  where $\prob_s(\dim\ker(\bar{M})=2d+1)$ denotes the probability that the reduction modulo $p$
  of a random element $M$ of $\rA_s$ has kernel of dimension $2d+1$. \Cref{lem:prob_distr}
  gives the limit of that probability as $s\to \infty$ over odd integers. The result follows
  by combining these two lemmas and the dominated convergence theorem.
\end{proof}

We now evaluate the limit in Proposition \ref{prop:prob_hitting_elt_of_kernel} in a
different way, arguing by induction on $n$, the dimension of $S$.
\begin{theorem}\label{thm:1diml_systematic}
  Let $s>1$ be an odd integer. Then we have
  \[
    \gamma_1(s) = \frac{(p-1)p^{s-1}}{p^s-1}.
  \]
\end{theorem}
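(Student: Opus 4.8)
The plan is to compute $\gamma_1(s)$ directly, since the $n=1$ case is the base of the induction and there is no smaller systematic subspace to reduce to. Fix $s$ odd, $s > 1$, and without loss of generality take $S = \langle v\rangle$ with $v$ the first standard basis vector, and $\cL = S$. Then $\rA_s$ is the set of alternating $s\times s$ matrices $M$ over $\Z_p$ whose first column (equivalently first row, by alternating-ness) is divisible by $p$, equipped with Haar measure, and by Remark \ref{rmrk:minrank} almost all such $M$ have rank $s-1$. The quantity $\gamma_1(s)$ is the probability that a kernel generator $y(M)$ of such an $M$ reduces mod $p$ into the line $\langle v\rangle$. The key observation is that $\langle y(M)\bmod p\rangle = \langle v\rangle$ is equivalent to the assertion that the reduction $\bar M$ has kernel containing $v$ (automatic here) and that $\bar M$ has rank exactly $s-1$, i.e. that $v$ already spans $\ker\bar M$; equivalently, the submatrix $M'$ obtained by deleting the first row and first column has reduction $\bar{M'}$ of full rank $s-1$. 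Indeed, if $\bar{M'}$ is nonsingular then $\ker\bar M$ is exactly $\langle v\rangle$, while if $\bar{M'}$ is singular then $\dim\ker\bar M \geq 3$ (parity), so $y(M)\bmod p$, being a \emph{generic} element of an at-least-$3$-dimensional kernel in the sense made precise by Lemma \ref{lem:equidistr}, fails to land in a fixed line with probability $1$ within that stratum — but more simply, one shows $y(M)\bmod p \in \langle v\rangle$ forces $v\in\ker\bar M$ to be the \emph{whole} reduction-mod-$p$ kernel, which is exactly nonsingularity of $\bar{M'}$.

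So the computation reduces to: $\gamma_1(s) = \Prob\big(\bar{M'}\in \mathrm{GL}_{s-1}(\F_p)\big)$ where $M'$ ranges over alternating $(s-1)\times(s-1)$ matrices over $\Z_p$ with Haar measure (note $M'$ has no congruence constraint, since only the first row/column of $M$ was constrained). Since $s-1$ is even, this is the probability that a Haar-random alternating matrix of even size $s-1$ over $\Z_p$ is nonsingular mod $p$, equivalently that a uniform-random alternating matrix over $\F_p$ of size $s-1 = 2q$ is nonsingular. This is a classical count: the number of nonsingular (equivalently, nondegenerate) alternating $2q\times 2q$ matrices over $\F_p$ is $p^{q(q-1)}\prod_{i=1}^{q}(p^{2i}-1) \cdot$ (appropriate normalization), and dividing by $p^{\binom{2q}{2}} = p^{q(2q-1)}$ gives the probability $\prod_{i=1}^{q}(1 - p^{-(2i-1)})$. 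I would cite or re-derive this via the transitive action of $\mathrm{GL}_{2q}(\F_p)$ on nondegenerate alternating forms with stabilizer $\mathrm{Sp}_{2q}(\F_p)$, using $|\mathrm{Sp}_{2q}(\F_p)| = p^{q^2}\prod_{i=1}^q(p^{2i}-1)$ and $|\mathrm{GL}_{2q}(\F_p)| = p^{\binom{2q}{2}}\prod_{i=1}^{2q}(p^i-1)$, so that the number of nondegenerate forms is $|\mathrm{GL}_{2q}|/|\mathrm{Sp}_{2q}|$ and the probability is that divided by $p^{\binom{2q}{2}}$, which telescopes to $\prod_{j\ \mathrm{odd},\ 1\le j\le 2q-1}(1-p^{-j})$.

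Finally I would reconcile this product with the closed form $\frac{(p-1)p^{s-1}}{p^s-1}$ claimed in the statement. With $s = 2q+1$, the claim is $\prod_{i=1}^{q}\big(1 - p^{-(2i-1)}\big) = \frac{(p-1)p^{2q}}{p^{2q+1}-1}$. I would verify this by a short induction on $q$ (or by recognizing the telescoping): multiplying the right-hand side for $q$ by $\big(1 - p^{-(2q+1)}\big) = \frac{p^{2q+1}-1}{p^{2q+1}}$ gives $\frac{(p-1)p^{2q}}{p^{2q+1}-1}\cdot\frac{p^{2q+1}-1}{p^{2q+1}} = \frac{(p-1)}{p}$, and then one checks $\frac{(p-1)}{p}\cdot\frac{p^{2q+2}}{p^{2q+3}-1}$... hmm, the telescoping is cleaner written as $\prod_{i=1}^q(1-p^{-(2i-1)})$; I would simply check the base case $q=1$ ($1-p^{-1} = \frac{p-1}{p} = \frac{(p-1)p^2}{p^3-1}\cdot\frac{p^3-1}{p^3}$, i.e. verify $\frac{p-1}{p} = \frac{(p-1)p^2}{p^3-1}$ is \emph{false} — so the product is not literally that closed form and the reconciliation needs care) and track the discrepancy; in practice the correct identity to prove is the one in the statement and I would pin down the exact finite-field count so that both sides match, treating the $\mathrm{Sp}/\mathrm{GL}$ index computation as the reliable anchor. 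The main obstacle is precisely this last bookkeeping step: getting the normalization of the alternating-matrix count exactly right (including whether "alternating of rank $s-1$" versus "nondegenerate after deleting a row/column" introduces an extra factor), and confirming that the resulting rational function collapses to $\frac{(p-1)p^{s-1}}{p^s-1}$. Everything upstream — the reduction to the submatrix $M'$ and the identification with a symplectic-form count — is routine given Lemma \ref{lem:equidistr} and Remark \ref{rmrk:minrank}.
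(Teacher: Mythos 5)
There is a genuine gap, and it is conceptual rather than the ``bookkeeping'' issue you suspect at the end. Your reduction asserts that, for $M\in\rA_s$ (so that $\bar M$ has vanishing first row and column and $\ker\bar M=\langle v\rangle\oplus\ker\bar{M'}$), the event $\langle y(M)\bmod p\rangle=\cL$ is equivalent to nonsingularity of $\bar{M'}$. The ``if'' direction is fine, but the ``only if'' direction is false: the mod-$p$ kernel $\ker\bar M$ contains, but need not equal, the reduction of the $\Z_p$-kernel $\ker M$, so $y(M)\bmod p$ can span $\cL$ even when $\dim\ker\bar M\geq 3$. Indeed, by \Cref{lem:equidistr}, conditioned on $\ker\bar M=U$ with $\dim U=2d+1\geq 3$, the line $\langle y(M)\bmod p\rangle$ is uniform among the lines of $U$ and hence equals $\cL$ with probability $(p-1)/(p^{2d+1}-1)>0$, not $0$ as you assert for these strata. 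The correct stratified identity is $\gamma_1(s)=\sum_{d\geq 0}\prob\big(\dim\ker\bar{M'}=2d\big)\cdot\tfrac{p-1}{p^{2d+1}-1}$ (this is the finite-$s$ computation inside \Cref{prop:prob_hitting_elt_of_kernel}), and your quantity is only its $d=0$ term. For $s=3$ this gives $\big(1-\tfrac1p\big)\cdot 1+\tfrac1p\cdot\tfrac{p-1}{p^3-1}=\tfrac{(p-1)p^2}{p^3-1}$, which is the claimed value and strictly exceeds your $1-\tfrac1p$. So the mismatch you detected at $q=1$ is not a normalization error in the symplectic count --- your probability $\prod_{i=1}^{q}\big(1-p^{-(2i-1)}\big)$ that a random alternating $(s-1)\times(s-1)$ matrix over $\F_p$ is nonsingular is correct --- it is exactly the discarded higher-corank strata, and no amount of renormalizing the $\GL/\mathrm{symplectic}$ index will recover them.

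The paper's proof sidesteps the stratification entirely by applying Bayes's formula over \emph{all} alternating $s\times s$ matrices of rank $s-1$: $\gamma_1(s)$ is the ratio of $\prob\big(\langle y(M)\bmod p\rangle=\cL\big)$, which equals $(p-1)/(p^s-1)$ because the kernel line of an unconstrained random $M$ is equidistributed among all lines of $\F_p^s$, to $\prob\big(\cL\subset\ker\bar M\big)=p^{-(s-1)}$, the latter because $\cL\subset\ker\bar M$ imposes one index-$p$ condition on each of the first $s-1$ rows, the condition on the last row being automatic since $M$ is alternating. If you want to salvage your route you would need the full corank distribution of a random alternating matrix over $\F_p$ of even size and then sum the series above; that does reproduce $\tfrac{(p-1)p^{s-1}}{p^s-1}$, but it is strictly more work than the two-line Bayes computation.
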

\begin{proof}
  Let $\Alt_s(\Z_p)$ be the set of alternating $s\times s$ matrices over $\Z_p$, equipped
  with Haar measure. By \cite{MR3393023}*{Proposition 2.1 and Lemma 3.10},
  the subset of those matrices in $\Alt_s(\Z_p)$ that have rank $s-1$ has mass $1$.
  Let $\Alt_s'(\Z_p)\subset \Alt_s(\Z_p)$ be that subset.
  The probability $\gamma_1(s)$ is the conditional probability
  that a random element $M$ of $\Alt_s'(\Z_p)$ satisfies $\langle y(M)\textup{ mod } p\rangle=\cL$
  under the condition that $\cL\subset \ker\bar{M}$. Applying Bayes's formula, we have
  \[
    \gamma_1(s) = \frac{\prob(\langle y(M)\textup{ mod } p\rangle=\cL \text{ and }\cL\subset \ker\bar{M})}{\prob(\cL\subset \ker\bar{M})}.
  \]
  To evaluate the numerator, notice that the condition $\langle y(M)\textup{ mod }p\rangle=\cL$ already implies
  that $\cL\subset \ker\bar{M}$, so that the numerator is simply equal to the probability that
  a random $M\in \Alt_s'(\Z_p)$ satisfies $\langle y(M)\textup{ mod } p\rangle=\cL$. But for random $M\in \Alt_s'(\Z_p)$,
the line $\langle y(M)\textup{ mod }p\rangle$ is easily seen to be equidistributed among all
  lines in $\F_p^s$, so the numerator is equal to $(p-1)/(p^s-1)$. To evaluate the
  denominator, note that the condition that $\cL\subset \ker\bar{M}$ is equivalent
  to every row of $\bar{M}$ being in the orthogonal complement of $\cL$, i.e. to every
  row of $M$ lying in a certain index $p$ subgroup of $\Z_p^s$. These conditions for
  the first $s-1$ rows are independent of each other, while for the last row it is then
  automatic, since $M$ is alternating. Thus, the denominator is $p^{-(s-1)}$, and the
  proof is complete.
\end{proof}

\begin{theorem}\label{thm:arbitrarydim_systematic}
  Suppose that $n>1$, and let $s>n$ be an odd integer. Then we have
  \[
    \gamma_n(s) = \frac{p^{s-n}(p-1)}{p^s-p^{n-1}}\cdot\left(1-\frac{p^{n-1}-1}{p-1}\gamma_{n-1}(s)\right).
  \]
\end{theorem}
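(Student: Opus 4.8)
\textbf{Proof proposal for \Cref{thm:arbitrarydim_systematic}.} The plan is to mimic the strategy of \Cref{thm:1diml_systematic} but to peel off one dimension of the systematic subspace $S$ at a time, using a conditioning argument on the extra constraint. Fix the odd integer $s > n$, and without loss of generality take $S$ to be the span of the first $n$ standard basis vectors $f_1,\dots,f_n$ of $\F_p^s$, with $\cL = \langle f_1\rangle$. Write $S' = \langle f_2,\dots,f_n\rangle$, an $(n-1)$-dimensional subspace, and let $\rA_s'$ be the corresponding set of alternating matrices whose kernel mod $p$ contains $S'$; by definition $\gamma_{n-1}(s)$ is the probability, for Haar-random $M \in \rA_s'$, that $\langle y(M)\bmod p\rangle = \cL$. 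The set $\rA_s$ is the subset of $\rA_s'$ cut out by the additional condition $f_1 \in \ker\bar M$, equivalently that every row of $\bar M$ is orthogonal to $f_1$. The key point is that for $M \in \rA_s$ we always have $\cL = \langle f_1\rangle \subseteq \ker\bar M$, so if moreover $\dim\ker\bar M$ were exactly $1 + (n-1) = n$ — i.e. $\ker\bar M = S$ — then $\langle y(M)\bmod p\rangle$ would necessarily equal $\cL$; the difficulty is precisely that $\ker\bar M$ can be strictly larger than $S$.

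First I would set up Bayes's formula inside the ambient space $\rA_s'$ (equipped with Haar measure, with the rank-$(s-1)$ locus of full measure as in \cite{MR3393023}*{Proposition 2.1, Lemma 3.10}):
\[
  \gamma_n(s) \;=\; \frac{\prob_{M\in\rA_s'}\big(\langle y(M)\bmod p\rangle = \cL \ \text{and}\ f_1\in\ker\bar M\big)}{\prob_{M\in\rA_s'}\big(f_1\in\ker\bar M\big)}.
\]
For the denominator, the condition $f_1 \in \ker\bar M$ is $p^{s-1}$ independent linear conditions on the rows of $\bar M$ — one orthogonality condition on each of the first $s-1$ rows, the last being automatic by alternation, and one checks these remain "equidistributed" conditions after imposing membership in $\rA_s'$, since $\rA_s'$ is itself cut out by congruence conditions only on the columns/rows indexed by $S'$ — giving denominator $p^{-(s-n)}$ after accounting for the overlap with the defining conditions of $\rA_s'$; this bookkeeping of how the $f_1$-orthogonality conditions interact with the $S'$-in-kernel conditions is the routine-but-delicate part. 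For the numerator, I would split according to whether $\langle y(M)\bmod p\rangle = \cL$ occurs because $\ker\bar M$ is exactly $S$ (the "good" case, fully inside $\rA_s'$, contributing via $\gamma_{n-1}$-type counting) versus the contributions already counted in $\gamma_{n-1}(s)$ where the kernel is larger; this is where the correction term $1 - \tfrac{p^{n-1}-1}{p-1}\gamma_{n-1}(s)$ will emerge, the factor $\tfrac{p^{n-1}-1}{p-1}$ being the number of lines in $S'$ that must be excluded.

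A cleaner route, which I would actually pursue to avoid the inclusion–exclusion bookkeeping, is the following. Work directly in $\rA_s$. Condition on $U := \ker\bar M \supseteq S$; by \Cref{lem:equidistr}, given $\ker\bar M = U$ the line $\langle y(M)\bmod p\rangle$ is uniform among the $\tfrac{p^{\dim U}-1}{p-1}$ lines of $U$, so $\langle y(M)\bmod p\rangle = \cL$ with probability $\tfrac{p-1}{p^{\dim U}-1}$ (note $\cL \subseteq S \subseteq U$ always). Hence $\gamma_n(s) = \sum_U \prob_{\rA_s}(\ker\bar M = U)\cdot \tfrac{p-1}{p^{\dim U}-1}$. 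The same identity for $\rA_s'$ reads $\gamma_{n-1}(s) = \sum_{U \supseteq S'}\prob_{\rA_s'}(\ker\bar M = U)\cdot \tfrac{p-1}{p^{\dim U}-1}$, where now $\cL \subseteq S'$ must hold, which is automatic. I would then relate $\prob_{\rA_s}(\ker\bar M = U)$ to $\prob_{\rA_s'}(\ker\bar M = U)$ for $U \supseteq S$ via $\rA_s \subseteq \rA_s'$ and the transitivity of the $\GL$-action on lines (as in \Cref{lem:equidistr}), together with the elementary count that among subspaces $U \supseteq S'$ of a given dimension, the fraction containing $f_1$ as well is controlled explicitly; summing over $U$ and separating the term $U = S$ from the rest yields exactly the claimed recursion. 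The main obstacle, in either approach, is keeping careful track of how the extra linear condition "$f_1 \in \ker\bar M$" intersects the conditions already defining $\rA_s'$ — in particular verifying that the relevant conditional measures are still governed by the uniform/equidistribution statements of \Cref{lem:equidistr} and of \cite{MR3393023} rather than being skewed by the alternation constraint. Once that is in hand, the algebraic identity
\[
  \gamma_n(s) = \frac{p^{s-n}(p-1)}{p^s - p^{n-1}}\Bigl(1 - \frac{p^{n-1}-1}{p-1}\,\gamma_{n-1}(s)\Bigr)
\]
follows by collecting terms.
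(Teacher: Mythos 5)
Your overall strategy in the first route --- decompose $S=\cL\oplus S'$ with $\dim S'=n-1$, apply Bayes's formula so that the denominator becomes the conditional probability $\prob\big(f_1\in\ker\bar{M}\;\big|\;S'\subseteq\ker\bar{M}\big)=p^{-(s-n)}$, and obtain the correction term by removing the contribution of the $\tfrac{p^{n-1}-1}{p-1}$ lines of $S'$ --- is exactly the paper's proof, and your denominator computation is correct. But the numerator step is misdescribed, and the description omits the one ingredient that produces the prefactor $\tfrac{p-1}{p^s-p^{n-1}}$. The correct partition is not ``whether $\ker\bar{M}$ is exactly $S$'': one partitions the matrices with $S'\subseteq\ker\bar{M}$ according to whether $y(M)\bmod p$ lies in $S'$ or not. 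In the first case $\langle y(M)\bmod p\rangle\neq\cL$ since $\cL\cap S'=0$, and by the very definition of $\gamma_{n-1}(s)$ each of the $\tfrac{p^{n-1}-1}{p-1}$ lines of $S'$ is hit with probability $\gamma_{n-1}(s)$, so this case has total mass $\tfrac{p^{n-1}-1}{p-1}\gamma_{n-1}(s)$. In the second case one needs that $\langle y(M)\bmod p\rangle$ is equidistributed among the $\tfrac{p^s-p^{n-1}}{p-1}$ lines of $\F_p^s\setminus S'$ (a symmetry argument as in \Cref{lem:equidistr}, conjugating by lifts of elements of $\GL_s(\F_p)$ that stabilise $S'$); this is where $\tfrac{p-1}{p^s-p^{n-1}}$ comes from, and it appears nowhere in your sketch. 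Without it the recursion does not close.

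Your ``cleaner route'' contains a genuine error: with $\cL=\langle f_1\rangle$ and $S'=\langle f_2,\dots,f_n\rangle$ the containment $\cL\subseteq S'$ is false by construction --- the whole point of the recursion is that the target line of $\gamma_n$ is \emph{not} a line of $S'$, whereas the line implicit in $\gamma_{n-1}(s)$ is a different line, lying inside $S'$. The route can nonetheless be completed, and it is genuinely different from the paper's: one needs (i) that $\prob_{\rA_s'}(\ker\bar{M}=U)$ depends only on $\dim U$ among the $U\supseteq S'$ (again a $\GL$-symmetry argument, now acting on subspaces rather than lines, so strictly more than \Cref{lem:equidistr} gives), and (ii) the Gaussian-binomial identity that the number of $d$-dimensional $U\supseteq S$ divided by the number of $d$-dimensional $U\supseteq S'$ equals $\tfrac{p^d-p^{n-1}}{p^s-p^{n-1}}$; writing $\tfrac{p^d-p^{n-1}}{p^d-1}=1-\tfrac{p^{n-1}-1}{p^d-1}$ and using $\sum_U\prob_{\rA_s'}(\ker\bar{M}=U)=1$ then yields the recursion term by term. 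As written, neither of your two routes isolates these inputs, so the proposal stops short of a proof; but the skeleton of the first route is the paper's argument and is sound.
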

\begin{proof}
  Let $S'\subset S$ be a subspace such that $S=\cL\oplus S'$. Let $\Alt'_s(\Z_p)$
  be the set of all alternating $s\times s$ matrices over $\Z_p$ of rank $s-1$,
  as in the proof of \Cref{thm:1diml_systematic}. Then, like in that proof,
  $\gamma_n(s)$ is the conditional probability
  that a random element $M$ of $\Alt_s'(\Z_p)$ satisfies $\langle y(M)\textup{ mod }p\rangle=\cL$
  under the condition that $S\subset \ker\bar{M}$. Applying Bayes's formula,
  we have
  \begin{align*}
    \gamma_n(s) & =  \frac{\prob(\langle y(M)\textup{ mod } p\rangle=\cL \text{ and }S\subset \ker\bar{M})}{\prob(S\subset \ker\bar{M})}\\
                & =  \frac{\prob(\langle y(M)\textup{ mod } p\rangle=\cL \text{ and }S'\subset \ker\bar{M})}{\prob(S\subset \ker\bar{M})}.
  \end{align*}
  We claim that the denominator
  is equal to $p^{-\sum_{i=1}^n(s-i)} = p^{-sn + \binom{n+1}{2}}$.
  Indeed, we may assume, without loss of generality, that $S$ is generated
  by the first $n$ standard basis vectors, in which case the condition
  that $S\subset \ker\bar{M}$ is equivalent to the condition that
  the first $n$ columns of $M$ have all entries divisible by $p$. Taking
  into account the assumption that $M$ be alternating, these are
  $(s-1)+(s-2)+\ldots+(s-n)$ independent conditions, each having probability $1/p$.

Again applying Bayes's formula, we deduce that the numerator is equal to
  \begin{eqnarray*}
    \prob\big(\langle y(M)\textup{ mod } p\rangle=\cL \quad \big| \quad S'\subset \ker\bar{M}\big) \times \prob\big(S'\subset \ker\bar{M}\big).
  \end{eqnarray*}
  The second of these two factors is equal to $p^{-s(n-1) + \binom{n}{2}}$ by the same
  reasoning as was just applied with $S$ in place of $S'$.
  To evaluate the first factor, we may partition the set of $M\in \Alt_s'(\Z_p)$ that satisfy $S'\subset \ker\bar{M}$ into
  those for which $y(M) \textup{ mod } p$ is in $S'$, and those for which $y(M)\textup{ mod } p$ is not in $S'$. Among the
  former, $\langle y(M) \textup{ mod } p\rangle$ is equally likely to be any of the lines
  in $S'$, of which there are $(p^{n-1}-1)/(p-1)$, and the probability for each one of these is,
  by definition, $\gamma_{n-1}(s)$.
  Among the latter, $\langle y(M)\textup{ mod }p \rangle$ is equidistributed among the lines
  in $\F_p^s\setminus S'$, of which there are $(p^s-p^{n-1})/(p-1)$.
  Combining everything we have just said, we deduce that we have
  \begin{eqnarray*}
    \gamma_n(s)& = & p^{sn-\binom{n+1}{2}-s(n-1)+\binom{n}{2}}\frac{p-1}{p^s-p^{n-1}}\cdot\left(1-\frac{p^{n-1}-1}{p-1}\gamma_{n-1}(s)\right)\\
    & = &
    \frac{p^{s-n}(p-1)}{p^s-p^{n-1}}\cdot\left(1-\frac{p^{n-1}-1}{p-1}\gamma_{n-1}(s)\right),
    \end{eqnarray*}
  as claimed.
\end{proof}

\begin{corollary}\label{cor:explicit_prob}
  For $n\in \Z_{\geq 1}$, let $m\in\{0,1\}$ be such that $m\equiv 1+n\pmod{2}$, and define
  \[
    \beta_n = \sum_{r=0}^\infty \frac{(p-1)\alpha(p,2r+m)}{p^{n+2r+m}-1}.
  \]
  Then we have 
  $\beta_1 = 1-\tfrac{1}{p}$, and for all $n>1$ we have
  \begin{equation} \label{beta_recurrence}
    \beta_n = \frac{p-1}{p^n}\cdot\left(1-\frac{p^{n-1}-1}{p-1}\beta_{n-1}\right).
  \end{equation}
\end{corollary}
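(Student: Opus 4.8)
The plan is to prove \Cref{cor:explicit_prob} by taking the limit $s\to\infty$ over odd integers in the identities of \Cref{thm:1diml_systematic} and \Cref{thm:arbitrarydim_systematic}. First I would observe that, by \Cref{prop:prob_hitting_elt_of_kernel}, we have $\lim_{s\to\infty}\gamma_n(s)=\beta_n$ for every $n\in\Z_{\geq 1}$, where the limit runs over odd integers $s>n$ and the $m$ attached to $n$ is the one in the statement (note that the parity of $s-\dim S = s-n$ is indeed even precisely when $m\equiv 1+n\pmod 2$, which is the regime in which \Cref{lem:prob_distr} applies). So the corollary is just the $s\to\infty$ limit of two already-proven finite-$s$ identities, provided the relevant limits exist and can be passed through the arithmetic operations involved.

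For the base case, \Cref{thm:1diml_systematic} gives $\gamma_1(s)=\frac{(p-1)p^{s-1}}{p^s-1}=\frac{p-1}{p-p^{1-s}}$ for every odd $s>1$; letting $s\to\infty$ the denominator tends to $p$, so $\beta_1=\lim_s\gamma_1(s)=\frac{p-1}{p}=1-\tfrac1p$, as claimed. For the recursion, \Cref{thm:arbitrarydim_systematic} gives, for $n>1$ and odd $s>n$,
\[
\gamma_n(s)=\frac{p^{s-n}(p-1)}{p^s-p^{n-1}}\left(1-\frac{p^{n-1}-1}{p-1}\,\gamma_{n-1}(s)\right)
=\frac{p-1}{p^n-p^{n-1+s-s}}\cdot\frac{1}{1-p^{n-1-s}}\left(1-\frac{p^{n-1}-1}{p-1}\,\gamma_{n-1}(s)\right);
\]
more cleanly, dividing numerator and denominator of the first factor by $p^s$ yields $\frac{p^{s-n}(p-1)}{p^s-p^{n-1}}=\frac{p^{-n}(p-1)}{1-p^{n-1-s}}$, which tends to $\frac{p-1}{p^n}$ as $s\to\infty$ since $n-1-s\to-\infty$. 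Since $\gamma_{n-1}(s)\to\beta_{n-1}$ by the inductive hypothesis (equivalently, directly by \Cref{prop:prob_hitting_elt_of_kernel}), and all the quantities in sight are bounded, the product of limits equals the limit of the product, giving
\[
\beta_n=\frac{p-1}{p^n}\left(1-\frac{p^{n-1}-1}{p-1}\,\beta_{n-1}\right),
\]
which is exactly \eqref{beta_recurrence}.

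The only point requiring a word of care — and the closest thing to an obstacle, though a mild one — is the legitimacy of interchanging the limit $s\to\infty$ with the finite algebraic manipulations: this is immediate because for each fixed $n$ the functions $s\mapsto\gamma_n(s)$ and the explicit rational expressions in $p^{s}$ appearing above are, for $s$ large, uniformly bounded away from the poles of the relevant rational functions (the denominators $1-p^{n-1-s}$ are bounded below by, say, $1/2$ once $s>n$), so each limit exists and the usual limit laws apply. Alternatively, one may avoid limits entirely: substituting $\gamma_{n-1}(s)=\frac{p-1}{p^n-p^{n-1}}\big(\tfrac{p^s-p^{n-1}}{p^{s-n}}-\cdots\big)$-type closed forms obtained by iterating \Cref{thm:arbitrarydim_systematic} from the base case \Cref{thm:1diml_systematic}, one checks by induction on $n$ that $\gamma_n(s)=\frac{(p-1)p^{\,s-n}\,Q_n(p^{-s})}{p^{s}-p^{n-1}}$ for a polynomial $Q_n$ with $Q_n(0)$ equal to the right-hand side of the asserted recursion applied formally, and then read off $\beta_n=\lim_s\gamma_n(s)$; but the limit-law argument is shorter and I would present that.
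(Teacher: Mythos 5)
Your proposal is correct and matches the paper's own (very brief) proof, which likewise deduces the corollary by letting $s\to\infty$ over odd integers in \Cref{thm:1diml_systematic} and \Cref{thm:arbitrarydim_systematic} and invoking \Cref{prop:prob_hitting_elt_of_kernel} to identify $\lim_s\gamma_n(s)$ with $\beta_n$. The only blemish is the garbled intermediate display in your recursion step, but your subsequent ``dividing numerator and denominator by $p^s$'' computation is the correct one and the limit argument goes through exactly as you say.
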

\begin{proof}
  This is an immediate consequence of \Cref{prop:prob_hitting_elt_of_kernel}, \Cref{thm:1diml_systematic}
  and \Cref{thm:arbitrarydim_systematic}.
\end{proof}

\begin{remark} \label{rem:explicit_prop}
Let us temporarily write $\beta_n'=(p-1)^{-1}p^{\frac{1}{2}n(n+1)}\beta_n$. Then the recurrence \eqref{beta_recurrence} gives
\[\beta_n'=p^{\frac{1}{2}n(n-1)}-(p^{n-1}-1)\beta_{n-1}',\]
along with $\beta'_1=1$. Successively substituting this recurrence into itself and rewriting the final answer in terms of $\beta_n$, we find 
\[\beta_n=\frac{p-1}{p^{\frac{1}{2}n(n+1)}}\sum_{i=0}^{n-1}(-1)^ip^{\frac{1}{2}(n-i)(n-i-1)}\prod_{j=1}^i(p^{n-j}-1).\]
\end{remark}

By combining \Cref{conj:frequencyRegular} with \Cref{cor:explicit_prob}, \Cref{rmrk:sparseRk2},
and the discussion in \S \ref{sec:HassePrinc}, we arrive at \Cref{conj:intro_prob_explicit}.

In light of \Cref{lem:regular_iff_norm} and \Cref{norm from kummer map} we are also led to the following conjecture.
\begin{conjecture}
  Suppose that $E(\Q)$ has rank $1$.
  With the same notation as in \Cref{conj:frequencyRegular}, and still under Assumption \ref{ass:simple_gamma_mod_elliptic},
  we have
\begin{eqnarray*}
  \lim_{X\rightarrow \infty}\frac{\#\{d\in \FbL(X):E(\Q(\sqrt{d}))/E(\Q(\sqrt{d}))_{\tors} \cong \Z[G]\}}{\#\FbL(X)}
  =\begin{cases}1/2,&n_{b}=1;\\ 1/8,&n_{b}=2;\\ 5/64,&n_{b}=3; \\29/1024,&n_{b}=4.\end{cases}
\end{eqnarray*}
\end{conjecture}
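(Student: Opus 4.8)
The plan is to derive this conjecture from \Cref{conj:frequencyRegular} together with the structural results of \Cref{sec:galmodMordellWeil}. Write $K=\Q(\sqrt d)$ and $L=\Q(\tfrac12 P)$. By \Cref{rmrk:sparseRk2} --- conditionally on the standard conjecture that twists of rank $\geq 2$ are sparse, and on finiteness of the relevant $2$-primary Shafarevich--Tate groups --- for $100\%$ of $d\in\FbL$ one has $\rk E_d(\Q)=1$ and $\sha(E_d/\Q)[2]=0$, so that $\delta_d$ is an isomorphism onto $\Sel_2(E_d/\Q)$ and $\im\delta_d=\delta_d(E_d[2])\oplus\langle\delta_d(R)\rangle$ for a Mordell--Weil generator $R$ of $E_d(\Q)$.

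The main step is the reduction
\[
  E(K)/E(K)_{\tors}\cong\Z[G]\iff \delta(P)\in\im\delta_d,
\]
valid for all but a density-zero set of $d\in\FbL$. The Mordell--Weil generators of $E(\Q)$ are $\pm P$ shifted by torsion, and since $\delta$ kills odd torsion and $E$ has no rational point of order $4$ (by \Cref{prop:no4isogeny}), their Kummer images make up the coset $\delta(P)+\delta(E[2])$ of the $2$-dimensional space $\delta(E[2])$, so four classes. For each fixed $Q\in E[2]$ the condition $K\subseteq\Q(\tfrac12(P+Q))$ forces $d$ into one of finitely many classes modulo squares, hence holds for only finitely many $d\in\FbL$; granting this, \Cref{lem:regular_iff_norm} shows $E(K)/E(K)_{\tors}\cong\Z[G]$ iff some point of $\delta(P)+\delta(E[2])$ is a norm from $E(K)$, which by \Cref{norm from kummer map} (and $\delta(P+Q)\in\im\delta$) means some point of that coset lies in $\im\delta_d$. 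The crux is that, for $100\%$ of $d$, only $\delta(P)$ itself can: \Cref{lem:systematic_subspace_main_lemma} gives $\Sel_2(E_d/\Q)\cap H^1(K_\Sigma/\Q,E[2])=\cS_b$ for $100\%$ of $d$, while each $\delta(Q)$ with $0\neq Q\in E[2]$ lies in $H^1(K_\Sigma/\Q,E[2])$ but --- assuming the genericity hypothesis $\cS_b\cap\delta(E[2])=0$ --- not in $\cS_b$, so $\delta(P+Q)=\delta(P)+\delta(Q)\notin\cS_b$ and therefore $\delta(P+Q)\notin\Sel_2(E_d/\Q)\supseteq\im\delta_d$.

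Granting the reduction, the limit in question equals $\lim_{X\to\infty}\#\{d\in\FbL(X):\delta(P)\in\im\delta_d\}/\#\FbL(X)$, which is precisely the limit in \Cref{conj:frequencyRegular}, namely $\sum_{r\geq 0}\alpha(2r+m_b)/(2^{n_b+2r+m_b}-1)$. By \Cref{prop:intro_explicitsums} (equivalently the $p=2$ case of \Cref{cor:explicit_prob}) this equals $\beta_{n_b}$, where $\beta_1=\tfrac12$ and $\beta_n=2^{-n}\bigl(1-(2^{n-1}-1)\beta_{n-1}\bigr)$ for $n\geq 2$; the recurrence yields $\beta_1=\tfrac12$, $\beta_2=\tfrac18$, $\beta_3=\tfrac{5}{64}$, $\beta_4=\tfrac{29}{1024}$. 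Since $1\leq n_b\leq 4$ by \Cref{def:systematicSub}, these four cases are exhaustive, which is the claim.

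The real obstacle is that the argument is conditional throughout: \Cref{conj:frequencyRegular} is itself a conjecture, resting on the equidistribution/random-matrix heuristic of \Cref{sec:randMatrix}, and an unconditional proof would require extending \Cref{thm:intro_distr} from $2$-Selmer to $2^n$-Selmer groups over the Frobenian families $\FbL$, in the spirit of Koymans--Pagano's proof of Stevenhagen's conjecture. Within the reduction, the delicate point is the genericity hypothesis $\cS_b\cap\delta(E[2])=0$ used to eliminate the three non-trivial Mordell--Weil generators $P+Q$: without it one must instead model the distinguished Selmer line as varying over the coset $\delta(P)+(\cS_b\cap\delta(E[2]))$, which produces a strictly larger probability depending on $\dim(\cS_b\cap\delta(E[2]))$ in addition to $n_b$, so that the clean list above is the answer only in the generic situation.
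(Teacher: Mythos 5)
Your derivation is correct and matches the paper's own (necessarily heuristic) route: the paper is ``led to'' this conjecture precisely by combining the norm criterion of \Cref{lem:regular_iff_norm} and \Cref{norm from kummer map} with \Cref{conj:frequencyRegular}, and then evaluating the infinite sum via the recurrence of \Cref{cor:explicit_prob} to get $1/2,\,1/8,\,5/64,\,29/1024$. Your additional care in ruling out the other Mordell--Weil generators $P+Q$, $0\neq Q\in E[2]$ --- via \Cref{lem:systematic_subspace_main_lemma} and the hypothesis $\cS_b\cap\delta(E[2])=0$ --- goes beyond what the paper makes explicit, and the caveat you raise (that without this genericity the distinguished Selmer line should be compared against the whole coset $\delta(P)+(\cS_b\cap\delta(E[2]))$, changing the predicted constant) is a legitimate refinement of the conjecture as stated rather than an error in your argument.
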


\subsection{Examples with $n_{b}>1$}
The following example shows that $n_b$ really can be greater than $1$.
Consider the elliptic curve \[E:y^2=(x+1505)(x+712)(x-2216),\]
which has LMFDB label $145119.d2$. Then we have $E(\Q)\cong \Z\oplus(\Z/2\Z)^2.$
The point $P=(3188,133380)\in E(\Q)$ is a non-divisible point of infinite order.
Write $P_1=(-1505,0)$, $P_2=(-712,0)\in E[2]$.
Viewing the Kummer map as taking values in $(\mathbb{Q}^{\times}/\mathbb{Q}^{\times 2})^2$, as in Section \ref{sec:assumptions}, we have 
\[\delta(P)=(13, 3\cdot 13), \quad \quad \delta(P_1)=(13\cdot 61, -13\cdot 61)\quad\textup{ and }\quad \delta(P_2)=(13\cdot 61, -3\cdot 13).\]
We see from this that $\Q(\frac{1}{2}P)$ is not linearly disjoint from $\Q(E[4])$,
so that \Cref{prop:no4isogeny}\ref{item:disjointfields} does not apply,
but one can check that \Cref{assumption:simple_gamma_mod} is nevertheless satisfied for $L=\Q(\tfrac12 P)$.

We have $\Sigma=\{\infty, 2, 3, 13, 61\}$. The local images are as follows. 
\begin{eqnarray*}
  \delta_{\infty}(E(\R)/2E(\R)) &=& \langle \delta_{\infty}(P_1)=(1,-1)\rangle,\\
  \delta_{2}(E(\Q_2)/2E(\Q_2)) &=& \langle \delta_{2}(P)=(5,-1),\delta_{2}(P_1)=(1,-1),\delta_2(5/4,*)=(1,5)\rangle,\\
  \delta_{3}(E(\Q_3)/2E(\Q_3)) &=& \langle \delta_{3}(P)=(1,3),\delta_{3}(P_1)=(1,-1)\rangle,\\
  \delta_{13}(E(\Q_{13})/2E(\Q_{13})) &=& \langle \delta_{13}(P)=(13,13),\delta_{13}(5,*)=(2,2)\rangle,\\
  \delta_{61}(E(\Q_{61})/2E(\Q_{61})) &=& \langle \delta_{61}(P_1)=(61,61),\delta_{61}(P_2)=(61,1)\rangle.
\end{eqnarray*}

We have
$\left(\frac{-1}{13}\right) =\left(\frac{3}{13}\right)=\left(\frac{61}{13}\right)=1$
and $\left(\frac{13}{61}\right) =\left(\frac{3}{61}\right)=\left(\frac{-1}{61}\right)  =1$.
From this we conclude that $S_{1}=\langle\delta(P)=(13, 3\cdot 13),(1,3)\rangle$,
so that $n_{1}=2$.
Moreover, if we replace $P$ by $P+P_1$ or by $P+P_2$, then \Cref{assumption:simple_gamma_mod} is again satisfied,
and it can be deduced from the above calculations that in both cases one has $n_1 = 3$.

\addtocontents{toc}{\protect\addvspace{2em}}
\bookmarksetup{startatroot}
\appendix
\section{Stevenhagen's heuristic via Selmer groups}\label{sec:stevenhagen}

In this Appendix we recall Stevenhagen's conjecture on the solubility of the negative Pell
equation, and explain the relationship to the results and conjectures in this
paper \cite{stevenhagen1993}. Whilst most of the material in this section is 
well known, we believe that the particular formulation that we give, 
specifically the reinterpretation in terms of Selmer groups in quadratic twist
families, is novel, and is suggestive of the elliptic curve analogue presented
in the body of the paper.

If $d$ is a positive square-free integer,
then the \emph{Negative Pell equation with parameter $d$} is the equation
\begin{equation}\label{negative pell}
x^2-dy^2=-1,
\end{equation}
to which one seeks solutions
$x$, $y$ in integers. Fix, for the remainder of the section,
a square-free positive integer $d$, and let $F=\Q(\sqrt{d})$.
Clearly, if \Cref{negative pell} is soluble, then $-1$ is a norm
from $F_d$, equivalently every odd prime number $p$ dividing $d$
satisfies $p\equiv 1\pmod{4}$.

This already shows that \Cref{negative pell} with parameter $d'$
is soluble for $0\%$ of square-free positive integers $d'$
asymptotically. Specifically, writing $\cF$ for 
the set of square-free integers $d'>0$ all of whose odd prime divisors are 
congruent to $1\pmod{4}$, a variation of Landau's theorem (see e.g.
\cite[\S1]{MR2726105}) implies that
\[
\#\{d'\in \cF~~: ~~d'<X\}\sim \frac{cX}{\log(X)^{1/2}},
\]
where $c=\frac{3\pi}{2}\prod_{p\equiv 1\pmod 4}(1-p^{-2})^{1/2}\approx0.464...$.
In light of this observation, it is natural to ask not for what proportion
of \emph{all} $d'$ \Cref{negative pell} with parameter $d'$ is soluble,
but for what proportion of $d'\in \cF$. The expected answer to this
question is given by a beautiful conjecture of Stevenhagen.

\begin{conjecture}[Stevenhagen] \label{Stevenhagen's conjecture}
  Let $\cF_{\textup{Pell}}$ denote the set of square-free integers $d'$
  for which \Cref{negative pell} with parameter $d'$ is soluble. Then the limit
\[
\lim_{X\rightarrow \infty}\frac{\#\{d'\in \cF_{\textup{Pell}}~~: ~~d'<X\}}{\#\{d'\in \cF~~:~~d'<X\}}
\]
exists and is equal to the irrational number
\[
c_{\textup{Pell}}=1-\prod_{j=1}^{\infty}(1-2^{-2j+1})=0.58057....
\]
\end{conjecture}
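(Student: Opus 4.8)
The plan is to transpose the three-step strategy of Part~1 to the setting of the norm-one torus $T_d$ attached to $F=\Q(\sqrt d)$, whose $\Q$-points are the elements of $F^\times$ of norm $1$. \textbf{Step 1 (reformulation via a Selmer group).} For $d\in\cF$, equation~\eqref{negative pell} is soluble if and only if the fundamental unit $\varepsilon_d$ of $F$ has norm $-1$, equivalently $\Cl^+(F)=\Cl(F)$. A $2$-descent on $T_d$ produces a finite group $\Sel(d)\subseteq H^1(\Q,T_d[2])$ --- the torus analogue of $\Sel_2(E_d/\Q)$ --- fitting into an exact sequence that relates it to $\Cl^+(F)[2]$ and to the local conditions at the primes dividing $d$; the class $[-1]\in\Q^\times/\Q^{\times2}$ is a distinguished element of $\Sel(d)$, lying there precisely because for $d\in\cF$ the value $-1$ is a norm everywhere locally, and \eqref{negative pell} is soluble if and only if $[-1]$ lies in the image of the Kummer map of $T_d$. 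Exactly as in Section~\ref{sec:syst_subspace}, the Frobenian constraint defining $\cF$ forces a \emph{systematic subspace} of some dimension $n$, explicitly determined by the splitting of $2$ and containing $[-1]$, into $\Sel(d)$; this is the analogue of $\cS_b$ and of $n_b$.

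\textbf{Step 2 (distribution of $\Sel(d)$).} One needs the torus analogue of \Cref{thm:intro_distr}: as $d$ runs over $\cF$ ordered by size, $\dim_{\F_2}\Sel(d)$, shifted by $n$ and by a fixed parity $m\equiv n+1\pmod 2$, is distributed according to a Cohen--Lenstra-type law $\alpha'$ --- the $2$-adic class-group variant of the $\alpha$ of \Cref{thm:main_distr}. The $4$-rank part of this is a theorem of Fouvry--Kl\"uners, and the full moment computation --- the analogue of Sections~\ref{sec:fouvryklueners}--\ref{sec:combinatorics} --- is precisely what Koymans--Pagano \cite{KP22} prove, by carrying Smith's machinery \cites{Smith1,Smith2} through the Frobenian family $\cF$, which is a zero-density subfamily with a non-trivial systematic subspace of exactly the type Smith's theorems exclude.

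\textbf{Step 3 (equidistribution and evaluation).} Finally one models the image of $[-1]$ in the quotient of $\Sel(d)$ by its systematic part as a uniformly random element of that quotient, conditionally on its dimension. This is not a formal consequence of Step~2, since $[-1]$ is a fixed, arithmetically meaningful class, and it constitutes the technical heart of \cite{KP22}; it is the number-field counterpart of the difficulty we expect in attacking \Cref{conj:intro_prob_explicit}. Granting it, the sought density equals
\[
  \sum_{r\geq0}\frac{\alpha'(2r+m)}{2^{\,n+2r+m}-1},
\]
and inserting the Pell values of $n$ and $m$ and summing this series --- as in the computation of \cite{stevenhagen1993}, which, because $\alpha'$ differs from $\alpha$, yields the \emph{irrational} value $1-\prod_{j\geq1}(1-2^{-2j+1})$ rather than a rational one as in \Cref{prop:intro_explicitsums} --- gives the constant $c_{\textup{Pell}}$. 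The main obstacle is Steps~2 and~3 jointly in the family $\cF$: one must push Smith's higher-descent argument through the extra systematic cohomology classes forced by the congruence conditions on the prime divisors of $d$, while controlling the joint distribution of these with the R\'edei symbols governing $N(\varepsilon_d)$.
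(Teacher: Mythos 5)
Your proposal follows essentially the same route as the paper's own treatment in the Appendix: recast solubility of the negative Pell equation as the statement that the distinguished class ($-1$, equivalently $d$) lies in the image of the unit Kummer map inside the Selmer group $\Sel_2^+(\cO^\times(\chi)/\Q)$ (your norm-one-torus descent is just this twisted-unit formulation), feed in the Fouvry--Kl\"uners distribution of $\dim_{\F_2}\Sel_2^+$ over $d\in\cF$, posit equidistribution of the unit's image among the non-trivial Selmer classes, and sum the resulting series as in Stevenhagen to obtain $c_{\textup{Pell}}$. Like the paper, this is a heuristic derivation rather than a proof (the statement is a conjecture, established by Koymans--Pagano), the one slight misattribution being that the dimension distribution in your Step 2 is already unconditional by Fouvry--Kl\"uners, the genuinely new content of \cite{KP22} being precisely your Step 3.
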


This conjecture and its heuristic justification are given in
\cite{stevenhagen1993}. In landmark work
Fouvry--Kl\"{u}ners \cite[Theorem 1]{MR2726105} proved that 
\[
1-c_\textup{Pell}=0.4194...\leq \liminf_{X\rightarrow \infty}\frac{\#\{d'\in \cF_{\textup{Pell}}~~: ~~d'<X\}}{\#\{d'\in \cF~~: ~~d'<X\}}\leq 2/3.
\]
Following several improvements on the bounds, Koymans--Pagano proved the conjecture in \cite{KP22}.
To explain the analogy 
between Stevenhagen's conjecture and the present work, we now give 
a reinterpretation of the justification for \Cref{Stevenhagen's conjecture} 
in terms of certain quadratic twist families of Selmer groups. 

The unit group $\cO_{F}^\times$ of $\cO_F$ is isomorphic to $\{\pm 1\}\times \Z$,
and it is well known that \Cref{negative pell} is soluble if and only if the 
norm of any (equivalently every) generator of $\cO_{F}^\times/\{\pm 1\}$ is 
equal to $-1$. We will study this condition via \Cref{norms via cohomology}. 
Let $\cO^\times$ denote the $G_\Q$-module of units in the ring integers of
$\bar{\Q}$, let $\chi\colon G_\Q\rightarrow \{\pm 1\}$ be the quadratic character of $G_{\Q}$
corresponding to $F$, let $\Z(\chi)$ be the group $\Z$ with each $\sigma\in G_{\Q}$
acting by multiplication by $\chi(\sigma)$, and let $\cO^\times(\chi)$ denote the
$G_{\Q}$-module $\cO^\times\otimes_{\Z}\Z(\chi)$ with diagonal $G_{\Q}$-action.
Let $\cO^\times_{F, N_{F/\Q}=1}$ denote the subgroup of $\cO_F^\times$ consisting
of elements of absolute norm $1$.
Note that the $G_\Q$-invariants of $\cO^\times(\chi)$ is canonically identified
with $\cO^\times_{F, N_{F/\Q}=1}$, so that 
 the short exact sequence of $G_\Q$-modules
\begin{equation*}
1\longrightarrow \boldsymbol \mu_2\longrightarrow \cO^\times(\chi) \stackrel{x\mapsto x^2}{\longrightarrow}\cO^\times(\chi)\longrightarrow 1
\end{equation*}
induces a coboundary map
\[
  \delta_\chi\colon \cO^\times_{F, N_{F/\Q}=1}/(\cO^{\times}_{F, N_{F/\Q}=1})^{2}\hookrightarrow H^1(\Q,\boldsymbol \mu_2).
\]
Identifying $H^1(\Q,\boldsymbol \mu_2)$ with $\Q^\times/\Q^{\times 2}$ via 
the Kummer isomorphism, we view this as a map into $\Q^\times/\Q^{\times 2}$.

\begin{lemma} \label{intersection of norm maps} 
We have $\{\pm 1\}\cap \im(\delta_\chi)=N_{F/\Q}(\cO_F^\times)\subseteq \Q^\times/\Q^{\times 2}$. 
In particular, a fundamental unit of $F$ has norm $-1$ if and only if $-1\in \im(\delta_\chi)$.
\end{lemma}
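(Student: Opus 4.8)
The statement is a direct application of \Cref{norms via cohomology} with the profinite group $\Gamma = G_\Q$, the $2$-divisible module $M = \cO^\times$, and the character $\chi$ corresponding to $F$. First I would verify that $\cO^\times$ is indeed $2$-divisible: this is the statement that every unit in the ring of integers of $\bar\Q$ has a square root that is again such a unit, which is immediate since $\bar\Q$ is algebraically closed and integrally closed. With $H = G_F = \ker\chi$, the fixed module $M^H = \cO_F^\times$ and $M^\Gamma = \Z^\times = \{\pm 1\}$, while $M(\chi)^\Gamma$ is identified (as noted in the excerpt just before the lemma) with $\cO^\times_{F, N_{F/\Q}=1}$, the kernel of the norm $\N = 1 + g \colon \cO_F^\times \to \{\pm 1\}$. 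The relevant coboundary maps are exactly $\delta$ (the Kummer map for $\{\pm 1\} \hookrightarrow H^1(\Q,\rou_2) = \Q^\times/\Q^{\times 2}$, restricted to $\{\pm1\}/\{\pm1\}^2 = \{\pm 1\}$) and $\delta_\chi$ as defined above.

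Next, I would invoke \Cref{norms via cohomology} verbatim: it gives that $\im(\N)$, the image of the norm map $\N_{F/\Q}\colon \cO_F^\times \to \{\pm 1\}$, consists of precisely those $m \in \{\pm 1\}$ for which $\delta(m) \in \im(\delta_\chi)$. Since $\delta$ restricted to $\{\pm 1\}$ is simply the inclusion $\{\pm 1\} \hookrightarrow \Q^\times/\Q^{\times 2}$, this translates to the assertion $\{\pm 1\} \cap \im(\delta_\chi) = \N_{F/\Q}(\cO_F^\times)$ inside $\Q^\times/\Q^{\times 2}$, which is the first claim. Here I should be slightly careful about the identification of $H^1(\Q,\rou_2)$ with $\Q^\times/\Q^{\times 2}$ via Kummer theory being compatible with the coboundary map of the squaring sequence on $\cO^\times(\chi)$ — but this compatibility is exactly how $\delta_\chi$ is being viewed as landing in $\Q^\times/\Q^{\times 2}$ in the paragraph preceding the lemma, so there is nothing further to check.

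For the final sentence, recall that $\cO_F^\times \cong \{\pm 1\} \times \Z$, so a fundamental unit $\varepsilon$ generates $\cO_F^\times / \{\pm 1\}$, and $\N_{F/\Q}(\cO_F^\times) = \langle \N_{F/\Q}(-1), \N_{F/\Q}(\varepsilon)\rangle = \langle 1, \N_{F/\Q}(\varepsilon)\rangle$. Thus $\N_{F/\Q}(\cO_F^\times) = \{\pm 1\}$ in $\Q^\times/\Q^{\times 2}$ precisely when $\N_{F/\Q}(\varepsilon) = -1$, and otherwise $\N_{F/\Q}(\cO_F^\times)$ is trivial. Combined with the first part, $-1 \in \im(\delta_\chi)$ if and only if $\N_{F/\Q}(\varepsilon) = -1$, as claimed.

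There is no real obstacle here: the content is entirely contained in \Cref{norms via cohomology}, and the work is the bookkeeping of identifying the various fixed submodules and coboundary maps in the statement of that lemma with the concrete objects ($\cO_F^\times$, $\cO^\times_{F, N_{F/\Q}=1}$, the Kummer map on $\{\pm 1\}$) appearing here. The mildest subtlety to flag explicitly is the $2$-divisibility of $\cO^\times$, which is needed as a hypothesis of \Cref{norms via cohomology} and which I would state as a one-line remark.
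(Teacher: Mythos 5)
Your proposal is correct and follows essentially the same route as the paper: apply \Cref{norms via cohomology} with $M=\cO^\times$ and $\Gamma=G_\Q$, and observe that the coboundary $\delta$ on $M^\Gamma=\{\pm 1\}$ is just the restriction of the Kummer isomorphism, so $\im(\N_{F/\Q})=\{\pm 1\}\cap\im(\delta_\chi)$. The extra bookkeeping you include ($2$-divisibility of $\cO^\times$ and the fundamental-unit deduction) is correct and only makes explicit what the paper leaves implicit.
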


\begin{proof}
  Apply \Cref{norms via cohomology} with $M=\cO^\times$ and $\Gamma=G_\Q$,
and note that the coboundary map 
\[\delta\colon \{\pm 1\}=\Z^\times/\Z^{\times 2}\hookrightarrow H^1(\Q,\boldsymbol \mu_2)\]
induced by the short exact sequence  
\begin{equation*}
1\longrightarrow \boldsymbol \mu_2\longrightarrow \cO^\times \stackrel{x\mapsto x^2}{\longrightarrow}\cO^\times\longrightarrow 1
\end{equation*}
is just the restriction of the Kummer isomorphism to $\{\pm 1\} \subseteq \Q^{\times}/\Q^{\times 2}$.
\end{proof}

By \Cref{intersection of norm maps}, understanding if (\ref{negative pell}) has a solution is 
equivalent to understanding when $-1\in \im(\delta_\chi)$. We will use Selmer 
groups to first understand the corresponding `everywhere local' question. 
Specifically, for a nonarchimedean place $v$ of $\Q$, let $\cO_v^\times$ 
denote the units in the ring of integers of $\bar{\Q}_v$ and $\chi_v$ the 
restriction of $\chi$ to $G_{\Q_v}$. Then, similarly to the construction of
$\delta_\chi$ above, for each such place $v$ that is not
split in $F/\Q$ we get maps
\[
  \delta_{\chi_v}\colon\cO^\times_{F_v, N_{F_v/\Q_v}=1}/(\cO^{\times}_{F_v, N_{F_v/\Q_v}=1})^2\hookrightarrow
H^1(\Q_v,\boldsymbol \mu_2),
\]
where we slightly abuse notation by also denoting by $v$ the unique
place of $F$ above $v$.
For a non-archimedean place $v$ that is split in $F/\Q$, the character $\chi_v$ is trivial, and we
take $\delta_{\chi_v}$ to be the usual Kummer map
$\Z_v^{\times}/(\Z_v^{\times})^2\hookrightarrow H^1(\Q_v,\boldsymbol \mu_2)$. 
Finally, the unique Archimedean place $v$ is split in $F$ by assumption, and
$\delta_{\chi_v}\colon \R^{\times}/(\R^\times)^2\to H^1(\R,\mu_2)$ is an
isomorphism.

\begin{definition}
We define the Selmer group $\Sel_2(\cO^\times(\chi)/\Q)$ to be the group
\[\Sel_2(\cO^\times(\chi)/\Q)=\{x\in H^1(\Q,\boldsymbol \mu_2) \mid \textup{res}_v(x)\in \im(\delta_{\chi_v})~~\textup{for all places }v\}.\]
\end{definition}

The Selmer group defined above may be explicitly described as follows. 

\begin{lemma} \label{explicit selmer}
The Kummer map $\Q^\times/\Q^{\times 2}\stackrel{\sim}{\longrightarrow}H^1(\Q,\boldsymbol \mu_2)$ realises
\[\Sel_2(\cO^\times(\chi)/\Q)=\{x \in \Z~\textup{sq.-free }
:~~x\mid \Delta_{F/\Q}~~\textup{and}~~(x,d)_p=1~~\textup{for all primes } p\mid \Delta_{F/\Q}\},\]
where  $\Delta_{F/\Q}$ is the discriminant of $F/\Q$.
\end{lemma}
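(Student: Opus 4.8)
The plan is to unwind the definition of the local Kummer images $\im(\delta_{\chi_v})$ inside $H^1(\Q_v,\boldsymbol\mu_2)\cong \Q_v^\times/\Q_v^{\times2}$ and then intersect the resulting local conditions. First I would handle the split (and archimedean) places: if $v$ is split in $F/\Q$ then $\chi_v$ is trivial, $\delta_{\chi_v}$ is the usual Kummer map $\Z_v^\times/\Z_v^{\times2}\hookrightarrow H^1(\Q_v,\boldsymbol\mu_2)$, and its image is exactly $H^1_{\ur}(\Q_v,\boldsymbol\mu_2)$, i.e. the classes of $v$-adic units modulo squares; at the archimedean place the map is surjective, so no condition. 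Thus a square-free integer $x$ lies in the Selmer group only if $x$ is a unit (up to squares) at every prime that is unramified in $F/\Q$, which is precisely the condition $x\mid \Delta_{F/\Q}$ (recalling $\Delta_{F/\Q}$ equals $d$ or $4d$ according to $d\bmod 4$).

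Next I would treat the non-split places $v$, i.e. the primes dividing $\Delta_{F/\Q}$. Here $F_v/\Q_v$ is the (ramified or unramified nonsplit) quadratic extension, and $\cO^\times_{F_v,N_{F_v/\Q_v}=1}$ is the norm-one unit group; the coboundary map $\delta_{\chi_v}$ fits, via \Cref{norms via cohomology} applied locally with $M=\cO_v^\times$ and $\Gamma=G_{\Q_v}$, into the statement that $\{\pm1\}\cap\im(\delta_{\chi_v}) = N_{F_v/\Q_v}(\cO_{F_v}^\times)$ inside $\Q_v^\times/\Q_v^{\times2}$, and more generally that the image of $\delta_{\chi_v}$ consists of those classes that are local norms from $F_v$ while also being "globally relevant" — but the cleanest route is to note directly that $\delta_{\chi_v}$ has image equal to the group of local norm classes intersected with the unit classes, which by local class field theory is cut out by the Hilbert symbol condition $(x,d)_v=1$. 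Concretely: for $x\in\Q_v^\times$, the class of $x$ lies in $\im(\delta_{\chi_v})$ precisely when $x$ is a norm from $F_v^\times$ (here one uses that $F_v^\times/\mathrm{N}(F_v^\times)$ and the norm-one unit Kummer image are dual under the local Tate pairing, exactly as the subspace $\sS_v$ is maximal isotropic in the elliptic-curve setting), equivalently $(x,d)_v = 1$. Combining this with the unramified conditions above yields that $x$ square-free lies in $\Sel_2(\cO^\times(\chi)/\Q)$ iff $x\mid\Delta_{F/\Q}$ and $(x,d)_p=1$ for all $p\mid\Delta_{F/\Q}$.

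A couple of bookkeeping points need care. One must check that once $x\mid\Delta_{F/\Q}$, imposing $(x,d)_p=1$ for all \emph{finite} $p\mid\Delta_{F/\Q}$ already forces the Hilbert symbol condition at every place, including $p=2$ and $p=\infty$; this follows from the product formula $\prod_v (x,d)_v = 1$ together with the fact that $(x,d)_v=1$ automatically at all $v\nmid 2d\infty$ (as both $x$ and $d$ are $v$-adic units there) and at split $v$ (where the local extension is trivial). One must also verify the archimedean place imposes nothing, which is immediate since $\delta_{\chi_\infty}$ is onto. I expect the main obstacle to be the precise identification, at the ramified primes and especially at $2$, of $\im(\delta_{\chi_v})$ with the norm-classes/Hilbert-symbol kernel: one has to pin down $\cO^\times_{F_v,N_{F_v/\Q_v}=1}$ modulo squares, compute its image under the connecting map of $1\to\boldsymbol\mu_2\to\cO_v^\times(\chi)\xrightarrow{2}\cO_v^\times(\chi)\to1$, and match it against $\{x\in\Q_v^\times/\Q_v^{\times2} : (x,d)_v=1\}$; the duality underlying \Cref{norms via cohomology} (local Tate duality for $\boldsymbol\mu_2$) is exactly what makes this work, but the $2$-adic case requires an explicit check of unit groups.
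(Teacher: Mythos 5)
Your final criterion at the non-split places --- the class of $x$ lies in $\im(\delta_{\chi_v})$ if and only if $(x,d)_v=1$ --- is the right one, but the way you arrive at it contains a genuine error. You assert that $\im(\delta_{\chi_v})$ equals the group of local norm classes \emph{intersected with the unit classes}, and then say this set is cut out by the Hilbert symbol condition; these two descriptions are not the same, and the first is false. The correct statement, which is exactly the content the lemma needs, is that $\im(\delta_{\chi_v})$ is the full group $N_{F_v/\Q_v}(F_v^\times)$ modulo squares, with no unit restriction: at a ramified place this contains classes of odd valuation, e.g. $-d=N_{F_v/\Q_v}(\sqrt{d})$, and correspondingly the right-hand side of the lemma contains square-free $x$ divisible by ramified primes (indeed $\delta_\chi(-1)=-d$, and the element $d\in\Sel_2^+(\cO^\times(\chi)/\Q)$ is precisely the target in \Cref{negative pell prop}). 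With your ``intersected with units'' description, the local condition at a ramified $p$ would wrongly exclude every $x$ with $p\mid x$, producing a strictly smaller group than the one in the statement. The paper pins down the image by a short direct computation via Hilbert's theorem 90 rather than the duality you gesture at: for a norm-one unit $x$ write $x=y\sigma(y)^{-1}$ with $y\in F_v^\times$; then $\delta_{\chi_v}(x)=N_{F_v/\Q_v}(y)$, and conversely \emph{every} $y\in F_v^\times$ yields the norm-one element $y\sigma(y)^{-1}$, which is automatically a unit because $\sigma$ preserves the valuation, so the image is all of $N_{F_v/\Q_v}(F_v^\times)$. This is the step your argument is missing; the appeal to local Tate duality/maximal isotropy, as stated, does not supply it.

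Two smaller points. The product-formula bookkeeping is unnecessary: when $2$ is ramified it is one of the primes $p\mid\Delta_{F/\Q}$, so the condition $(x,d)_2=1$ is already imposed, while at split or inert $2$ and at the archimedean place (split, since $d>0$, with $\delta_{\chi_\infty}$ onto) the conditions are automatic for square-free $x\mid\Delta_{F/\Q}$. Also, your first paragraph concludes that $x$ must be a unit at \emph{every} unramified prime having only analysed the split ones; the inert primes are covered by the norm/Hilbert-symbol description (the image there consists of the classes of even valuation), which for square-free $x$ again forces $p\nmid x$, so the conclusion is fine but the justification as written overreaches.
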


\begin{proof}
At the Archimedean place $v$ the condition in the definition of $\Sel^2(\cO^\times(\chi)/\Q)$
is empty, so from now on let $v$ be a non-Archimedean place of $\Q$.

An easy calculation verifies that when we identify $H^1(\Q_v,\boldsymbol \mu_2)$
with $\Q_v^\times/\Q_v^{\times 2}$, for $v$ non-split the map $\delta_{\chi_v}$
is given as follows:
for $x\in \cO^\times_{F_v, N_{F_v/\Q_v}=1}$, by Hilbert's theorem 90 we can find
$y\in F_v^\times$ such that $x=y\sigma(y)^{-1}$. Then $\delta_{\chi_v}(x)=N_{F_v/\Q_v}(y)$. 

Now, since for every $y\in F_v^\times$ we have $y\sigma(y)^{-1}\in \cO_{F_v}^\times$,
we find that $\im(\delta_\chi)=N_{F_v/\Q_v}(F_v^{\times})$. Note that for $v$ 
inert in $F/\Q$ this image consists precisely of the elements of $\Q_v^\times$ of even
valuation, and the same is true for split places. To conclude, we note that 
$x\in \Q_v^{\times}$ is a norm from $F_v^\times$ if and only if the Hilbert
symbol $(x,d)_v$ is equal to $1$.
\end{proof}

Note that $\Sel_2(\cO^\times(\chi)/\Q)$ is (visibly) a finite group,
and $\delta_\chi$ gives an injection
\[
  \delta_\chi\colon \cO^\times_{F, N_{F/\Q}=1}/(\cO^{\times}_{F, N_{F/\Q}=1})^2\hookrightarrow
\Sel_2(\cO^\times(\chi)/\Q).
\]
As in the proof of \Cref{explicit selmer}, this map is given explicitly by 
taking $x\in \cO^\times_{F, N_{F/\Q}=1}$, chosing $y\in F^\times$ such that
$y\sigma(y)^{-1}=x$, and mapping $x$ to the class of
$N_{F/\Q}(y)\in \Q^\times/\Q^{\times 2}$. 

\begin{remark}
For $-1$ to be in the image of $\delta_\chi$, a necessary condition is that
$-1\in \Sel_2(\cO^\times(\chi)/\Q)$ which, from the description given
in \Cref{explicit selmer}, is equivalent to $d\in \cF$. Thus the necessary
local conditions for (\ref{negative pell}) to be soluble may be viewed as the
condition that $-1\in \Sel_2(\cO^\times(\chi)/\Q)$. 
\end{remark}

Note that  $-1\in \cO^\times_{F, N_{F/\Q}=1}$ maps to $-d\in\Sel_2(\cO^\times(\chi)/\Q)$
since $\sqrt{d}$ satisfies $\sqrt{d}\cdot \sigma(\sqrt{d})^{-1}=-1$, and
$N_{F/\Q}(\sqrt{d})=-d$. In particular, every element of
$\Sel_2(\cO^\times(\chi)/\Q)/\left \langle \delta_\chi(-1)\right \rangle$ is 
uniquely represented by a positive square-free integer, so that we obtain a 
natural injection
\begin{equation} \label{injection into selmer}
  \Z/2\Z\cong \cO^\times_{F, N_{F/\Q}=1}\big/\!\big(\{\pm 1\}\cdot (\cO^{\times}_{F, N_{F/\Q}=1})^2\big)\hookrightarrow
\Sel_2^+(\cO^\times(\chi)/\Q),
\end{equation}
where $\Sel_2^+(\cO^\times(\chi)/\Q)$ denotes the subgroup of $\Sel_2(\cO^\times(\chi)/\Q)$
consisting of positive elements of $\Q^\times/\Q^{\times 2}$.
For $d\in \cF$, when we identify the quotient of $\Sel_2(\cO^\times(\chi)/\Q)$
by $\langle\delta_\chi(-1)\rangle$ with $\Sel_2^+(\cO^\times(\chi)/\Q)$, the element
$-1\in \Sel_2(\cO^\times(\chi)/\Q)\subset \Q^\times/(\Q^\times)^2$ is identified with the
class of $d\in \Sel_2^+(\cO^\times(\chi)/\Q)$. To summarise, we have now proven
the following, which roughly corresponds to the combination of
\cite[Lemma 2.1 (ii) and Proposition 2.2]{stevenhagen1993}.

\begin{proposition} \label{negative pell prop}
Let $d\geq2$ be a square-free integer, and let $\chi$ be the quadratic 
character associated to the extension $F_d=\Q(\sqrt{d})/\Q$. Then in order 
for the negative Pell equation \Cref{negative pell} to be soluble, we must 
have $d\in \Sel_2^+(\cO^\times(\chi)/\Q)$ where
\begin{eqnarray*}
  \lefteqn{\Sel_2^+(\cO^\times(\chi)/\Q)=}\\
  & & \{x \in \Z_{>0}~\textup{sq. free } :~~x\mid
    \Delta_{F_d/\Q}~~\textup{and}~~(x,d)_p=1~~\textup{for all primes } p\mid \Delta_{F_d/\Q}\}.
\end{eqnarray*}
Moreover, this happens if and only if $d\in \cF$. In this case, a necessary 
and sufficient condition for (\ref{negative pell}) to be soluble is that the 
unique nontrivial element of $\cO^\times_{F_d, N_{F_d/\Q}=1}\big/\!\big(\{\pm 1\}\cdot
(\cO^{\times}_{F_d, N_{F_d/\Q}=1})^2\big)$ maps to $d$ under (\ref{injection into selmer}).
\end{proposition}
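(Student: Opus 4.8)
\textbf{Proof plan for \Cref{negative pell prop}.}
The plan is to assemble the three assertions of the proposition directly from the lemmas already established, treating the statement essentially as a packaging of \Cref{explicit selmer}, \Cref{intersection of norm maps}, and the discussion surrounding \eqref{injection into selmer}. First I would fix a square-free $d\geq 2$ and the quadratic character $\chi$ cut out by $F_d/\Q$, and record the standing observation that $\sqrt{d}\in F_d^\times$ satisfies $\sqrt{d}\cdot\sigma(\sqrt{d})^{-1}=-1$, whence $\delta_\chi(-1)$ is the class of $N_{F_d/\Q}(\sqrt{d})=-d$ in $\Q^\times/\Q^{\times 2}$; this identifies the quotient $\Sel_2(\cO^\times(\chi)/\Q)/\langle\delta_\chi(-1)\rangle$ with the subgroup $\Sel_2^+(\cO^\times(\chi)/\Q)$ of classes represented by positive square-free integers, and under this identification the distinguished class $-1\in\Sel_2(\cO^\times(\chi)/\Q)$ corresponds to the class of $d$.

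For the first assertion I would argue that solubility of \eqref{negative pell} forces the absolute norm of a fundamental unit of $F_d$ to be $-1$, hence $-1\in N_{F_d/\Q}(\cO_{F_d}^\times)$, hence by \Cref{intersection of norm maps} we have $-1\in\im(\delta_\chi)\subseteq\Sel_2(\cO^\times(\chi)/\Q)$; passing to the quotient by $\langle\delta_\chi(-1)\rangle$ and using the identification above, this says precisely $d\in\Sel_2^+(\cO^\times(\chi)/\Q)$. The explicit description of $\Sel_2^+(\cO^\times(\chi)/\Q)$ as square-free positive $x\mid\Delta_{F_d/\Q}$ with $(x,d)_p=1$ for all $p\mid\Delta_{F_d/\Q}$ is then immediate from \Cref{explicit selmer} (intersecting with the positive classes). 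For the ``if and only if'' with $\cF$: having $d\in\Sel_2^+(\cO^\times(\chi)/\Q)$ is, via this description, the condition that $(d,d)_p=1$ for all $p\mid\Delta_{F_d/\Q}$; using $(d,d)_p=(-1,d)_p$ and evaluating the Hilbert symbol at each odd ramified prime $p\mid d$ shows this is equivalent to $p\equiv 1\pmod 4$ for every odd prime divisor of $d$, i.e.\ to $d\in\cF$. (One should check the primes $2$ and $\infty$ cause no extra obstruction here, which is a short local computation.)

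For the final assertion, assume $d\in\cF$. Solubility of \eqref{negative pell} is equivalent to a fundamental unit of $F_d$ having norm $-1$, which by \Cref{intersection of norm maps} is equivalent to $-1\in\im(\delta_\chi)$. Now $\delta_\chi$ is injective with image in $\Sel_2(\cO^\times(\chi)/\Q)$, and the source $\cO^\times_{F_d,N_{F_d/\Q}=1}/(\cO^\times_{F_d,N_{F_d/\Q}=1})^2\cong(\Z/2\Z)^2$ has the class of $-1$ mapping to $\delta_\chi(-1)$; so $-1\in\im(\delta_\chi)$ holds if and only if the unique class in $\cO^\times_{F_d,N_{F_d/\Q}=1}/\big(\{\pm1\}\cdot(\cO^\times_{F_d,N_{F_d/\Q}=1})^2\big)$ maps, under the induced injection \eqref{injection into selmer}, to the class of $-1$ in $\Sel_2(\cO^\times(\chi)/\Q)$; and under the identification of that quotient Selmer group with $\Sel_2^+(\cO^\times(\chi)/\Q)$, the class of $-1$ becomes the class of $d$, giving exactly the stated criterion. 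The only genuinely computational point — and the place I would be most careful — is the Hilbert-symbol bookkeeping establishing the equivalence $d\in\Sel_2^+(\cO^\times(\chi)/\Q)\Leftrightarrow d\in\cF$, including the correct treatment of the dyadic and real places; everything else is a formal consequence of the cohomological machinery already set up.
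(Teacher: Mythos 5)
Your proposal is correct and follows essentially the same route as the paper, which proves this proposition simply by assembling Lemma \ref{intersection of norm maps}, Lemma \ref{explicit selmer}, and the discussion around \eqref{injection into selmer} (indeed the paper introduces the proposition with ``we have now proven the following''). Your Hilbert-symbol verification that $d\in\Sel_2^+(\cO^\times(\chi)/\Q)$ is equivalent to $d\in\cF$ is the same check the paper performs (phrased there as $-1\in\Sel_2(\cO^\times(\chi)/\Q)$ iff $d\in\cF$), and the dyadic case works out as you anticipate.
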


\begin{remark} \label{lower bound pell}
An immediate consequence of \Cref{negative pell prop} is that the negative 
Pell equation is soluble for all $d\in \cF$ such that $\dim_{\F_2}
\Sel_2^+(\cO^\times(\chi)/\Q)=1,$ since then there is a unique non-trivial 
element of $\Sel_2^+(\cO^\times(\chi)/\Q)$ and there is therefore no option 
but for $d$ and the image of the generator of $\cO^\times_{F_d, N_{F_d/\Q}=1
}\big/\!\big(\{\pm 1\}\cdot (\cO^{\times}_{F_d, N_{F_d/\Q}=1})^2\big)$ to coincide.
\end{remark}

To pass from \Cref{negative pell prop} to Stevenhagen's conjecture, one 
hypothesises that, in absence of additional information, each non-trivial element of 
$\Sel_2^+(\cO^\times(\chi)/\Q)$ has equal right to be the image $\eta$ of the 
generator of $\cO^\times_{F_d, N_{F_d/\Q}=1}\big/\!\big(\{\pm 1\}\cdot
(\cO^{\times}_{F_d, N_{F_d/\Q}=1})^2\big)$. In particular, if $\dim_{\F_2}
\Sel_2^+(\cO^\times(\chi)/\Q)=r$, then the `probability' that $d$ and
$\eta$ coincide is $1/(2^r-1)$. To make this precise, for $r\geq 1$ write 
\[
\textup{Pr}(r)=\lim_{X\rightarrow \infty}\frac{\#\{d\in \cF~~: d< X
\textup{ and }\dim_{\F_2}\Sel_2^+(\cO^\times(\chi)/\Q)=r\}}{\#\{d\in \cF~~: d< X\}}.
\]
Then one conjectures that this limit exists for each $r$, and that
\begin{equation} \label{conjecture first}
\lim_{X\rightarrow \infty}\frac{\#\{d\in \cF_{\textup{Pell}}~~: ~~d<X\}}{\#\{d\in \cF~~: ~~d<X\}}=\sum_{r= 1}^\infty\frac{\textup{Pr}(r)}{2^r-1}.
\end{equation}
In particular, if one assumes that $\eta$ ``looks equidistributed''
among the non-trivial elements of
$\Sel_2^+(\cO^\times(\chi)/\Q)$, then understanding the proportion of $d\in \cF$ such that 
\Cref{negative pell} is soluble comes down to determining the distribution of 
$\dim_{\F_2}\Sel_2^+(\cO^\times(\chi)/\Q)$ as $\chi$ varies over the 
quadratic characters associated with $d\in \cF$.

The aforementioned work of Fouvry--Kl\"{u}ners proves
$\textup{Pr}(r)=\alpha \prod_{j=1}^{r-1}(2^j-1)^{-1}$, where
$\alpha=1-c_\textup{Pell}$ (combine \cite[Corollary 2]{MR2726105} with op.
cit. Proposition 2), as had already been conjectured and supported by 
substantial partial results by Gerth at the time Stevenhagen made his conjecture.
Therefore \Cref{conjecture first} predicts that
\[
\lim_{X\rightarrow \infty}\frac{\#\{d\in \cF_{\textup{Pell}}~~:~~d<X\}}{\#\{d\in \cF~~: ~~d<X\}}=\sum_{r=1}^\infty\frac{\alpha}{\prod_{j=1}^r(2^j-1)}\stackrel{\textup{\cite[Prop. 2.8]{stevenhagen1993}}}{=}1-\alpha=c_\textup{Pell},
\]
which is Stevenhagen's conjecture.
Note that by \Cref{lower bound pell} the work of Fouvry--Kl\"{u}ners gives the unconditional lower bound
\[\liminf_{X\rightarrow \infty}\frac{\#\{d\in \cF_{\textup{Pell}}~~: ~~d<X\}}{\#\{d\in \cF~~: ~~d<X\}} \geq \textup{Pr}(1)=1-c_\textup{Pell}\]
mentioned earlier.

\bibliographystyle{plain}

\large\bibliography{references}

\end{document}